\newtheorem{proposition}{Proposition}[section]
\newtheorem{theorem}[proposition]{Theorem}
\newtheorem{corollary}[proposition]{Corollary}
\newtheorem{lemma}[proposition]{Lemma}
\theoremstyle{definition}
\newtheorem{example}[proposition]{Example}
\newtheorem{definition}[proposition]{Definition}
\newtheorem{problem}[proposition]{Problem}
\theoremstyle{remark}
\newtheorem{remark}[proposition]{Remark}
\numberwithin{equation}{section}
\newcommand{\margincolor}{red}
\definecolor{darkgreen}{rgb}{0,0.7,0}
\newcounter{margincounter}
\newcommand{\marginnum}{
\ifnum\value{margincounter}<10
\textcolor{\margincolor}{\begin{picture}(0,0)\put(2.2,2.4){\circle{9}}\end{picture}\footnotesize\arabic{margincounter}}
\else\ifnum\value{margincounter}<100
\textcolor{\margincolor}{\begin{picture}(0,0)\put(4.256,2.5){\circle{11}}\end{picture}\footnotesize\arabic{margincounter}}
\else
\textcolor{\margincolor}{\begin{picture}(0,0)\put(6.8,2.5){\circle{14}}\end{picture}\footnotesize\arabic{margincounter}}
\fi\fi
}
\newcommand{\newword}[1]{\emph{#1}}
\newcommand{\integers}{\mathbb Z}
\newcommand{\reals}{\mathbb R}
\newcommand{\complex}{\mathbb C}
\newcommand{\bic}{{\mathrm {biC}}}
\DeclareMathOperator{\D}{\mathsf{D}}
\DeclareMathOperator{\ind}{\mathsf{ind}}
\DeclareMathOperator{\Con}{\mathsf{Con}}
\DeclareMathOperator{\Com}{\mathsf{Con^c}}
\DeclareMathOperator{\Comb}{\mathsf{Con^{ca}}}
\DeclareMathOperator{\AlgCon}{\mathsf{AlgCon}}
\DeclareMathOperator{\jirr}{\mathsf{j-Irr}}
\DeclareMathOperator{\mirr}{\mathsf{m-Irr}}
\DeclareMathOperator{\cjirr}{\mathsf{j-Irr^c}}
\DeclareMathOperator{\cmirr}{\mathsf{m-Irr^c}}
\DeclareMathOperator{\brick}{\mathsf{brick}}
\DeclareMathOperator{\tors}{\mathsf{tors}}
\DeclareMathOperator{\ftors}{\mathsf{f-tors}}
\DeclareMathOperator{\torf}{\mathsf{torf}}
\DeclareMathOperator{\sttilt}{\mathsf{s}\tau\mathsf{-tilt}}
\DeclareMathOperator{\itrigid}{\mathsf{i}\tau\mathsf{-rigid}}
\DeclareMathOperator{\trigid}{\tau\mathsf{-rigid}}
\DeclareMathOperator{\trigidpair}{\tau\mathsf{-rigid-pair}}
\DeclareMathOperator{\itrigidpair}{\mathsf{i}\tau\mathsf{-rigid-pair}}
\DeclareMathOperator{\ttiltpair}{\tau\mathsf{-tilt-pair}}
\DeclareMathOperator{\sbrick}{\mathsf{sbrick}}
\DeclareMathOperator{\Sub}{\mathsf{Sub}}
\DeclareMathOperator{\Filt}{\mathsf{Filt}}
\DeclareMathOperator{\Ideals}{\mathsf{ideal}}
\DeclareMathOperator{\add}{\mathsf{add}}
\DeclareMathOperator{\Fac}{\mathsf{Fac}}
\let\mod\relax
\DeclareMathOperator{\mod}{\mathsf{mod}}
\DeclareMathOperator{\Hom}{\mathsf{Hom}}
\DeclareMathOperator{\End}{\mathsf{End}}
\DeclareMathOperator{\rad}{\mathsf{rad}}
\DeclareMathOperator{\Rad}{\mathsf{Rad}}
\DeclareMathOperator{\Ext}{\mathsf{Ext}}
\DeclareMathOperator{\Kernel}{\mathsf{Ker}}
\DeclareMathOperator{\Cokernel}{\mathsf{Cok}}
\DeclareMathOperator{\T}{\mathsf{T}}
\DeclareMathOperator{\Image}{\mathsf{Im}}
\DeclareMathOperator{\ann}{\mathsf{ann}}
\let\top\relax
\DeclareMathOperator{\top}{\mathsf{top}}
\DeclareMathOperator{\id}{\mathsf{id}}
\DeclareMathOperator{\Hasse}{\mathsf{Hasse}}
\DeclareMathOperator{\con}{\mathsf{con}}
\DeclareMathOperator{\com}{\mathsf{con^c}}
\DeclareMathOperator{\comb}{\mathsf{con^{ca}}}
\DeclareMathOperator{\simp}{\mathsf{S}}
\DeclareMathOperator{\wide}{\mathsf{wide}}
\DeclareMathOperator{\spanv}{\mathsf{span}}
\newcommand{\set}[1]{{\left\{ #1 \right\}}}
\newcommand{\pidown}{\pi_\downarrow}
\newcommand{\piup}{\pi^\uparrow}
\newcommand{\join}{\vee}
\newcommand{\meet}{\wedge}
\renewcommand{\Join}{\bigvee}
\newcommand{\Meet}{\bigwedge}
\newcommand{\joindn}{\join_\dn}
\newcommand{\meetdn}{\meet_\dn}
\newcommand{\Joindn}{\Join_\dn}
\newcommand{\Meetdn}{\Meet_\dn}
\newcommand{\xto}{\xrightarrow}
\newcommand{\cyc}{{\mathrm{cyc}}}
\newcommand{\op}{{\mathrm{op}}}
\newcommand{\sym}{\mathfrak{S}}
\newcommand{\E}{\mathcal{E}}
\newcommand{\CC}{{\mathcal C}}
\newcommand{\WW}{{\mathcal W}}
\newcommand{\TT}{{\mathcal T}}
\newcommand{\II}{{\mathcal I}}
\renewcommand{\AA}{{\mathcal A}}
\newcommand{\BB}{{\mathcal B}}
\renewcommand{\S}{{\mathcal S}}
\renewcommand{\SS}{\mathscr{S}}
\newcommand{\UU}{\mathcal{U}}
\newcommand{\VV}{\mathcal{V}}
\newcommand{\FF}{{\mathcal F}}
\newcommand{\X}{{\mathcal X}}
\newcommand{\UUU}{\mathscr{U}}
\newcommand{\forces}[1][]{\mathrel{\rightsquigarrow_{\mathrm{#1}}}}
\newcommand{\forceseq}[1][]{\mathrel{\leftrightsquigarrow_{\mathrm{#1}}}}
\newenvironment{itemize}{\begin{enumerate}[$\bullet$]}{\end{enumerate}} 
\newcommand{\rs}[1]{\setcounter{enumi}{#1}}
\def\newboxedcommand#1#2 
 \def\newboxedcommandlocala##1##2.{##2}%
 \edef\newboxedcommandlocalb{\expandafter\newboxedcommandlocala\string#1.}%
\newsavebox\csname\newboxedcommandlocalb savebox\endcsname%
\newlength\csname\newboxedcommandlocalb largeurbox\endcsname%
\newsavebox\locboxinminipage
\newlength\locboxinminipagel
\newcommand{\boxinminipage}[1]
{%
 \sbox\locboxinminipage{#1}%
 \settowidth\locboxinminipagel{\usebox{\locboxinminipage}}%
 \begin{minipage}{\locboxinminipagel}\usebox{\locboxinminipage}\end{minipage}%
}
\newcommand{\R}{\mathbb{R}}
\newcommand{\dn}{\downarrow}
\newboxedcommand{\Sa}{\footnotesize$\xymatrix@C=.15cm@R=.25cm@!=0cm@!=0cm{1}$}
\newboxedcommand{\Sb}{\footnotesize$\xymatrix@C=.15cm@R=.25cm@!=0cm@!=0cm{2}$}
\newboxedcommand{\Sc}{\footnotesize$\xymatrix@C=.15cm@R=.25cm@!=0cm{3}$}
\newboxedcommand{\ab}{\footnotesize$\xymatrix@C=.15cm@R=.25cm@!=0cm{1 & \\ & 2}$}
\newboxedcommand{\bc}{\footnotesize$\xymatrix@C=.15cm@R=.25cm@!=0cm{2 & \\ & 3}$}
\newboxedcommand{\ba}{\footnotesize$\xymatrix@C=.15cm@R=.25cm@!=0cm{& 2  \\ 1 &}$}
\newboxedcommand{\cb}{\footnotesize$\xymatrix@C=.15cm@R=.25cm@!=0cm{& 3  \\ 2 &}$}
\newboxedcommand{\abc}{\footnotesize$\xymatrix@C=.15cm@R=.25cm@!=0cm{1 & & \\ & 2 & \\ & & 3}$}
\newboxedcommand{\cba}{\footnotesize$\xymatrix@C=.15cm@R=.25cm@!=0cm{& & 3  \\ & 2 & \\ 1 & & }$}
\newboxedcommand{\acb}{\footnotesize$\xymatrix@C=.15cm@R=.25cm@!=0cm{1 & & 3 \\ & 2  &}$}
\newboxedcommand{\bac}{\footnotesize$\xymatrix@C=.15cm@R=.25cm@!=0cm{& 2 & \\ 1 & & 3}$}
\newboxedcommand{\bacb}{\footnotesize $\xymatrix@C=.15cm@R=.25cm@!=0cm{& 2 & \\ 1 & & 3 \\ & 2 &}$}
\newboxedcommand{\Pun}{\footnotesize$\xymatrix@C=.15cm@R=.25cm@!=0cm@!=0cm{& & 1 \\ & 1 & & 2 \\2 & & & & 2}$}
\newboxedcommand{\Pdeux}{\footnotesize$\xymatrix@C=.15cm@R=.25cm@!=0cm@!=0cm{& & 2 \\ & 1 & & 2 \\ 1 & & & & 1}$}
\newboxedcommand{\Punm}{\footnotesize$\xymatrix@C=.15cm@R=.25cm@!=0cm@!=0cm{& & 2 \\ & 2 & & 1 & & 2 \\1 & & & & 1 & & 2 \\ & & & & & & & 1}$}
\newboxedcommand{\Pdeuxm}{\footnotesize$\xymatrix@C=.15cm@R=.25cm@!=0cm@!=0cm{& & 1 \\ & 1 & & 2 & & 1 \\ 2 & & & & 2 & & 1\\ & & & & & & & 2 }$}
\newboxedcommand{\Sddu}{\footnotesize$\xymatrix@C=.15cm@R=.25cm@!=0cm@!=0cm{2 \\ & 2 \\ & & 1}$}
\newboxedcommand{\Suud}{\footnotesize$\xymatrix@C=.15cm@R=.25cm@!=0cm@!=0cm{1 \\ & 1 \\ & & 2 }$}
\newboxedcommand{\Sduu}{\footnotesize$\xymatrix@C=.15cm@R=.25cm@!=0cm@!=0cm{2 \\ & 1 \\ & & 1}$}
\newboxedcommand{\Sudd}{\footnotesize$\xymatrix@C=.15cm@R=.25cm@!=0cm@!=0cm{1 \\ & 2 \\ & & 2 }$}
\newboxedcommand{\Sdu}{\footnotesize$\xymatrix@C=.15cm@R=.25cm@!=0cm@!=0cm{2 \\ & 1}$}
\newboxedcommand{\Sud}{\footnotesize$\xymatrix@C=.15cm@R=.25cm@!=0cm@!=0cm{1 \\ & 2 }$}
\newboxedcommand{\Sdd}{\footnotesize$\xymatrix@C=.15cm@R=.25cm@!=0cm@!=0cm{2 \\ & 2 }$}
\newboxedcommand{\Suu}{\footnotesize$\xymatrix@C=.15cm@R=.25cm@!=0cm@!=0cm{1 \\ & 1 }$}
\newboxedcommand{\Suudd}{\footnotesize$\xymatrix@C=.15cm@R=.25cm@!=0cm@!=0cm{& 1 \\2 & & 1 \\ & & & 2 }$}
\newboxedcommand{\Sudud}{\footnotesize$\xymatrix@C=.15cm@R=.25cm@!=0cm@!=0cm{1 \\& 2 & & 1 \\ & & 2 }$}
\newboxedcommand{\KPu}{\footnotesize$\xymatrix@C=.15cm@R=.25cm@!=0cm@!=0cm{& 2 & \\ 1 & & 1 }$}
\newboxedcommand{\KPd}{\footnotesize$\xymatrix@C=.15cm@R=.25cm@!=0cm@!=0cm{& 2 & & 2 &\\ 1 & & 1 & & 1}$}
\newboxedcommand{\KIu}{\footnotesize$\xymatrix@C=.15cm@R=.25cm@!=0cm@!=0cm{2 & & 2\\ & 1 & }$}
\newboxedcommand{\KId}{\footnotesize$\xymatrix@C=.15cm@R=.25cm@!=0cm@!=0cm{2 & & 2 & & 2\\ & 1 & & 1 & }$}
\newboxedcommand{\spR}{\footnotesize$\xymatrix@C=.15cm@R=.25cm@!=0cm@!=0cm{\reals}$}
\newboxedcommand{\spC}{\footnotesize$\xymatrix@C=.15cm@R=.25cm@!=0cm@!=0cm{\complex}$}
\newboxedcommand{\spRC}{\footnotesize$\xymatrix@C=.15cm@R=.25cm@!=0cm@!=0cm{\reals \\ \complex}$}
\newboxedcommand{\spRRC}{\footnotesize$\xymatrix@C=.15cm@R=.25cm@!=0cm@!=0cm{\reals & & \reals \\ & \complex}$}
\newcommand{\ta}[1]{\objectmargin{0cm}\xybox{(0,0)*+[F-]{#1}}}
\newcommand{\tb}[2]{\objectmargin{0cm}\xybox{(0,0)*+[F-]{\left. #1 \middle| #2 \right.}}}
\newcommand{\tc}[3]{\objectmargin{0cm}\xybox{(0,0)*+[F-]{\left. #1 \middle| #2 \middle| #3 \right.}}}
\newcommand{\cirl}[1]{\objectmargin{.0cm}\xybox{(0,0)*++[o][F-]{#1}}} 
\newcommand{\bcirl}[1]{\objectmargin{.03cm}\xybox{(0,0)*++[o][F-]{#1}}} 
\title[Lattices of torsion classes]{Lattice theory of torsion classes: Beyond {\Large$\tau$}-tilting theory}
\author[Demonet]{Laurent Demonet}
\address{L. Demonet: Graduate School of Mathematics, Nagoya University, Chikusa-ku, Nagoya, 464-8602 Japan}
\email{Laurent.Demonet@normalesup.org}
\urladdr{http://www.math.nagoya-u.ac.jp/~demonet/}
\author[Iyama]{Osamu Iyama}
\address{O. Iyama: Graduate School of Mathematical Sciences, University of Tokyo, 3-8-1
Komaba Meguro-ku Tokyo 153-8914, Japan}
\email{iyama@math.nagoya-u.ac.jp}
\urladdr{http://www.math.nagoya-u.ac.jp/~iyama/}
\author[Reading]{Nathan Reading}
\address{N. Reading:  Department of Mathematics, North Carolina State University, Raleigh, NC 27695-8205, USA}
\email{reading@math.ncsu.edu}
\urladdr{http://www4.ncsu.edu/~nreadin/}
\author[Reiten]{Idun Reiten}
\address{I. Reiten: Department of Mathematical Sciences, Norges teknisk-naturvitenskapelige universitet, 7491 Trondheim, Norway}
\author[Thomas]{Hugh Thomas}
\address{H. Thomas: D\'epartement de math\'ematiques, Universit\'e du Qu\'ebec \`a Montr\'eal,
C.P. 8888, succ. Centre-ville Montr\'eal (Qu\'ebec), H3C 3P8 Canada}
\email{hugh.ross.thomas@gmail.com}
\thanks{Laurent Demonet's work on this project was partially supported by JSPS Grant-in-Aid for Young Scientist (B) 26800008 and 17K14160. Osamu Iyama's work on this project was partially supported by JSPS Grant-in-Aid for Scientific Research (B) 24340004, (B) 16H03923, (C) 23540045 and (S) 15H05738.
Nathan Reading's work on this project was partially supported by the National Science Foundation under grant numbers DMS-1101568 and DMS-1500949.
Idun Reiten was supported by the FriNat grants 196600 and 231000 from the Research Council of Norway.
Hugh Thomas's work on this project was partially supported by an NSERC Discovery Grant and the Canada Research Chairs program.}
\subjclass[2010]{16G10, 06A07, 20F55, 05E15}
\begin{document}

\begin{abstract}
 The aim of this paper is to establish a lattice theoretical framework to study the partially ordered set $\tors A$ of torsion classes over a finite-dimensional algebra $A$.
 We show that $\tors A$ is a complete lattice which enjoys very strong properties, as \emph{bialgebraicity} and \emph{complete semidistributivity}.
 Thus its Hasse quiver carries the important part of its structure, and we introduce the brick labelling of its Hasse quiver and use it to study lattice congruences of $\tors A$. In particular, we give a representation-theoretical interpretation of the so-called \emph{forcing order}, and we prove that $\tors A$ is \emph{completely congruence uniform}. When $I$ is a two-sided ideal of $A$, $\tors (A/I)$ is a lattice quotient of $\tors A$ which is called an \emph{algebraic quotient}, and the corresponding lattice congruence is called an \emph{algebraic congruence}. The second part of this paper consists in studying algebraic congruences. We characterize the arrows of the Hasse quiver of $\tors A$ that are contracted by an algebraic congruence in terms of the brick labelling. In the third part, we study in detail the case of preprojective algebras $\Pi$, for which $\tors \Pi$ is the Weyl group endowed with the weak order. 
In particular, we give a new, more representation theoretical proof of the isomorphism between $\tors k Q$ and the Cambrian lattice when $Q$ is a Dynkin quiver. 
 We also prove that, in type $A$, the algebraic quotients of $\tors \Pi$ are exactly its Hasse-regular lattice quotients.
\end{abstract}
\maketitle

\setcounter{tocdepth}{2}
\tableofcontents

\section{Introduction}\label{intro}

The main object of study in this paper is the collection of torsion classes of a finite-dimensional algebra.  
Torsion classes are closely related to the study of derived categories and their $t$-structures.  
The recently developed $\tau$-tilting theory \cite{AIR,DIJ}, itself partly inspired by the cluster algebras of Fomin and Zelevinsky \cite{FZ1}, also provides insight into the structure of
torsion classes, but is generally forced to restrict attention to torsion classes which are functorially finite.  
By contrast, in this paper, we develop methods to understand the whole lattice of torsion classes.  
These methods also shed new light on certain lattices built from Weyl groups, such as the weak order and Cambrian lattices.

\subsection{Algebraic lattice congruences}
Let $A$ be a finite-dimensional algebra over an arbitrary field $k$ and let $\mod A$ be the category of finitely generated left $A$-modules. The main object of this paper is the complete lattice $\tors A$ of torsion classes of $\mod A$, ordered by inclusion. Recall that a \emph{torsion class} $\TT \subseteq \mod A$ is a full subcategory that is closed under extensions and factors.
Recall also that a complete lattice $L$ is a partially ordered set such that, for any subset $\S$, there is a unique largest element of $L$ smaller than all elements of $\S$, the \newword{meet} of $\S$, written $\Meet \S$, and a unique smallest element of $L$ larger than all elements of $\S$, the \newword{join} of $\S$, written $\Join \S$.

The starting point of this paper is the observation that each factor algebra of $A$ determines a \newword{complete lattice congruence} of $\tors A$
(an equivalence relation compatible with the complete lattice structure).
We describe the correspondence in terms of the lattices $\Ideals A$ and $\Com(\tors A)$. The lattice $\Ideals A$ is the set of two-sided ideals of $A$ ordered by inclusion. For a lattice $L$, the lattice $\Com L$ is the set of complete lattice congruences of $L$ ordered by refinement.

\begin{theorem}[Theorem \ref{eta op}]\label{eta op intro}
Let $A$ be a finite-dimensional $k$-algebra.
\begin{enumerate}[\rm(a)]
\item For any $I\in\Ideals A$, the map $\TT\mapsto\TT\cap\mod(A/I)$ is a surjective morphism of complete lattices from $\tors A$ to $\tors(A/I)$.
\end{enumerate}
Thus let $\Theta_I$ be the complete lattice congruence on $\tors A$ setting $\TT\equiv_{\Theta_I} \UU$ if and only if ${\TT \cap \mod (A/I)} = \UU \cap \mod (A/I)$.\begin{enumerate}[\rm(a)] \rs{1}
\item \label{eta op intro morph}
The map $\eta_A:\Ideals A\to\Com(\tors A)$ sending $I$ to $\Theta_I$ is a \emph{morphism of complete join-semilattices}: $\eta_A(\sum_{I \in \II} I) = \Join_{I \in \II} \eta_A(I)$ for any subset $\II \subseteq \Ideals A$.
\end{enumerate}
\end{theorem}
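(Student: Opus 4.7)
The plan is to prove part (a) by direct verification and part (b) by reducing to a combinatorial statement about contracted Hasse arrows that will be available from the body of the paper.

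For part (a), I begin by observing that intersection with $\mod(A/I)$ preserves the torsion-class property: since both $\TT$ and $\mod(A/I)$ are closed under extensions and factors in $\mod A$, their intersection is closed under extensions and factors inside $\mod(A/I)$. For surjectivity, given $\UU \in \tors(A/I)$, the natural lift is $\widetilde\UU := \Filt(\UU)$, the smallest torsion class in $\mod A$ containing $\UU$. The key observation is that any filtration of a module $M \in \mod(A/I)$ by $A$-submodules automatically lies in $\mod(A/I)$, because $I$ annihilates every $A$-submodule of $M$; applied to a filtration with factors in $\UU$, closure of $\UU$ under extensions inside $\mod(A/I)$ forces $M \in \UU$, so $\widetilde\UU \cap \mod(A/I) = \UU$. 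Preservation of meets is immediate, since meets in both lattices are intersections of subcategories. For joins, one uses $\Join_\alpha \TT_\alpha = \Filt\bigl(\bigcup_\alpha \TT_\alpha\bigr)$ together with the same filtration-confinement argument.

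For part (b), the inequality $\Join_{I \in \II} \eta_A(I) \leq \eta_A\bigl(\sum_{I \in \II} I\bigr)$ follows from monotonicity: since $I \subseteq \sum_{J \in \II} J$ implies $\mod\bigl(A/\sum_{J \in \II} J\bigr) \subseteq \mod(A/I)$, any two torsion classes agreeing on $\mod(A/I)$ also agree on the smaller subcategory. For the reverse inequality, the plan is to invoke two results developed later in the paper: first, that complete congruence uniformity of $\tors A$ implies every complete congruence on $\tors A$ is determined by its set of contracted Hasse arrows, with joins of congruences corresponding to unions of these arrow sets; and second, a characterization of the algebraic congruence $\Theta_I$ as contracting exactly the Hasse arrows whose brick label $B$ satisfies $IB \neq 0$, equivalently $B \notin \mod(A/I)$. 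Granted these, an arrow labelled by $B$ is contracted by $\eta_A\bigl(\sum_{I \in \II} I\bigr)$ iff $\bigl(\sum_{I \in \II} I\bigr) B \neq 0$ iff $IB \neq 0$ for some $I \in \II$ iff some $\Theta_I$ contracts the arrow, iff $\Join_{I \in \II} \Theta_I$ contracts it; hence the two congruences coincide.

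The main obstacle is the reverse inequality in part (b). Without the brick-labelling and complete congruence uniformity of $\tors A$, constructing directly, for arbitrary $\TT$ and $\UU$ with $\TT \cap \mod\bigl(A/\sum_{I \in \II} I\bigr) = \UU \cap \mod\bigl(A/\sum_{I \in \II} I\bigr)$, an explicit zigzag connecting $\TT$ to $\UU$ through individual $\Theta_I$-equivalences is delicate---especially for infinite $\II$. The structural theory of $\tors A$ reduces this obstruction to the transparent statement that $\bigl(\sum_{I \in \II} I\bigr) B = 0$ iff $IB = 0$ for every $I \in \II$.
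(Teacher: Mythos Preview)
Your argument for part (a) is correct and essentially matches the paper's, though the paper packages it via adjoint pairs of order-preserving maps rather than direct verification.

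For part (b), however, there is a genuine gap. Your key premise---that complete congruence uniformity of $\tors A$ implies every complete congruence is determined by its set of contracted Hasse arrows, with joins corresponding to unions of arrow sets---is false. The paper's own Example~\ref{exconga} (the lattice $L=\{(-\infty,x),(-\infty,x],\emptyset,\mathbb R\}$) is completely congruence uniform, bialgebraic, and completely semidistributive, yet admits a complete congruence that is not arrow-determined. What your brick-label computation actually proves is that $\Theta_{\sum\II}$ and each $\Theta_I$ contract arrows according to $(\sum\II)B\neq 0$ and $IB\neq 0$ respectively; via Theorem~\ref{mainiso} this yields $\Theta_{\sum\II}=\Join^{\Comb}_{I\in\II}\Theta_I$, the join in the lattice of \emph{arrow-determined} complete congruences. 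But since $\Comb(\tors A)$ is only a meet-sublattice of $\Com(\tors A)$, one has $\Join^{\Com}\le\Join^{\Comb}$, and your identity collapses to the easy inequality $\Join^{\Com}_{I}\Theta_I\le\Theta_{\sum\II}$ that you already had.

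The paper's proof of the reverse inequality does exactly what you hoped to avoid: it builds an explicit zigzag. Using the formula $\piup_{\Theta_I}(\TT)=\TT^I:=\{X:X/IX\in\TT\}$ (from Proposition~\ref{surj nice}\eqref{maxmin}) one checks $(\TT^I)^J=\TT^{I+J}$, and hence for $\TT\equiv_{\Theta_{I+J}}\UU$ one obtains the chain $\TT\equiv_{\Theta_I}\TT^I\equiv_{\Theta_J}\UU^I\equiv_{\Theta_I}\UU$, giving $\Theta_{I+J}\le\Theta_I\vee\Theta_J$ already in $\Con(\tors A)$. Finite-dimensionality then reduces arbitrary $\II$ to finite sums. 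This establishes the result in $\Con$, whence it follows in $\Com$ and $\Comb$ automatically (since $\Theta_{\sum\II}$ lies in all three). Your arrow-based approach would need an independent reason why $\Join^{\Com}_I\Theta_I$ is arrow-determined, and none is available without essentially reproducing this zigzag.
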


Theorem~\ref{eta op intro}\eqref{eta op intro morph} implies in particular that the map $\eta_A:I\mapsto\Theta_I$ is \emph{order-preserving}.
The map $\eta_A$ is typically not surjective.
We define an  \newword{algebraic congruence} of $\tors A$ to be a congruence of the form $\Theta_I$ for some $I\in\Ideals A$.
We write $\AlgCon A$ for the set of algebraic congruences of $\tors A$ (\emph{i.e.}, the image of $\eta_A$), partially ordered by refinement.
Similarly, an \newword{algebraic quotient} of $\tors A$ is the quotient of $\tors A$ modulo an algebraic congruence, so that $B \mapsto \tors B$ is a surjective map from factor algebras of $A$ to algebraic quotients of $\tors A$. Theorem \ref{eta op intro}(b) implies that $\AlgCon A$ is a complete lattice.

Recall that the \emph{Hasse quiver} $\Hasse P$ of a partially ordered set $P$ has vertex set $P$ and arrows $x\to y$ whenever $x>y$ and there is no $z$ such that $x>z>y$.

\begin{figure}
\begin{adjustbox}{rotate=90,center}\begin{minipage}{19.4cm}
\[
\begin{xy}
(0,0) *+{\tc{\ab}{\bc}{\cb}}="A",
(0,-20) *+{\tc{\ab}{\acb}{\cb}}="Ab",
(-12,-40) *+{\tc{\Sc}{\acb}{\cb}}="Aba",
(+12,-40) *+{\tc{\ab}{\acb}{\Sa}}="Abc",
(0,-60) *+{\tc{\Sc}{\acb}{\Sa}}="Abac",
(-40,-70) *+{\tb{\Sc}{\cb}}="Abab",
(+40,-70) *+{\tb{\ab}{\Sa}}="Abcb",
(0,-80) *+{\tb{\Sc}{\Sa}}="Abacb",
(-20,-100) *+{\ta{\Sc}}="Abacba",
(+20,-100) *+{\ta{\Sa}}="Abacbc",
(0,-120) *+{0}="Abacbac",
(-40,-30) *+{\tb{\bc}{\cb}}="Aa",
(+30,-20) *+{\tc{\ab}{\bc}{\Sb}}="Ac",
(+40,-40) *+{\tb{\ab}{\Sb}}="Acb",
(-15,-65) *+{\tb{\bc}{\Sb}}="Aac",
(0,-100) *+{\ta{\Sb}}="Aaca"
\ar"A";"Ab"|{\cirl{\Sb}}%
\ar"Ab";"Aba"|{\cirl{\ab}}%
\ar@{=>}"Ab";"Abc"|{\cirl{\cb}}%
\ar@{=>}"Aba";"Abac"|{\cirl{\cb}}%
\ar"Abc";"Abac"|{\cirl{\ab}}%
\ar@{=>}"Abac";"Abacb"|{\cirl{\acb}}%
\ar"Abacb";"Abacba"|{\cirl{\Sa}}%
\ar"Abacb";"Abacbc"|{\cirl{\Sc}}%
\ar"Abacba";"Abacbac"|{\cirl{\Sc}}%
\ar"Abacbc";"Abacbac"|{\cirl{\Sa}}%
\ar"Aba";"Abab"|{\cirl{\Sa}}%
\ar@{=>}"Abab";"Abacba"|{\cirl{\cb}}%
\ar"Abc";"Abcb"|{\cirl{\Sc}}%
\ar"Abcb";"Abacbc"|{\cirl{\ab}}%
\ar"A";"Aa"|{\cirl{\Sa}}%
\ar"Aa";"Abab"|{\cirl{\Sb}}%
\ar"A";"Ac"|{\cirl{\Sc}}%
\ar"Ac";"Acb"|{\cirl{\bc}}%
\ar"Acb";"Abcb"|{\cirl{\Sb}}%
\ar"Aa";"Aac"|(.45){\cirl{\Sc}}%
\ar@/_.8cm/"Ac";"Aac"|(.25){\cirl{\Sa}}%
\ar"Aac";"Aaca"|(.4){\cirl{\bc}}%
\ar"Aaca";"Abacbac"|{\cirl{\Sb}}%
\ar@/_.7cm/"Acb";"Aaca"|{\cirl{\Sa}}%
\end{xy}
\quad %
\begin{xy}
(0,0) *+{\tc{\ab}{\bc}{\Sc}}="A",
(0,-40) *+{\tc{\ab}{\Sa}{\Sc}}="Ab",
(-12,-70) *+{\tb{\Sa}{\Sc}}="Aba",
(-40,-90) *+{\ta{\Sc}}="Abab",
(+40,-70) *+{\tb{\ab}{\Sa}}="Abcb",
(+20,-100) *+{\ta{\Sa}}="Abacbc",
(0,-120) *+{0}="Abacbac",
(-40,-30) *+{\tb{\bc}{\Sc}}="Aa",
(+30,-20) *+{\tc{\ab}{\bc}{\Sb}}="Ac",
(+40,-40) *+{\tb{\ab}{\Sb}}="Acb",
(-20,-45) *+{\tb{\bc}{\Sb}}="Aac",
(-15,-90) *+{\ta{\Sb}}="Aaca"
\ar"A";"Ab"|{\cirl{\Sb}}%
\ar"Ab";"Aba"|{\cirl{\ab}}%
\ar"Aba";"Abacbc"|{\cirl{\Sc}}%
\ar"Abab";"Abacbac"|{\cirl{\Sc}}%
\ar"Abacbc";"Abacbac"|{\cirl{\Sa}}%
\ar"Aba";"Abab"|{\cirl{\Sa}}%
\ar"Ab";"Abcb"|{\cirl{\Sc}}%
\ar"Abcb";"Abacbc"|{\cirl{\ab}}%
\ar"A";"Aa"|{\cirl{\Sa}}%
\ar"Aa";"Abab"|{\cirl{\Sb}}%
\ar"A";"Ac"|{\cirl{\Sc}}%
\ar"Ac";"Acb"|{\cirl{\bc}}%
\ar"Acb";"Abcb"|{\cirl{\Sb}}%
\ar"Aa";"Aac"|(.45){\cirl{\Sc}}%
\ar@/_.8cm/"Ac";"Aac"|(.25){\cirl{\Sa}}%
\ar@/_.5cm/"Aac";"Aaca"|(.4){\cirl{\bc}}%
\ar"Aaca";"Abacbac"|{\cirl{\Sb}}%
\ar@/_.7cm/"Acb";"Aaca"|{\cirl{\Sa}}%
\end{xy}
\]
 \caption{Hasse quivers of the lattices $\tors \Lambda$ and $\tors \Lambda'$}\label{exlat}
 \end{minipage}
 \end{adjustbox}
\end{figure}

\begin{example} \label{basex}
 Consider the algebras
 \[\Lambda := \left. k \left(\boxinminipage{\xymatrix{1 \ar[r]^\alpha & 2 \ar@/^/[r]^\beta & 3 \ar@/^/[l]^{\beta^*}}}\right) \middle/ (\beta \alpha, \beta \beta^*, \beta^* \beta) \right. \quad \text{and} \quad \Lambda' := \Lambda / (\beta^*).\]

 We depict $\Hasse(\tors \Lambda)$ and $\Hasse(\tors \Lambda')$ in Figure~\ref{exlat}. It turns out that, in this case, each torsion class is of the form $\Fac T$ for some canonical module $T$, as explained in Section \ref{tautiltfiniteintro}, so we represent $\Fac T$ by the composition series of $T$. 
   In accordance with Theorem \ref{eta op intro}, $\tors \Lambda'$ is a
   lattice quotient of $\tors \Lambda$; the quotient map identifies torsion
   classes of $\Lambda$ connected by double arrows.
\end{example}

\subsection{Hasse quiver, brick labelling and forcing order}

We start our investigation by giving elementary lattice theoretical properties of $\tors A$. 
A complete lattice $L$ is called \emph{weakly atomic} if, whenever $x < y$ in $L$, $\Hasse[x, y]$ has at least one arrow. We recall in Section \ref{fbs} the definitions of  \emph{complete semidistributivity} and \emph{bialgebraicity}. We prove the following result.

\begin{theorem}[Theorem \ref{semidistributiveandbialg}] \label{semidistributiveandbialgintro}
 Let $A$ be a finite-dimensional algebra. The lattice $\tors A$ is bialgebraic, and therefore weakly atomic. Moreover, it is completely semidistributive. 
\end{theorem}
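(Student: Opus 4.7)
The plan is to prove the three claims in sequence: bialgebraicity first, then arrow-separation as a lattice-theoretic consequence, and finally complete semidistributivity. For bialgebraicity, I would identify the compact elements of $\tors A$ as the torsion classes of the form $\T(M) := \Filt(\Fac M)$, the smallest torsion class containing a given $M \in \mod A$. Every $\TT \in \tors A$ visibly equals $\Join_{M \in \TT} \T(M)$, so it suffices to check compactness of each $\T(M)$. If $M \in \Join_{i \in I} \TT_i = \Filt(\bigcup_i \TT_i)$, then $M$ admits a filtration whose successive quotients lie in various $\TT_{i_k}$; since $M$ is finite-dimensional, this filtration is finite (of length at most $\dim_k M$), so $\T(M) \subseteq \Join_k \TT_{i_k}$ for a finite subcollection. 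For the dual claim that $(\tors A)^{\op}$ is algebraic, I would invoke the order-reversing bijection $\TT \leftrightarrow \TT^\perp$ between $\tors A$ and the lattice $\torf A$ of torsion-free classes, and run the symmetric argument with $\Filt(\Sub N)$ as the compact elements of $\torf A$.

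Arrow-separation is then a general lattice-theoretic consequence of algebraicity. Given $\TT \subsetneq \UU$, pick a compact $c \le \UU$ with $c \not\le \TT$. A Zorn argument using compactness shows that the set $S := \{\VV \in [\TT, \UU] : c \not\le \VV\}$ has a maximal element $\VV_0$: any chain in $S$ has its supremum in $S$, because otherwise compactness of $c$ would force $c$ below some element of the chain. Setting $\VV_1 := \Meet\{\WW \in [\TT, \UU] : \VV_0 < \WW\}$, every such $\WW$ contains $c$ by maximality of $\VV_0$, so $c \le \VV_1$, forcing $\VV_1 > \VV_0$; by construction $\VV_0 \lessdot \VV_1$ is then an arrow in $\Hasse[\TT, \UU]$.

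The substantive step is complete semidistributivity, which I expect to be the main obstacle. My approach is to verify complete meet-semidistributivity by direct inspection using the descriptions $\Meet_i \TT_i = \bigcap_i \TT_i$ and $\Join_i \UU_i = \Filt(\bigcup_i \UU_i)$. Assuming $\TT \cap \UU_i = \VV$ for every $i \in I$ and given $M \in \TT \cap \Join_i \UU_i$, a filtration of $M$ with factors in the $\UU_i$'s produces a top factor $M/M_{n-1}$ that is a quotient of $M \in \TT$, hence lies in $\TT \cap \UU_{i_n} = \VV$. The delicate point, where the main obstacle lies, is pushing this observation to $M \in \VV$ itself: the naive induction on filtration length is blocked because the submodule $M_{n-1}$ need not belong to $\TT$, and torsion classes are not closed under submodules. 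Overcoming this requires playing the torsion-torsion-free decomposition with respect to $\VV$ against the $\UU_i$-filtration, presumably by extracting a quotient of $M$ lying in both $\VV^\perp$ and $\Join_i \UU_i$ and showing it must vanish. Once meet-SD is settled, complete join-SD follows by the dual argument in $\torf A$ and transport via $(\tors A)^{\op} \cong \torf A$.
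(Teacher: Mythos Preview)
Your approach to bialgebraicity and arrow-separation matches the paper's essentially exactly: compact elements are the $\T(M)$ (finite filtration from finite-dimensionality), co-compact elements are obtained via the anti-isomorphism to $\torf A$, and arrow-separation is a Zorn argument from algebraicity.

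For complete semidistributivity, the obstacle you describe is real for the direction you chose, but self-inflicted. You peel off the \emph{top} factor $M/M_{n-1}$ of the filtration, which lands in $\TT \cap \UU_{i_n} = \VV$, and then you are stuck because the submodule $M_{n-1}$ need not lie in $\TT$. The paper instead peels off the \emph{bottom} subobject: write $0 \to V \to X \to Y \to 0$ with $0 \neq V \in \VV$ for some $\VV \in \S$ and $Y$ still in $\Join \S$ (the remaining filtration passes to the quotient). Now $Y$, being a \emph{quotient} of $X \in \TT$, lies in $\TT$, and $\dim Y < \dim X$, so induction on dimension gives $Y \in \TT \cap \UU = \TT \cap \VV$. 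Then $X$ is an extension of $V \in \VV$ by $Y \in \VV$, so $X \in \VV$, hence $X \in \TT \cap \VV = \TT \cap \UU$. No torsion/torsion-free decomposition with respect to $\VV$ is needed; the whole argument is a three-line induction once you take the submodule end of the filtration rather than the quotient end.
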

Note that the properties of $\tors A$ that are given in Theorem \ref{semidistributiveandbialgintro} are rare for complete lattices. 
They can be seen as a kind of discreteness of $\tors A$, even though it is usually infinite, and even uncountable. 

We now introduce a representation theoretical counterpart to the arrows of $\Hasse (\tors A)$. Recall that $S \in \mod A$ is called a \emph{brick} if any non-zero endomorphism of $S$ is invertible, \emph{i.e.} if $\End_A(S)$ is a division ring. It turns out that for each arrow $q : \TT \to \UU$ of $\Hasse(\tors A)$, there is a unique brick $S_q \in \TT$ satisfying $\Hom_A(U, S_q) = 0$ for any $U \in \UU$ (see Theorem~\ref{difftors2}). In order to relate lattice theory to representation theory, we label $q$ by $S_q$. Labels are written on arrows of Figure~\ref{exlat}. Notice that a brick usually labels more than one arrow.

For a complete lattice $L$, recall that $x \in L$ is \emph{completely join-irreducible} if it is non-zero and cannot be written non-trivially as the join of other elements. Equivalently, there is a unique arrow pointing from $x$ in $\Hasse L$.
A first lattice-theoretical interpretation of labels is that they naturally parametrize completely join-irreducible torsion classes.
\begin{theorem}[Theorem \ref{difftors2}(c)] \label{introdt}
 There is a bijection from completely join-irredu\-cible torsion classes $\TT$ to bricks of $A$ mapping $\TT$ to the label of the unique arrow pointing from $\TT$. 
There is a dual bijection from completely meet-irreducible torsion classes to isomorphism classes of bricks of $\mod A$.
\end{theorem}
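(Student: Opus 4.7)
The plan is to produce mutually inverse maps between completely join-irreducible torsion classes and bricks, using the brick labelling from Theorem~\ref{difftors2}. First, arrow-separation of $\tors A$ (Theorem~\ref{semidistributiveandbialgintro}), together with the standard characterization of complete join-irreducibility in a bialgebraic lattice, implies: $\TT\in\tors A$ is completely join-irreducible if and only if there is a unique Hasse arrow $q_\TT:\TT\to\TT_\ast$ starting at $\TT$ (equivalently, $\TT$ has a unique lower cover). Define $\phi(\TT):=S_{q_\TT}$ by reading off the label of this unique arrow.

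For the inverse, given a brick $S$ I would set $\psi(S):=\T(S)=\Filt(\Fac S)$, the smallest torsion class containing $S$, and verify that $\T(S)$ is completely join-irreducible with outgoing Hasse arrow labelled by $S$. Let $\UU_S:=\Join\{\VV\in\tors A:S\notin\VV\}$. The central technical step is to show that $S\notin\UU_S$. Granting this, any $\VV\subsetneq\T(S)$ satisfies $S\notin\VV$ by minimality of $\T(S)$, hence $\VV\subseteq\UU_S$; so $\UU_S$ is the unique lower cover of $\T(S)$ in $\tors A$, making $\T(S)$ completely join-irreducible with unique Hasse arrow $q:\T(S)\to\UU_S$. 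To identify the label with $S$, I would check the two characterizing properties from Theorem~\ref{difftors2}: $S\in\T(S)$ is immediate, and $\Hom_A(U,S)=0$ for all $U\in\UU_S$ would be established by observing that a nonzero morphism $U\to S$ produces, via factor-closure of $\UU_S$ and the division-ring condition $\End_A(S)$, a subobject of $S$ in $\UU_S$ which iteratively forces $S\in\UU_S$, contradicting the key claim.

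The maps are then mutually inverse: $\phi\circ\psi=\id$ holds by construction, while if $\TT$ is completely join-irreducible with arrow-label $S$, the conditions $S\in\TT$ and $S\notin\TT_\ast$ (characterizing the label in Theorem~\ref{difftors2}) give $\T(S)\subseteq\TT$ and $\T(S)\not\subseteq\TT_\ast$; since $\TT_\ast$ is the unique element covered by $\TT$, this forces $\T(S)=\TT$. The dual bijection for completely meet-irreducible torsion classes follows by transporting the whole argument along the natural anti-isomorphism $\tors A\cong(\tors A^{\op})^{\op}$ arising from the torsion/torsion-free pair correspondence: this anti-isomorphism exchanges completely join- and meet-irreducibles, and bricks over $A$ correspond to bricks over $A^{\op}$ via $k$-duality.

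The main obstacle will be the technical claim that $S\notin\UU_S$ when $S$ is a brick, i.e., that the join in $\tors A$ of all torsion classes missing $S$ still misses $S$. The idea is that complete semidistributivity of $\tors A$ (Theorem~\ref{semidistributiveandbialgintro}) reduces the question to a filtration/extension analysis, and the division-ring structure of $\End_A(S)$ prevents $S$ from being assembled from factors of members of $\{\VV:S\notin\VV\}$ without $S$ already appearing as a subquotient sitting inside one of them. This interplay between the semidistributive structure of the lattice and the brick condition is precisely the representation-theoretic content that makes the bijection work.
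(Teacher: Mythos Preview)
Your central technical claim is false as stated. With $\UU_S:=\Join\{\VV\in\tors A:S\notin\VV\}$, it can happen that $S\in\UU_S$. Take $A=k(1\to 2\leftarrow 3)$ and let $S$ be the injective envelope of the simple $S_2$ (the indecomposable of dimension vector $(1,1,1)$), which is a brick. The torsion classes $\Fac P_1=\add\{P_1,S_1\}$ and $\Fac P_3=\add\{P_3,S_3\}$ both omit $S$, yet the short exact sequence $0\to P_1\to S\to S_3\to 0$ puts $S\in\Fac P_1\join\Fac P_3\subseteq\UU_S$; in fact $\UU_S\supseteq\T(S)$ here. The family $\{\VV:S\notin\VV\}$ is simply not closed under joins, and complete semidistributivity does not help: that axiom controls joins of a family sharing a \emph{common} meet with a fixed element, which is not the situation at hand. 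Your ``iterative'' argument that a nonzero map $U\to S$ with $U\in\UU_S$ forces $S\in\UU_S$ breaks for the same reason---the image is a proper submodule of $S$ lying in $\UU_S$, and the brick condition on $S$ says nothing about its proper submodules.

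The paper's route is to stay inside $\T(S)$ throughout. The correct replacement for your $\UU_S$ is $\T(S)\cap{}^\perp S$, and the key lemma (Lemma~\ref{simpleness}) is that every morphism $f:X\to S$ with $X\in\T(S)=\Filt(\Fac S)$ is zero or surjective: filter $X$ by objects of $\Fac S$, pass to a quotient so that $f$ is nonzero on the bottom piece $X_1\in\Fac S$, and observe that the composite $S^{\oplus n}\twoheadrightarrow X_1\to S$ splits because $S$ is a brick. It follows at once that any $\VV\subsetneq\T(S)$ lies in ${}^\perp S$, so $\T(S)$ is completely join-irreducible with unique predecessor $\T(S)\cap{}^\perp S$, and the label is $S$ since $S\in\T(S)$ and $\T(S)\cap{}^\perp S\subseteq{}^\perp S$. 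The brick hypothesis enters precisely through the splitting of $S^{\oplus n}\to S$, and the argument genuinely requires $X\in\Filt(\Fac S)$ rather than merely $X$ in some torsion class omitting $S$; this is exactly why a global $\UU_S$ cannot work. Your verification of $\phi\circ\psi=\id$ and $\psi\circ\phi=\id$, and the passage to the dual statement, are fine once this is repaired.
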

Theorem \ref{introdt} generalizes a result of \cite{DIJ} about functorially finite torsion classes. It has been proven independently in \cite{BCZ}. 
Consider a complete lattice $L$.
A surjective complete lattice morphism $L \twoheadrightarrow L'$ determines a \emph{complete lattice congruence} $\Theta$ on $L$ (with congruence classes given by pre-images of elements of~$L'$). Given an arrow $q$ in $\Hasse L$, we say that $\Theta$ \emph{contracts} $q$ if the head and tail of $q$ are congruent modulo $\Theta$. If $L$ is finite, a lattice congruence on $L$ is completely determined by the set of arrows of $\Hasse L$ it contracts.  For  infinite $L$, this is not generally true.  However, consider the complete meet-sublattice $\Comb L \subseteq \Com L$ consisting of \emph{arrow-determined} complete congruences, \emph{i.e.} $\Theta \in \Com L$ such that $L/\Theta$ is weakly atomic (see Definition \ref{arrowdet} and Proposition \ref{arrdeteq}). An arrow-determined complete congruence is specified (among all such congruences) by the set of arrows it contracts.  

For two arrows $q$ and $q'$ of $\Hasse L$, we say that \emph{$q$ forces $q'$} and write $q \forces q'$ if any lattice congruence contracting $q$ also contracts $q'$. Clearly, $\forces$ is a preorder (a reflexive, transitive not-necessarily-antisymmetric relation) on $\Hasse_1(L)$. We call $\forces$ the \emph{forcing preorder}, and the corresponding equivalence relation is called \emph{forcing equivalence}. If $L$ is completely semidistributive and bialgebraic, \emph{e.g.} $L = \tors A$, then, for a subset $\S \subseteq \Hasse_1(L)$, there is an arrow-determined complete congruence contracting exactly $\S$ if and only if $\S$ is closed under forcing (Theorem \ref{mainiso}).

There is a natural map from the set of completely join-irreducible elements of $L$ to the forcing equivalence classes, mapping $x$ to the forcing equivalence class of the arrow pointing from~$x$. Dually, there is a natural map from the completely meet-irreducible elements of $L$ to the forcing equivalence classes. 
An important case for lattice theory occurs when these maps are actually bijective, in which case $L$ is called \emph{completely congruence uniform} (\emph{congruence uniform} if $L$ is finite).
A main theorem of this paper states that $\tors A$ is completely congruence uniform:
\begin{theorem}[Theorem~\ref{theorem uniform2}] \label{thintroconguni}
 Let $A$ be a finite-dimensional algebra.
 \begin{enumerate}[\rm(a)]
  \item Two arrows of $\Hasse(\tors A)$ are forcing equivalent if and only if they are labelled by isomorphic bricks. Hence there is a bijection between forcing equivalence classes of arrows of $\Hasse(\tors A)$ and isomorphism classes of bricks.
  \item The lattice $\tors A$ is completely congruence uniform.
 \end{enumerate}
\end{theorem}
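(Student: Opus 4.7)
The plan is to derive part (b) as a formal consequence of part (a) via Theorem~\ref{introdt}, and to prove part (a) by showing that every covering arrow $q:\TT\to\UU$ of $\Hasse(\tors A)$ with brick label $S$ is forcing equivalent to the unique out-arrow of $\TT_S$, where $\TT_S$ denotes the completely join-irreducible torsion class corresponding to $S$ under Theorem~\ref{introdt}; concretely, $\TT_S=\T(S)$, the smallest torsion class containing $S$. Writing $(\TT_S)_*$ for the unique torsion class covered by $\TT_S$, part (a) follows, since distinct bricks yield distinct canonical arrows. For (b), the assignment $\TT\mapsto[\TT\to\TT_*]$ from completely join-irreducible torsion classes to forcing equivalence classes of arrows factors as $\TT\mapsto S_\TT\mapsto[\text{forcing class labelled }S_\TT]$, a composition of two bijections, and the dual argument using completely meet-irreducibles handles the other half of complete congruence uniformity.

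The forward direction of the key claim is immediate. Assume a complete congruence $\Theta$ contracts $q$, so that $\TT\equiv_\Theta\UU$; meeting with $\TT_S\leq\TT$ yields $\TT_S\equiv_\Theta\TT_S\meet\UU$. Since $S\in\TT_S$ but $S\notin\UU$ (from the label characterization, $\Hom_A(U,S)=0$ for all $U\in\UU$, whereas $\Hom_A(S,S)\neq 0$), the meet $\TT_S\meet\UU$ is a proper sub-torsion-class of $\TT_S$, hence lies below the unique cover $(\TT_S)_*$. Order-convexity of congruence classes then forces $\TT_S\equiv_\Theta(\TT_S)_*$, so $\Theta$ contracts the out-arrow of $\TT_S$.

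The converse direction is the main obstacle and rests on two compatibility identities: the canonical join decomposition $\TT=\UU\vee\TT_S$, and the inequality $(\TT_S)_*\leq\UU$. The first is immediate: $\UU\vee\TT_S$ strictly exceeds $\UU$ (it contains $S$) and is contained in $\TT$, hence equals $\TT$ because $\TT$ covers $\UU$. For the second, I would first identify $\UU$ inside $\TT$ as $\UU=\{X\in\TT:\Hom_A(X,S)=0\}$: the right-hand side is closed under factors and extensions (by left-exactness of $\Hom_A(-,S)$), contains $\UU$ by the label characterization, and excludes $S$, so by the covering relation it equals $\UU$. The inequality $(\TT_S)_*\leq\UU$ then reduces to showing that every $X\in(\TT_S)_*$ satisfies $\Hom_A(X,S)=0$, which I expect to obtain by identifying $(\TT_S)_*$ explicitly with $\TT_S\cap\{X:\Hom_A(X,S)=0\}$; this is the delicate step exploiting that $S$ is a brick together with the minimality of $\T(S)$ among torsion classes containing $S$. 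Granting both identities, if $\Theta$ contracts $\TT_S\to(\TT_S)_*$, then $\TT=\UU\vee\TT_S\equiv_\Theta\UU\vee(\TT_S)_*=\UU$ by $\vee$-compatibility of $\Theta$, completing the forcing equivalence and hence both (a) and (b).
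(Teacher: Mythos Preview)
Your argument establishes only one implication of (a). Showing that every arrow $q$ with label $S$ is forcing equivalent to the canonical arrow $\TT_S\to(\TT_S)_*$ indeed proves that arrows with isomorphic labels are forcing equivalent. But the sentence ``part (a) follows, since distinct bricks yield distinct canonical arrows'' does not give the converse: distinct canonical arrows are distinct \emph{as arrows}, but nothing you have written prevents $\TT_S\to(\TT_S)_*$ and $\TT_{S'}\to(\TT_{S'})_*$ from being forcing equivalent when $S\not\cong S'$. Your forward and backward directions both take as input an arrow together with \emph{its own} label, so they cannot be replayed to separate two canonical arrows with different labels. Consequently the ``composition of two bijections'' in your derivation of (b) is not justified either: the map $S\mapsto[\text{forcing class labelled }S]$ is surjective by what you have shown, but you have not proved it injective.

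This is exactly the direction where the paper does real work. The paper constructs, for each brick $S$, an explicit complete congruence $\equiv_S$ on $\tors A$ (Proposition~\ref{congbrk}): one declares $\TT\equiv_S\UU$ when every nonzero module in $(\TT\vee\UU)\cap(\TT\wedge\UU)^\perp$ admits $S$ as a subfactor. This congruence contracts the canonical arrow for $S$, and if it contracts an arrow $q'$ then $S$ is a subfactor of $S_{q'}$. Applying this with $S=S_q$ and $S=S_{q'}$ to a pair of forcing-equivalent arrows forces $S_q\cong S_{q'}$. Your approach would need an analogous separating device. The ``delicate step'' you flag, namely $(\TT_S)_*=\TT_S\cap{}^{\perp}S$, is correct and is precisely Theorem~\ref{dtcd}(a) (proved via Lemma~\ref{simpleness}(a): every nonzero map from an object of $\T(S)$ to $S$ is surjective), so that part of your backward direction can be completed; but it only contributes to the implication you already have.
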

In particular, by Theorem~\ref{thintroconguni}(a), the forcing preorder induces an order on the set $\brick A$ of bricks that we also denote by $\forces$ and call the \newword{forcing order}.

The labelling of $\Hasse(\tors A)$ by bricks sheds additional light on Theorem~\ref{eta op intro}.
Given $I\in\Ideals A$, recall from Theorem~\ref{eta op intro} that $\Theta_I$ is the complete lattice congruence on $\tors A$ corresponding to $\tors A \twoheadrightarrow \tors (A/I)$.
As $\tors (A/I)$ is weakly atomic, $\Theta_I$ is arrow-determined, so we can characterize $\Theta_I$ by the set of arrows it contracts. They are specified in the following theorem.
\begin{theorem}[Theorem~\ref{annihilate contract0}] \label{introanncont}
 Let $A$ be a finite-dimensional $k$-algebra and $I \in \Ideals A$. An arrow $q$ of $\Hasse(\tors A)$ is not contracted by $\Theta_I$ if and only if $S_q$ is annihilated by $I$, that is $S_q \in \mod (A/I)$. Moreover, the labelling of arrows that are not contracted by $\Theta_I$ is the same in $\Hasse(\tors A)$ and $\Hasse(\tors (A/I))$.

More generally, if $\UU \subseteq \TT$ are in $\tors A$, then $\TT \equiv_{\Theta_I} \UU$ if and only if there is no arrow of $\Hasse [\UU, \TT]$ whose label is in $\mod (A/I)$.  \end{theorem}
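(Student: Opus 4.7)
The plan is to prove the arrow-level equivalence first and then derive both the label-compatibility claim and the general interval statement from it. Throughout, I write $\pi:\tors A\twoheadrightarrow\tors(A/I)$ for the surjective complete lattice morphism $\TT\mapsto\TT\cap\mod(A/I)$ of Theorem~\ref{eta op intro}, so that $\TT\equiv_{\Theta_I}\UU$ is equivalent to $\pi(\TT)=\pi(\UU)$. Note that $\tors(A/I)$ is arrow-separated by Theorem~\ref{semidistributiveandbialgintro} applied to $A/I$, hence $\Theta_I$ is arrow-determined. One direction of the arrow-level statement is essentially formal: if $q:\TT\to\UU$ has $S_q\in\mod(A/I)$, then $S_q\in\pi(\TT)$ but $S_q\notin\UU$ (hence $S_q\notin\pi(\UU)$), because $\End_A(S_q)\neq 0$ while $\Hom_A(U,S_q)=0$ for every $U\in\UU$; thus $\pi(\TT)\neq\pi(\UU)$.

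The converse is the main obstacle of the proof. Suppose $q:\TT\to\UU$ is not contracted, and choose $M\in(\TT\cap\mod(A/I))\setminus\UU$. Let $t_\UU M$ denote the largest submodule of $M$ lying in $\UU$ and set $M':=M/t_\UU M$. Then $M'\neq 0$ (as $M\notin\UU$), $M'\in\TT$ (quotient-closure of $\TT$), $M'\in\UU^\perp$ (by construction of $t_\UU M$), and $M'\in\mod(A/I)$ (as a quotient of $M$). I now invoke the structural result proved earlier in the paper during the development of the brick labelling: for a covering relation $\TT\to\UU$ in $\tors A$, the wide subcategory $\TT\cap\UU^\perp$ admits $S_q$ as its unique simple object. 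Consequently the nonzero object $M'$ has a composition series in this wide subcategory whose factors are all isomorphic to $S_q$; since these factors are in particular subquotients of $M'$ in $\mod A$ and $\mod(A/I)$ is closed under subquotients, $S_q\in\mod(A/I)$.

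The general interval statement follows from the arrow-level equivalence via the arrow-determined property: by Proposition~\ref{arrdeteq}, for any arrow-determined congruence $\Theta$ on $\tors A$ and any $\UU\leq\TT$, $\TT\equiv_\Theta\UU$ if and only if every arrow of $\Hasse[\UU,\TT]$ is contracted by $\Theta$; applying this to $\Theta_I$ and combining with the arrow-level characterisation yields the desired equivalence. For the label-compatibility claim, I first verify that $\pi(\TT)$ covers $\pi(\UU)$ in $\tors(A/I)$: for any $\WW$ with $\pi(\UU)\leq\WW\leq\pi(\TT)$, pick $X\in\pi^{-1}(\WW)$ and set $X':=(X\vee\UU)\wedge\TT\in[\UU,\TT]$; then the lattice-morphism property gives $\pi(X')=(\WW\vee\pi(\UU))\wedge\pi(\TT)=\WW$, so $X'\in\{\UU,\TT\}$ forces $\WW\in\{\pi(\UU),\pi(\TT)\}$. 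Finally $S_q$ is a brick of $\mod(A/I)$ lying in the wide subcategory $\pi(\TT)\cap\pi(\UU)^\perp\subseteq\mod(A/I)$, so by uniqueness of the brick label it must coincide with $S_{\pi(q)}$.
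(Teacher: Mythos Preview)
Your proof is correct and uses the same key ingredients as the paper: the description $\TT\cap\UU^\perp=\Filt S_q$ for an arrow $q:\TT\to\UU$ (Theorem~\ref{difftors2}(b)), closure of $\mod(A/I)$ under subquotients, and uniqueness of the brick label. The only difference is organizational. The paper proves the interval statement first, directly: by Lemma~\ref{anncl} the bricks in $\overline\TT\cap\overline\UU^\perp$ are exactly the bricks in $\TT\cap\UU^\perp$ lying in $\mod(A/I)$, so Theorem~\ref{difftors2}(a) applied to $A/I$ gives $\overline\TT=\overline\UU$ if and only if no brick of $\TT\cap\UU^\perp$ is in $\mod(A/I)$; the arrow case is then the specialisation where $\brick[\UU,\TT]=\{S_q\}$. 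You instead prove the arrow case first (essentially by extracting a copy of $S_q$ as a subquotient of a module in $\overline\TT\setminus\overline\UU$), and then deduce the interval statement from the lattice-theoretic fact that $\Theta_I$ is arrow-determined. Your route leans a bit more on the general lattice theory of Section~2, while the paper's route stays entirely on the module side; both are short and the content is the same.
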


\begin{example}
 Theorem~\ref{introanncont} is illustrated in Figure~\ref{exlat} for algebras of Example~\ref{basex}. Indeed, the bricks of $\mod \Lambda$ that are not annihilated by $I = (\beta^*)$ are $\cb$ and $\acb$.
\end{example}

Given a brick $S$ of $A$, let $\ann S$ be the annihilator $\{a \in A \mid aS = 0\}$ which is a two-sided ideal of $A$.
As a corollary of Theorem~\ref{introanncont}, we get

\begin{corollary}[Corollary~\ref{Istar}] \label{Istar intro}
Consider a finite-dimensional $k$-algebra $A$ and write $I_0$ for $\bigcap_{S \in \brick A} \ann S$.
Then $\tors A$ and $\tors (A/I_0)$ are canonically isomorphic. Moreover, $I_0$ is the biggest ideal of $A$ with this property.
\end{corollary}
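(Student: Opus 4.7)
The plan is to apply Theorem~\ref{introanncont} directly to the ideal $I_0$. First I would show that $\Theta_{I_0}$ is the identity congruence on $\tors A$. By Theorem~\ref{introanncont}, an arrow $q$ of $\Hasse(\tors A)$ fails to be contracted by $\Theta_{I_0}$ if and only if $S_q$ is annihilated by $I_0$. But the very definition $I_0 = \bigcap_{S \in \brick A} \ann S$ gives $I_0 \subseteq \ann S_q$ for every brick $S_q$, so no arrow of $\Hasse(\tors A)$ is contracted by $\Theta_{I_0}$.

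Next I would invoke Theorem~\ref{semidistributiveandbialgintro}, applied to both $A$ and $A/I_0$: the lattices $\tors A$ and $\tors(A/I_0)$ are bialgebraic, hence arrow-separated. Thus $\Theta_{I_0}$ is arrow-determined, as is the identity congruence on $\tors A$. Two arrow-determined complete congruences are equal as soon as they contract the same set of arrows, and both $\Theta_{I_0}$ and the identity contract the empty set of arrows; hence they coincide. Consequently, the canonical surjective morphism $\TT \mapsto \TT \cap \mod(A/I_0)$ of Theorem~\ref{eta op intro}(a) is an isomorphism.

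For the maximality, I would take $I \in \Ideals A$ such that the canonical map $\tors A \to \tors(A/I)$ is an isomorphism, equivalently such that $\Theta_I$ is the identity congruence. Then $\Theta_I$ contracts no arrow of $\Hasse(\tors A)$, and Theorem~\ref{introanncont} yields $I \subseteq \ann S_q$ for every arrow $q$. By Theorem~\ref{introdt}, every brick $S \in \brick A$ arises as the label $S_q$ of the unique arrow emanating from the corresponding completely join-irreducible torsion class; therefore $I \subseteq \ann S$ for every $S \in \brick A$, which gives $I \subseteq I_0$.

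Once Theorems~\ref{eta op intro}, \ref{semidistributiveandbialgintro}, \ref{introdt} and~\ref{introanncont} are in hand, no real obstacle remains: the argument is a bookkeeping combination of these. The one subtle point worth emphasizing is that the maximality relies crucially on the surjectivity half of the bijection in Theorem~\ref{introdt}, namely that \emph{every} brick of $A$ is realized as the label of some arrow of $\Hasse(\tors A)$; without this, the containment $I \subseteq I_0$ would not follow from the vanishing of the labels.
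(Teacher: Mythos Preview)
Your proof is correct and follows essentially the same route as the paper. The paper packages the key step as a separate Corollary~\ref{contain refine} (that $\Theta_I\le\Theta_J$ iff $\brick(A/I)\supseteq\brick(A/J)$) and then specializes to $J=0$, but the ingredients are identical: Theorem~\ref{annihilate contract0} to read off which arrows are contracted, arrow-determinedness of algebraic congruences (Theorem~\ref{eta op}(b), equivalently arrow-separatedness of $\tors(A/I_0)$ via Theorem~\ref{semidistributiveandbialg}), and the surjectivity in Theorem~\ref{difftors2}(c) to pass from ``$I$ annihilates every label'' to ``$I$ annihilates every brick'' for the maximality.
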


\subsection{Functorially finite torsion classes and \texorpdfstring{$\tau$-tilting}{tau-tilting} theory} \label{tautiltfiniteintro}
An important tool to study $\tors A$ consists of basic support $\tau$-tilting $A$-modules introduced by Adachi--Iyama--Reiten \cite{AIR}.
A module $T \in \mod A$ is \emph{$\tau$-rigid} if $\Hom_A(T, \tau T) = 0$ where $\tau$ is the Auslander-Reiten translation.
It is called \emph{$\tau$-tilting} if it is $\tau$-rigid and has $n$ non-isomorphic indecomposable summands where $n$ is the number of simple $A$-modules; in fact, this is equivalent to the natural maximality condition for $\tau$-rigid modules.
Finally, we say that $T$ is \emph{support $\tau$-tilting} if it is a $\tau$-tilting $(A/(e))$-module for some idempotent $e \in A$.

The set $\ftors A$ of \emph{functorially finite} torsion classes of $A$ is a subposet of $\tors A$. It is proven in \cite{AIR} that there is a bijection from the set $\sttilt A$ of isomorphism classes of basic support $\tau$-tilting $A$-modules to $\ftors A$.
The bijection sends $T \in \sttilt A$ to the category $\Fac T$ consisting of modules obtained as quotients of $T^\ell$ for any $\ell \in \integers_{\ge 0}$. It endows $\sttilt A$ with the structure of a partially ordered set.

By \cite[Theorem 1.3]{DIJ}, $\Hasse(\sttilt A) \cong \Hasse(\ftors A)$ is a full subquiver of $\Hasse(\tors A)$, and, by \cite{AIR}, arrows of $\Hasse(\sttilt A)$ are of the form $T \oplus X \to T \oplus X^*$ where $X$ is indecomposable, $X^*$ is indecomposable or zero and there is an exact sequence $X \xto{u} T' \to X^* \to 0$ where $u$ is a left minimal $(\add T)$-approximation.
The process of moving forwards or backwards along an arrow of $\Hasse(\sttilt A)$ is called a \emph{mutation}.
For such an arrow $T \oplus X \to T \oplus X^*$, the label of $q: \Fac (T \oplus X) \to \Fac(T \oplus X^*)$ is
 \[S_q \cong \frac{X}{\sum_{f \in \rad_A(T \oplus X, X)} \Image f}\] (see Proposition~\ref{theorem label2}).

Recall that by \cite[Theorem 2.7]{AIR} and \cite[Theorem 1.2]{DIJ}, $\# \tors A < \infty$ if and only if $\ftors A = \tors A$ if and only if $\# \ftors A = \# \sttilt A < \infty$. In this case, $A$ is called \emph{$\tau$-tilting finite}.
Hence, we get in Theorem~\ref{eta op} the following corollary of Theorem~\ref{eta op intro}(a).

\begin{corollary}\label{factor-closed}
The class of $\tau$-tilting finite algebras is closed under taking factor algebras.
\end{corollary}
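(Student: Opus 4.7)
The plan is to translate the finiteness of $\tau$-tilting modules into the finiteness of torsion classes and then push it through the surjection of Theorem~\ref{eta op intro}(a).

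First, I would invoke the equivalence, stated just above the corollary and attributed to \cite[Theorem 2.7]{AIR} together with \cite[Theorem 1.2]{DIJ}, that $A$ is $\tau$-tilting finite if and only if $\#\tors A < \infty$. Thus the claim reduces to showing that $\#\tors(A/I) \le \#\tors A$ for every two-sided ideal $I$ of $A$.

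Next I would apply Theorem~\ref{eta op intro}(a): the assignment $\TT \mapsto \TT \cap \mod(A/I)$ is a \emph{surjective} morphism of complete lattices from $\tors A$ onto $\tors(A/I)$. Surjectivity immediately forces $\#\tors(A/I) \le \#\tors A$, so if the right-hand side is finite, so is the left-hand side. Combining with the equivalence recalled in the previous step, $A/I$ is $\tau$-tilting finite, which is the desired statement.

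There is essentially no obstacle here; the work has already been done in Theorem~\ref{eta op intro}(a), and the corollary is just the observation that surjectivity of a lattice morphism preserves the property of having finite cardinality. The only subtlety worth noting is the appeal to the characterization of $\tau$-tilting finiteness in terms of $\#\tors A$, which turns a statement about support $\tau$-tilting modules into the purely order-theoretic statement used above.
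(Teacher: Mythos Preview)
Your argument is correct and is exactly the paper's approach: the paper records this as Theorem~\ref{eta op}(d) and proves it in one line as ``an immediate consequence of (a)'', i.e., the surjectivity of $\TT\mapsto\TT\cap\mod(A/I)$ together with the characterization $\tau$-tilting finite $\Leftrightarrow \#\tors A<\infty$ from \cite{AIR,DIJ}.
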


For example, local algebras and representation-finite algebras are clearly $\tau$-tilting finite.
We refer to \cite{AAC,EJR,IZ,M} for more examples.

We suppose now that $A$ is $\tau$-tilting finite. An important ingredient for understanding $\Con(\tors A) = \Com(\tors A)$ in this case is that $\tors A$ has a property called \emph{polygonality} (Proposition~\ref{torsApole}), and therefore the forcing preorder can be easily described combinatorially (Proposition~\ref{forcing=polygonal}).
Using this ingredient, we give two algebraic characterizations of the forcing order on bricks.
In order to do so, we define a \emph{semibrick} as a set of bricks having no non-zero morphisms between distinct elements.
For a semibrick $E$, we define $\Filt E$ as the smallest full subcategory of $\mod A$ containing $E$ and closed under extensions.
Then the subcategory $\Filt E$ is wide, \emph{i.e.}, closed under extension, kernels and cokernels, \cite[Theorem 1.2]{ringelk}.
\begin{theorem}[Theorem~\ref{theorem forcing}]
 Let $A$ be a finite-dimensional algebra that is $\tau$-tilting finite. The forcing order $\forces$ on $\brick A$ is the transitive closure of the relation $\forces[f]$ defined by:
$S_1 \forces[f] S_2$ if there is a semibrick $\{S_1\} \cup E$ such that $S_2 \in \Filt(\{S_1\} \cup E) \setminus \Filt(E)$.

  The relation $\forces$ can also be defined as the transitive closure of the relation $\forces[pf]$ defined by:
$S_1 \forces[pf] S_2$ if there is a semibrick $\{S_1, S_1'\}$ such that $S_2 \in \Filt(\{S_1, S_1'\}) \setminus \{S'_1\}$.
\end{theorem}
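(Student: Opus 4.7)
The plan is to leverage the polygonality of $\tors A$. By Proposition~\ref{torsApole} and Proposition~\ref{forcing=polygonal}, the forcing preorder $\forces$ on arrows of $\Hasse(\tors A)$ is the transitive closure of the combinatorial \emph{polygonal forcing} relation, in which two arrows force each other when they occupy corresponding positions on a common polygon of $\Hasse(\tors A)$. The remaining task is to translate polygonal forcing into the language of semibricks and to match it successively with $\forces[pf]$ and $\forces[f]$.

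The first step is to identify polygons of $\tors A$ with $2$-element semibricks of $\mod A$. A polygon $[\TT_\wedge,\TT_\vee]$ is determined by the two labels $\{S_1,S_1'\}$ of the arrows descending from $\TT_\vee$; these form a semibrick, and the wide subcategory $\WW=\Filt\{S_1,S_1'\}$ ``spans'' the polygon. Combining the brick labelling of Theorem~\ref{thintroconguni} with the explicit polygon structure, the arrows of this polygon are labelled exactly by the bricks of $\WW$: each simple $S_1,S_1'$ labels one arrow on each of the two chains, while the remaining bricks label interior arrows. A direct inspection of the polygon then shows that polygonal forcing from an $S_1$-labelled arrow reaches precisely the arrows whose labels lie in $\Filt\{S_1,S_1'\}\setminus\{S_1'\}$, matching the defining condition of $\forces[pf]$. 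Hence $\forces$ is the transitive closure of $\forces[pf]$.

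For the $\forces[f]$ characterization, $\forces[pf]\subseteq\forces[f]$ is immediate (take $E=\{S_1'\}$), so the transitive closures satisfy one inclusion. For the reverse it suffices to show $\forces[f]\subseteq\forces$: given a semibrick $\{S_1\}\cup E$ and $S_2\in\Filt(\{S_1\}\cup E)\setminus\Filt E$, I work inside the wide subcategory $\WW'=\Filt(\{S_1\}\cup E)$, which is equivalent to $\mod B$ for some finite-dimensional algebra $B$ inheriting $\tau$-tilting finiteness from $A$. Applying the already-established $\forces=\mathrm{tr.cl.}\forces[pf]$ characterization to $\WW'\cong\mod B$ yields a chain of $\forces[pf]$-relations in $\WW'$ from $S_1$ to $S_2$; since every $2$-semibrick of $\WW'$ is a $2$-semibrick of $\mod A$ and $\Filt$ is computed identically in $\WW'$ and $\mod A$ (as $\WW'$ is closed under extensions), these relations lift to $\forces[pf]$-relations in $\mod A$, giving $S_1\forces S_2$.

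The main technical obstacle is the labelling analysis underlying the second paragraph: rigorously showing that the arrows of a polygon are labelled by the bricks of $\Filt\{S_1,S_1'\}$ in the asserted fashion, and that polygonal forcing from an $S_1$-arrow reaches exactly the arrows with labels in $\Filt\{S_1,S_1'\}\setminus\{S_1'\}$. In asymmetric polygons, such as the pentagon $\tors (kA_2)$ where the two chains have different lengths and the simples sit at different depths on the two sides, one must carefully track the position of each brick label and the propagation of contractions around the polygon, combining the abstract polygon structure with the brick-labelling machinery of Theorem~\ref{thintroconguni}.
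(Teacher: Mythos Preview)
Your overall architecture matches the paper's: reduce to polygonal forcing via Propositions~\ref{torsApole}(a) and~\ref{forcing=polygonal}, then analyse a single polygon to get $\forces\Rightarrow\forces[pf]$, and handle $\forces[pf]\Rightarrow\forces[f]$ trivially. The polygon analysis you sketch is exactly what the paper does, using Theorem~\ref{theorem filtorth} and Proposition~\ref{torsApole}(b) to see that in a polygon $[\UU,\TT]$ with $\simp[\UU,\TT]=\{S_1,S_1'\}$, the non-extremal arrows are labelled by bricks of $\Filt\{S_1,S_1'\}$ other than $S_1'$; you correctly flag this as the place where care is required.

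Where you diverge from the paper is in the implication $\forces[f]\Rightarrow\forces$. The paper proves this directly as Lemma~\ref{forcdep}: it realises $\Filt(\{S_1\}\cup E)$ as $\WW(N,Q)$ for some $\tau$-rigid pair, picks an arrow labelled $S_2$ inside the polytope $[\UU(N,Q),\TT(N,Q)]$, and runs an induction on its target by nesting polygons. Your route instead passes to $\WW'\cong\mod B$ and bootstraps from the $\forces[pf]$ characterisation there. This is a legitimate alternative, but as written it has a gap: the sentence ``Applying the already-established $\forces=\mathrm{tr.cl.}\forces[pf]$ characterisation to $\WW'\cong\mod B$ yields a chain \dots'' presupposes $S_1\forces_B S_2$, which you have not argued. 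Knowing that $\forces_B$ equals the transitive closure of $\forces[pf]_B$ tells you nothing about $S_2$ until you know $S_1$ actually forces $S_2$ in $\tors B$. The fix is short: in $\mod B$ the brick $S_1$ is \emph{simple}, and $S_2\notin\Filt E$ means $S_1$ is a composition factor of $S_2$; then Proposition~\ref{topbottomlatsimpleforce}(d) (whose proof is independent of the theorem at hand) gives $S_1\forces_B S_2$. With that line inserted, your reduction works, and the lifting step is fine because semibricks and $\Filt$ in the extension-closed $\WW'$ agree with those in $\mod A$. You should also note that $B=C_{N,Q}$ is $\tau$-tilting finite via Theorem~\ref{ttwideaa}(c), since $[\UU(N,Q),\TT(N,Q)]$ is a finite interval of $\tors A$.
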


\begin{example} Theorem \ref{thintroconguni}(a) implies that the set of arrows contracted in passing from the left hand side to the right hand side of Figure \ref{exlat} necessarily consists of all arrows labelled by some set of bricks, since each arrow labelled by a given brick forces all the other arrows labelled by that brick.  Further, this set must be closed under the relation $\forces$.  This is consistent with Figure \ref{exlat}, since $\smash{\cb}$ forces only $\smash{\acb}$, and $\smash{\acb}$ forces nothing.
\end{example}

We give another characterization of the forcing order on bricks under additional hypotheses on $A$.
The following theorem applies in particular to finite-dimensional hereditary algebras and preprojective algebras of Dynkin type.

\begin{theorem}[Theorem~\ref{forcing=doubleton}] \label{fordoub}
 Let $A$ be a finite-dimensional $k$-algebra that is $\tau$-tilting finite satisfying $\End_A(S) \cong k$ and $\Ext^1_A(S, S) = 0$ for all $S \in \brick A$. Then the forcing order $\forces$ on $\brick A$ is the transitive closure of the relation $\forces[d]$ defined by:
 $S_1 \forces[d] S_2$ if there exists a brick $S_1'$ such that
   \begin{itemize}\setlength{\itemindent}{10pt}
    \item[] $\dim \Ext^1_A(S_1, S_1') = 1$ and there is an exact sequence $0 \to S_1' \to S_2 \to S_1 \to 0$
    \item[or] $\dim \Ext^1_A(S_1', S_1) = 1$ and there is an exact sequence $0 \to S_1 \to S_2 \to S_1' \to 0$.
   \end{itemize}
\end{theorem}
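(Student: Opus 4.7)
The plan is to exploit Theorem~\ref{theorem forcing}, which already identifies $\forces$ with the transitive closure of $\forces[pf]$, and to refine $\forces[pf]$ to $\forces[d]$ under the extra hypotheses on $A$.

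For the inclusion ``transitive closure of $\forces[d]$ is contained in $\forces$'', it suffices to show each $\forces[d]$ step is actually a $\forces[pf]$ step. Given an extension $0 \to S_1' \to S_2 \to S_1 \to 0$ with $\dim \Ext^1_A(S_1, S_1') = 1$ (the other case being symmetric), the hypothesis $\Ext^1_A(S_1, S_1) = 0$ forces $S_1 \not\cong S_1'$; combined with $\End_A(S) = k$ and the brick property of $S_2$, one verifies that $\{S_1, S_1'\}$ is in fact a semibrick. Since $S_2 \in \Filt(\{S_1, S_1'\})$ and $S_2 \not\cong S_1'$ (as $S_1$ is a composition factor of $S_2$), we obtain $S_1 \forces[pf] S_2$, hence $S_1 \forces S_2$ by Theorem~\ref{theorem forcing}.

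For the reverse inclusion, by Theorem~\ref{theorem forcing} it suffices to decompose each $\forces[pf]$ instance into a chain of $\forces[d]$ instances. Suppose $\{S_1, S_1'\}$ is a semibrick and $S_2$ is a brick in $\WW := \Filt(\{S_1, S_1'\})$ with $S_2 \not\cong S_1'$. The subcategory $\WW$ is wide in $\mod A$ and is a length abelian category with exactly two simples $S_1, S_1'$. Since every brick of $\WW$ is a brick of $A$ and $A$ is $\tau$-tilting finite, $\WW$ has only finitely many bricks (using Theorem~\ref{introdt}). Standard arguments then identify $\WW$ with $\mod B$ for a finite-dimensional algebra $B$ whose two simples inherit $\End_B = k$ and $\Ext^1_B = 0$ with themselves from $A$ (extension-closedness of $\WW$ in $\mod A$ ensures the Ext groups computed inside $\WW$ agree with those in $\mod A$), and $B$ is itself $\tau$-tilting finite.

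The classification I need is that the quiver of $B$ has at most one arrow in each direction between its two vertices. Indeed, if there were $d \ge 2$ parallel arrows in one direction, then the generalized Kronecker algebra $kK_d$ would arise as a factor algebra of $B$; by Corollary~\ref{factor-closed} this would force $kK_d$ to be $\tau$-tilting finite, contradicting the fact that $kK_d$ has infinitely many torsion classes. A direct case analysis of the remaining possibilities (no arrows, one arrow in one direction, one arrow in each direction with suitable relations) then shows that every brick of $\WW$ distinct from $S_1$ and $S_1'$ has length exactly two and fits into a non-split extension $0 \to S_1' \to S_2 \to S_1 \to 0$ or $0 \to S_1 \to S_2 \to S_1' \to 0$, with the relevant $\Ext^1$ necessarily one-dimensional. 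Hence either $S_2 = S_1$ (trivially related to itself) or $S_1 \forces[d] S_2$, finishing the proof. The main technical obstacle lies in the case of a quiver with one arrow in each direction, where several families of relations must be analyzed and $\tau$-tilting finiteness of $B$ must be leveraged to exclude bricks of length at least three in $\WW$.
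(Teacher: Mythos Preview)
Your overall strategy matches the paper's: reduce to Theorem~\ref{theorem forcing}, then show that a single $\forces[pf]$ step and a single $\forces[d]$ step are the same relation under the $k$-stone hypothesis. The paper packages this equivalence as Proposition~\ref{prpstone}. Your ``one verifies that $\{S_1,S_1'\}$ is a semibrick'' in the direction $\forces[d]\Rightarrow\forces[pf]$ is exactly the content of that proposition's (ii)$\Rightarrow$(i); the paper carries it out by applying $\Hom_A(S_1',-)$, $\Hom_A(-,S_2)$, and $\Hom_A(S_1,-)$ to the extension and repeatedly using $\End=k$ and $\Ext^1(S,S)=0$.

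There is one genuine difference in the other direction. To bound the number of arrows in the Gabriel quiver of $B$ (equivalently, to show $\dim\Ext^1_A(S',S)\le 1$ for a semibrick $\{S,S'\}$), you argue indirectly: two parallel arrows would produce a Kronecker quotient of $B$, contradicting $\tau$-tilting finiteness (via Corollary~\ref{factor-closed}). The paper instead proves $\dim\Ext^1_A(S',S)\le 1$ directly by a short homological computation (Lemma~\ref{lemma basicstones}) using only the $k$-stone hypothesis; $\tau$-tilting finiteness is not needed at this step and enters only to realize $\Filt(S_1,S_1')$ as a module category (Lemma~\ref{lemma formfiltstones}). Your route is valid, but the paper's is more self-contained and isolates the exact role of each hypothesis.

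Finally, you overstate the difficulty of the two-cycle case. Once you know $B$ is a Nakayama algebra with two simples, any uniserial module $M$ of length $\ell\ge 3$ has a nonzero nilpotent endomorphism: the length-$(\ell-2)$ quotient of $M$ and the length-$(\ell-2)$ submodule $\rad^2 M$ have the same top and length, hence are isomorphic, yielding a ``shift by two'' map. So bricks in $\WW$ automatically have length $\le 2$, and $\tau$-tilting finiteness of $B$ is not needed to exclude longer bricks. This is what underlies the paper's Proposition~\ref{proposition shapepoly}, which simply computes $\Hasse(\sttilt B)$ for this Nakayama algebra.
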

The transitive closure of $\forces[d]$ was introduced in \cite{IRRT} as the \emph{doubleton extension order} and Theorem~\ref{fordoub} was proven for preprojective algebras of Dynkin type.

\subsection{Applications to preprojective algebras and Weyl groups}

Our remaining results concern the special case of a preprojective algebra $\Pi$ of Dynkin type
(see Section \ref{preproj-weyl} for background).
As mentioned above, $\Pi$ is $\tau$-tilting finite.
By a result of Mizuno (see Theorem \ref{Mizuno's theorem}), $\tors\Pi$ is isomorphic to the corresponding Weyl group $W$ endowed with the weak order.
The next goal is to characterize algebraic lattice congruences of $W$.

A partially ordered set $P$ is called \newword{Hasse-regular} if it has the property that each vertex of the Hasse quiver has the same degree (as an undirected graph). If $A$ is $\tau$-tilting finite, then $\tors A$ is necessarily Hasse-regular (see Corollary~\ref{Hasse-regular}).

As before, a join-irreducible element of a finite lattice $L$ is an element $j$ with exactly one arrow from~$j$ in $\Hasse L$.
 We say a lattice congruence on $L$ \emph{contracts} $j$ if it contracts the unique arrow from $j$.
A join-irreducible element $j\in L$ is called a \newword{double join-irreducible element} if the unique arrow from $j$ in $\Hasse L$ goes either to another join-irreducible element or to the bottom element of $L$.    
\begin{theorem} \label{double ji}
Let $W$ be a finite Weyl group of simply-laced type, and $\Pi$ the corresponding preprojective algebra.
Let $\Theta$ be a lattice congruence on $W \cong \tors \Pi$.
Consider the following three conditions on $\Theta$:
\begin{enumerate}[\rm(i)]
\item $\Theta$ is an algebraic congruence on $W$.
\item $W/\Theta$ is Hasse-regular.
\item There is a set $J$ of double join-irreducible elements such that $\Theta$ is the smallest congruence contracting every element of $J$.
\end{enumerate}
Then \textup{(i)} $\Rightarrow$ \textup{(ii)} $\Rightarrow$ \textup{(iii)}.
If $W$ is of type $A_n$, then all three conditions are equivalent.
\end{theorem}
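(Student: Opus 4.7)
For (i) $\Rightarrow$ (ii), the plan is direct. If $\Theta = \Theta_I$ for some $I \in \Ideals \Pi$, then Theorem~\ref{eta op intro}(a) identifies $W/\Theta$ with $\tors(\Pi/I)$. Since $\tors \Pi \cong W$ is finite, $\Pi$ is $\tau$-tilting finite, and hence so is $\Pi/I$ by Corollary~\ref{factor-closed}. Corollary~\ref{Hasse-regular} then implies that $\tors(\Pi/I) \cong W/\Theta$ is Hasse-regular.

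For (ii) $\Rightarrow$ (iii), I would exploit the complete congruence uniformity of $\tors \Pi$ (Theorem~\ref{thintroconguni}(b)): on the finite lattice $W$, every congruence $\Theta$ is determined by its set $J(\Theta)$ of contracted join-irreducibles, and $J(\Theta)$ is downward-closed in the forcing preorder. By Theorem~\ref{introdt} together with Theorem~\ref{thintroconguni}(a), the forcing equivalence classes of join-irreducibles are singletons (since forcing-equivalent arrows share a brick label, and each brick is the label of a unique join-irreducible). Consequently $\Theta$ is the smallest congruence contracting its forcing-maximal contracted join-irreducibles, and it suffices to show that every forcing-maximal element of $J(\Theta)$ is a double join-irreducible. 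Arguing by contradiction, suppose $j \in J(\Theta)$ is forcing-maximal but not double; then the unique element $j_*$ covered by $j$ is neither $\hat 0$ nor join-irreducible, and so covers at least two elements. The plan is to examine the Hasse quiver of $W/\Theta$ locally around the class $[j] = [j_*]$: the two or more non-contracted descents from $j_*$ should create arrows out of $[j]$ in the quotient that cannot be matched by the single descent of $j$, forcing the total Hasse-degree of $[j]$ to exceed that of a comparable class and contradicting Hasse-regularity. Turning this intuitive count into a rigorous argument will be the main obstacle; it should be carried out using polygonality of $\tors \Pi$ (Proposition~\ref{torsApole}) to control the correspondence between arrows in $\Hasse W$ and arrows in $\Hasse(W/\Theta)$, together with the fact that each $w \in W$ has exactly $n$ Hasse-neighbours.

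For (iii) $\Rightarrow$ (i) in type $A_n$, the plan is to produce an ideal of $\Pi$ from the data of $J$. Let $\CC \subseteq \brick \Pi$ be the forcing-downward closure of the bricks labelling the arrows out of the join-irreducibles in $J$; by Theorem~\ref{thintroconguni}(a), $\CC$ is then precisely the set of bricks labelling arrows of $\Hasse W$ contracted by $\Theta$. By Theorem~\ref{introanncont}, the algebraic congruence $\Theta_I$ attached to $I \in \Ideals \Pi$ contracts exactly those arrows whose label $S$ satisfies $\ann S \supseteq I$. We therefore aim to exhibit $I \in \Ideals \Pi$ with $\{S \in \brick \Pi : \ann S \supseteq I\} = \CC$; the natural candidate is $I := \bigcap_{S \in \brick \Pi \setminus \CC} \ann S$, which by construction annihilates every brick in $\CC$. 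The delicate step, which I expect to be the hardest part of the entire proof, is to verify that $I$ does not annihilate any brick outside $\CC$. This will rely on the explicit classification of bricks of $\Pi$ in type $A_n$ (which can be parametrized by intervals in the type-$A$ root poset) together with a combinatorial characterization of double join-irreducibles in the weak order on $S_{n+1}$, aligned so that the intersection of annihilators cuts out precisely $\CC$.
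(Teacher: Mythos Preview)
Your argument for (i) $\Rightarrow$ (ii) is essentially the paper's.

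For (ii) $\Rightarrow$ (iii), your setup (reduce to showing forcing-maximal contracted join-irreducibles are double) matches the paper's Theorem~\ref{two}, but the proof strategy diverges. You propose a purely combinatorial Hasse-degree count around the class $[j]=[j_*]$ using polygonality. The paper instead uses the Coxeter fan: via Proposition~\ref{hasse-simplicial}, Hasse-regularity of $W/\Theta$ is equivalent to simpliciality of the quotient fan $\FF_\Theta$. If $j_*$ has two descents $s_i,s_k$ (both necessarily adjacent to $s_j$ in the Dynkin diagram), then $j\to j_*$ is a side arrow in two hexagons; the four non-contracted arrows $\beta_1,\dots,\beta_4$ at $j$ and $j_*$ in these hexagons give four facet normals of the cone for $[j]$, and since $\beta$ lies in $\spanv(\beta_1,\beta_2)\cap\spanv(\beta_3,\beta_4)$, these four normals are linearly dependent. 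That is a one-line linear-algebra obstruction to simpliciality. Your degree-count idea is the same conclusion phrased combinatorially, but actually carrying it out (showing no two of the $2(n-1)$ external facets of $jD\cup j_*D$ merge, and that enough survive as Hasse-arrows in the quotient) is delicate; the paper's geometric route bypasses this entirely.

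For (iii) $\Rightarrow$ (i) in type $A$, you have misread Theorem~\ref{introanncont}: an arrow labelled $S$ is \emph{not} contracted by $\Theta_I$ precisely when $I\subseteq\ann S$, not the other way around. This propagates: your candidate $I=\bigcap_{S\notin\CC}\ann S$ by construction annihilates every brick \emph{outside} $\CC$ (not in $\CC$), and the delicate step is to show it fails to annihilate every brick \emph{in} $\CC$. With the sign fixed, your approach is viable, but the paper takes a more direct route: working in $\overline\Pi=\Pi/I_\cyc$, it associates to each double join-irreducible $\sigma\in J$ its non-revisiting path $u_\sigma$ and sets $I=(u_\sigma)_{\sigma\in J}$. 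Then $u_\sigma S\neq 0$ forces the uniserial $X_{u_\sigma}$ to be a subfactor of $S$, which by the type-$A$ identity $\forces\;=\;\text{subfactor}$ (Theorem~\ref{brmodpi}(b)) gives exactly $S\in\CC$. This avoids intersecting annihilators and the associated verification.
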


It would be interesting to understand the algebraic quotients of $W$ for any Dynkin type. Unfortunately, (iii) $\Rightarrow$ (ii) and (iii) $\Rightarrow$ (i) are not true in type $D$ as shown in Example \ref{cexd4}.

The equivalence of (ii) and (iii) in type $A_n$ was also proved independently in \cite[Theorem~26]{HM}, which also characterizes double join-irreducible elements in terms of the noncrossing arc diagrams introduced in~\cite{arcs}.


In the following example, we show that in full generality algebraic congruences do not depend only on the lattice structure of $\tors A$:
\begin{example} \label{cexiiii}
 We consider the $k$-algebras
 \[\Lambda_1 := k \left(\xymatrix{ 1 \ar@(ul,dl)[]_u \ar[r]^x & 2 } \right) / (u^2) \quad \text{and} \quad \Lambda_2 := k \left(\xymatrix{ 1  \ar[r]^x & 2 \ar@(dr,ur)[]_v} \right) / (v^2).\]
 We also consider the $\reals$-algebra $\Lambda_3$ of type $B_2$, constructed as the tensor algebra of the species
 $\smash{\xymatrix{ \reals \ar@<-1pt>[r]^{\complex} & \complex }}$.
 The labelled Hasse quivers of their support $\tau$-tilting modules are depicted in Figure \ref{cexf}. It is an easy application of Theorem \ref{introanncont}, that an algebraic congruence on $\Lambda_1$ that contracts $q_1$ has to contract $q_2$ while the converse is not true. In the same way, an algebraic congruence on $\Lambda_2$ that contracts $q_2$ has to contract $q_1$ while the converse is not true. Finally, for $\Lambda_3$, an algebraic congruence contracts $q_1$ if and only if it contracts $q_2$. So the algebraic congruences of these three isomorphic lattices are not the same. Moreover, it also shows that (iii) in Theorem \ref{double ji} does not imply (i) in general and in fact that no such
combinatorial criterion can be equivalent to (i) in full generality.
 \begin{figure}
\makebox[\textwidth]{%
 $\boxinminipage{\xymatrix@C=1.5cm@R=2.5cm@!=0cm{
  & \tb{\Suudd}{\Sb} \ar[ddl]|-{\cirl{\Sa}} \ar[dr]|-{\cirl{\Sb}} \\
  & & \tb{\Suudd}{\Suud} \ar[d]|-{\cirl{\Sud}}^-{\quad q_1} \\
  \ta{\Sb} \ar[ddr]|-{\cirl{\Sb}} & & \tb{\Suu}{\Suud} \ar[d]|-{\cirl{\Suud}}^-{\quad\; q_2} \\
  & & \ta{\Suu} \ar[dl]|-{\cirl{\Sa}} \\
 & 0
 }} \quad \quad \boxinminipage{\xymatrix@C=1.5cm@R=2.5cm@!=0cm{
  & \tb{\Sudd}{\Sdd} \ar[ddl]|-{\cirl{\Sa}} \ar[dr]|-{\cirl{\Sb}} \\
  & & \tb{\Sudd}{\Sudud} \ar[d]|-{\cirl{\Sudd}}^-{\quad\; q_1} \\
  \ta{\Sdd} \ar[ddr]|-{\cirl{\Sb}} & & \tb{\Sa}{\Sudud} \ar[d]|-{\cirl{\Sud}}^-{\quad q_2} \\
  & & \ta{\Sa} \ar[dl]|-{\cirl{\Sa}} \\
 & 0
 }}
 \quad \quad \boxinminipage{\xymatrix@C=1.5cm@R=2.5cm@!=0cm{
  & \tb{\spRC}{\spC} \ar[ddl]|-{\cirl{\spR}} \ar[dr]|-{\cirl{\spC}} \\
  & & \tb{\spRC}{\spRRC} \ar[d]|-{\cirl{\spRC}}^-{\quad q_1} \\
  \ta{\spC} \ar[ddr]|-{\cirl{\spC}} & & \tb{\spR}{\spRRC} \ar[d]|-{\cirl{\spRRC}}^-{\quad\; q_2} \\
  & & \ta{\spR} \ar[dl]|-{\cirl{\spR}} \\
 & 0
 }}
 $
}
 \caption{$\Hasse(\sttilt \Lambda_1)$, $\Hasse(\sttilt \Lambda_2)$ and $\Hasse(\sttilt \Lambda_3)$} \label{cexf}
 \end{figure}
\end{example}

We now give a more explicit description of algebraic lattice quotients of $W$ in type~$A$.
Write $\Pi_{A_n}$ for the preprojective algebra of type $A_n$, and $W_{A_n}$ for the corresponding Weyl group, isomorphic to the symmetric group $\sym_{n+1}$.

We denote by $\UUU$ the set of the following objects, which we can naturally identify:
\begin{itemize}
\item Double join irreducible elements in $W_{A_n}$.
\item \emph{Non-revisiting paths}.   (These are paths in the quiver of $\Pi_{A_n}$ which visit each vertex at most once, including the trivial paths $e_i$.)
\item \emph{Uniserial} $\Pi_{A_n}$-modules. (These are $\Pi_{A_n}$-modules which have unique composition series.)
\end{itemize}
Then $\UUU$ forms a partially ordered set, setting $w \le w'$ if $w$ is a subpath of $w'$.
We denote by $\Ideals \UUU$ the set of order ideals of $\UUU$, which consists of subsets $\S \subset \UUU$ such that if $w \in \S$ and $w \le w'$ then $w' \in \S$.

\begin{theorem}\label{AlgQuot A}
 Let us consider the two-sided ideal $I_\cyc$ of $\Pi_{A_n}$ generated by all 2-cycles and $\overline{\Pi}_{A_n} := \Pi_{A_n} / I_\cyc$.  Then, writing $\eta$ for $\eta_{\overline{\Pi}_{A_n}}$,  the following hold.
 \begin{enumerate}[\rm(a)]
  \item The ideal $I_0$ defined in Corollary~\ref{Istar intro} coincides with $I_\cyc$.
  \item We have lattice isomorphisms
\begin{equation*}
\Ideals \UUU \xrightarrow{\sim}
\Ideals\overline{\Pi}_{A_n}
\xrightarrow{\sim}\AlgCon \Pi_{A_n}
\end{equation*}
given by $\S\mapsto \spanv_k \S$ and $I\mapsto\eta(I)$.
 \item If $I, J \in \Ideals \Pi_{A_n}$, we have
  \[\eta(I) = \eta(J) \Leftrightarrow I + I_\cyc = J + I_\cyc \Leftrightarrow I \cap \UUU = J \cap \UUU.\]
\end{enumerate}
\end{theorem}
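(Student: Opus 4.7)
The approach is to first establish a monomial description of $\overline{\Pi}_{A_n}$, use it to compute brick annihilators, and then assemble parts (a), (b), and (c).

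The key structural fact is that $\overline{\Pi}_{A_n}$ is a monomial algebra with $k$-basis $\UUU$: the product in $\overline{\Pi}_{A_n}$ of two non-revisiting paths is either another element of $\UUU$ (when the concatenation is itself non-revisiting) or zero (when a ``U-turn'' appears, producing a 2-cycle that vanishes modulo $I_\cyc$). Spanning is immediate, as every path in the doubled quiver is either monotone or contains a 2-cycle. Linear independence follows from the standard dimension formula $\dim \Pi_{A_n} = \binom{n+2}{3}$ together with the count $\dim I_\cyc = \binom{n}{3}$, giving $\dim \overline{\Pi}_{A_n} = n^2 = |\UUU|$. As a consequence, every ideal $J$ of $\overline{\Pi}_{A_n}$ equals $\spanv_k(J \cap \UUU)$ with $J \cap \UUU$ closed under super-paths, which yields the first isomorphism in (b): $\S \mapsto \spanv_k \S$ is a lattice isomorphism $\Ideals \UUU \to \Ideals \overline{\Pi}_{A_n}$.

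For (a), the inclusion $I_\cyc \subseteq I_0$ is immediate since each brick of $\Pi_{A_n}$ is a uniserial $\overline{\Pi}_{A_n}$-module. For the reverse inclusion, take $x \in \Pi_{A_n}$ with $\bar x = \sum_{u \in \UUU} c_u u \neq 0$ in $\overline{\Pi}_{A_n}$ and pick any $u_0$ with $c_{u_0} \neq 0$. Acting on the top $v_0$ of $M_{u_0}$, only prefixes of $u_0$ contribute, and the coefficient of the bottom basis vector $v_{|u_0|}$ is exactly $c_{u_0} \neq 0$. So $\bar x \cdot v_0 \neq 0$ and $x \notin \ann M_{u_0}$, hence $x \notin I_0$.

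For the second isomorphism in (b), part (a) and Corollary \ref{Istar intro} give $\tors \Pi_{A_n} \cong \tors \overline{\Pi}_{A_n}$, so $\AlgCon \Pi_{A_n} = \AlgCon \overline{\Pi}_{A_n}$, and $\eta = \eta_{\overline{\Pi}_{A_n}}$ is a surjective complete join-morphism by Theorem \ref{eta op intro}. For injectivity, if $\eta(J) = \eta(J')$ for $J, J' \in \Ideals \overline{\Pi}_{A_n}$, then by Theorem \ref{introanncont} $J$ and $J'$ annihilate the same bricks. The monomial computation yields $\ann M_w = \spanv_k\{u \in \UUU : u \not\le w\}$, and super-path-closedness gives $J \subseteq \ann M_w \Leftrightarrow w \notin J \cap \UUU$, so the bricks annihilated by $J$ determine $J$. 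Passing to the images $\bar I, \bar J \in \Ideals \overline{\Pi}_{A_n}$ of arbitrary $I, J \in \Ideals \Pi_{A_n}$, this yields the first equivalence in (c).

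The second equivalence in (c), $I + I_\cyc = J + I_\cyc \Leftrightarrow I \cap \UUU = J \cap \UUU$, is the main technical obstacle. It rests on the stronger claim that $\Pi_{A_n}$ itself admits a $k$-basis $\UUU \sqcup \VV$ with $\VV \subseteq I_\cyc$, and every ideal $I$ of $\Pi_{A_n}$ equals $\spanv_k(I \cap (\UUU \sqcup \VV))$. Granted this, $I + I_\cyc = \spanv_k(I \cap \UUU) + I_\cyc$ depends only on $I \cap \UUU$, and both implications follow. The basis decomposition is established by examining each corner $e_j I e_i$: the off-diagonal corners ($i \neq j$) are one-dimensional with generator in $\UUU$, so are automatically spanned by basis elements; the diagonal corners $e_j \Pi_{A_n} e_j$ are commutative local Nakayama algebras of the form $k[c]/(c^{N_j})$, with $c$ a 2-cycle at $j$, whose ideals are principal and hence spanned by powers of $c$ together with $e_j$. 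This case analysis is specific to type $A$ geometry rather than following from the combinatorics of $\UUU$ alone, and handling the diagonal corners requires extracting the cycle component from elements of $I$ via iterated multiplication inside the ideal.
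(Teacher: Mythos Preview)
Your overall strategy tracks the paper's, but there are two concrete errors.

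\textbf{Part (a), inclusion $I_\cyc \subseteq I_0$.} You write that this is immediate ``since each brick of $\Pi_{A_n}$ is a uniserial $\overline{\Pi}_{A_n}$-module.'' Bricks of $\Pi_{A_n}$ are the string modules, not only the uniserial ones (e.g.\ the module with top $1\oplus 3$ and socle $2$ in $A_3$). More to the point, you have not justified why $I_\cyc$ annihilates every brick; this is exactly the content of the inclusion you are trying to prove, so the argument is circular. The paper's fix is short: the element $\omega=\sum_i x_i y_{i+1}$ is central in $\Pi_{A_n}$ and generates $I_\cyc$, so $I_\cyc\subseteq \Pi\cdot\rad Z(\Pi)$, and then Corollary~\ref{EJRcoro} gives $I_\cyc\subseteq I_0$. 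Your argument for the reverse inclusion $I_0\subseteq I_\cyc$ via uniserial modules is fine and matches the paper.

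\textbf{Part (c), the second equivalence.} Your claim that ``the off-diagonal corners ($i\neq j$) are one-dimensional'' is false for $n\ge 4$: one has $\dim e_i\Pi_{A_n}e_j=\min(i,j,n{+}1{-}i,n{+}1{-}j)$, so for instance $\dim e_2\Pi_{A_4}e_3=2$. Hence your case analysis collapses. You also do not need the strong claim that every ideal is spanned by elements of a fixed basis $\UUU\sqcup\VV$. The paper's Lemma~\ref{ideals of Pi} gives exactly what is required, with a cleaner corner-by-corner argument: writing $p\in\UUU$ for the shortest path from $j$ to $i$, one has $e_i\Pi e_j=p\cdot e_j\Pi e_j$, and since $e_j\Pi e_j\cong k[c]/(c^{N_j})$ is local with maximal ideal $e_jI_\cyc e_j$, either $e_iIe_j=e_i\Pi e_j$ (so it contains $p$) or $e_iIe_j\subseteq e_iI_\cyc e_j$. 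This yields $I=\spanv_k(I\cap\UUU)\oplus(I\cap I_\cyc)$, hence $I+I_\cyc=\spanv_k(I\cap\UUU)\oplus I_\cyc$, and both equivalences in (c) follow immediately. Your diagonal-corner observation is correct and is essentially the same as the paper's; it is only the off-diagonal step that needs to be replaced.
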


Based on Theorem~\ref{AlgQuot A} and some general combinatorial results found in~\cite{regions2}, we give an explicit combinatorial description of arbitrary algebraic congruences and quotients in type $A$.
(See Theorems~\ref{type A contract} and~\ref{type A bottoms}.)

To conclude the paper, we apply our theory to preprojective algebras to obtain a new representation-theoretical
approach to some results about \emph{Cambrian lattices}.
We consider a preprojective algebra $\Pi$ of Dynkin type and the corresponding Weyl group $W$ endowed with the weak order.
We continue to identify the lattice $W$ with the lattice $\tors\Pi$ via Mizuno's isomorphism as mentioned above.
To each Coxeter element $c$, or equivalently to each orientation $Q_c$ of the Dynkin diagram, corresponds the so-called \emph{Cambrian congruence} $\Theta_c$ on $W$ (see Section~\ref{camblat}).
On the other hand, we can consider the natural surjective lattice morphism $W \cong \tors \Pi \twoheadrightarrow \tors kQ_c$.
Our first result about Cambrian lattices is the following one:

 \begin{theorem}[Theorem~\ref{kQ camb}]\label{kQ camb intro}
The Cambrian congruence $\Theta_c$ induces the surjective lattice morphism $\tors \Pi \twoheadrightarrow \tors kQ_c$.
 In particular, $\tors k Q_c$ is identified with the \emph{Cambrian lattice} $W/\Theta_c$.
 \end{theorem}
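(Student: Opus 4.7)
The plan is to identify the algebraic congruence $\Theta_I$ on $\tors\Pi$ arising from the natural algebra surjection $\pi\colon\Pi\twoheadrightarrow kQ_c$ with the Cambrian congruence $\Theta_c$ on $W$, under Mizuno's isomorphism $W\cong\tors\Pi$. Write $I=\ker\pi$ for the ideal of $\Pi$ generated by the arrows of the doubled Dynkin quiver going opposite to the orientation of $Q_c$. By Theorem~\ref{eta op intro}(a), the map $\TT\mapsto\TT\cap\mod kQ_c$ is a surjective complete lattice morphism $\tors\Pi\twoheadrightarrow\tors kQ_c$ and $\Theta_I$ is the associated algebraic congruence, so the ``in particular'' identification $\tors kQ_c\cong W/\Theta_c$ will follow at once from $\Theta_I=\Theta_c$.

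To prove this equality I would first reduce to a comparison of contracted arrows. By Theorem~\ref{semidistributiveandbialgintro}, $\tors\Pi$ is bialgebraic and hence arrow-separated, so every complete congruence on $\tors\Pi$ is arrow-determined, and by Theorem~\ref{thintroconguni}(a)--(b) is in fact determined by the isomorphism classes of brick labels of the arrows it contracts. Applying Theorem~\ref{introanncont} to $\Theta_I$ identifies its contracted arrows as exactly those labelled by bricks of $\Pi$ that do \emph{not} lie in $\mod kQ_c$, and by the bijection of Theorem~\ref{introdt} these are precisely the unique Hasse arrows descending from the join-irreducibles $j\in W\cong\tors\Pi$ whose associated brick $S_j$ fails to be a $kQ_c$-module. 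It therefore suffices to verify that $\Theta_c$ contracts exactly the same join-irreducibles.

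The main work is this matching. The standard combinatorial description of the Cambrian congruence identifies its contracted join-irreducibles as those which fail to be $c$-sortable. I would combine the brick-label formula from Subsection~\ref{tautiltfiniteintro} with Mizuno's description of $\TT_w$ in terms of reduced expressions of $w$, to compute the brick $S_j$ explicitly from the reflection data of a join-irreducible $j$, and then check that $S_j\in\mod kQ_c$ iff $j$ is $c$-sortable. An alternative, more intrinsic route leverages Theorem~\ref{fordoub}: the forcing order on $\brick\Pi$ is generated by the doubleton extension relation $\forces[d]$, and an extension of a semibrick pair of $kQ_c$-modules is again a $kQ_c$-module, so the subset $\brick\Pi\cap\mod kQ_c$ is forcing-saturated; together with the cardinality match $|\brick\Pi\cap\mod kQ_c|=|\brick kQ_c|=|\Phi^+|$, this pins down the contracted-brick set uniquely and identifies $\Theta_I$ with $\Theta_c$.

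The principal obstacle is this final matching --- bridging the combinatorial notion of $c$-sortability with the representation-theoretic condition of lying in $\mod kQ_c$. The cleanest path seems to be a local check at each Hasse arrow via mutation along Mizuno's isomorphism, so that the correspondence between non-$c$-sortable join-irreducibles and bricks outside $\mod kQ_c$ is verified one arrow at a time, after which $\Theta_I=\Theta_c$ follows from the arrow-determined criterion established in the reduction step.
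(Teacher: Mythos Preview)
Your reduction is exactly right and matches the paper: both arguments identify $\Theta_c$ with $\Theta_I$ by comparing the sets of brick labels contracted, and both check $\Theta_c\le\Theta_I$ by observing that the length-two generators $X_{j,i}$ (for $j\to i$ an arrow of $\overline Q$ not in $Q_c$) lie outside $\mod kQ_c$. The substantive difference is in the other direction, $\Theta_I\le\Theta_c$, i.e.\ proving that \emph{every} brick $S\notin\mod kQ_c$ is forced by one of these generators.

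Your Option~A (match $S_j\in\mod kQ_c$ with $c$-sortability of $j$) is essentially the Ingalls--Thomas argument the paper is explicitly trying to bypass; it is valid but not new, and it imports precisely the combinatorics of sortable elements that Theorem~\ref{kQ camb intro} is advertised as avoiding.

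Your Option~B has a genuine gap. The observation ``an extension of a semibrick pair of $kQ_c$-modules is again a $kQ_c$-module'' is correct, but it does not translate into ``$\brick\Pi\cap\mod kQ_c$ is forcing-saturated'' in any sense that helps: in a doubleton relation $S_1\forces[d]S_2$ the auxiliary brick $S_1'$ is unconstrained, so knowing $S_1\in\mod kQ_c$ says nothing about $S_2$. What you actually need is the reverse direction: given $S\notin\mod kQ_c$, produce a smaller brick still outside $\mod kQ_c$ that forces it. Your cardinality match does not close this gap either: it only tells you the size of the $\Theta_I$-non-contracted set, whereas you would need to know independently that $\Theta_c$ has exactly $|\Phi^+|$ non-contracted join-irreducibles, which is precisely the sortable-element count you are trying to avoid.

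The paper handles $\Theta_I\le\Theta_c$ by a direct representation-theoretic induction on $\dim S$: it proves (Lemma~\ref{prepcamb2}, relying on Lemma~\ref{prepcamb1} and Ringel's description of $\mod\Pi$ as $(\mod kQ)(1,\tau)$) that any brick $S\notin\mod kQ_c$ with $\dim S\ge3$ sits in a short exact sequence $0\to S_1\to S\to S_2\to 0$ with $\{S_1,S_2\}$ a semibrick of $\Pi$ and at least one of $S_1,S_2$ again outside $\mod kQ_c$. Since both $S_1$ and $S_2$ force $S$ by Theorem~\ref{theorem forcing}, the induction hypothesis then gives that $S$ is contracted by $\Theta_c$. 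This ``splitting lemma'' is the missing ingredient in your sketch, and it is where the real work lies.
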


The identification of $\tors k Q_c$ with $W/\Theta_c$ in Theorem~\ref{kQ camb intro} was proved in \cite{IngTho} using combinatorial methods.
Our proof uses mostly representation theory, bypassing in particular the \emph{sortable elements} \cite{sort_camb} used in \cite{IngTho}.
We also give a new representation-theoretical argument for the following result, proven using sortable elements in~\cite{sort_camb}.
 \begin{theorem}[Theorem~\ref{camb sublat}] \label{cambsublatintro}
  The subset $\smash{\pidown^{\Theta_c} W}$ of $W$ consisting of smallest elements of each $\Theta_c$-equivalence class is a sublattice of $W$, canonically isomorphic to the Cambrian lattice $W/\Theta_c$.
 \end{theorem}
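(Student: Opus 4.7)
The plan is to realise $\pidown^{\Theta_c} W$ as the image of a section of the complete lattice morphism $\pi\colon\tors\Pi\twoheadrightarrow\tors(kQ_c)$ from Theorem~\ref{kQ camb intro}, and to show that this section is itself a morphism of complete lattices. Since $\pi(\TT)=\TT\cap\mod(kQ_c)$ preserves all meets, it has a left adjoint $s\colon\tors(kQ_c)\to\tors\Pi$ characterised by $s(\UU)\subseteq\TT$ if and only if $\UU\subseteq\pi(\TT)$. Concretely, $s(\UU)$ is the smallest torsion class of $\Pi$ containing $\UU$, viewed as a full subcategory of $\mod\Pi$ along the surjection $\Pi\twoheadrightarrow kQ_c$ with kernel $I_c$. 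As a left adjoint of a complete lattice morphism, $s$ automatically preserves arbitrary joins.

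Next I would verify $\pi\circ s=\id_{\tors(kQ_c)}$; the only nontrivial inclusion is that every $M\in s(\UU)\cap\mod(kQ_c)$ lies in $\UU$. This follows by induction on a presentation of $M$ as an iterated $\Pi$-extension and $\Pi$-quotient of modules in $\UU$: any intermediate $\Pi$-module $N$ surjects canonically onto $N/I_c N\in\mod(kQ_c)$, and the $I_c$-annihilation of $M$ lets one replace each $\Pi$-step by a $kQ_c$-step, forcing $M\in\UU$. Consequently $s$ is injective with image exactly $\pidown^{\Theta_c} W$, and provides a bijection $\tors(kQ_c)\to\pidown^{\Theta_c} W$ inverse to the restriction of $\pi$. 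Combined with join-preservation, this already yields closure of $\pidown^{\Theta_c} W$ under joins in $\tors\Pi$.

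The main obstacle is to show that $s$ also preserves meets, equivalently that $\pidown^{\Theta_c} W$ is closed under meets in $\tors\Pi$. The inclusion $s(\UU_1\cap\UU_2)\subseteq s(\UU_1)\cap s(\UU_2)$ is formal. For the reverse, I would argue via the brick labelling: by Theorem~\ref{introanncont}, the Hasse arrows of $\tors\Pi$ contracted by~$\Theta_c$ are precisely those whose brick label lies outside $\mod(kQ_c)$, so $\TT$ belongs to $\pidown^{\Theta_c} W$ exactly when every downward Hasse arrow from $\TT$ has a $kQ_c$-brick as label. Assume for contradiction that $s(\UU_1)\cap s(\UU_2)$ admits a contracted downward arrow with brick label $S\notin\mod(kQ_c)$. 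Since $\tors\Pi$ is completely semidistributive and bialgebraic (Theorem~\ref{semidistributiveandbialgintro}) and completely congruence uniform (Theorem~\ref{thintroconguni}), one can link $S$, via the canonical join representation of $s(\UU_1)\cap s(\UU_2)$, to the brick label of some downward Hasse arrow from $s(\UU_1)$ or $s(\UU_2)$; each such label is a $kQ_c$-brick since $s(\UU_i)\in\pidown^{\Theta_c} W$, contradicting $S\notin\mod(kQ_c)$.

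Combining these steps, $s$ is an isomorphism of complete lattices from $\tors(kQ_c)$ onto the sublattice $\pidown^{\Theta_c} W$ of $W$. Theorem~\ref{kQ camb intro} identifies $\tors(kQ_c)$ with the Cambrian lattice $W/\Theta_c$, yielding the canonical isomorphism $\pidown^{\Theta_c} W\cong W/\Theta_c$ asserted in the theorem.
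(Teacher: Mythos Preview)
Your setup is fine and agrees with the paper: the section $s=\phi^-$ is the left adjoint of $\pi=\overline{(-)}$, it preserves joins, $\pi\circ s=\id$ holds (this is Proposition~\ref{surj nice}\eqref{split}), and its image is $\pidown^{\Theta_c}W$. The difficulty is entirely in showing that $s$ preserves meets, and here your argument breaks down.

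The proposed step ``link $S$, via the canonical join representation of $s(\UU_1)\cap s(\UU_2)$, to the brick label of some downward Hasse arrow from $s(\UU_1)$ or $s(\UU_2)$'' is not justified, and in fact no such purely lattice-theoretic argument can work. Nothing in your reasoning uses anything specific about $kQ_c$: it would apply verbatim to any algebraic congruence $\Theta_I$ on $\tors\Pi$, showing that $\pidown^{\Theta_I}W$ is always a sublattice. But the paper points out explicitly (just before Theorem~\ref{camb sublat}) that this fails already for the algebraic congruence generated by $s_1s_2s_3$ in $\sym_4$. So the argument must exploit something particular to the Cambrian situation.

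The paper's proof does exactly that. The key observation is that the surjection $\phi:\Pi\twoheadrightarrow kQ$ has an algebra \emph{section} $i:kQ\hookrightarrow\Pi$, and this gives a second map $i_-:\tors kQ\to\tors\Pi$, sending $\TT$ to $\{X\in\mod\Pi\mid{}_{kQ}X\in\TT\}$. Since $i_-$ is visibly an intersection of torsion classes, it is a morphism of complete meet-semilattices. The paper then proves $\phi^-=i_-$: the inclusion $\phi^-(\TT)\subseteq i_-(\TT)$ is clear, and the reverse uses Ringel's description $\mod\Pi\simeq(\mod kQ)(1,\tau)$ to show that every $X\in i_-(\TT)$ admits a $\Pi$-filtration whose subquotients lie in $\TT$ (Lemma~\ref{filtPi}). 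Thus $\phi^-$ is simultaneously a join- and meet-semilattice morphism, hence a lattice morphism, and its image $\pidown^{\Theta_c}W$ is a sublattice. The representation-theoretic input---the existence of the section $i$ and the filtration lemma---is what distinguishes the Cambrian case from a general algebraic congruence.
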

 As explained in the next section, it is a general result that $\smash{\pidown^{\Theta_c} W}$ is closed under joins. The strong part of Theorem~\ref{cambsublatintro} is that it is also closed under meets.

\section{Lattice congruences and forcing order}

\subsection{Preliminaries} \label{lat prelim}
We give some background material on lattices.
Much of this is in standard lattice-theory books such as \cite{Birkhoff,LTF}.
Some of the material given here follows an order-theoretic approach to lattice congruences described in \cite[Section~9-5]{regions}.

Let $L$ be a partially ordered set and let $x$ and $y$ be elements of $L$.
An element $z$ of $L$ is called the \newword{join} of $x$ and $y$ and denoted $x\join y$ if $z\ge x$ and $z\ge y$ and if, for every element $w$ with $w\ge x$ and $w\ge y$, we have $w\ge z$.
Thus the join of $x$ and $y$, if it exists, is the unique minimal common upper bound of $x$ and $y$.
Dually, the \newword{meet} $x\meet y$ of $x$ and $y$, if it exists, is the unique maximal common lower bound of $x$ and $y$.

A \newword{lattice} is a partially ordered set $L$ with the property that for every $x,y\in L$ the join $x\join y$ and the meet $x\meet y$ both exist.
The join $\join$ is an associative, commutative operation on $L$, and for any nonempty finite subset $\S=\set{x_1,\ldots,x_k}$ of $L$, the element $x_1\join\cdots\join x_k$ is the unique minimal common upper bound for the elements of $\S$.
We write $\Join \S$ for this minimal upper bound.
Similarly, $\meet$ is associative and we write $\Meet \S$ for $x_1\meet\cdots\meet x_k$, which is the unique maximal lower bound for $\S$.

If $\S$ is an infinite subset of $L$, then there need not exist a unique minimal upper bound for $\S$ in $L$, even when $L$ is a lattice.
(For example, consider the integers $\integers$ under their usual order and take $\S=\integers$.)
Similarly, $\S$ need not have a unique maximal lower bound.
A lattice $L$ is called \newword{complete} if every subset $\S$ of $L$ admits a unique minimal upper bound $\Join \S$ and a unique maximal lower bound $\Meet \S$. In this case $L$ has a minimum $0 := \Join \emptyset = \Meet L$ and a maximum $1 := \Meet \emptyset = \Join L$.

Recall that the \newword{Hasse quiver} $\Hasse L$ of an ordered set $L$ has set of vertices $L$ and an arrow $x \to y$ if and only if $x > y$ and for any $z \in L$, $x \ge z \ge y \Rightarrow x = z \text{ or } z = y$.
If $x\to y$ is an arrow in $\Hasse L$, then we say that $x$ \emph{covers} $y$.

We say that $j\in L$ is \newword{join-irreducible} if there does not exist a finite subset $\S\subseteq L$ such that $j=\Join \S$ and $j\notin \S$. We say that it is \newword{completely join-irreducible} if there does not exist a subset $\S \subseteq L$ such that $j = \Join \S$ and $j \notin \S$.
An element $j$ is completely join-irreducible if and only if there exists an element $j_*$ satisfying $\{x \in L \mid x < j\} = \{x \in L \mid x \leq j_*\}$. In particular, if $j$ is completely join-irreducible, then it covers exactly one element, $j_*$. If $L$ is finite, then the converse is true: if $j$ covers exactly one element, then it is join-irreducible.
In the same way, $m \in L$ is \newword{meet-irreducible} if every finite $\S\subseteq L$ with $m=\Meet \S$ has $m\in \S$. It is \newword{completely meet-irreducible} if every $\S \subseteq L$ with $m=\Meet \S$ has $m\in \S$.
If $m$ is completely meet-irreducible, then $m$ is covered by exactly one element $m^*$.
The converse is true if $L$ is finite.
We denote by $\jirr L$ ($\cjirr L$) and $\mirr L$ ($\cmirr L$) the sets of (completely) join-irreducible and (completely) meet-irreducible elements in $L$ respectively.

A map $\eta$ from a lattice $L_1$ to another lattice $L_2$ is called a \newword{morphism of lattices}
if $\eta(x\join y)=\eta(x)\join \eta(y)$ and $\eta(x\meet  y)=\eta(x)\meet \eta(y)$ for every $x,y\in L_1$.
If $\eta: L_1\to L_2$ is a morphism of lattices, then $\eta(\Join \S)=\Join\eta(\S)$ and $\eta(\Meet \S)=\Meet\eta(\S)$ for any \emph{finite} subset $\S$ of $L_1$.
However, the same property need not hold for \emph{infinite} subsets of $L_1$, even when $L_1$ and $L_2$ are both complete lattices.
A map $\eta$ from a complete lattice $L_1$ to a complete lattice $L_2$ is a \newword{morphism of complete lattices} if $\eta(\Join \S)=\Join\eta(\S)$ and $\eta(\Meet \S)=\Meet\eta(\S)$ for every subset $\S$ of $L_1$.
It is more typical in the lattice theory literature to say ``lattice homomorphism'' for a morphism of lattices and ``complete lattice homomorphism" for ``morphism of complete lattices'', but we adopt the more category-theoretical language here.

A \newword{join-semilattice} is a partially ordered set with a join operation, and a \newword{meet-semilattice} is a partially ordered set with a meet operation.
A map $\eta:L_1\to L_2$ is a \newword{morphism of join-semilattices} if $\eta(x\join y)=\eta(x)\join \eta(y)$ for every $x,y\in L_1$.
It is a \newword{morphism of meet-semilattices} if $\eta(x\meet  y)=\eta(x)\meet \eta(y)$ for every $x,y\in L_1$.
A join-semilattice or meet-semilattice can be complete or not in the sense of the previous paragraph, and we can similarly define a \newword{morphism of complete join-semilattices} or a \newword{morphism of complete meet-semilattices}.

A \newword{join-sublattice} (respectively, \newword{meet-sublattice}) of a join-semilattice (respectively, meet-semilattice) is a subset that is closed under the join (respectively, meet) operation, and a \newword{sublattice} of a lattice is a subset that is a join-sublattice and a meet-sublattice.
The image of a morphism $\eta:L_1\to L_2$ of lattices (respectively, join-semilattices, meet-semilattices) is a sublattice (respectively, join-sublattice, meet-sublattice) of $L_2$.

We recall the following general definition, and give some properties in the special case of complete lattices. 

\begin{definition} \label{defadjlat} 
Let $P$ and $Q$ be posets and let $a:P\to Q$ and $b:Q\to P$ be order-preserving maps.
We say that $(a,b)$ is an \newword{adjoint pair} if $p\in P$ and $q\in Q$
satisfy $a(p)\le q$ if and only if they satisfy $p\le b(q)$.
\end{definition}

\begin{proposition}\label{adjoint}
Assume that $(a,b)$ is an adjoint pair, and both $P$ and $Q$ are complete lattices. The following hold:
\begin{enumerate}[\rm(a)]
\item The map $a$ is a morphism of complete join-semilattices, and
$b$ is a morphism of complete meet-semilattices.
\item For any $p\in P$ and $q\in Q$, we have $p\le ba(p)$ and $ab(q)\le q$.
\end{enumerate}
\end{proposition}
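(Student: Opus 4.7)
The plan is to exploit the defining biconditional of Definition~\ref{defadjlat} in the two natural ways: first to obtain the unit/counit inequalities in part (b), and then to use those together with the order-preservation of $a$ and $b$ to deduce the semilattice morphism property in part (a).

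First I would prove (b), which is the standard ``unit and counit'' of an adjunction. For any $p\in P$, apply the biconditional $a(p')\le q \iff p'\le b(q)$ with $p'=p$ and $q=a(p)$; since $a(p)\le a(p)$ holds trivially, we get $p\le b(a(p))$. Dually, taking $q'=q$ and $p=b(q)$, the inequality $b(q)\le b(q)$ gives $a(b(q))\le q$.

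Next I would prove (a). To show $a(\Join\S)=\Join a(\S)$ for an arbitrary $\S\subseteq P$, the easy inequality is $\Join a(\S)\le a(\Join\S)$: for each $s\in\S$ we have $s\le\Join\S$, hence $a(s)\le a(\Join\S)$ by order-preservation, so $a(\Join\S)$ is an upper bound of $a(\S)$. For the reverse inequality, set $q=\Join a(\S)$. By adjointness, $a(\Join\S)\le q$ will follow from $\Join\S\le b(q)$, and for that it suffices to check $s\le b(q)$ for each $s\in\S$. But $a(s)\le q$ by definition of $q$, and one more application of adjointness gives $s\le b(q)$, as required. The dual argument, using the other direction of the biconditional, shows $b(\Meet\TT)=\Meet b(\TT)$ for every $\TT\subseteq Q$, proving that $b$ is a morphism of complete meet-semilattices.

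There is no real obstacle here; the proof is a routine unfolding of the adjunction axiom. The only thing one has to be slightly careful about is the direction of the quantifiers when passing to arbitrary (possibly infinite) subsets, but since $P$ and $Q$ are \emph{complete} lattices by hypothesis, the joins $\Join\S$ and meets $\Meet\TT$ exist for all such subsets and the arguments above go through verbatim without any finiteness caveat.
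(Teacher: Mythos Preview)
Your proof is correct and follows essentially the same approach as the paper. The only cosmetic difference is that for part (a) the paper proves $a(\Join\S)=\Join a(\S)$ in one stroke by showing both sides have the same set of upper bounds via the chain of equivalences $a(p)\le q\ \forall p\in\S \iff p\le b(q)\ \forall p\in\S \iff \Join\S\le b(q) \iff a(\Join\S)\le q$, whereas you split the equality into two inequalities and use order-preservation of $a$ for the easy one; this is a negligible stylistic variation. (Note also that despite your opening ``plan'', you never actually invoke (b) in your proof of (a) --- nor do you need to.)
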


\begin{proof}
(a) We show the assertion for $a$; the assertion for $b$ is dual.
Take any subset $\S \subseteq P$.
To prove $\Join  a(\S)=a(\Join  \S)$, it is enough
to show that $q\in Q$ satisfies $a(p)\le q$ for all $p\in \S$ if and only if
$a(\Join  \S)\le q$.
The condition $a(p)\le q$ for all $p\in \S$ is equivalent to
$p \le b(q)$ for all $p\in \S$.
This is equivalent to $\Join  \S\le b(q)$, which
is equivalent to $a(\Join  \S)\le q$.
Thus the assertion follows.

(b) Since $a(p)\le a(p)$, we have $p\le ba(p)$. Similarly we have $ab(q)\le q$.
\end{proof}

An equivalence relation $\equiv$ on a lattice $L$ is called a \newword{(lattice) congruence} if and only if it has the following property:
If $x_1$, $x_2$, $y_1$, and $y_2$ are elements of $L$ such that $x_1\equiv y_1$ and $x_2\equiv y_2$, then also $x_1\meet x_2\equiv y_1\meet y_2$ and $x_1\join x_2\equiv y_1\join y_2$.
Given a lattice congruence $\Theta$ on $L$, the \newword{quotient lattice} is $L/\Theta$, where the partial order is defined as follows: A $\Theta$-class $C_1$ is less than or equal to a $\Theta$-class $C_2$ in $L/\Theta$ if and only if there exists an element $x_1$ of $C_1$ and an element $x_2$ of $C_2$ such that $x_1\le x_2$ in $L$.
Equivalently, for any $x_1\in C_1$ and $x_2\in C_2$, the join of $C_1$ and $C_2$ is the congruence class containing $x_1\join x_2$ and the meet of $C_1$ and $C_2$ is the congruence class containing $x_1\meet x_2$.
We denote by $\Con L$ the set of congruences on $L$, partially ordered with the refinement order ($\Theta \le \Theta'$ if and only if $\forall x, x' \in L, x \equiv_\Theta y \Rightarrow x \equiv_{\Theta'} y$).
This is a lattice, and in fact it is a sublattice of the lattice of all equivalence relations (or equivalently the lattice of all set partitions of $L$).
It is also a distributive lattice, meaning that meet distributes over join and vice versa.

Similarly, we define a \newword{complete (lattice) congruence} on a complete lattice $L$ to be an equivalence relation $\equiv$ with the following property:
For any indexing set $I$ (not necessarily finite) and families $\set{x_i}_{i\in I}$ and $\set{y_i}_{i\in I}$ of elements of $L$, if $x_i\equiv y_i$ for all $i\in I$, then $\Meet\set{x_i \mid i\in I}\equiv\Meet\set{y_i \mid i\in I}$ and $\Join\set{x_i \mid i\in I}\equiv\Join\set{y_i \mid i\in I}$. We denote by $\Com L$ the lattice of complete congruences on $L$.

For a (complete) lattice congruence $\Theta$ on $L$, the map sending each element of $L$ to its congruence class is a morphism of (complete) lattices from $L$ to $L/\Theta$.
On the other hand, given a morphism of (complete) lattices $\eta:L_1\to L_2$, there is a (complete) lattice congruence $\Theta_\eta$ on $L_1$ defined by $x\equiv_{\Theta_\eta} y$
 if and only if $\eta(x)=\eta(y)$.
Moreover if $\eta$ is surjective, then $\eta$ induces an isomorphism $L_1/\Theta_\eta\to L_2$ of (complete) lattices.

Complete lattice congruences on a complete lattice $L$ are particularly well-behaved. For the remainder of this subsection, we assume that $L$ is complete and $\Theta$ is a complete congruence.
In particular, each congruence class is an interval in $L$.
(Given $x,y\in L$ with $x\le y$, the \newword{interval} $[x,y]$ in $L$ is the set $\set{z\in L \mid x\le z\le y}$.)
In particular, given an element $x$ of $L$, the congruence class of $x$ has a unique minimal element $\pidown x = \smash{\pidown^\Theta} x$ and a unique maximal element $\piup x = \smash{\piup_\Theta} x$.
The maps $\pidown$ and $\piup$ are order-preserving.
(See \cite[Proposition~9-5.2]{regions} and \cite[Exercise~9.42]{regions}.)
The finite case of the following proposition is \cite[Proposition~9-5.5]{regions}, and this version for complete lattices holds by essentially the same proof.
(See \cite[Exercise~9.42]{regions}.)

\begin{proposition} \label{realpidown}
Suppose $L$ is a complete lattice and $\Theta$ is a complete congruence.
The sets $\pidown L:=\set{\pidown x \mid x\in L}$ and $\piup L:=\set{\piup x \mid x\in L}$, endowed with the partial orders induced from the partial order on $L$, are complete lattices, both isomorphic to the quotient lattice $L/\Theta$.
\end{proposition}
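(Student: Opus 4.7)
The plan is to establish an order isomorphism between the quotient $L/\Theta$ (which is itself a complete lattice, by the assumption that $\Theta$ is a complete congruence) and each of $\pidown L$ and $\piup L$. I would work out the argument for $\pidown L$ in detail, since the case of $\piup L$ is dual.

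First I would verify that every $\Theta$-class $C$ contains its own meet. Fix any $d \in C$, and apply the complete congruence property to the two families $\{c\}_{c \in C}$ and $\{d\}_{c \in C}$: since $c \equiv d$ for every $c\in C$, we obtain $\Meet_{c\in C} c \equiv \Meet_{c\in C} d = d$, so $\Meet C \in C$. Hence $\pidown x = \Meet [x]_\Theta$ is well-defined and lies in the class of $x$. Next I would check that $\pidown$ is order-preserving: if $x\le y$, set $a = \pidown x$ and $b = \pidown y$; then $a\meet b \equiv a\meet b$ belongs to the class $[x\meet y]_\Theta = [x]_\Theta$, and since $a$ is the minimum of that class with $a\meet b \le a$, we get $a\meet b = a$, i.e., $\pidown x \le \pidown y$.

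Now I would build the comparison map. Because $\Theta$ is a complete congruence, the complete lattice operations on $L/\Theta$ are well-defined by $\Join [x_i]_\Theta = [\Join x_i]_\Theta$ and $\Meet [x_i]_\Theta = [\Meet x_i]_\Theta$ (independence of representatives is precisely the complete congruence property, and the universal property is a short check). Define $\phi\colon L/\Theta \to \pidown L$ by $\phi([x]_\Theta) = \pidown x$. Surjectivity is tautological; injectivity holds because $\pidown x$ determines its class. For order-preservation, $[x]_\Theta \le [y]_\Theta$ is equivalent to $x\meet y \equiv x$, whence $\pidown x = \pidown(x\meet y) \le \pidown y$ by the previous step. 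Conversely, if $\pidown x \le \pidown y$ in $L$, then these are comparable representatives of $[x]_\Theta$ and $[y]_\Theta$, so $[x]_\Theta \le [y]_\Theta$. Thus $\phi$ is an order isomorphism, and so $\pidown L$ inherits the structure of a complete lattice isomorphic to $L/\Theta$; its joins and meets are then forced to be $\Join\pidown x_i = \pidown(\Join \pidown x_i)$ and $\Meet \pidown x_i = \pidown(\Meet \pidown x_i)$.

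The main point requiring care is not a deep obstacle but a bookkeeping one: the well-definedness of joins and meets on $L/\Theta$ for \emph{arbitrary} (possibly infinite) families relies essentially on the hypothesis that $\Theta$ is a complete congruence, not merely a lattice congruence. Once this is in place, the dual argument applied to $\piup$ yields the corresponding isomorphism $L/\Theta \cong \piup L$, completing the proof.
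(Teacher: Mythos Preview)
Your proof is correct and follows the standard approach. Note that the paper does not actually give a proof of this proposition: it states that the finite case is \cite[Proposition~9-5.5]{regions} and that the complete-lattice version ``holds by essentially the same proof,'' referring to \cite[Exercise~9.42]{regions}. Your argument is precisely the expected one---showing each $\Theta$-class contains its meet (hence is an interval), that $\pidown$ is order-preserving, and that $[x]_\Theta \mapsto \pidown x$ is an order isomorphism---and your emphasis on where the \emph{complete} congruence hypothesis is used (well-definedness of infinite joins and meets in $L/\Theta$) is exactly the point of the exercise cited. The paper also records, just before the proposition, that classes are intervals and $\pidown$, $\piup$ are order-preserving (again by citation), so you have in fact supplied proofs of those preliminary facts as well.
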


The maps $\pidown:L\to \pidown L$ and $\piup:L\to \piup L$ are morphisms of complete lattices.
However, $\pidown L$ and $\piup L$ are not necessarily sublattices of $L$.
In general, $\pidown L$ is only a complete join-sublattice of $L$, and $\piup L$ is only a complete meet-sublattice.  
We will see in Section~\ref{camblat} that in an important example (when $\Theta$ is a Cambrian congruence), the subposets $\pidown L$ and $\piup L$ are sublattices of $L$.

For a complete lattice congruence $\Theta$ on $L$, consider the canonical projection $\eta:L\to L/\Theta$.
For $x,y\in L$, $\eta(x)\le\eta(y)$ if and only if $\pidown x\le y$ if and only if $x\le\piup y$.
In particular, the image of an interval in $L$ under $\eta$ is an interval in $L/\Theta$.
Specifically, for any $x\le y$, we have
$\eta([x,y])=[\eta(x),\eta(y)]$.

Given a complete congruence $\Theta$ on a complete lattice $L$, say that $\Theta$ \newword{contracts} an arrow $y \to x$ in $\Hasse L$ if $x\equiv_\Theta y$. As one might expect, arrows cannot be contracted independently. Rather, if an arrow is contracted by $\Theta$, then $\Theta$ is forced to contract other arrows as well.
To formalize this forcing, for each subset $\S \subseteq \Hasse_1(L)$, we define $\con(\S)$ to be the minimum lattice congruence that contracts all arrows in $\S$.
(We take $\con(\S)$ to be the meet of all congruences that contract all arrows in~$\S$.
Since $\Con L$ is a complete sublattice of the lattice of equivalence relations on $L$, this meet also contracts all arrows in~$\S$.)
We define the \emph{forcing equivalence} by
\[a\forceseq b\Longleftrightarrow \con(a)=\con(b).\]
We define the \emph{forcing preorder} on $\Hasse_1(L)$ by
\[a \forces b\Longleftrightarrow\con(a)\ge\con(b)\text{ in $\Con L$}.\]
This gives a partial order on the set $\Hasse_1(L)/\mathord{\forceseq}$ of forcing equivalence classes.

For a special class of finite lattices called polygonal lattices, the forcing preorder on arrows has a simple, local description, as we now explain.
A \emph{polygon} in a finite lattice $L$ is an interval $[x,y]$ such that $\{z\in L\mid x < z < y\}$
consists of two disjoint nonempty chains.
The lattice $L$ is \emph{polygonal} if the following two conditions hold:
First, if there are arrows $y_1\to x$ and $y_2\to x$ in the Hasse quiver for distinct elements $y_1$ and $y_2$, then $[x,y_1\join  y_2]$ is a polygon; and second, if there are arrows $y\to x_1$ and $y\to x_2$ in the Hasse quiver for distinct elements $x_1$ and $x_2$, then $[x_1\meet  x_2,y]$ is a polygon.

We define the \emph{polygonal preorder} $\forces[p]$ on arrows of $\Hasse L$.
In every polygon labelled as shown here,
\begin{equation}\label{a polygon}
\xymatrix@R=1.2em{
&\bullet\ar[rd]^{b'} \ar[ld]_{a}\\
\bullet\ar[d]_{q_1}& &\bullet\ar[d]^{q_{\ell+1}}\\
 \ar@{..}[d] & & \ar@{..}[d] \\
\ar[d]_{q_\ell}&&\ar[d]^{q_{\ell+m}}\\
\bullet\ar[dr]_{a'} &&\bullet\ar[dl]^{b}\\
&\bullet
}\end{equation}
we have $a \forces[p] b \forces[p] a$ and $a \forces[p] q_i$ for all $i$.
We take the transitive closure to obtain the polygonal preorder.
The \emph{polygonal equivalence} is defined by\[a\forceseq_{\rm p}b\Longleftrightarrow a\forces[p]b\forces[p]a.\]
Clearly the polygonal preorder gives a partial order on $\Hasse_1(L)/\mathord{\forceseq_{\rm p}}$ which we call the \emph{polygonal order}.
We have the following general result.

\begin{proposition}\cite[Theorem~9-6.5]{regions}\label{forcing=polygonal}
Let $L$ be a finite polygonal lattice.
\begin{enumerate}[\rm(a)]
\item The forcing equivalence coincides with the polygonal equivalence.
\item The forcing order coincides with the polygonal order.
\end{enumerate}
\end{proposition}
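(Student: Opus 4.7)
Both claims of the proposition follow once we establish that $a\forces b$ if and only if $a\forces[p]b$ for arrows $a,b$ of $\Hasse L$. I would split the proof into the two implications.

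For the direction $a\forces[p]b\Rightarrow a\forces b$, the plan is to argue directly on a single polygon \eqref{a polygon}. Label the top $T$, the bottom $B$, the left chain $T=w_0, w_1,\dots, w_\ell, w_{\ell+1}=B$ with $a$ the first cover and $a'$ the last, and the right chain $T=w'_0,\dots,w'_{m+1}=B$ with $b'$ the first cover and $b$ the last. The key structural fact about polygons is that $w_i\meet w'_j=B$ and $w_i\join w'_j=T$ for any $1\le i\le\ell$, $1\le j\le m$ (the two interior chains are disjoint maximal chains in $[B,T]$). Assume a complete congruence $\Theta$ contracts $a$, i.e., $T\equiv w_1$. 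Meeting with $w'_m$ yields $w'_m\equiv B$, hence $b$ is contracted; meeting with $w'_1$ yields $w'_1\equiv B$, and then joining $w'_1\equiv B$ with each $w_i$ gives $w_i\equiv T$ for $1\le i\le\ell$, so every $q_1,\dots,q_\ell$ is contracted. The arrows $q_{\ell+1},\dots,q_{\ell+m}$ on the right chain are contracted because $w'_j\equiv B$ for every $j$, obtained by meeting $T\equiv w_1$ with each $w'_j$. The relation $b\forces[p]a$ follows by the symmetric argument. Transitivity of $\forces[p]$ extends this to the full polygonal preorder.

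For the harder direction $a\forces b\Rightarrow a\forces[p]b$, the plan is to exhibit, for any subset $\S\subseteq\Hasse_1(L)$ that is closed under $\forces[p]$, a lattice congruence $\Theta_\S$ on $L$ which contracts exactly the arrows in $\S$. This will show that the congruence $\con(a)$ generated by a single arrow contracts only arrows in the $\forces[p]$-closure of $\{a\}$, giving the desired implication. Define $\equiv_\S$ as the smallest equivalence relation on $L$ identifying $x$ and $y$ whenever $y\to x\in\S$. I would verify that $\equiv_\S$ is a congruence by checking the single-cover case: if $y\to x\in\S$ and $z\in L$, then $x\join z\equiv_\S y\join z$ and $x\meet z\equiv_\S y\meet z$. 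If $z\ge y$ or $z\le x$ the statement is trivial; otherwise $x\join z<y\join z$ and the interval $[x\join z,y\join z]$ must be traversed by a chain of $\equiv_\S$-steps. Here the polygonal axiom is used: the elements $y$ and $z\join x$ both cover $x$ below $z\join x$... actually, the setup to invoke polygonality is that $z\meet y$ and $x$ both lie below $y$, producing a polygon whose top is $y$ and one of whose top arrows is $y\to x$; the opposite top arrow and the successive middle arrows belong to $\S$ by $\forces[p]$-closure, and tracking these arrows across the polygon yields a chain in $\S$ connecting $y\join z$ to $x\join z$.

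The main obstacle is precisely this last verification: showing that the polygonal transport of covers across joins (and dually, across meets) works globally, not just locally inside one polygon. The cleanest execution is an induction on the length of a maximal chain in $[x\join z, y\join z]$ (resp.\ $[x\meet z,y\meet z]$), using at each step the polygonal axiom to replace a single cover in $\S$ by a chain of covers in $\S$ one step closer to $z$; the induction terminates because the lattice is finite. Once this is in place, $\equiv_\S$ is a lattice congruence, it contracts every arrow in $\S$ by construction, and it contracts \emph{only} arrows in $\S$ because the $\forces[p]$-closure of $\{a\}$ is already closed under the polygonal transport used in the induction. This yields both (a) and (b), since the preorders coincide on individual arrows and therefore induce the same partial order on equivalence classes.
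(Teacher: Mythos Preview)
The paper does not prove this proposition; it is quoted verbatim from \cite[Theorem~9-6.5]{regions} and used as a black box. So there is no in-paper argument to compare against, and what matters is whether your sketch stands on its own.

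Your first implication $a\forces[p]b\Rightarrow a\forces b$ is correct and cleanly argued: the meet/join manipulations inside a single polygon are exactly what is needed, and the claim $w_i\meet w'_j=B$, $w_i\join w'_j=T$ for interior elements of the two chains follows directly from the definition of a polygon.

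The second implication is where the proposal has a genuine gap. Your strategy --- show that any $\forces[p]$-closed set $\S$ of Hasse arrows generates a lattice congruence, hence $\con(a)$ contracts only arrows in the $\forces[p]$-closure of $\{a\}$ --- is the right one, and is essentially Day's/Reading's approach. But the paragraph where you invoke polygonality is confused: you first write ``$y$ and $z\join x$ both cover $x$'', which is false in general since $z\join x$ need not cover $x$; you then switch to ``$z\meet y$ and $x$ both lie below $y$, producing a polygon whose top is $y$'', but polygonality requires two elements \emph{covered by} $y$, and $z\meet y$ need not be covered by $y$. Neither setup gives you the polygon you need. The correct move is: assuming $z\not\le x$, pick some $w$ with $x\lessdot w\le x\vee z$ (finiteness guarantees such a $w$), and if $w\ne y$ apply the polygonal axiom to the two covers $y,w$ of $x$ to obtain the polygon $[x,y\vee w]$; then $y\to x$ polygonally forces the arrows on the $w$-side of that polygon, and in particular the top arrow $(y\vee w)\to w^{*}$ on that side is $\forces[p]$-equivalent to $y\to x$. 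One then replaces $(y\to x)$ by $(y\vee w\to w^{*})$ and repeats, with a suitable rank-type quantity strictly decreasing. You gesture at ``an induction on the length of a maximal chain'' but never identify the inductive quantity or carry out a step; as written, the argument is a plan rather than a proof. Filling in this transport lemma carefully is the entire content of the harder direction.
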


Proposition~\ref{forcing=polygonal} lets us understand forcing among edges locally, in polygons.

\subsection{Bialgebraic completely semidistributive lattices} \label{fbs} 
The aim of this subsection is to introduce a well-behaved class of congruences that we call \emph{arrow-determined} on a lattice $L$ and to describe them in term of the Hasse quiver of $L$. It is known that, if $L$ is finite, $\Con L$ can be identified with a sublattice of the powerset of $\Hasse_1(L)$,
sending a congruence to the set of arrows it contracts, see for example \cite[9-5]{regions}. We generalize this result for complete lattices in Theorem \ref{mainiso}. To do so, we have to restrict our investigation to some well-behaved situations, since there are usually too many congruences.

Recall from the introduction that the lattice $L$ is \emph{weakly atomic} if $\Hasse[x, y]$ contains at least one arrow whenever $x<y$ in $L$.
We introduce the following notion, which is natural with respect to our problem.
\begin{definition} \label{arrowdet}
 For a congruence $\Theta$ on a lattice $L$, we say that $\Theta$ is \emph{arrow-determined} if for any ordered pair $x \leq y$ of $L$, $y \equiv_\Theta x$ if and only if all arrows of $\Hasse [x, y]$ are contracted by $\Theta$.
\end{definition}
 Notice that for a lattice $L$, the trivial congruence is arrow-determined if and only if $L$ is weakly atomic. More generally, we have the following characterization.
\begin{proposition} \label{arrdeteq}
  A complete lattice congruence $\Theta$ on a complete lattice $L$ is arrow-determined if and only if $L/\Theta$ is weakly atomic.
\end{proposition}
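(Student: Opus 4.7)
The plan is to prove both implications by lifting cover relations between $L$ and $L/\Theta$, using the maps $\pidown, \piup$ from Proposition~\ref{realpidown} together with the fact that the quotient $L\to L/\Theta$ is a morphism of complete lattices. Observe first that for any $x \leq y$ with $x \equiv_\Theta y$, the whole interval $[x,y]$ lies in a single $\Theta$-class, so all arrows of $\Hasse[x,y]$ are trivially contracted; the substance of ``arrow-determined'' lies in the converse direction.

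For $(\Leftarrow)$, assume $L/\Theta$ is arrow-separated and take $x \leq y$ in $L$ with $x \not\equiv_\Theta y$. Arrow-separation yields a cover $\bar v < \bar u$ in $\Hasse[\bar x,\bar y]$. I would then set $v_0 := x \join \pidown \bar v$ and $u_0 := y \meet \piup \bar u$, both in $[x,y]$. A direct verification using Proposition~\ref{adjoint} and the quotient morphism gives $v_0 \leq u_0$, $\bar{v_0} = \bar v$, and $\bar{u_0} = \bar u$; the covering $\bar v \lessdot \bar u$ then forces $\bar z \in \{\bar v, \bar u\}$ for every $z \in [v_0, u_0]$. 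Defining $v^* := \Join\{z \in [v_0, u_0] : \bar z = \bar v\}$ and $u^* := \Meet\{z \in [v^*, u_0] : \bar z = \bar u\}$, completeness of $\Theta$ keeps $v^*$ in $\bar v$ and $u^*$ in $\bar u$, so $v^* < u^*$. A short case analysis on the class of any $w \in [v^*, u^*]$ shows $[v^*, u^*] = \{v^*, u^*\}$, so $u^* \to v^*$ is an uncontracted arrow of $\Hasse[x, y]$, contradicting the assumption that all arrows there are contracted.

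For $(\Rightarrow)$, I argue contrapositively. If $L/\Theta$ is not arrow-separated, pick $\bar x < \bar y$ such that $\Hasse[\bar x, \bar y]$ contains no arrows, and set $x' := \pidown \bar x$, $y' := \piup \bar y$. Then $x' < y'$ in $L$ with $x' \not\equiv_\Theta y'$, so it suffices to show that every arrow of $\Hasse[x', y']$ is contracted. Given an arrow $u \to v$ in $\Hasse[x', y']$ with $\bar v < \bar u$, I will derive a contradiction by showing $\bar u$ covers $\bar v$ in $L/\Theta$. For this, given any $\bar v \leq \bar w \leq \bar u$, the element $w' := v \join (u \meet \piup \bar w)$ lies in $[v,u]$ and satisfies $\bar{w'} = (\bar u \meet \bar w) \join \bar v = \bar w$; the covering $u \to v$ then forces $w' \in \{v, u\}$, hence $\bar w \in \{\bar v, \bar u\}$, as required.

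The main technical obstacle is the lifting step in $(\Leftarrow)$: one must ensure that the chosen $v^*$ and $u^*$ really sit in the correct congruence classes and genuinely witness a cover in $L$, not merely in $L/\Theta$. This relies essentially on completeness of $\Theta$, so that arbitrary joins and meets of elements of a single class remain in that class, and on the interval $[v_0, u_0]$ being tight enough that the cover relation in $L/\Theta$ forbids intermediate classes. Once this bookkeeping is organized via $\pidown$ and $\piup$, both directions drop out of the case analysis.
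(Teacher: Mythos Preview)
Your proposal is correct and follows essentially the same strategy as the paper: transfer cover relations between $L$ and $L/\Theta$ using the projections $\pidown,\piup$ and the fact that the quotient map is a complete lattice morphism. Your $(\Leftarrow)$ direction is slightly more elaborate than the paper's (you sandwich with $v_0,u_0$ and then refine to $v^*,u^*$, whereas the paper works directly in $\pidown L$ and takes a single join), and this extra care pays off by guaranteeing the constructed arrow actually lies in $[x,y]$; your $(\Rightarrow)$ is the contrapositive of the paper's direct argument, but the underlying computation---that an uncontracted Hasse arrow in $L$ projects to a Hasse arrow in $L/\Theta$---is the same.
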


Before proving Proposition \ref{arrdeteq}, we introduce some notation that we use all along this subsection. When $\Theta$ is a congruence on a lattice $L$, we commonly identify $L/\Theta$ with $\pidown L$ as in Proposition~\ref{realpidown}. For $x \leq y$ in $\pidown L$, we denote by $[x, y]$ the interval of $L$ and by $[x,y]_\dn$ the interval of $\pidown L$. Similarly, we denote $\join$ and $\meet$ the lattice operations of $L$ and $\joindn$ and $\meetdn$ the lattice operations of $\pidown L$, even if $\join$ and $\joindn$ coincide as explained in Section \ref{lat prelim}. 

\begin{proof}[Proof of Proposition \ref{arrdeteq}]
First, suppose that $\Theta$ is arrow-determined.
Consider an ordered pair $x<y$ in $\pidown L$. 
As $\Theta$ is arrow-determined, there exists an arrow $v \to u$ in $\Hasse [x, y]$ that is not contracted by $\Theta$.
We claim that $\pidown v \to \pidown u$ is an arrow in $\Hasse[x,y]_\dn$.
Since $\pidown$ is order-preserving and since $v \to u$ is not contracted, we have $x\le\pidown u<\pidown v\le y$.
If there exists $z\in\pidown L$ with $\pidown u \leq z \leq \pidown v$, then $(u\join\pidown u)\le(u\join z)\le(u\join\pidown v)$.
We easily see that this simplifies to $u\le(u\join z)\le v$.
Since $v \to u$ is an arrow in $\Hasse L$, one of these inequalities must be an equality.
If $u=u\vee z$, then $z\leq u$, so $z=\pidown u$.  On the other hand,
if $u\vee z=v$, then observing that $\pidown u \vee \pidown z=z$,
the fact that $\pidown$ is a lattice morphism implies that $z=\pidown v$.  We have proved the claim, which implies that $L/\Theta$ is weakly atomic.

Conversely, suppose that $\pidown L$ is weakly atomic and let $[x, y]$ be an interval in~$L$.
Since $\pidown$ is order-preserving, we have $\pidown x\le\pidown y$.
If $x\equiv_\Theta y$, then all arrows of $\Hasse [x, y]$ are contracted by $\Theta$.
If $x \not\equiv_\Theta y$, then $\pidown x<\pidown y$.
We will exhibit an arrow in $\Hasse [x, y]$ that is not contracted by $\Theta$.
Since $\pidown L$ is weakly atomic, there exists an arrow $v \to u$ in $\Hasse [\pidown x,\pidown y]_\dn$.
Let $v'=v\join x$.
Since $x\equiv_\Theta\pidown x$, also $v'\equiv_\Theta v\join\pidown x=v$.
Let $u'=\Join\set{w\in L \mid w\le v',\,w\equiv_\Theta u}$.
Since $\Theta$ is a complete congruence, $u'\equiv_\Theta u$.
If $u'\equiv_\Theta v'$, then $u\equiv_\Theta u'\equiv_\Theta v'\equiv_\Theta v$, contradicting the fact that $v \to u$ is an arrow in $\Hasse [\pidown x,\pidown y]_\dn$.
By construction, $u'\le v'$, but in fact $u'<v'$ because $u'\not\equiv_\Theta v'$.
Since $v\le\pidown y\le y$ and $x\le y$, also $v'=v\join x\le y$.
Since $u\join x\le v\join x=v'$ and $u\join x\equiv_\Theta u\join\pidown x=u$, by the definition of $u'$, we see that $u\join x\le u'$, so $x\le u'$.

We have shown that $x\le u'<v'\le y$ and that $u'\not\equiv_\Theta v'$.
It remains only to show that $v'\to u'$ is an arrow in $\Hasse L$.
Suppose to the contrary that there exists $z$ with $u'<z<v'$.
Then since $\pidown$ is order preserving, $u\le\pidown z\le v$.
By the definition of~$u'$ we have $u'\not\equiv_\Theta z$, so $u<\pidown z$.
If $\pidown z=v$, then $z\ge v$, contradicting the fact that $z<v'$ and $v'=v\join x$.
Thus $u<\pidown z<v$, contradicting the fact that $v \to u$ is an arrow in $\Hasse [\pidown x,\pidown y]_\dn$.
By this contradiction, $v'\to u'$ is an arrow in $\Hasse [x, y]$ that is not contracted by $\Theta$, so we have verified that $\Theta$ is arrow-determined.
\end{proof}

An arrow-determined complete congruence $\Theta$ is completely specified by the set of arrows it contracts. 
Namely, $x \equiv_\Theta y$ if and only if all arrows of $\Hasse [x \meet y, x \join y]$ are contracted by $\Theta$.
We denote by $\Comb L$ the set of complete congruences over $L$ that are arrow-determined. Notice that, if $L$ is finite, we clearly have $\Con L = \Com L = \Comb L$. More generally, we obtain the following result.

\begin{proposition} \label{combmeetsl}  
The set $\Comb L$ is a complete meet-sublattice of $\Com L$, which is a complete meet-sublattice of $\Con L$.
In particular, $\Comb L$ and $\Com L$ are complete lattices.
\end{proposition}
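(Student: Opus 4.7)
The plan is to prove the two sublattice statements by directly checking that the relevant properties are preserved under arbitrary intersections of equivalence relations, and then to derive completeness from standard lattice-theoretic principles. Throughout, recall that in $\Con L$ the meet of a family $\{\Theta_i\}_{i \in I}$ is simply the intersection $\bigcap_{i \in I} \Theta_i$ viewed as equivalence relations.

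First I would show that $\Com L$ is a complete meet-sublattice of $\Con L$. Given a family $\{\Theta_i\}_{i \in I}$ of complete congruences, set $\Theta := \bigcap_{i \in I} \Theta_i$. To see that $\Theta$ is a complete congruence, take any index set $J$ and families $(x_j)_{j \in J}$, $(y_j)_{j \in J}$ in $L$ with $x_j \equiv_\Theta y_j$ for all $j$. Then $x_j \equiv_{\Theta_i} y_j$ for every $i \in I$ and $j \in J$, and since each $\Theta_i$ is a complete congruence, we obtain $\Join\{x_j \mid j \in J\} \equiv_{\Theta_i} \Join\{y_j \mid j \in J\}$ and $\Meet\{x_j \mid j \in J\} \equiv_{\Theta_i} \Meet\{y_j \mid j \in J\}$ for all $i$, hence the same relations hold modulo $\Theta$.

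Next I would show that $\Comb L$ is a complete meet-sublattice of $\Com L$. Let $\{\Theta_i\}_{i \in I}$ be a family in $\Comb L$ and set $\Theta := \bigcap_{i \in I} \Theta_i \in \Com L$ (by the first step). To verify that $\Theta$ is arrow-determined, fix $x \leq y$ in $L$. The forward implication (if $x \equiv_\Theta y$ then every arrow in $\Hasse[x,y]$ is contracted by $\Theta$) is immediate since every element of $[x,y]$ lies in the congruence class of $x$. For the converse, suppose every arrow of $\Hasse[x,y]$ is contracted by $\Theta$. Since $\Theta \subseteq \Theta_i$ for each $i$, every such arrow is also contracted by $\Theta_i$. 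Because each $\Theta_i$ is arrow-determined, this forces $x \equiv_{\Theta_i} y$ for every $i \in I$, hence $x \equiv_\Theta y$. So $\Theta \in \Comb L$.

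Finally, for completeness of $\Com L$ and $\Comb L$, I would invoke the standard fact that a subset of a complete lattice which is closed under arbitrary meets and contains the top element is itself a complete lattice (the join of a subset being computed as the meet of its upper bounds). The top congruence of $\Con L$, which identifies all elements of $L$, is trivially a complete congruence and is arrow-determined (both conditions in Definition~\ref{arrowdet} are vacuously satisfied), so it lies in $\Comb L \subseteq \Com L$. Combined with the meet-closure shown above, this yields that both $\Com L$ and $\Comb L$ are complete lattices. The only subtle step is the arrow-determined check in the second paragraph, but it reduces cleanly to the observation that contracting an arrow is monotone in the congruence.
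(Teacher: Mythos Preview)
Your proof is correct and follows essentially the same approach as the paper. The paper states the first step ($\Com L$ being a complete meet-sublattice of $\Con L$) as well-known without detail, and proves the arrow-determined step via the contrapositive (if $x \not\equiv_\Theta y$ then some $\Theta_i$ fails to contract an arrow in $[x,y]$, hence so does $\Theta$), which is logically equivalent to your direct verification; your added remarks on the forward implication and on completeness via the top element are correct and simply make explicit what the paper leaves implicit.
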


\begin{proof}
The fact that $\Com L$ is a complete meet-sublattice of $\Con L$ is well-known and elementary.
Consider a family $(\Theta_i)_{i \in \II}$ of arrow-determined complete congruences, and denote $\Theta = \Meet_{i \in \II} \Theta_i$ (the meet is computed in $\Com L$ or equivalently in $\Con L$). Let $x \leq y$ in $L$ such that $x \not\equiv_\Theta y$. By definition, this means that there exists $i \in \II$ such that $x \not\equiv_{\Theta_i} y$. As $\Theta_i$ is arrow-determined, there exists an arrow $q: u \to v$ in $\Hasse [x, y]$ such that $u \not\equiv_{\Theta_i} v$. Again by the definition of $\Theta$, this implies $u \not\equiv_{\Theta} v$. In other words, we have proved that $\Theta$ is arrow-determined, hence $\Comb L$ is a complete meet-semilattice of $\Com L$.
\end{proof}

We now introduce a particularly well behaved class of complete lattices. We need several definitions and properties about complete lattices, that we recall briefly. For a more detailed introduction, we refer to \cite{AN} for completely semidistributive lattices and \cite{KL} for algebraic and co-algebraic lattices.  
The following definition appears in \cite{CH}, where the first bullet point is shown to be equivalent to $L$ being sectionally pseudocomplemented.

\begin{definition}
 A complete lattice $L$ is called \emph{completely semidistributive} if, for $x \in L$ and $\S \subseteq L$, the following hold:
\begin{itemize}
 \item If $x \meet  y=x\meet  z$ for all $y, z \in \S$, then $x\meet  \left(\Join \S\right) = x \meet  y$ for all $y \in \S$;
 \item If $x \join  y=x\join  z$ for all $y, z \in \S$, then $x\join  \left(\Meet \S\right) = x \join  y$ for all $y \in \S$.
\end{itemize}
\end{definition}

Recall also the following definitions.
\begin{definition}
 An element $x$ of a complete lattice $L$ is \emph{compact} if for any set $\S \subseteq L$ such that $x \leq \Join \S$, there exists a finite subset $\{x_1, x_2, \dots, x_n\} \subseteq \S$ such that $x \leq \Join_{i = 1}^n x_i$. Then $L$ is \emph{algebraic} if for any $x \in L$, there exists a set $\S$ of compact elements of $L$ such that $x = \Join \S$.

 Dually, $x$ is \emph{co-compact} if for any set $\S \subseteq L$ such that $x \geq \Meet \S$, there exists a finite subset $\{x_1, x_2, \dots, x_n\} \subseteq \S$ such that $x \geq \Meet_{i = 1}^n x_i$. Then $L$ is \emph{co-algebraic} if for any $x \in L$, there exists a set $\S$ of co-compact elements of $L$ such that $x = \Meet \S$.

 We say that $L$ is \emph{bialgebraic} if it is algebraic and co-algebraic.
\end{definition}

We now state the main results of this subsection.
The first result is known.
(See, for example \cite[2.2]{CD} or \cite[Theorem~3.6]{NationNotes}.)
\begin{theorem} \label{arrsep}
 Let $L$ be a complete lattice. If $L$ is algebraic or if $L$ is co-algebraic, then $L$ is weakly atomic.
\end{theorem}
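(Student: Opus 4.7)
My plan is to handle the algebraic case; the co-algebraic case then follows by passing to the opposite lattice, which is co-algebraic (resp.\ algebraic) precisely when $L$ is algebraic (resp.\ co-algebraic), and arrow-separation is self-dual.

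So suppose $L$ is algebraic and $x<y$ in $L$. Since $y=\Join\S$ for some set $\S$ of compact elements and $x<y$, some compact element $c\in\S$ satisfies $c\le y$ but $c\not\le x$ (otherwise $y\le x$). The first step is to locate, using $c$, a covering pair inside $[x,y]$. I would consider
\[
\UU=\set{z\in[x,y]\,\middle|\,c\not\le z},
\]
which is nonempty since $x\in\UU$, and apply Zorn's lemma to $\UU$ ordered by inclusion.

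The key step is to verify that any chain $(z_i)_{i\in I}$ in $\UU$ has an upper bound in $\UU$, and here compactness of $c$ is exactly what makes the argument work: if $c\le\Join_{i\in I}z_i$, compactness gives a finite subset with $c\le z_{i_1}\join\cdots\join z_{i_n}$, but since the $z_i$ form a chain this common upper bound equals some $z_j$, contradicting $z_j\in\UU$. Hence $\Join_{i\in I}z_i\in\UU$ and Zorn yields a maximal element $u\in\UU$. Now set $v:=u\join c$; then $v>u$ (since $c\not\le u$) and $v\le y$ (since $u\le y$ and $c\le y$). It remains to check $v$ covers $u$: any $w$ with $u<w\le v\le y$ lies in $[x,y]$ but not in $\UU$ by maximality of $u$, so $c\le w$ and hence $w\ge u\join c=v$, forcing $w=v$. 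Thus $u\to v$ is an arrow of $\Hasse[x,y]$.

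The only delicate point is the Zorn step, and compactness of $c$ is exactly the hypothesis designed to control the supremum of a chain; after that the maximality of $u$ does all the work. For the co-algebraic case, I would simply dualize: pick a co-compact $c$ with $c\ge x$ and $c\not\ge y$, take $v$ minimal in $\set{z\in[x,y]\mid c\not\ge z}$, and set $u:=v\meet c$, showing by the dual argument that $v$ covers $u$ in $\Hasse[x,y]$.
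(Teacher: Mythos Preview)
Your argument is correct. The paper takes essentially the same route---Zorn's lemma together with compactness to locate a covering pair---but organizes it through two auxiliary lemmas: first that every interval of an algebraic lattice is again algebraic (Lemma~\ref{lemalgint}), and then that any nonzero compact element covers something (Lemma~\ref{exar}, applying Zorn to the maximal chains strictly below the compact element). Your version is more direct: you work with a compact element $c$ of $L$ itself rather than transferring compactness to the interval, and you apply Zorn to the set $\{z\in[x,y]\mid c\not\le z\}$ instead. This avoids the preliminary lemma about intervals, at the cost of not isolating the reusable statement that compact elements always cover something. One small notational slip: in the paper's convention Hasse arrows point from the larger element to the smaller, so the arrow you produce should be written $v\to u$ rather than $u\to v$.
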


Moreover, if we restrict to quotients of completely semidistributive and bialgebraic complete lattices, then the converse holds in the following sense. 
\begin{theorem} \label{eqardalg}
 Let $L$ be a complete lattice that is completely semidistributive and bialgebraic.
Then a complete congruence  $\Theta \in \Com L$ is arrow-determined if and only if $L/\Theta$ is weakly atomic if and only if $L/\Theta$ is bialgebraic.
\end{theorem}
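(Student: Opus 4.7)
By Proposition~\ref{arrdeteq} the first two conditions are equivalent, and Theorem~\ref{arrsep} applied to $L/\Theta$ shows that ``$L/\Theta$ bialgebraic'' implies ``$L/\Theta$ arrow-separated''. The substance of the proof is the remaining implication ``$L/\Theta$ arrow-separated $\Rightarrow$ $L/\Theta$ bialgebraic'', which by self-duality of both the hypotheses on $L$ and the conclusion reduces to showing that $L/\Theta$ is algebraic.

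The key observation, abbreviating ``completely join-irreducible'' as ``cji'', is that every cji $j \in L$ with $j \not\equiv_\Theta j_*$ satisfies $\pidown j = j$: otherwise $\pidown j < j$ would force $\pidown j \le j_*$ by the cji property, and since $\Theta$-classes are intervals, $j_*$ would end up in the class of $j$, a contradiction. Consequently such a $j$ is compact in $L/\Theta$: joins in $L/\Theta$ and in $L$ agree on $\pidown L$, and $j$ is itself compact in the algebraic lattice $L$ (a standard fact: a cji element of an algebraic lattice, being the join of the compact elements below it, cannot be written as a join of strictly smaller elements, hence must equal one of them and so be compact).

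For each $y \in L/\Theta$ I would then show $y = \tilde y$, where $\tilde y := \Joindn \{\, j \in L \mid j \text{ cji},\ j \le y,\ j \not\equiv_\Theta j_* \,\}$; combined with the previous paragraph this yields algebraicity. Assuming for contradiction $\tilde y < y$, I produce an arrow $v \to u$ in $\Hasse[\tilde y, y]_\dn$ by arrow-separatedness, lift it via the construction in the proof of Proposition~\ref{arrdeteq} to a non-contracted arrow $v \to u'$ in $\Hasse L$ with $\tilde y \le u' < v$, and consider $J := \{\, j \in L \mid j \text{ cji},\ j \le v,\ u' \vee j = v \,\}$. A cji decomposition of $v$ in $L$ together with the cover relation $v \to u'$ shows $J \neq \emptyset$, and the key observation forces every $j \in J$ to be contracted: if $j \not\equiv_\Theta j_*$, then $j = \pidown j \le \tilde y \le u'$ would contradict $j \not\le u'$.

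The main obstacle is deriving the contradiction at this point, and this is where complete semidistributivity of $L$ enters decisively. Applied to the family $J$ and the element $u'$, the relation $u' \vee j = v$ for all $j \in J$ yields $u' \vee \Meet J = v$. A covering argument (decomposing $\Meet J$ as a join of cji and using once more that $v$ covers $u'$) then produces some $j \in J$ with $j \le \Meet J$, forcing $j_0 := \Meet J = j$ to itself be cji and the minimum of $J$. The identical argument applied to $(j_0)_*$ in place of $\Meet J$ shows $(j_0)_* \le u'$, since otherwise a cji below $(j_0)_*$ would lie in $J$ and be strictly below $j_0$. As $j_0 \in J$ is contracted, $v = u' \vee j_0 \equiv_\Theta u' \vee (j_0)_* = u'$, contradicting the non-contractedness of $v \to u'$. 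Hence $\tilde y = y$, proving algebraicity of $L/\Theta$; co-algebraicity follows by the dual argument, completing the proof of bialgebraicity.
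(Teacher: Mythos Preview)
Your proof is correct and follows the same overall strategy as the paper: reduce to algebraicity of $L/\Theta$, show that non-contracted completely join-irreducible elements of $L$ land in $\pidown L$ and are compact there, and then prove that every $y\in\pidown L$ equals the join of such elements below it by deriving a contradiction from a non-contracted arrow.

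The only difference is packaging. The paper works directly in $L$: given $x\in L$ and $x'=\Join E$ with $E$ the set of non-contracted cji below $x$, if $x'\not\equiv_\Theta x$ then arrow-determinedness yields a non-contracted arrow $y\to z$ in $[x',x]$, and a single appeal to Proposition~\ref{bijfeqjirr}(a) produces a non-contracted cji $j\le y$ with $j\not\le z\ge x'$, so $j\in E$ yet $j\not\le x'$, done. You instead pass to $\pidown L$, use arrow-separatedness to find an arrow there, lift it back to a non-contracted arrow $v\to u'$ in $L$ via the construction from Proposition~\ref{arrdeteq}, and then essentially reprove the content of Proposition~\ref{bijfeqjirr}(a) inline: the set $J$, the equality $j_0=\Meet J$ obtained from complete semidistributivity, and the verification $(j_0)_*\le u'$ together amount to producing the cji $j_0$ with $u'\vee j_0=v$ and $u'\vee(j_0)_*=u'$ that Proposition~\ref{bijfeqjirr} would have handed you directly. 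The paper's route is shorter and more modular; yours is self-contained but duplicates earlier work.
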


We denote by $\Ideals (\Hasse_1(L))$ the set of subsets $ \S \subseteq \Hasse_1(L) $ such that for any $q \in \S$ and $q' \in \Hasse_1(L)$, if $q \forces q'$ then $q' \in \S$.
It is naturally a complete lattice with respect to inclusion (joins coincide with unions and meets coincide with intersections).

\begin{theorem} \label{mainiso}
Let $L$ be a complete lattice that is completely semidistributive and bialgebraic. Then $\Comb L$ is isomorphic to $\Ideals(\Hasse_1 (L))$, mapping a congruence to the set of arrows it contracts.
In particular, $\Comb L$ is distributive. 
\end{theorem}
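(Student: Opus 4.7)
The plan is to define $\Phi: \Comb L \to \Ideals(\Hasse_1(L))$ by sending a congruence $\Theta$ to the set $\Phi(\Theta)$ of arrows it contracts, and to show that $\Phi$ is an order-isomorphism. Well-definedness of $\Phi$ is immediate from the definition of $\forces$: if $\Theta$ contracts $q$ and $q \forces q'$, then $\Theta \ge \con(q) \ge \con(q')$, so $\Theta$ also contracts $q'$; hence $\Phi(\Theta)$ is a forcing-ideal. Injectivity of $\Phi$ and the equivalence $\Theta \leq \Theta' \Leftrightarrow \Phi(\Theta) \subseteq \Phi(\Theta')$ both follow from arrow-determinedness: if $\Phi(\Theta) \subseteq \Phi(\Theta')$ and $x \equiv_\Theta y$, then every arrow of $\Hasse[x \meet y, x \join y]$ lies in $\Phi(\Theta) \subseteq \Phi(\Theta')$, and the arrow-determinedness of $\Theta'$ forces $x \equiv_{\Theta'} y$.

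The main content is surjectivity of $\Phi$. Given $\S \in \Ideals(\Hasse_1(L))$, I would set
\[\Psi(\S) = \Meet\{\Theta \in \Com L \mid \S \subseteq \Phi(\Theta)\},\]
which exists and is a complete congruence by Proposition~\ref{combmeetsl}, and by construction contracts every arrow of $\S$. Two tasks remain: (a) $\Psi(\S)$ is arrow-determined, so belongs to $\Comb L$; and (b) $\Psi(\S)$ contracts no arrow outside $\S$, i.e.\ $\Phi(\Psi(\S)) \subseteq \S$. I would handle these simultaneously by constructing, for each arrow $q \in \Hasse_1(L)$, a canonical arrow-determined complete congruence $\Theta_q$ whose contracted arrows are exactly $\{q' \mid q \forces q'\}$. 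Since $\S$ is a forcing-ideal, one then obtains $\Psi(\S) = \Join_{q \in \S} \Theta_q$ computed in $\Comb L$; complete semidistributivity ensures that the arrows contracted by this join are the set-theoretic union $\bigcup_{q \in \S} \Phi(\Theta_q) = \S$, giving (b). Claim (a) then follows from Proposition~\ref{arrdeteq} and Theorem~\ref{eqardalg} applied to the resulting quotient.

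The main obstacle is the construction of the principal congruences $\Theta_q$ and the verification that joins of them in $\Comb L$ introduce no extraneous contractions. This is exactly where both hypotheses on $L$ are needed: bialgebraicity allows one to reduce questions about arbitrary intervals and their arrows to information carried by completely join-irreducibles and completely meet-irreducibles, while complete semidistributivity controls how these principal pieces combine under joins and meets. Once $\Phi$ is established as an order-bijection, it is automatically a lattice isomorphism. Distributivity of $\Comb L$ then follows at once from that of $\Ideals(\Hasse_1(L))$, whose joins and meets are unions and intersections of forcing-ideals.
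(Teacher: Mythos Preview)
Your outline for well-definedness, injectivity, and the order equivalence is fine and matches the paper. The gap is in surjectivity: you reduce to constructing principal congruences $\Theta_q$ with $\Phi(\Theta_q)=\{q':q\forces q'\}$ and then taking their join in $\Comb L$, but you do not construct $\Theta_q$, and you yourself flag this as ``the main obstacle.'' This reduction does not simplify the problem: producing $\Theta_q$ is exactly the surjectivity statement for the principal ideal below $q$, so it is a special case of what you are trying to prove, not a stepping stone to it. Your subsequent claim that ``complete semidistributivity ensures that the arrows contracted by this join are the set-theoretic union'' is also unsubstantiated; joins of congruences generally contract more arrows than the union, and showing they do not here is essentially equivalent to the full theorem. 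Finally, note that your $\Psi(\S)$ is defined as a meet in $\Com L$ (this is $\com(\S)$), whereas your join $\Join_{q\in\S}\Theta_q$ lives in $\Comb L$ (this is $\comb(\S)$); identifying them presupposes that $\com(\S)$ is arrow-determined, which is your claim (a).

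The paper avoids this circularity by a direct construction valid for \emph{any} forcing ideal $\II$ at once: it defines $x\equiv y$ if and only if $\Hasse_1[x\meet y,x\join y]\subseteq\II$, and proves in Lemma~\ref{equivcong} that this is a complete, arrow-determined congruence contracting exactly $\II$. The key technical input is Lemma~\ref{trs}, which says that if two overlapping intervals have all their Hasse arrows in $\II$, then so does the interval they generate; this is where complete semidistributivity and bialgebraicity enter, via Proposition~\ref{bijfeqjirr} and Lemma~\ref{exforcb}. Transitivity and compatibility with arbitrary joins/meets then follow. This gives the inverse to $\Phi$ in one stroke, without any detour through principal pieces.
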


Before proving the theorems, we give an example showing that, usually, $\Comb L$ is much smaller than $\Com L$, even when $L$ is bialgebraic, completely distributive and arrow separated. 

\begin{example} \label{exconga}
 Consider \[L := \{(-\infty, x) \mid x \in \R\} \cup \{(-\infty, x] \mid x \in \R\} \cup \{\emptyset, \R\}\] totally ordered by inclusion. It is a complete, completely distributive, lattice. The arrows of $\Hasse L$ are $(-\infty, x] \to (-\infty, x)$ for each $x \in \R$, so $L$ is weakly atomic.

 Moreover, $\cjirr L = \{(-\infty, x] \mid x \in \R\}$ and $\cmirr L = \{(-\infty, x) \mid x \in \R\}$. The set of compact elements of $L$ is $\cjirr L \cup \{\emptyset\}$ and the set of co-compact elements is $\cmirr L \cup \{\R\}$, hence we see easily that $L$ is bialgebraic. Consider the congruence $\Theta$ of $L$ identifying $(0,x)$ with $(0,x]$ for each $x$. It is a complete congruence. Moreover, $L/\Theta = \R \cup \{-\infty, +\infty\}$ so $\Hasse(L/\Theta)$ has no arrows,  hence is not weakly atomic. In particular, $\Theta \in \Com L \setminus \Comb L$. More precisely, the only $\Theta' \in \Comb L$ that is bigger than $\Theta$ identifies all elements of $L$. Moreover, for each complete congruence of $\R \cup \{-\infty, +\infty\}$, there is a corresponding complete congruence of $L$ between $\Theta$ and $\Theta'$.
\end{example}

We now give several lemmas.

\begin{lemma} \label{lemsd}
 Suppose that $L$ is completely semidistributive.
 \begin{enumerate}[\rm(a)]
  \item Any interval $[u, v]$ of $L$ is completely semidistributive.
  \item For any $\Theta \in \Com L$, $L/\Theta$ is completely semidistributive.
 \end{enumerate}
\end{lemma}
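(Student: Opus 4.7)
For part (a), my plan is to observe that the lattice operations on the interval $[u,v]$ agree with those on $L$ for nonempty subsets. Precisely, for any nonempty $\S \subseteq [u,v]$, both $\Join \S$ and $\Meet \S$ computed in $L$ lie in $[u,v]$ (since $u \le \Meet \S \le \Join \S \le v$) and therefore coincide with the operations in $[u,v]$. Given $x \in [u,v]$ and such an $\S$, each hypothesis of complete semidistributivity applies verbatim in $L$, and the conclusion obtained in $L$ takes place inside $[u,v]$, yielding the desired identity in $[u,v]$.

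For part (b), I would exploit the adjoint structure of the canonical projection $q \colon L \to L/\Theta$. Since each $\Theta$-class is an interval of $L$, the assignments $\sigma, \tau \colon L/\Theta \to L$ defined by $\sigma(\bar x) = \pidown \bar x$ and $\tau(\bar x) = \piup \bar x$ are order-preserving. The characterization of the order on $L/\Theta$ recalled just after Proposition~\ref{realpidown}, namely $q(x) \le q(y)$ iff $\pidown x \le y$ iff $x \le \piup y$, translates directly into the adjunctions $\sigma \dashv q \dashv \tau$, and both adjoints satisfy $q \circ \sigma = q \circ \tau = \id_{L/\Theta}$. Applying Proposition~\ref{adjoint}(a) then gives that $\tau$ preserves arbitrary meets and $\sigma$ preserves arbitrary joins.

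To establish the meet direction of complete semidistributivity in $L/\Theta$, I would assume that $\bar x \meet \bar y = \bar x \meet \bar z$ for all $\bar y, \bar z$ in some $\S \subseteq L/\Theta$, and set $x := \tau(\bar x)$ together with $y_\alpha := \tau(\bar y_\alpha)$ for each $\bar y_\alpha \in \S$. Meet-preservation by $\tau$ then forces $x \meet y_\alpha = \tau(\bar x \meet \bar y_\alpha)$ to be \emph{genuinely constant} in $L$ as $\alpha$ varies, rather than merely constant modulo $\Theta$. Complete semidistributivity of $L$ yields $x \meet \Join_\alpha y_\alpha = x \meet y_{\alpha_0}$ in $L$, and applying the complete lattice morphism $q$ produces the desired identity $\bar x \meet \Join \S = \bar x \meet \bar y_{\alpha_0}$ in $L/\Theta$. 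The join direction follows by the exact dual, lifting the family via $\sigma$ and invoking the other implication of complete semidistributivity in $L$.

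The main obstacle I foresee lies precisely in this choice of lift: for a naive choice of representatives, the meets $x \meet y_\alpha$ would only be constant modulo $\Theta$, so the hypothesis of complete semidistributivity of $L$ would fail to apply. The use of the adjoints $\sigma$ and $\tau$ is what transports the hypothesis from $L/\Theta$ up to $L$ without loss, and the asymmetry between $\pidown L$ being only a complete join-sublattice of $L$ and $\piup L$ being only a complete meet-sublattice explains why the meet direction requires $\tau$ while the join direction requires $\sigma$.
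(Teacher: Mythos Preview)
Your proof is correct and follows essentially the same approach as the paper. The paper realizes $L/\Theta$ concretely as $\pidown L$ (respectively $\piup L$), uses that $\pidown L$ is a complete join-sublattice of $L$ to transfer the join hypothesis verbatim into $L$, applies complete semidistributivity there, and then pushes the conclusion back via the complete lattice morphism $\pidown$; your adjoint formulation with $\sigma \dashv q \dashv \tau$ is exactly the same argument in more abstract language, since $\sigma$ preserving arbitrary joins is precisely the statement that $\pidown L$ is a complete join-sublattice.
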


\begin{proof}
 (a) The property is immediate as $[u, v]$ is a complete sublattice of $L$.

 (b) 
Let $\S \subseteq \pidown L$ and $x \in \pidown L$ such that for all $y, z \in \S$, we have $x \joindn y = x \joindn z$, or equivalently $x \join y = x \join z$, since $\pidown L$ is a complete join-sublattice of $L$.
 As $L$ is completely semidistributive, we deduce that, $x \join \Meet \S = x \join z$ for all $z \in \S$. 
 Because $\pidown: L \to \pidown L$ is a morphism of complete lattices, we deduce $\pidown x \joindn \Meetdn \{\pidown y \mid y \in \S\} = \pidown x \joindn \pidown z$ for all $z \in \S$.
 As $x \in \pidown L$ and $\S \subseteq \pidown L$, we have $\pidown x = x$ and $\pidown z = z$ for any $z \in \S$ so $x \joindn \Meetdn \S = x \joindn z$.
 The dual argument, using $\piup L$, completes the proof.
\end{proof}

\begin{lemma} \label{lemalgint}\
 \begin{enumerate}[\rm (a)]
  \item If $x \in L$ is compact, then for any interval $[u, v]$ with $v \geq x$, we have $u \join x$ compact in $[u, v]$.
  \item If $x \in L$ is co-compact, then for any interval $[u, v]$ with $u \leq w$, we have $v \meet x$ co-compact in $[u, v]$.
  \item If $L$ is algebraic, any interval $[u, v]$ of $L$ is algebraic.
  \item If $L$ is co-algebraic, any interval $[u, v]$ of $L$ is co-algebraic.
 \end{enumerate}
\end{lemma}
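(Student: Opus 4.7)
The plan is to prove (a) and (c) directly, then obtain (b) and (d) by duality.

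For part (a), the key observation is that for an interval $[u,v]$ in $L$, joins taken in $[u,v]$ of subsets $\S \subseteq [u,v]$ coincide with joins taken in $L$: the $L$-join $\Join \S$ is bounded below by $u$ (using $u \le z$ for any $z \in \S$, assuming $\S$ nonempty, or $u$ itself in the empty case) and above by $v$, so it lies in $[u,v]$. Given this, suppose $u \join x \le \Join_{i \in I} y_i$ with each $y_i \in [u,v]$. Then $x \le \Join_{i \in I} y_i$ in $L$, and by compactness of $x$ in $L$ there is a finite subset $\{y_{i_1}, \dots, y_{i_n}\}$ with $x \le \Join_{j=1}^n y_{i_j}$. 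Joining with $u$ (which is $\le y_{i_j}$ for each $j$), we get $u \join x \le \Join_{j=1}^n y_{i_j}$, establishing compactness of $u \join x$ inside $[u,v]$.

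For part (c), let $y \in [u,v]$. Since $L$ is algebraic, write $y = \Join \S$ for some set $\S$ of compact elements of $L$. For each $z \in \S$, we have $z \le y \le v$, so by part (a) applied to the interval $[u,v]$, $u \join z$ is compact in $[u,v]$. Since $y \ge u$, we have $y = u \join y = u \join \Join_{z \in \S} z = \Join_{z \in \S}(u \join z)$, exhibiting $y$ as a join of compact elements of $[u,v]$. Hence $[u,v]$ is algebraic.

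Parts (b) and (d) follow by applying (a) and (c) to the opposite lattice $L^{\op}$, under which intervals $[u,v]$ in $L$ correspond to intervals $[v,u]$ in $L^{\op}$, compactness becomes co-compactness, and joins become meets. The only step requiring attention is to confirm that the statement of (b) is genuinely dual to (a), which is a routine check (the apparent stray ``$w$'' in the hypothesis of (b) is the dual condition $u \le x$). No step here presents real difficulty; the essential point, which recurs in all four parts, is the compatibility of joins and meets inside an interval with those of the ambient lattice.
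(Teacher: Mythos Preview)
Your proof is correct and follows essentially the same approach as the paper's: prove (a) and (c) directly and obtain (b), (d) by duality, with (a) reducing to the compactness of $x$ in $L$ and (c) following by writing an element as a join of compact elements and pushing up by $u$. Your explicit remark that nonempty joins in $[u,v]$ agree with those in $L$ makes a step explicit that the paper leaves implicit, and you correctly flag the typo ``$w$'' for ``$x$'' in (b).
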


\begin{proof}
 By symmetry, we prove (a) and (c).

 (a) Without loss of generality, suppose that $x \neq 0$. Let $\S \subseteq [u, v]$ such that $u \join x \leq \Join \S$. We have $x \leq \Join \S$ so there exists a (non-empty) finite subset $F \subseteq \S$ satisfying $x \leq \Join F$. Moreover, as $F \subseteq [u, v]$, $u \leq \Join F$ so $u \join x \leq \Join F$.

 (c) Let $x \in [u, v]$. As $L$ is algebraic, $x = \Join \S$ for a set $\S \subseteq L$ of compact elements. Then $x = \Join \{u \join y \mid y \in \S\}$. So, by (a), $x$ is a join of compact elements of $[u, v]$. So $[u, v]$ is algebraic.
\end{proof}

\begin{lemma} \label{cjitocompact}\
 \begin{enumerate}[\rm (a)]
  \item If $L$ is algebraic and $x \in L$ is completely join-irreducible, then $x$ is compact.
  \item If $L$ is co-algebraic and $x \in L$ is completely meet-irreducible, then $x$ is co-compact.
 \end{enumerate}
\end{lemma}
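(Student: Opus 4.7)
By the evident duality between algebraicity and co-algebraicity, and between completely join-irreducible and completely meet-irreducible elements, it suffices to prove (a), with (b) following by the dual argument applied to the opposite lattice $L^{\mathrm{op}}$.

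For (a), the plan is to apply the algebraicity hypothesis directly and then invoke the strong form of join-irreducibility. Assume $L$ is algebraic and $x \in L$ is completely join-irreducible. By definition of algebraicity, there exists a set $\S \subseteq L$ consisting of compact elements with
\[
x = \Join \S.
\]
Since $x$ is completely join-irreducible, the definition recalled in Subsection~\ref{lat prelim} states that whenever $x$ is expressed as $\Join \S$ for a subset $\S \subseteq L$, one must have $x \in \S$. Applying this to the above expression yields $x \in \S$, so $x$ itself belongs to a set of compact elements, hence is compact.

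There is no real obstacle here: the result is essentially immediate from the two definitions once one notices that complete join-irreducibility, as formulated in the paper, forbids any expression of $x$ as a join of elements strictly below it, including in particular the expression guaranteed by algebraicity. The only thing to double-check is that the definition of complete join-irreducibility does not implicitly require $\S$ to be finite or to avoid $x$; since the paper's definition quantifies over all subsets $\S$ with $x \notin \S$, the argument goes through for the possibly infinite set of compact elements provided by algebraicity.
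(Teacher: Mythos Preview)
Your proof is correct and is essentially identical to the paper's own argument: write $x=\Join\S$ with $\S$ a set of compact elements (by algebraicity), then use complete join-irreducibility to conclude $x\in\S$, hence $x$ is compact. The duality reduction to (a) is also the same.
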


\begin{proof}
 Let us prove (a). As $L$ is algebraic, $x = \Join \S$ for a set $\S \subseteq L$ of compact elements. As $x$ is completely join-irreducible, $x \in \S$, so $x$ is compact.
\end{proof}

\begin{lemma} \label{lemalgquot}
  Let $L$ be a complete lattice and $\Theta \in \Com L$.
 \begin{enumerate}[\rm (a)]
  \item For $u \in \pidown L$, we have $u \in \cjirr L$ if and only if $u \in \cjirr (\pidown L)$.
  \item For $u \in \piup L$, we have $u \in \cmirr L$ if and only if $u \in \cmirr (\piup L)$.
  \item For $u \in \pidown L$, $u$ is compact in $L$ if and only if it is compact in $\pidown L$.
  \item For $u \in \piup L$, $u$ is co-compact in $L$ if and only if it is co-compact in $\piup L$.
 \end{enumerate}
\end{lemma}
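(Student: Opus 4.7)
My plan is to reduce to proving (a) and (c), since (b) and (d) are formally dual, obtained by applying the same argument to the opposite lattice and interchanging $\pidown$ and $\piup$, join and meet, compact and co-compact. The key ingredient I will use repeatedly is the discussion following Proposition~\ref{realpidown}: the map $\pidown:L\to\pidown L$ is a morphism of complete lattices, $\pidown L$ is a complete join-sublattice of $L$ so that $\Join\S=\Joindn\S$ for every $\S\subseteq\pidown L$, and $\pidown x\le x$ always, with equality exactly when $x\in\pidown L$.

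For (a), the forward direction is immediate: if $u\in\cjirr L$ and $\S\subseteq\pidown L$ represents $u$ as a join in $\pidown L$, then $\Joindn\S=\Join\S=u$ forces $u\in\S$ by join-irreducibility in $L$. The converse is the interesting step. Given $u\in\cjirr(\pidown L)$ and $\S\subseteq L$ with $u=\Join\S$, I would apply $\pidown$ to obtain $u=\pidown u=\Joindn\set{\pidown s \mid s\in\S}$, extract some $s_0\in\S$ with $\pidown s_0=u$ from complete join-irreducibility in $\pidown L$, and then conclude $s_0=u$ using both $s_0\le\Join\S=u$ and the fact that $u\in\pidown L$ is the minimum of its $\Theta$-class, so that $s_0\equiv_\Theta u$ forces $s_0\ge u$.

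For (c), the forward direction is again routine because joins agree on $\pidown L$, so any finite subcover of $u\le\Join\S$ in $L$ remains a finite subcover of $u\le\Joindn\S$ in $\pidown L$. For the converse, I will apply $\pidown$ to turn an $L$-cover $u\le\Join\S$ into a $\pidown L$-cover $u\le\Joindn\set{\pidown s\mid s\in\S}$, invoke compactness in $\pidown L$ to find a finite $F\subseteq\S$ with $u\le\Joindn\set{\pidown s\mid s\in F}=\pidown(\Join F)$, and then upgrade this to $u\le\Join F$ via $\pidown(\Join F)\le\Join F$. I do not expect any substantive obstacle; the only points that require care are the very last step of (c), where one must invoke $\pidown x\le x$ to lift a covering from $\pidown L$ back to $L$, and the analogous step in (a), where being in $\pidown L$ is precisely what upgrades $s_0\equiv_\Theta u$ with $s_0\le u$ to $s_0=u$.
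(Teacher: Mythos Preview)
Your proposal is correct and follows essentially the same approach as the paper: reduce to (a) and (c) by duality, use that $\pidown L$ is a complete join-sublattice for the forward directions, and push through $\pidown$ for the converses. The only cosmetic difference is in the last step of (a): where you invoke $\Theta$-class minimality to get $s_0\ge u$ from $s_0\equiv_\Theta u$, the paper simply notes $u=\pidown s_0\le s_0$ directly, which is the same fact in slightly more economical form.
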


\begin{proof}
 By symmetry, we prove (a) and (c).

 (a) Suppose that $u \in \cjirr L$. Let $\S \subseteq \pidown L$ such that $u = \Joindn \S$, so $u = \Join \S$, as $\pidown L$ is a complete join-sublattice of $L$. As $u \in \cjirr L$, we get $u \in \S$. So $u \in \cjirr (\pidown L)$. Conversely, suppose $u \in \cjirr (\pidown L)$ and consider $\S \subseteq L$ such that $u = \Join \S$. As $\pidown: L \to \pidown L$ is a morphism of complete lattice, we get $u = \pidown u = \Joindn \{\pidown x \mid x \in \S\}$. So $u = \pidown x$ for some $x \in \S$. In particular $u \leq x$. As $u = \Join \S  \geq x$, we get $u = x$. Finally, $u \in \cjirr L$.

 (c) Suppose that $u$ is compact in $L$. Let $\S \subseteq \pidown L$ such that $u \leq \Joindn \S = \Join \S$. As $u$ is compact in $L$, there exists a finite subset $F \subseteq \S$ such that $u \leq \Join F = \Joindn F$. So $u$ is compact in $\pidown L$. Conversely, suppose that $u$ is compact in $\pidown L$ and let $\S \subseteq L$ such that $u \leq \Join \S$. We get $u = \pidown u \leq \Joindn \{\pidown x \mid x \in \S\}$, so $u \leq \Joindn \{\pidown x \mid x \in F\}$ for some finite subset $F \subseteq \S$. For any $x \in F$, $\pidown x \leq x$ so $u \leq \Join \{\pidown x \mid x \in F\} \leq \Join F$. So $u$ is compact in $L$.
\end{proof}

The following lemma is known.
(See, for example, \cite[6.1]{CD} or \cite[Lemma~2.1]{AGT}.)

\begin{lemma} \label{algtocji}
 Let $L$ be a complete lattice.
 \begin{enumerate}[\rm(a)]
  \item If $L$ is co-algebraic, then any $x \in L$ is a join of elements of $\cjirr L$.
  \item If $L$ is algebraic, then any $x \in L$ is a meet of elements of $\cmirr L$.
 \end{enumerate}
\end{lemma}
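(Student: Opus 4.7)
By the duality between the two statements, it suffices to prove (a); assume $L$ is co-algebraic and fix $x\in L$. Let
\[
y := \Join\{j\in\cjirr L \mid j\le x\}.
\]
Clearly $y\le x$, and the goal is to derive a contradiction from the assumption $y<x$.

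First, I would extract a co-compact separator. Since $L$ is co-algebraic, one checks (as in the argument just before Lemma~\ref{cjitocompact}) that any $z\in L$ is the meet of all co-compact elements above $z$. Applying this to $y$, the fact that $y<x$ forces the existence of a co-compact element $c_0\in L$ with $c_0\ge y$ and $c_0\not\ge x$. Now consider the set
\[
P := \{z\in L \mid z\le x,\ z\not\le c_0\},
\]
which contains $x$ and is therefore nonempty. I would apply Zorn's Lemma to $P$ ordered by the reverse of the partial order on $L$ to produce a minimal element of $P$. To verify the Zorn hypothesis, take a chain $C\subseteq P$ (in the order of $L$) and set $z:=\Meet C$. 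Clearly $z\le x$. If we had $z\le c_0$, co-compactness of $c_0$ would yield a finite subset $F\subseteq C$ with $\Meet F\le c_0$; but $F$ is a finite chain, so $\Meet F$ is its minimum, which lies in $P$, a contradiction. Hence $z\in P$, so Zorn applies and yields a minimal $j\in P$.

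Next I would argue that $j$ is completely join-irreducible. Since $0\le c_0$ and $j\not\le c_0$ we have $j\neq 0$, so $j\neq\Join\emptyset$. Suppose $j=\Join S$ for some $S\subseteq L$ with $j\notin S$. Each $s\in S$ satisfies $s\le j\le x$ and $s\neq j$, so $s<j$; minimality of $j$ in $P$ then forces $s\notin P$, and since $s\le x$ this gives $s\le c_0$. Taking the join over $S$ yields $j=\Join S\le c_0$, contradicting $j\in P$. Thus $j\in\cjirr L$.

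Finally, $j\in\cjirr L$ with $j\le x$ implies $j\le y$ by the definition of $y$, and then $j\le y\le c_0$, contradicting $j\not\le c_0$. Therefore $y=x$, proving (a). Part (b) is dual. The only subtle point, and the one I would take most care with, is the Zorn step: one must ensure that the meet of a chain stays outside $c_0$, and this is precisely where co-compactness of $c_0$ (rather than merely its existence) enters the argument.
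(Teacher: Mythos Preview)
Your proof is correct and follows essentially the same approach as the paper: define the join $y$ of all completely join-irreducible elements below $x$, use co-algebraicity to find a co-compact separator $c_0$ above $y$ but not above $x$, apply Zorn's Lemma (using co-compactness to show meets of chains stay in $P$) to find a minimal element of $P=\{z\le x\mid z\not\le c_0\}$, and verify that this minimal element is completely join-irreducible, yielding the contradiction. The only cosmetic difference is that your parenthetical pointer to ``the argument just before Lemma~\ref{cjitocompact}'' does not actually match anything in the paper, but the claim you justify there (that every element of a co-algebraic lattice is the meet of the co-compact elements above it) is an immediate consequence of the definition, so no gap results.
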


\begin{proof}
 By symmetry, we prove (a). Consider $x' = \Join \{z \in \cjirr L \mid z \leq x\}$. It suffices to prove that $x' = x$. We have $x' \leq x$.
 Suppose that $x \not\leq x'$. As $L$ is co-algebraic, $x' = \Meet \S$ for some subset $\S \subseteq L$ of co-compact elements. Then $x \not\leq x'$ implies $x \not\leq y$ for some $y \in \S$.

 Let $E = \{z \in [0,x] \mid z \not\leq y\}$. As $x \in E$, $E$ is non-empty. If $\II \subseteq E$ is non-empty and totally ordered, then $\Meet \II \in E$. Indeed, if $\Meet \II \leq y$, as $y$ is co-compact, there exists a non-empty finite subset $F \subseteq \II$ such that $\Meet F \leq y$. As $F$ is non-empty, finite and totally ordered, $\Meet F \in F \subseteq E$, which contradicts $\Meet F \leq y$. Hence, by Zorn's Lemma, $E$ admits a minimal element $z$. We claim that $z$ is completely join-irreducible. Indeed, if $z = \Join \S'$ for some $\S' \subseteq L$, $z \not\leq y$ implies $z' \not\leq y$ for some $z' \in \S'$. As $z' \leq z$, by minimality of $z$ in $E$, we have $z' = z$.

 As $z \in \cjirr L$ and $z \leq x$, we have $z \leq x'$ by definition. As  $x' = \Meet \S$ and $y \in \S$, we get $z \leq y$, which contradicts $z \in E$.
\end{proof}

\begin{lemma} \label{exar}
 If $x \in L$ is compact and $x \neq 0$, then there exists an arrow in $\Hasse L$ starting at $x$.
\end{lemma}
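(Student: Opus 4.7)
The plan is to produce the desired cover by a Zorn's lemma argument on the set of strict predecessors of $x$, using compactness to show that suprema of chains stay strictly below $x$.

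First I would introduce $\S := \{y \in L \mid y < x\}$. Since $x \neq 0$, we have $0 \in \S$, so $\S$ is non-empty. An arrow in $\Hasse L$ starting at $x$ is exactly a maximal element of $\S$, so the goal becomes producing such a maximal element via Zorn's lemma.

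The key step is to verify that every chain $C \subseteq \S$ has an upper bound in $\S$. Set $y^\star := \Join C$; since $L$ is a complete lattice, this exists. Clearly $y^\star \leq x$. The hard part is to show $y^\star \neq x$, and this is where compactness of $x$ enters. If we had $y^\star = x$, compactness would give a finite subset $F \subseteq C$ with $x \leq \Join F$. But $C$ is a chain, so $\Join F$ is the maximum element of $F$ (which is non-empty since $x \neq 0$ forces $C$ to be non-empty once we handle the empty chain separately using $0 \in \S$ as lower bound), and that maximum lies in $C \subseteq \S$, forcing $\Join F < x$, a contradiction. Hence $y^\star < x$, so $y^\star \in \S$ is an upper bound of $C$.

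By Zorn's lemma, $\S$ has a maximal element $y$. By construction $y < x$ and there is no $z$ with $y < z < x$, so $x \to y$ is an arrow in $\Hasse L$. The only real subtlety is the compactness step (ensuring the supremum of a chain cannot reach $x$), which is exactly the point of the hypothesis; everything else is a standard application of Zorn's lemma.
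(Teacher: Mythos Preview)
Your proof is correct and follows essentially the same idea as the paper: use compactness to show that the join of any chain in $\{y\in L\mid y<x\}$ stays strictly below $x$, then invoke Zorn's lemma. The only organizational difference is that the paper applies Zorn's lemma to the set of chains in $\{y\in L\mid y<x\}$ (obtaining a maximal chain and then taking its join), whereas you apply Zorn directly to $\{y\in L\mid y<x\}$ to obtain a maximal element; your formulation is slightly more direct since it avoids the extra step of checking that the join of the maximal chain really is a cover.
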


\begin{proof} 
Consider the set of chains in $\{y \in L \mid y < x\}$, ordered by containment.
This partially ordered set satisfies the hypotheses of Zorn's Lemma, so there is a maximal totally ordered subset $\II$ of $\{y \in L \mid y < x\}$.
Let $z = \Join \II$. 
We have $z \leq x$, but since $x$ is compact, if $z=x$ then $x$ is a join of finitely many elements of $\II$.
However, since $\II$ is totally ordered, this join is strictly below $x$.
By this contradiction, we see that $z < x$.
If $u \in L$ satisfies $z \leq u < x$, we get that $\II \cup \{u\}$ is a totally ordered subset of $\{y \in L \mid y < x\}$, so by maximality of $\II$, $u \in \II$ so $u \leq z$ and $u = z$. Finally, there is an arrow $x \to z$ in $\Hasse L$.
\end{proof}

We now prove the first main result of this section.

\begin{proof}[Proof of Theorem~\ref{arrsep}] 
 By symmetry, we suppose that $L$ is algebraic. Hence, by Lemma \ref{lemalgint}(c), any interval $[u, v]$ of $L$ is algebraic.
 If $u < v$, since $v$ is a join of compact elements of $[u,v]$, there is an element $x\neq u$ in $[u, v]$ that is compact in $[u, v]$.
Thus, by Lemma \ref{exar}, $\Hasse [u, v]$ contains at least one arrow.
So $L$ is weakly atomic.
\end{proof}

The following proposition generalizes a known result for finite lattices (see for example \cite[Proposition~9-5.20(i)]{regions}).

\begin{proposition} \label{bijfeqjirr}  
 Suppose that $L$ is completely semidistributive. Let $x \to y$ be an arrow of $\Hasse L$. Then
 \begin{enumerate}[\rm (a)]
  \item There exists an arrow $j \to j_*$ in $\Hasse L$ forcing equivalent to $x \to y$ with $j$ completely join-irreducible and $j \leq x$, $j_* \leq y$ and $j \not\leq y$. 
  \item There exists an arrow $m^* \to m$ in $\Hasse L$ forcing equivalent to $x \to y$ with $m$ completely meet-irreducible and $m \geq y$, $m^* \geq x$ and $m \not\geq x$.
 \end{enumerate}
\end{proposition}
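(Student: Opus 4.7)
The plan is, given an arrow $x\to y$ in $\Hasse L$, to construct the required completely join-irreducible element $j$ explicitly as
\[j:=\Meet\set{z\in L\mid z\le x,\ z\not\le y},\]
and to denote this defining set by $\S$. First I would note that $\S$ is non-empty (it contains $x$) and that for every $z\in\S$ the cover relation $x\to y$ forces $y\join z=x$: the element $y\join z$ lies in the two-element chain $[y,x]$, and $y\join z=y$ would give $z\le y$, contradicting $z\in\S$. Applying the second bullet of complete semidistributivity with $y$ in the role of the base element and $\S$ as the indexed family (the equal-joins hypothesis being satisfied since $y\join z=x$ for every $z\in\S$), I would conclude
\[y\join j \;=\; y\join\Meet\S \;=\; x,\]
so in particular $j\not\le y$ while $j\le x$. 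Hence $j$ is the minimum element of $\S$.

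The next step is to verify that $j$ is completely join-irreducible. If $j=\Join\S'$ with $j\notin\S'$, then every $z'\in\S'$ satisfies $z'<j$; by minimality of $j$ in $\S$ this forces $z'\notin\S$, so $z'\le y$, and taking the join over $\S'$ yields $j\le y$, a contradiction. Hence $j\in\cjirr L$ and the unique element $j_*$ it covers exists, characterized by $\set{z\in L\mid z<j}=\set{z\in L\mid z\le j_*}$. Since $j_*<j$, minimality of $j$ in $\S$ gives $j_*\le y$; combined with $j_*\le j$, this yields $j_*\le j\meet y$. The reverse inequality comes from $j\meet y<j$ (which holds because $j\not\le y$) together with the above characterization of $j_*$, so $j\meet y=j_*$. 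This already gives the stated inequalities $j\le x$, $j_*\le y$, $j\not\le y$.

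The forcing equivalence of $x\to y$ with $j\to j_*$ then reduces to the four identities $j\join y=x$, $j_*\join y=y$, $j\meet x=j$, and $j\meet y=j_*$. Indeed, any complete congruence $\Theta$ contracting $x\to y$ satisfies $j=j\meet x\equiv_\Theta j\meet y=j_*$, and conversely any $\Theta$ contracting $j\to j_*$ satisfies $x=j\join y\equiv_\Theta j_*\join y=y$. Part (b) follows by the dual argument, taking $m:=\Join\set{z\in L\mid z\ge y,\ z\not\ge x}$ and using the first bullet of complete semidistributivity. The main subtlety is that $\S$ is \emph{a priori} a large set, so the \emph{complete} form of semidistributivity (as opposed to its finite version) is essential to produce $j$ as the minimum of $\S$ with $y\join j=x$; once that is in hand, the remainder is routine algebraic manipulation.
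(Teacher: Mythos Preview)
Your proof is correct and follows essentially the same approach as the paper's: define $j=\Meet\set{z\in L\mid z\le x,\ z\not\le y}$, use complete semidistributivity to get $y\join j=x$, deduce that $j$ is completely join-irreducible by the minimality argument, and conclude forcing equivalence from the identities $y\join j=x$ and $y\meet j\le j_*$. Your write-up is slightly more explicit (for instance, you verify $y\meet j=j_*$ rather than just $y\meet j\le j_*$), but the logical skeleton is identical.
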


\begin{proof}
 By symmetry, we prove (a).
We consider the set $\S := \{z \in  L \mid y \join z = x\}=\set{z\in L\mid z\le x,z\not\le y}$.
It is not empty as $x \in \S$.
Let $j := \Meet \S$.
By complete semidistributivity, $y \join j = x$, so in particular $j \leq x$ and $j \not\leq y$.
If $j$ is not completely join-irreducible, then it is a join of elements strictly below $j$, but then since all of those elements are also less than $y$, their join is below $y$, contradicting $y\join j=x$.
We conclude that $j$ is completely join-irreducible and thus that there is a unique arrow $j\to j_*$ in $\Hasse L$. By definition of $j$, we have $j_* \leq y$.
If a congruence has $y \equiv x$ then also $y \meet j \equiv x \meet j = j$, so, as $y \meet j \leq j_* \leq j$, we get $j_* \equiv j$.
Conversely, if $j \equiv j_*$ then also $x = y \join j \equiv y \join j_* = y$.
\end{proof}

\begin{lemma} \label{exforcb}
 Suppose that $L$ is completely semidistributive, $x \to y$ is an arrow of $\Hasse L$ and $[u, v]$ is an interval of $L$.
 \begin{enumerate}[\rm(a)]
  \item If $L$ is algebraic, $x \leq v$ and $u \meet x \leq y$, then there exists an arrow $z \to t$ in $[u, v]$ that forces $x \to y$.
  \item If $L$ is co-algebraic, $y \geq u$ and $v \join y \geq x$, then there exists an arrow $z \to t$ in $[u, v]$ that forces $x \to y$.
 \end{enumerate}
\end{lemma}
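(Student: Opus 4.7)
The plan is to prove (a); part (b) follows by an entirely dual argument (using completely meet-irreducible elements and co-compactness in place of their join analogues). The strategy is to replace $x \to y$ by a forcing-equivalent arrow $j \to j_*$ with $j$ completely join-irreducible, and then construct inside $[u,v]$ a suitable cover of $u \join j$ that forces $j \to j_*$.

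First I would apply Proposition~\ref{bijfeqjirr}(a) to obtain a completely join-irreducible element $j$ with unique lower cover $j_*$ such that $j \to j_*$ is forcing-equivalent to $x \to y$, $j \le x$, $j_* \le y$, and $j \not\le y$. Since $L$ is algebraic, Lemma~\ref{cjitocompact}(a) gives that $j$ is compact in $L$. The hypothesis $u \meet x \le y$ combined with $j \le x$ yields $u \meet j \le y$, and with $j \not\le y$ this forces $u \meet j < j$; complete join-irreducibility of $j$ then upgrades this to $u \meet j \le j_*$. In particular $j \not\le u$, so $u < u \join j$, while $u \join j \le v$ follows from $u \le v$ and $j \le x \le v$, placing $u \join j$ strictly above the bottom of $[u,v]$.

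Next I would produce the desired arrow. Lemma~\ref{lemalgint}(a) shows that $u \join j$ is compact inside the interval $[u,v]$, and Lemma~\ref{exar} applied within $[u,v]$ then yields a cover $u \join j \to t_0$ in $\Hasse[u,v]$; by convexity of $[u,v]$ in $L$, this is automatically an arrow of $\Hasse L$ as well. To verify that $u \join j \to t_0$ forces $x \to y$, let $\Phi$ be any lattice congruence with $u \join j \equiv_\Phi t_0$. Note that $j \not\le t_0$, since $j \le t_0$ combined with $u \le t_0$ would give $u \join j \le t_0$, contradicting $t_0 < u \join j$. Hence $t_0 \meet j < j$, and complete join-irreducibility of $j$ gives $t_0 \meet j \le j_*$. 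Meeting $u \join j \equiv_\Phi t_0$ with $j$ produces $j \equiv_\Phi t_0 \meet j$, and then joining with $j_*$ (using $t_0 \meet j \le j_*$ and $j_* \le j$) produces $j_* \equiv_\Phi j$; thus $\Phi$ contracts $j \to j_*$ and, by forcing-equivalence, also $x \to y$.

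The main subtlety lies in the choice of where to look for the arrow. A naive attempt to work in $[u \join y, u \join x]$ (or $[u \join j_*, u \join j]$) runs into the problem that this interval may collapse, or that its top need not admit a cover from below in the infinite setting, since without compactness Lemma~\ref{exar} gives nothing. Passing to the compact surrogate $j$ and to the slightly larger interval $[u, u \join j] \subseteq [u,v]$ sidesteps both issues at once: compactness of $u \join j$ in $[u,v]$ guarantees the cover $t_0$, while the bound $u \le t_0 < u \join j$ rules out $j \le t_0$ and thereby provides the crucial separation $t_0 \meet j \le j_* < j$ that drives the forcing step.
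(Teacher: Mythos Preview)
Your proof is correct and follows essentially the same route as the paper: both pass to a forcing-equivalent arrow $j\to j_*$ with $j$ completely join-irreducible via Proposition~\ref{bijfeqjirr}(a), use compactness of $j$ (Lemma~\ref{cjitocompact}) to make $u\join j$ compact in the relevant interval, produce an arrow $u\join j\to t$ by Lemma~\ref{exar}, and conclude by meeting with $j$. The only cosmetic difference is that the paper first reduces to the interval $[u\meet x,v]$ (so that $u\meet x=0$ and $v=1$), whereas you work directly in $L$ and derive $u\meet j\le j_*$ from $u\meet x\le y$ and $j\not\le y$; both arguments are equivalent.
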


\begin{proof}
By symmetry, we only prove (a).
By Lemmas \ref{lemsd} and \ref{lemalgint}, $[u \meet x, v]$ is completely semidistributive and algebraic, so we can suppose without loss of generality that $u \meet x = 0$ and $v = 1$.  
By Proposition \ref{bijfeqjirr}(a), there exists an arrow $j \to j_*$ in $\Hasse L$ that is forcing equivalent to $x \to y$ such that $j$ is completely join-irreducible and $j \leq x$.
As $L$ is algebraic, by Lemma \ref{cjitocompact}, $j$ is compact.
Hence, by Lemma \ref{lemalgint}, $z := u \join j$ is compact in $[u, 1]$.
As $u \meet x = 0$, $j \leq x$ and $j \neq 0$, we get that $j \not\leq u$ so $z > u$.
Hence, by Lemma \ref{exar}, there exists an arrow $z \to t$ in $\Hasse [u, 1]$.
By definition of $z$, we have $t \not\geq j$. As $j$ is completely join-irreducible, we deduce $t \meet j \leq j_* < j$.
For any congruence having $z \equiv t$, we have $j = z \meet j \equiv t \meet j$, so $j \equiv j_*$. Hence, $(z \to t) \forces (j \to j_*) \forceseq (x \to y)$.
\end{proof}

\begin{lemma} \label{trs}
 Suppose that $L$ is bialgebraic and completely semidistributive. Let $\II \in \Ideals(\Hasse_1 L)$. Consider two intervals
 $[u, v]$ and $[u', v']$ of $L$ such that
\[\Hasse_1 [u, v] \subseteq \II \quad \text{and} \quad \Hasse_1 [u', v'] \subseteq \II  \quad \text{and} \quad  [u, v] \cap [u', v'] \neq \emptyset.\] Then we have $\Hasse_1 [u \meet u', v \join v'] \subseteq \II$.
\end{lemma}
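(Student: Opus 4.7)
The plan is to prove Lemma \ref{trs} by showing that every arrow $q \colon z \to t$ in $\Hasse[u \meet u', v \join v']$ is forced by some arrow lying in $\Hasse[u, v] \cup \Hasse[u', v']$. Since $\Hasse_1[u, v] \cup \Hasse_1[u', v'] \subseteq \II$ by hypothesis and $\II \in \Ideals(\Hasse_1 L)$ is closed under $\forces$, this will imply $q \in \II$, giving the conclusion.

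The main tool will be Lemma \ref{exforcb}: given an arrow $q \colon z \to t$ in $\Hasse L$ and an interval $[x, y]$, part (a) produces an arrow in $\Hasse[x, y]$ forcing $q$ as soon as $z \leq y$ and $x \meet z \leq t$, while part (b) produces such an arrow as soon as $t \geq x$ and $y \join t \geq z$. I will fix an element $w \in [u, v] \cap [u', v']$, so that $u \join u' \leq w \leq v \meet v'$ and $w$ serves as a bridge between the two intervals. I then carry out a case analysis on the positions of $z$ and $t$ relative to $v, v', u, u'$, attempting to apply Lemma \ref{exforcb} to either $[u, v]$ or $[u', v']$. For instance, when $z \leq v$ and $t \geq u$, part (b) applies to $[u, v]$ because $v \join t \geq v \geq z$; the symmetric configuration uses $[u', v']$ instead. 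Intermediate cases (for example $z \leq v$ but $t \not\geq u$) are typically handled by part (a) of the same interval, using that the meet-type condition $u \meet z \leq t$ frequently reduces to the inequality $t \geq u \meet u'$ which is automatic.

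The main obstacle will be the "straddle" case, where $z$ lies below $v \join v'$ without lying below either $v$ or $v'$ individually, and $t$ lies above $u \meet u'$ without lying above either $u$ or $u'$. I expect to handle this by first using Proposition \ref{bijfeqjirr}(a) to replace $q$ by a forcing-equivalent arrow $j \to j_*$ where $j$ is completely join-irreducible; by bialgebraicity and Lemma \ref{cjitocompact}, $j$ is then compact. The join-irreducibility of $j$ makes the meet-type hypothesis of Lemma \ref{exforcb}(a) tractable, since $u \meet j \leq j_*$ holds precisely when $j \not\leq u$ (and similarly for $u'$). If even after this reduction none of the four direct conditions of Lemma \ref{exforcb} is satisfied for the endpoints $j, j_*$, I will chain the argument through intermediate intervals such as $[u, v \join v']$ and $[u \meet u', v]$ (together with their symmetric counterparts $[u', v \join v']$ and $[u \meet u', v']$), using $w$ as the bridge at each step, so that successive applications of Lemma \ref{exforcb} trace $q$ back to an arrow in $\Hasse[u, v] \cup \Hasse[u', v']$.
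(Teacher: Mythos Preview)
Your toolkit is right—Lemma \ref{exforcb}, Proposition \ref{bijfeqjirr}, and the bridge element $w$ are exactly what is needed—and the non-straddle cases after the completely join-irreducible reduction are fine: once $q$ is replaced by $j\to j_*$ with $j\in\cjirr L$, the constraint $j\not\le t$ together with $t\ge u\meet u'$ forces $j\not\le u$ or $j\not\le u'$, and if additionally $j\le v$ or $j\le v'$ one application of Lemma \ref{exforcb}(a) finishes. The problem is the straddle case. Your plan there is to apply Lemma \ref{exforcb} to an intermediate interval such as $[u,v\join v']$, obtaining an arrow $q'$ forcing $q$, and then ``successively'' trace $q'$ back. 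But $q'$ is an \emph{arbitrary} arrow of $\Hasse[u,v\join v']$: nothing prevents its own completely join-irreducible replacement from again satisfying $j'\not\le v$ and $j'\not\le v'$, so the recursion has no termination argument. (The earlier remark that ``$u\meet z\le t$ frequently reduces to $t\ge u\meet u'$'' is also not right in general—this only becomes tractable \emph{after} the join-irreducible reduction.)

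The paper avoids this loop by changing the target of the argument: instead of tracing a single arrow back, it proves that whole intermediate intervals have their Hasse arrows in $\II$, bootstrapping from the given ones. After reducing to $u\meet u'=0$, $v\join v'=1$, it first isolates the \emph{stacked} special case $v'=u$ (so the two intervals are $[0,u]$ and $[u,1]$); here the join-irreducible reduction gives a clean dichotomy ($j\le u$ or $j\not\le u$) with no straddle. For the general case it then shows, by a single application of Lemma \ref{exforcb} to $[u',v']$, that $\Hasse_1[0,u]\subseteq\II$ (using $x\le u\le w\le v'$ and $u'\meet x\le u'\meet u=0$), and dually $\Hasse_1[v,1]\subseteq\II$. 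Two applications of the stacked case—first to $[0,u]$ and $[u,v]$, then to $[0,v]$ and $[v,1]$—finish the proof. Your chaining idea can be salvaged along these lines: prove once and for all that every arrow of $\Hasse[u,v\join v']$ is forced by one in $\Hasse[u,v]\cup\Hasse[u',v']$ (use the \emph{meet}-irreducible reduction here, since $m\ge u$ is automatic and then either $m\not\ge v$ or $m\ge v\ge w\ge u'$), and only then feed the straddle case into it.
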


\begin{proof}
 By Lemmas \ref{lemsd} and \ref{lemalgint}, $[u \meet u', v \join v']$ is completely semidistributive and bialgebraic, so we can suppose without loss of generality that $u \meet u' = 0$ and $v \join v' = 1$, and prove that $\Hasse_1 L \subseteq \II$.

We suppose first that $v' = u$, so that $u' = 0$ and $v = 1$.
Consider an arrow $x \to y$ of $\Hasse L$.
By Lemma \ref{bijfeqjirr}, there exists an arrow $j \to j_*$ of $\Hasse L$ that is forcing equivalent to $x \to y$ with $j$ completely join-irreducible.
If $j \leq v' = u$, we have $(j \to j_*) \in \Hasse_1 [u', v'] \subseteq \II$.
Otherwise, $u \meet j < j$, so $u \meet j \leq j_*$ and by Lemma \ref{exforcb}(a), there is an arrow in $\Hasse_1 [u, 1] \subseteq \II$ that forces $j \to j_*$.
So $j \to j_*$ and $x \to y$ are in $\II$.

 Let us go back to the general case and fix $c \in [u, v] \cap [u', v']$. Consider an arrow $x \to y$ of $\Hasse [0, u]$. We have $x \leq u \leq c \leq v'$ and $u' \meet x \leq u' \meet u = 0 \leq y$, so by Lemma \ref{exforcb}(a), there is an arrow of $\Hasse [u', v'] \subseteq \II$ that forces $x \to y$, so $(x \to y) \in \II$. We proved that $\Hasse_1 [0, u] \subseteq \II$. Symmetrically, we get that $\Hasse_1 [v, 1] \subseteq \II$. So using the first case for the intervals $[0, u]$ and $[u, v]$, we deduce that $\Hasse_1 [0, v] \subseteq \II$. Using again the first case for $[0, v]$ and $[v, 1]$, we conclude $\Hasse_1 L \subseteq \II$.
\end{proof}

Let $\II \in \Ideals(\Hasse_1 L)$ and define the relation $\equiv$ on $L$ by $x \equiv y$ if and only if $\Hasse_1 [x \meet y, x \join y] \subseteq \II$.

\begin{lemma} \label{equivcong}
 Suppose that $L$ is completely semidistributive and bialgebraic. Then, the relation $\equiv$ is a complete congruence that is arrow-determined.
\end{lemma}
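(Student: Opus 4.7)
The plan is to verify, in order, that $\equiv$ is an equivalence relation, that it respects arbitrary joins, that it respects arbitrary meets, and finally that it is arrow-determined. Reflexivity and symmetry are immediate since $\Hasse_1[x,x]=\emptyset\subseteq\II$. For transitivity, given $x\equiv y$ and $y\equiv z$, I would apply Lemma~\ref{trs} to the intervals $[x\meet y,x\join y]$ and $[y\meet z,y\join z]$, which share the element $y$, obtaining $\Hasse_1[x\meet y\meet z,x\join y\join z]\subseteq\II$. Since cover relations in a convex subinterval of $L$ agree with cover relations of $L$, the arrows of $\Hasse_1[x\meet z,x\join z]$ sit inside this set, so $x\equiv z$.

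For the single-element case of join compatibility, assume $x\equiv y$ with $x\le y$ (the general reduction being trivial) and fix $z\in L$. For any arrow $p\to q$ in $\Hasse_1[x\join z,y\join z]$ we have $q\ge x$ and $y\join q\ge p$, so Lemma~\ref{exforcb}(b) produces an arrow in $[x,y]\subseteq\II$ that forces $p\to q$. As $\II$ is closed under the forcing preorder, $p\to q\in\II$, whence $x\join z\equiv y\join z$. Combined with transitivity, this gives compatibility with any finite join, and the meet case is strictly dual via Lemma~\ref{exforcb}(a).

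For arbitrary joins, suppose $x_i\equiv y_i$ for $i\in I$; after replacing each pair by $(x_i\meet y_i, x_i\join y_i)$ I may assume $x_i\le y_i$, and set $x=\Join x_i$, $y=\Join y_i$. Pick any arrow $p\to q$ in $\Hasse_1[x,y]$. By Proposition~\ref{bijfeqjirr}(a), there is a forcing-equivalent arrow $j\to j_*$ with $j\in\cjirr L$, $j\le p$, and $j\not\le q$. Since $L$ is algebraic, Lemma~\ref{cjitocompact}(a) makes $j$ compact, so $j\le Y:=y_{i_1}\join\cdots\join y_{i_n}$ for some finite subfamily; setting $X:=x_{i_1}\join\cdots\join x_{i_n}$, the finite case yields $\Hasse_1[X,Y]\subseteq\II$. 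If we had $X\meet j=j$, then $j\le X\le x\le q$, contradicting $j\not\le q$; thus $X\meet j<j$, and the complete join-irreducibility of $j$ forces $X\meet j\le j_*$. Now Lemma~\ref{exforcb}(a), applied to the arrow $j\to j_*$ and the interval $[X,Y]$, produces an arrow of $[X,Y]\subseteq\II$ that forces $j\to j_*$, so $j\to j_*\in\II$ and hence $p\to q\in\II$. Compatibility with arbitrary meets is strictly dual, using co-algebraicity, Proposition~\ref{bijfeqjirr}(b), Lemma~\ref{cjitocompact}(b), and Lemma~\ref{exforcb}(b).

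Arrow-determinedness is then formal: an arrow $u\to v$ is contracted by $\equiv$ iff $\Hasse_1[v,u]=\{u\to v\}\subseteq\II$, which is tautologically equivalent to $u\to v\in\II$. The defining condition $x\equiv y\Leftrightarrow\Hasse_1[x\meet y,x\join y]\subseteq\II$ then reads exactly as ``every arrow in $\Hasse_1[x\meet y,x\join y]$ is contracted'', so $\equiv$ is arrow-determined. The main obstacle is the infinitary step of join/meet compatibility, and this is precisely where the full force of bialgebraicity plus complete semidistributivity is used: compactness lets us reduce an infinite join to a finite one, while Proposition~\ref{bijfeqjirr} and Lemma~\ref{exforcb} let us transfer the contraction of an arbitrary Hasse arrow across intervals via forcing-equivalent join-irreducibles.
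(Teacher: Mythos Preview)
Your proof is correct, but it takes a genuinely different route from the paper's for the key step of compatibility with arbitrary joins.

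The paper does not separate out a finite case. Instead, for families $(x_i)$, $(y_i)$ with $x_i\equiv y_i$, it sets $u_i=x_i\meet y_i$, $v_i=x_i\join y_i$, $u=\Join u_i$, $v=\Join v_i$, and for an arrow in $\Hasse[u,v]$ passes (via Proposition~\ref{bijfeqjirr}(b) applied inside the interval $[u,v]$) to a forcing-equivalent arrow $m^*\to m$ with $m$ completely \emph{meet}-irreducible in $[u,v]$. Then, since $m\not\ge v$, there is a \emph{single} index $i$ with $m\not\ge v_i$; meet-irreducibility of $m$ forces $v_i\join m\ge m^*$, and Lemma~\ref{exforcb}(b) with the interval $[u_i,v_i]$ finishes. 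Thus the paper uses the \emph{meet}-irreducible side of Proposition~\ref{bijfeqjirr} and the (b)-half of Lemma~\ref{exforcb} to handle joins, locating one index directly.

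Your approach uses the \emph{join}-irreducible side: you pass to $j\to j_*$ with $j\in\cjirr L$, invoke compactness of $j$ (Lemma~\ref{cjitocompact}(a)) to descend to a finite sub-join $Y$, appeal to your already-established finite case to get $\Hasse_1[X,Y]\subseteq\II$, and then apply Lemma~\ref{exforcb}(a). This is a bit longer, since it requires the separate finite/single-element argument, but it makes the role of algebraicity (compactness of completely join-irreducibles) very explicit. The paper's argument is shorter and more symmetric: meet-irreducibility alone isolates the relevant index without any finite--infinite reduction. Both approaches exploit the same three ingredients---Proposition~\ref{bijfeqjirr}, Lemma~\ref{exforcb}, and Lemma~\ref{trs}---just wired together differently.
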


\begin{proof}
 First of all, it is clearly reflexive and symmetric. For the transitivity, suppose that $x \equiv y$ and $y \equiv z$. It means that \[\Hasse_1 [x \meet y, x \join y] \subseteq \II \quad \text{and} \quad \Hasse_1 [y \meet z, y \join z] \subseteq \II.\] As $y \in [x \meet y, x \join y] \cap [y \meet z, y \join z]$, by Lemma \ref{trs}, we get $\Hasse_1 [x \meet y \meet z, x \join y \join z] \subseteq \II$, so $\Hasse_1 [x \meet z, x \join z] \subseteq \II$, so $x \equiv z$. Therefore $\equiv$ is an equivalence relation.

We consider an index set $\S$ and two families $(x_i)_{i \in \S}$ and $(y_i)_{i \in \S}$ such that $x_i \equiv y_i$ for all $i \in \S$. Let $x = \Join_{i \in \S} x_i$ and $y = \Join_{i \in \S} y_i$ and let us prove that $x \equiv y$.

For each $i$, denote $u_i = x_i \meet y_i$ and $v_i = x_i \join y_i$, $u = \Join_{i \in \S} u_i$ and $v = \Join_{i \in \S} v_i$.
We have $[x \meet y, x \join y] \subseteq [u, v]$, so it suffices to prove that $\Hasse_1 [u, v] \subseteq \II$.  
Consider an arrow $m^* \to m$ of $\Hasse_1 [u, v]$ with $m$ completely meet-irreducible in $[u, v]$.
As $m \not\geq v$, there exists $i \in \S$ such that $m \not\geq v_i$.
Then $v_i \join m > m$.
As $v_i \join m \in [u, v]$ and $m$ is completely meet-irreducible in $[u, v]$, we deduce $v_i \join m \geq m^*$.
So, by Lemma \ref{exforcb}(b), $m^* \to m$ is forced by an arrow of $\Hasse [u_i, v_i]$.
By definition, we have $\Hasse_1 [u_i, v_i] \subseteq \II$ so $(m^* \to m) \in \II$.
By Proposition \ref{bijfeqjirr}, each arrow of $\Hasse [u, v]$ is forcing equivalent to an arrow $m^* \to m$ of $[u, v]$ with $m$ completely meet-irreducible in $[u, v]$.
Hence, $\Hasse_1 [u, v] \subseteq \II$.

 The proof that $\equiv$ is compatible with meets is dual.
 Finally, the fact that $\equiv$ is arrow-determined is a direct consequence of its definition.
\end{proof}

\begin{proof}[Proof of Theorem \ref{mainiso}]
There is a well defined, order-preserving map from $\Comb L$ to $\Ideals(\Hasse_1 L)$ mapping a congruence to the set of arrows it contracts. By definition of arrow-determined congruences, this map is injective, and by Lemma \ref{equivcong}, it is surjective.
The inverse map is order-preserving as well, so the map is an isomorphism of complete lattices.
Since $\Ideals(\Hasse_1 L)$ is closed under union and intersection, distributivity of $\Comb L$ follows. 
\end{proof}

Before proving Theorem \ref{eqardalg}, we need a last lemma.

\begin{lemma} \label{quotcomp}
 Consider a complete congruence $\Theta$ of $L$.  
 \begin{enumerate}[\rm(a)]
  \item Let $j \in \cjirr L$ such that $j \to j_*$ is not contracted by $\Theta$. Then its image $\overline{j}$ in $L/\Theta$ is completely join-irreducible. If $L$ is algebraic, then $\overline{j}$ is compact.
  \item Let $m\in \cmirr L$ such that $m^* \to m$  is not contracted by $\Theta$. Then its image $\overline{m}$ in $L/\Theta$ is completely meet-irreducible. If $L$ is co-algebraic, then $\overline{m}$ is co-compact.
 \end{enumerate}
\end{lemma}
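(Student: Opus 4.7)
I would prove part~(a) first; part~(b) then follows by the order-theoretic duality already exploited in Lemmas~\ref{lemalgquot}, \ref{cjitocompact}, and elsewhere. Using Proposition~\ref{realpidown} I identify $L/\Theta$ with the subposet $\pidown L$, so that $\overline{j}$ corresponds to $\pidown j$. The task then becomes to show that $\pidown j \in \cjirr(\pidown L)$, and that $\pidown j$ is compact in $\pidown L$ whenever $L$ is algebraic.

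The single nontrivial step, which I would carry out at the start, is to prove that $\pidown j = j$: the non-contraction hypothesis on the arrow $j \to j_*$ forces $j$ to sit at the bottom of its own $\Theta$-class. Indeed, $\pidown j \leq j$ always holds, and if $\pidown j < j$ held, then since $j$ is completely join-irreducible we would have $\pidown j \leq j_*$; joining with $j_*$ in the relation $\pidown j \equiv_\Theta j$ would then give $j_* = \pidown j \join j_* \equiv_\Theta j \join j_* = j$, contradicting the assumption that $j\to j_*$ is not contracted. Hence $\pidown j = j$.

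From here everything is machinery supplied by the lemmas already proved. Since $j \in \pidown L$, Lemma~\ref{lemalgquot}(a) upgrades $j \in \cjirr L$ to $j \in \cjirr(\pidown L)$, which under the identification $\pidown L \cong L/\Theta$ says exactly that $\overline{j}$ is completely join-irreducible. If in addition $L$ is algebraic, Lemma~\ref{cjitocompact}(a) gives that $j$ is compact in $L$, and Lemma~\ref{lemalgquot}(c) then transfers this compactness to $\pidown L$, hence to $L/\Theta$. Part~(b) is obtained by replacing $\pidown$ with $\piup$, $j \to j_*$ with $m^* \to m$, and invoking Lemmas~\ref{lemalgquot}(b), \ref{cjitocompact}(b), and \ref{lemalgquot}(d) in the analogous order. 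There is no real obstacle: the whole argument is a short reduction, the only content being the observation that a non-contracted cover out of a completely join-irreducible element pins it to the bottom of its congruence class.
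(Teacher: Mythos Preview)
Your proof is correct and follows essentially the same route as the paper's: identify $L/\Theta$ with $\pidown L$, show $\pidown j = j$, then invoke Lemmas~\ref{lemalgquot}(a), \ref{cjitocompact}(a), and \ref{lemalgquot}(c). The only difference is that the paper compresses your paragraph-long justification of $\pidown j = j$ into the single clause ``As $j \in \cjirr L$ is not contracted, $j = \pidown j \in \pidown L$,'' treating it as immediate; your spelled-out argument via joining with $j_*$ is exactly the right way to unpack that claim.
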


\begin{proof}
 By symmetry, we prove (a).  
 As $j \in \cjirr L$ is not contracted, $j = \pidown j \in \pidown L$.
 So, by Lemma \ref{lemalgquot}(a), $j \in \cjirr(\pidown L)$.
 If $L$ is algebraic, by Lemma \ref{cjitocompact}(a), $j$ is compact in $L$ so by Lemma \ref{lemalgquot}(c), $j$ is compact in $\pidown L$.
\end{proof}

\begin{proof}[Proof of Theorem \ref{eqardalg}]
First of all, $\Theta$ is arrow-determined if and only if $L/\Theta$ is weakly atomic is Proposition \ref{arrdeteq}. Moreover, if $L/\Theta$ is bialgebraic, then $L/\Theta$ is weakly atomic by Theorem \ref{arrsep}. 

Conversely, suppose that $\Theta$ is arrow-determined.
Let $x \in L$ and \[E = \{j \in \cjirr L \mid j \leq x \text{ and $j \to j_*$ is not contracted by $\Theta$}\}\] and $x' = \Join E$.
We have $x' \leq x$. Suppose that $x' \not\equiv_\Theta x$.
As $\Theta$ is arrow-determined, there exists an arrow $y \to z$ in $\Hasse [x', x]$ that is not contracted by $\Theta$.
By Proposition \ref{bijfeqjirr}(a), there exists $j \to j_*$ in $\Hasse L$ that is forcing equivalent to $y \to z$ such that $j \in \cjirr L$ and $j \leq y \leq x$.
In particular, $j \to j_*$ is not contracted by $\Theta$ so $j \in E$.
As $j\not\leq x'$, this contradicts the definition of $x'$.
We proved that $x' \equiv_\Theta x$.

 Moreover, by Lemma \ref{quotcomp}(a), the images of the elements of $E$ are compact in $L/\Theta$, so any element of $L/\Theta$ is a join of compact elements. We proved that $L/\Theta$ is algebraic. Symmetrically, $L/\Theta$ is co-algebraic.
\end{proof}

We finish this subsection by noticing that, in our setting, the forcing order coincides with the \emph{complete forcing order} and the \emph{arrow-determined forcing order}.

For each subset $\S \subseteq \Hasse_1(L)$, we define $\com(\S)$ to be the minimum complete lattice congruence that contracts all elements of $\S$ and $\comb{\S}$ to be the minimal arrow-determined complete lattice congruence that contracts all elements of $\S$.
We define the \emph{complete forcing order} on $\Hasse_1(L)$ by
\[a \forces[c] b\Longleftrightarrow\com(a)\ge\com(b)\text{ in $\Com L$},\]
and the \emph{arrow-determined forcing order} on $\Hasse_1(L)$ by
\[a \forces[ca] b\Longleftrightarrow\comb(a)\ge\comb(b)\text{ in $\Comb L$}.\]

While it is elementary that for $\S \subseteq \Hasse_1(L)$ the congruences $\con(\S)$, $\com(\S)$ and $\comb(\S)$ are in general distinct, when $L$ is completely semidistributive and bialgebraic, we obtain the following proposition.

\begin{proposition} \label{concom}
 Suppose that $L$ is completely semidistributive and bialgebraic. The complete forcing order, the arrow-determined forcing order and the forcing order coincide.
\end{proposition}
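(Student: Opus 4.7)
My plan is to establish the coincidence of $\forces$, $\forces[c]$, and $\forces[ca]$ by (i) observing that the implications $\forces \Rightarrow \forces[c] \Rightarrow \forces[ca]$ follow formally from the filtration $\Comb L \subseteq \Com L \subseteq \Con L$, and (ii) closing the loop by using Theorem~\ref{mainiso} to realise any $\forces$-ideal as the contraction set of an arrow-determined complete congruence.

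First I would note that, for any arrow $a$, the inclusions $\Comb L \subseteq \Com L \subseteq \Con L$ force $\con(a) \leq \com(a) \leq \comb(a)$ in the refinement order, since the minimum over a more restricted class of congruences can only grow. Consequently, if every lattice congruence contracting $a$ must contract $b$, then the same holds \emph{a fortiori} for every complete, and every arrow-determined complete, congruence. This yields the easy chain $\forces \Rightarrow \forces[c] \Rightarrow \forces[ca]$.

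The substantive direction is $\forces[ca] \Rightarrow \forces$, and it is here that the hypotheses of complete semidistributivity and bialgebraicity are used. Given $a \forces[ca] b$, I will form the set
\[\S_a := \{c \in \Hasse_1(L) \mid a \forces c\}.\]
Reflexivity of $\forces$ puts $a \in \S_a$, and transitivity makes $\S_a$ closed under $\forces$, so $\S_a \in \Ideals(\Hasse_1(L))$. Theorem~\ref{mainiso} then supplies a $\Theta \in \Comb L$ whose contraction set is exactly $\S_a$. Since $\Theta$ is an arrow-determined complete congruence contracting $a$, the minimality of $\comb(a)$ yields $\comb(a) \leq \Theta$, so every arrow contracted by $\comb(a)$ is also contracted by $\Theta$. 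The assumption $a \forces[ca] b$ means that $\comb(a)$ contracts $b$, whence $b \in \S_a$, that is, $a \forces b$.

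I do not anticipate a genuine obstacle here: once Theorem~\ref{mainiso} is invoked, everything reduces to monotonicity and the definitions of the three forcing relations. The only delicate point worth flagging is that the ideal $\S_a$ is taken with respect to the \emph{lattice-congruence} relation $\forces$ (not the weaker $\forces[ca]$), which is automatic from its definition and is precisely what lets Theorem~\ref{mainiso} promote $\S_a$ to a congruence in $\Comb L$ and thereby bridge the gap between $\forces$ and $\forces[ca]$.
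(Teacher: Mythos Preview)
Your proof is correct and follows essentially the same route as the paper: the easy implications come from the inclusions $\Comb L \subseteq \Com L \subseteq \Con L$, and for the converse you form the $\forces$-ideal $\S_a=\{c\in\Hasse_1(L)\mid a\forces c\}$ (which is exactly the set of arrows contracted by $\con a$, the paper's $\II$), invoke Theorem~\ref{mainiso} to realise it as the contraction set of some $\Theta\in\Comb L$, and then use $\comb(a)\le\Theta$ to pull $b$ into $\S_a$.
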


\begin{proof}
 First of all, it is immediate that $q \forces q' \Rightarrow q \forces[c] q' \Rightarrow q \forces[ca] q'$. Conversely, let $q: x \to y$ and $q': x' \to y'$ be two arrows of $\Hasse L$ such that $q \forces[ca] q'$. It is immediate that the arrows contracted by $\con q$ form a forcing ideal $\II$. By Theorem \ref{mainiso}, there exists a complete arrow-determined congruence $\Theta$ contracting exactly the elements of $\II$. As $q \forces[ca] q'$, it means that $q' \in \II$ so $q \forces q'$.
\end{proof}

\subsection{Complete congruence uniformity}   

Continuing Section \ref{fbs}, we now generalize the notion of congruence uniformity to complete lattices. Let us consider a complete lattice $L$. We restrict our attention to an appropriate subset of the arrows of $\Hasse L$.
For $j \in \cjirr L$ and $\Theta \in \Com L$, we say that $\Theta$ \emph{contracts} $j$ if it contracts the arrow $j \to j_*$. For $m \in \cmirr L$, we say that $\Theta$ \emph{contracts} $m$ if it contracts $m^* \to m$. In the same way, for $j, j' \in \cjirr L$, we say that $j$ \emph{forces} $j'$ and we write $j \forces j'$ if $j \to j_*$ forces $j' \to j'_*$, and for $m, m' \in \cmirr L$, we say that $m$ \emph{forces} $m'$ and we write $m \forces m'$ if $m^* \to m$ forces $m'^* \to m'$. We denote by $\Ideals (\cjirr L)$ and $\Ideals (\cmirr L)$ the ideals of this relation.

\begin{definition} \label{ccongunifd}
 A complete lattice $L$ is \emph{completely congruence uniform} if the following conditions hold.
\begin{itemize}
\item Forcing is a partial order on $\cjirr L$ and
the map from $\Comb L$ to $\Ideals(\cjirr L)$ sending a congruence to the set of completely join-irreducible elements it contracts is a bijection.
\item Forcing is a partial order on $\cmirr L$ and the map from $\Comb L$ to $\Ideals(\cmirr L)$ sending a congruence to the set of completely meet-irreducible elements it contracts is a bijection.
\end{itemize}
\end{definition}

Notice that in Definition \ref{ccongunifd}, the two bijections $\Comb L \leftrightarrow \Ideals(\cjirr L)$ and $\Comb L \leftrightarrow \Ideals(\cmirr L)$ are automatically isomorphisms of complete lattices.

Notice that this definition is equivalent to the definition of a congruence uniform lattice when $L$ is finite. We now give easier criteria for congruence uniformity.

\begin{proposition} \label{congunifchar}
 For a complete lattice $L$, we have $\mathrm{(i)}\Rightarrow\mathrm{(ii)}\Rightarrow\mathrm{(iii)}$:
 \begin{enumerate}[\rm(i)]
  \item $L$ is completely congruence uniform.
  \item \begin{itemize}
  \item The map $\cjirr L\to\Comb L$ given by $j\mapsto \comb(j\to j_*)$ is a bijection between $\cjirr L$ and $\cjirr \Comb L$.
  \item The map $\cmirr L\to\Comb L$ given by $m\mapsto \comb(m^*\to m)$ is a bijection between $\cmirr L$ and $\cjirr \Comb L$.
  \end{itemize}
  \item \begin{itemize}
  \item The map $\cjirr L\to\Comb L$ given by $j\mapsto \comb(j\to j_*)$ is injective.
  \item The map $\cmirr L\to\Comb L$ given by $m\mapsto \comb(m^*\to m)$ is injective.
  \end{itemize} 
 \end{enumerate}
Moreover, if $L$ is completely semidistributive and bialgebraic, then $\mathrm{(iii)}\Rightarrow\mathrm{(i)}$.
\end{proposition}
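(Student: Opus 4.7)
The plan is to exploit two ingredients: Theorem~\ref{mainiso}, which identifies $\Comb L$ with $\Ideals(\Hasse_1(L))$ under the forcing preorder when $L$ is completely semidistributive and bialgebraic; and Proposition~\ref{bijfeqjirr}, which shows that every arrow of $\Hasse L$ is forcing-equivalent to an arrow of the form $j\to j_*$ with $j\in\cjirr L$ and dually to one of the form $m^*\to m$ with $m\in\cmirr L$. Both $(\mathrm{i})\Rightarrow(\mathrm{ii})$ and $(\mathrm{iii})\Rightarrow(\mathrm{i})$ reduce to transporting the standard bijection between a poset $P$ and the completely join-irreducible elements of its lattice $\Ideals P$ of order ideals (namely, the principal ideals).

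For $(\mathrm{i})\Rightarrow(\mathrm{ii})$, I invoke the hypothesized isomorphism $\Comb L\xrightarrow{\sim}\Ideals(\cjirr L)$ sending a congruence to the set of completely join-irreducibles it contracts. Since forcing is a partial order on $\cjirr L$ by (i), the completely join-irreducible elements of $\Ideals(\cjirr L)$ are exactly the principal forcing-ideals $\{j'\in\cjirr L\mid j\forces j'\}$, indexed by $j\in\cjirr L$. Transporting back, each such principal ideal corresponds to the smallest arrow-determined congruence contracting $j\to j_*$, that is, $\comb(j\to j_*)$; hence $j\mapsto\comb(j\to j_*)$ realizes a bijection $\cjirr L\xrightarrow{\sim}\cjirr\Comb L$. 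The meet-irreducible assertion follows by the dual argument applied to $\Comb L\cong\Ideals(\cmirr L)$. The implication $(\mathrm{ii})\Rightarrow(\mathrm{iii})$ is trivial.

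For $(\mathrm{iii})\Rightarrow(\mathrm{i})$ under the extra hypotheses, I combine Theorem~\ref{mainiso} with Proposition~\ref{bijfeqjirr}(a): the assignment $j\mapsto[j\to j_*]$ is surjective onto the set of forcing-equivalence classes in $\Hasse_1(L)$, and the injectivity hypothesis of (iii) forces distinct completely join-irreducibles to land in distinct classes, making this assignment a bijection compatible with the forcing preorder in both directions. In particular forcing is antisymmetric on $\cjirr L$, and this bijection induces an isomorphism $\Ideals(\Hasse_1(L))\cong\Ideals(\cjirr L)$. Composing with Theorem~\ref{mainiso} yields the desired isomorphism $\Comb L\cong\Ideals(\cjirr L)$, which by construction sends $\Theta$ to the set of completely join-irreducibles it contracts, fulfilling Definition~\ref{ccongunifd}. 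The meet-irreducible side is handled symmetrically via Proposition~\ref{bijfeqjirr}(b).

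The main obstacle I expect is bookkeeping: one must verify that under the composite identification $\Comb L\cong\Ideals(\Hasse_1(L))\cong\Ideals(\cjirr L)$, the congruence $\comb(j\to j_*)$ corresponds to the principal ideal generated by $j$, so that the bijection extracted from (iii) really matches the one required by (i). This follows from the minimality characterization of $\comb$ together with Proposition~\ref{bijfeqjirr}(a), but it must be tracked consistently through all three incarnations of the lattice.
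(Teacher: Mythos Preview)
Your proposal is correct and follows essentially the same approach as the paper's proof: for $(\mathrm{i})\Rightarrow(\mathrm{ii})$ both arguments identify the completely join-irreducible elements of $\Ideals(\cjirr L)$ with the principal forcing-ideals and transport this through the bijection $\Comb L\cong\Ideals(\cjirr L)$ given by (i); for $(\mathrm{iii})\Rightarrow(\mathrm{i})$ both use Theorem~\ref{mainiso} together with Proposition~\ref{bijfeqjirr} to identify $\Ideals(\Hasse_1(L))$ with $\Ideals(\cjirr L)$ once injectivity forces the map $j\mapsto[j\to j_*]$ to be a bijection onto the forcing-equivalence classes. Your explicit bookkeeping paragraph spells out a point the paper leaves implicit, but the logical content is the same.
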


\begin{proof}
 By symmetry, we consider the conditions about completely join-irreducible elements.

 (i) $\Rightarrow$ (ii) For $j \in \cjirr L$, we write $\II_j := \{j' \in \cjirr L \mid j \forces j'\}$. Consider a completely join-irreducible element $\II$ of $\Ideals(\cjirr L)$. We have $\II = \Join_{j \in \II} \II_j$, so, as $\II$ is completely join-irreducible, $\II = \II_j$ for some $j$. Conversely, it is immediate that, for any $j \in \cjirr L$, no proper subideal of $\II_j$ contains $j$ so $\II_j$ is completely join-irreducible. Then, the conclusion follows from the isomorphism $\Comb L \cong \Ideals(\cjirr L)$.

 (ii) $\Rightarrow$ (iii) This is immediate.

 We now suppose that $L$ is completely semidistributive and bialgebraic.

 (iii) $\Rightarrow$ (i) First of all, our assumption implies that the forcing on $\cjirr L$ is a partial order. Second, 
by Theorem \ref{mainiso}, there is an isomorphism from $\Ideals(\Hasse_1(L))$ to $\Comb L$, mapping $\II$ to $\Join_{q \in \II} \comb(q)$. Moreover, by Proposition \ref{bijfeqjirr}, in each forcing equivalence class of arrows of $\Hasse L$, there is an arrow $j \to j_*$ such that $j$ is completely join-irreducible in $L$.
Thus $\Comb L\cong\Ideals(\Hasse_1(L)) \cong \Ideals(\cjirr L)$. 
\end{proof}

Notice that, in general, (i), (ii) and (iii) of Proposition \ref{congunifchar} are not equivalent, as shown in the following example.

\begin{example}\

 (a) We take $L$ as in Example \ref{exconga}, and define $L' := L \cup \{\alpha\}$ where $\emptyset < \alpha < \R$, but for any $x \in \R$, $(-\infty, x)$ and $(-\infty, x]$ are not comparable with $\alpha$. We get easily that $\cjirr L' = \{(-\infty,x] \mid x \in \R\} \cup \{\alpha\}$ and $\cmirr L' = \{(-\infty,x) \mid x \in \R\} \cup \{\alpha\}$, and as in Example \ref{exconga}, $L'$ is complete and bialgebraic. 

 Moreover, if we consider the arrow $q_x: (-\infty,x] \to (-\infty,x)$ of $\Hasse L'$, it is immediate that $\comb(q_x)$ contracts only $q_x$. On the other hand, $\comb(\alpha \to \emptyset) = \comb(\R \to \alpha)$ identifies all elements of $L'$. So $L'$ satisfies (iii). But $\comb(\alpha \to \emptyset)$ is not completely join-irreducible in $\Comb L'$ as it is equal to $\Join_{x \in \R} \comb(q_x)$. So $L'$ does not satisfy (ii). It follows that $L'$
 is not completely semidistributive.

 (b) We now consider the lattice $L'' = L \cup \bar L$, where $\bar L$ is a copy of $L$, where we identify $\emptyset$ and $\bar \emptyset$ on the one hand, and $\R$ and $\bar \R$ on the other hand. Moreover, no other elements of $L$ and $\bar L$ are comparable. As before $L''$ is complete and bialgebraic. For each arrow $(x \to y) \in \Hasse_1(L'')$, we have $x \in \cjirr L''$,  $y \in \cmirr L''$ and $\comb(x \to y)$ contracts only $x \to y$. So $L''$ satisfies (ii).

 On the other hand, the forcing on $\cjirr L''$ is trivial, so $\Ideals(\cjirr L'') = 2^{\cjirr L''}$. We have a strict inclusion of ideals $\cjirr L \subsetneq \cjirr L''$, and \[\Join_{j \in \cjirr L} \comb(j \to j_*) = \Join_{j \in \cjirr L''} \comb(j \to j_*)\] is the maximum congruence, so (i) does not hold. As before, it implies that $L''$ is not completely semidistributive.
\end{example}

\section{Lattice of torsion classes}

\subsection{Elementary properties}\label{prelim-tor}
Let $k$ be a field.
We consider an associative, finite-dimensional $k$-algebra $A$ with an identity element.
We denote by $\mod A$ the category of finitely generated left $A$-modules. For $M\in\mod A$, we denote by $\add M$ the full subcategory of $\mod A$ consisting of direct summands of finite direct sums of copies of $M$.
For a class $\CC \subseteq \mod A$ , we define its orthogonal categories in $\mod A$ by
\begin{eqnarray*}
\CC^{\perp_A}=\CC^{\perp}&:=&\{X\in\mod A\mid\Hom_A(\CC,X)=0\},\\
{}^{\perp_A}\CC={}^{\perp}\CC&:=&\{X\in\mod A\mid\Hom_A(X,\CC)=0\}.
\end{eqnarray*}
We denote by $\Filt \CC$ the full subcategory of $A$-modules filtered by modules in $\CC$. Moreover, when $\CC_1, \dots, \CC_n \subseteq \mod A$, $\Filt(\CC_1, \dots, \CC_n) := \Filt(\CC_1 \cup \cdots \cup \CC_n)$.

We say that a full subcategory $\TT$ of $\mod A$ is a \newword{torsion class} (respectively, \newword{torsion-free class})
if it is closed under factor modules (respectively, submodules), isomorphisms and extensions.
For any subcategory $\CC$ of $\mod A$, ${}^\perp\CC$ is a torsion class and $\CC^\perp$ is a torsion-free class.
We denote by $\tors A$ (respectively, $\torf A$) the set of torsion classes (respectively, torsion-free classes) in $\mod A$.
The set $\tors A$ is closed under intersection, so it forms a complete lattice with respect to inclusion, with a unique maximal element $\mod A$ and a unique minimal element $\{0\}$ \cite[Proposition 1.3]{IRTT}.
The meet is intersection and the join $\Join \S$ of $\S \subset \tors A$ is the meet of all upper bounds of $\S$. Alternatively, $\Join \S$ is given explicitly as $\Filt (\Fac \S)$, the full subcategory of $\mod A$ consisting of modules that are filtered by quotients of modules in $\S$.
The set $\torf A$ is similarly a complete lattice with respect to inclusion.
For any subcategory $\X \subseteq \mod A$, there is a smallest torsion class $\T(\X)$ containing $\X$, namely the meet (\emph{i.e.}\ intersection) of all torsion classes containing $\X$.
We have anti-isomorphisms
\[(-)^\perp:\tors A\to\torf A\ \text{ and }\ {}^\perp(-):\torf A\to\tors A\]
of complete lattices.
A \emph{torsion pair} is a pair $(\TT,\FF)$ of a torsion class $\TT$ and a torsion-free class $\FF$ in $\mod A$ satisfying $\TT^{\perp}=\FF$ and $\TT={}^{\perp}\FF$.

We start by proving that the lattice $\tors A$ enjoys the properties investigated in Section \ref{fbs}. 

\begin{theorem} \label{semidistributiveandbialg}
 Let $A$ be a finite-dimensional algebra.
 \begin{enumerate}[\rm(a)]
  \item The lattice $\tors A$ is completely semidistributive.
  \item The lattice $\tors A$ is bialgebraic, and hence weakly atomic.
 \end{enumerate}
\end{theorem}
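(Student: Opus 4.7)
The plan is to prove (b) first, since algebraicity of $\tors A$ will reduce the semidistributivity argument in (a) to a finite question about filtrations in $\mod A$.

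\textbf{For (b)}, I identify the compact elements of $\tors A$ as the classes $\T(M)$ for $M \in \mod A$. The key observation is that since each $\TT_i \in \tors A$ is already closed under quotients, the join $\Join_i \TT_i$ equals $\Filt(\bigcup_i \TT_i)$. If $\T(M) \subseteq \Join_i \TT_i$, then $M$ admits a finite filtration whose factors lie in $\bigcup_i \TT_i$, and such a filtration visits only finitely many of the $\TT_i$; hence $\T(M) \subseteq \Join_{i \in F} \TT_i$ for some finite $F \subseteq I$. Since every $\TT \in \tors A$ equals $\Join_{M \in \TT} \T(M)$, the lattice $\tors A$ is algebraic. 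For co-algebraicity, I invoke the standard anti-isomorphism of complete lattices $(-)^\perp \colon \tors A \xrightarrow{\sim} (\torf A)^{\op}$ and run the symmetric argument in $\torf A$, using that torsion-free classes are closed under submodules so that joins in $\torf A$ admit the same $\Filt$-description, whence the smallest torsion-free class containing a module is compact. Arrow-separation is then immediate from Theorem~\ref{arrsep}.

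\textbf{For (a)}, I use (b) to reduce complete meet-semidistributivity to a binary statement. Suppose $\TT \cap \UU_i = \VV$ for all $i \in I$. For any $M \in \TT \cap \Join_i \UU_i$, the finite-filtration description of the join places $M$ in $\TT \cap \Join_{i \in F} \UU_i$ for some finite $F \subseteq I$, and induction on $|F|$ reduces the task to: given $\TT \cap \UU_1 = \TT \cap \UU_2 = \VV$, show $\TT \cap (\UU_1 \vee \UU_2) \subseteq \VV$. Complete join-semidistributivity in $\tors A$ corresponds under $(-)^\perp$ to complete meet-semidistributivity in $\torf A$, which is proved by the symmetric argument using torsion-free radicals.

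\textbf{The main obstacle} is the binary case. Given $M \in \TT \cap (\UU_1 \vee \UU_2)$, fix a minimal filtration $0 = M_0 \subset M_1 \subset \cdots \subset M_n = M$ with each factor $M_j/M_{j-1} \in \UU_1 \cup \UU_2$, and work downward from the top. The top factor $M/M_{n-1}$ lies in some $\UU_i$ and, as a quotient of $M \in \TT$, also in $\TT$, hence in $\VV$. The difficulty is that the submodule $M_{n-1}$ need not lie in $\TT$, since torsion classes are not generally closed under submodules, so one cannot simply iterate. The plan is to replace $M_{n-1}$ by its $\TT$-torsion part $t_\TT(M_{n-1})$ and analyze the induced filtration on $M / t_\TT(M_{n-1})$, whose pieces remain in $\UU_1 \vee \UU_2$ while being controlled by the fact that $M_{n-1}/t_\TT(M_{n-1}) \in \TT^\perp$ sits inside a $\TT$-module. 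Iterating this procedure, and using that $M$ has finite length to guarantee termination, one produces a filtration of $M$ by subquotients in $\VV$, so that $M \in \Filt(\VV) = \VV$. The delicate bookkeeping concerning the interplay between $\TT$, the $\UU_i$, and their radicals along the filtration is where the real technical work lies.
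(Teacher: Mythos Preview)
Your argument for (b) is essentially the paper's: compact elements are the $\T(M)$, the join is $\Filt(\bigcup_i\TT_i)$, and co-algebraicity follows by the anti-isomorphism with $\torf A$. Fine.

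For (a), however, you have both an unnecessary detour and a genuine gap. The detour is the reduction to the binary case via compactness: the paper handles an arbitrary family $\S$ directly, with no need to invoke (b). The gap is that your ``main obstacle'' is left as delicate bookkeeping you do not actually carry out; working \emph{top-down} on the filtration is precisely what creates the difficulty, since the submodule $M_{n-1}$ need not lie in $\TT$, and passing to $t_\TT(M_{n-1})$ and $M/t_\TT(M_{n-1})$ does not obviously yield a shorter filtration of the same shape.

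The paper's argument avoids all of this by peeling from the \emph{bottom} and inducting on $\dim X$. Given $X\in\TT\cap\Join\S$ nonzero, choose $\VV\in\S$ and a short exact sequence $0\to V\to X\to Y\to 0$ with $0\neq V\in\VV$ and $Y\in\Join\S$. Since $\TT$ is closed under quotients, $Y\in\TT$, so by induction on dimension $Y\in\TT\cap\UU=\TT\cap\VV$. Now $X$ is an extension of $Y\in\VV$ by $V\in\VV$, hence $X\in\VV$, so $X\in\TT\cap\VV=\TT\cap\UU$. That is the entire proof of meet-semidistributivity; join-semidistributivity is dual. The single idea you are missing is to take the filtration layer that lies in some $\VV\in\S$ as a \emph{submodule} rather than a quotient, so that the complementary piece inherits membership in $\TT$ for free.
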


Notice that Theorem \ref{semidistributiveandbialg}(a) is a bit stronger than the semidistributivity proven in \cite[Theorem 4.5]{GM}.
We give the following proposition before proving Theorem \ref{semidistributiveandbialg}.

\begin{proposition} \label{compacttors}\
 \begin{enumerate}[\rm(a)]
  \item For $\TT \in \tors A$, $\TT$ is compact if and only if $\TT = \T(X)$ for some $X \in \mod A$.
  \item For $\TT \in \tors A$, $\TT$ is co-compact if and only if $\TT = {}^\perp X$ for some $X \in \mod A$.
 \end{enumerate}
\end{proposition}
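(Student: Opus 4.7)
My plan is to prove (a) directly from the description of joins as $\Join \S = \Filt(\Fac \S)$ recalled earlier in the subsection, and then to deduce (b) by invoking the anti-isomorphism $(-)^\perp : \tors A \to \torf A$.

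For the ``if'' direction of (a), I assume $\TT = \T(X)$ for some $X \in \mod A$. Given any $\S \subseteq \tors A$ with $\TT \subseteq \Join \S$, the formula above places $X$ in $\Filt(\Fac \S)$, so that $X$ admits a filtration $0 = X_0 \subset X_1 \subset \cdots \subset X_n = X$ whose successive quotients lie in $\Fac\UU_i$ for some $\UU_i \in \S$. Crucially, the filtration is finite since $X$ is finite-dimensional, so only finitely many classes $\UU_1,\dots,\UU_n$ from $\S$ are used. Thus $X \in \UU_1 \vee \cdots \vee \UU_n$, and the minimality of $\T(X)$ forces $\T(X) \subseteq \UU_1 \vee \cdots \vee \UU_n$, so $\T(X)$ is compact. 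For the converse I use the tautological identity $\TT = \Join_{X \in \TT} \T(X)$ (each summand lies in $\TT$, and every $X \in \TT$ appears in it). Compactness then provides finitely many $X_1, \dots, X_n \in \TT$ with $\TT \subseteq \T(X_1) \vee \cdots \vee \T(X_n) = \T(X_1 \oplus \cdots \oplus X_n)$; the reverse inclusion holds because each $\T(X_i) \subseteq \TT$.

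For (b), the anti-isomorphism $(-)^\perp : \tors A \to \torf A$ exchanges arbitrary meets with arbitrary joins, and hence exchanges co-compact elements with compact elements. I will repeat the arguments of (a) verbatim inside $\torf A$, using that joins of torsion-free classes are similarly given by $\Filt$ of the union, to conclude that the compact elements of $\torf A$ are exactly the classes $\FF(X)$, the smallest torsion-free class containing $X$, as $X$ ranges over $\mod A$. To translate back, it suffices to verify ${}^\perp X = {}^\perp \FF(X)$: for fixed $Y \in \mod A$, the full subcategory $\{Z \in \mod A \mid \Hom_A(Y,Z)=0\}$ is a torsion-free class (being closed under submodules and extensions by left-exactness of $\Hom_A(Y,-)$ and the long exact sequence), so once it contains $X$ it contains $\FF(X)$. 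Then a co-compact $\TT \in \tors A$ corresponds to some compact $\TT^\perp = \FF(X)$, giving $\TT = {}^\perp \FF(X) = {}^\perp X$, as required.

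The main obstacle I foresee is justifying the dual join formula in $\torf A$, namely that $\Filt(\bigcup_{\FF \in \S'} \FF)$ is a torsion-free class (closure under extensions is immediate, closure under submodules is the content). This should follow by intersecting a given filtration $0 = Z_0 \subset \cdots \subset Z_n = Z$ with any submodule $Y \subseteq Z$: the successive quotients of $0 = Y \cap Z_0 \subset \cdots \subset Y \cap Z_n = Y$ embed into those of $Z$ and hence remain in the relevant $\FF_i$, since each $\FF_i$ is itself closed under submodules. Once this is in place, the finite-dimensionality of modules in $\mod A$ makes the finite-filtration extraction proceed identically in both parts, and the proof is essentially formal.
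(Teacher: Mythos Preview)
Your proof is correct and follows essentially the same route as the paper: part (a) via the explicit join formula $\Join\S=\Filt(\bigcup\S)$ together with finite-dimensionality of $X$, and part (b) by running the analogous argument in $\torf A$ and transporting via the anti-isomorphism ${}^\perp(-)$. Your treatment of (b) is in fact more detailed than the paper's (which simply asserts that the argument of (a) dualizes), notably in justifying the torsion-free join formula and the identity ${}^\perp X={}^\perp\FF(X)$.
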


\begin{proof}
 (a) Suppose that $\TT$ is compact. As $\TT = \Join_{X \in \TT} \T(X)$ holds, there exists $X_1, X_2, \dots, X_n \in \TT$ such that $\TT = \Join_{i = 1}^n \T(X_i) = \T(X_1 \oplus X_2 \oplus \cdots \oplus X_n)$.

 Conversely, suppose that $\TT = \T(X)$ for some $X \in \mod A$ and let $\S \subseteq \tors A$ such that $\TT \subseteq \Join \S$. We know that $\Join \S = \Filt \left(\bigcup \S\right)$. So $X$ is filtered by elements of $\bigcup \S$. On the other hand, $X$ is finite-dimensional, so $X$ is filtered by finitely many elements of $\bigcup \S$. Therefore, there exists $\{\TT_1, \TT_2, \dots, \TT_n\} \subseteq \S$ such that $X \in \Join_{i = 1}^n \TT_i$, so $\TT = \T(X) \subseteq \Join_{i = 1}^n \TT_i$. We proved that $\T(X)$ is compact.

 (b) The argument of (a) works analogously for torsion-free classes. Then, using the anti-isomorphism ${}^\perp - : \torf A \to \tors A$ leads us to the conclusion.
\end{proof}

We now prove Theorem \ref{semidistributiveandbialg}.

\begin{proof}[Proof of Theorem \ref{semidistributiveandbialg}]
 (a) By duality, we prove only the first condition. Let $\TT \in \tors A$ and $\S \subseteq \tors A$ satisfying $\TT \cap \UU = \TT \cap \VV$ for all $\UU, \VV \in \S$. It is enough to prove that $\TT \cap \Join \S \subseteq \TT \cap \UU$ for a fixed $\UU \in \S$.

 Recall that $\Join \S=\Filt(\bigcup \S)$ holds. Let $X \in\TT \cap \Join \S$. We prove by induction on $\dim X$ that $X \in \TT \cap \UU$. If $X = 0$, it is clear. Otherwise, there exist $\VV \in \S$ and a short exact sequence $0 \to V \to X \to Y \to 0$ with $0 \neq V \in \VV$
and $Y \in \Join \S$. As $\TT$ is a torsion class, $Y \in \TT$, hence by the induction hypothesis, $Y \in \TT \cap \UU = \TT \cap \VV$. As $\VV$ is a torsion class, $X \in \VV$. So $X \in \TT \cap \VV = \TT \cap \UU$.

 (b) For any $\TT \in \tors A$, $\TT = \Join_{X \in \TT} \T(X)$ and $\T(X)$ is compact by Proposition \ref{compacttors}(a), so $\tors A$ is algebraic. Dually, $\TT = \Meet_{X \in \TT^\perp} ({}^\perp X)$ is a meet of co-compact torsion classes by Proposition \ref{compacttors}(b), so $\tors A$ is co-algebraic.
By Theorem \ref{arrsep}, $\tors A$ is weakly atomic. 
\end{proof}

\subsection{Brick labelling} \label{bricklabel2}
 An important ingredient of this paper, which permits an understanding of the forcing preorder as well as an understanding of wide subcategories is the notion of \emph{brick labelling} of $\Hasse(\tors A)$. Note that we do not assume that $A$ is $\tau$-tilting finite in this section. Several of the results we give are generalizations of results that are already known in the $\tau$-tilting finite case.

 Recall that a \emph{brick} is an $A$-module whose endomorphism algebra is a division algebra.
 When $\UU \subseteq \TT$, we denote $\brick [\UU, \TT]$ the set of isomorphism classes of bricks in $\TT \cap \UU^\perp$.
This notation will be justified by Theorem \ref{dtcd}.

\begin{theorem} \label{difftors2}
 Let $\UU \subseteq \TT$ be two torsion classes in $\mod A$. The following hold:
 \begin{enumerate}[\rm(a)]
  \item We have $\TT = \UU$ if and only if $\brick [\UU, \TT] = \emptyset$.
  \item There is an arrow $q: \TT \to \UU$ in $\Hasse(\tors A)$ if and only if $\brick [\UU, \TT]$ contains exactly one element $S_q$. Moreover, $\TT \cap \UU^\perp = \Filt S_q$.
  \item There is a bijection $\cjirr(\tors A) \to \brick A$ that associates to $\TT'$ the brick $S_q$ for the only arrow $q$ starting at $\TT'$.
  \item There is a bijection $\cmirr(\tors A) \to \brick A$ that associates to $\UU'$ the brick $S_q$ for the only arrow $q$ ending at $\UU'$.
 \end{enumerate}
\end{theorem}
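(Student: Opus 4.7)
The overall strategy centres on an identity forced by the cover relation: if $\TT$ covers $\UU$ and $S \in \brick[\UU, \TT]$, then $\TT \cap {}^\perp S = \UU$, with dual $\UU^\perp \cap S^\perp = \TT^\perp$. Parts (c) and (d) will follow from (b) using complete semidistributivity (Theorem~\ref{semidistributiveandbialg}) and Proposition~\ref{bijfeqjirr}, so the main work is in (a) and (b).

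For part (a), the forward direction is immediate from $\TT \cap \TT^\perp = \{0\}$. For the converse, given $\UU \subsetneq \TT$, I would use the torsion radical $t_\UU$ associated with the torsion pair $(\UU, \UU^\perp)$: for any $X \in \TT \setminus \UU$, the module $X/t_\UU X$ is a nonzero object of $\TT \cap \UU^\perp$. Choosing a nonzero $Y$ in $\TT \cap \UU^\perp$ of minimal dimension, any noninvertible endomorphism of $Y$ has image that is both a quotient of $Y \in \TT$ and a submodule of $Y \in \UU^\perp$, hence a proper nonzero element of $\TT \cap \UU^\perp$ of strictly smaller dimension, contradicting minimality. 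Thus $\End_A(Y)$ is a division ring and $Y$ is the required brick.

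For part (b), I would first establish $\TT \cap {}^\perp S = \UU$ (it is a torsion class between $\UU$ and $\TT$ and strictly below $\TT$, since $S \in \TT \setminus {}^\perp S$) together with its dual. Consequently, every nonzero $X \in \TT \cap \UU^\perp$ admits nonzero morphisms both to and from $S$; applied to $S$ itself this shows that $\Fac S \cap (\TT \cap \UU^\perp) = \{S, 0\}$ together with its dual, so $S$ has no proper nonzero sub- or quotient-modules in $\TT \cap \UU^\perp$. Uniqueness of the brick follows: any other $S' \in \brick[\UU, \TT]$ admits mutual nonzero morphisms with $S$, which must be surjective by the above, and the composition in $\End_A(S)$ must be invertible (otherwise one of the maps is zero), forcing $S \cong S'$ using indecomposability of bricks. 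The converse—single brick forces cover—is immediate: any intermediate $\UU'$ would yield bricks in $\brick[\UU, \UU']$ and $\brick[\UU', \TT]$ both equal to $S$, forcing $S \in \UU' \cap (\UU')^\perp = \{0\}$.

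The characterization $\TT \cap \UU^\perp = \Filt S$ I would prove by induction on dimension: given nonzero $X$, embed $S \hookrightarrow X$; if $X/S \in \TT \cap \UU^\perp$, apply induction to it, otherwise pull back $t_\UU(X/S)$ to obtain $S \subsetneq \tilde X \subseteq X$ with both $\tilde X$ and $X/\tilde X$ in $\TT \cap \UU^\perp$ and apply induction to both. The main obstacle is the degenerate case $\tilde X = X$, i.e.\ when $X/S$ is entirely $\UU$-torsion. For this I would show that all embeddings $S \hookrightarrow X$ share a common image (otherwise the sum of two distinct such images lies in $\Fac S \cap (\TT \cap \UU^\perp) = \{S,0\}$, forcing them to coincide), so that every $\phi \in \End_A(X)$ preserves this unique copy of $S$; the restriction $\End_A(X) \to \End_A(S)$ is then injective thanks to $\Hom_A(\UU, X) = 0$, realising $\End_A(X)$ as a finite-dimensional subalgebra without zero divisors of the division ring $\End_A(S)$, hence itself a division ring. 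Then $X$ is a brick, so $X \cong S$ by uniqueness, contradicting $\dim X > \dim S$. Parts (c) and (d) are now bookkeeping: the map $\TT \mapsto S_q$ from $\cjirr(\tors A)$ to $\brick A$ is well-defined by (b), and using Proposition~\ref{bijfeqjirr}(a) together with complete semidistributivity one identifies its inverse, with (d) entirely dual.
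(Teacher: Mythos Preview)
Your approach to (a) and the heart of (b) is correct and takes a genuinely different route from the paper. Where the paper proves a ``simpleness'' lemma (Lemma~\ref{simpleness}: every nonzero map $X\to S$ with $X\in\T(\UU,S)$ is surjective) and derives uniqueness from it, you exploit the cover identity $\TT\cap{}^\perp S=\UU$ and its dual directly. This makes both uniqueness of the brick and the implication ``single brick $\Rightarrow$ cover'' very transparent. For the $\Filt S$ description the paper uses a Cartesian-diagram construction (Lemma~\ref{ft2}), while your torsion-radical pullback is a reasonable alternative.

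There is, however, a genuine gap in (c) and (d): surjectivity is not established, and your references to Proposition~\ref{bijfeqjirr} and complete semidistributivity do not fill it. You must show that every brick $S$ labels the arrow out of some completely join-irreducible element, i.e.\ that $\T(S)$ is completely join-irreducible. Your cover identity goes the wrong direction (it \emph{assumes} a cover); Proposition~\ref{bijfeqjirr} only passes from an existing arrow to a forcing-equivalent one out of a completely join-irreducible element, and you cannot yet invoke preservation of labels under forcing equivalence---that is Theorem~\ref{theorem uniform2}, proved later \emph{using} the present result. The paper handles this (in fact more generally, as Theorem~\ref{dtcd}) via Lemma~\ref{simpleness}(a): every nonzero map $X\to S$ with $X\in\T(S)$ is surjective, so any $\VV\subsetneq\T(S)$ lies in ${}^\perp S$, giving $\T(S)$ the unique lower cover $\T(S)\cap{}^\perp S$. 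You need an equivalent statement, and none of your tools provide it. (Injectivity, by contrast, really is easy: if $\TT$ is completely join-irreducible with label $S$, then $\T(S)\subseteq\TT$ but $\T(S)\not\subseteq\TT_*$, forcing $\T(S)=\TT$.)

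There is also a smaller technical slip in the degenerate case of your $\Filt S$ argument. The sum $S_1+S_2$ of two distinct images of $S$ is a quotient of $S\oplus S$, not of a single copy of $S$, so it is not covered by your earlier identity ``$\Fac S\cap(\TT\cap\UU^\perp)=\{S,0\}$'' (which, as your gloss indicates, only says $S$ has no proper nonzero quotient in $\TT\cap\UU^\perp$; note $S\oplus S$ itself lies in $\TT\cap\UU^\perp$). The conclusion $X=S$ is correct and follows more directly: any nonzero $g\colon X\to S$ (which exists since $X\notin\UU=\TT\cap{}^\perp S$) restricted to the chosen copy $S\subseteq X$ is an isomorphism---otherwise $g$ factors through $X/S\in\UU$ and vanishes since $S\in\UU^\perp$---so $X\cong S\oplus\ker g$ with $\ker g\cong X/S\in\UU\cap\UU^\perp=0$.
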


We will prove Theorem \ref{difftors2} at the end of this subsection. The bijections of Theorem \ref{difftors2}(c),(d) have also been established independently using different methods in \cite{BCZ}.

We will need a more general version of Theorem \ref{difftors2}(c),(d).

\begin{theorem} \label{dtcd}
 Let $\UU \subseteq \TT$ be torsion classes in $\mod A$. The following hold:
 \begin{enumerate}[\rm(a)]
  \item There is a bijection $\cjirr[\UU, \TT] \to \brick[\UU, \TT]$ mapping $\TT' \in \cjirr[\UU, \TT]$ to $S_q$ where $q:\TT' \to \UU'$ is the unique arrow of $\Hasse[\UU, \TT]$ starting at $\TT'$. Moreover, $\TT' = \UU \join \T(S_q)$ and $\UU' = \TT' \meet {}^\perp S_q$.
  \item There is a bijection $\cmirr[\UU, \TT] \to \brick[\UU, \TT]$ mapping $\UU' \in \cmirr[\UU, \TT]$ to $S_q$ where $q:\TT' \to \UU'$ is the unique arrow of $\Hasse[\UU, \TT]$ ending at $\UU'$. Moreover, $\UU' = \TT \meet {}^\perp S_q$ and $\TT' = \UU' \join \T(S_q)$.
 \end{enumerate}
\end{theorem}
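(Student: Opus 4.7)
My plan is to prove part (a) in detail; part (b) will follow by a parallel argument whose key modification I indicate at the end. First, by Theorem~\ref{semidistributiveandbialg} together with Lemmas~\ref{lemsd}(a) and~\ref{lemalgint}(c),(d), the interval $[\UU,\TT]$ is itself bialgebraic and completely semidistributive, hence arrow-separated by Theorem~\ref{arrsep}. For $\TT' \in \cjirr[\UU,\TT]$, this provides a unique cover $\UU' = (\TT')_*$ in $[\UU,\TT]$, and the resulting arrow $q\colon \TT' \to \UU'$ is also an arrow of $\Hasse(\tors A)$. Theorem~\ref{difftors2}(b) then produces the brick $S_q$ with $\TT' \cap \UU'^\perp = \Filt S_q$; since $\UU \subseteq \UU' \subsetneq \TT' \subseteq \TT$, this $S_q$ automatically lies in $\brick[\UU,\TT]$, so $\TT' \mapsto S_q$ is well defined.

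Both displayed formulas follow from the covering relation in $\tors A$. From $S_q \in \UU'^\perp$ one has $\UU' \subseteq \TT' \cap {}^\perp S_q$; since $S_q \notin {}^\perp S_q$ (brick property), $\TT' \cap {}^\perp S_q$ is strictly contained in $\TT'$, and the covering relation forces $\UU' = \TT' \cap {}^\perp S_q$. Symmetrically, $\UU \join \T(S_q)$ is a member of $[\UU, \TT']$ containing $S_q \notin \UU'$, so by complete join-irreducibility of $\TT'$ (every element of $[\UU,\TT']$ different from $\TT'$ lies in $\UU'$) it cannot be strictly below $\TT'$, whence $\TT' = \UU \join \T(S_q)$. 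Injectivity of the map is immediate from these formulas.

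The main obstacle is surjectivity. Given $S \in \brick[\UU,\TT]$, I set $\TT' := \UU \join \T(S)$ and $\UU' := \TT' \cap {}^\perp S$, and I must show $\TT' \in \cjirr[\UU,\TT]$ with cover $\UU'$. The crucial lemma is: \emph{any nonzero submodule $Y \subseteq S$ lying in $\TT' = \Filt(\UU \cup \Fac S)$ must equal $S$}. To prove it, take a filtration $0 = Y_0 \subsetneq Y_1 \subseteq \cdots \subseteq Y_n = Y$ with $Y_i/Y_{i-1} \in \UU \cup \Fac S$; the nonzero inclusion $Y_1 \hookrightarrow S$ excludes $Y_1 \in \UU$ (since $S \in \UU^\perp$), so $Y_1 \in \Fac S$ is a quotient of some $S^\ell$. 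The composition $S^\ell \twoheadrightarrow Y_1 \hookrightarrow S$ is a nonzero element of $\Hom_A(S^\ell, S) \cong \End_A(S)^\ell$, and since $\End_A(S)$ is a division ring at least one component is invertible; this makes the composition a split surjection, and combined with $Y_1 \hookrightarrow S$ being injective, this forces $Y_1 = S$ and hence $Y = S$. With the lemma in hand, suppose some $\VV \in [\UU, \TT']$ with $\VV \neq \TT'$ is not contained in $\UU'$; then $\VV \not\subseteq {}^\perp S$ produces $X \in \VV$ with nonzero image $Y \subseteq S$ in $\VV \subseteq \TT'$. The lemma gives $Y = S$, so $S \in \VV$ and $\TT' = \UU \join \T(S) \subseteq \VV$, contradicting $\VV \subsetneq \TT'$. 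Thus $\UU'$ is the unique cover of $\TT'$ in $[\UU,\TT]$, and Theorem~\ref{difftors2}(b) applied to the brick $S \in \TT' \cap \UU'^\perp$ identifies the brick label as $S$.

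For part (b), the plan is a mirror argument: set $\UU' := \TT \cap {}^\perp S$ and $\TT' := \UU' \join \T(S)$, and show that $\UU'$ is completely meet-irreducible in $[\UU,\TT]$ with co-cover $\TT'$. The submodule lemma is replaced by a cokernel argument: given $\VV \in [\UU, \TT]$ strictly containing $\UU'$, pick $X \in \VV$ whose image $Y$ in $S$ is nonzero, and apply $\Hom_A(-, S)$ to $0 \to Y \to S \to S/Y \to 0$. The resulting left ideal $\Hom_A(S/Y, S) \subseteq \End_A(S)$ does not contain $\id_S$ (which restricts to the nonzero inclusion $Y \hookrightarrow S$), so $\End_A(S)$ being a division ring forces $\Hom_A(S/Y, S) = 0$. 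Hence $S/Y \in \T(S) \cap {}^\perp S \subseteq \TT \cap {}^\perp S = \UU' \subseteq \VV$, and closure of $\VV$ under extensions yields $S \in \VV$, whence $\T(S) \subseteq \VV$ and $\TT' = \UU' \join \T(S) \subseteq \VV$. The formulas, injectivity, and identification of the label then proceed exactly as in part (a).
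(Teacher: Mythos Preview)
Your argument for part (a) is correct and follows essentially the same route as the paper: the ``crucial lemma'' you prove inline (any nonzero submodule of $S$ lying in $\TT' = \Filt(\UU \cup \Fac S)$ equals $S$) is a reformulation of Lemma~\ref{simpleness}(a), proved by the same filtration-and-splitting argument, and you then use it exactly as the paper does to show that every $\VV \subsetneq \TT'$ lies in ${}^\perp S$. Your organization differs slightly (you define the forward map, derive the formulas, and then prove surjectivity, whereas the paper builds the inverse map directly), but the content is the same.

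One caveat: you invoke Theorem~\ref{difftors2}(b) both to define $S_q$ and, at the end, to identify the label as $S$. In the paper's logical order Theorem~\ref{difftors2}(b) is deduced \emph{from} Theorem~\ref{dtcd}(a), so as written your proof is circular relative to the paper. This is easily repaired: once you know $\TT' \to \UU'$ is a Hasse arrow and $S \in \TT' \cap \UU'^\perp$, uniqueness of the brick follows directly from Lemma~\ref{simpleness}(b) (any other brick $S'$ in $\TT' \cap \UU'^\perp$ would satisfy $\T(\UU,S') = \TT' = \T(\UU,S)$, hence $S' \cong S$), without appealing to Theorem~\ref{difftors2}(b).

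For part (b) you take a genuinely different route. The paper simply says ``dual to (a)'', implicitly using the anti-isomorphism $\tors A \leftrightarrow \torf A$ (or the opposite algebra). Your direct argument stays inside $\tors A$: from a nonzero image $Y \subseteq S$ you show $\Hom_A(S/Y,S)=0$ via the division-ring structure, so $S/Y \in \UU'$, and extension closure then forces $S \in \VV$. This is correct and pleasant, and has the minor advantage of avoiding any appeal to duality. The paper's approach is shorter but less explicit; yours makes the meet-irreducible side self-contained.
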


We now define the \emph{brick labelling}.

\begin{definition} \label{definebrick2}
 Let $q: \TT \to \UU$ be an arrow of $\Hasse(\tors A)$. The \emph{label of $q$} is the unique brick $S_q$ in $\TT \cap \UU^\perp$, given in Theorem \ref{difftors2}(b).
\end{definition}

We give an example of brick labelling.

\begin{example} \label{kronecker2} 
 Let $k$ be an algebraically closed field and $Q$ the Kronecker quiver \[\xymatrix{2\ar@/^/[r]^{a}|{}="A" \ar@/_/[r]_{b}|{}="B"  &1 }  \] and $A=kQ$. For $(\lambda,\mu) \in k^2 \setminus \{(0,0)\}$, we consider the following brick in $\mod A$:
 \[
  S_{(\lambda:\mu)} = \left[\boxinminipage{\xymatrix{ 2 \ar@/^/[d]^-\mu \ar@/_/[d]_-\lambda \\ 1  }}\right]
 \]
 whose isomorphism class only depends of $(\lambda:\mu) \in \mathbb{P}^1(k)$. Then, for $\S\subseteq \mathbb{P}^1(k)$ non-empty, we define the torsion class $\TT(\S) = \Filt (\S \cup \{2\})$. We also define the torsion class $\TT(\emptyset) = \smash{\bigcap_{\S \neq \emptyset}} \TT(\S)$. Then $\TT: \smash{2^{\mathbb{P}^1(k)} }\to \tors A$ is an injective morphism of complete lattices from the power set of $\mathbb{P}^1(k)$ to $\tors A$. We denote by $\mathcal{R}$ its image. Then, using classical knowledge about the Auslander-Reiten quiver of $A$, the labelled Hasse quiver of $\tors A$ is given by
\begin{center}$
\xymatrix@C=.45cm@R=.3cm{&&&&\add S_1 \ar[drrrr]|{\cirl{\Sa}}\\
 \mod A \ar[dr]|-{\cirl{\Sa}} \ar[urrrr]|{\cirl{\Sb}} &&&&&&&& 0. \\
&\Fac \KPu \ar[rr]|-{\cirl{\KPu}}&& \Fac \KPd \ar@{.}[r] & \mathcal{R} \ar@{.}[r] & \Fac \KIu \ar[rr]|-{\cirl{\KIu}}&& \add S_2 \ar[ur]|{\cirl{\Sb}}}
$\end{center}
 Any arrow of $\Hasse \mathcal{R}$ has the form $q: \TT(\S) \to \TT(\S')$ for some $\S, \S' \subseteq \mathbb{P}^1(k)$ satisfying $\S \setminus \S' = \{(\lambda:\mu)\}$ for some $(\lambda: \mu) \in \mathbb{P}^1(k)$. The brick that labels 
 this arrow is $S_q = S_{(\lambda:\mu)}$. To be more explicit, if $P$ is an indecomposable preprojective module distinct from $S_1$, then $\Fac P$ contains all indecomposable modules except the ones that are to its left in the Auslander-Reiten quiver, if $I$ is indecomposable preinjective, then $\Fac I$ contains $I$ and indecomposable modules that are to its right in the Auslander-Reiten quiver. Finally, $\TT(\S)$ contains no preprojective modules, all preinjective modules and the tubes that are indexed by elements of $\S$.
\end{example}

In the rest of this subsection, we prove Theorem \ref{difftors2}.
We start by giving a relative version of \cite[Lemma 4.4]{DIJ}.

\begin{lemma}
  \label{simpleness} Let $\UU \in \tors A$ and $S$ be a brick in $\UU^\perp$. Then, the following statements hold.
  \begin{enumerate}[\rm(a)]
  \item Every morphism $f:X\to S$ in $\T(\UU, S)$ is either zero or surjective.
  \item If a brick $S'$ in $\UU^\perp$ satisfies $\T(\UU, S) =\T(\UU, S')$, then $S\simeq S'$.
  \end{enumerate}
\end{lemma}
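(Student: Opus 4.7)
My plan is to first establish the convenient description $\T(\UU, S) = \Filt(\UU \cup \Fac S)$. The right-hand side visibly contains $\UU \cup \{S\}$ and is closed under extensions; closure under factors follows by induction on the filtration length from the fact that both $\UU$ (being a torsion class) and $\Fac S$ are closed under factors. Consequently every $X \in \T(\UU,S)$ admits a filtration $0 = X_0 \subseteq X_1 \subseteq \cdots \subseteq X_n = X$ with each $X_i/X_{i-1}$ in $\UU \cup \Fac S$, and both proofs will reduce to an analysis of such a filtration.

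For (a) I would induct on $n$. Given $f : X \to S$, I restrict to $X_1$. If $X_1 \in \UU$ then $f|_{X_1} = 0$ since $S \in \UU^\perp$, so $f$ factors through $X/X_1$, which lies in $\T(\UU,S)$ with a shorter filtration, and the induction hypothesis finishes the case. If instead $X_1 \in \Fac S$, I pick a surjection $\pi : S^k \twoheadrightarrow X_1$ and look at $f|_{X_1} \circ \pi \in \Hom_A(S^k, S) \cong \End_A(S)^k$. Because $\End_A(S)$ is a division ring, each component of $f|_{X_1} \circ \pi$ is either zero or invertible. If all components vanish, then $f|_{X_1} \circ \pi = 0$ and surjectivity of $\pi$ gives $f|_{X_1} = 0$, reducing to the previous case. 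Otherwise some component is invertible, $f|_{X_1} \circ \pi$ is surjective, hence $f|_{X_1}$, and therefore $f$, is surjective.

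For (b), since $S \in \T(\UU,S') = \Filt(\UU \cup \Fac S')$ is nonzero, the same filtration argument produces a nonzero $X_1 \subseteq S$ with $X_1 \in \UU \cup \Fac S'$. The inclusion $X_1 \hookrightarrow S$ is nonzero, while $\Hom_A(\UU, S) = 0$, so we must have $X_1 \in \Fac S'$; composing a surjection $(S')^k \twoheadrightarrow X_1$ with $X_1 \hookrightarrow S$ then yields a nonzero morphism $f : S' \to S$. By part (a), applied with the brick $S$ and the module $S' \in \T(\UU,S)$, this $f$ is surjective. The symmetric argument produces a surjection $g : S \to S'$, and then $g \circ f$ is a surjective endomorphism of the finite-dimensional brick $S'$, hence an automorphism. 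This forces $f$ to be injective as well, so $f : S' \to S$ is the desired isomorphism.

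The only delicate step will be justifying the characterization $\T(\UU,S) = \Filt(\UU \cup \Fac S)$ carefully; once this is in hand, everything reduces to elementary manipulations with the division-ring structure of $\End_A(S)$ and the finite-dimensionality of bricks, mirroring the argument of \cite[Lemma 4.4]{DIJ} in its relative form.
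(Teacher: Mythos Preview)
Your proof is correct and follows essentially the same approach as the paper's own proof: both rely on the identity $\T(\UU,S)=\Filt(\UU\cup\Fac S)$, reduce part (a) to the observation that a nonzero map $S^{\oplus k}\to S$ is a split epimorphism because $\End_A(S)$ is a division ring, and derive (b) from (a) by producing surjections in both directions. The paper organises (a) slightly more compactly by passing directly to the largest $i$ with $f(X_i)=0$ rather than phrasing it as an induction, and concludes (b) via the dimension inequalities $\dim_k S'\ge\dim_k S\ge\dim_k S'$ rather than your composition-is-an-automorphism argument, but these are cosmetic differences.
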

\begin{proof}
  (a) We show that $f\neq0$ implies that $f$ is surjective. Since $X\in\T(\UU, S)=\Filt(\UU, \Fac S)$, there exists a filtration $0=X_0\subset X_1\subset\cdots\subset X_t=X$ satisfying $X_{i+1}/X_i\in\UU$ or $X_{i+1}/X_i\in\Fac S$ for any $i$.  We can assume $f(X_1) \neq 0$ by
  taking a maximal number $i$ satisfying $f(X_i)=0$ and replacing $X$ by $X/X_i$. Since $S \in \UU^\perp$, we get $X_1 \in \Fac S$, so there exists an epimorphism $g:S^{\oplus n}\to X_1$. Since $fg: S^{\oplus n} \to S$ is non-zero and $S$ is a brick, $fg$ must be a split epimorphism. Thus $f$ is surjective.

  (b) Since $S$ belongs to $\T(\UU, S') \cap \UU^\perp$, there exists a non-zero morphism ${f:S'\to S}$. This is surjective by (a), and therefore $\dim_k S' \geq \dim_k S$.  The same argument shows the opposite inequality, and therefore $f$ is an isomorphism.
\end{proof}

Next, we give the following elementary lemma:
\begin{lemma} \label{modgivesbrick}
 Let $\TT, \UU \in \tors A$ and $X \in \TT \cap \UU^\perp$ be non-zero. Then there exists $S \in \brick[\UU, \TT]$ that is the image of an endomorphism of $X$.
\end{lemma}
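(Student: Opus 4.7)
The plan is to construct the required brick as the image of an endomorphism of $X$ of minimal non-zero rank. The key observation is that the class $\TT \cap \UU^\perp$ is closed under both taking subobjects (because $\UU^\perp$ is a torsion-free class) and taking quotients (because $\TT$ is a torsion class), so the image of any endomorphism of $X$ automatically lies in $\TT \cap \UU^\perp$.

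More precisely, I would first note that $\End_A(X)$ contains a non-zero element (namely $\id_X$, as $X \neq 0$), and since $X$ is finite-dimensional I can choose $f \in \End_A(X) \setminus \{0\}$ minimizing $\dim_k \Image f$. Set $S := \Image f$ and factor $f$ as $\iota \circ \pi$ where $\pi : X \twoheadrightarrow S$ is the canonical surjection and $\iota : S \hookrightarrow X$ is the inclusion. Then $S$ is a quotient of $X \in \TT$, so $S \in \TT$, and simultaneously $S$ is a submodule of $X \in \UU^\perp$, so $S \in \UU^\perp$. Thus $S \in \TT \cap \UU^\perp$ and $S$ is an image of an endomorphism of $X$ by construction.

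It remains to show that $S$ is a brick. Given a non-zero endomorphism $g : S \to S$, I would consider $\iota \circ g \circ \pi \in \End_A(X)$. Since $\pi$ is surjective and $\iota$ is injective, $g \neq 0$ forces $\iota g \pi \neq 0$, and $\Image(\iota g \pi) = \iota(g(S))$ has the same dimension as $\Image g \subseteq S$. By the minimality of $\dim_k \Image f = \dim_k S$, we must have $\dim_k \Image g = \dim_k S$, so $g$ is surjective and therefore (by finite-dimensionality) an isomorphism. Hence $\End_A(S)$ is a division ring and $S$ is a brick.

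The argument is essentially routine once one has the minimality idea, and there is no real obstacle: closure properties of torsion and torsion-free classes do all of the work in placing $S$ in $\TT \cap \UU^\perp$, and the brick property is forced by minimality of the dimension together with the injectivity/surjectivity of $\iota$ and $\pi$.
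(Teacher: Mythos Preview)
Your proof is correct and essentially the same as the paper's: both rely on the closure of $\TT \cap \UU^\perp$ under subobjects and quotients together with the composition trick $\iota g \pi$ to lift endomorphisms of $\Image f$ to endomorphisms of $X$. The only difference is that the paper phrases the descent as an induction on $\dim X$ while you phrase it as a direct minimality argument on $\dim_k \Image f$, which is a cosmetic distinction.
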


\begin{proof}
 We argue by induction on the dimension of $X$. If $X$ is a brick, it is immediate. Otherwise, $X$ admits a non-zero radical endomorphism $f = \iota \pi$ where $\pi$ is surjective and $\iota$ is injective. Then, $\Image f \in \Fac X \cap \Sub X \subseteq \TT \cap \UU^\perp$ and $0 < \dim \Image f < \dim X$, so by induction hypothesis, there is $S \in \brick[\UU, \TT]$ that is the image of an endomorphism $g$ of $\Image f$, hence of $\iota g \pi$.
\end{proof}

We deduce the following.

\begin{lemma} \label{gener}
 Let $\UU \subseteq \TT$ be two torsion classes. We have $\TT = \Filt (\UU \cup \brick [\UU, \TT])$.
\end{lemma}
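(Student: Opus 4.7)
The inclusion $\Filt(\UU \cup \brick[\UU, \TT]) \subseteq \TT$ is immediate: $\UU \subseteq \TT$ by hypothesis, $\brick[\UU, \TT] \subseteq \TT \cap \UU^\perp \subseteq \TT$ by definition, and a torsion class is closed under extensions. The real content is the reverse inclusion, which I would prove by induction on $\dim_k X$ for $X \in \TT$.

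The base case $X=0$ is trivial, and if $X \in \UU$ there is nothing to prove. Otherwise, I would invoke the fact that $\UU$ is a torsion class in $\mod A$ to obtain the canonical short exact sequence
\[
0 \to U \to X \to V \to 0
\]
with $U \in \UU$ and $V \in \UU^\perp$. Since $\TT$ is closed under factors, $V \in \TT \cap \UU^\perp$. If $V = 0$ then $X = U \in \UU$ and we are done, so assume $V \neq 0$.

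The key step is then to apply Lemma~\ref{modgivesbrick} to the nonzero object $V \in \TT \cap \UU^\perp$, producing a brick $S \in \brick[\UU, \TT]$ realized as the image of an endomorphism of $V$. In particular $S$ embeds into $V$, so we obtain a short exact sequence $0 \to S \to V \to V/S \to 0$ with $\dim_k(V/S) < \dim_k V \leq \dim_k X$. Since $\TT$ is closed under factors, $V/S \in \TT$, and the induction hypothesis applies to give $V/S \in \Filt(\UU \cup \brick[\UU, \TT])$. Combining this with $S \in \brick[\UU, \TT]$ shows $V \in \Filt(\UU \cup \brick[\UU, \TT])$, and then combining with $U \in \UU$ via the canonical sequence yields $X \in \Filt(\UU \cup \brick[\UU, \TT])$, closing the induction.

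The main subtlety to watch out for is the temptation to stay inside $\UU^\perp$: after replacing $V$ by $V/S$, the quotient need not lie in $\UU^\perp$ anymore. However, this is not actually needed for the induction, since membership in $\TT$ alone (which is preserved under factors) suffices to re-apply the inductive hypothesis. The brick $S$ itself is guaranteed to be in $\brick[\UU, \TT]$ by Lemma~\ref{modgivesbrick}, so the filtration is of the required form.
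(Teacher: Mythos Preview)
Your proof is correct and follows essentially the same approach as the paper's: both argue by induction on $\dim X$, take the canonical torsion sequence $0\to U\to X\to V\to 0$ with respect to $\UU$, apply Lemma~\ref{modgivesbrick} to the nonzero $V\in\TT\cap\UU^\perp$ to extract a brick submodule $S\in\brick[\UU,\TT]$, and use the induction hypothesis on $V/S\in\TT$. Your remark that $V/S$ need not remain in $\UU^\perp$, and that this is harmless since only membership in $\TT$ is required for the induction, is exactly the point the paper's proof implicitly relies on.
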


\begin{proof}
 Let $X \in \TT$. We argue by induction on $\dim X$. If $X = 0$, the result is immediate. As $(\UU, \UU^\perp)$ is a torsion pair, there exists a short exact sequence
 \[0 \to U \to X \to U' \to 0\]
 with $U \in \UU$ and $U' \in \UU^\perp$. It suffices to prove $U' \in  \Filt (\UU \cup \brick [\UU, \TT])$. Suppose that $U' \neq 0$. We have $U' \in \TT \cap \UU^\perp$, so by Lemma \ref{modgivesbrick}, there exists a short exact sequence
 \[0 \to S \to U' \to Y \to 0\]
 with $S \in \brick [\UU, \TT]$. As $U' \in \TT$, we also have $Y \in \TT$ so by induction hypothesis, $Y \in \Filt (\UU \cup \brick [\UU, \TT])$, hence $U' \in  \Filt (\UU \cup \brick [\UU, \TT])$.
\end{proof}

We also deduce:

\begin{lemma} \label{ft2}
 Let $\UU \subseteq \TT$ be two torsion classes of $\mod A$. Then we have $\TT \cap \UU^\perp = \Filt \brick [\UU, \TT]$.
\end{lemma}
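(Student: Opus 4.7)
The inclusion $\Filt \brick[\UU, \TT] \subseteq \TT \cap \UU^\perp$ is immediate: both $\TT$ (as a torsion class) and $\UU^\perp$ (as a torsion-free class) are closed under extensions, so their intersection is as well, and it contains $\brick[\UU, \TT]$ by definition. For the reverse inclusion, the plan is to use strong induction on $\dim_k X$ for $X \in \TT \cap \UU^\perp$.

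Given nonzero $X \in \TT \cap \UU^\perp$, Lemma~\ref{modgivesbrick} provides an endomorphism $f = \iota\pi : X \to X$ whose image is a brick $S \in \brick[\UU, \TT]$, where $\pi : X \twoheadrightarrow S$ and $\iota : S \hookrightarrow X$. Viewing $S$ as a submodule of $X$, I examine the short exact sequence $0 \to S \to X \to X/S \to 0$; note $X/S \in \TT$ as a quotient of $X$. If $X/S \in \UU^\perp$, then $X/S$ lies in $\TT \cap \UU^\perp$ with strictly smaller dimension, and induction together with the closure of $\Filt \brick[\UU, \TT]$ under extensions gives $X \in \Filt \brick[\UU, \TT]$. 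Otherwise, applying the torsion pair $(\UU, \UU^\perp)$ to $X/S$, let $W \subseteq X/S$ be the maximal $\UU$-submodule and $Z$ its pre-image in $X$. When $W \subsetneq X/S$, the submodule $Z \subseteq X$ is an extension of $W \in \UU$ by $S$, hence in $\TT$, and also in $\UU^\perp$ as a submodule of $X \in \UU^\perp$. Simultaneously $X/Z \cong (X/S)/W$ lies in $\UU^\perp$ by the definition of $W$, and in $\TT$ as a quotient of $X$. Both $Z$ and $X/Z$ have strictly smaller dimension, so induction and extension-closure dispose of this case.

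The main obstacle is the remaining case $W = X/S$, that is $X/S \in \UU$, and I will rule it out by exploiting the structure of $f$ itself. The composite $\pi\iota : S \to S$ is an endomorphism of the brick $S$, hence is either an isomorphism or zero. If it is an isomorphism, then $\iota$ is a split monomorphism, so $X = S \oplus \ker\pi$; the complement $\ker\pi$ inherits membership in $\TT \cap \UU^\perp$ (as a direct summand of $X$) with strictly smaller dimension, and induction concludes. If instead $\pi\iota = 0$, then $\pi$ vanishes on $S$, so it factors as $\pi = \bar\pi \circ q$ with $q : X \twoheadrightarrow X/S$ and $\bar\pi : X/S \to S$. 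But $\Hom(X/S, S) = 0$ because $X/S \in \UU$ and $S \in \UU^\perp$, forcing $\bar\pi = 0$ and hence $\pi = 0$, which contradicts the surjectivity of $\pi$ onto the nonzero brick $S$. This contradiction eliminates the problematic case and completes the induction.
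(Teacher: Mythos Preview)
Your argument is correct and uses the same core idea as the paper: find a brick $S$ as the image of an endomorphism of $X$ via Lemma~\ref{modgivesbrick}, pull back the $\UU$-torsion part of $X/S$ to get a submodule $Z \subseteq X$ with $Z$ and $X/Z$ both in $\TT \cap \UU^\perp$, and induct on the two pieces. The one difference is in how the degenerate case $X/S \in \UU$ is handled. The paper first reduces to indecomposable $X$: then the sequence $0 \to S \to X \to Y \to 0$ cannot split (else $X = S$, a brick), so since $\End_A(S)$ is a division ring every map $X \to S$ restricts to zero on $S$, giving $\Hom_A(Y,S) \cong \Hom_A(X,S) \neq 0$ and hence $Y \notin \UU$ outright. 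You instead work with arbitrary $X$ and examine $\pi\iota \in \End_A(S)$ directly, which avoids the reduction to indecomposables at the cost of one extra case split. (In fact your invertible-$\pi\iota$ subcase is vacuous: there $\ker\pi \cong X/S$ would lie in $\UU \cap \UU^\perp = 0$, forcing $X = S$ and contradicting $X/S \notin \UU^\perp$; so only the $\pi\iota = 0$ branch actually occurs, which is exactly the paper's contradiction.)
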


\begin{proof}
 The inclusion $\supseteq$ is immediate, hence we prove the other one. Let $X \in \TT \cap \UU^\perp$ be indecomposable. We argue by induction on $\dim X$. If $X$ is a brick, the result is immediate. Otherwise, using Lemma \ref{modgivesbrick}, there exists a brick $S \in \brick [\UU, \TT]$ that is a submodule and a quotient of $X$. Consider a short exact sequence
 \[0 \to S \to X \to Y \to 0. \]
 As $S$ is a brick, the short exact sequence does not split, and, as $\Hom_A(X, S) \neq 0$, we deduce $\Hom_A(Y, S) \neq 0$ so, as $S \in \UU^\perp$, $Y \notin \UU$.

 We fix a short exact sequence $0 \to U \to Y \to U' \to 0$ with $U \in \UU$ and $U' \in \UU^\perp$ and consider the following Cartesian diagram:
 \[\xymatrix{
  S \ar@{^{(}->}[r] \ar@{=}[d] & X' \ar@{^{(}->}[d] \ar@{->>}[r] & U \ar@{^{(}->}[d] \\
  S \ar@{^{(}->}[r] & X \ar@{->>}[d] \ar@{->>}[r] & Y \ar@{->>}[d] \\
  & U' \ar@{=}[r] & U'
 }\]
 As $X \in \UU^\perp$, we get $X' \in \UU^\perp$. As $S, U \in \TT$, we get $X' \in \TT$. We have clearly $U' \in \TT \cap \UU^\perp$. Moreover, as $S \subseteq X'$, we have $X' \neq 0$. As $Y \notin \UU$, $U' \neq 0$, so the dimension of each indecomposable summand of $X'$ and $U'$ is smaller than $\dim X$. This allows us to conclude by the induction hypothesis. \end{proof}

Then, we prove Theorem \ref{dtcd}.

\begin{proof}[Proof of Theorem \ref{dtcd}]
 (a) Let $\TT' = \UU \join \T(S)$. As $S \in \TT$ and $\UU \subseteq \TT$, it is immediate that $\UU \subseteq \TT' \subseteq \TT$. Let also $\UU' = \TT' \cap {}^\perp S$. As $\UU \subseteq {}^\perp S$ and $\TT' \not\subseteq {}^\perp S$, we have $\UU \subseteq \UU' \subsetneq \TT'$.

 If $\VV \subsetneq \TT'$, consider $X \in \VV$ and $f: X \to S$. As $X \in \TT' = \T(\UU, S)$, if $f \neq 0$ we get that $f$ is surjective by Lemma \ref{simpleness}(a), hence $S \in \T(X) \subseteq \VV$ so $\VV = \TT'$, which is a contradiction. So $\VV \subseteq {}^\perp S$, hence $\VV \subseteq \UU'$. We proved that $\TT'$ is completely join-irreducible in $[\UU, \TT]$ and that there is an arrow $\TT' \to \UU'$ in $\Hasse [\UU, \TT]$.

 Let us prove that the only element of $\brick [\UU', \TT']$ is $S$. It is clear that $S \in \brick [\UU', \TT']$. Consider $S' \in \brick [\UU', \TT']$. We have $\T(\UU, S') \subseteq \TT'$ and $\T(\UU, S') \not\subseteq \UU'$, so $\T(\UU, S') = \TT'$. By Lemma \ref{simpleness}(b), this implies that $S \cong S'$.

 Finally, we need to prove the uniqueness of the completely join-irreducible torsion class. Consider an arrow $\TT'' \to \UU''$ of $\Hasse [\UU, \TT]$ such that $\brick [\UU'', \TT''] = \{S\}$ and  $\TT''$ is completely join-irreducible. As $\UU \subseteq \TT''$ and $S \in \TT''$, we have $\TT' \subseteq \TT''$. As $S \in \TT'$ and $S \notin \UU''$, we have $\TT' \not\subseteq \UU''$, so $\TT'' = \TT' \join \UU''$ and, as $\TT''$ is join-irreducible, we obtain $\TT' = \TT''$. By uniqueness of the arrow starting at $\TT'$, we also have $\UU' = \UU''$.

 (b) It is dual to (a).
\end{proof}

Finally, we prove Theorem \ref{difftors2}.

\begin{proof}[Proof of Theorem \ref{difftors2}]
 (a) The case $\TT = \UU$ is trivial. Suppose that $\TT \neq \UU$ and let $X \in \TT \cap \UU^\perp$ be non-zero. By Lemma \ref{modgivesbrick}, $X$ admits a quotient that is in $\brick [\UU, \TT]$.

 (b) First of all, if $\# \brick [\UU, \TT] = 1$ and $\UU \subseteq \VV \subseteq \TT$, by (a), we have $\UU = \VV$ or $\TT = \VV$ hence there is an arrow $\TT \to \UU$ in $\Hasse(\tors A)$. Conversely, if $\TT \to \UU$ is an arrow of $\Hasse(\tors A)$, by (a) there exists $S \in \brick [\UU, \TT]$. Then, by Theorem \ref{dtcd}(a), there exists an arrow $\TT' \to \UU'$ in $\Hasse [\UU, \TT]$ such that $\brick[\UU', \TT'] = \{S\}$. So $\brick [\UU, \TT] = \{S\}$. Finally, by Lemma \ref{ft2}, $\TT \cap \UU^\perp = \Filt S_q$.

 (c) and (d) It is Theorem \ref{dtcd} for $\UU = 0$ and $\TT = \mod A$.
\end{proof}

\subsection{Complete congruence uniformity of the lattice of torsion classes} 
The main result of this subsection is the following.

\begin{theorem} \label{theorem uniform2}
Let $A$ be a finite-dimensional algebra.
\begin{enumerate}[\rm(a)]
\item Two arrows of $\Hasse(\tors A)$ are labelled by the same brick if and only if they are forcing equivalent.
\item The lattice $\tors A$ is completely congruence uniform.
\item The brick labelling coincides with the join-irreducible labelling and with the meet-irreducible labelling under the bijections of \textup{Theorem \ref{difftors2}(c),(d)}.
\end{enumerate}
\end{theorem}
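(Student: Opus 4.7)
The plan is to deduce (a), (b), and (c) together by combining the bijections of Theorem~\ref{difftors2} with the lattice machinery of Section~\ref{fbs}. By Theorem~\ref{semidistributiveandbialg}, $\tors A$ is completely semidistributive and bialgebraic, so Proposition~\ref{congunifchar} reduces (b) to showing that the two maps $j\mapsto\comb(j\to j_*)$ and $m\mapsto\comb(m^*\to m)$ are injective.

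First I would handle the easy direction of (a): arrows with the same brick label are forcing equivalent. Given $q:\TT\to\UU$ with brick label $S=S_q$, I compare it with the canonical arrow $q_S:\T(S)\to\T(S)\cap{}^\perp S$ issuing from the completely join-irreducible torsion class $\T(S)$ attached to $S$ via Theorem~\ref{difftors2}(c). Three identities do the work: (i) $\UU=\TT\cap{}^\perp S$, because $\TT\cap\UU^\perp=\Filt(S)$ by Theorem~\ref{difftors2}(b) and any intermediate torsion class would have to contain $S$, which is not in ${}^\perp S$; (ii) $\TT=\UU\vee\T(S)$, by Theorem~\ref{dtcd}(a) applied to $[\UU,\TT]$; and (iii) $\UU\wedge\T(S)=\T(S)\cap{}^\perp S$, combining (i) with $\T(S)\cap{}^\perp S\subseteq\UU$. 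For any complete congruence $\Theta$, joining $\T(S)\equiv_\Theta\T(S)\cap{}^\perp S$ with $\UU$ yields $\TT\equiv_\Theta\UU$, and meeting $\TT\equiv_\Theta\UU$ with $\T(S)$ recovers the companion identification. Hence $q\forceseq q_S$, so all arrows with brick label $S$ are pairwise forcing equivalent.

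Part (c) is then immediate: the join-irreducible labelling sends $j\in\cjirr(\tors A)$ to the forcing class of its unique outgoing arrow, which for $j=\T(S)$ is $q_S$ with brick label $S$, matching the Theorem~\ref{difftors2}(c) bijection. The meet-irreducible case is dual, using that the canonical arrow ${}^\perp S\vee\T(S)\to{}^\perp S$ given by Theorem~\ref{dtcd}(b) also has brick label $S$ and is thus forcing equivalent to $q_S$ by the easy direction.

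The main obstacle is the converse direction of (a): distinct bricks $S_1\not\cong S_2$ must give $q_{S_1}\not\forceseq q_{S_2}$. When $\ann S_1\neq\ann S_2$, say $\ann S_2\not\subseteq\ann S_1$, the algebraic congruence $\Theta_{\ann S_2}$ separates them by Theorem~\ref{introanncont}: it leaves $q_{S_2}$ untouched since $S_2\in\mod(A/\ann S_2)$, but contracts $q_{S_1}$ since $S_1\notin\mod(A/\ann S_2)$. For the delicate case $\ann S_1=\ann S_2$, which algebraic congruences cannot detect, I would analyse the canonical completely join-irreducible element $j(q):=\Meet\{\VV\in\tors A:\UU\vee\VV=\TT\}$ attached to an arrow $q:\TT\to\UU$. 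A filtration argument in the spirit of Lemma~\ref{ft2}, together with Lemma~\ref{simpleness}(a), shows that every such $\VV$ satisfies $S_q\in\VV$, whence $\T(S_q)\subseteq\VV$ and thus $j(q)=\T(S_q)$. Extending the finite-case semidistributive-lattice argument to our complete bialgebraic setting then shows that $j(q)$ is an invariant of the forcing equivalence class of $q$, so forcing-equivalent arrows share the same brick; this completes (a), and (b) follows from Proposition~\ref{congunifchar}.
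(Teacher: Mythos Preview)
Your easy direction of (a) and part (c) are correct and essentially match the paper's argument in Proposition~\ref{feq}; the identities (i)--(iii) are valid, and indeed your computation $j(q)=\T(S_q)$ is right: any $\VV$ with $\UU\vee\VV=\TT$ contains some $X\in\VV\setminus\UU$, whose torsion-free quotient with respect to $\UU$ lies in $\VV\cap\TT\cap\UU^\perp=\VV\cap\Filt(S_q)$ and hence surjects onto $S_q$, giving $S_q\in\VV$.

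The hard direction of (a), however, has a genuine gap. Your final step asserts that ``extending the finite-case semidistributive-lattice argument'' shows $j(q)$ depends only on the forcing class of $q$. But this is exactly the statement that the map $\cjirr(\tors A)\to\Comb(\tors A)$, $j\mapsto\comb(j\to j_*)$, is injective --- condition~(iii) of Proposition~\ref{congunifchar} --- which is precisely what you are trying to prove. There is no purely lattice-theoretic argument available here: there exist finite semidistributive lattices that are \emph{not} congruence uniform, and in any such lattice there are distinct join-irreducibles $j\neq j'$ with $j\to j_*$ forcing equivalent to $j'\to j'_*$, so $j(-)$ fails to be a forcing invariant. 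Thus the claimed extension is false in general, and your argument is circular at this point. (The case split on $\ann S_1$ versus $\ann S_2$ is also a red herring: the algebraic-congruence case relies on the forward reference to Theorem~\ref{annihilate contract0} and still leaves the equal-annihilator case unresolved, while a valid $j(q)$-invariant argument would render the split unnecessary.)

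The paper closes this gap with a concrete representation-theoretic construction you are missing: for each brick $S$ it defines an explicit relation $\equiv_S$ on $\tors A$ by declaring $\TT\equiv_S\UU$ when every nonzero $X\in(\TT\vee\UU)\cap(\TT\wedge\UU)^\perp$ admits $S$ as a subfactor, and proves directly (Proposition~\ref{congbrk}) that $\equiv_S$ is a complete congruence. This congruence contracts an arrow $q$ if and only if $S$ is a subfactor of $S_q$. Hence if $q$ and $q'$ are forcing equivalent, applying $\equiv_{S_q}$ shows $S_q$ is a subfactor of $S_{q'}$, and symmetrically $S_{q'}$ is a subfactor of $S_q$, forcing $S_q\cong S_{q'}$. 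This is the missing idea.
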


In particular, we get:
\begin{corollary} \label{fincongunif}
 Let $A$ be a finite-dimensional algebra that is $\tau$-tilting finite. Then $\tors A$ is congruence uniform.
\end{corollary}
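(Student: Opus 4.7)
The proof plan is essentially to invoke Theorem \ref{theorem uniform2}(b) combined with the fact that $\tau$-tilting finiteness forces $\tors A$ to be a finite lattice. Specifically, as recalled after Corollary \ref{factor-closed}, the hypothesis that $A$ is $\tau$-tilting finite is equivalent to $\# \tors A < \infty$ (by \cite[Theorem 2.7]{AIR} and \cite[Theorem 1.2]{DIJ}). So $\tors A$ is a finite lattice.

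Next, I would appeal to the remark immediately following Definition \ref{ccongunifd}, which observes that for a finite lattice, complete congruence uniformity collapses to the classical notion of congruence uniformity. Indeed, when $L$ is finite, $\cjirr L = \jirr L$, $\cmirr L = \mirr L$, every congruence is complete and arrow-determined (so $\Comb L = \Com L = \Con L$), and the bijections required in Definition \ref{ccongunifd} are exactly those required for classical congruence uniformity.

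Finally, applying Theorem \ref{theorem uniform2}(b) to the finite-dimensional algebra $A$ yields that $\tors A$ is completely congruence uniform, and hence, by the previous paragraph, congruence uniform. There is no real obstacle here: the corollary is a direct specialization of the preceding theorem to the finite case, and all the substantive work has already been carried out in Theorem \ref{theorem uniform2}.
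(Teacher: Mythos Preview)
Your proposal is correct and matches the paper's approach: the paper states this corollary immediately after Theorem~\ref{theorem uniform2} with only the phrase ``In particular, we get,'' leaving the reader to supply exactly the argument you give. You have simply made explicit the two trivial observations needed---that $\tau$-tilting finiteness forces $\tors A$ to be finite, and that complete congruence uniformity reduces to congruence uniformity for finite lattices.
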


Note that the congruence uniformity of $\tors A$ was known only for certain special classes of algebras: preprojective algebras, via the weak order (see Section \ref{pre weak sec}), and certain gentle algebras \cite{PPP}.
By Theorem \ref{theorem uniform2}, the forcing preorder $\forces$ can be considered as a partial order on $\brick A$. In particular, we get the following description of $\Comb(\tors A)$.

\begin{corollary} 
 The complete lattices $\Comb(\tors A)$ and $\Ideals(\brick A)$ are isomorphic, where $\Ideals(\brick A)$ consists of the sets of bricks that are closed under forcing.
\end{corollary}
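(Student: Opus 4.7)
The plan is to assemble the corollary from two isomorphisms already established earlier in the paper, using the brick labelling as a dictionary between arrows and bricks.

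First, I would invoke Theorem~\ref{semidistributiveandbialg} to verify that $L = \tors A$ satisfies the hypotheses of Theorem~\ref{mainiso}, namely that it is completely semidistributive and bialgebraic. Theorem~\ref{mainiso} then provides a canonical isomorphism of complete lattices
\[
\Comb(\tors A) \;\cong\; \Ideals\bigl(\Hasse_1(\tors A)\bigr),
\]
sending a congruence to the set of arrows it contracts, where $\Ideals$ is taken with respect to the forcing preorder $\forces$ on arrows.

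Next, I would use the brick labelling map $q \mapsto S_q$ of Definition~\ref{definebrick2} together with Theorem~\ref{theorem uniform2}(a). The content of that part of the theorem is that two arrows of $\Hasse(\tors A)$ are forcing equivalent if and only if they carry the same brick label, so the labelling induces a bijection between the set of forcing equivalence classes of arrows and $\brick A$. Combined with the fact that $\forces$ is a preorder whose symmetrization is exactly forcing equivalence, this shows that $\forces$ descends to a genuine partial order on $\brick A$ (this is precisely the forcing order on bricks, defined after Theorem~\ref{thintroconguni}), and that the quotient of $(\Hasse_1(\tors A), \forces)$ by forcing equivalence is isomorphic, as a poset, to $(\brick A, \forces)$.

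Finally, I would observe that order ideals of any preorder are in order-preserving bijection with order ideals of the associated partial order obtained by quotienting by the symmetrization of the preorder: a subset $\mathcal{I}$ of $\Hasse_1(\tors A)$ is $\forces$-closed if and only if it is a union of forcing equivalence classes that is $\forces$-closed in the quotient. Therefore
\[
\Ideals\bigl(\Hasse_1(\tors A)\bigr) \;\cong\; \Ideals(\brick A),
\]
where on the right the ideals are taken for the forcing order on bricks. Composing with the isomorphism of Theorem~\ref{mainiso} yields the claim. There is no serious obstacle here: the corollary is essentially a repackaging of Theorems~\ref{mainiso} and~\ref{theorem uniform2}(a), and the only minor point to check carefully is that the labelling map is order-preserving in both directions when passing between the preorder on arrows and the induced order on bricks, which is immediate from the definition of $\forces$ on $\brick A$ as the order induced via $q \mapsto S_q$.
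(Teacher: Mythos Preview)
Your proposal is correct and takes essentially the same approach as the paper. The paper's proof is the one-line ``This is a consequence of Theorem~\ref{theorem uniform2}'', which implicitly goes via complete congruence uniformity ($\Comb(\tors A)\cong\Ideals(\cjirr(\tors A))$) together with the bijection $\cjirr(\tors A)\cong\brick A$; you instead spell out the equivalent route via Theorem~\ref{mainiso} and part (a) of Theorem~\ref{theorem uniform2}, which amounts to the same argument unpacked.
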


\begin{proof}
 This is a consequence of Theorem \ref{theorem uniform2}.
\end{proof}

We need some preparation to prove Theorem \ref{theorem uniform2}. We associate to each brick a certain complete congruence. Let $S \in \brick A$. We define the relation $\equiv_S$ in the following way. For $\TT, \UU \in \tors A$, we put $\TT \equiv_S \UU$ if every $X \in (\TT \join \UU) \cap (\TT \meet \UU)^\perp$ admits $S$ as a subfactor.

\begin{proposition} \label{congbrk}
 The relation $\equiv_S$ is a complete congruence.
\end{proposition}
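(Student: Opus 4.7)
The plan is to reformulate $\equiv_S$ via the brick labelling of Lemma~\ref{ft2}, verify the equivalence-relation axioms, and then confirm compatibility with arbitrary joins and meets. The key reformulation is that for $\UU \subseteq \TT$, the condition $\TT \equiv_S \UU$ is equivalent to requiring every brick $B \in \brick[\UU, \TT]$ to admit $S$ as a subfactor: by Lemma~\ref{ft2}, $\TT \cap \UU^\perp = \Filt \brick[\UU, \TT]$, and the class of modules admitting $S$ as a subfactor (with $0$ included) is closed under extensions, since in any short exact sequence $0 \to Y' \to Y \to Y'' \to 0$ the $S$-subfactor of a nonzero end persists in the middle $Y$.

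Reflexivity and symmetry are then immediate. For the interval-closure property, if $\TT \meet \UU \subseteq \WW \subseteq \TT \join \UU$ and $\TT \equiv_S \UU$, both $\WW \cap (\TT \meet \UU)^\perp$ and $(\TT \join \UU) \cap \WW^\perp$ embed into $(\TT \join \UU) \cap (\TT \meet \UU)^\perp$ (the second using $\TT \meet \UU \subseteq \WW$, whence $\WW^\perp \subseteq (\TT \meet \UU)^\perp$). Thus the $S$-subfactor property propagates throughout the interval, giving $\WW \equiv_S \TT \meet \UU$ and $\TT \join \UU \equiv_S \WW$.

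For transitivity, given $\TT_1 \equiv_S \TT_2$ and $\TT_2 \equiv_S \TT_3$, by extension closure it suffices to take a brick $X \in \brick[\TT_1 \meet \TT_3, \TT_1 \join \TT_3]$ and produce an $S$-subfactor. I decompose $X$ iteratively using the torsion pairs $(\TT_2, \TT_2^\perp)$, $(\TT_1 \meet \TT_2, (\TT_1 \meet \TT_2)^\perp)$, and $(\TT_2 \meet \TT_3, (\TT_2 \meet \TT_3)^\perp)$. Each step isolates a piece landing in one of the two \emph{good} wide subcategories $(\TT_1 \join \TT_2) \cap (\TT_1 \meet \TT_2)^\perp$ or $(\TT_2 \join \TT_3) \cap (\TT_2 \meet \TT_3)^\perp$, where the corresponding hypothesis directly delivers an $S$-subfactor, while the residual piece must sit in $\TT_1 \meet \TT_2$ and in $\TT_2 \meet \TT_3$ simultaneously, hence in $M := \TT_1 \meet \TT_2 \meet \TT_3$; combined with the perp condition $X \in M^\perp$ inherited by submodules, this forces the residual to vanish after finitely many strict dimension drops. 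Extension closure of the $S$-subfactor class then concludes.

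For compatibility with arbitrary joins and meets, given families $(\TT_i)_i$ and $(\UU_i)_i$ with $\TT_i \equiv_S \UU_i$, interval closure reduces to the chain case $\UU_i \subseteq \TT_i$, and the claim becomes that every brick in $\brick[\Join \UU_i, \Join \TT_i]$ admits $S$ as a subfactor (with the dual statement for meets). Such a brick is decomposed via a finite sequence of torsion-pair splits drawn from the families, each split placing a piece in some $\TT_i \cap \UU_i^\perp$ on which the hypothesis applies, with finite-dimensionality ensuring termination. The main obstacle is the transitivity step and its infinite analog for join/meet compatibility: perp-closures are well-behaved under submodules but not quotients, so one must carefully alternate torsion-pair splits on the submodule and quotient sides, verifying that the recursion terminates with the residual forced toward $M \cap M^\perp = 0$.
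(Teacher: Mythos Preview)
Your reformulation via bricks and Lemma~\ref{ft2} is correct and a useful simplification, and the reduction of join/meet compatibility to the chain case $\UU_i\subseteq\TT_i$ via interval closure is fine once transitivity is in hand. However, your transitivity argument has a genuine gap. Suppose the torsion part of $X$ for $(\TT_2,\TT_2^\perp)$ vanishes, so $X\in\TT_2^\perp$. Then $X$ is already torsion-free for each of your three pairs $(\TT_2,\,\cdot\,)$, $(\TT_1\meet\TT_2,\,\cdot\,)$, $(\TT_2\meet\TT_3,\,\cdot\,)$, so no further split produces any nonzero ``good'' piece, and the residual you obtain is $X$ itself---which lies in $\TT_2^\perp$, not in $M=\TT_1\meet\TT_2\meet\TT_3$ as you claim. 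Your termination argument via $M\cap M^\perp=0$ therefore does not apply.

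What is missing is exactly the ingredient the paper exploits: the explicit description of joins. An element of $\TT_1\join\TT_3=\Filt(\TT_1\cup\TT_3)$ has a nonzero submodule in $\TT_1$ or in $\TT_3$, and dually an element of $(\TT_1\meet\TT_3)^\perp=\TT_1^\perp\join_{\torf}\TT_3^\perp$ has a nonzero quotient in $\TT_1^\perp$ or in $\TT_3^\perp$. With this, a \emph{single} split by $(\TT_2,\TT_2^\perp)$ suffices for transitivity: if the torsion part $U\neq0$, it lies in $\TT_2$ and in $(\TT_1\meet\TT_3)^\perp$, hence has a nonzero quotient $U''$ in $\TT_1^\perp$ or $\TT_3^\perp$; say $U''\in\TT_1^\perp$, so $U''\in\TT_2\cap\TT_1^\perp\subseteq(\TT_1\join\TT_2)\cap(\TT_1\meet\TT_2)^\perp$ and the hypothesis applies. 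If $U=0$ then $X\in\TT_2^\perp$ and one argues dually using a nonzero submodule of $X$ in $\TT_1$ or $\TT_3$. The same join description also makes the join-compatibility step a one-liner (take a nonzero submodule of your brick lying in some $\TT_{i_0}$, observe it is automatically in $\UU_{i_0}^\perp$), so no iterative scheme is required there either.
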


\begin{proof}
 For simplicity, we write $\equiv$ instead of $\equiv_S$. This relation is clearly symmetric and reflexive.

 Let us prove that it is transitive. Suppose that $\TT \equiv \UU$ and $\UU \equiv \VV$. Let $X \in (\TT \join \VV) \cap (\TT \meet \VV)^\perp$ be non-zero. We consider a short exact sequence $0 \to U \to X \to U' \to 0$ with $U \in \UU$ and $U' \in \UU^\perp$. Suppose that $U \neq 0$. We have $U \in (\TT \meet \VV)^\perp = \TT^\perp \join \VV^\perp$ so $U$ admits a non-zero quotient $U''$ that is in $\TT^\perp$ or in $\VV^\perp$. By symmetry, we suppose that $U'' \in \TT^\perp$. So $U'' \in (\TT \join \UU) \cap (\TT \meet \UU)^\perp$, hence it admits $S$ as a subfactor because $\TT \equiv \UU$, so $X$ admits $S$ as a subfactor. If $U' \neq 0$, it admits a submodule $U''$ that is in $\TT$ or $\VV$ and we conclude as before.

 Consider now two families $(\UU_i)_{i \in \II}$ and $(\TT_i)_{i \in \II}$ of torsion classes satisfying $\UU_i \equiv \TT_i$ for all $i \in \II$. Let $\UU = \Join_{i \in \II} \UU_i$ and $\TT = \Join_{i \in \II} \TT_i$. We will prove that $\UU \equiv \TT$. Let $X \in (\TT \join \UU) \cap (\TT \meet \UU)^\perp$ be non-zero. As $X \in \TT \join \UU = \Join_{i \in \II} (\TT_i \join \UU_i)$, there exists a non-zero submodule $X'$ of $X$ and $i_0 \in \II$ such that $X' \in \TT_{i_0} \join \UU_{i_0}$. As $X' \in (\TT \meet \UU)^\perp$ and $\Join_{i \in \II} (\TT_i \meet \UU_i) \subseteq \TT \meet \UU$, we have $X' \in \Meet_{i \in \II} (\TT_i \meet \UU_i)^\perp$ so $X' \in (\TT_{i_0} \meet \UU_{i_0})^\perp$. Hence, as $\TT_{i_0} \equiv \UU_{i_0}$, $X'$ admits $S$ as a factor module, so $X$ does. In the same way, we prove that $\Meet_{i \in \II} \UU_i \equiv \Meet_{i \in \II} \TT_i$ so $\equiv$ is a complete congruence.
\end{proof}

\begin{proposition} \label{feq}
 Two arrows $q$ and $q'$ of $\Hasse(\tors A)$ are forcing equivalent if and only if $S_q \cong S_{q'}$.
\end{proposition}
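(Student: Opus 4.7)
I will prove the two implications separately, using Proposition~\ref{congbrk} for the ``only if'' direction and the characterization of arrows via Theorem~\ref{dtcd}(a) for the ``if'' direction.

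\textbf{Plan for the ``if'' direction.} Suppose $S_q\cong S_{q'}=:S$. I will show that for every complete congruence $\Theta$, $\Theta$ contracts $q$ if and only if $\Theta$ contracts the particular arrow $q_S:\T(S)\to\T(S)\meet{}^\perp S$ (which is the unique arrow in $\Hasse(\tors A)$ starting at the completely join-irreducible torsion class $\T(S)$ by Theorem~\ref{difftors2}(c) and Theorem~\ref{dtcd}(a) with $\UU=0$). Granted this, the analogous statement holds for $q'$, so $\con(q)=\con(q_S)=\con(q')$ and the arrows are forcing equivalent. To establish the equivalence, apply Theorem~\ref{dtcd}(a) to the interval $[\UU,\TT]$ (where $\TT$ is trivially join-irreducible) for any arrow $r:\TT\to\UU$ with label $S$: this gives $\TT=\UU\join\T(S)$ and $\UU=\TT\meet{}^\perp S$. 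Since $S\in\TT$ we have $\T(S)\subseteq\TT$, and therefore $\T(S)\meet{}^\perp S\subseteq\TT\meet{}^\perp S=\UU$; this also implies $\UU\meet\T(S)=\T(S)\meet{}^\perp S$. If $\TT\equiv_\Theta\UU$, meeting both sides with $\T(S)$ gives $\T(S)\equiv_\Theta\T(S)\meet{}^\perp S$. Conversely, joining both sides of $\T(S)\equiv_\Theta\T(S)\meet{}^\perp S$ with $\UU$ gives $\TT\equiv_\Theta\UU$, using $\UU\join(\T(S)\meet{}^\perp S)=\UU$.

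\textbf{Plan for the ``only if'' direction.} Suppose $S:=S_q\not\cong S':=S_{q'}$. I will construct a congruence that contracts one of $q,q'$ but not the other, using $\equiv_S$ from Proposition~\ref{congbrk}. First I claim that $\equiv_S$ contracts an arrow $r:\TT\to\UU$ if and only if $S$ is a subfactor of $S_r$. Indeed, since $\TT\join\UU=\TT$ and $\TT\meet\UU=\UU$, the definition says $\equiv_S$ contracts $r$ iff every object of $\TT\cap\UU^\perp$ admits $S$ as a subfactor; by Theorem~\ref{difftors2}(b) this set is exactly $\Filt S_r$. One direction is clear by taking $X=S_r\in\Filt S_r$; the other uses the filtration structure: any $X\in\Filt S_r$ admits $S_r$ as a subfactor (as the first step of its filtration), and subfactors of subfactors are subfactors.

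\textbf{Concluding the ``only if'' direction.} Now observe that if both $S$ is a subfactor of $S'$ and $S'$ is a subfactor of $S$, then $\dim_kS\le\dim_kS'\le\dim_kS$, so the two dimensions agree; writing $S\cong N/M$ for $0\subseteq M\subseteq N\subseteq S'$ with $\dim_k(N/M)=\dim_kS'$ forces $M=0$ and $N=S'$, giving $S\cong S'$. Since $S\not\cong S'$, at least one of the subfactor relations fails; without loss of generality $S$ is not a subfactor of $S'$. By the claim, $\equiv_S$ contracts $q$ (trivially, $S$ is a subfactor of itself) but does not contract $q'$. Hence $\equiv_S\ge\con(q)$ while $\equiv_S\not\ge\con(q')$, so $\con(q)\ne\con(q')$ and the arrows are not forcing equivalent.

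\textbf{Main obstacle.} The substantive content lies in the ``if'' direction, where one must exhibit a canonical representative $q_S$ in every forcing equivalence class and check it is preserved under arbitrary congruences. The lattice identities from Theorem~\ref{dtcd}(a) make this transparent once the right meet and join manipulations are identified. The ``only if'' direction is routine once one sees that the congruence $\equiv_S$ manufactured in Proposition~\ref{congbrk} is strong enough to distinguish bricks of different dimensions, while the dimension-count rules out the mutually-subfactor edge case.
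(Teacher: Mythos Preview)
Your proof is correct. The ``only if'' direction is essentially the paper's argument in contrapositive form: both use the congruence $\equiv_{S}$ from Proposition~\ref{congbrk} and the observation that mutual subfactors of equal dimension must be isomorphic.

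For the ``if'' direction, you take a slightly different route. The paper works directly with two arbitrary arrows $q:\TT\to\UU$ and $q':\TT'\to\UU'$ sharing the label $S$: it establishes the identities $(\TT\meet\TT')\join\UU'=\TT'$ and $(\UU\meet\TT')\join\UU'=\UU'$ by elementary arguments (using only that $S\in\TT\cap\TT'$, $S\in\UU^\perp\cap\UU'^\perp$, and that $q'$ is a Hasse arrow), and then concludes $\TT'\equiv\UU'$ from $\TT\equiv\UU$. You instead pass through the canonical arrow $q_S:\T(S)\to\T(S)\meet{}^\perp S$ and use the formulas $\TT=\UU\join\T(S)$, $\UU=\TT\meet{}^\perp S$ extracted from Theorem~\ref{dtcd}(a). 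Both approaches are short and rest on the same lattice manipulations; yours is perhaps more transparently connected to the complete join-irreducibility of $\T(S)$, while the paper's is self-contained and does not need to cite Theorem~\ref{dtcd}.

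One cosmetic remark: you phrase the ``if'' direction in terms of \emph{complete} congruences, but forcing equivalence is defined via arbitrary lattice congruences. This causes no problem, since your argument uses only binary meets and joins and hence applies verbatim to any lattice congruence (and in any case Proposition~\ref{concom} makes the distinction immaterial for $\tors A$).
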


\begin{proof}
 We denote $q: \TT \to \UU$ and $q': \TT' \to \UU'$.

 First, suppose that $S := S_q \cong S_{q'}$. Let $\equiv$ be a congruence satisfying $\TT \equiv \UU$. We have $S \in \TT \meet \TT'$ and $S \notin \UU'$ so $\UU' \subsetneq (\TT \meet \TT') \join \UU' \subseteq \TT'$. As $q'$ is an arrow, we deduce $(\TT \meet \TT') \join \UU' = \TT'$. We have $\UU \meet \TT' \subseteq {}^\perp S$ and $\UU' \subseteq {}^\perp S$ so $(\UU \meet \TT') \join \UU' \subseteq {}^\perp S$ and therefore $\UU' \subseteq (\UU \meet \TT') \join \UU' \subsetneq \TT'$, so, as before, $(\UU \meet \TT') \join \UU' = \UU'$. As $\UU \equiv \TT$ and $\equiv$ is a lattice congruence, we deduce $\TT' = (\TT \meet \TT') \join \UU' \equiv (\UU \meet \TT') \join \UU' = \UU'$.

 Suppose now that $q$ and $q'$ are forcing equivalent. The congruence $\equiv_{S_q}$ defined above contracts $q$, so it contracts $q'$. Hence, $S_q$ is a subfactor of $S_{q'}$. Conversely, $S_{q'}$ is a subfactor of $S_q$. Then, $S_{q'} \cong S_q$.
\end{proof}

We can finally prove Theorem \ref{theorem uniform2}.

\begin{proof}[Proof of Theorem \ref{theorem uniform2}]
 (a) This is Proposition \ref{feq}.

 (b) By (a) and Theorem \ref{difftors2}(c), the forcing equivalence classes in $\Hasse_1(\tors A)$ correspond bijectively with $\brick A\cong\cjirr(\tors A)$. With this and its dual, we conclude using Proposition \ref{congunifchar}(ii)$\Rightarrow$(i) together with Theorem \ref{semidistributiveandbialg} that $\tors A$ is completely congruence uniform.

(c) This is immediate.
\end{proof}

 We end this subsection by giving the following elementary observation about the forcing order on bricks.

 \begin{proposition} \label{topbottomlatsimpleforce}\
  \begin{enumerate}[\rm (a)]
   \item The arrows incident to $0$ in $\Hasse(\tors A)$ are $\Filt S \to 0$ for each simple $A$-module $S$. The label of $\Filt S \to 0$ is $S$.
   \item The arrows incident to $\mod A$ in $\Hasse(\tors A)$ are $\mod A \to {}^\perp S$ for each simple $A$-module $S$. The label of $\mod A \to {}^\perp S$ is $S$.
   \item The maximal elements for the forcing order on $\brick A$ are simple $A$-modules.
   \item For a simple $A$-module $S$ and a brick $S'$, $S \forces S'$ if and only if $S$ is a subfactor of $S'$. 
  \end{enumerate}
 \end{proposition}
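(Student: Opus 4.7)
Parts (a) and (b) follow from Theorem~\ref{difftors2} together with simple observations about simple modules in torsion and torsion-free classes. For (a), an arrow $\TT \to 0$ is characterised by $\TT = \Filt S_q$ for a unique brick $S_q$; since the non-zero torsion class $\TT$ contains a simple quotient $S$ of any of its non-zero elements, $\Filt S \subseteq \TT$ is a non-zero sub-torsion-class, hence equal to $\TT$ by the covering hypothesis, and Lemma~\ref{simpleness}(b) yields $S \cong S_q$. Conversely, for simple $S$ the class $\Filt S$ covers $0$ because any strict non-zero sub-torsion-class would contain a simple quotient, which can only be $S$, forcing it to contain $\Filt S$. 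Part (b) is dual: for an arrow $\mod A \to \VV$, Theorem~\ref{difftors2}(b) gives $\VV^\perp = \Filt S_q$, hence $\VV = {}^\perp S_q$; picking a simple submodule $S$ of some non-zero module in the torsion-free class $\VV^\perp$ (closed under submodules) yields $\VV \subseteq {}^\perp S \subsetneq \mod A$, whence $\VV = {}^\perp S$ and $S \cong S_q$ by Lemma~\ref{simpleness}(b). Conversely, any torsion class strictly containing ${}^\perp S$ must contain $S$ and every other simple module, so equals $\mod A$.

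For (d), both directions use the congruence $\equiv_S$ from Proposition~\ref{congbrk}, which is a complete congruence contracting every $S$-arrow by construction; hence $\con_S \leq \equiv_S$, where $\con_S$ denotes the common principal congruence of any $S$-arrow (well-defined by Theorem~\ref{theorem uniform2}(a)). For $(\Rightarrow)$, if $S \forces S'$ then $\con_S$ contracts an $S'$-arrow $q' \colon \TT' \to \UU'$, hence so does $\equiv_S$; since $\TT' \cap \UU'^\perp = \Filt S'$ by Theorem~\ref{difftors2}(b), the definition of $\equiv_S$ forces every module in $\Filt S'$---in particular $S'$ itself---to admit $S$ as a subfactor. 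The reverse direction $(\Leftarrow)$ is the main technical hurdle: we must prove $\equiv_S = \con_S$ for \emph{simple} $S$, which will directly yield that $\con_S$ contracts every $S'$-arrow whenever $S$ is a composition factor of $S'$. The strategy exploits parts (a) and (b): since both $\Filt S \to 0$ and $\mod A \to {}^\perp S$ are $S$-arrows, $\con_S$ yields the basic identifications $\TT \equiv_{\con_S} \TT \vee \Filt S$ and $\TT \equiv_{\con_S} \TT \wedge {}^\perp S$ for every $\TT \in \tors A$; combined with the interval description in Lemma~\ref{ft2} and Theorem~\ref{dtcd}, which ensures that every brick in $\brick[\UU,\TT]$ has $S$ as a composition factor whenever $\UU \equiv_S \TT$, these identifications can be propagated inductively over the interval $[\UU, \TT]$ to obtain $\UU \equiv_{\con_S} \TT$.

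Finally, (c) interprets maximality of $S \in \brick A$ as the absence of a strict forcer, i.e.\ $y \forces S$ implies $y \cong S$. Simple $S$ is maximal: if $y \forces S$, then $\con_y \leq \equiv_y$ contracts the atomic $S$-arrow $\Filt S \to 0$ (part (a)), so by definition of $\equiv_y$ every $X \in \Filt S$ has $y$ as a subfactor; specialising to $X = S$ simple and using that the brick $y$ is non-zero yields $y \cong S$. Conversely, any non-simple brick $S$ has a simple subfactor $T \neq S$; by $(\Leftarrow)$ of (d) we have $T \forces S$, and the $(\Rightarrow)$-style argument (which only uses $\con_y \leq \equiv_y$ and is valid for arbitrary brick $y$) excludes $S \forces T$, since otherwise $S$ would be a subfactor of the simple $T$, contradicting $S \not\cong T$. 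Thus $T$ strictly forces $S$ and $S$ is not maximal; the whole proposition reduces to the single difficult step $\equiv_S = \con_S$ in $(\Leftarrow)$ of (d).
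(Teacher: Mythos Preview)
Your treatment of (a), (b), (c), and the $(\Rightarrow)$ direction of (d) is correct and close in spirit to the paper's. The genuine gap is exactly where you flag it: the claim that $\equiv_S = \con_S$ for simple $S$. You propose to ``propagate inductively'' using the moves $\TT \mapsto \TT \vee \Filt S$ and $\TT \mapsto \TT \wedge {}^\perp S$, but there is no evident induction variable, and these moves do not reach arbitrary elements of a given $\equiv_S$-class. Concretely, ${}^\perp S$ is in general \emph{strictly larger} than the set of modules avoiding $S$ as a composition factor (e.g.\ for $A=k(1\to 2)$ and $S=S_2$ one has ${}^\perp S=\add(S_1,P_1)$ while the modules avoiding $S_2$ form $\add S_1$), so iterating $\wedge\,{}^\perp S$ stabilises too early.

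The paper closes this gap with a single clean observation that you are missing. Let $e$ be the primitive idempotent with $\top(Ae)\cong S$ and set $B:=A/(e)$. Because $(e)=AeA$ is idempotent, $\mod B$ is itself a \emph{torsion class} in $\mod A$ (closed under quotients and extensions), and moreover $\Filt S \vee \mod B = \mod A$ since every module is filtered by simples. As $\con_S$ collapses $\Filt S \to 0$, it collapses $\mod A = \Filt S \vee \mod B$ to $\mod B$, and hence
\[
\TT \;=\; \TT \wedge \mod A \;\equiv_{\con_S}\; \TT \wedge \mod B \;=\; \TT \cap \mod B
\]
for every $\TT \in \tors A$. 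This shows in one stroke that the algebraic congruence $\Theta_{(e)}$ (whose classes are the fibres of $\TT \mapsto \TT\cap\mod B$) equals $\con_S$. Your congruence $\equiv_S$ is in fact the same as $\Theta_{(e)}$, since a brick lies in $\mod B$ exactly when $S$ is not among its composition factors. The key missing ingredient in your approach is therefore to replace ${}^\perp S$ by the smaller torsion class $\mod B$, which turns the elusive ``inductive propagation'' into a single meet.
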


 \begin{proof}
  (a) Any $\TT \in \tors A \setminus \{0\}$ contains $\Filt S$ for a simple module $S$. Indeed, let $X \in \TT$ be non-zero and $S$ be a simple factor module of $X$. Then $S \in \TT$, so $\Filt S \subseteq \TT$. As $\Filt S \cap \Filt S' = 0$ for $S \neq S'$, the result follows. It is immediate that $S$ labels $\Filt S \to 0$.

  (b) By the dual of (a), arrows incident to $0$ in $\torf A$ are $\Filt S \to 0$ so, as $ {}^\perp(-):\torf A\to\tors A $ is an anti-isomorphism, arrows incidents to $\mod A$ in $\tors A$ are $\mod A \to {}^\perp (\Filt S) = {}^\perp S$.

  (d) Let $S$ be a simple $A$-module, $e$ be the corresponding primitive idempotent and $B := A/(e)$. As a very special case of Theorem \ref{eta op}(a), $\pi: \tors A \twoheadrightarrow \tors B$, $\TT \mapsto \TT \cap \mod B$ is a surjective morphism of complete lattices. Moreover, in this case, $\pi$ splits, as any $\UU \in \tors B$ is also a torsion class in $\mod A \supseteq \mod B$, identifying $\tors B$ with a sublattice of $\tors A$.

  By (a), $q_S: \Filt S \to 0$ is an arrow of $\Hasse(\tors A)$. We have, in $\tors A$, $\Filt S \join \mod B = \mod A$, so the lattice congruence $\Theta$ corresponding to $\pi$ is $\con(q_S)$.

  Let $S' \in \brick A$. By Theorem \ref{difftors2}(c), there is an arrow $q: \T(S') \to \UU$ with $S_q \cong S'$ and $\T(S') \in \cjirr(\tors A)$.
  If $S$ is not a subfactor of $S'$, then $S' \in \mod B$, so that $q$ is an arrow of $\Hasse(\tors B)$. So $q$ is not contracted by $\pi$. Therefore $S'$ is not forced by $S$.
  If $S$ is a subfactor of $S'$, $\pi(\T(S')) \neq \T(S')$ so $q$ has to be contracted by $\pi$, hence by $\con(q_S)$. It implies that $S$ forces $S'$.

  (c) As any non-simple brick admits a strict simple subfactor, any maximal brick has to be simple by (d). Moreover, by (d) again, a simple module cannot force another simple module, so all simple modules are maximal.
 \end{proof}

\section{Functorially finite torsion classes}

\subsection{Reminders on \texorpdfstring{$\tau$-tilting}{tau-tilting} theory} 
We recall that a torsion class $\TT$ of $\mod A$ is \newword{functorially finite}
if there exists $M\in\mod A$ such that $\TT=\Fac M$, where $\Fac M$ is the full
subcategory of $\mod A$ consisting of factor modules of finite direct sums of
copies of $M$.
We denote by $\ftors A$ the set of all functorially finite torsion classes in $\mod A$.

If $X \in \mod A$, we denote by $|X|$ the number of non-isomorphic indecomposable direct summands of $X$. We say that $X$ is \emph{basic} if it has no direct summand of the form $Y \oplus Y$ for an indecomposable $A$-module $Y$.

There is a bijection between $\ftors A$ and a certain class of $A$-modules.
Recall that $M\in\mod A$ is \newword{$\tau$-rigid} if $\Hom_A(M,\tau M)=0$ where $\tau$ is the Auslander-Reiten translation.
We say that $M\in\mod A$ is \newword{$\tau$-tilting} if it is $\tau$-rigid and $|M|=|A|$
holds.
We say that $M\in\mod A$ is \newword{support $\tau$-tilting} if there exists an idempotent
$e$ of $A$ such that $M$ is a $\tau$-tilting $(A/(e))$-module.
We denote by $\sttilt A$ the set of isomorphism classes of basic support
$\tau$-tilting $A$-modules, by $\trigid A$ the set of isomorphism classes of
basic $\tau$-rigid $A$-modules, and by $\itrigid A$ the set of isomorphism classes of
indecomposable $\tau$-rigid $A$-modules.
By \cite[Theorem 2.7]{AIR}, we have a surjection
\begin{equation}\label{tau-rigid gives tors}
\Fac:\trigid A\to\ftors A
\end{equation}
given by $M\mapsto\Fac M$, which induces a bijection
\begin{equation}\label{tau-tilting gives tors}
\Fac:\sttilt A\xrightarrow{\sim}\ftors A.
\end{equation}

Sometimes, we use the following characterization of vanishing of $\Hom_A(X, \tau Y)$:
\begin{proposition}[{\cite[Proposition 5.8]{AS}}] \label{characanntau}
  Let $X$ and $Y$ be two $A$-modules. Then $\Hom_A(X, \tau Y) = 0$ if and only if $\Ext^1_A(Y, X') = 0$ for all $X' \in \Fac X$.
\end{proposition}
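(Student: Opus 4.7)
The plan is to deduce both directions from the Auslander--Reiten duality formula
\[\overline{\Hom}_A(M, \tau N) \cong D\Ext^1_A(N, M),\]
where $\overline{\Hom}_A(M, \tau N)$ denotes $\Hom_A(M, \tau N)$ modulo morphisms factoring through an injective module, and $D$ is the $k$-duality.

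For the forward direction, I would suppose $\Hom_A(X, \tau Y) = 0$ and take any $X' \in \Fac X$ together with a surjection $X^n \twoheadrightarrow X'$. Left-exactness of $\Hom_A(-, \tau Y)$ yields an injection $\Hom_A(X', \tau Y) \hookrightarrow \Hom_A(X^n, \tau Y) = \Hom_A(X, \tau Y)^n = 0$, hence $\Hom_A(X', \tau Y) = 0$. In particular $\overline{\Hom}_A(X', \tau Y) = 0$, and the AR formula gives $\Ext^1_A(Y, X') = 0$.

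For the reverse direction, assume $\Ext^1_A(Y, X') = 0$ for all $X' \in \Fac X$. Given a morphism $f \colon X \to \tau Y$, I would set $X' := \Image f \in \Fac X$, so that $f$ factors as $X \twoheadrightarrow X' \hookrightarrow \tau Y$ via some monomorphism $i$. The AR formula applied to $X'$ gives $\overline{\Hom}_A(X', \tau Y) = 0$, so $i$ factors through some injective $I$ as $X' \xrightarrow{\alpha} I \to \tau Y$. Then $\alpha$ is a monomorphism, which splits because $I$ is injective, so $X'$ is a direct summand of $I$ and hence itself injective.

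The main obstacle will be to then conclude that $X' = 0$. My plan is to first reduce to the case where $Y$ has no projective direct summand (writing $Y = Y_0 \oplus P$ with $P$ the maximal projective summand and noting $\tau Y = \tau Y_0$). A standard consequence of the minimal projective presentation defining $\tau$ is that $\tau Y_0$ has no nonzero injective direct summand for such $Y_0$; combined with the fact that any injective submodule of any module splits off as a direct summand, this shows that $\tau Y_0$ contains no nonzero injective submodule. Therefore $X' = 0$, whence $f = 0$, completing the argument. The crux of the proof is this asymmetry: the forward implication is essentially formal, whereas the reverse relies on the structural input that $\tau Y$ has no nonzero injective submodules once the projective part of $Y$ is stripped away.
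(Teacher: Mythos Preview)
The paper does not give its own proof of this proposition: it is stated with a citation to \cite[Proposition~5.8]{AS} and used as a black box. So there is nothing to compare your argument against in this paper.

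That said, your argument is correct and is essentially the standard derivation from the Auslander--Reiten formula $\Ext^1_A(Y,X')\cong D\,\overline{\Hom}_A(X',\tau Y)$. The forward direction is immediate, and your handling of the reverse direction is fine: once you know the inclusion $X'\hookrightarrow\tau Y$ factors through an injective, $X'$ is injective, and stripping the projective part of $Y$ lets you invoke the standard fact that $\tau Y_0$ has no nonzero injective summand (hence no nonzero injective submodule). One small remark: you do not really need to reduce $Y$ first; you can argue directly that any indecomposable injective summand of $X'$ would split off $\tau Y$, contradicting that $\tau$ lands in modules without injective summands on the non-projective part, while the projective part of $Y$ contributes nothing to $\tau Y$. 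Either way the argument goes through.
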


We also introduce the notion of a \emph{$\tau$-rigid pair}. A $\tau$-rigid pair over $A$ is a pair $(M, P)$ where $M$ is a $\tau$-rigid $A$-module and $P$ is a projective $A$-module satisfying $\Hom_A(P, M) = 0$. We say that $(M, P)$ is \emph{basic} if both $M$ and $P$ are. We denote by $\trigidpair A$ the set of isomorphism classes of basic $\tau$-rigid pairs over $A$ and by $\itrigidpair A$ the subset of $\trigidpair A$ consisting of indecomposable ones (\emph{i.e.} $(M,0)$ with $M$ indecomposable or $(0,P)$ with $P$ indecomposable). We identify $M \in \trigid A$ with $(M, 0) \in \trigidpair A$. We say that a $\tau$-rigid pair $(M,P)$ is \emph{$\tau$-tilting} if, in addition, we have $|M| + |P| = |A|$.
We denote by $\ttiltpair A$ the set of isomorphism classes of basic $\tau$-tilting pairs. We have a bijection $\ttiltpair A \to \sttilt A$ mapping $(M, P)$ to $M$. Finally, for $(M, P) \in \trigidpair A$, we denote by $\ttiltpair_{(M,P)} A$ the set of isomorphism classes of basic $\tau$-tilting pairs over $A$ having $(M,P)$ as a direct summand.

We recall that the order on $\ttiltpair A \cong \sttilt A$ induced by the bijection \eqref{tau-tilting gives tors} is characterized in the following way.
\begin{lemma}[{\cite[Lemma 2.25]{AIR}}] \label{ordertautilt}
 For $(T, P), (U, Q) \in \ttiltpair A$, we have the inequality $(T, P) \ge (U, Q)$ if and only if $\Hom_A(U, \tau T) = 0$ and $\Hom_A(P, U) = 0$.
\end{lemma}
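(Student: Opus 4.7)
The plan is to translate the order on $\ttiltpair A$ into a containment of torsion classes, then reduce the problem to a membership question and characterize it via Hom-vanishing.

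By definition, the order on $\ttiltpair A \cong \sttilt A$ is induced from the bijection $\Fac: \sttilt A \to \ftors A$ in \eqref{tau-tilting gives tors}, so that $(T,P) \ge (U,Q)$ if and only if $\Fac T \supseteq \Fac U$. Since $\Fac T$ is closed under factor modules and $\Fac U$ is generated (as a torsion class under factor modules) by $U$, this containment is equivalent to the single condition $U \in \Fac T$. Thus it suffices to establish:
\begin{equation*}
U \in \Fac T \iff \Hom_A(U,\tau T) = 0 \text{ and } \Hom_A(P,U) = 0.
\end{equation*}

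For the ($\Rightarrow$) direction, I would argue as follows. If $U \in \Fac T$, then $\Fac U \subseteq \Fac T$. Since $T$ is $\tau$-rigid, applying Proposition~\ref{characanntau} to $\Hom_A(T,\tau T) = 0$ yields $\Ext^1_A(T, X) = 0$ for every $X \in \Fac T$, hence in particular for every $X \in \Fac U$. A second application of Proposition~\ref{characanntau} (in the reverse direction, with roles $X = U$, $Y = T$) then gives $\Hom_A(U,\tau T) = 0$. For the second condition, pick an epimorphism $T^n \twoheadrightarrow U$: since $P$ is projective and $\Hom_A(P,T) = 0$ (by definition of a $\tau$-tilting pair), any morphism $P \to U$ lifts through $T^n$ and must therefore be zero.

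The ($\Leftarrow$) direction is the main obstacle and is the substantive content of~\cite[Lemma~2.25]{AIR}. My strategy would be to reduce to the honest $\tau$-tilting case by setting $e$ to be the idempotent with $\add(eA) = \add P$ and $B := A/(e)$. The hypothesis $\Hom_A(P,U) = 0$ precisely means $eU = 0$, so $U$ lies in the full subcategory $\mod B \subseteq \mod A$; restricting to $B$, the pair $(T,P)$ becomes a genuine $\tau_B$-tilting $B$-module $T$, and the assumption $\Hom_A(U,\tau_A T) = 0$ translates into $\Hom_B(U,\tau_B T) = 0$. One then invokes the Brenner--Butler-type characterization $\Fac T = \{X \in \mod B \mid \Hom_B(X,\tau_B T) = 0\}$ for $\tau$-tilting modules over $B$, which is established by taking a minimal right $(\add T)$-approximation $T_0 \to U$ and applying a Wakamatsu-type argument: the cokernel of the approximation must vanish when tested against $\tau_B T$, which under the approximation property forces $U \in \Fac T$. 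The main delicacy is ensuring the translation between $\tau_A$ and $\tau_B$ is valid on the relevant Hom spaces, which follows because Hom from a $B$-module to any summand of $\tau_A T$ killed in passing to $B$ is automatically zero.
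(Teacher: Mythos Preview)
The paper does not give its own proof of this lemma; it is simply quoted from \cite[Lemma~2.25]{AIR}. So there is nothing to compare against, and I will just assess your argument.

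Your argument is correct. The forward direction is clean. For the backward direction, your reduction to $B=A/(e)$ works, but it is an unnecessary detour: the paper already records (as Theorem~\ref{bongartz}, also from \cite{AIR} and logically prior) that for any $\tau$-rigid pair $(X,Q)$ the Bongartz completion satisfies $\Fac X^{+}={}^{\perp}(\tau X)\cap Q^{\perp}$. Applying this with $(X,Q)=(T,P)$, and noting that a $\tau$-tilting pair is its own Bongartz completion, gives directly
\[
\Fac T={}^{\perp}(\tau T)\cap P^{\perp},
\]
so $U\in\Fac T$ is \emph{equivalent} to $\Hom_A(U,\tau T)=0$ and $\Hom_A(P,U)=0$. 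This handles both directions at once and avoids the $\tau_A$-versus-$\tau_B$ translation you flagged as delicate. Your Wakamatsu-type sketch is essentially reproving Theorem~\ref{bongartz} inside $\mod B$ rather than invoking it over $A$.
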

Moreover, $\sttilt A \cong \ttiltpair A$ is endowed with a \emph{mutation}, exchanging two pairs $(T_1, P_1)$ and $(T_2, P_2)$, described in Theorem \ref{muttautilt}. We call $(T, P) \in \trigidpair A$ \emph{almost $\tau$-tilting} if $|T|+|P|=|A|-1$.

\begin{theorem} \label{muttautilt}\
 \begin{enumerate}[\rm(a)]
  \item \cite[Theorem 2.18]{AIR} If $(T, P)$ is an almost $\tau$-tilting pair, then $\ttiltpair_{(T,P)} A$ has exactly two elements $(T_1, P_1)$ and $(T_2, P_2)$.
  \item \cite[Theorem 2.33]{AIR} The Hasse quiver of $\ttiltpair A$ has an arrow linking $(T_1, P_1)$ and $(T_2, P_2)$ of \textup{(a)} and all arrows occur in this way.
 \end{enumerate}
\end{theorem}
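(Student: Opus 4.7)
The plan is to exploit the bijection \eqref{tau-tilting gives tors} $\Fac\colon \sttilt A \xrightarrow{\sim} \ftors A$ and the brick labelling of Theorem~\ref{difftors2} as far as possible. For part~(a), I would first reduce to the case $P = 0$: if $P = Ae$ for an idempotent $e$, then by Lemma~\ref{ordertautilt} any $\tau$-tilting completion $(T \oplus X, P \oplus Q)$ satisfies $\Hom_A(P, T \oplus X) = 0$, forcing $X$ to be an $A/(e)$-module. Hence completions of $(T, P)$ correspond bijectively to $\tau$-tilting completions of $T$ viewed over $A/(e)$, so we may assume $P = 0$ and $|T| = |A| - 1$.

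In this reduced setting, the first completion is Bongartz's: by Proposition~\ref{characanntau}, ${}^\perp(\tau T)$ is a torsion class containing $T$ as an $\Ext$-projective object, and by \eqref{tau-tilting gives tors} it equals $\Fac(T \oplus X_1)$ for a unique basic $X_1$ with $\add X_1 \cap \add T = 0$; a dimension count using $|T| = |A| - 1$ forces $X_1$ indecomposable. The second completion arises from an exchange sequence: take a left minimal $(\add T)$-approximation $u\colon X_1 \to T'$ and set $X_2 := \Cokernel(u)$. If $X_2 \neq 0$, then $(T \oplus X_2, 0)$ is the second completion; otherwise (when $u$ is injective and $X_1$ is projective) the pair $(T, X_1)$ plays this role. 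Verifying $\tau$-rigidity of the resulting pair uses the exchange sequence together with $\Hom_A(T, \tau T) = 0$ via Proposition~\ref{characanntau}. For uniqueness, I would observe that any $\tau$-tilting completion $T \oplus X$ corresponds under $\Fac$ to a functorially finite torsion class lying in the interval $[\Fac T, {}^\perp(\tau T)]$ of $\tors A$, and then use Theorem~\ref{difftors2}(b) combined with brick-labelling analysis on this interval to see that only two such torsion classes arise as $\Fac$ of completions of~$T$.

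For part~(b), Lemma~\ref{ordertautilt} shows that the two completions $(T_1, P_1)$ and $(T_2, P_2)$ from (a) are comparable, say $(T_1, P_1) > (T_2, P_2)$; this pair forms a Hasse cover because any strictly intermediate $\tau$-tilting pair $(T', P')$ would necessarily contain $(T, P)$ as a direct summand and so yield a third completion of $(T, P)$, contradicting~(a). Conversely, given a Hasse arrow $(T_1, P_1) \to (T_2, P_2)$ in $\ttiltpair A$, I would extract the maximal common direct summand $(T, P)$; dimensional reasoning together with (a) forces $|T| + |P| = |A| - 1$, for otherwise iterated application of (a) would produce a $\tau$-tilting pair strictly between $(T_1, P_1)$ and $(T_2, P_2)$.

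The main obstacle will be the uniqueness claim in (a): proving directly that no third $\tau$-tilting completion of $(T, P)$ exists. This requires a detailed analysis of $\Hom$-spaces inside $\Fac(T \oplus X_1) = {}^\perp(\tau T)$ via Proposition~\ref{characanntau}, together with tight control over left $(\add T)$-approximations of all candidate indecomposable extensions of $T$. A secondary difficulty is showing that the pair obtained from the exchange sequence really is $\tau$-rigid (not just of the correct dimension), which hinges on minimality of $u$ and a careful propagation of $\tau$-rigidity along $X_1 \to T' \to X_2 \to 0$.
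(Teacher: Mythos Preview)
The paper does not prove this theorem: it is quoted verbatim from \cite{AIR}, so there is no proof here to compare against. Your outline is essentially a reconstruction of the original \cite{AIR} argument (Bongartz completion plus exchange sequence), augmented by an attempt to handle uniqueness via the brick labelling of Theorem~\ref{difftors2}.

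That attempted shortcut is where the difficulty lies, and it does not obviously close. Knowing that there is a single brick in $[\Fac T,\,{}^\perp(\tau T)]$ is equivalent, via Theorem~\ref{difftors2}(b), to knowing that this interval is a single Hasse arrow; but establishing that directly is essentially Jasso's reduction (Theorem~\ref{ttwideaa}(c) for an almost $\tau$-tilting pair), whose proof in \cite{J} already uses the \cite{AIR} mutation machinery you are trying to prove. So the brick-labelling route is circular unless you can show independently that ${}^\perp(\tau T)\cap(\Fac T)^\perp$ contains a unique brick, and I do not see how to do this with only Theorem~\ref{difftors2} and Proposition~\ref{characanntau}. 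You correctly flag this as the main obstacle and retreat to a direct $\Hom$-space analysis; at that point you are simply redoing the \cite{AIR} proof, not bypassing it.

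Two further points. First, your Bongartz step implicitly assumes that the support $\tau$-tilting module $M$ with $\Fac M={}^\perp(\tau T)$ has $T$ as a summand; this is true because $T$ is $\Ext$-projective in ${}^\perp(\tau T)$, but it is exactly the content of Theorem~\ref{bongartz}, which the paper also cites from \cite{AIR}. Second, in part~(b) the converse direction is too quick: if the maximal common summand $(T,P)$ has $|T|+|P|<|A|-1$, ``iterated application of~(a)'' does not by itself produce an intermediate $\tau$-tilting pair; you need to argue that one can complete $(T,P)$ to an almost $\tau$-tilting pair lying below $(T_1,P_1)$ and above $(T_2,P_2)$, which requires the interval description of Theorem~\ref{bongartz} again.
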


Note that a version of Theorem \ref{muttautilt}(a) was proved in \cite[Proposition 5.7]{DF} for $2$-term silting complexes.

Any $\tau$-rigid pair has two canonical completions, as shown below.
\begin{theorem}[{\cite[Theorem 2.10]{AIR}}] \label{bongartz}
 If $(X, Q) \in \trigidpair A$, then the subposet $\ttiltpair_{(X,Q)} A$ of $\ttiltpair A$ is an interval $[(X^-, Q^-), (X^+, Q)]$. Moreover, they are characterized by the identities $\Fac X^+ = {}^\perp(\tau X) \cap Q^\perp$ and $\Fac X^- = \Fac X$.
\end{theorem}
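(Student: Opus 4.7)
The plan is to construct the two candidate endpoints $(X^+,Q)$ and $(X^-,Q^-)$ explicitly from the characterizations given in the statement, and then to verify that every element of $\ttiltpair_{(X,Q)} A$ sits between them using the order characterization from Lemma~\ref{ordertautilt}. The bijection \eqref{tau-tilting gives tors} between $\sttilt A$ and $\ftors A$ reduces most of the work to checking properties of torsion classes.

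For the upper bound, I would set $\TT^+:={}^\perp(\tau X)\cap Q^\perp$ and first verify that $\TT^+$ is a torsion class (${}^\perp(\tau X)$ is always a torsion class by Proposition~\ref{characanntau} applied to $X$, and $Q^\perp$ is one because $Q$ is projective) and that it contains $X$ (using $\Hom_A(X,\tau X)=0$ from $\tau$-rigidity and $\Hom_A(Q,X)=0$ from the pair hypothesis). The crucial step is then to show $\TT^+$ is \emph{functorially finite}, so that $\TT^+=\Fac X^+$ for some basic support $\tau$-tilting module $X^+$ via \eqref{tau-tilting gives tors}. Since $X\in\TT^+\subseteq{}^\perp(\tau X)$ combined with Proposition~\ref{characanntau} yields $\Ext^1_A(X,\TT^+)=0$, $X$ is Ext-projective in $\Fac X^+$ and hence a summand of $X^+$; and $Q$ is the maximal basic projective in $(\TT^+)^\perp$, so the projective summand of the associated $\tau$-tilting pair is exactly $Q$. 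Thus $(X^+,Q)\in\ttiltpair_{(X,Q)} A$.

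For the lower bound, $\Fac X$ is automatically functorially finite since it is generated by a single module, so $\Fac X=\Fac X^-$ for a basic support $\tau$-tilting module $X^-$ with associated projective summand $Q^-$. The same Ext-projectivity argument shows $X\mid X^-$; and from $\Hom_A(Q,X)=0$ together with $Q$ projective we get $Q\in(\Fac X)^\perp=(\Fac X^-)^\perp$, so $Q\mid Q^-$ by maximality of the projective summand. Thus $(X^-,Q^-)\in\ttiltpair_{(X,Q)} A$. The interval property is now a direct application of Lemma~\ref{ordertautilt}. Given $(T,P)\in\ttiltpair_{(X,Q)} A$, the inequality $(T,P)\le(X^+,Q)$ requires $\Hom_A(T,\tau X^+)=0$ and $\Hom_A(Q,T)=0$: using $X\mid T$ and $Q\mid P$ one has $T\in{}^\perp(\tau X)\cap Q^\perp=\Fac X^+$, whence $T\in{}^\perp(\tau X^+)$ by Proposition~\ref{characanntau}, and the second vanishing is inherited from $\Hom_A(P,T)=0$. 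Dually, $(X^-,Q^-)\le(T,P)$ follows because $X^-\in\Fac X\subseteq\Fac T\subseteq{}^\perp(\tau T)$ and $X^-\in\Fac X\subseteq P^\perp$.

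The main obstacle is the functorial finiteness of $\TT^+$, i.e. the Bongartz-type completion: without an explicit generator, the upper bound does not a priori correspond to an element of $\sttilt A$. The natural approach is constructive: for each indecomposable projective $P_i$ not killed by $Q$, take a minimal left $(\add X)$-approximation $P_i\xrightarrow{f_i} X_i$ with cokernel $Y_i$, and show that $X^+:=X\oplus\bigoplus_i Y_i$, paired with $Q$, is a $\tau$-tilting pair realizing $\Fac X^+=\TT^+$. Verifying that $\bigoplus_i Y_i$ stays $\tau$-rigid and is Ext-orthogonal to $X$, and that every object of $\TT^+$ is a quotient of a sum of copies of $X^+$, is the delicate combinatorial/homological core of the argument, and it uses in an essential way the characterization in Proposition~\ref{characanntau}.
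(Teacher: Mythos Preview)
The paper does not give its own proof of this statement; it is quoted verbatim from \cite[Theorem~2.10]{AIR} as a background result in the subsection on reminders on $\tau$-tilting theory. There is therefore nothing in the paper to compare your proposal against.

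That said, your sketch is essentially the argument of \cite{AIR}: build the upper endpoint via the Bongartz-type completion realizing ${}^\perp(\tau X)\cap Q^\perp$ as a functorially finite torsion class, take $\Fac X$ for the lower endpoint, and use Lemma~\ref{ordertautilt} for the interval property. You have correctly isolated functorial finiteness of $\TT^+$ as the only nontrivial step. One small caution: the assertion in your second paragraph that ``$Q$ is the maximal basic projective in $(\TT^+)^\perp$'' is not obvious a priori and is really a consequence of the explicit construction you defer to the last paragraph (which yields $|X^+|+|Q|=|A|$ and hence forces the projective part of the pair to equal $Q$); it would be cleaner to postpone that claim until after the construction.
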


In Theorem \ref{bongartz}, we call $(X^-, Q^-)$ the \emph{co-Bongartz completion of $(X, Q)$} and $(X^+, Q)$ the \emph{Bongartz completion of $(X, Q)$}. Additionally, we observe the following.

\begin{lemma} \label{bongcompmin1}
 Let $(T, P) \in \ttiltpair A$ and $X$ be the minimal direct summand of $T$ such that $\Fac T = \Fac X$. Then $(T, P)$ is the Bongartz completion of $(T/X, P)$.
\end{lemma}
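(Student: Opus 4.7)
The plan is to prove $(T,P) = (Y^+,P)$, where $Y := T/X$ and $(Y^+,P)$ is the Bongartz completion of $(Y,P)$ provided by Theorem~\ref{bongartz}, so that $\ttiltpair_{(Y,P)} A = [(Y^-,P^-),(Y^+,P)]$. The easy inequality is $(T,P) \leq (Y^+,P)$: since $Y$ is a direct summand of $T$, the pair $(T,P)$ lies in $\ttiltpair_{(Y,P)} A$ and hence below its maximum.

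For the reverse inequality, the strategy is to show that $(T,P)$ is the top of this interval by ruling out upward Hasse arrows from $(T,P)$ that remain inside it. The combinatorial core is a dichotomy on the indecomposable summands $T_i$ of $T$: if $T_i$ is a summand of $Y$, then $X$ is still a summand of $T/T_i$, so $\Fac(T/T_i) \supseteq \Fac X = \Fac T$, which forces $T_i \in \Fac(T/T_i)$; if $T_i$ is a summand of $X$, then minimality of $X$ gives $\Fac(T/T_i) \subsetneq \Fac T$ and hence $T_i \notin \Fac(T/T_i)$. Invoking the mutation-direction criterion of~\cite{AIR}---that at an indecomposable summand $T_i$ of $T$ the mutation is upward if and only if $T_i \in \Fac(T/T_i)$---the upward mutations from $(T,P)$ in $\sttilt A$ occur only at summands of $Y$ or at summands of $P$. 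Each such mutation either replaces or removes a summand of $Y$ (so the new pair loses $Y$ as a direct summand), or strictly shrinks $P$; in either case the resulting pair leaves $\ttiltpair_{(Y,P)} A$.

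To conclude, suppose $(T,P) \lneq (T',P) \in \ttiltpair_{(Y,P)} A$. By the mutation-connectivity of $\sttilt A \cong \ftors A$ (Theorem~\ref{muttautilt} together with \cite[Theorem~1.3]{DIJ}), there is an upward Hasse arrow from $(T,P)$ in $\sttilt A$ to some $(T'',P'')$ with $(T'',P'') \leq (T',P)$. The chain $(Y^-,P^-) \leq (T,P) < (T'',P'') \leq (T',P) \leq (Y^+,P)$ places $(T'',P'')$ inside the interval $\ttiltpair_{(Y,P)} A$, contradicting the conclusion of the previous paragraph. Hence $(T,P) = (Y^+,P)$, which is the Bongartz completion of $(T/X,P)$.

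The main obstacle is the mutation-direction criterion, which is not stated explicitly in the excerpt but follows from the exchange sequences recorded in Theorem~\ref{muttautilt}: the upward exchange presents $T_i$ as a quotient of a module in $\add(T/T_i)$, so it exists exactly when $T_i \in \Fac(T/T_i)$, and dually for the downward exchange. Some additional care is needed for the boundary case where the replacement summand is zero (with a corresponding change in the projective part) and to verify that mutations at summands of $P$ always go upward, since adding a summand to the $\tau$-tilting part enlarges the associated torsion class.
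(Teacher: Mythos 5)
Your proof is correct and follows essentially the same route as the paper's: both reduce, via \cite[Theorem 1.3]{DIJ}, to a Hasse arrow lying above $(T,P)$ inside $\ttiltpair_{(T/X,P)}A$, and then use the exchange sequence (equivalently, the up/down mutation criterion) together with the minimality of $X$ to conclude that any such arrow must exchange an indecomposable summand of $(T/X,P)$, which is impossible. The paper packages this as a one-step contradiction rather than a full classification of the upward mutations from $(T,P)$, but the substance is identical.
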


\begin{proof}
  First of all, it is immediate that $(T, P) \leq ((T/X)^+, P)$. By \cite[Theorem 1.3]{DIJ}, if $(T,P)$ was not the Bongartz completion of $(T/X, P)$, there would be an arrow $(T',P') \to(T,P)$ in $\Hasse (\ttiltpair A)$ such that $T/X \in \add T'$ and $P \in \add P'$
 So $P' \cong P$ and we can decompose $T' \cong M \oplus U$ and $T \cong M \oplus V$ with $U$ and $V$ indecomposable. We have $\Fac T = \Fac M$. So, as $X$ is the minimal direct summand of $T$ such that $\Fac T = \Fac X$, $\add X$ does not contain $V$. So $V$ is a direct summand of $T/X$, hence $M \oplus U$ does not have $T/X$ as a direct summand. It is a contradiction.
\end{proof}

We recall that $A$ is \newword{$\tau$-tilting finite} if there are only finitely many
indecomposable $\tau$-rigid $A$-modules. We get the following straightforward corollary of Theorem \ref{muttautilt}.

\begin{corollary} \label{Hasse-regular}
Let $A$ be a finite-dimensional $k$-algebra.
Then $\ftors A$ is Hasse-regular.
In particular, if $A$ is $\tau$-tilting finite, then $\tors A$ is Hasse-regular.
\end{corollary}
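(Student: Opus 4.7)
The plan is to work through the bijections identifying $\ftors A$ with $\ttiltpair A$ and then count arrows at each vertex via mutation.

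First, I would pass from $\ftors A$ to $\ttiltpair A$. By the bijection~\eqref{tau-tilting gives tors} combined with the identification $\sttilt A \cong \ttiltpair A$ mentioned after Lemma~\ref{ordertautilt}, the poset $\ftors A$ is isomorphic to $\ttiltpair A$, and by the cited result \cite[Theorem 1.3]{DIJ} this isomorphism induces an isomorphism of Hasse quivers. So it suffices to show that every vertex of $\Hasse(\ttiltpair A)$ has the same degree.

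Next, I would compute the degree. Fix a vertex $(T,P) \in \ttiltpair A$, so $|T|+|P|=|A|=:n$. By Theorem~\ref{muttautilt}(b), every arrow of $\Hasse(\ttiltpair A)$ incident to $(T,P)$ arises from an almost $\tau$-tilting pair $(M,Q)$ such that $(T,P) \in \ttiltpair_{(M,Q)}A$, and by Theorem~\ref{muttautilt}(a) each such $(M,Q)$ gives rise to exactly one arrow at $(T,P)$ (namely the edge joining $(T,P)$ to the unique other element of $\ttiltpair_{(M,Q)}A$). Since $|M|+|Q|=n-1$, the pairs $(M,Q)$ with $(T,P) \in \ttiltpair_{(M,Q)} A$ are precisely those obtained by deleting a single indecomposable summand from the basic pair $(T,P)$, of which there are exactly $|T|+|P| = n$. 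Hence every vertex of $\Hasse(\ttiltpair A)$ has degree~$n$, proving that $\ftors A$ is Hasse-regular.

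Finally, for the ``in particular'' assertion, I would recall from the paragraph preceding the corollary that if $A$ is $\tau$-tilting finite then $\ftors A = \tors A$ by \cite[Theorem~2.7]{AIR} and \cite[Theorem~1.2]{DIJ}, so Hasse-regularity of $\tors A$ follows immediately from the previous paragraph. There is no real obstacle here; the only point that must be handled cleanly is to verify that the indexing of arrows at $(T,P)$ by ``summands to delete'' is a bijection, which is exactly the content of Theorem~\ref{muttautilt}.
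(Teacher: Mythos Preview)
Your proof is correct and follows exactly the approach the paper intends: it presents this result as a ``straightforward corollary of Theorem~\ref{muttautilt}'', and your argument via counting mutations at a basic $\tau$-tilting pair is precisely that. One small remark: the citation of \cite[Theorem~1.3]{DIJ} is unnecessary here, since the poset isomorphism $\ftors A \cong \ttiltpair A$ already gives an isomorphism of Hasse quivers automatically; that result is about $\Hasse(\ftors A)$ being a \emph{full subquiver} of $\Hasse(\tors A)$, which you do not need.
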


The following characterizations are shown in
\cite{DIJ} and \cite{IRTT}:
\begin{theorem}[{\cite{DIJ,IRTT}}]\label{tors is complete lattice}
The following conditions are equivalent.
\begin{enumerate}[\rm(i)]
\item $A$ is $\tau$-tilting finite.
\item $\ftors A$ is a finite set.
\item $\ftors A$ is a complete lattice.
\item $\ftors A=\tors A$.
\end{enumerate}
\end{theorem}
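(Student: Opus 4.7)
The plan is to establish the circle of implications (i)$\Leftrightarrow$(ii)$\Rightarrow$(iv)$\Rightarrow$(iii)$\Rightarrow$(i). The first equivalence and the two middle arrows are formal; the closing implication (iii)$\Rightarrow$(i) is the substantive one, coming from \cite{IRTT}.

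For (i)$\Leftrightarrow$(ii), I would use the bijection $\Fac:\sttilt A\xrightarrow{\sim}\ftors A$ from~\eqref{tau-tilting gives tors}. Each $T\in\sttilt A$ has at most $|A|$ pairwise non-isomorphic indecomposable summands, so finiteness of $\itrigid A$ forces finiteness of $\sttilt A$ and hence of $\ftors A$. Conversely, if $\ftors A$ is finite then so is $\sttilt A$, and since every indecomposable $\tau$-rigid module is a summand of some basic support $\tau$-tilting module via Bongartz completion (Theorem~\ref{bongartz}), $\itrigid A$ is finite.

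The implication (iv)$\Rightarrow$(iii) is trivial since $\tors A$ is always a complete lattice (Subsection~\ref{prelim-tor}). For (ii)$\Rightarrow$(iv), I would appeal to \cite[Theorem~1.2]{DIJ}; more conceptually, one can combine the bialgebraicity of $\tors A$ (Theorem~\ref{semidistributiveandbialg}(b)) with the identification of the compact elements as $\{\T(X)\mid X\in\mod A\}$ (Proposition~\ref{compacttors}(a)). Once each $\T(X)$ is known to be functorially finite, the compacts form a finite subposet of $\ftors A$; algebraicity then writes every $\TT\in\tors A$ as a join of compacts drawn from this finite set, so $\TT$ is a finite and therefore compact join, giving $\tors A=\ftors A$.

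The main obstacle is (iii)$\Rightarrow$(i), which is the theorem of~\cite{IRTT}. Arguing contrapositively, assume $|\itrigid A|=\infty$. Iterated mutation (Theorem~\ref{muttautilt}) yields infinitely many pairwise distinct elements of $\sttilt A$, from which one extracts a strictly ascending chain $T_1<T_2<\cdots$. Under hypothesis (iii), the join $\Join_i\Fac T_i$ would exist in $\ftors A$ and equal $\Fac T$ for some $T\in\sttilt A$; tracing the mutation history then forces $T$ to absorb infinitely many pairwise non-isomorphic indecomposable summands, contradicting $|T|\le|A|$. The delicate step is reconciling the join in the hypothetically complete $\ftors A$ with the ambient join in $\tors A$, and this is the technical heart of~\cite{IRTT}.
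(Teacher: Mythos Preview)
The paper does not itself prove this theorem; it is stated with attribution to \cite{DIJ,IRTT} and used as a black box. So there is no paper proof to compare against, and your proposal should be assessed on its own merits and against what those references actually do.

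Your cycle (i)$\Leftrightarrow$(ii) and (iv)$\Rightarrow$(iii) are fine. There are, however, two genuine gaps.

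First, your ``more conceptual'' argument for (ii)$\Rightarrow$(iv) assumes without justification that each $\T(X)$ is functorially finite. This is false in general (for the Kronecker quiver, $\T(X)$ for a regular module is not in $\ftors A$), and establishing it under hypothesis~(ii) is precisely the non-formal content of the implication. Proposition~\ref{compacttors}(a) identifies the compacts of $\tors A$, but gives no reason for them to lie in $\ftors A$. The actual argument in \cite{DIJ} goes through the bijection between bricks and indecomposable $\tau$-rigid modules: one shows that any torsion class strictly containing a functorially finite one contains a new brick, hence a new $\tau$-rigid module, so if $\itrigid A$ is finite one cannot climb above the finitely many $\Fac T$. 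Your citation of \cite[Theorem~1.2]{DIJ} is correct, but the alternative argument you sketch does not stand on its own.

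Second, your outline of (iii)$\Rightarrow$(i) is too loose to be a proof. From $|\sttilt A|=\infty$ one does not immediately ``extract a strictly ascending chain'': the Hasse quiver is $n$-regular, but an infinite walk need not be monotone, and you would need an argument (e.g.\ via maximal green sequences or the partial order structure) to produce an infinite chain. Moreover, even granting such a chain, the contradiction you describe requires knowing that the join in $\ftors A$ agrees with the join in $\tors A$, which you correctly flag as delicate but do not resolve; indeed \cite{IRTT} handles this by a different route. If you want to fill this in rather than cite, the cleanest path is (ii)$\Rightarrow$(iv) via \cite{DIJ} as above, together with the observation that if $\ftors A$ is a complete lattice then its join must dominate the join in $\tors A$ on any family, forcing $\ftors A$ to be closed under arbitrary joins in $\tors A$ and hence to contain every compact $\T(X)$; from there your algebraicity argument does go through.
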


On the other hand, it is a much more subtle condition for $A$ that $\ftors A$ is a lattice.
It is shown in \cite[Theorem 0.3]{IRTT} that for a path algebra $kQ$ of a connected
acyclic quiver $Q$,
$\ftors(kQ)$ is a lattice if and only if $Q$
is either a Dynkin quiver or has at most $2$ vertices.

We have the following description of join-irreducible elements in $\tors A$.

\begin{theorem}[{\cite[Theorem 2.7 and following discussion]{IRRT}}]\label{join-irreducible in sttilt}
Let $A$ be a finite-dimensional $k$-algebra.
\begin{enumerate}[\rm(a)]
 \item If $A$ is $\tau$-tilting finite, then the map $M\mapsto\Fac M$ of \eqref{tau-rigid gives tors} restricts to a bijection
\[\Fac:\itrigid A\xrightarrow{\sim}\jirr(\tors A).\]
 \item More generally, the map $M\mapsto\Fac M$ restricts to a bijection
\[\Fac:\itrigid A\xrightarrow{\sim} \ftors A \cap \cjirr(\tors A).\]
\end{enumerate}
\end{theorem}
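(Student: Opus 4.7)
We prove (b); (a) then follows since $\ftors A = \tors A$ for $\tau$-tilting finite $A$ by Theorem~\ref{tors is complete lattice}, combined with the coincidence $\jirr = \cjirr$ in any finite lattice. By~\eqref{tau-rigid gives tors}, the map $\Fac$ sends $\itrigid A$ into $\ftors A$, so it suffices to verify that it restricts to a bijection $\itrigid A \to \ftors A \cap \cjirr(\tors A)$.

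Fix $M \in \itrigid A$ and apply Theorem~\ref{bongartz} to obtain the co-Bongartz completion $(M \oplus N, P) = (M^-, 0^-)$ of $(M, 0)$: it is the unique minimum of $\ttiltpair_{(M, 0)} A$. By Theorem~\ref{muttautilt}, the only downward mutation from $(M \oplus N, P)$ in $\Hasse(\ttiltpair A)$ is the mutation at the summand $M$, since mutating at any other indecomposable summand yields another element of $\ttiltpair_{(M, 0)} A$, which is bounded below by $(M \oplus N, P)$. Combining~\eqref{tau-tilting gives tors} with the full-subquiver property of $\Hasse(\ftors A) \subseteq \Hasse(\tors A)$ from \cite[Theorem 1.3]{DIJ}, this produces an arrow $q : \Fac M \to \Fac T'$ in $\Hasse(\tors A)$ whose brick label I denote $S := S_q$ (Theorem~\ref{difftors2}(b)). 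By the bijection of Theorem~\ref{difftors2}(c), to show $\Fac M \in \cjirr(\tors A)$ it suffices to prove $\Fac M = \T(S)$.

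The inclusion $\T(S) \subseteq \Fac M$ is immediate since $S \in \Fac M$. For the reverse, the identity $\Fac T' = \Fac M \cap {}^\perp S$ of Theorem~\ref{dtcd}(a) together with Lemma~\ref{simpleness}(a) (any nonzero morphism $X \to S$ with $X \in \Fac M$ is surjective) imply the following dichotomy: every torsion class $\TT' \subsetneq \Fac M$ satisfies either $\TT' \subseteq \Fac T'$, or some $X \in \TT'$ surjects onto $S$, forcing $S \in \TT'$ and hence $\T(S) \subseteq \TT'$. It remains to rule out the possibility $\T(S) \subsetneq \Fac M$, equivalently to show $M \in \T(S)$. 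Using the explicit formula $S \cong M / L$ with $L = \sum_{f \in \rad_A(M \oplus N, M)} \Image f$ from Proposition~\ref{theorem label2} and the mutation exchange sequence $M \xrightarrow{u} N'' \to M^* \to 0$ (with $u$ a left minimal $(\add N)$-approximation) from \cite{AIR}, one verifies that $L \in \T(S)$; extension closure of $\T(S)$ applied to the short exact sequence $0 \to L \to M \to S \to 0$ then yields $M \in \T(S)$.

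For surjectivity, let $\TT = \Fac T \in \ftors A \cap \cjirr(\tors A)$ with $(T, P) \in \ttiltpair A$, and let $X$ be the minimal summand of $T$ with $\Fac X = \TT$ (Lemma~\ref{bongcompmin1}). If $X$ decomposed non-trivially as $X_1 \oplus X_2$, minimality would yield $\Fac X_1, \Fac X_2 \subsetneq \TT$ while $\Fac X_1 \vee \Fac X_2 = \Fac X = \TT$, contradicting $\TT \in \cjirr(\tors A)$. Hence $X \in \itrigid A$, and $\TT = \Fac X$. Injectivity follows from the same uniqueness: if $\Fac M_1 = \Fac M_2$ for $M_1, M_2 \in \itrigid A$, both must equal the unique minimal summand $X$ of the corresponding $\tau$-tilting pair, and indecomposability forces $M_1 \cong M_2$.

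\emph{The main obstacle} is the identification $\Fac M = \T(S)$ (equivalently $M \in \T(S)$) in step~(i); the remaining components are essentially bookkeeping via Lemma~\ref{bongcompmin1} and the mutation theory of \cite{AIR, DIJ}. The reduction via Lemma~\ref{simpleness}(a) and Theorem~\ref{dtcd}(a) to the module-theoretic statement $L \in \T(S)$ is clean, but the verification of the latter calls on the precise structure of the mutation exchange sequence — a delicate interplay between the radical morphisms contributing to $L$ and the quotient map $M \twoheadrightarrow S$.
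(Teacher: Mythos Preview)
The paper does not give its own proof of this statement; it is cited from \cite{IRRT}. Your overall strategy --- reduce $\Fac M\in\cjirr(\tors A)$ to the identity $\Fac M=\T(S)$ with $S$ the label of the unique downward arrow in $\Hasse(\ftors A)$, and handle surjectivity/injectivity via the minimal summand $X$ of Lemma~\ref{bongcompmin1} --- is correct and natural. However, there is a genuine gap at the one step you yourself flag as the obstacle.

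You assert that $L=\rad_{\End_A(M)}M\in\T(S)$ ``using the mutation exchange sequence $M\xrightarrow{u}N''\to M^*\to 0$'', but that sequence gives no handle on $L$: it concerns a map \emph{out of} $M$ into $\add N$, whereas $L$ is built from radical maps \emph{into} $M$. What actually works is a direct radical-filtration argument. Set $B=\End_A(M)$; since $M$ is indecomposable, $B$ is local and $\rad B$ nilpotent. For each $i\ge 0$ choose $k$-generators $f_1,\dots,f_r$ of $\rad_B^i$; the surjection $(f_1,\dots,f_r)\colon M^r\twoheadrightarrow\rad_B^iM/\rad_B^{i+1}M$ kills $(\rad_B M)^r$ (because $f_j(\rad_B M)\subseteq\rad_B^{i+1}M$), hence factors through $S^r$. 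Thus every subquotient $\rad_B^iM/\rad_B^{i+1}M$ lies in $\Fac S$, so $M\in\Filt(\Fac S)=\T(S)$ and $\Fac M=\T(S)\in\cjirr(\tors A)$ by Theorem~\ref{dtcd}(a). Once this is in place, your entire ``dichotomy'' paragraph becomes superfluous: the equality $\Fac M=\T(S)$ is all you need, and nothing in that paragraph is used afterwards.

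A minor point on injectivity: as written you jump from $\Fac M_1=\Fac M_2$ to ``both equal the minimal summand $X$''. You should first observe that $M_1$ and $M_2$ are both summands of the common co-Bongartz completion $T$ (Theorem~\ref{bongartz}), since $(M_i^-,0^-)$ is the unique basic support $\tau$-tilting pair with $\Fac M_i^-=\Fac M_i$. Alternatively, bypass this entirely: the brick $S$ labelling the unique arrow from $\Fac M_i$ is $M_i/\rad_{\End_A(M_i)}M_i$, and Proposition~\ref{brick-rigid}(a) says this assignment is injective on $\itrigid A$.
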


We finish this subsection by interpretations of the brick labelling in terms of $\tau$-tilting modules. It has been defined by Asai for functorially finite torsion classes. By \cite[Theorem 1.3]{DIJ}, $\Hasse(\ftors A)$ is a full subquiver of $\Hasse(\tors A)$. Then the brick labelling of arrows of $\Hasse(\ftors A)$ has the following description.

\begin{proposition}[\cite{A}] \label{theorem label2}
 Let $q: \TT \to \UU$ be an arrow of $\Hasse(\ftors A)$. Then
  \[S_q \cong \frac{X}{\Rad_A(T,X)\cdot T} \]
 where the basic support $\tau$-tilting modules $T$ and $U$ corresponding to $\TT$ and $\UU$ via the bijection $\Fac$ are decomposed as $T=X\oplus M$ and $U=Y\oplus M$ for $X$ an 
 indecomposable $A$-module and $Y$ an $A$-module which is indecomposable or zero.
\end{proposition}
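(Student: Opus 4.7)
The plan is to invoke Theorem~\ref{difftors2}(b), which identifies $S_q$ as the unique brick in $\TT\cap\UU^\perp=\Filt S_q$. It therefore suffices to show that $S':=X/Z$, with $Z:=\Rad_A(T,X)\cdot T$, is a non-zero brick lying in $\TT\cap\UU^\perp$, because then $S'$ lies in the wide subcategory $\Filt S_q$ whose only simple object is $S_q$, forcing $S'\cong S_q$. Using $T=X\oplus M$, I rewrite
\[
Z \;=\; \Rad\End_A(X)\cdot X \;+\; \sum_{f\in\Hom_A(M,X)}\Image f,
\]
which is visibly the image of a morphism $T^{(N)}\to X$, so $Z\in\Fac T=\TT$. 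I also note $\UU=\Fac M$: by Theorem~\ref{muttautilt} the mutation gives an exchange sequence $X\to M^0\to Y\to 0$ with $M^0\in\add M$, so $Y\in\Fac M$ (or $Y=0$), hence $\UU=\Fac(Y\oplus M)=\Fac M$.

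Membership $S'\in\Fac T=\TT$ is immediate, and non-vanishing $S'\neq 0$ follows from a Fitting-type argument: the equality $X=Z$ would realize $X$ as an image of a morphism $T^{(N)}\to X$ whose $X$-components all lie in $\Rad\End_A(X)$, which combined with the locality of $\End_A(X)$ forces $X\in\add M$, contradicting the hypothesis. The core technical step is the brick property. Since $T$ is $\tau$-rigid and $Z\in\Fac T$, Proposition~\ref{characanntau} yields $\Ext^1_A(X,Z)=0$, so the short exact sequence $0\to Z\to X\to S'\to 0$ gives a surjection $\End_A(X)\twoheadrightarrow\Hom_A(X,S')$. Given $\phi\in\End_A(S')$, I lift $\phi\circ(X\twoheadrightarrow S')$ to some $\tilde\phi\in\End_A(X)$. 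Since $\End_A(X)$ is local, $\tilde\phi$ is either invertible, in which case $\tilde\phi$ preserves $Z$ (because $\Rad\End_A(X)$ is a two-sided ideal and post-composition with $\tilde\phi$ sends $\Hom_A(M,X)\cdot M$ into itself) and thus descends to an invertible $\phi$, or $\tilde\phi\in\Rad\End_A(X)\subseteq\Rad_A(T,X)$ and then $\tilde\phi(X)\subseteq Z$ so $\phi=0$. Hence $\End_A(S')$ is a division ring.

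For the orthogonality $S'\in\UU^\perp=(\Fac M)^\perp$, I apply $\Hom_A(M,-)$ to $0\to Z\to X\to S'\to 0$: the map $\Hom_A(M,X)\to\Hom_A(M,S')$ vanishes because every morphism $M\to X$ has image contained in $Z$ by construction, and $\Ext^1_A(M,Z)=0$ follows from Proposition~\ref{characanntau} applied to $T$ together with $Z\in\Fac T$. Hence $\Hom_A(M,S')=0$, and combined with $Y\in\Fac M$ this gives $\Hom_A(U,S')=0$, so $S'\in\UU^\perp$ and the proof concludes by Theorem~\ref{difftors2}(b). The main obstacle is the interplay between the two kinds of generators of $Z$, namely $\Rad\End_A(X)\cdot X$ and $\Hom_A(M,X)\cdot M$: both the verification that any lift $\tilde\phi$ preserves $Z$ and the non-vanishing argument must handle these summands uniformly, and the vanishing $\Ext^1_A(T,Z)=0$ needed throughout relies crucially on the $\tau$-rigidity of the full module $T=X\oplus M$ rather than of its summands separately.
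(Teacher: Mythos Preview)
The paper does not supply its own proof of this proposition; it is quoted from Asai~\cite{A} without argument. Your proof is essentially correct and cleanly organized around Theorem~\ref{difftors2}(b): once $S'=X/Z$ is shown to be a nonzero brick in $\TT\cap\UU^\perp$, uniqueness of the label forces $S'\cong S_q$. The lifting of endomorphisms via $\Ext^1_A(X,Z)=0$ (from Proposition~\ref{characanntau} and $Z\in\Fac T$), the dichotomy on invertibility of the lift using locality of $\End_A(X)$ and stability of $Z$, and the computation $\Hom_A(M,S')=0$ from the long exact sequence are all sound.

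One small imprecision: in the non-vanishing step you conclude that $X=Z$ would force $X\in\add M$, but what the argument actually yields is $X\in\Fac M$. Writing $J:=\Rad\End_A(X)$ and $L:=\Hom_A(M,X)\cdot M$, the submodule $L$ is $\End_A(X)$-stable (post-composing any $M\to X$ with an endomorphism stays in $L$), so $X=JX+L$ together with nilpotency of $J$ gives $X/L=J\cdot(X/L)$ and hence $X=L\in\Fac M$ by Nakayama. This already contradicts $\TT=\Fac T\supsetneq\Fac M=\UU$, so the correction is harmless; but deducing $X\in\add M$ would require a splitting of $T^{(N)}\twoheadrightarrow X$, for which you would need $\Ext^1_A(X,\ker)=0$, and the kernel is not obviously in $\Fac T$.
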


We recall also bijections arising when $A$ is $\tau$-tilting finite. A set $\{S_i\}_{i\in I}$ of bricks (or its direct sum) is called a \emph{semibrick} if $\Hom_A(S_i,S_j)=0$ for any $i\neq j$. We denote by $\brick A$ the set of isomorphism classes of bricks of $A$, and by $\sbrick A$ the set of isomorphism classes of semibricks.

\begin{proposition}\label{brick-rigid}
Let $A$ be a finite-dimensional $k$-algebra.
\begin{enumerate}[\rm(a)]
\item \cite{DIJ} There is an injection $\itrigid A\to \brick A$ sending $M$ to $M/\rad_{\End_A(M)}M$.
\item \cite{A} There is an injection $\sttilt A\to \sbrick A$ sending $M$ to $M/\rad_{\End_A(M)}M$.
\end{enumerate}
 Moreover, if $A$ is $\tau$-tilting finite, these maps are bijections.
\end{proposition}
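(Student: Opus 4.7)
The plan is to combine the brick labelling of $\Hasse(\tors A)$ from Theorem~\ref{difftors2} and Proposition~\ref{theorem label2} with the parametrization of completely join-irreducible functorially finite torsion classes by indecomposable $\tau$-rigid modules given by Theorem~\ref{join-irreducible in sttilt}.

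For (a), given $M \in \itrigid A$, Theorem~\ref{join-irreducible in sttilt}(b) places $\Fac M$ in $\ftors A \cap \cjirr(\tors A)$, and Theorem~\ref{difftors2}(c) attaches to it the brick $S_q$ labelling the unique arrow $q$ of $\Hasse(\tors A)$ starting at $\Fac M$. The composition $M \mapsto \Fac M \mapsto S_q$ is injective into $\brick A$, so it remains to identify $S_q$ with $M/\rad_{\End_A(M)} M$. Using Lemma~\ref{bongcompmin1}, the arrow $q$ is realized as the mutation of the co-Bongartz completion $(M^-, Q^-)$ at the summand $M$. Writing $M^- = M \oplus N$ with $N$ free of summands isomorphic to $M$, Proposition~\ref{theorem label2} gives
\[
S_q \cong M/\Rad_A(M^-, M)\cdot M^- = M\bigl/\bigl(\rad_{\End_A(M)} M + \textstyle\sum_{f\colon N \to M} f(N)\bigr).
\]
The key step is showing $\sum_{f\colon N\to M} f(N) \subseteq \rad_{\End_A(M)} M$. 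Since $M^-$ is $\tau$-rigid, Proposition~\ref{characanntau} yields $\Ext^1_A(N, M) = 0$, so no $f\colon N\to M$ can be surjective (else the sequence $0 \to \ker f \to N \to M \to 0$ would split, making $M$ a summand of $N$). Since $N \in \Fac M^- = \Fac M$, we fix a surjection $\pi\colon M^k \twoheadrightarrow N$; then $f\pi\colon M^k \to M$ has image $\sum_i g_i(M)$ for $g_i \in \End_A(M)$, and the non-surjectivity of $f$ forces each $g_i$ to be non-invertible, hence to lie in the unique maximal ideal $\rad\End_A(M)$ of the local ring $\End_A(M)$.

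For (b), let $(T, P) \in \ttiltpair A$ with $T = T_1 \oplus \cdots \oplus T_n$ basic, and set $\Gamma = \End_A(T)$. Decomposing $\rad\Gamma \cdot T$ along the summand decomposition of $T$ yields
\[
T/\rad\Gamma\cdot T \;\cong\; \bigoplus_i T_i\big/\Rad_A(T, T_i)\cdot T,
\]
since the $T_i$-component of $\rad\Gamma\cdot T$ coincides with $\sum_{f \in \Rad_A(T, T_i)} f(T)$. By Proposition~\ref{theorem label2}, each non-zero summand $S_i := T_i/\Rad_A(T, T_i)\cdot T$ is the brick labelling the downward arrow of $\Hasse(\ftors A)$ produced by mutation of $(T, P)$ at $T_i$, and by Theorem~\ref{difftors2}(b) it lies in $\Fac T \cap (\Fac T^{(i)})^\perp$. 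To verify that $\{S_i : S_i \neq 0\}$ forms a semibrick, observe that for $j \neq i$ the summand $T_j$ persists in $T^{(i)}$, so $S_j \in \Fac T_j \subseteq \Fac T^{(i)}$; this yields $\Hom_A(S_i, S_j) = 0$, and in particular $S_i \not\cong S_j$ since otherwise $\End_A(S_i)$ would vanish. Injectivity of $T \mapsto T/\rad_{\End_A(T)} T$ is then obtained by reconstructing $\Fac T$ from the semibrick via Theorem~\ref{difftors2}(b) (the labels $S_i$ determine the downward covers $\Fac T^{(i)} = \Fac T \cap {}^\perp S_i$) together with the bijection~\eqref{tau-tilting gives tors}.

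When $A$ is $\tau$-tilting finite, Theorem~\ref{tors is complete lattice} gives $\ftors A = \tors A$, so the injection in~(a) becomes a bijection as a composition of two bijections. The main obstacle is surjectivity in~(b): given a semibrick $E$, one must produce $(T, P) \in \ttiltpair A$ whose image equals $E$. The plan is to pass through wide subcategories: by~\cite{ringelk}, $E \mapsto \Filt E$ is a bijection from $\sbrick A$ onto the wide subcategories of $\mod A$, and in the $\tau$-tilting finite case every wide subcategory is functorially finite and corresponds canonically to a unique support $\tau$-tilting module via the standard tilting-theoretic bijection, providing the required inverse.
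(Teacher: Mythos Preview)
The paper does not prove this proposition itself; it is quoted from \cite{DIJ} and \cite{A}. The sentence following the statement does observe that Theorem~\ref{difftors2}(c), Proposition~\ref{theorem label2} and Theorem~\ref{join-irreducible in sttilt} together recover part~(a), and this is precisely the route you take. Your argument for (a), including the verification that $\sum_{f:N\to M}f(N)\subseteq\rad_{\End_A(M)}M$ via $\Ext^1_A(N,M)=0$ and the factorization through $M^k\twoheadrightarrow N$, is correct.

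For (b) there are two genuine gaps. First, your injectivity argument is circular as written: the formula $\Fac T^{(i)}=\Fac T\cap{}^\perp S_i$ already presupposes knowledge of $\Fac T$, so it does not reconstruct $\Fac T$ from the semibrick $E=\{S_i\}$. What is actually needed is $\Fac T=\T(E)$. One way to obtain this is via the filtration $T\supseteq(\rad\Gamma)T\supseteq(\rad\Gamma)^2T\supseteq\cdots\supseteq 0$: each subquotient $(\rad\Gamma)^iT/(\rad\Gamma)^{i+1}T$ is a quotient of a power of $T/(\rad\Gamma)T$ (compose a surjection $T^m\twoheadrightarrow(\rad\Gamma)^iT$ with the projection and check that $((\rad\Gamma)T)^m$ lands in $(\rad\Gamma)^{i+1}T$), hence $T\in\Filt(\Fac E)=\T(E)$.

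Second, for surjectivity in the $\tau$-tilting finite case you invoke a bijection between wide subcategories and support $\tau$-tilting modules. In this paper that bijection is Theorem~\ref{ttwideb}, whose proof \emph{uses} Proposition~\ref{brick-rigid}(b), so the appeal is circular. A non-circular route would send a semibrick $E$ to $\T(E)=\Fac T$ via \eqref{tau-tilting gives tors} and then verify, using Proposition~\ref{bongcompmin2}(b), that the downward labels of $\Fac T$ recover exactly $E$; but establishing this last point (not merely that they form \emph{some} semibrick) is the substantive content of Asai's argument and still has to be supplied.
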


Notice that Theorem \ref{difftors2}(c) given before extends Proposition \ref{brick-rigid}(a), using Proposition \ref{theorem label2} and Theorem~\ref{join-irreducible in sttilt}.

\subsection{Wide subcategories}  \label{wide2}

Let $A$ be a finite-dimensional $k$-algebra. This subsection deals with combinatorial interpretation of wide subcategories in $\tors A$ in terms of bricks. Recall that a full subcategory $\WW \subseteq \mod A$ is \emph{wide} if it is stable by extension, kernel and cokernel. In particular it is an abelian category. We denote by $\wide A$ the set of wide subcategories of $\mod A$.

Before going further, we recall the following relation between semibricks and wide subcategories of $\mod A$. 
 \begin{proposition}[\cite{ringelk}] \label{bijsbw}
  There is a bijection
  \[\Filt: \sbrick A \to \wide A\]
  mapping a semibrick $\S$ to the full subcategory of $A$-modules that are filtered by bricks of $\S$. The inverse bijection associates to $\WW \in \wide A$ the set of its simple objects.
 \end{proposition}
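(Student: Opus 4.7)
The plan is to verify that the two maps are well-defined and mutually inverse, where the inverse of $\Filt$ sends a wide subcategory $\WW$ to the set of simple objects of $\WW$ (regarded as an abelian category in its own right).

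For the direction $\wide A \to \sbrick A$: if $\WW$ is wide, it is an abelian category (closed under kernels, cokernels, and containing $0$), and every object of $\WW$ has finite length as an $A$-module, hence also inside $\WW$. So every object of $\WW$ admits a Jordan--H\"older filtration by simple objects of $\WW$, which already gives $\WW = \Filt(\text{simples of } \WW)$. Each simple $S$ of $\WW$ satisfies $\End_A(S) = \End_\WW(S)$, which is a division ring by Schur's lemma applied inside the abelian category $\WW$, so $S$ is a brick in $\mod A$. For distinct simples $S, S'$ of $\WW$, any morphism $S \to S'$ has its image in $\WW$ (since $\WW$ is closed under images), and this image is a quotient of the simple $S$ and a subobject of the simple $S'$, hence zero. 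Thus the simples of $\WW$ form a semibrick.

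For the harder direction $\sbrick A \to \wide A$: given a semibrick $\S$, the subcategory $\Filt \S$ is closed under extensions by construction, so the task is closure under kernels and cokernels, equivalently under images. I would first establish a key lemma, by induction on the filtration length of $Y$: for any $S \in \S$ and any $Y \in \Filt \S$, every morphism $g : S \to Y$ is either zero or a monomorphism, and dually for morphisms $Y \to S$; moreover, in the monomorphism case $Y/\Image g \in \Filt \S$. The base case $Y \in \S$ reduces to the semibrick condition $\Hom_A(S, T) = 0$ for distinct $S, T \in \S$ together with $\End_A(S)$ being a division ring. The inductive step writes a short exact sequence $0 \to T \to Y \to Y' \to 0$ with $T \in \S$ and $Y' \in \Filt \S$ of shorter filtration, and analyses whether the composition $S \to Y \to Y'$ vanishes, using the induction hypothesis on $Y'$ to control the resulting kernels, images and quotients. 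Given the key lemma, a second induction on the filtration length of $X$ shows that for any $f : X \to Y$ in $\Filt \S$ the image, kernel, and cokernel all lie in $\Filt \S$.

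To finish, I would check the two constructions are mutually inverse. The equality $\WW = \Filt(\text{simples of }\WW)$ is already noted above. For the other composition: every $S \in \S$ is a simple object of $\Filt \S$, because any proper nonzero subobject of $S$ in $\Filt \S$ would, by the key lemma applied to its inclusion into $S$, be all of $S$; and conversely any simple object of $\Filt \S$ must be isomorphic to some element of $\S$ by inspecting the bottom term of one of its filtrations by elements of $\S$. The main obstacle is the key lemma in the semibrick-to-wide direction, and specifically its inductive step, where one must control how a subobject of a brick in $\S$ can intersect the two terms of the short exact sequence $0 \to T \to Y \to Y' \to 0$; once this is controlled, the remaining deductions are formal.
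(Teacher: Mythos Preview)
The paper does not give its own proof of this proposition: it is stated with a citation to \cite{ringelk} and used as a known result. Your argument is correct and is essentially the standard proof (and in spirit Ringel's original one): the direction $\wide A \to \sbrick A$ is Schur's lemma inside the abelian category $\WW$, while the direction $\sbrick A \to \wide A$ hinges on the lemma that any nonzero morphism from an element of $\S$ into an object of $\Filt\S$ is a monomorphism with cokernel again in $\Filt\S$, proved by induction on filtration length, after which closure under kernels and cokernels follows by a second induction. Your sketch of the inductive step is accurate; the only point worth making explicit is that in the case where the composite $S\to Y\to Y'$ is a monomorphism, one uses $S\cap T=0$ inside $Y$ to identify $(S+T)/S\cong T$ and $(Y/S)/T\cong Y'/S$, which is exactly the computation needed to conclude $Y/S\in\Filt\S$.
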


For $(N, Q) \in \trigidpair A$, as in Theorem \ref{bongartz}, we denote by $(N^+, Q^+)$ and $(N^-, Q)$ the Bongartz and co-Bongartz completions of $(N, Q)$. Then, we consider the torsion classes
\[\UU(N, Q) := \Fac N^- = \Fac N \quad \text{and} \quad \quad \TT(N, Q) := \Fac N^+ = {}^{\perp} (\tau N) \cap Q^\perp ,\]
and the full subcategory
\[\WW{(N, Q)} :=  \TT(N, Q) \cap  \UU(N, Q)^\perp = {}^{\perp} (\tau N) \cap Q^\perp \cap N^\perp.\]

Our starting point is the following, which is mostly in the article of Jasso about $\tau$-tilting reduction \cite{J}.
\begin{theorem} \label{ttwideaa}
 Let $A$ be a finite-dimensional algebra and $(N, Q) \in \trigidpair A$.
 \begin{enumerate}[\rm (a)]
  \item The subcategory $\WW{(N, Q)}$ is a wide subcategory of $\mod A$.
  \item Let $C_{N, Q} := \End_A(N^+)/[N]$ where $[N]$ is the ideal consisting of endomorphisms that factor through $\add N$. Then there is an equivalence of categories \[F_{N,Q} : \WW(N,Q) \xrightarrow{\sim} \mod C_{N, Q},\] mapping $X$ to $\Hom_A(N^+, X)$.
  \item There is an isomorphism of lattices \[[\UU(N, Q), \TT(N, Q)]  \xrightarrow[\sim]{\psi_{N, Q}} \tors C_{N,Q}\] with $\psi_{N, Q}(\VV) = F_{N, Q}(\VV \cap \WW(N,Q))$.
  \item If $A$ is $\tau$-tilting finite, then there is an isomorphism of lattices \[\ttiltpair_{(N,Q)} A \xrightarrow[\sim]{\Fac} [\UU(N, Q), \TT(N, Q)].\]
 \end{enumerate}
\end{theorem}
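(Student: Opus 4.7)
The plan is to import Jasso's $\tau$-tilting reduction from \cite{J} and adapt it to the pair setting $(N,Q)$. Parts (a), (b) form the core: the perpendicular category $\WW(N,Q) = {}^{\perp}(\tau N) \cap Q^\perp \cap N^\perp$ is abelian and equivalent to $\mod C_{N,Q}$ via $\Hom_A(N^+, -)$. Parts (c), (d) then lift this equivalence to the associated lattices of torsion classes and support $\tau$-tilting pairs.

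For (a), I would verify directly that $\WW(N,Q)$ is closed under extensions, kernels, and cokernels. Extension-closure is immediate, as both $\TT(N,Q)$ and $N^\perp$ are closed under extensions. Cokernel-closure follows because torsion classes are closed under quotients, and so is $N^\perp$. Kernel-closure is the only nontrivial point: combining Proposition~\ref{characanntau} with the fact that $\Fac X \subseteq \TT(N,Q)$ for every $X \in \WW(N,Q)$, a short diagram chase on an exact sequence $0 \to \Kernel f \to X \to Y$ with $X,Y \in \WW(N,Q)$ shows that $\Kernel f$ lies in $N^\perp \cap {}^{\perp}(\tau N) \cap Q^\perp$.

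For (b), the functor $\Hom_A(N^+, -)$ takes values in $\mod \End_A(N^+)$, and I would show that on $\WW(N,Q)$ morphisms factoring through $\add N$ act as zero, so the functor descends to $\mod C_{N,Q}$. Essential surjectivity uses the standard property (see \cite{J}) that $N^+$ is an Ext-projective generator of $\TT(N,Q)$ relative to $\Fac N$. Full faithfulness reduces, via the $\add N^+$-approximation sequence, to showing that $[N]$ captures exactly the morphisms killed when applied to $X \in \WW(N,Q)$. For (c), I would define $\psi_{N,Q}(\VV) := F_{N,Q}(\VV \cap \WW(N,Q))$ with proposed inverse $\mathcal{V} \mapsto \Filt(\Fac N \cup F_{N,Q}^{-1}(\mathcal{V}))$; the key verification is that every $\VV$ in the interval is reconstructed from $\VV \cap \WW(N,Q)$ by extending by $\Fac N$, which follows from Lemma~\ref{ft2} applied inside $[\UU(N,Q), \VV]$, after which compatibility with meets (intersection) and joins (via $\Filt$) is automatic.

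For (d), combine (c) with Theorem~\ref{tors is complete lattice}: if $A$ is $\tau$-tilting finite, the interval $[\UU(N,Q), \TT(N,Q)]$ is finite, so $\tors C_{N,Q}$ is finite, forcing $C_{N,Q}$ itself to be $\tau$-tilting finite with $\tors C_{N,Q} = \ftors C_{N,Q}$ in bijection with $\sttilt C_{N,Q}$; Jasso's reduction then provides a bijection $\ttiltpair_{(N,Q)} A \cong \sttilt C_{N,Q}$, which is readily checked to be order-preserving via Lemma~\ref{ordertautilt}. The main obstacle I anticipate is careful bookkeeping of the projective part $Q$: Jasso's original setting is formulated for $\tau$-rigid modules rather than pairs, so some care is needed to verify that the Ext-projective description of $N^+$ inside $\TT(N,Q) = {}^{\perp}(\tau N) \cap Q^\perp$ and the identification in (b) remain valid when $Q$ is nonzero.
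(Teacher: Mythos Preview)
Your overall plan is sound, but there is one genuine error and one significant simplification you are missing.

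\textbf{Error in (a).} You write that cokernel-closure follows because ``torsion classes are closed under quotients, and so is $N^\perp$.'' But $N^\perp$ is a torsion-free class, closed under submodules and extensions, \emph{not} under quotients. The correct argument is dual to your kernel argument: for $f:X\to Y$ in $\WW(N,Q)$, the cokernel lies in $\TT(N,Q)$ (as a quotient of $Y$), and to see $\Cokernel f \in N^\perp$ one applies $\Hom_A(N,-)$ to $0\to\Image f\to Y\to\Cokernel f\to 0$ and uses that $\Ext^1_A(N,\Image f)=0$ because $\Image f\in\Fac X\subseteq {}^\perp(\tau N)$ (via Auslander--Reiten duality). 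The paper proves the kernel case explicitly in this style and then notes the cokernel case is dual.

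\textbf{The obstacle in (b), (c) dissolves.} You correctly anticipate that handling the projective part $Q$ is the main bookkeeping issue when importing Jasso's results. The paper dispatches this in one line rather than reworking the proofs: let $e$ be the idempotent corresponding to $Q$ and set $A':=A/(e)$. Then $\mod A'=Q^\perp$ as a full subcategory of $\mod A$, and $(N,0)$ is a $\tau$-rigid pair over $A'$ with $\WW_{A'}(N,0)=\WW_A(N,Q)$, $\TT_{A'}(N,0)=\TT_A(N,Q)$, $\UU_{A'}(N,0)=\UU_A(N,Q)$. Hence (b) and (c) follow immediately from Jasso's Theorems~1.4 and~1.5 applied to $(N,0)$ over $A'$; no adaptation of the arguments to the pair setting is needed. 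Your direct approach via approximation sequences and Lemma~\ref{ft2} could be made to work, but it duplicates effort already in \cite{J}.

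\textbf{Part (d).} Your route through $\tau$-tilting finiteness of $C_{N,Q}$ and Jasso's bijection is unnecessarily circuitous. Once $A$ is $\tau$-tilting finite, $\Fac:\ttiltpair A\to\tors A$ is a bijection by \eqref{tau-tilting gives tors} and Theorem~\ref{tors is complete lattice}, and Theorem~\ref{bongartz} identifies $\ttiltpair_{(N,Q)}A$ with the interval $[(N^-,Q^-),(N^+,Q)]$ in $\ttiltpair A$, which $\Fac$ carries to $[\UU(N,Q),\TT(N,Q)]$. That is the entire argument.
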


\begin{proof}
 (a) As $\TT(N, Q)$ and $\UU(N, Q)^\perp$ are stable by extensions, so is $\WW = \WW(N, Q)$. Let $f: X \to Y$ be a morphism in $\WW$. Let us prove that $\Kernel f \in \WW$. We have $\Kernel f \in \Sub X \subseteq Q^\perp \cap N^\perp$, so we need to prove that $\Kernel f \in {}^\perp(\tau N)$. By applying $\Hom_A(-, \tau N)$ to the short exact sequence $0 \to \Kernel f \to X \to \Image f \to 0$, we get an exact sequence
  \[0 = \Hom_A(X, \tau N) \to \Hom_A(\Kernel f, \tau N) \to \Ext^1_A(\Image f, \tau N).\]
 By Auslander-Reiten duality, \[\Ext^1_A(\Image f, \tau N) = \underline{\Hom}_A(N, \Image f) \subseteq \underline{\Hom}_A(N, Y) = 0,\] so we get $\Kernel f \in \WW$. Dually, we have $\Cokernel f \in \WW$.

 (b) Jasso proved the result when $Q = 0$ in \cite[Theorem 1.4]{J}. For the general case, denote $A' := A/(e)$ where $e$ is the idempotent corresponding to $Q$. Then as a full subcategory of $\mod A$, we have $\mod A' = Q^\perp$. Therefore, the result of Jasso for $(N, 0) \in \trigidpair A'$ implies the general result.

 (c) As in the proof of (b), this is a consequence of \cite[Theorem 1.5]{J} which establishes the bijection when $Q = 0$.

 (d) This is a consequence of \eqref{tau-tilting gives tors} and Theorem \ref{bongartz}.
\end{proof}

We deduce from Theorem \ref{ttwideaa} and \eqref{tau-tilting gives tors} the compatibility of the brick labelling with the $\tau$-tilting reduction which is described in Theorem \ref{ttwideaa}(b).
We keep the notation of Theorem \ref{ttwideaa}.

\begin{proposition} \label{bcomptred}
 For $(N, Q) \in \trigidpair A$, consider an arrow $q: \TT \to \UU$ in $\Hasse[\UU(N, Q), \TT(N, Q)]$ and the corresponding arrow $\bar q: \psi_{N, Q}(\TT) \to \psi_{N, Q}(\UU)$ in $\Hasse(\tors C_{N, Q})$. Then we have $S_q \in \WW(N, Q)$ and $S_{\bar q} = F_{N, Q}(S_q)$.
\end{proposition}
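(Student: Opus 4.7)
The plan is to combine the uniqueness characterization of brick labels from Theorem~\ref{difftors2}(b) with the fact that $F_{N,Q}$ is an equivalence of abelian categories restricted from an inclusion into $\mod A$. Write $\WW := \WW(N,Q)$, and for $\VV\in[\UU(N,Q),\TT(N,Q)]$ set $\VV_W := \VV\cap\WW$; by Theorem~\ref{ttwideaa}(c), the assignment $\VV\mapsto\VV_W$ is compatible with $\psi_{N,Q}$, and $F_{N,Q}(\VV_W)=\psi_{N,Q}(\VV)$.

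First I would check that $S_q\in\WW$. By Theorem~\ref{difftors2}(b), $S_q\in\TT\cap\UU^\perp$. Since $\UU(N,Q)\subseteq\UU\subseteq\TT\subseteq\TT(N,Q)$, we get $S_q\in\TT(N,Q)\cap\UU(N,Q)^\perp=\WW$, which proves the first half of the statement.

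Next, the key step is to identify $\TT\cap\UU^\perp$ with the corresponding subset of $\WW$. I claim that
\[
\TT\cap\UU^\perp \;=\; \TT_W \cap (\UU_W)^{\perp_\WW},
\]
where $\perp_\WW$ denotes orthogonality computed inside $\WW$ (equivalently, inside $\mod A$ but restricted to objects of $\WW$, since $\WW$ is a full subcategory). The inclusion ``$\subseteq$'' is immediate from the previous paragraph applied to arbitrary elements of $\TT\cap\UU^\perp$. For ``$\supseteq$'', take $X\in\WW$ with $\Hom_A(\UU_W,X)=0$ and any $Y\in\UU$; using the torsion pair $(\UU(N,Q),\UU(N,Q)^\perp)$, decompose $Y$ by $0\to Y_1\to Y\to Y_2\to 0$ with $Y_1\in\UU(N,Q)$ and $Y_2\in\UU(N,Q)^\perp$. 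Since $\UU(N,Q)\subseteq\UU$, we have $Y_2\in\UU\cap\UU(N,Q)^\perp=\UU_W$, and any morphism $Y\to X$ kills $Y_1$ (as $X\in\UU(N,Q)^\perp$) hence factors through $Y_2$, and is therefore zero. So $X\in\UU^\perp$, and the claim is proved.

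Finally, since $F_{N,Q}:\WW\xrightarrow{\sim}\mod C_{N,Q}$ is an equivalence of abelian categories, it preserves both bricks and the relation $\Hom=0$. Consequently $F_{N,Q}(S_q)$ is a brick in
\[
F_{N,Q}(\TT_W)\cap F_{N,Q}(\UU_W)^\perp \;=\; \psi_{N,Q}(\TT)\cap\psi_{N,Q}(\UU)^\perp.
\]
By Theorem~\ref{difftors2}(b) applied to $\bar q$ in $\Hasse(\tors C_{N,Q})$, this category contains a unique brick, namely $S_{\bar q}$, so $S_{\bar q}=F_{N,Q}(S_q)$. The only subtle point is the identification of orthogonals in the second step; the rest is a transparent application of Theorem~\ref{difftors2}(b) on both sides of the equivalence $F_{N,Q}$.
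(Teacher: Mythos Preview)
Your proof is correct and follows essentially the same route as the paper: show $S_q\in\WW$ from the chain $\UU(N,Q)\subseteq\UU\subseteq\TT\subseteq\TT(N,Q)$, push $S_q$ through the equivalence $F_{N,Q}$, and invoke the uniqueness in Theorem~\ref{difftors2}(b) on the $C_{N,Q}$ side.

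The only difference is that you prove more than necessary. The full equality $\TT\cap\UU^\perp=\TT_W\cap(\UU_W)^{\perp_\WW}$ is not needed; the paper uses only the trivial inclusion ``$\subseteq$'', which already gives $S_q\in\TT_W\cap(\UU_W)^{\perp_\WW}$ (since $S_q\in\TT$, $S_q\in\WW$, and $\UU_W\subseteq\UU$). Your argument for ``$\supseteq$'' via the torsion pair $(\UU(N,Q),\UU(N,Q)^\perp)$ is correct, but it plays no role in establishing $S_{\bar q}=F_{N,Q}(S_q)$: once $F_{N,Q}(S_q)$ is a brick in $\psi_{N,Q}(\TT)\cap\psi_{N,Q}(\UU)^\perp$, uniqueness from Theorem~\ref{difftors2}(b) finishes the job. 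So what you flagged as ``the only subtle point'' is in fact dispensable.
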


\begin{proof}
 By definition, $S_q \in \TT \cap \UU^\perp$. As $\UU(N, Q) \subseteq \UU \subseteq \TT \subseteq \TT(N, Q)$, we get $S_q \in \TT(N, Q) \cap \UU(N, Q)^\perp = \WW(N, Q)$. Also $S_q \in (\TT \cap \WW(N, Q)) \cap (\UU \cap \WW(N, Q))^\perp$, so, as $F_{N, Q}$ is an equivalence of categories, $F_{N, Q}(S_q) \in \psi_{N, Q}(\TT)  \cap \psi_{N, Q}(\UU)^\perp$. Therefore $F_{N, Q}(S_q)$ is the label of $\bar q$.
\end{proof}

\begin{definition} \label{lpolyt}
 A subset of $\tors A$ of the form $[\UU(N, Q), \TT(N, Q)]$ for some basic $\tau$-rigid pair $(N,Q)$ is called a \emph{polytope} or an \emph{$\ell$-polytope} where $\ell := n - |N|-|Q|$.
\end{definition}

\begin{remark}
 Suppose that $A$ is $\tau$-tilting finite. Then $1$-polytopes correspond to arrows of $\Hasse(\tors A)$. Moreover, an $\ell$-polytope is $\ell$-Hasse-regular, using the isomorphism of lattices $\ttiltpair_{(N, Q)} A \cong [\UU(N, Q), \TT(N, Q)]$ and Theorem \ref{muttautilt}. So $2$-polytopes are polygons in the sense of Section \ref{lat prelim}. We prove in Proposition~\ref{torsApole} that the converse holds and $\tors A$ is polygonal.
\end{remark}

We have the following result about bricks in polytopes of $\tors A$.
\begin{theorem} \label{theorem filtorth}
 Let $A$ be a finite-dimensional algebra. Let $(N, Q)$ be a basic $\tau$-rigid pair and $[\UU, \TT] = [\UU(N, Q), \TT(N, Q)]$ be the corresponding polytope of $\tors A$. Let $\WW = \WW(N, Q) \in \wide A$.
 \begin{enumerate}[\rm(a)]
  \item The set $\S$ of  simple objects of $\WW$ is a semibrick of $A$ satisfying $\WW = \Filt \S$.
  \item Bricks in $\WW$ are exactly the labels of arrows of $\Hasse [\UU, \TT] \subseteq \Hasse(\tors A)$.
  \item The semibrick $\S$ consists of labels of arrows incident to $\UU$ in $\Hasse [\UU, \TT]$.
  \item The semibrick $\S$ consists of labels of arrows incident to $\TT$ in $\Hasse [\UU, \TT]$.
 \end{enumerate}
 We denote $\S$ by $\simp[\UU,\TT]$.
\end{theorem}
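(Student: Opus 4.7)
The plan is to exploit the $\tau$-tilting reduction provided by Theorem~\ref{ttwideaa}: the category equivalence $F_{N,Q}\colon\WW\xrightarrow{\sim}\mod C_{N,Q}$, the lattice isomorphism $\psi_{N,Q}\colon[\UU,\TT]\xrightarrow{\sim}\tors C_{N,Q}$, and their compatibility with brick labelling (Proposition~\ref{bcomptred}). A preliminary check will be that $\psi_{N,Q}$ sends endpoints correctly: since $\UU\cap\UU^\perp=0$ we get $\psi_{N,Q}(\UU)=F_{N,Q}(\UU\cap\WW)=F_{N,Q}(0)=0$, and since $\WW\subseteq\TT$ we get $\psi_{N,Q}(\TT)=F_{N,Q}(\WW)=\mod C_{N,Q}$. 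This reduces each of (b), (c), (d) to its known counterpart over the finite-dimensional algebra $C_{N,Q}$ applied to the whole of $\tors C_{N,Q}$.

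For (a), I would observe that by Theorem~\ref{ttwideaa}(a) the subcategory $\WW$ is wide, hence abelian. Its simple objects $\S$ therefore form a family of bricks with no non-zero morphisms between distinct members when regarded in $\mod A$, since $\WW$ is a full subcategory. Proposition~\ref{bijsbw} then promotes $\S$ to a semibrick of $A$ satisfying $\Filt\S=\WW$.

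For (b), one direction is immediate: if $q\colon\TT'\to\UU'$ is an arrow in $\Hasse[\UU,\TT]$, Theorem~\ref{difftors2}(b) gives $S_q\in\TT'\cap\UU'^\perp\subseteq\TT\cap\UU^\perp=\WW$, and $S_q$ is a brick. Conversely, for a brick $S\in\WW$ the image $F_{N,Q}(S)$ is a brick of $C_{N,Q}$; by Theorem~\ref{difftors2}(c) it labels at least one arrow $\bar q$ of $\Hasse(\tors C_{N,Q})$ (the unique one pointing from the completely join-irreducible element $\T(F_{N,Q}(S))$). Transporting $\bar q$ back via $\psi_{N,Q}^{-1}$ yields an arrow $q$ in $\Hasse[\UU,\TT]$, and Proposition~\ref{bcomptred} identifies $F_{N,Q}(S_q)\cong S_{\bar q}=F_{N,Q}(S)$, whence $S_q\cong S$ since $F_{N,Q}$ is an equivalence.

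For (c), the arrows of $\Hasse[\UU,\TT]$ incident to $\UU$ correspond under $\psi_{N,Q}$ to arrows of $\Hasse(\tors C_{N,Q})$ incident to $0$. By Proposition~\ref{topbottomlatsimpleforce}(a) these latter arrows are precisely the $\Filt S_0\to 0$ for $S_0$ a simple $C_{N,Q}$-module, and they are labelled by $S_0$. Pulling back through $F_{N,Q}^{-1}$ via Proposition~\ref{bcomptred} identifies the labels with the simple objects of $\WW$, which is exactly~$\S$. Statement (d) is completely dual, using Proposition~\ref{topbottomlatsimpleforce}(b) to describe arrows incident to $\mod C_{N,Q}$. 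The only real obstacle is careful bookkeeping along $\psi_{N,Q}$ and $F_{N,Q}$ to ensure that brick labels are preserved in both directions and that the identification $F_{N,Q}(\S)=$ simples of $\mod C_{N,Q}$ holds; both facts are encoded in Theorem~\ref{ttwideaa} and Proposition~\ref{bcomptred}, so no genuinely new input is needed.
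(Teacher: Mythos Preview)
Your proposal is correct and follows essentially the same approach as the paper: both arguments reduce to the special case $(N,Q)=(0,0)$ via the $\tau$-tilting reduction of Theorem~\ref{ttwideaa} and the compatibility of labels in Proposition~\ref{bcomptred}, then invoke Theorem~\ref{difftors2}(c) and Proposition~\ref{topbottomlatsimpleforce} for $C_{N,Q}$. The paper is slightly more terse, first stating the $(0,0)$ case explicitly and then transferring, whereas you unfold the transfer step by step (checking $\psi_{N,Q}(\UU)=0$, $\psi_{N,Q}(\TT)=\mod C_{N,Q}$, and both directions of (b) separately), but there is no substantive difference.
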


\begin{proof}
 First, we consider the case where $(N, Q) = (0, 0)$, hence $\UU = 0$ and $\TT = \mod A$. In this case, $\S$ consists of simple $A$-modules, and (a) is immediate as $A$ is a finite-dimensional algebra and $\WW = \mod A$. By Theorem \ref{difftors2}(c), all bricks appear as labels of arrows in $\Hasse(\tors A)$, so (b) holds.
 Proposition \ref{topbottomlatsimpleforce} implies (c) and (d).

 For a general $(N, Q)$, let $\S$ be the set of simple objects in the abelian category $\WW$. As each object of $\WW$ has finite length, we have $\Filt \S = \WW$. Moreover, Proposition \ref{bcomptred} tells us that the isomorphism
  \[\psi_{N, Q}: [\UU, \TT] \cong \tors C_{N, Q}\]
 is compatible with the brick labelling, via the equivalence $F_{N, Q}: \WW \to \mod C_{N, Q}$. Then, the conclusion for $(N, Q)$ follows the results for $(0,0) \in \trigidpair C_{N, Q}$.
\end{proof}

We give the following description of semibricks in terms of arrows in $\Hasse(\tors A)$.
 \begin{proposition} \label{bongcompmin2}
  Let $A$ a finite-dimensional algebra. Let $(T, P) \in \ttiltpair A$ and consider the smallest direct summand $X$ of $T$ such that $\Fac X = \Fac T$.
  \begin{enumerate}[\rm (a)]
   \item For an indecomposable direct summand $(N, Q)$ of $(T, P)$, the mutation of $(T, P)$ at $(N, Q)$ is smaller than $(T, P)$ if and only if $N$ is a direct summand of $X$ and $Q = 0$.
   \item We have $T/\rad_{\End_A(T)}(T) = X/\rad_{\End_A(X)}(X)$. Moreover, $T/\rad_{\End_A(T)}(T)$ is the direct sum of the labels of arrows of $\Hasse(\tors A)$ starting at $\Fac T$.
  \end{enumerate}
 \end{proposition}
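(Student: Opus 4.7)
The plan is to handle (a) via the Bongartz/co-Bongartz dichotomy from Theorem~\ref{bongartz} and Theorem~\ref{muttautilt}, and then to prove (b) through a componentwise analysis of $\rad_{\End_A(T)}T$ versus $\rad_{\End_A(X)}X$, using the brick label formula from Proposition~\ref{theorem label2}.

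For (a), removing an indecomposable summand $(N,Q)$ from $(T,P)$ yields an almost $\tau$-tilting pair whose only two completions, by Theorem~\ref{muttautilt}(a), are $(T,P)$ itself and the mutation; by Theorem~\ref{bongartz} these are the co-Bongartz and Bongartz completions, and the Bongartz completion is always the larger one. When $N=0$, the Bongartz completion of $(T,P/Q)$ has projective part $P/Q$, so $(T,P)$ must be the co-Bongartz and the mutation is larger; hence a downward mutation forces $Q=0$. When $Q=0$, the characterization $\Fac X^-=\Fac X$ of the co-Bongartz together with Lemma~\ref{bongcompmin1} shows that $(T,P)$ is the co-Bongartz of $(T/N,P)$ if and only if $\Fac(T/N)=\Fac T$; and by the minimality defining $X$ this is equivalent to $N$ not being a summand of $X$. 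Combining these, the mutation is smaller than $(T,P)$ if and only if $Q=0$ and $N$ is a summand of $X$.

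For (b), I decompose $T=\bigoplus_i T_i$ into indecomposables and write $Y:=T/X$. Since $T$ is basic, $\rad\End_A(T)=\Rad_A(T,T)$, so $(\rad_{\End_A(T)}T)_i=\Rad_A(T,T_i)\cdot T\subseteq T_i$. The first move is to show that for $T_i$ a summand of $Y$ we have $\Rad_A(T,T_i)\cdot T=T_i$: indeed $\Fac(T/T_i)=\Fac T$ (as $X$ is already a summand of $T/T_i$), so $T_i\in\Fac(T/T_i)$, producing a surjection $(T/T_i)^n\twoheadrightarrow T_i$ all of whose constituent maps $T_k\to T_i$ are radical. Combining this vanishing with Proposition~\ref{theorem label2} and part (a) shows that $T/\rad_{\End_A(T)}T$ is the direct sum of the labels of the downward arrows from $\Fac T$, settling the second assertion of (b).

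The remaining claim $T/\rad_{\End_A(T)}T=X/\rad_{\End_A(X)}X$ amounts to $\Rad_A(T,T_i)\cdot T=\Rad_A(X,T_i)\cdot X$ for each $T_i$ a summand of $X$; the nontrivial inclusion reduces to $\Hom_A(Y,T_i)\cdot Y\subseteq\Rad_A(X,T_i)\cdot X$, i.e.\ that every $f\colon Y\to T_i$ has image inside $(\rad_{\End_A(X)}X)_i$. This is the step I expect to be the main obstacle. My strategy is to use $Y\in\Fac X$ to pick a surjection $g\colon X^n\twoheadrightarrow Y$ and to inspect $\tilde f:=f\circ g\colon X^n\to T_i$ componentwise through the decomposition $X^n=\bigoplus_k T_k^{m_k}$ indexed by summands of $X$. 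The components $T_k\to T_i$ with $T_k\not\cong T_i$ are automatically radical. If some component $T_i\to T_i$ were invertible then $\tilde f$, and hence $f$, would be surjective, placing $T_i\in\Fac Y\subseteq\Fac(T/T_i)$, which would force $\Fac(T/T_i)=\Fac T$ and contradict the fact that $T_i$ is a summand of $X$. Thus every component of $\tilde f$ is radical, so $\Image f=\Image\tilde f\subseteq(\rad_{\End_A(X)}X)_i$, completing the argument.
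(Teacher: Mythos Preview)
Your proof is correct and follows essentially the same structure as the paper's: part~(a) via the Bongartz/co-Bongartz dichotomy and minimality of $X$, part~(b) via a summand-by-summand analysis of $\rad_{\End_A(T)}T$ showing the $Y$-components vanish and then identifying the $X$-components with $\rad_{\End_A(X)}X$.

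The one place the paper streamlines your argument is the final inclusion $\Hom_A(Y,T_i)\cdot Y\subseteq\Rad_A(X,T_i)\cdot X$. Rather than arguing by contradiction on an invertible component, the paper observes directly that for any $f\colon Y\to X$, if $\pi\colon X^\ell\twoheadrightarrow Y$ is your chosen surjection (necessarily radical, since $\add X\cap\add Y=0$), then $f\pi$ is radical by the two-sided ideal property of $\Rad_A$, whence $\Image f=\Image(f\pi)\subseteq\rad_{\End_A(X)}X$. This is shorter than your route but logically equivalent.
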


 \begin{proof}
  (a) By Lemma \ref{bongcompmin1}, $(T, P)$ is the Bongartz completion of $(T/X, P)$. In particular, if $N \in \add X$ and $Q = 0$, the mutation of $(T, P)$ at $(N, Q)$ contains $(T/X, P)$ as a direct summand, hence is smaller than $(T, P)$. If $N \notin \add X$ or $Q \neq 0$, then $\Fac T/N \supseteq \Fac X = \Fac T$, hence the mutation of $(T, P)$ at $(N, Q)$ is bigger than $(T, P)$.

  (b) We have
  \[\frac{T}{\rad_{\End_A(T)}(T)} = \frac{X}{\Rad_A(T, X) \cdot T} \oplus \frac{T/X}{\Rad_A(T, T/X) \cdot T}.\]
  As $\Fac T = \Fac X$ and $\add X \cap \add(T/X) = 0$, there is a radical surjective map $\pi$ from $X^\ell$ to $T/X$ for some integer $\ell$, hence $\Rad_A(T, T/X) \cdot T \supseteq \Image \pi = T/X$, so the second term vanishes:
   \[\frac{T}{\rad_{\End_A(T)}(T)} = \frac{X}{\Rad_A(T, X) \cdot T}.\]
   This is the direct sum of the labels of arrows starting at $(T, P)$, by (a) and Proposition \ref{theorem label2}. We have proved the second part of the claim.

  We have \[\Rad_A(T, X) \cdot T = \Rad_A(X, X) \cdot X + \Rad_A(T/X, X) \cdot (T/X).\] For any $f: T/X \to X$, the image of $f$ coincides with the image of $f\pi: X^\ell \to X$ which is radical, as $\pi$ is. So $\Rad_A(T/X, X) \cdot (T/X) \subseteq \Rad_A(X, X) \cdot X$, and $\Rad_A(T, X) \cdot T = \Rad_A(X, X) \cdot X$, hence
  \begin{align*} \frac{T}{\rad_{\End_A(T)}(T)} &= \frac{X}{\Rad_A(X, X) \cdot X} = \frac{X}{\rad_{\End_A(X)}(X)}. \qedhere \end{align*}
 \end{proof}

We deduce the following bijection between $\ttiltpair A$ and $\wide A$ when $A$ is $\tau$-tilting finite.

\begin{theorem} \label{ttwideb}
 Let $A$ be a finite-dimensional algebra that is $\tau$-tilting finite. Then there is a bijection
  \[\ttiltpair A \xto{\sim} \wide A\]
mapping a pair $(T, P)$ to $\WW{(T/X, P)}$
where $X$ is the minimal summand of $T$ satisfying $\Fac X = \Fac T$.
\end{theorem}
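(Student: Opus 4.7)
The plan is to verify that the stated assignment coincides with the composition
\[\ttiltpair A \xto{\sim} \sttilt A \xto{\sim} \sbrick A \xto{\Filt} \wide A\]
of the identification $(T,P)\leftrightarrow T$, the bijection of Proposition~\ref{brick-rigid}(b) (which is a bijection because $A$ is $\tau$-tilting finite), and the bijection $\Filt$ of Proposition~\ref{bijsbw}. This composition sends $(T,P)$ to $\Filt(T/\rad_{\End_A(T)}T)$, so the theorem will follow once I establish the equality $\WW(T/X,P) = \Filt(T/\rad_{\End_A(T)}T)$ for every $(T,P)\in\ttiltpair A$.

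First I would fix $(T,P)\in\ttiltpair A$ together with the minimal summand $X$ of $T$ satisfying $\Fac X=\Fac T$, and apply Lemma~\ref{bongcompmin1} to conclude that $(T,P)$ is the Bongartz completion of $(T/X,P)$. By Theorem~\ref{bongartz} this yields $\TT(T/X,P)=\Fac T$, while $\UU(T/X,P)=\Fac(T/X)$ holds by definition, so the polytope attached to $(T/X,P)$ is the interval $[\Fac(T/X),\Fac T]$ and Theorem~\ref{ttwideaa}(a) guarantees that $\WW(T/X,P)$ is a wide subcategory of $\mod A$.

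Next I would compute the semibrick $\simp[\Fac(T/X),\Fac T]$ of simple objects of $\WW(T/X,P)$ in two different ways. Theorem~\ref{theorem filtorth}(d) identifies it with the set of labels of arrows of $\Hasse[\Fac(T/X),\Fac T]$ incident to the top element $\Fac T$, i.e.\ starting at $\Fac T$. Using $\ftors A=\tors A$ (Theorem~\ref{tors is complete lattice}) together with the bijection $\Fac\colon\ttiltpair A\to\ftors A$ and Theorem~\ref{bongartz}, these correspond to the downward arrows from $(T,P)$ in $\Hasse\ttiltpair_{(T/X,P)}A$. Proposition~\ref{bongcompmin2}(a) then tells me that \emph{every} downward arrow from $(T,P)$ in $\Hasse\ttiltpair A$ already lies inside $\ttiltpair_{(T/X,P)}A$, so the two sets of arrows coincide. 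Applying Proposition~\ref{bongcompmin2}(b), the direct sum of their labels is $T/\rad_{\End_A(T)}T$, hence $\simp[\Fac(T/X),\Fac T]=T/\rad_{\End_A(T)}T$. Combining this with Theorem~\ref{theorem filtorth}(a) and Proposition~\ref{bijsbw} gives $\WW(T/X,P)=\Filt(T/\rad_{\End_A(T)}T)$ as desired, and bijectivity is automatic.

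The main obstacle I foresee is precisely the arrow-matching step: confirming that the downward arrows from $\Fac T$ in the global quiver $\Hasse(\tors A)$ all remain inside the subinterval $[\Fac(T/X),\Fac T]$. This is exactly the content of Proposition~\ref{bongcompmin2}(a), and it is the step where the minimality of $X$ is genuinely used; everything else is an assembly of previously established bijections and of the brick labelling for polytopes developed in Theorems~\ref{ttwideaa} and~\ref{theorem filtorth}.
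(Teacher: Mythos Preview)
Your proposal is correct and follows essentially the same route as the paper: reduce to showing $\WW(T/X,P)=\Filt(T/\rad_{\End_A(T)}T)$ via the bijections of Propositions~\ref{brick-rigid}(b) and~\ref{bijsbw}, then use Lemma~\ref{bongcompmin1}, Proposition~\ref{bongcompmin2}(a)(b), and Theorem~\ref{theorem filtorth}(a)(d) exactly as the paper does. The ``obstacle'' you flag is precisely the step handled by Proposition~\ref{bongcompmin2}(a), so there is nothing missing.
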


\begin{proof}
 By Propositions \ref{bijsbw} and \ref{brick-rigid}(b), there are bijections
 \[\ttiltpair A \to \sbrick A \xto{\Filt} \wide A\]
 where the first map maps $(T, P)$ to $T/\rad_{\End_A(T)}(T)$. So it suffices to prove $\Filt L = \WW(T/X, P)$ where $L := T/\rad_{\End_A(T)}(T)$.

 By Lemma \ref{bongcompmin1}, $(T, P)$ is the Bongartz completion of $(T/X, P)$ so the maximum of the polytope $I := \ttiltpair_{(T/X, P)} A$. By Lemma \ref{bongcompmin2}(a), all arrows starting at $(T, P)$ in $\Hasse(\ttiltpair A)$ are in $\Hasse I$, and by Lemma \ref{bongcompmin2}(b), they are labelled by the indecomposable direct summands of $L$. So, by Theorem~\ref{theorem filtorth}(a)(d), the indecomposable direct summands of $L$ are the simple objects of $\WW(T/X, P)$ . Therefore, $\Filt L = \WW(T/X, P)$.
\end{proof}

We give more details about Theorem \ref{theorem filtorth}(c)(d):

\begin{proposition} \label{proposition polygons}
 Let $[\UU, \TT]$ be an $\ell$-polytope in $\tors  A$. Then there exist indexings
  \begin{itemize}
   \item $\alpha_i: \TT \to \TT_i$, $1 \leq i \leq \ell$ of arrows pointing from $\TT$ in $[\UU, \TT]$,
   \item $\beta_i: \UU_i \to \UU$, $1 \leq i \leq \ell$ of arrows pointing toward $\UU$ in $[\UU, \TT]$,
  \end{itemize}
 such that the following hold:
 \begin{enumerate}[\rm (a)]
  \item We have $\UU = \Meet_{i = 1}^\ell \TT_i$;
  \item We have $\TT = \Join_{i = 1}^\ell \UU_i$;
  \item For $i, j \in \{1, \dots, \ell\}$, $\TT_i  \not\supseteq \UU_j$ if and only if $i = j$;
  \item For any $i \in \{1, \dots, \ell\}$, the same brick labels $\alpha_i$ and $\beta_i$.
 \end{enumerate}
\end{proposition}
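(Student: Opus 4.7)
The plan is to reduce to the case of the top polytope via $\tau$-tilting reduction. By Theorem~\ref{ttwideaa}(c), there is a lattice isomorphism $\psi_{N,Q}: [\UU, \TT] \xrightarrow{\sim} \tors C_{N,Q}$ sending $\TT$ to $\mod C_{N,Q}$ and $\UU$ to $\{0\}$, and Proposition~\ref{bcomptred} shows that $\psi_{N,Q}$ intertwines the brick labellings through the equivalence $F_{N,Q}: \WW(N,Q) \xrightarrow{\sim} \mod C_{N,Q}$. Setting $B := C_{N,Q}$, the algebra $B$ has exactly $\ell$ isomorphism classes of simple modules $S_1, \dots, S_\ell$: indeed, $N^+$ has $|A| - |Q|$ indecomposable summands, of which $|N|$ belong to $\add N$ and are killed in passing to $B = \End_A(N^+)/[N]$. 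So it is enough to establish the four statements for the polytope $[0, \mod B]$ in $\tors B$.

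In $\tors B$, Proposition~\ref{topbottomlatsimpleforce}(b) identifies the arrows pointing from $\mod B$ as $\alpha_i: \mod B \to {}^\perp S_i$, and Proposition~\ref{topbottomlatsimpleforce}(a) identifies the arrows pointing toward $0$ as $\beta_i: \Filt S_i \to 0$, with both $\alpha_i$ and $\beta_i$ labelled by $S_i$. Indexing both families by the common set of simple $B$-modules yields (d) immediately. For (a), $\bigcap_{i=1}^{\ell} {}^\perp S_i$ consists of $B$-modules admitting no non-zero morphism to any simple, and such a module must be zero because any non-zero finitely generated $B$-module has a non-zero top. For (b), $\Join_{i=1}^{\ell} \Filt S_i = \Filt(S_1, \dots, S_\ell) = \mod B$, since every $B$-module is filtered by its composition factors. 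For (c), $\Filt S_j \subseteq {}^\perp S_i$ is equivalent to $\Hom_B(S_j, S_i) = 0$, which holds precisely when $i \neq j$.

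The main conceptual step is invoking the $\tau$-tilting reduction isomorphism together with its compatibility with the brick labelling via $F_{N,Q}$. After that reduction, the statement of the proposition collapses to elementary facts about simple modules of a finite-dimensional algebra, so no serious obstacle is expected; the only bookkeeping point is to confirm that $B$ has exactly $\ell$ simples, which is the computation carried out above.
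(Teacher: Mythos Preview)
Your proof is correct and follows the same approach as the paper: reduce via $\tau$-tilting reduction (Theorem~\ref{ttwideaa}(c) and Proposition~\ref{bcomptred}) to the polytope $[0,\mod B]$, then read off (a)--(d) from Proposition~\ref{topbottomlatsimpleforce}. The paper compresses this reduction into the phrase ``As in the proof of Theorem~\ref{theorem filtorth}'', while you spell it out explicitly and add the count $|B|=|N^+|-|N|=\ell$; otherwise the arguments are identical.
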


\begin{proof}
 As in the proof of Theorem \ref{theorem filtorth}, we only have to consider the case where $\UU = 0$ and $\TT = \mod A$. Let $\{S_1, S_2, \dots, S_\ell\}$ be the set of isomorphism classes of simple $A$-modules. Then, using Proposition \ref{topbottomlatsimpleforce}, putting $\alpha_i:  \mod A \to {}^\perp S_i =: \TT_i$ and $\beta_i: \UU_i := \Filt S_i \to 0$, the assertions follow.
\end{proof}

We give also an alternative way to construct polytopes, which is a kind of converse to Proposition \ref{proposition polygons}.

\begin{proposition} \label{polyg2}\
 \begin{enumerate}[\rm (a)]
  \item Let $\TT \in \ftors A$. Consider $\ell$ distinct arrows $\alpha_i: \TT \to \TT_i$ of $\Hasse(\tors A)$. Let $\UU := \Meet_{i = 1}^\ell \TT_i$. Then $[\UU, \TT]$ is an $\ell$-polytope.
  \item Let $\UU \in \ftors A$. Consider $\ell$ distinct arrows $\beta_i: \UU_i \to \UU$ of $\Hasse(\tors A)$. Let $\TT :=  \Join_{i = 1}^\ell \UU_i$. Then $[\UU, \TT]$ is an $\ell$-polytope.
 \end{enumerate}
\end{proposition}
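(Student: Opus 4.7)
The plan is to exhibit, for each configuration of $\ell$ arrows, an explicit $\tau$-rigid pair realising the polytope $[\UU,\TT]$. I prove (a) in detail; (b) follows dually using co-Bongartz completions.

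Choose the basic support $\tau$-tilting pair $(T,P)$ with $\Fac T=\TT$, and let $X$ be the minimal direct summand of $T$ with $\Fac X=\TT$. Comparing the count $|X|$ of arrows starting at $\TT$ in $\Hasse(\tors A)$ given by Proposition \ref{bongcompmin2}(b) with the count $|X|$ of downward mutations at $(T,P)$ given by Proposition \ref{bongcompmin2}(a), and using that mutations yield arrows of $\Hasse(\ftors A)\subseteq\Hasse(\tors A)$ via Theorem \ref{muttautilt}(b), every arrow of $\Hasse(\tors A)$ starting at $\TT$ is a mutation at an indecomposable summand of $X$. Hence each $\alpha_i$ corresponds to a distinct indecomposable summand $N_i\in\add X$ of $T$. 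Set $N=N_1\oplus\cdots\oplus N_\ell$ and let $M$ be the sum of the remaining indecomposable summands, so that $T=M\oplus N$ and $(M,P)\in\trigidpair A$ with $|M|+|P|=n-\ell$. My candidate polytope is $[\UU(M,P),\TT(M,P)]$.

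The key step is to show $(T,P)=(M^+,P)$. By construction $(T,P)\in\ttiltpair_{(M,P)} A$, so Theorem \ref{bongartz} gives $(T,P)\le(M^+,P)$. For the converse, Theorem \ref{ttwideaa}(c) combined with the bijection \eqref{tau-tilting gives tors} furnishes an order-preserving bijection $\ttiltpair_{(M,P)} A\xrightarrow{\sim}\ttiltpair C_{M,P}$ with $|C_{M,P}|=\ell$, sending $(T,P)\mapsto\tilde T$ and $(M^+,P)\mapsto(C_{M,P},0)$. Under it, the mutations at the $N_i$ translate into the $\ell$ mutations at $\tilde T$ in $\ttiltpair C_{M,P}$ (the full list, by Theorem \ref{muttautilt}(a) applied to $C_{M,P}$), all of which are downward by hypothesis. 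Invoking the fact that in $\ttiltpair C$ of any finite-dimensional algebra $C$ the top $(C,0)$ is the unique element whose mutations are all downward, we obtain $\tilde T=(C_{M,P},0)$, hence $(T,P)=(M^+,P)$ and $\TT=\TT(M,P)$.

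With $\TT=\TT(M,P)$, each mutated module $T/N_i\oplus N_i^*$ still contains $M$ as a summand, so $\UU(M,P)=\Fac M\subseteq\TT_i$ and each $\alpha_i$ lies in the polytope. By Proposition \ref{topbottomlatsimpleforce}(b) applied to $\tors C_{M,P}$ (transferred by the isomorphism $\psi_{M,P}$ of Theorem \ref{ttwideaa}(c)), the top of the polytope has exactly $\ell$ outgoing arrows in its Hasse quiver; these must therefore be our $\alpha_i$. Proposition \ref{proposition polygons}(a) then yields $\UU=\Meet_{i=1}^{\ell}\TT_i=\UU(M,P)$, so $[\UU,\TT]=[\UU(M,P),\TT(M,P)]$ is an $\ell$-polytope. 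For (b) one dualises: starting from $(U,Q)$ with $\Fac U=\UU$, one identifies each $\beta_i$ with an upward mutation at a distinct indecomposable summand $L_i$ of $(U,Q)$, removes these summands to form $(U',Q')\in\trigidpair A$, and shows dually that $(U,Q)$ is the co-Bongartz completion $((U')^-,(Q')^-)$, so $\UU=\UU(U',Q')$; Proposition \ref{proposition polygons}(b) then finishes.

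The main obstacle is the fact that $(C,0)$ is the unique element of $\ttiltpair C$ whose mutations are all downward. When $C$ is $\tau$-tilting finite this is automatic, but in the general case it requires a separate argument. Proposition \ref{bongcompmin2}(a) forces any such local maximum to have trivial $P$-part and to satisfy $\tilde T=X(\tilde T)$; the difficulty is then to deduce $\Fac\tilde T=\mod C$ from these conditions. A natural route is to combine co-compactness of $\Fac\tilde T$ in the co-algebraic lattice $\tors C$ (via Proposition \ref{compacttors}(b) and Theorem \ref{semidistributiveandbialg}(b)) with the dual of Lemma \ref{exar} to produce an upward Hasse cover of $\Fac\tilde T$ in $\tors C$, and then to promote this cover to an upward mutation using the counting identity from Proposition \ref{bongcompmin2}(b), which would contradict the hypothesis.
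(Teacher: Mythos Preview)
Your argument follows the same strategy as the paper's: identify the $\tau$-rigid pair obtained by removing the $\ell$ mutated summands, and show that the resulting polytope is exactly $[\UU,\TT]$. The paper proves (b) rather than (a) and argues directly in $A$ instead of transferring to $C_{M,P}$: since $[\UU(N,Q),\TT(N,Q)]\cap\ftors A$ is $\ell$-Hasse-regular and $\UU$ already has $\ell$ incoming arrows there, $\UU$ must coincide with the minimum $\UU(N,Q)$, and then Proposition~\ref{proposition polygons}(b) yields $\TT=\TT(N,Q)$. Your detour through $C_{M,P}$ is valid but not needed, and, as you note, it raises the question of why $(C,0)$ is the unique element of $\ttiltpair C$ with all mutations downward. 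In fact the paper's phrasing implicitly uses the same fact (to deduce $\UU=\UU(N,Q)$ from Hasse-regularity one still needs that any non-minimal element admits a downward arrow in the polytope), so you have isolated a point the paper treats as evident.

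Your co-compactness fix is correct in concept, but the final ``promotion'' of the Hasse cover to a mutation should invoke \cite[Theorem~1.3]{DIJ} (an arrow of $\Hasse(\tors C)$ with one endpoint in $\ftors C$ has both endpoints there), not Proposition~\ref{bongcompmin2}(b), which only counts \emph{downward} arrows. There is also a cleaner route that bypasses the obstacle entirely and avoids the transfer to $C_{M,P}$: since each $N_i\in\add X$, you have $T/X\in\add M$, so
\[
\TT(M,P)={}^\perp(\tau M)\cap P^\perp\ \subseteq\ {}^\perp\bigl(\tau(T/X)\bigr)\cap P^\perp\ =\ \TT
\]
by Lemma~\ref{bongcompmin1} and Theorem~\ref{bongartz}; combined with $\TT\le\TT(M,P)$ (from $(T,P)\in\ttiltpair_{(M,P)}A$) this gives $\TT=\TT(M,P)$ at once.
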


\begin{proof}
 By duality, we prove only (b). By \cite[Theorem 1.3]{DIJ}, all $\UU_i$ are in $\ftors A$. Thanks to Theorem \ref{muttautilt}, the basic $\tau$-tilting pairs corresponding to $\UU$, $\UU_1$, $\UU_2$, \dots, $\UU_\ell$ admit a maximal common direct summand $(N, Q)$
with $|N|+|Q| = |A| - \ell$. All $\UU_i$'s and $\UU$ appear in the $\ell$-polytope $[\UU(N, Q), \TT(N, Q)]$, so $\TT = \Join_{i=1}^\ell \UU_i \in [\UU(N, Q), \TT(N, Q)]$, and $[\UU, \TT] \subseteq [\UU(N, Q), \TT(N, Q)]$.

As the $\beta_i$ are $\ell$ arrows pointing toward $\UU$ in $\Hasse([\UU(N, Q), \TT(N, Q)] \cap \ftors A)$ and $[\UU(N, Q), \TT(N, Q)] \cap \ftors A$ is $\ell$-Hasse-regular, we have $\UU = \UU(N, Q)$ and the $\beta_i$ are all arrows pointing toward $\UU$. Hence, by Proposition \ref{proposition polygons}(b), we have $\TT(N, Q) = \TT$.
\end{proof}

Additionally, we show that if $A$ is $\tau$-tilting finite, then $\tors A$ is polygonal as defined in Section \ref{lat prelim}.

\begin{proposition} \label{torsApole}
Let $A$ be a finite-dimensional algebra that is $\tau$-tilting finite. The following hold:
 \begin{enumerate}[\rm (a)]
  \item The lattice $\tors A$ is polygonal. The polygons of $\tors A$ are precisely the $2$-polytopes.
  \item Let $[\UU, \TT]$ be a polygon of $\tors A$ and $S$ be a brick in $\TT \cap \UU^\perp$. Then:
 \begin{itemize}
  \item if $S \in \simp[\UU,\TT]$, then $S$ labels exactly two arrows of $[\UU, \TT]$;
  \item if $S \notin \simp[\UU,\TT]$, then $S$ labels exactly one arrow of $[\UU, \TT]$.
 \end{itemize}
 \end{enumerate}
\end{proposition}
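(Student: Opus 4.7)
For part (a), I first plan to prove polygonality of $\tors A$. Given two arrows $y_1 \to x$ and $y_2 \to x$ of $\Hasse(\tors A)$ with $y_1 \ne y_2$, Proposition~\ref{polyg2}(b) shows that $[x, y_1 \join y_2]$ is a $2$-polytope, which by Theorem~\ref{ttwideaa}(c) is isomorphic as a lattice to $\tors C$ for some finite-dimensional algebra $C$ with exactly two simples; since the interval is a finite lattice of torsion classes, Theorem~\ref{tors is complete lattice} ensures that $C$ is $\tau$-tilting finite, so by Corollary~\ref{Hasse-regular} the lattice $\tors C$ is $2$-Hasse-regular. Such a finite, $2$-Hasse-regular, bounded lattice decomposes as two chains from top to bottom sharing only their endpoints (each intermediate vertex carries exactly one up-arrow and one down-arrow, forcing the paths downward from the top to remain disjoint and reach the bottom), and both chains have length at least $2$ because $\add S_1$ and $\add S_2$ are distinct intermediate torsion classes of $\tors C$. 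Hence $[x, y_1 \join y_2]$ is a polygon, and the dual argument handles pairs of arrows out of a common vertex. To identify polygons with $2$-polytopes, given any polygon $[\UU, \TT]$ I let $y_1, y_2$ be the two elements covered by $\TT$ in $[\UU, \TT]$; Proposition~\ref{polyg2}(a) gives a $2$-polytope $[y_1 \meet y_2, \TT]$, and the shape of a polygon forces $y_1 \meet y_2 = \UU$ since $y_1, y_2$ lie on opposite intermediate chains whose common lower bounds in $[\UU, \TT]$ reduce to $\UU$.

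For part (b), my plan is to reduce to the rank $2$ case via Theorem~\ref{ttwideaa} and Proposition~\ref{bcomptred}. Writing $[\UU, \TT] = [\UU(N,Q), \TT(N,Q)]$ with $|A| - |N| - |Q| = 2$, the lattice isomorphism $\psi_{N,Q} : [\UU, \TT] \xrightarrow{\sim} \tors C$ with $C := C_{N,Q}$ of rank $2$ is compatible with brick labelling via the equivalence of abelian categories $F_{N,Q} : \WW(N,Q) \xrightarrow{\sim} \mod C$; this equivalence carries simple objects to simple objects, so $\simp[\UU, \TT]$ corresponds to the set of simple $C$-modules. It therefore suffices to establish (b) for the polygon $[0, \mod C]$ inside $\tors C$.

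In this reduced setting, the main step is a counting argument. Let $n$ be the number of vertices of the polygon $\tors C$; the total number of arrows is also $n$, since the Hasse quiver consists of two chains whose lengths sum to $n$. By Theorem~\ref{difftors2}(c), the number of bricks of $C$ equals the number of completely join-irreducible torsion classes, which is the number of intermediate vertices, namely $n - 2$. Proposition~\ref{topbottomlatsimpleforce}(a),(b) identifies the labels of the two arrows starting at $\mod C$ and of the two arrows ending at $0$ as the two simples $S_1, S_2$; since $n \geq 4$ (each chain has length at least $2$), these four arrows are pairwise distinct, so each simple labels at least two arrows. Every brick labels at least one arrow, again by Theorem~\ref{difftors2}(c), so counting incidences yields the lower bound $2 \cdot 2 + (n-4) \cdot 1 = n$, which exactly matches the total. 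Hence each simple labels exactly $2$ arrows and each non-simple brick labels exactly $1$. The main obstacle will be the combinatorial verification in (a) that a finite $2$-Hasse-regular bounded lattice decomposes into two chains of length at least two; once this is in place, the counting in (b) is essentially automatic.
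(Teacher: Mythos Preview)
Your argument for part (a) is correct and matches the paper's proof: both invoke Proposition~\ref{polyg2} to show that the interval spanned by two cover relations with a common endpoint is a $2$-polytope, and both deduce that a $2$-polytope is a polygon from $2$-Hasse-regularity (the paper via the Remark following Definition~\ref{lpolyt}, you by spelling out why a finite $2$-Hasse-regular bounded lattice consists of two disjoint chains). One small inaccuracy: you write $\add S_1$ and $\add S_2$ for the atoms of $\tors C$, but these need not be torsion classes unless the simples have no self-extensions; the correct objects are $\Filt S_1$ and $\Filt S_2$, which still gives the required two distinct intermediate elements.

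Your argument for part (b) is correct but takes a genuinely different route from the paper. The paper argues directly in $[\UU,\TT]$: for the upper bounds it shows that two arrows on the same maximal chain carry distinct labels (via $\UU_1^\perp \cap \TT_2 = 0$), and that two arrows on different chains of $\Hasse([\UU,\TT]\setminus\{\TT\})$ carry distinct labels (via $\VV_1 \cap \VV_2 = \UU$), whence a non-simple brick labels at most one arrow and any brick at most two. You instead reduce via Proposition~\ref{bcomptred} to the polygon $[0,\mod C]$ for $C$ of rank two and run a global counting argument: $n$ arrows, $n-2$ bricks by Theorem~\ref{difftors2}(c), and the lower bounds from Proposition~\ref{topbottomlatsimpleforce} already sum to $n$, forcing equality everywhere. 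Your approach is slicker and avoids the orthogonality computations, at the cost of importing the bijection $\cjirr(\tors C)\leftrightarrow\brick C$ as a black box; the paper's approach is more hands-on and makes visible exactly \emph{why} labels cannot repeat on a chain, information that is useful elsewhere (e.g.\ in the proof of Theorem~\ref{theorem forcing}).
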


\begin{proof}
 (a) Let $\TT_1\to\UU$ and $\TT_2\to\UU$ be distinct arrows of $\Hasse(\tors A)$. By Proposition \ref{polyg2}, $[\UU,\TT_1\join \TT_2]$ is a $2$-polytope, hence a polygon. The other condition for polygonality is proved dually.  As polygons are of the form $[\UU,\TT_1\join \TT_2]$ for some distinct arrows $\TT_1\to\UU$ and $\TT_2\to\UU$, we have also proved that polygons are $2$-polytopes.

 (b) By Theorem \ref{theorem filtorth}(b)(c)(d), $S$ labels at least two arrows if $S \in \simp[\UU,\TT]$ and $S$ labels at least one arrow otherwise.

 If two distinct arrows $q_1: \TT_1 \to \UU_1$ and $q_2: \TT_2 \to \UU_2$ belong to the same path of $\Hasse[\UU, \TT]$, we can suppose without loss of generality that $\TT_2 \subseteq \UU_1$. Then the label of $q_1$ is in $\UU_1^\perp$ and the label of $q_2$ is in $\TT_2$, so these labels are distinct as $\UU_1^\perp \cap \TT_2 = 0$. As a polygon has two maximal paths, $S$ labels at most two arrows of $[\UU, \TT]$.

 Consider the arrows $\TT \to \VV_1$ and $\TT \to \VV_2$ in $\Hasse[\UU, \TT]$. If two distinct arrows $q_1: \TT_1 \to \UU_1$ and $q_2: \TT_2 \to \UU_2$ belong to different paths of $\Hasse([\UU, \TT] \setminus \{\TT\})$, we can suppose without loss of generality that $\TT_1 \subseteq \VV_1$ and $\TT_2 \subseteq \VV_2$. So the label of $q_1$ is in $\VV_1 \cap \UU^\perp$ and the label of $q_2$ is in $\VV_2 \cap \UU^\perp$. As $\VV_1 \cap \VV_2 = \UU$, the labels of $q_1$ and $q_2$ have to be distinct. Combining this assertion with the first one, we have proved that all labels of arrows of $\Hasse([\UU, \TT] \setminus \{\TT\})$ have to be distinct. So, if $S \notin \simp[\UU,\TT]$, $S$ cannot label two arrows of $\Hasse [\UU, \TT]$.
\end{proof}

\subsection{Algebraic characterizations of the forcing order}

The aim of this subsection is to describe the forcing order on bricks in terms of representation theory.
We start with a characterization which holds for any finite-dimensional algebra that is $\tau$-tilting finite.

\begin{definition}
Define the \emph{filtration order} $\forces[f]$ on $\brick A$ as the transitive closure of the following.
\begin{itemize}
\item $S_1 \forces[f] S_2$ if there is a semibrick $\{S_1\} \cup E$ such that $S_2 \in \Filt (\{S_1\} \cup E) \setminus \Filt E$.
\end{itemize}
Define the \emph{pair filtration order} $\forces[pf]$ on $\brick A$ as the transitive closure of the following.
\begin{itemize}
\item $S_1 \forces[pf] S_2$ if there is a semibrick $\{S_1,S'_1\}$ such that $S_2 \in \Filt\{S_1,S'_1\} \setminus \{S'_1\}$.
\end{itemize}
\end{definition}

We have the following first main result in this subsection.

\begin{theorem} \label{theorem forcing}
Let $A$ be a finite-dimensional $k$-algebra that is $\tau$-tilting finite. The forcing order $\forces$, the filtration order $\forces[f]$ and the pair filtration order $\forces[pf]$ coincide. In particular, for $S_1, S_2 \in \brick A$, if $S_1 \forces S_2$, then $S_1$ is a subfactor of $S_2$.
\end{theorem}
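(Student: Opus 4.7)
The plan is to close the cycle of inclusions $\forces[pf] \subseteq \forces[f] \subseteq \forces \subseteq \forces[pf]$ and then to derive the subfactor claim. The first inclusion is immediate, taking $E = \{S_1'\}$.

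For $\forces[f] \subseteq \forces$, I would realize an elementary step $S_1 \forces[f] S_2$ via a semibrick $E_+ = \{S_1\} \cup E$ as a forcing inside the polytope attached to the wide subcategory $\WW = \Filt E_+$. By Theorem~\ref{ttwideb}, $\WW = \WW(N,Q)$ for some basic $\tau$-rigid pair $(N,Q)$; Theorem~\ref{ttwideaa}(c) and Proposition~\ref{bcomptred} then furnish a lattice isomorphism $\psi_{N,Q}\colon [\UU(N,Q),\TT(N,Q)] \xrightarrow{\sim} \tors C_{N,Q}$ compatible with the brick labelling via the equivalence $F_{N,Q}\colon \WW \xrightarrow{\sim} \mod C_{N,Q}$. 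Since $S_2 \notin \Filt E$, the composition series in $\mod C_{N,Q}$ of the brick $F_{N,Q}(S_2)$ involves the simple $F_{N,Q}(S_1)$, so Proposition~\ref{topbottomlatsimpleforce}(d) yields $F_{N,Q}(S_1) \forces F_{N,Q}(S_2)$ in $\tors C_{N,Q}$. Finally, since congruences on $\tors A$ restrict to the sublattice $[\UU(N,Q),\TT(N,Q)]$ and since by Theorem~\ref{theorem uniform2}(a) any two arrows carrying the same brick label are forcing equivalent, this forcing transfers upward to $\tors A$, giving $S_1 \forces S_2$.

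The main obstacle is $\forces \subseteq \forces[pf]$. Here the plan is to use that $\tors A$ is polygonal by Proposition~\ref{torsApole} and to apply Proposition~\ref{forcing=polygonal} so as to express $\forces$ as the transitive closure of the polygonal preorder, thereby reducing the task to a local analysis inside each polygon. For a polygon $[\UU,\TT]$, Theorem~\ref{theorem filtorth} identifies $\simp[\UU,\TT] = \{T_1,T_2\}$ as a $2$-element semibrick with $\brick \Filt\{T_1,T_2\}$ in bijection with the labels of arrows of the polygon; Proposition~\ref{torsApole}(b) further specifies that $T_1$ and $T_2$ each label two (opposite-corner) arrows while every other brick of $\Filt\{T_1,T_2\}$ labels a single ``middle'' arrow. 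Unwinding the definition of $\forces[p]$ in this polygon then shows that the set of labels forced by the corner label $T_1$ is exactly $\Filt\{T_1,T_2\} \setminus \{T_2\}$, which is precisely the elementary pair-filtration forcing through the semibrick $\{T_1,T_2\}$.

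For the final subfactor claim, by transitivity of the subfactor relation it suffices to treat an elementary step $S_1 \forces[pf] S_2$ via a semibrick $\{S_1,S_1'\}$. The filtration of $S_2$ by $\{S_1,S_1'\}$ either contains $S_1$ as a composition factor, witnessing the subfactor, or lies entirely in $\Filt\{S_1'\}$. The latter case is excluded because $\Filt\{S_1'\}$ corresponds via Theorem~\ref{ttwideb} to a $\tau$-rigid pair whose associated algebra $C$ is local, and since $\tors C$ is isomorphic to a sub-interval of the finite lattice $\tors A$ it is itself finite; hence $C$ is a local $\tau$-tilting-finite algebra, so $\mod C$ admits a unique brick, forcing $S_2 \cong S_1'$ and contradicting $S_2 \neq S_1'$.
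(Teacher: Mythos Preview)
Your proof is correct and largely parallels the paper, but your treatment of $\forces[f] \subseteq \forces$ is genuinely different. The paper proves this implication via Lemma~\ref{forcdep}, an induction \emph{inside} the polytope $[\UU(N,Q),\TT(N,Q)]$: given an arrow labelled $S_2$ in that interval, one walks downward through polygons toward $\UU(N,Q)$, at each step replacing the arrow by a forcing-stronger one whose label is simpler, until reaching an arrow labelled by an element of $E \cup \{S_1\}$; the hypothesis $S_2 \notin \Filt E$ then pins down $S_1$. Your argument instead passes through the equivalence $F_{N,Q}\colon \WW \xrightarrow{\sim} \mod C_{N,Q}$ and invokes Proposition~\ref{topbottomlatsimpleforce}(d) for the simple module $F_{N,Q}(S_1)$, then transfers the forcing back along the interval isomorphism using Theorem~\ref{theorem uniform2}(a). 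This is shorter and more conceptual; it replaces the induction of Lemma~\ref{forcdep} by an appeal to a result already available. The transfer step is sound: a congruence on $\tors A$ restricts to a congruence on any interval, the arrow in the interval labelled $S_1$ exists by Theorem~\ref{theorem filtorth}(b), and Proposition~\ref{bcomptred} makes the labels match under $\psi_{N,Q}$.

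Your step for $\forces \subseteq \forces[pf]$ is essentially the paper's, using polygonality and the label multiplicities of Proposition~\ref{torsApole}(b). For the final subfactor claim, your detour through $\forces[pf]$ and the local reduced algebra is correct but heavier than needed: the paper simply observes that a single step $S_1 \forces[f] S_2$ already exhibits $S_1$ as a composition factor of $S_2$ in $\Filt(\{S_1\}\cup E)$, hence as a subfactor in $\mod A$, and transitivity of the subfactor relation finishes.
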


We start with a lemma:

\begin{lemma} \label{forcdep} Let $A$ be a finite-dimensional $k$-algebra that is $\tau$-tilting finite.
 If $E \cup \{S_1\} \in \sbrick A$ and $S_2 \in \Filt (E \cup \{S_1\}) \setminus \Filt E$ is a brick then $S_1 \forces S_2$.
\end{lemma}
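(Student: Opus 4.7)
The plan is to reduce the problem to a statement about simple modules, where Proposition \ref{topbottomlatsimpleforce}(d) applies directly. First I would form the wide subcategory $\WW := \Filt(E \cup \{S_1\})$, which by Proposition \ref{bijsbw} has $E \cup \{S_1\}$ as its set of simple objects. Since $A$ is $\tau$-tilting finite, Theorem \ref{ttwideb} guarantees that $\WW = \WW(N, Q)$ for some basic $\tau$-rigid pair $(N, Q)$. Setting $[\UU, \TT] := [\UU(N, Q), \TT(N, Q)]$, Theorem \ref{ttwideaa} then provides an equivalence $F := F_{N, Q} : \WW \xto{\sim} \mod C$ with $C := C_{N, Q}$, together with a lattice isomorphism $\psi := \psi_{N, Q} : [\UU, \TT] \xto{\sim} \tors C$.

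Next I would transport the hypothesis across $F$. Write $\bar S_1 := F(S_1)$ and $\bar S_2 := F(S_2)$; since $F$ sends simples of $\WW$ to simples of $\mod C$ and bricks to bricks, $\bar S_1$ is a simple $C$-module and $\bar S_2$ is a brick of $\mod C$. The assumption $S_2 \in \Filt(E \cup \{S_1\}) \setminus \Filt E$ translates to $\bar S_2 \notin \Filt(F(E))$, so some composition factor of $\bar S_2$ must lie outside $F(E)$; this composition factor can only be $\bar S_1$, so $\bar S_1$ is a subfactor of $\bar S_2$. Proposition \ref{topbottomlatsimpleforce}(d), applied to the algebra $C$, then gives $\bar S_1 \forces \bar S_2$ in $\brick C$.

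To conclude, I would pull this forcing back through $\psi$ and then up into $\tors A$. By Proposition \ref{bcomptred}, the isomorphism $\psi$ matches brick labels via $F$; consequently the forcing in $\tors C$ yields a forcing of some arrow labelled $S_1$ by an arrow labelled $S_2$ inside the interval $[\UU, \TT]$ of $\tors A$. Finally, any lattice congruence on $\tors A$ restricts to a lattice congruence on $[\UU, \TT]$ (since $[\UU, \TT]$ is closed under the ambient meet and join of any pair of its elements), so forcing inside the interval lifts to forcing inside $\tors A$, and via Theorem \ref{theorem uniform2}(a) we obtain $S_1 \forces S_2$. The step demanding the most care in the write-up is this final lift from the interval to the whole lattice; the rest is essentially a transport-of-structure argument through the $\tau$-tilting reduction of Theorems \ref{ttwideaa}--\ref{ttwideb}.
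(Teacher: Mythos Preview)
Your argument is correct and takes a genuinely different route from the paper's. Both proofs begin by realizing $\WW=\Filt(E\cup\{S_1\})$ as $\WW(N,Q)$ for some $\tau$-rigid pair via Theorem~\ref{ttwideb}, but then diverge. The paper stays inside $\tors A$: it picks an arrow of $\Hasse[\UU,\TT]$ labelled $S_2$ and runs an induction on the target of that arrow, at each step producing a polygon in $[\UU,\TT]$ and invoking the polygonal description of forcing (Proposition~\ref{forcing=polygonal}) together with Theorem~\ref{theorem filtorth}. You instead push everything through the equivalence $F:\WW\xrightarrow{\sim}\mod C$ of Theorem~\ref{ttwideaa}, reducing to the case where $S_1$ is a \emph{simple} module and $S_1$ a composition factor of $S_2$, where Proposition~\ref{topbottomlatsimpleforce}(d) gives the forcing immediately; the compatibility of labels under $\psi$ (Proposition~\ref{bcomptred}) and the observation that congruences restrict to intervals then pull the conclusion back to $\tors A$.

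Your approach is cleaner and more conceptual: it makes transparent that the lemma is really the simple-module case of forcing, transported through $\tau$-tilting reduction. The paper's approach is more self-contained within the lattice $\tors A$ and makes the polygonal mechanism visible, which is arguably more in the spirit of the surrounding discussion of $\forces[pf]$. Your identification of the lift from the interval to the ambient lattice as the delicate point is apt, but the justification you give (restriction of a congruence to a sublattice is a congruence) is exactly right and needs no further elaboration.
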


\begin{proof}
 By Theorem \ref{ttwideb}, there exists $(N, Q) \in \trigidpair A$ such that $\WW(N, Q) = \Filt (E \cup \{S_1\})$. Let us denote $\UU = \UU(N, Q)$ and $\TT = \TT(N, Q)$. By Theorem~\ref{theorem filtorth}(b), $S_2$ labels an arrow $\TT_1 \to \TT_2$ of $[\UU, \TT]$.

 Let us prove by induction on $\TT_2 \in [\UU, \TT]$ that $S_1\forces S_2$. First of all, if $\TT_2 = \UU$, then $S_2 \in E \cup \{S_1\}$ holds by Theorem \ref{theorem filtorth}(c). By our assumption, $S_2 \notin \Filt E$, so $S_2 = S_1$.

 Otherwise, suppose that $\TT_2 \supsetneq \UU$. Then there exists an arrow $\TT_2 \to \TT_3$ in $[\UU, \TT]$. Taking the common summands of the $\tau$-tilting pairs corresponding to $\TT_1$, $\TT_2$ and $\TT_3$, we obtain a 2-polytope $[\UU',\TT'] \subseteq [\UU,\TT]$, hence a polygon, containing the arrows $\TT_1 \to \TT_2$ and $\TT_2 \to \TT_3$.
 If $\TT' = \TT_1$, then $S_2$ labels an arrow ending at $\UU' \subsetneq \TT_2$ by Theorem~\ref{theorem filtorth}(c)(d). By the induction hypothesis, we have $S_1 \forces S_2$.

 It remains to consider the case $\TT' \neq \TT_1$. Let $\simp[\UU',\TT']=\{S_3,S_4\}$.
 Then $S_3\forces S_2$ and $S_4\forces S_2$ hold by Proposition \ref{forcing=polygonal}.
 Moreover, as $\Filt(S_3, S_4) \ni S_2 \notin \Filt E$, either $S_3\notin \Filt E$ or $S_4\notin \Filt E$ holds.
 Since $S_3$ and $S_4$ label arrows ending at $\UU' \subsetneq \TT_2$, either $S_1\forces S_3$ or $S_1\forces S_4$ holds by the induction hypothesis.
 So $S_1\forces S_2$ holds.
\end{proof}

\begin{proof}[Proof of Theorem \ref{theorem forcing}]
Proposition~\ref{torsApole}(a) says that $\tors A$ is polygonal, so by Proposition~\ref{forcing=polygonal}, the forcing order coincides with the polygonal order.

Clearly $S_1\forces[pf]S_2$ implies $S_1\forces[f]S_2$.
Thanks to Lemma \ref{forcdep} and by transitivity of $\forces$, $S_1\forces[f]S_2$ implies $S_1\forces S_2$.

We show that $S_1\forces S_2$ implies $S_1\forces[pf]S_2$. As $\forces$ and $\forces[p]$ on $\Hasse_1(\tors A)$ coincide, where $\forces[p]$ is the polygonal forcing, it suffices to consider the case where
there are arrows $q_1$ and $q_2$ labelled by $S_1$ and $S_2$ such that $q_1 \forces[p] q_2$ in a polygon $[\UU,\TT]$ of $\tors A$. If $q_1 \forceseq[p] q_2$, we have $S_1 \cong S_2$, so $S_1\forces[pf]S_2$. So we assume that $q_1$ and $q_2$ are not forcing equivalent. Then $q_1$ is an arrow of $\Hasse[\UU, \TT]$ incident to $\TT$ or $\UU$ and $q_2$ is an arrow of $\Hasse[\UU, \TT]$ that is not incident to $\TT$ or $\UU$.
 By Theorem~\ref{theorem filtorth}, the semibrick $\simp[\UU,\TT]$ is of the form $\{S_1,S'_1\}$ and $S_2$ belongs to $\Filt \{S_1, S_1'\}$. Then, by Proposition~\ref{torsApole}(b), $S_2 \not\cong S_1'$, so $S_1 \forces[pf] S_2$ holds.

The last statement is clear since $\forces$ coincides with $\forces[f]$.
\end{proof}

We define a convenient concept.
\begin{definition}
 An $A$-module is \emph{multiplicity free} if it has no repetition in its composition series.
\end{definition}

Notice that a multiplicity free indecomposable module is a brick. Before giving more specific characterizations, we give the following elementary observation.

\begin{lemma} \label{lemma extbricks}
Let $\{S, S'\} \in \sbrick A$ with $S \not\cong S'$
and consider a non-split short exact sequence $0 \to S \to X \to S' \to 0$. Then $X$ is a brick.
\end{lemma}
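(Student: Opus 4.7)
My plan is to pick an arbitrary $f \in \End_A(X)$ and show that $f$ is either zero or an isomorphism, using the natural filtration of $X$ given by the short exact sequence. Write $\iota : S \hookrightarrow X$ and $\pi : X \twoheadrightarrow S'$ for the two structural maps. The key observation is that the composition $\pi f \iota : S \to S'$ vanishes because $\Hom_A(S,S') = 0$ (as $\{S,S'\}$ is a semibrick with $S \not\cong S'$). Consequently $f$ preserves the submodule $S \subseteq X$ and induces an endomorphism $f|_S \in \End_A(S)$ and an endomorphism $\bar f \in \End_A(S')$ on the quotient. Since $S$ and $S'$ are bricks, each of $f|_S$ and $\bar f$ is either zero or an isomorphism.

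The first step is to dispose of the easy case: if both $f|_S$ and $\bar f$ are isomorphisms, then by the five-lemma $f$ itself is an isomorphism. Next I would treat the case $\bar f = 0$: then $\pi f = 0$, so $f$ factors as $f = \iota h$ for some $h : X \to S$, giving $f|_S = (h\iota)$ composed into $X$ via $\iota$. If $h\iota \neq 0$ then $h\iota$ is invertible (since $S$ is a brick), whence $(h\iota)^{-1}\circ h$ provides a retraction of $\iota$ and the sequence splits, contradicting the hypothesis; therefore $h\iota = 0$, so $h$ factors through $\pi$ as $h = k\pi$ with $k : S' \to S$, and since $\Hom_A(S',S) = 0$ (again from the semibrick hypothesis) we get $k = 0$, hence $f = 0$.

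The symmetric step handles $f|_S = 0$: then $f$ factors as $f = g \pi$ for some $g : S' \to X$, and $\bar f = \pi g$. If $\bar f \neq 0$ then $\pi g$ is invertible on $S'$, so $g \circ (\pi g)^{-1}$ splits $\pi$, again a contradiction; hence $\bar f = 0$ as well, so $g$ factors through $\iota$ as $g = \iota k$ with $k : S' \to S$, and $\Hom_A(S',S) = 0$ yields $f = 0$. Combining the three cases, every endomorphism of $X$ is either zero or invertible, so $\End_A(X)$ is a division ring and $X$ is a brick.

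The argument is essentially a careful tracing through a $2 \times 2$ diagram; the only real subtlety is to recognize that the ``mixed'' cases (one of $f|_S$, $\bar f$ zero, the other an isomorphism) are precisely where the non-split hypothesis on the sequence is used, via the vanishing $\Hom_A(S',S) = 0$. No further lemma from the paper is needed beyond the definition of brick and semibrick.
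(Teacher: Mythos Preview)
Your proof is correct. Every step checks out: the factorization arguments in Cases~2 and~3 are clean, and the three cases you treat do exhaust all possibilities since each of $f|_S$ and $\bar f$ is either zero or invertible.

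The paper takes a different, more conceptual route. It observes that $\WW := \Filt(S,S')$ is an abelian category in which $S$ and $S'$ are the simple objects (this is Ringel's result, Proposition~\ref{bijsbw}). In $\WW$, the object $X$ has length~$2$ with distinct composition factors and is not $S \oplus S'$ (since the sequence is non-split), hence is indecomposable and multiplicity-free; the paper then invokes the general fact, stated just before the lemma, that a multiplicity-free indecomposable object is a brick. Your argument is essentially an unpacking of that last fact in the length-$2$ case, carried out directly in $\mod A$ without passing through $\WW$. The advantage of your approach is that it is entirely self-contained and uses only the definitions of brick and semibrick; the paper's version is shorter and situates the lemma in a broader framework, but depends on Ringel's structural result about $\Filt$ of a semibrick.
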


\begin{proof}
 Recall that $S$ and $S'$ are two non-isomorphic simple objects in the abelian category $\WW := \Filt(S, S')$. By assumption, $X$ has length $2$ in $\WW$ and is not isomorphic to $S\oplus S'$. Thus $X$ is indecomposable and multiplicity free in $\WW$, and hence it is a brick.
\end{proof}

For multiplicity free bricks, the forcing order is described in a very simple way.

\begin{corollary} \label{elemforc}
 Let $A$ be a finite-dimensional algebra that is $\tau$-tilting finite. Consider $X, Y \in \brick A$, such that $Y$ is multiplicity free. Then $X\forces Y$ if and only if $X$ is a subfactor of $Y$.
\end{corollary}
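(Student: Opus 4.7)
The forward direction is immediate: the last sentence of Theorem~\ref{theorem forcing} already gives that $X \forces Y$ implies $X$ is a subfactor of $Y$. So only the converse requires work.

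For the converse, the plan is to exhibit a semibrick of the form $\{X\} \cup E$ together with the separation property $Y \in \Filt(\{X\} \cup E) \setminus \Filt E$, and then apply Lemma~\ref{forcdep}. The multiplicity-free assumption on $Y$ will be exactly what allows such an $E$ to be built. Note first that $X$ itself is multiplicity free, since a subfactor cannot increase composition-factor multiplicities. Let $\{S_1, \dots, S_m\}$ denote the simple composition factors of $X$, and let $\{T_1, \dots, T_r\}$ denote the remaining simple composition factors of $Y$; because $Y$ is multiplicity free, these two collections are disjoint and consist of pairwise non-isomorphic simples. Set $E := \{T_1, \dots, T_r\}$.

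To see that $\{X\} \cup E$ is a semibrick, distinct simples automatically satisfy the semibrick condition, and $X$ is a brick by hypothesis. A nonzero morphism $X \to T_i$ would be surjective (as $T_i$ is simple), forcing $T_i$ to appear as a composition factor of $X$ and contradicting $T_i \notin \{S_1, \dots, S_m\}$; the dual argument rules out nonzero morphisms $T_i \to X$. To see that $Y \in \Filt(\{X\} \cup E)$, write $X \cong Y'/Y''$ for some submodules $Y'' \subseteq Y' \subseteq Y$ (by the definition of subfactor), producing a filtration of $Y$ with successive quotients $Y''$, $X$, and $Y/Y'$. Each $S_i$ occurs exactly once in the composition series of $Y$ and that occurrence lies inside the middle factor $X$ by multiplicity-freeness, so neither $Y''$ nor $Y/Y'$ admits any $S_i$ as a composition factor; their composition factors therefore lie in $E$, and both modules lie in $\Filt E$. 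Hence $Y \in \Filt(\{X\} \cup E)$. Finally, $Y \notin \Filt E$, because $S_1$ is a composition factor of $Y$ that is not in $E$.

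Invoking Lemma~\ref{forcdep} with this choice of semibrick and $E$ yields $X \forces Y$, completing the converse direction.

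The proof is short and contains no genuine obstacle; the only point requiring care is the verification that $\{X\} \cup E$ is a semibrick, which is precisely where the multiplicity-free hypothesis on $Y$ enters (to guarantee that the simples appearing outside $X$ in a filtration of $Y$ are not among the composition factors of $X$).
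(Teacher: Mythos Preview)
Your proof is correct, and it takes a genuinely different route from the paper's argument.

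The paper proceeds by induction on $\dim Y - \dim X$: at each step it enlarges $X$ to a subfactor $X'$ of $Y$ by adjoining a single simple $S$ via a non-split extension, verifies that $\{X,S\}$ is a semibrick (using multiplicity-freeness), applies Lemma~\ref{lemma extbricks} to see that $X'$ is a brick, and then uses Theorem~\ref{theorem forcing} (essentially the relation $\forces[pf]$) to obtain $X \forces X'$. Transitivity and the inductive hypothesis $X' \forces Y$ finish the argument.

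Your approach is more direct: you build a single semibrick $\{X\} \cup E$ with $E$ the simples of $Y$ not appearing in $X$, verify $Y \in \Filt(\{X\} \cup E) \setminus \Filt E$, and invoke Lemma~\ref{forcdep} once. This avoids the induction, the need to construct intermediate bricks, and the appeal to Lemma~\ref{lemma extbricks}. The paper's inductive argument, on the other hand, produces an explicit chain of bricks $X = X_0 \forces X_1 \forces \cdots \forces Y$ with each step realized by a two-element semibrick, which is closer in spirit to the pair-filtration order $\forces[pf]$ and to the doubleton extension order of Definition~\ref{dbl}; your argument uses the full strength of the filtration order $\forces[f]$ with a potentially large semibrick.
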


\begin{proof}
By Theorem~\ref{theorem forcing}, it suffices to prove `if' part.
Suppose that $X$ is a subfactor of $Y$. We show by induction on $\dim Y - \dim X$ that $X$ forces $Y$. This is clear if $\dim Y = \dim X$.
Suppose $\dim Y > \dim X$. Then there exists a subfactor $X'$ of $Y$, a simple $A$-module $S$ and a non-split short exact sequence of one of the following forms:
\[\xi: 0 \to S \to X' \to X \to 0\ \text{ or }\ \xi': 0 \to X \to X' \to S \to 0.\]
As $X'$ is a subfactor of $Y$, it is also multiplicity free and hence $\{X,S\}$ is a semibrick.
By Lemma~\ref{lemma extbricks}, $X'$ is a brick and by Theorem~\ref{theorem forcing}, $X$ forces $X'$. On the other hand, by induction hypothesis, $X'$ forces $Y$. Therefore $X$ forces $Y$.
\end{proof}

In \cite{IRRT}, the forcing order is shown to be equivalent to the \emph{doubleton extension order} when $A$ is a preprojective algebras of Dynkin type. We end this section by proving this for a much more general class of algebras.
\begin{definition}[\cite{IRRT}] \label{dbl}
 The \emph{doubleton extension order} on $\brick A$ is the transitive closure $\forces[d]$ of the relation defined by:
 $S_1 \forces[d] S_2$ if there exists a brick $S_1'$ such that
   \begin{itemize}
    \item[] $\dim \Ext^1_A(S_1, S_1') = 1$ and there is an exact sequence $0 \to S_1' \to S_2 \to S_1 \to 0$;
    \item[or] $\dim \Ext^1_A(S_1', S_1) = 1$ and there is an exact sequence $0 \to S_1 \to S_2 \to S_1' \to 0$.
   \end{itemize}
\end{definition}

We will consider bricks having the following stronger property.
\begin{definition}
 A brick $S \in \mod A$ is called a \emph{stone} if $\Ext^1_A(S, S) = 0$. It is called a \emph{$k$-stone} if additionally $\End_A(S) \cong k$.
\end{definition}

We give the following characterization that is the second main theorem of this subsection.
\begin{theorem}\label{forcing=doubleton}
 Let $A$ be a finite-dimensional $k$-algebra that is $\tau$-tilting finite such that all bricks of $\mod A$ are $k$-stones. Then the forcing order $\forces$ on $\brick A$ coincides with the doubleton extension order $\forces[d]$.
\end{theorem}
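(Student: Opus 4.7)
The strategy is to prove the two containments $\forces[d] \subseteq \forces$ and $\forces \subseteq \forces[d]$ separately, using Theorem~\ref{theorem forcing} to identify $\forces$ with the transitive closure of $\forces[pf]$ throughout.

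For $\forces[d] \subseteq \forces$, suppose $S_1 \forces[d] S_2$ via (a generator of) the first bullet of Definition~\ref{dbl}: a non-split extension $\xi\colon 0 \to S_1' \to S_2 \to S_1 \to 0$ with $\dim \Ext^1_A(S_1, S_1') = 1$. I first claim $\{S_1, S_1'\}$ is a semibrick. Applying $\Hom_A(S_2, -)$ to $\xi$ and using the $k$-stone hypothesis $\End_A(S_2) = k$, the map $\id_{S_2} \mapsto \pi$ forces $\Hom_A(S_2, S_1') = 0$. Comparing with $\Hom_A(-, S_1')$ applied to $\xi$, where the connecting map $\End_A(S_1') \to \Ext^1_A(S_1, S_1')$ is an isomorphism between $1$-dimensional spaces (since $\id \mapsto [\xi] \neq 0$), we obtain $\Hom_A(S_1, S_1') = 0$. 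Dually, this same isomorphism yields $\Ext^1_A(S_2, S_1') = 0$ and hence $\Hom_A(S_2, S_1) \cong \End_A(S_2) = k$, giving $\Hom_A(S_1', S_1) = 0$. Then $S_2 \in \Filt\{S_1, S_1'\} \setminus \{S_1'\}$, so $S_1 \forces[pf] S_2$ and Theorem~\ref{theorem forcing} yields $S_1 \forces S_2$; the second bullet of $\forces[d]$ is symmetric, and the general case follows by transitivity of $\forces$.

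For $\forces \subseteq \forces[d]$, by Theorem~\ref{theorem forcing} it suffices to show each generator of $\forces[pf]$ lies in $\forces[d]$. Given a semibrick $\{S_1, S_1'\}$ and a brick $S_2 \in \Filt\{S_1, S_1'\} \setminus \{S_1'\}$, I reduce to a 2-simple algebra via $\tau$-tilting reduction. By Theorem~\ref{ttwideb}, the wide subcategory $\WW := \Filt\{S_1, S_1'\}$ equals $\WW(N, Q)$ for some $\tau$-rigid pair, and Theorem~\ref{ttwideaa}(b) provides an exact equivalence $F\colon \WW \xto{\sim} \mod C$ with $C$ a 2-simple $k$-algebra. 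Since $\WW$ is an exact abelian subcategory, $F$ preserves $\Hom$ and $\Ext^1$; thus $C$ inherits $\tau$-tilting finiteness (Theorem~\ref{ttwideaa}(d)) and the property that all bricks are $k$-stones, and any $\forces[d]$-chain in $\mod C$ lifts to one in $\mod A$. Writing $T_1 = F(S_1)$, $T_2 = F(S_1')$, $M = F(S_2) \neq T_2$, it suffices to show in $\mod C$ that $T_1 \forces[d] M$ for every brick $M \neq T_2$.

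For this 2-simple claim, I first bound $\dim \Ext^1_C(T_1, T_2) \leq 1$: if $n := \dim \Ext^1_C(T_1, T_2) \geq 2$, take a non-split extension $M$ of $T_1$ by $T_2$ (a brick by Lemma~\ref{lemma extbricks}); then $\Hom_C(-, M)$ applied to this extension yields $\Ext^1_C(M,M) \supseteq \Ext^1_C(T_1, M) = k^{n-1} \neq 0$ (using that the iso $\End_C(M) = k \to \Hom_C(T_2, M) = k$ makes the subsequent connecting map zero), contradicting the $k$-stone hypothesis. Symmetrically $\dim \Ext^1_C(T_2, T_1) \leq 1$, so the Gabriel quiver of $C$ has at most one arrow in each direction, hence $C$ is Nakayama. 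A direct computation shows a uniserial $C$-module of length $\ell$ has $\dim \End = \lceil \ell/2 \rceil$, so bricks have length $\leq 2$; each length-2 brick is $M \cong [T_1/T_2]$, giving $T_1 \forces[d] M$ via the first bullet of Definition~\ref{dbl}, or $M \cong [T_2/T_1]$, giving $T_1 \forces[d] M$ via the second, proving the 2-simple claim. The main obstacle is executing the chain of $\Hom$/$\Ext$ computations precisely, especially tracking where the hypothesis $\dim \Ext^1 = 1$ is essential rather than merely non-vanishing.
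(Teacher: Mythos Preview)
Your proof is correct and follows essentially the same route as the paper: the paper packages the equivalence of the generating relations for $\forces[pf]$ and $\forces[d]$ as Proposition~\ref{prpstone} (whose two directions are exactly your two paragraphs) and then cites Theorem~\ref{theorem forcing}. The only cosmetic difference is that, for the reduction to a two-simple algebra, the paper uses the progenerator construction of Lemma~\ref{lemma formfiltstones} and the explicit polygon shapes of Proposition~\ref{proposition shapepoly}, whereas you invoke Jasso's reduction (Theorem~\ref{ttwideaa}) and compute $\dim\End$ of uniserials directly; your inline bound $\dim\Ext^1_C(T_1,T_2)\le 1$ is precisely Lemma~\ref{lemma basicstones}.
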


From now on, until the end of this subsection, we suppose that $A$ is $\tau$-tilting finite and all bricks are $k$-stones. We start with the following observation.

\begin{lemma} \label{lemma basicstones}
Let $\{S, S'\}$ be a semibrick of $A$. Then $\dim \Ext^1_A(S', S)$ is $0$ or $1$.
 In the latter case, the non-split short exact sequence $0 \to S \to X \to S' \to 0$ satisfies:
 \begin{enumerate}[\rm(a)]
  \item $\Hom_A(X, S) = \Hom_A(S', X) = \Ext^1_A(X, S) = \Ext^1_A(S', X) = 0$;
  \item $\dim \Hom_A(S, X) = \dim \Hom_A(X, S') = 1$;
  \item $\dim \Ext^1_A(S, X)$ is either $0$ or $1$;
  \item $\dim \Ext^1_A(X, S')$ is either $0$ or $1$.
 \end{enumerate}
\end{lemma}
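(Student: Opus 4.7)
The plan is to analyse the long exact sequences obtained from the non-split short exact sequence $\xi : 0 \to S \to X \to S' \to 0$ by repeatedly applying $\Hom_A$-functors. The key input is Lemma \ref{lemma extbricks}, which guarantees that $X$ is a brick, so by the standing assumption that every brick is a $k$-stone we have $\End_A(X) \cong k$ and $\Ext^1_A(X,X) = 0$. Combined with the semibrick hypothesis $\Hom_A(S,S') = \Hom_A(S',S) = 0$ and the $k$-stone property for $S$ and $S'$, these vanishings are rigid enough to determine every relevant $\Hom$ and most relevant $\Ext^1$-spaces.

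First I would apply $\Hom_A(S,-)$ and $\Hom_A(-,S')$ to $\xi$. Since $\End_A(S) \cong k$, $\Hom_A(S,S') = 0$ and $\Ext^1_A(S,S) = 0$, the first sequence collapses to $0 \to k \to \Hom_A(S,X) \to 0$, giving $\dim \Hom_A(S,X) = 1$; the second yields $\dim \Hom_A(X,S') = 1$ by a symmetric argument. This proves (b). Next, apply $\Hom_A(-,S)$ and $\Hom_A(S',-)$. In each, the connecting map out of the 1-dimensional space $\End_A(S)$ or $\End_A(S')$ sends the identity to the class of $\xi \neq 0$, so it is injective. Consequently $\Hom_A(X,S) = 0$ and $\Hom_A(S',X) = 0$.

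Now apply $\Hom_A(X,-)$ to $\xi$. Using the values just computed together with $\Ext^1_A(X,X) = 0$, the sequence reduces to $0 \to k \to k \to \Ext^1_A(X,S) \to 0$, where the first map sends $\mathrm{id}_X$ to the canonical projection $X \to S'$ and is therefore a non-zero, hence bijective, map between one-dimensional spaces. Thus $\Ext^1_A(X,S) = 0$. The same analysis with $\Hom_A(-,X)$ gives $\Ext^1_A(S',X) = 0$, completing (a). Feeding $\Ext^1_A(X,S) = 0$ back into the $\Hom_A(-,S)$ long exact sequence turns it into $0 \to k \xrightarrow{\partial} \Ext^1_A(S',S) \to 0$, so $\partial$ is an isomorphism and $\dim \Ext^1_A(S',S) = 1$. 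If instead $\xi$ does not exist (no non-split extension) then $\dim \Ext^1_A(S',S) = 0$, so the first assertion is proved.

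Finally, for (c) and (d), apply $\Hom_A(S,-)$ and $\Hom_A(-,S')$ respectively to $\xi$. Each produces an injection $\Ext^1_A(S,X) \hookrightarrow \Ext^1_A(S,S')$ or $\Ext^1_A(X,S') \hookrightarrow \Ext^1_A(S,S')$. Since $\{S',S\} = \{S,S'\}$ is also a semibrick, the bound just established (with the roles of $S$ and $S'$ exchanged) gives $\dim \Ext^1_A(S,S') \leq 1$, whence (c) and (d) follow. The only mild subtlety is ensuring that the part of the lemma giving the bound on $\dim \Ext^1_A(S',S)$ is proved \emph{before} it is reused symmetrically for (c) and (d); no delicate estimate is needed beyond this careful ordering and the fact that $X$ is a brick.
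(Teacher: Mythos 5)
Your proposal is correct and follows essentially the same route as the paper: invoke Lemma \ref{lemma extbricks} to see that $X$ is a brick (hence a $k$-stone), then chase the long exact sequences from the various $\Hom$-functors applied to $\xi$, using the surjectivity of $\Hom_A(X,X)\to\Hom_A(X,S')$ to kill $\Ext^1_A(X,S)$ and the symmetric bound on $\Ext^1_A(S,S')$ for (c) and (d). The only differences are cosmetic reorderings of which sequences are analysed first.
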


\begin{proof}
 We suppose that $\Ext^1_A(S', S) \neq 0$. Let us consider a non-split short exact sequence
 \[\xi: 0 \to S \to X \to S' \to 0.\]
 According to Lemma~\ref{lemma extbricks}, $X$ is also a brick and hence a $k$-stone. Applying $\Hom_A(-, S')$ to $\xi$ gives $\dim \Hom_A(X, S') = 1$. Applying $\Hom_A(-, S)$ to $\xi$ gives the exact sequence
 \begin{equation}
  0 \to \Hom_A(X, S) \to \Hom_A(S, S) \to \Ext^1_A(S', S) \to \Ext^1_A(X, S) \to 0 \label{exseq1}
 \end{equation}
 Because $\xi$ does not split and $\End_A(S) \cong k$, we obtain $\Hom_A(X, S) = 0$. Then, applying $\Hom_A(X, -)$ to $\xi$ yields the exact sequence
 \begin{align*}0= & \Hom_A(X,S) \to \Hom_A(X, X) \to \Hom_A(X, S') \\ \to & \Ext^1_A(X, S) \to \Ext^1_A(X,X) = 0. \end{align*}
Since $\dim \Hom_A(X, S') = 1$, $\Hom_A(X, X) \to \Hom_A(X, S')$ is surjective, therefore $\Ext^1_A(X, S) = 0$. Again by \eqref{exseq1}, we get $\dim \Ext^1_A(S', S) = 1$. The dual reasoning implies that $\dim \Hom_A(S, X) = 1$ and $\Hom_A(S', X) = \Ext^1_A(S', X) = 0$.

 We proved the first part of the Lemma, (a) and (b). For (c), applying $\Hom_A(S,-)$ to $\xi$ gives the exact sequence
 \[0 = \Ext^1_A(S, S) \to \Ext^1_A(S, X) \to \Ext^1_A(S, S').\]
 Exchanging the role of $S$ and $S'$, we have already proven that $\dim \Ext^1_A(S, S')$ is $0$ or $1$, so (c) holds. Finally, (d) is dual to (c).
\end{proof}

We deduce a description of $\Filt(S_0, S_1)$:

\begin{lemma} \label{lemma formfiltstones}
 Let $\{S_0,S_1\}$ be a semibrick with $S_0 \not\cong S_1$. Then we have an equivalence of categories $\Filt(S_0, S_1) \cong \mod (kQ/I)$ where
 \[Q = \left(\boxinminipage{\xymatrix{\bullet \ar@/^/[r]^{} & \bullet \ar@/^/[l]^{} }}\right)\]
 and $I$ is an ideal satisfying $(Q_1^N) \subseteq I \subseteq (Q_1)$ for $N$ big enough, where $(Q_1^\ell)$ is the two-sided ideal generated by paths of length $\ell$. Moreover, $S_0$ and $S_1$ correspond to the simple $kQ/I$ modules.
\end{lemma}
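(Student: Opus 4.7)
The plan is to identify the wide subcategory $\Filt(S_0,S_1)$ with the module category of a suitable finite-dimensional algebra whose Gabriel quiver is described via $\Ext^1$ between the two simples. I will use $\tau$-tilting reduction to get this algebra explicitly, then use the stone/semibrick hypotheses to bound the number of arrows.

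First I would observe that by Proposition~\ref{bijsbw}, $\WW := \Filt(S_0,S_1)$ is a wide subcategory of $\mod A$ whose set of simple objects is precisely $\{S_0,S_1\}$. Since $A$ is $\tau$-tilting finite, Theorem~\ref{ttwideb} provides a basic $\tau$-rigid pair $(N,P)$ with $\WW(N,P)=\WW$, and Theorem~\ref{ttwideaa}(b) then supplies an equivalence of categories
\[
F := F_{N,P} : \WW \xrightarrow{\sim} \mod C,\qquad C := \End_A(N^+)/[N].
\]
Thus $C$ is a finite-dimensional $k$-algebra with exactly two simple modules $F(S_0),F(S_1)$, and the $k$-linearity of $F$ together with the $k$-stone hypothesis gives $\End_C(F(S_i))\cong\End_A(S_i)\cong k$, so $C$ is basic.

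Next I would compute the Gabriel quiver of $C$ by computing the relevant $\Ext^1$ groups. Because $\WW$ is closed under extensions in $\mod A$, we have $\Ext^1_\WW(X,Y)=\Ext^1_A(X,Y)$ for $X,Y\in\WW$, and the equivalence $F$ transports this to $\Ext^1_C(F(X),F(Y))$. For the diagonal terms, the $k$-stone assumption yields $\Ext^1_A(S_i,S_i)=0$. For the off-diagonal terms, $\{S_0,S_1\}$ being a semibrick allows us to apply Lemma~\ref{lemma basicstones}, which gives $\dim_k\Ext^1_A(S_i,S_j)\in\{0,1\}$ for $i\neq j$.

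Finally I would conclude by a standard Gabriel-quiver argument: since $\End_C(F(S_i))\cong k$ and the multiplicities of arrows in the Gabriel quiver of $C$ equal $\dim_k\Ext^1_C(F(S_i),F(S_j))$, this quiver has two vertices and at most one arrow in each direction, so it is a subquiver of $Q$. Hence $C\cong kQ/I$ for some ideal $I\subseteq (Q_1)$, obtained by adjoining to the admissible Gabriel relations any arrow of $Q$ that does not appear in the actual Gabriel quiver of $C$; admissibility of $I$ in $kQ$ gives $(Q_1^N)\subseteq I$ for $N$ large. By construction the two simple $(kQ/I)$-modules correspond under $F$ to $S_0$ and $S_1$. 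The only mild subtlety will be verifying that $F$ preserves $\Ext^1$; this is immediate from the fact that $\WW$ is wide (so $\Ext^1_\WW=\Ext^1_A$ on $\WW$) combined with the general fact that any equivalence of abelian categories preserves Yoneda $\Ext^1$.
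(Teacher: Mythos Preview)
Your proof is correct and follows the same overall strategy as the paper: identify $\WW=\Filt(S_0,S_1)$ with $\mod C$ for a finite-dimensional $k$-algebra $C$ with two simples, then read off its Gabriel quiver from $\dim_k\Ext^1_A(S_i,S_j)$ using the $k$-stone hypothesis and Lemma~\ref{lemma basicstones}.

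The one genuine difference is how the equivalence $\WW\cong\mod C$ is produced. The paper first treats the special case where $A$ itself is basic with exactly two simples $S_0,S_1$, showing directly that $A$ is elementary and presenting it as a quotient of $kQ$; it then reduces the general case to this one by taking a minimal left $\WW$-approximation $P$ of $A$, which is a progenerator of $\WW$, and setting $B=\End_A(P)$. You instead invoke the $\tau$-tilting reduction machinery already established in the paper (Theorems~\ref{ttwideaa} and~\ref{ttwideb}) to obtain $C=\End_A(N^+)/[N]$ in one stroke. Your route is a bit slicker since it reuses existing results rather than introducing a separate approximation argument; the paper's route is more self-contained and makes the elementary nature of $C$ explicit. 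Both lead to the same Ext computation and the same conclusion.
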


\begin{proof}
 We start with the case where $A$ is basic with two isomorphism classes of simple $A$-modules $S_0$ and $S_1$. Denote $E = A/\rad A$. As $A$-modules, we have $E \cong S_0 \oplus S_1$. So, as $\mod E \subseteq \mod A$ is fully faithful and $S_0$ and $S_1$ are $k$-stones, we get $E \cong \End_E(E) \cong \End_A(E) \cong k \times k$ as $k$-algebras. So $A$ is \emph{elementary} in the sense of \cite[Section III.1]{ARS}. We consider the $E$-bimodule $F := \rad A / \rad^2 A$. By \cite[Theorem III.1.9(b)]{ARS}, there is a surjective morphism $\phi: T_E (F) \twoheadrightarrow A$ where $T_E (F)$ is the tensor algebra $\bigoplus_{n \geq 0} F^{\otimes_E n}$ and $(F^N) \subseteq \Kernel \phi \subseteq (F^2)$. Let $e_0$ and $e_1$ be orthogonal primitive idempotents corresponding to $S_0$ and $S_1$ respectively. For $i, j \in \{0,1\}$, $\dim e_i F e_j$ is the multiplicity of $S_i$ as a direct summand of the $E$-module $F e_j$, that is, by \cite[Proposition III.1.15(a)]{ARS}, $\dim \Ext^1_A(S_j, S_i)$. As $S_0$ and $S_1$ are stones, $\Ext^1_A(S_0, S_0) = 0 = \Ext^1_A(S_1, S_1)$. Moreover, by Lemma~\ref{lemma basicstones}, $\dim \Ext^1_A(S_0, S_1) \le 1$, and $\dim \Ext^1_A(S_1, S_0) \le 1$, so we deduce from the above discussion that $T_E (F)$ is a quotient of $kQ$. The result follows in this case.

 Consider now the general case. We know that $\WW := \Filt(S_0, S_1)$ is a wide subcategory. Moreover, as $\mod A$ is $\tau$-tilting finite, using Theorem~\ref{theorem filtorth}, $\WW$ is functorially finite. Hence, it is easy that a minimal left $\WW$-approximation $P$ of $A$ is a progenerator of $\WW$. 
 So, by Morita theory, $\WW \cong \mod B$ for $B = \End_A(P)$, which is a basic finite-dimensional $k$-algebra and satisfies the assumptions of the previous paragraph. The conclusion follows.
\end{proof}

From the above, we deduce the following characterization of polygons in $\tors A$:

\begin{proposition} \label{proposition shapepoly}
 Suppose that $[\UU, \TT]$ is a polygon of $\tors A$, and let $\{S_0, S_1\} = \simp[\UU,\TT]$.
Depending on $(\dim \Ext^1_A(S_1, S_0),\dim \Ext^1_A(S_0, S_1))$, the polygon $[\UU, \TT]$ is labelled in the following way, where $X_i$ is the non-trivial extension of $S_{1-i}$ by $S_{i}$:
 \begin{center} \begin{tabular}{ccccccc}
  $\xymatrix@C=.705cm@R=.35cm@!=0cm{
    & \TT \ar[dddl]_{S_0} \ar[dddr]^{{S_1}} \\ \\ \\
   \bullet \ar[dddr]_{{S_1}} & & \bullet \ar[dddl]^{S_0} \\ \\ \\
   & \UU
  }$ & &
  $\xymatrix@C=.705cm@R=.35cm@!=0cm{
    & \TT \ar[ddl]_{S_0} \ar[dddr]^{{S_1}} \\ \\
   \bullet \ar[dd]_{X_0} \\
    & & \bullet \ar[dddl]^{S_0} \\
    \bullet \ar[ddr]_{{S_1}}   \\ \\
   & \UU
  }$ & &
  $\xymatrix@C=.705cm@R=.35cm@!=0cm{
    & \TT \ar[dddl]_{S_0} \ar[ddr]^{{S_1}} \\ \\
    & & \bullet \ar[dd]^{X_1} \\
    \bullet \ar[dddr]_{{S_1}} \\ & & \bullet \ar[ddl]^{{S_0}}  \\ \\
   & \UU
  }$ & &
  $\xymatrix@C=.705cm@R=.35cm@!=0cm{
    & \TT \ar[ddl]_{S_0} \ar[ddr]^{{S_1}} \\ \\
   \bullet \ar[dd]_{X_0} & & \bullet \ar[dd]^{X_1} \\ \\
    \bullet \ar[ddr]_{{S_1}} & & \bullet \ar[ddl]^{{S_0}}  \\ \\
   & \UU
  }$ \\
  $(0,0)$ & & $(1,0)$ & & $(0,1)$ & & $(1,1)$.
 \end{tabular}\end{center}
\end{proposition}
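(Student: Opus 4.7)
My plan is to use Theorem~\ref{ttwideaa}(c) together with Proposition~\ref{bcomptred} to reduce the computation of the labelled polygon $[\UU,\TT]$ to that of $\tors B$, where $B = C_{N,Q}$ is the algebra arising from the $\tau$-tilting reduction at $\WW = \Filt(S_0,S_1)$. The algebra $B$ has exactly two simples $S_0, S_1$, is $\tau$-tilting finite (since the polygon is finite), and inherits the property that all its bricks are $k$-stones, because the equivalence $F_{N,Q}: \WW \to \mod B$ preserves $\Hom$ and $\Ext^1$. By Lemma~\ref{lemma formfiltstones}, $B$ is Morita equivalent to $kQ'/I$ with $Q'$ having exactly $\dim \Ext^1_A(S_j,S_i)$ arrows from $j$ to $i$, so the case at hand is determined by $(a, b) := (\dim \Ext^1_A(S_1, S_0), \dim \Ext^1_A(S_0, S_1))$.

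Next I would identify the extremal edges of $\tors B$ via Proposition~\ref{topbottomlatsimpleforce}: the two arrows at $\mod B$ are $\mod B \to {}^\perp S_i$ labelled $S_i$, and the two arrows at $0$ are $\add S_i \to 0$ labelled $S_i$ (using $\Ext^1(S_i, S_i)=0$, which gives $\T(S_i) = \add S_i$). To fill in the interior, I analyse the side starting with the arrow labelled $S_0$, the other being symmetric. When $a = 0$, any module with no $S_0$ in the top has no $S_0$ composition factor at all (any such factor would split off by the vanishing of $\Ext^1(S_1, S_0)$ and $\Ext^1(S_i, S_i)$), so ${}^\perp S_0 = \add S_1$ and this side of the polygon has length two with labels $S_0, S_1$. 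When $a = 1$, the extension $X_0 \in {}^\perp S_0$ is a non-simple brick, so ${}^\perp S_0 \supsetneq \add S_1$; by the $2$-Hasse-regularity of the polygon (Corollary~\ref{Hasse-regular}) the unique Hasse-arrow from ${}^\perp S_0$ downward in the polygon is ${}^\perp S_0 \to \add S_1$, whose label by Theorem~\ref{difftors2}(b) is the unique brick in ${}^\perp S_0 \cap (\add S_1)^\perp$, which is $X_0$. This gives a length-three side with labels $S_0, X_0, S_1$. The side starting at the arrow labelled $S_1$ is handled dually with the parameter $b$ and the brick $X_1$.

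The hard part will be in case $(1,1)$, where I must also verify that the polygon has no further intermediate vertices, equivalently that $|\tors B| = 6$. By Proposition~\ref{torsApole}(b), the polygon has $4 + m$ arrows, where $m$ is the number of non-simple bricks of $\WW$; Lemma~\ref{lemma basicstones} already gives $m \ge 2$ via $X_0, X_1$, so it suffices to show $m \le 2$. Under the $k$-stone hypothesis, I will argue that no brick of length $\ge 3$ can exist in $\WW$: since $\Ext^1(S_i, S_i) = 0$, the composition factors of any uniserial indecomposable in $\WW$ must alternate between $S_0$ and $S_1$, and a uniserial module of odd length $\ge 3$ has isomorphic top and socle, so the composition $M \twoheadrightarrow \top \cong \soc \hookrightarrow M$ is a non-zero nilpotent endomorphism, contradicting $\End(M) = k$; a uniserial brick of even length $\ge 4$ would contain $X_i$ as a submodule with quotient $X_i$, yielding a non-split class in $\Ext^1(X_i, X_i)$, contradicting the $k$-stone property of $X_i$. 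A parallel analysis, invoking Lemma~\ref{lemma basicstones} to control the possible extensions inside $M$, rules out non-uniserial bricks of length $\ge 3$. Combining all four cases then yields exactly the polygon shapes and labels depicted in the statement.
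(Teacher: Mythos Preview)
Your reduction via $\tau$-tilting reduction (Theorem~\ref{ttwideaa} and Proposition~\ref{bcomptred}) to an algebra $B$ with two simples is exactly the paper's approach, and you correctly make explicit that $F_{N,Q}$ preserves $\Ext^1$ (because $\WW$ is extension-closed in $\mod A$), so that the pair $(a,b)=(\dim\Ext^1_A(S_1,S_0),\dim\Ext^1_A(S_0,S_1))$ agrees with the corresponding pair computed in $\mod B$; the paper leaves this point implicit.

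Where you diverge is in the final computation. The paper simply observes that $B\cong kQ/I$, a quotient of the path algebra of a $2$-cycle by an admissible ideal, is a Nakayama algebra with two simple modules, and declares the computation of $\Hasse(\sttilt B)$ in each of the four cases to be straightforward. Your brick-counting argument reaches the same conclusion but works harder than necessary: once Lemma~\ref{lemma formfiltstones} has been invoked, every indecomposable $B$-module is automatically uniserial, so your separate treatment of ``non-uniserial bricks of length $\ge 3$'' is superfluous (and as written, rather vague). Your uniserial analysis is fine --- odd length $\ge 3$ gives top${}\cong{}$socle and hence a nilpotent endomorphism, while even length $\ge 4$ would yield a non-split self-extension of $X_i$, contradicting $\Ext^1(X_i,X_i)=0$ --- but note that this last step actually shows such modules do not \emph{exist} in $\mod B$, which is stronger than merely ruling them out as bricks. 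In short, your proof is correct, but appealing directly to the Nakayama structure of $B$ (as the paper does) makes the endgame a one-line computation rather than a case analysis.
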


\begin{proof}
 By Lemma~\ref{lemma formfiltstones}, $\WW = \Filt(S_0, S_1) \cong \mod (kQ/I)$ where
 \[Q = \left(\boxinminipage{\xymatrix{\bullet \ar@/^/[r]^{} & \bullet \ar@/^/[l]^{} }}\right)\]
 and $I$ is an ideal satisfying $(Q_1^N) \subseteq I \subseteq (Q_1)$ for $N$ big enough. According to Proposition \ref{bcomptred}, the labels in $\Hasse [\UU, \TT]$ coincide with the labels in $\mod (kQ/I)$ via the equivalence above. Therefore, we can suppose that $A = kQ/I$. Then the computation of $\Hasse (\sttilt A)$ is straightforward as $A$ is a Nakayama algebra with two simple modules.
\end{proof}

We deduce the following proposition.
\begin{proposition} \label{prpstone}
 For $S_1, S_2 \in\brick A$, the following are equivalent:
 \begin{enumerate}[\rm (i)]
  \item There exists a semibrick $\{S_1,S'_1\}$ such that $S_2 \in \Filt\{S_1,S'_1\} \setminus \{S'_1\}$;
  \item $S_1 \cong S_2$ or there exists a brick $S_1' \in \mod A$ such that one of the following situations occurs:
  \begin{itemize}
   \item $\dim \Ext^1_A(S_1, S_1') = 1$ and there is an extension $0 \to S_1' \to S_2 \to S_1 \to 0$;
   \item $\dim \Ext^1_A(S_1', S_1) = 1$ and there is an extension $0 \to S_1 \to S_2 \to S_1' \to 0$.
  \end{itemize}
 \end{enumerate}
 Moreover, in \textup{(ii)}, $\{S_1, S_1'\}$ is automatically a semibrick.
\end{proposition}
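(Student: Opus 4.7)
The plan is to use the explicit description of filtration categories $\Filt\{S_1, S_1'\}$ provided by Lemma \ref{lemma formfiltstones} together with the polygon classification of Proposition \ref{proposition shapepoly}, and to verify the automatic semibrick property in (ii) by Ext-sequence computations on the given extension.

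For (i) $\Rightarrow$ (ii), suppose $\{S_1, S_1'\}$ is a semibrick and $S_2 \in \Filt\{S_1, S_1'\} \setminus \{S_1'\}$. First rule out $S_1 \cong S_1'$: otherwise $\Filt\{S_1\}$ contains only direct sums of copies of $S_1$ since $\Ext^1_A(S_1, S_1) = 0$, and its unique brick $S_1 \cong S_1'$ is excluded, making (i) vacuous. So $S_1 \not\cong S_1'$. By Proposition \ref{bijsbw} the subcategory $\WW := \Filt\{S_1, S_1'\}$ is wide, and by Theorem \ref{ttwideb} we have $\WW = \WW(N, Q)$ for some $(N, Q) \in \trigidpair A$; since $\{S_1, S_1'\} = \simp[\UU(N, Q), \TT(N, Q)]$ by Theorem \ref{theorem filtorth}(c), the associated polytope is a polygon. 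Proposition \ref{proposition shapepoly} then classifies this polygon into four cases according to the pair $(\dim \Ext^1_A(S_1', S_1), \dim \Ext^1_A(S_1, S_1'))$, and Theorem \ref{theorem filtorth}(b) identifies $\brick \WW$ with the arrow labels. In every case the bricks in $\WW$ lie in $\{S_1, S_1', X_0, X_1\}$, where $X_0$ (respectively, $X_1$) is the non-trivial extension $0 \to S_1 \to X_0 \to S_1' \to 0$ (resp.\ $0 \to S_1' \to X_1 \to S_1 \to 0$) present precisely when the corresponding Ext is one-dimensional. Since $S_2 \not\cong S_1'$, we get $S_2 \in \{S_1, X_0, X_1\}$, and each option immediately matches one of the clauses of (ii).

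For (ii) $\Rightarrow$ (i), the case $S_1 \cong S_2$ reduces to reflexivity. Otherwise, by symmetry we are given a non-split extension $\xi : 0 \to S_1' \to S_2 \to S_1 \to 0$ with $\dim \Ext^1_A(S_1, S_1') = 1$ and all three modules bricks; here $S_1' \not\cong S_1$ since $\Ext^1_A(S_1, S_1) = 0$. To prove $\{S_1, S_1'\}$ is a semibrick, apply $\Hom_A(-, S_1')$ to $\xi$: using $\End_A(S_1') = k$, $\Ext^1_A(S_1', S_1') = 0$, and the non-splitness of $\xi$ (so the connecting map between two one-dimensional spaces is an isomorphism), one obtains $\Ext^1_A(S_2, S_1') = 0$ and $\Hom_A(S_1, S_1') \cong \Hom_A(S_2, S_1')$. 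Any nonzero $f \in \Hom_A(S_2, S_1')$ would yield a nonzero endomorphism $\iota \circ f$ of the brick $S_2$ whose image lies in the proper submodule $\iota(S_1')$, contradicting $\End_A(S_2) = k$; hence $\Hom_A(S_1, S_1') = 0$. Next, applying $\Hom_A(S_2, -)$ to $\xi$ with $\Ext^1_A(S_2, S_1') = 0$ shows that $\End_A(S_2) \to \Hom_A(S_2, S_1)$, $\phi \mapsto \pi\phi$, is surjective, forcing $\Hom_A(S_2, S_1) = k\pi$. Finally, applying $\Hom_A(-, S_1)$ to $\xi$ with $\Ext^1_A(S_1, S_1) = 0$ yields $0 \to \End_A(S_1) \to \Hom_A(S_2, S_1) \to \Hom_A(S_1', S_1) \to 0$ whose first map has image $k\pi$, so the target vanishes: $\Hom_A(S_1', S_1) = 0$. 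Hence $\{S_1, S_1'\}$ is a semibrick, $S_2 \in \Filt\{S_1, S_1'\}$ by $\xi$, and $S_2 \not\cong S_1'$ (else $\xi$ forces $S_1 = 0$), establishing (i).

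The main obstacle is the ``moreover'' assertion that $\{S_1, S_1'\}$ is automatically a semibrick in (ii): this requires the careful chain of Ext long-exact-sequence arguments above, exploiting simultaneously the one-dimensionality of $\Ext^1_A(S_1, S_1')$, the stone properties of $S_1$ and $S_1'$, and crucially the fact that $S_2$ itself is a brick.
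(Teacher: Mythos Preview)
Your proof is correct and follows essentially the same approach as the paper: (i)$\Rightarrow$(ii) via Proposition~\ref{proposition shapepoly} (you merely unpack the polygon setup through Theorem~\ref{ttwideb}, which the paper leaves implicit), and (ii)$\Rightarrow$(i) via long exact sequences applied to the given extension. The only cosmetic difference is that in establishing orthogonality of $S_1$ and $S_1'$ you invoke the brick property of $S_2$ directly (a nonzero endomorphism with proper image is impossible) whereas the paper reaches the same vanishing by a dimension count in the sequence obtained from $\Hom_A(-,S_2)$; both arguments are equally valid and yield the same conclusions.
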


\begin{proof}
 (i) $\Rightarrow$ (ii). If $S_1 \not\cong S_2$, this is an immediate consequence of Proposition~\ref{proposition shapepoly}.

 (ii) $\Rightarrow$ (i). Suppose that there exists a short exact sequence \[\xi: 0 \to S_1 \to S_2 \to S_1' \to 0.\] Applying $\Hom_A(S_1', -)$ to $\xi$ gives the long exact sequence
 \begin{align}0 &\to \Hom_A(S_1', S_1) \to \Hom_A(S_1', S_2) \to \Hom_A(S_1', S_1') \nonumber \\ &\to \Ext^1_A(S_1', S_1) \to \Ext^1_A(S_1', S_2) \to \Ext^1_A(S_1', S_1') = 0 \label{lex4}.\end{align}
 Therefore, as $\dim \Ext^1_A(S_1', S_1) = 1$ and $\xi$ does not split, we get $\Ext^1_A(S_1', S_2) = 0$. Then, applying $\Hom_A(-, S_2)$ to $\xi$ gives the exact sequence
  \[0 \to \Hom_A(S_1', S_2) \to \Hom_A(S_2, S_2) \to \Hom_A(S_1, S_2) \to \Ext^1_A(S_1', S_2) = 0,\]
 so $\Hom_A(S_1', S_2) = 0$ and $\dim \Hom_A(S_1, S_2) = 1$. Using \eqref{lex4} again, we obtain $\Hom_A(S_1', S_1) = 0$. Applying $\Hom_A(S_1, -)$ to $\xi$ gives $\Hom_A(S_1, S_1') = 0$. So $S_1$ and $S_1'$ are orthogonal, and we have the assertion.
\end{proof}

\begin{proof}[Proof of Theorem~\ref{forcing=doubleton}]
 By Proposition~\ref{prpstone}, we get that $\forces[pf]$ and $\forces[d]$ coincide. As, by Theorem~\ref{theorem forcing}, $\forces$ and $\forces[pf]$ coincide, the result follows.
\end{proof}

 The following useful observation will be used in Section \ref{camblat}.
 \begin{proposition}\label{splitbrick}
  Let $A$ be a finite-dimensional $k$-algebra that is $\tau$-tilting finite such that all bricks of $\mod A$ are $k$-stones. Then for $S \in \brick A$ that is not simple, there is a semibrick $\{S_1, S_2\}$ such that $\dim \Ext^1_A(S_2, S_1) = 1$ and a short exact sequence $0 \to S_1 \to S \to S_2 \to 0$.
 \end{proposition}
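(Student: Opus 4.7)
My plan is to exhibit $S$ as one of the ``middle'' extensions $X_0$ or $X_1$ of Proposition~\ref{proposition shapepoly} inside a suitably chosen polygon of $\tors A$. The starting point is Proposition~\ref{topbottomlatsimpleforce}(c), which asserts that the forcing-maximal bricks are exactly the simple $A$-modules. Since $S$ is non-simple, there exists a brick $S^\ast\not\cong S$ with $S^\ast\forces S$.

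Next I invoke Theorem~\ref{theorem forcing}: the forcing order $\forces$ coincides with the pair filtration order $\forces[pf]$, which is by definition the transitive closure of the single-step pair-filtration relation. Hence we obtain a sequence $S^\ast=R_0\forces[pf]R_1\forces[pf]\cdots\forces[pf]R_k=S$ of single steps. Since $S^\ast\not\cong S$, at least one step is non-trivial; taking $i$ to be the largest index with $R_i\not\cong R_{i+1}$, we have $R_{i+1}\cong\cdots\cong R_k=S$, hence $R_i\forces[pf]S$ with $R_i\not\cong S$. By the definition of $\forces[pf]$, this yields a semibrick $\{R_i,R'\}$ with $S\in\Filt\{R_i,R'\}\setminus\{R'\}$. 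In particular $S\not\cong R'$, and together with $S\not\cong R_i$, this forces $\{R_i,R'\}$ to be a two-element semibrick such that $S$ is a brick of the wide subcategory $\Filt\{R_i,R'\}$ distinct from both of its simples.

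Finally I apply Proposition~\ref{proposition shapepoly} to the polygon $[\UU,\TT]$ of $\tors A$ whose associated semibrick $\simp[\UU,\TT]$ equals $\{R_i,R'\}$: the existence of such a polygon follows from Theorem~\ref{ttwideb} (a two-element semibrick corresponds via the bijection with $\wide A$ to a $\tau$-rigid pair $(N,Q)$ of corank~$2$, producing a $2$-polytope $[\UU(N,Q),\TT(N,Q)]$, which is a polygon by Proposition~\ref{torsApole}(a)). By Theorem~\ref{theorem filtorth}(b), the bricks of $\Filt\{R_i,R'\}$ are precisely the labels of arrows of $\Hasse[\UU,\TT]$, which according to Proposition~\ref{proposition shapepoly} are $R_i$, $R'$, and, whenever the relevant $\Ext^1$ is nonzero, the non-trivial extensions $X_0$ and $X_1$. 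Since $S$ is a brick in $\Filt\{R_i,R'\}$ which is neither $R_i$ nor $R'$, it must be isomorphic to $X_0$ or to $X_1$. In either case one reads off the required short exact sequence $0\to S_1\to S\to S_2\to 0$ with $\{S_1,S_2\}$ a semibrick, and $\dim_k\Ext^1_A(S_2,S_1)=1$ by Lemma~\ref{lemma basicstones}.

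The delicate point of the argument is isolating the last non-trivial pair-filtration step in the chain, so as to guarantee that $S$ fails to be a simple of the corresponding wide subcategory; once this is done, the $k$-stone hypothesis (via Proposition~\ref{proposition shapepoly} and Lemma~\ref{lemma basicstones}) forces the remaining extension structure.
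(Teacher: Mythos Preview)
Your proof is correct and follows essentially the same strategy as the paper's: both start from Proposition~\ref{topbottomlatsimpleforce} to find a brick strictly forcing $S$, then unwind a characterization of the forcing order to extract a non-trivial extension at the last step. The only difference is packaging: the paper invokes Theorem~\ref{forcing=doubleton} (i.e.\ $\forces=\forces[d]$) so that the definition of $\forces[d]$ hands over the short exact sequence with one-dimensional $\Ext^1$ immediately, and then appeals to Proposition~\ref{prpstone} for the semibrick property; you instead use $\forces=\forces[pf]$ from Theorem~\ref{theorem forcing} and then read off the extension from Proposition~\ref{proposition shapepoly}. Since the proof of Theorem~\ref{forcing=doubleton} itself passes through Proposition~\ref{prpstone}, whose (i)$\Rightarrow$(ii) direction is proved via Proposition~\ref{proposition shapepoly}, your route is really an unrolled version of the paper's.
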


 \begin{proof}
  By Proposition \ref{topbottomlatsimpleforce}, there is a simple $A$-module $S_0$ such that $S_0 \forces S$. By Theorem \ref{forcing=doubleton}, $S_0 \forces[d] S$. As $S_0 \not\cong S$, by definition of the doubleton extension order, there exist two bricks $S_1$ and $S_2$ with $\dim \Ext^1_A(S_2, S_1) = 1$ and a short exact sequence $0 \to S_1 \to S \to S_2 \to 0$. By Proposition \ref{prpstone}, $\{S_1, S_2\}$ is a semibrick.
 \end{proof}

\section{Algebraic lattice congruences on torsion classes}

\subsection{General results on morphisms of algebras}\label{gen-res}

Let $\AA$ be an abelian category. A full subcategory $\TT$ of $\AA$ is a \emph{torsion class} in $\AA$ if it is closed under factor objects and extensions. Dually we define a \emph{torsion-free class} in $\AA$. The classes $\tors \AA$ of torsion classes and $\torf \AA$ of torsion-free classes in $\AA$ are ordered by inclusion.

The following observation is a starting point of this section.

\begin{proposition}\label{AB tors}
Let $\AA$ and $\BB$ be abelian categories.
\begin{enumerate}[\rm(a)]
\item Let $F:\AA\to\BB$ be a right exact functor. Then we have order-preserving maps $F^*:\tors\BB\to\tors\AA$ and $F_*:\torf\BB\to\torf\AA$ given by
\[F^*(\TT):=\{X\in\AA\mid F(X)\in\TT\}\ \text{ and }\ F_*(\FF):=F^*({}^{\perp_{\BB}}\FF)^{\perp_{\AA}}.\]

\item Let $G:\BB\to\AA$ be a left exact functor. Then we have order-preserving maps $G^*:\torf\AA\to\torf\BB$ and $G_*:\tors\AA\to\tors\BB$ given by
\[G^*(\FF):=\{X\in\BB\mid G(X)\in\FF\}\ \text{ and }\ G_*(\TT):={}^{\perp_{\BB}}G^*(\TT^{\perp_{\AA}}).\]
\end{enumerate}
\end{proposition}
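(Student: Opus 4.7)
The plan is to prove (a) directly and to deduce (b) by passing to opposite categories.

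For (a), I first check that $F^*(\TT) = \{X \in \AA \mid F(X) \in \TT\}$ is a torsion class in $\AA$. Closure under factors is immediate: an epimorphism $X \twoheadrightarrow Y$ in $\AA$ yields an epimorphism $F(X) \twoheadrightarrow F(Y)$ in $\BB$ by right-exactness of $F$, so $F(X) \in \TT$ forces $F(Y) \in \TT$. For closure under extensions, a short exact sequence $0 \to X \to Y \to Z \to 0$ with $X, Z \in F^*(\TT)$ becomes, after applying $F$, a right-exact sequence $F(X) \to F(Y) \to F(Z) \to 0$; thus $F(Y)$ is an extension of $F(Z) \in \TT$ by the image of $F(X) \to F(Y)$, which is itself a factor of $F(X) \in \TT$ and hence lies in $\TT$. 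Closure of $\TT$ under extensions then gives $F(Y) \in \TT$, i.e.\ $Y \in F^*(\TT)$. Order-preservation of $F^*$ is immediate from its definition.

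To handle $F_*$, note that for $\FF \in \torf\BB$, the left orthogonal ${}^{\perp_\BB}\FF$ is a torsion class in $\BB$, so by the previous step $F^*({}^{\perp_\BB}\FF)$ lies in $\tors\AA$, and therefore $F_*(\FF) := (F^*({}^{\perp_\BB}\FF))^{\perp_\AA}$ is a torsion-free class in $\AA$. Order-preservation of $F_*$ is a composition of three elementary steps: $\FF \mapsto {}^{\perp_\BB}\FF$ reverses inclusion, $F^*$ preserves it, and $(-)^{\perp_\AA}$ reverses it again.

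For (b), rather than repeat the argument, I would invoke duality. A left-exact functor $G : \BB \to \AA$ becomes a right-exact functor $G^{\op} : \BB^{\op} \to \AA^{\op}$, and under the canonical identifications $\tors\CC \leftrightarrow \torf\CC^{\op}$ and $\torf\CC \leftrightarrow \tors\CC^{\op}$, the two maps produced by (a) applied to $G^{\op}$ translate precisely into the maps $G^*$ and $G_*$ of the statement, with the same order-preservation properties. There is no genuine obstacle in the argument; the only point that requires a little care is the extension-closure step above, where right-exactness forces one to pass through the image of $F(X) \to F(Y)$ rather than assume that map is a monomorphism.
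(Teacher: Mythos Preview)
Your proof is correct and follows essentially the same approach as the paper: verify closure of $F^*(\TT)$ under factors and extensions using right-exactness (with the same care about passing through the image of $F(X)\to F(Y)$), observe that $F_*(\FF)$ is torsion-free as a right orthogonal, check order-preservation by composing order-reversing maps, and obtain (b) by duality. The only cosmetic difference is that you spell out the opposite-category translation for (b), whereas the paper simply says ``dual to (a)''.
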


\begin{proof}
(a) Fix $\TT\in\tors\BB$. Let $0\to X\xto{\iota} Y\to Z\to0$ be an exact sequence in $\AA$.
Then $F(X)\to F(Y)\to F(Z)\to0$ is an exact sequence in $\BB$.
If $Y\in F^*(\TT)$, then $F(Y)\in\TT$, so $F(Z)\in\TT$. Thus $Z\in F^*(\TT)$.
Similarly, if $X,Z\in F^*(\TT)$, then $F(X),F(Z)\in\TT$ and hence $\Image F(\iota) \in \TT$ so $F(Y)\in\TT$. Thus $Y\in F^*(\TT)$. Clearly $F^*$ is order-preserving.

Let $\FF\in\torf\BB$. Clearly $F_*(\FF)$ is a torsion class in $\AA$ since it is defined by $(-)^{\perp_{\AA}}$.
Since ${}^{\perp_{\BB}}(-):\torf\BB\to\tors\BB$
and $(-)^{\perp_{\AA}}:\tors\AA\to\torf\AA$
are order-reversing, $F_*$ is also order-preserving.

(b) This is dual to (a).
\end{proof}

A \emph{torsion pair} is a pair $(\TT,\FF)$ consisting of a torsion class $\TT$ in $\AA$ and a torsion-free class $\FF$ in $\AA$ such that $\Hom_{\AA}(\TT, \FF) = 0$, and for any $X\in\AA$, there exists a short exact sequence $0\to T\to X\to F\to0$ with $T\in\TT$ and $F\in\FF$.
In this case, we have $\TT={}^{\perp_{\AA}}\FF$ and $\FF=\TT^{\perp_{\AA}}$.

We say that $\AA$ has \emph{enough torsion-free classes} if for any torsion class $\TT$ in $\AA$, there exists a torsion-free class $\FF$ in $\AA$ such that $(\TT,\FF)$ is a torsion pair.
Dually we define for $\AA$ to have \emph{enough torsion classes} in an obvious way. Finally, we say that $\AA$ has \emph{enough torsion pairs} if is has enough torsion classes and enough torsion-free classes.

In Definition~\ref{defadjlat}, we gave the concept of adjoint pairs of order-preserving maps. Any adjoint pair of functors induces an adjoint pair of order-preserving maps.

\begin{proposition}\label{AB adjoint}
Let $\AA$ and $\BB$ be abelian categories, and $(F:\AA\to\BB,\,\,G:\BB\to\AA)$ be an adjoint pair of functors.
\begin{enumerate}[\rm(a)]
\item If $\BB$ has enough torsion pairs, then $(G_*:\tors\AA\to\tors\BB,\,\,F^*:\tors\BB\to\tors\AA)$ is an adjoint pair.
\item If $\AA$ has enough torsion pairs, then $(F_*:\torf\BB\to\torf\AA,\,\,G^*:\torf\AA\to\torf\BB)$ is an adjoint pair.
\end{enumerate}
\end{proposition}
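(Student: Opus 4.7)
The strategy is to exploit the hom-adjunction $\Hom_{\BB}(FX, Y)\cong\Hom_{\AA}(X, GY)$ to rewrite the definitions of $G_*$ and $F_*$ in a form that makes the adjointness of order-preserving maps transparent. Parts (a) and (b) are formally dual; I would set up (a) in detail and then indicate (b) by the same computation with the roles of torsion and torsion-free classes swapped.

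For (a), I would first compute, for $\TT\in\tors\AA$, the torsion-free class $G^*(\TT^{\perp_{\AA}})$ that appears in the definition $G_*(\TT)={}^{\perp_{\BB}}G^*(\TT^{\perp_{\AA}})$. By definition, $Y\in G^*(\TT^{\perp_{\AA}})$ means $\Hom_{\AA}(X,GY)=0$ for every $X\in\TT$, which by the adjunction is equivalent to $\Hom_{\BB}(FX,Y)=0$ for every $X\in\TT$. Hence $G^*(\TT^{\perp_{\AA}})=(F\TT)^{\perp_{\BB}}$, and consequently $G_*(\TT)={}^{\perp_{\BB}}((F\TT)^{\perp_{\BB}})$.

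Next, I would invoke that $\BB$ has enough torsion-free classes to identify ${}^{\perp_{\BB}}((F\TT)^{\perp_{\BB}})$ with the smallest torsion class of $\BB$ containing $F\TT$: any $\UU\in\tors\BB$ with $\UU\supseteq F\TT$ satisfies $\UU^{\perp_{\BB}}\subseteq(F\TT)^{\perp_{\BB}}$, and therefore ${}^{\perp_{\BB}}((F\TT)^{\perp_{\BB}})\subseteq{}^{\perp_{\BB}}(\UU^{\perp_{\BB}})=\UU$, where the last equality uses precisely the existence of enough torsion-free classes in $\BB$. The adjointness $G_*\dashv F^*$ then reads off immediately: $G_*(\TT)\subseteq\UU$ iff $F\TT\subseteq\UU$ iff $\TT\subseteq F^*(\UU)$, which is Definition~\ref{defadjlat}.

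For (b), I would run the entirely dual argument. The same adjunction now gives $F^*({}^{\perp_{\BB}}\FF')={}^{\perp_{\AA}}(G\FF')$ for $\FF'\in\torf\BB$, so $F_*(\FF')=({}^{\perp_{\AA}}(G\FF'))^{\perp_{\AA}}$. The dual closure argument, this time using that $\AA$ has enough torsion classes, identifies this with the smallest torsion-free class of $\AA$ containing $G\FF'$, and the equivalence $F_*(\FF')\subseteq\FF$ iff $\FF'\subseteq G^*(\FF)$ follows. The only delicate point in the whole proof is the use of ``enough torsion pairs'': without the equality ${}^{\perp}(\TT^{\perp})=\TT$ for every torsion class (and its dual for torsion-free classes), one cannot translate containments involving $G_*(\TT)$ or $F_*(\FF')$ into the containments of $F\TT$ and $G\FF'$ that match $F^*$ and $G^*$. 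Everything else is bookkeeping with orthogonals.
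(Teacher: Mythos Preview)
Your proof is correct and follows essentially the same idea as the paper: both unwind the functor adjunction $\Hom_{\BB}(FX,Y)\cong\Hom_{\AA}(X,GY)$ to show $G^*(\TT^{\perp_{\AA}})=(F\TT)^{\perp_{\BB}}$ and then use the Galois connection between orthogonals together with ``enough torsion pairs'' to obtain the order-adjunction. Your packaging of $G_*(\TT)={}^{\perp_{\BB}}((F\TT)^{\perp_{\BB}})$ as the smallest torsion class containing $F\TT$ is a slightly cleaner formulation than the paper's direct chain of equivalences (and in fact shows that only ``enough torsion-free classes'' in $\BB$ is needed for (a)); for (b) the paper passes to opposite categories to reduce to (a), while you run the dual argument by hand, but the content is the same.
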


\begin{proof}
(a) For $\S\in\tors\BB$, we take a torsion pair $(\S,\FF)$ in $\BB$.
Then, for $\TT\in\tors\AA$, we have $\TT\subseteq F^*(\S)$ if and only if $F(\TT)\subseteq\S$ if and only if $\Hom_{\BB}(F(\TT),\FF)=0$. Since $(F,G)$ is an adjoint pair, this is equivalent to $\Hom_{\AA}(\TT,G(\FF))=0$.
This holds if and only if $G^*(\TT^{\perp_{\AA}})
\supseteq\FF$. As $\BB$ has enough torsion classes, we have $\FF = ({}^{\perp_{\BB}} \FF)^{\perp_{\BB}}$ and $G^*(\TT^{\perp_{\AA}}) = ({}^{\perp_{\BB}} G^*(\TT^{\perp_{\AA}}))^{\perp_{\BB}}$. Therefore, $G^*(\TT^{\perp_{\AA}}) \supseteq\FF$ if and only if $G_*(\TT)={}^{\perp_{\BB}} G^*(\TT^{\perp_{\AA}}) \subseteq {}^{\perp_{\BB}} \FF = \S$.
We have shown that $(G_*,F^*)$ is an adjoint pair.

(b) For opposite categories $\AA^{\op}$ and $\BB^{\op}$, we have an adjoint pair $(G^{\op}:\BB^{\op}\to\AA^{\op},\ F^{\op}:\AA^{\op}\to\BB^{\op})$. By (a), this gives rise to an adjoint pair $(F^{\op}_*:\tors\BB^{\op}\to\tors\AA^{\op},\ G^{\op*}:\tors\AA^{\op}\to\tors\BB^{\op})$. Using natural identifications $\tors\BB^{\op}=\torf\BB$ and $\tors\AA^{\op}=\torf\AA$, the desired assertion follows.
\end{proof}

We apply these observations to morphisms of algebras. It is immediate that, for a finite-dimensional $k$-algebra $A$, $\mod A$ has enough torsion classes and enough torsion-free classes. In the rest of this subsection, let $\phi:A\to B$ be a morphism of finite-dimensional $k$-algebras. We denote the associated restriction functor by
\[E:={}_A(-):\mod B\to\mod A,\]
which is an exact functor.
Moreover we have a right exact functor $E_\lambda$ and a left exact functor $E_\rho$ given by
\[E_\lambda:=B\otimes_A-:\mod A\to\mod B\ \text{ and }\ E_\rho:=\Hom_A(B,-):\mod A\to\mod B,\]
which give rise to adjoint pairs $(E_\lambda,E)$ and $(E,E_\rho)$.
For $\TT\in\tors A$ and $\S\in\tors B$, we define:
\begin{align*}
\phi_-(\TT) &:= E^*(\TT) = \{Y\in\mod B\mid {}_AY\in\TT\},\\
\phi_+(\TT) &:= E_*(\TT) = {}^{\perp_B}\{Y\in\mod B\mid {}_AY\in\TT^{\perp_A}\},\\
\phi^+(\S) &:= E_\lambda^*(\S) = \{X\in\mod A\mid B\otimes_AX\in\S\},\\
\phi^-(\S) &:= E_{\rho*}(\S) = {}^{\perp_A}\{X\in\mod A\mid\Hom_A(B,X)\in\S^{\perp_B}\}.
\end{align*}
We summarize the following basic properties.

\begin{theorem}\label{relating the maps}\quad
\begin{enumerate}[\rm(a)]
\item \label{lowers}
$\phi_-$ and $\phi_+$ are order-preserving maps $\tors A\to\tors B$.
\item \label{uppers}
$\phi^+$ and $\phi^-$ are order-preserving maps $\tors B\to\tors A$.
\item \label{adj+}
$(\phi_+:\tors A\to\tors B,\,\,\phi^+:\tors B\to\tors A)$ is an adjoint pair.
\item \label{adj-}
$(\phi^-:\tors B\to\tors A,\,\,\phi_-:\tors A\to\tors B)$ is an adjoint pair.
\item \label{meet semi homs}
The maps $\phi_-:\tors A\to\tors B$ and $\phi^+:\tors B\to\tors A$ are morphisms of complete meet-semilattices.
\item \label{join semi homs}
The maps $\phi_+:\tors A\to\tors B$ and $\phi^-:\tors B\to\tors A$ are morphisms of complete join-semilattices.
\item \label{subs}
For any $\TT\in\tors A$, we have $\phi_-(\TT)\subseteq\phi_+(\TT)$.
\end{enumerate}
\end{theorem}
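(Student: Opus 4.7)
My approach is to deduce essentially every part of the statement by assembling the general results proved earlier in Propositions~\ref{AB tors}, \ref{AB adjoint}, and \ref{adjoint}. The key ingredients I will use are: the restriction functor $E:\mod B\to\mod A$ is exact (so both left and right exact); the extension of scalars $E_\lambda=B\otimes_A-$ is right exact; the coinduction $E_\rho=\Hom_A(B,-)$ is left exact; we have adjunctions $(E_\lambda,E)$ and $(E,E_\rho)$; and both $\mod A$ and $\mod B$ have enough torsion pairs, so the hypotheses of Proposition~\ref{AB adjoint} are always available.

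For parts \eqref{lowers} and \eqref{uppers}, I will apply Proposition~\ref{AB tors}(a) to the right exact functor $E$ to obtain $\phi_-=E^*:\tors A\to\tors B$, and Proposition~\ref{AB tors}(b) to the left exact functor $E$ to obtain $\phi_+=E_*:\tors A\to\tors B$; a short unwinding of $E_*(\TT)={}^{\perp_B}E^*(\TT^{\perp_A})$ reconciles the definition given in the excerpt. Similarly, Proposition~\ref{AB tors}(a) applied to $E_\lambda$ gives $\phi^+=E_\lambda^*:\tors B\to\tors A$, and Proposition~\ref{AB tors}(b) applied to $E_\rho$ gives $\phi^-=E_{\rho*}:\tors B\to\tors A$. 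Order-preservation is part of each of those statements.

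For parts \eqref{adj+} and \eqref{adj-}, I feed the two functorial adjunctions into Proposition~\ref{AB adjoint}(a). The pair $(E_\lambda,E)$ with $E_\lambda:\mod A\to\mod B$ yields the adjoint pair $(E_*,E_\lambda^*)=(\phi_+,\phi^+)$; the pair $(E,E_\rho)$ with $E:\mod B\to\mod A$ yields the adjoint pair $(E_{\rho*},E^*)=(\phi^-,\phi_-)$. Parts \eqref{meet semi homs} and \eqref{join semi homs} are then immediate from Proposition~\ref{adjoint}(a), since in any adjoint pair the left adjoint is a morphism of complete join-semilattices and the right adjoint is a morphism of complete meet-semilattices.

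Finally, part \eqref{subs} is a direct verification that I expect to handle in a single line. If $Y\in\phi_-(\TT)$, so that ${}_AY\in\TT$, then for any $Z\in\mod B$ with ${}_AZ\in\TT^{\perp_A}$, a $B$-linear morphism $Y\to Z$ is in particular $A$-linear and therefore lies in $\Hom_A({}_AY,{}_AZ)=0$; hence $Y\in{}^{\perp_B}\{Z\in\mod B\mid{}_AZ\in\TT^{\perp_A}\}=\phi_+(\TT)$. There is no genuine obstacle in any part of the theorem; the entire statement is a bookkeeping exercise that compiles the categorical framework set up in the previous propositions.
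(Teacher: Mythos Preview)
Your proposal is correct and follows essentially the same route as the paper's proof: parts \eqref{lowers}--\eqref{uppers} from Proposition~\ref{AB tors}, parts \eqref{adj+}--\eqref{adj-} from Proposition~\ref{AB adjoint}, parts \eqref{meet semi homs}--\eqref{join semi homs} from Proposition~\ref{adjoint}, and part \eqref{subs} by the same one-line verification using $\Hom_B(Y,Z)\subseteq\Hom_A({}_AY,{}_AZ)$. Your unwinding of which functor plays which role in the abstract propositions is accurate and matches the paper exactly.
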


\begin{proof}
\eqref{lowers}\eqref{uppers} These are shown in Proposition~\ref{AB tors}.

\eqref{adj+}\eqref{adj-} These are shown in Proposition~\ref{AB adjoint}.

\eqref{meet semi homs}\eqref{join semi homs} These follow from \eqref{adj+}, \eqref{adj-} and Propositions~\ref{adjoint}.

\eqref{subs} Let $Y \in \phi_-(\TT)$, \emph{i.e.} ${}_A Y\in\TT$. Then we have, for all $Z\in\mod B$ satisfying ${}_AZ\in\TT^{\perp_A}$, $\Hom_B(Y,Z)\subseteq\Hom_A({}_A Y,{}_A Z)=0$. Therefore \begin{align*}Y&\in{}^{\perp_B}\{Z\in\mod B\mid {}_A Z\in\TT^{\perp_A}\} = \phi_+(\TT).\qedhere\end{align*}
\end{proof}

We will observe in Example~\ref{not lattice morphism}(a) below that, contrary to what one might have expected given
Theorem~\ref{relating the maps}\eqref{subs}, $\phi^-(\S)\subseteq\phi^+(\S)$ does not necessarily hold for $\S\in\tors B$ in general.
We give a sufficient condition for this to hold.

Recall that we call a morphism $\phi:A\to B$ of finite-dimensional $k$-algebras an \emph{epimorphism} if it satisfies the following three equivalent
conditions \cite[Propositions 1.1, 1.2]{Sto}, \cite[Proposition 1.1]{Si}, see also \cite{Ste}:
\begin{itemize}
 \item $\phi$ is an epimorphism in the category of rings;
 \item $B \otimes_A B \cong B$ through multiplication;
 \item the functor ${}_A(-):\mod B\to \mod A$ is fully faithful.
\end{itemize}
Note that, while a surjective morphism of rings is an epimorphism, the converse is far from being true, e.g. the following inclusion is a ring epimorphism:
\[\phi: \begin{bmatrix} k & k \\ 0 & k \end{bmatrix} \hookrightarrow \begin{bmatrix} k & k \\ k & k \end{bmatrix}.\]
For ring epimorphisms, we have the following property.

\begin{proposition}
Let $\phi:A\to B$ be an epimorphism of finite-dimensional $k$-algebras.
For any $\S\in\tors B$, we have $\phi^-(\S)\subseteq\phi^+(\S)$.
\end{proposition}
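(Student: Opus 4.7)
The plan is to reduce the statement to a simple fact about ring epimorphisms, using the adjunction already established in Theorem~\ref{relating the maps}\eqref{adj-}. Since $(\phi^-,\phi_-)$ is an adjoint pair, the inclusion $\phi^-(\S)\subseteq\phi^+(\S)$ is equivalent to $\S\subseteq\phi_-(\phi^+(\S))$. Unwinding the definitions gives
\[\phi_-(\phi^+(\S))=\{Y\in\mod B\mid {}_AY\in\phi^+(\S)\}=\{Y\in\mod B\mid B\otimes_A{}_AY\in\S\},\]
so it suffices to show that $B\otimes_A{}_AY\in\S$ for every $Y\in\S$.

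The key step is to invoke the characterization of ring epimorphisms: $\phi:A\to B$ is an epimorphism iff the multiplication map $B\otimes_A B\to B$ is an isomorphism of $B$-bimodules (equivalently, iff the restriction functor ${}_A(-):\mod B\to \mod A$ is fully faithful). Using this together with the associativity of tensor products, for any $Y\in\mod B$ one has a chain of natural $B$-module isomorphisms
\[B\otimes_A{}_AY\;\cong\;B\otimes_A(B\otimes_B Y)\;\cong\;(B\otimes_AB)\otimes_B Y\;\cong\;B\otimes_B Y\;\cong\;Y.\]
Hence for $Y\in\S$ we obtain $B\otimes_A{}_AY\cong Y\in\S$, which proves $\S\subseteq\phi_-(\phi^+(\S))$, and therefore $\phi^-(\S)\subseteq\phi^+(\S)$.

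There is essentially no obstacle once the epimorphism property is brought in: the proof is a formal manipulation of adjunctions combined with the isomorphism $B\otimes_AB\cong B$. The only care needed is to keep track of which ring one is tensoring over, and to note that $B\otimes_A{}_AY$ inherits its $B$-module structure from the left factor, so the final isomorphism with $Y$ is genuinely an isomorphism of $B$-modules.
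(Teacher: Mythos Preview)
Your proof is correct, and it takes a somewhat different route from the paper's. The paper argues directly: given $X\in\phi^-(\S)$ and setting $\FF=\S^{\perp_B}$, it uses the epimorphism property in the form $\Hom_A(B,{}_A(-))=\id_{\mod B}$ to get ${}_A\FF\subseteq\{Y\in\mod A\mid\Hom_A(B,Y)\in\FF\}$, and then the tensor--Hom adjunction gives $\Hom_B(B\otimes_AX,\FF)=\Hom_A(X,{}_A\FF)=0$, so $B\otimes_AX\in\S$.

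Your argument instead exploits the lattice-level adjunction $(\phi^-,\phi_-)$ from Theorem~\ref{relating the maps}\eqref{adj-} to reduce the claim to $\S\subseteq\phi_-(\phi^+(\S))$, and then uses the epimorphism property in its tensor form $B\otimes_AB\cong B$ to obtain $B\otimes_A{}_AY\cong Y$ for $Y\in\mod B$. So while the paper uses the $E_\rho$-side characterization of epimorphisms (via $\Hom_A(B,-)$), you use the $E_\lambda$-side (via $B\otimes_A-$). Your approach is arguably cleaner: it avoids unpacking the perpendicular in the definition of $\phi^-$ and reduces everything to the single functorial identity $E_\lambda\circ E\cong\id_{\mod B}$, at the cost of one extra appeal to the already-established adjoint pair.
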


\begin{proof}
Let $\S\in\tors B$ and $\FF:=\S^{\perp_B}$.
Since $\phi$ is an epimorphism, we have $\Hom_A(B,{}_A(-))=\Hom_B(B,-)=\id_{\mod B}$ and hence
\[{}_A\FF\subseteq\{Y\in\mod A\mid\Hom_A(B,Y)\in\FF\}.\]
Assume $X\in\phi^-(\S)$, that is, $\Hom_A(X,Y)=0$ holds for any $Y\in\mod A$ satisfying $\Hom_A(B,Y)\in\FF$.
Thus $\Hom_B(B\otimes_AX,\FF)=\Hom_A(X,{}_A\FF)=0$ holds. Therefore $B\otimes_AX\in\S$ and we have $X\in\phi^+(\S)$.
\end{proof}

The following example shows that $\phi^+$ and $\phi_-$ are not necessarily morphisms of join-semilattices, and $\phi^-$ and $\phi_+$ are not necessarily morphisms of meet-semilattices.

\begin{example}\label{not lattice morphism}\
\begin{enumerate}[\rm(a)]
\item Let $A=k$ and $B$ be an arbitrary finite-dimensional $k$-algebra with $n \geq 2$ non-isomorphic simple modules $S_1$, $S_2$, \dots, $S_n$. For any $\S\in\tors B$, it is easy to check that
\[\phi^+(\S)=\left\{\begin{array}{ll}
0& \text{if } \S\neq\mod B\\
\mod A&\text{if } \S=\mod B
\end{array}\right.
\ \text{ and }\
\phi^-(\S)=\left\{\begin{array}{ll}
0&\text{if } \S=0\\
\mod A&\text{if }\S\neq0.
\end{array}
\right.\]
For all $i = 1, \dots, n$, $\Filt S_i \neq \mod B$, while $\Join_i \Filt S_i = \mod B$ so $\phi^+$ is not a morphism of join-semilattices. In the same way, for all $i = 1, \dots, n$, ${}^\perp S_i \neq 0$, while $\Meet_i {}^\perp S_i = 0$ so $\phi^-$ is not a morphism of meet-semilattices.

\item Let $A$ be a finite-dimensional algebra with $n \geq 2$ non-isomorphic simple modules $S_1$, $S_2$, \dots, $S_n$.  We consider an embedding $\phi: A \hookrightarrow B$ where $B$ is a matrix algebra $B$, which is simple. The only torsion classes in $\mod B$ are $0$ and $\mod B$. For $\TT \in \tors A$, by Theorem \ref{relating the maps}\eqref{adj+}, we have $\phi_+(\TT) = 0$ if and only of $\TT \subseteq \phi^+(0)$ if and only if $B\otimes_A\TT=0$.
So we have 
\[\phi_+(\TT)=\left\{\begin{array}{ll}
0&\text{if } B\otimes_A\TT=0 \\ 
\mod B&\text{if }  B\otimes_A\TT\neq0 \\ 
\end{array}
\ \text{ and }\
\phi_-(\TT)=\left\{\begin{array}{ll}
0& \text{if }  {}_AB\notin\TT\\ 
\mod B&\text{if } {}_AB\in\TT. 
\end{array}\right.
\right.\]
For any $i = 1, \dots, n$, as ${}^\perp S_i$ contains the projective cover $P_j$ of $S_j$ for $j \neq i$. As $B \otimes_A P_j \neq 0$, 
we have $\phi_+({}^\perp S_i) = \mod B$. On the other hand, $\Meet_i {}^\perp S_i = 0$, so $\phi_+$ is not a morphism of meet-semilattices.
Since $B$ is sincere as an $A$-module and $n\geq 2$, we have ${}_AB\notin\Filt S_i$, 
so $\phi_-(\Filt S_i) = 0$.
On the other hand, $\Join_i \Filt S_i = \mod A$, so $\phi_-$ is not a morphism of join-semilattices.
\end{enumerate}
\end{example}

Let us fix a surjective morphism $\phi:A\to B$ of finite-dimensional $k$-algebras.
In this case, the functor ${}_A(-):\mod B\to\mod A$ is fully faithful, and we can regard
$\mod B$ as a full subcategory of $\mod A$ consisting of $A$-modules annihilated by $\Kernel \phi$.
Then $\mod B$ is closed under submodules and factor modules in $\mod A$. We have
\[(B\otimes_A-)\circ{}_A(-)=\id_{\mod B}=\Hom_A(B,-)\circ{}_A(-).\]
For a subcategory $\CC$ of $\mod A$, we consider the subcategory
\[\overline{\CC}:=\CC\cap\mod B \subseteq \mod B.\]
We get the following basic properties.
\begin{proposition}\label{tau-rigid gives tau-rigid}\
\begin{enumerate}[\rm(a)]
\item If $X$ is a $\tau$-rigid $A$-module, then $B\otimes_AX$ is a $\tau$-rigid $B$-module.
\item There is a commutative diagram
\[\xymatrix@C=2cm{
\trigid A\ar[r]^-{\Fac-}\ar[d]_{B\otimes_A-}&\ftors A\ar[d]^{\overline{(-)}}\\
\trigid B\ar[r]^-{\Fac-}&\ftors B.}\]
\end{enumerate}
\end{proposition}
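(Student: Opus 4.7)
For part (a), I will use the characterization of $\tau$-rigidity via vanishing of $\Ext^1$ given in Proposition~\ref{characanntau}, rather than the definition via $\Hom_A(X,\tau X)=0$; this avoids having to directly track the Auslander--Reiten translate through the change of rings. Concretely, I will fix a projective presentation
\[
P_1 \xto{d} P_0 \to X \to 0
\]
in $\mod A$. Applying the right exact functor $B\otimes_A-$, and using that $B\otimes_A P$ is a projective $B$-module for any projective $A$-module $P$, produces a projective presentation $B\otimes_AP_1 \to B\otimes_AP_0 \to B\otimes_AX \to 0$ in $\mod B$.

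The central computation is then: for every $Y\in\mod B$, the adjunction isomorphism $\Hom_B(B\otimes_AP_i,Y)\cong\Hom_A(P_i,{}_AY)$ is natural in $P_i$ and hence yields
\[
\Ext^1_B(B\otimes_AX,Y)\;\cong\;\Ext^1_A(X,{}_AY).
\]
Given this identification, the main observation is that $\Fac_B(B\otimes_AX)\subseteq\Fac_AX$: indeed $B\otimes_AX=X/IX$ where $I=\Kernel\phi$, so $B\otimes_AX$ is already an $A$-quotient of $X$, and any $B$-quotient of $(B\otimes_AX)^n$ is likewise an $A$-quotient of $X^n$. Combining the two facts, for any $Y\in\Fac_B(B\otimes_AX)$ we have ${}_AY\in\Fac_AX$, so $\Ext^1_A(X,{}_AY)=0$ by Proposition~\ref{characanntau} applied to the $\tau$-rigid module $X$, hence $\Ext^1_B(B\otimes_AX,Y)=0$. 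Applying Proposition~\ref{characanntau} in the other direction gives that $B\otimes_AX$ is $\tau$-rigid.

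For part (b), the commutativity of the diagram is the identity
\[
\Fac_B(B\otimes_AX)\;=\;\Fac_AX\cap\mod B.
\]
The inclusion $\subseteq$ was already noted above, together with the fact that $\Fac_B(B\otimes_AX)\subseteq\mod B$ by definition. For the reverse inclusion, let $Y\in\Fac_AX\cap\mod B$. Choose a surjection $X^n\twoheadrightarrow Y$ of $A$-modules. Since $Y$ lies in $\mod B$, it is annihilated by $I$, so this surjection factors through the quotient $X^n/IX^n=(B\otimes_AX)^n$. The resulting surjection $(B\otimes_AX)^n\twoheadrightarrow Y$ is automatically $B$-linear, so $Y\in\Fac_B(B\otimes_AX)$.

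\textbf{Main obstacle.} The only nontrivial point is the $\Ext^1$-identification in part (a); the key is that applying $B\otimes_A-$ to a projective presentation of $X$ yields an honest projective presentation of $B\otimes_AX$ (not just a right exact complex), so that the naive adjunction computation of $\Ext^1$ is valid without needing to invoke Tor or a spectral sequence. Once this is in place, both (a) and (b) reduce to bookkeeping.
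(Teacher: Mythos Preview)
Your treatment of part (b) is correct and matches the paper's argument verbatim.

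In part (a), however, your central computation is wrong. The adjunction does give a natural isomorphism of the two-term complexes $\Hom_B(B\otimes_AP_\bullet,Y)\cong\Hom_A(P_\bullet,{}_AY)$ for $\bullet\in\{0,1\}$, but $\Ext^1$ is not the cokernel of a two-term projective presentation. Concretely, take $A=k[x]/(x^2)$, $B=A/(x)=k$, and $X=Y=k$: then $\Ext^1_B(B\otimes_AX,Y)=\Ext^1_k(k,k)=0$ whereas $\Ext^1_A(X,{}_AY)=\Ext^1_A(k,k)\cong k$, so the claimed isomorphism fails. The underlying reason is that $B\otimes_A-$ applied to a projective \emph{resolution} of $X$ need not be a resolution of $B\otimes_AX$; the obstruction is $\mathrm{Tor}^A_1(B,X)$.

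The fix is easy and your strategy survives: the low-degree exact sequence of the change-of-rings spectral sequence $E_2^{p,q}=\Ext^p_B(\mathrm{Tor}^A_q(B,X),Y)\Rightarrow\Ext^{p+q}_A(X,{}_AY)$ gives an \emph{injection} $\Ext^1_B(B\otimes_AX,Y)\hookrightarrow\Ext^1_A(X,{}_AY)$, and an injection is all you need for the vanishing to transfer. Once you replace ``$\cong$'' by ``$\hookrightarrow$'', the rest of your argument via Proposition~\ref{characanntau} goes through.

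For comparison, the paper avoids $\Ext$ altogether by using the characterization from \cite[Proposition~2.4]{AIR}: $X$ is $\tau$-rigid if and only if $\Hom_A(d,X)\colon\Hom_A(P_0,X)\to\Hom_A(P_1,X)$ is surjective for a minimal projective presentation $P_1\xrightarrow{d}P_0\to X\to 0$. Adjunction identifies $\Hom_B(\overline d,\overline X)$ with $\Hom_A(d,\overline X)$, and the surjection $X\twoheadrightarrow\overline X$ together with projectivity of $P_i$ gives the surjectivity of $\Hom_A(d,\overline X)$ from that of $\Hom_A(d,X)$ by a one-line diagram chase. This is slightly slicker since it sidesteps the spectral sequence, but your corrected approach is equally valid and arguably more conceptual.
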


\begin{proof}
(a) Let $P_1 \rightarrow P_0 \rightarrow X \rightarrow 0$ be a
minimal projective presentation of $X$ in $\mod A$.  Then $X$ is
$\tau$-rigid if and only if the induced map $\Hom(P_0,X)\rightarrow
\Hom(P_1,X)$ is surjective, see \cite[Proposition 2.4]{AIR}.  Using this,
it is easy to see that if $X$ is a $\tau$-rigid $A$-module, then
$B \otimes_A X$ is a $\tau$-rigid $B$-module.

(b) Suppose $X$ is a $\tau$-rigid $A$-module.
By (a), $B\otimes_A X$ is a $\tau$-rigid $B$-module.  It is
clear that $\Fac (B\otimes_A X) \subseteq \overline{\Fac X}$.
On the other hand, if $Y\in \overline{\Fac X}$, the surjective map
$X^r \twoheadrightarrow Y$ factors through $B\otimes_A X^r$, showing that
$Y$ is in $\Fac (B\otimes_A X)$.
\end{proof}

\begin{proposition} \label{surj nice}
Let $\phi:A\to B$ be a surjective morphism of finite-dimensional $k$-algebras.
\begin{enumerate}[\rm(a)]
\item \label{torsion pair}
If $(\TT,\FF)$ is a torsion pair in $\mod A$, then $(\overline{\TT},\overline{\FF})$ is a torsion pair in $\mod B$.
\item \label{+-over}
$\phi_+=\overline{(-)}=\phi_-$.
\item \label{split}
$\overline{(-)}\circ\phi^+=\id_{\tors B}=\overline{(-)}\circ\phi^-$.
\item \label{surj morph}
$\overline{(-)}: \tors A \to \tors B$ is a surjective morphism of complete lattices.
\item \label{maxmin}
For any $\S\in\tors B$, the set $\{\TT\in\tors A\mid\overline{\TT}=\S\}$ is the interval $[\phi^-(\S), \phi^+(\S)]$ in $\tors A$. Therefore $\smash{\pidown = \phi^-}$ and $\smash{\piup = \phi^+}$.
\end{enumerate}
\end{proposition}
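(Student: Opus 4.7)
The plan is to prove the five parts in sequence, each leaning on its predecessors; the heart of the argument is (a), from which the rest is essentially bookkeeping against the adjunctions recorded in Theorem~\ref{relating the maps}.

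For \textbf{(a)}, I would first observe that $\mod B$ coincides with the full subcategory of $\mod A$ of modules annihilated by $\Kernel\phi$, and is therefore closed under submodules, factor modules, and extensions inside $\mod A$. This immediately makes $\overline{\TT}$ a torsion class and $\overline{\FF}$ a torsion-free class in $\mod B$. The orthogonality $\Hom_B(\overline{\TT},\overline{\FF})=\Hom_A(\overline{\TT},\overline{\FF})=0$ is full faithfulness of ${}_A(-)$. Given $Y\in\mod B$, the canonical $(\TT,\FF)$-sequence $0\to T\to Y\to F\to 0$ in $\mod A$ lies automatically in $\mod B$ because $T$ is a submodule and $F$ a factor of $Y\in\mod B$.

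For \textbf{(b)}, the equality $\phi_-(\TT)=\overline{\TT}$ is a direct unpacking of the definition. For $\phi_+(\TT)=\overline{\TT}$, I would apply (a) to the torsion pair $(\TT,\TT^{\perp_A})$: this yields a torsion pair $(\overline{\TT},\overline{\TT^{\perp_A}})$ in $\mod B$, so $\phi_+(\TT)={}^{\perp_B}\overline{\TT^{\perp_A}}=\overline{\TT}$. For \textbf{(c)} with $\phi^+$: since $B\otimes_AY=Y$ for $Y\in\mod B$, $\overline{\phi^+(\S)}=\S$ is immediate. For $\phi^-$, I would use the $(E,E_\rho)$-adjunction $\Hom_A(Y,X)\cong\Hom_B(Y,\Hom_A(B,X))$ for $Y\in\mod B$, $X\in\mod A$, together with the torsion pair $(\S,\S^{\perp_B})$ in $\mod B$. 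The inclusion $\S\subseteq\overline{\phi^-(\S)}$ is then immediate from the adjunction; conversely, any $Y\in\mod B\setminus\S$ projects nontrivially to some $F\in\S^{\perp_B}$ via the canonical sequence in $\mod B$, and $F$ regarded as an $A$-module satisfies $\Hom_A(B,F)=F\in\S^{\perp_B}$ and $\Hom_A(Y,F)\neq 0$, showing $Y\notin\phi^-(\S)$.

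Parts \textbf{(d)} and \textbf{(e)} are then formal. Part (d) follows from (b) combined with Theorem~\ref{relating the maps}\eqref{meet semi homs}\eqref{join semi homs}: because $\overline{(-)}$ is simultaneously $\phi_-$ (preserving arbitrary meets) and $\phi_+$ (preserving arbitrary joins), it is a morphism of complete lattices; surjectivity comes from (c). For (e), the two adjunctions of Theorem~\ref{relating the maps}\eqref{adj+}\eqref{adj-} together with (b) give the equivalences $\overline{\TT}\subseteq\S\Leftrightarrow\TT\subseteq\phi^+(\S)$ and $\S\subseteq\overline{\TT}\Leftrightarrow\phi^-(\S)\subseteq\TT$; specializing to $\S=\overline{\TT}$ identifies the fibre $\overline{(-)}^{-1}(\S)$ with $[\phi^-(\S),\phi^+(\S)]$ and therefore identifies $\pidown$ and $\piup$ with $\phi^-$ and $\phi^+$ respectively under the quotient isomorphism $\tors A/\Theta\cong\tors B$ of Proposition~\ref{realpidown}. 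The only step requiring a genuine (if small) argument is the $\phi^-$ half of (c); everything else is definition-chasing against the adjunctions already established.
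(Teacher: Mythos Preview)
Your proposal is correct and follows essentially the same approach as the paper's proof: part (a) via closure of $\mod B$ under sub- and factor modules, (b) via (a), (c) via the identities $(B\otimes_A-)\circ{}_A(-)=\id$ and $\Hom_A(B,-)\circ{}_A(-)=\id$ together with the adjunctions, and (d), (e) formally from Theorem~\ref{relating the maps}. The only cosmetic differences are that for $\overline{(-)}\circ\phi^+=\id$ you argue directly from $B\otimes_AY=Y$ rather than splitting into two inclusions, and for (e) you invoke both adjunction equivalences at once to get the full fibre characterization in one stroke, whereas the paper writes out one containment explicitly.
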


\begin{proof}
\eqref{torsion pair} Since $\overline{\TT}\subseteq\TT$ and $\overline{\FF}\subseteq\FF$, we have $\Hom_B(\overline{\TT},\overline{\FF})=0$.
For any $X\in\mod B$, take an exact sequence $0\to T\to X\to F\to0$ with $T\in\TT$ and $F\in\FF$.
Since $\mod B$ is closed under submodules and factor modules in $\mod A$, we have
$T\in\overline{\TT}$ and $F\in\overline{\FF}$. Thus the assertion follows.

\eqref{+-over} The equation $\phi_-(\TT)=\overline{\TT}$ is clear.
Let $\FF:=\TT^{\perp_A}$.
Then $\{Y\in\mod B\mid{}_AY\in\FF\}=\overline{\FF}$ holds.
Thus $\phi_+(\TT)={}^{\perp_B}\overline{\FF}=\overline{\TT}$ holds by \eqref{torsion pair}.

\eqref{split} Suppose $\S\in\tors B$.
Since $(B\otimes_A-)\circ{}_A(-)=\id_{\mod B}$, we have ${}_A\S\subseteq\phi^+(\S)$.
Thus by the definition of $\overline{(-)}$, we have $\S\subseteq\overline{\phi^+(\S)}$.
On the other hand, since $(\overline{(-)},\phi^+)=(\phi_+,\phi^+)$ is an adjoint pair by
Proposition~\ref{relating the maps}\eqref{adj+}, we have $\overline{\phi^+(\S)}\subseteq\S$
by Proposition~\ref{adjoint}.
Thus $\overline{\phi^+(\S)}=\S$ holds. 

The adjoint pair $(\phi^-,\overline{(-)})=(\phi^-,\phi_-)$ gives $\overline{\phi^-(\S)}\supseteq\S$. We have \[\S^{\perp_B}\subseteq \FF := \{X \in \mod A \mid \Hom_A(B, X) \in \S^{\perp_B}\},\] and hence $\overline{\phi^-(\S)} = \mod B \cap {}^{\perp_A} \FF\subseteq {}^{\perp_B}(\S^{\perp_B}) = \S$.

\eqref{surj morph}
By Theorem~\ref{relating the maps}\eqref{meet semi homs}\eqref{join semi homs}, $\overline{(-)}$ is a morphism of complete lattices.
By \eqref{split}, $\overline{(-)}$ is surjective.

\eqref{maxmin} Suppose $\overline\TT=\S$.
Then in particular, $\phi_+(\TT)=\overline\TT\subseteq\S$, so since $(\phi_+,\phi^+)$ is an adjoint pair by Proposition~\ref{relating the maps}\eqref{adj+}, we have $\TT\subseteq\phi^+(\S)$.
Similarly, since $\S\subseteq\phi_-(\TT)$ and $(\phi^-,\phi_-)$ is an adjoint pair by Proposition~\ref{relating the maps}\eqref{adj-}, we have $\phi^-(\S)\subseteq\TT$.
\end{proof}

As we saw, when $\phi: A \to B$ is surjective, $\phi_- = \phi_+ = \overline{(-)}$ is automatically a morphism of complete lattice. We give an open problem about $\phi^+$ and $\phi^-$, which are much more difficult to understand.

\begin{problem}\label{phi upper +- question}
 Characterize the surjective morphisms $\phi: A \twoheadrightarrow B$ of $k$-algebras for which $\phi^+:\tors B\to\tors A$ and $\phi^-:\tors B\to\tors A$ are morphisms of complete lattices.
\end{problem}

We know that when $\phi: A \to B$ is surjective, then $\phi_- = \phi_+ = \overline{(-)}$ preserves functorial finiteness. A question  of interest is the following one.

\begin{problem} \label{ffpres}
 For a morphism $\phi: A \to B$ of $k$-algebras, for each of $\phi^+$, $\phi^-$, $\phi_+$ and $\phi_-$, characterize when they preserve functorial finiteness.
\end{problem}

We give an example of a non-$\tau$-tilting finite algebra such that
Problem~\ref{phi upper +- question} has a positive answer,
which also shows some difficulty to solve the problem in general. Moreover, this example shows that the answer to Problem \ref{ffpres} is not always positive.

\begin{example}\label{kronecker}
Let $k$ be an algebraically closed field and $Q_m$ the $m$-Kronecker quiver 
\[\xymatrix{2\ar@/^/[r]^{a_1}|{}="A" \ar@/_/[r]_{a_m}|{}="B"  &1 \ar@{..}"A";"B"}  \] for $m \geq 2$ and $A_m=kQ_m$.
The Hasse quiver of $\sttilt A_m\cong\ftors A_m$ is given by
\makebox[\textwidth]{%
$
\xymatrix@C=.5cm@R=.3cm{&&&P_1\ar[drrrr]&&&&\\
 A_m=P_1\oplus P_2\ar[dr]\ar[urrr] &&&&&&& 0 \\
&P_2\oplus P_3\ar[r]&P_3\oplus P_4\ar[r]&\cdots\ar[r]&I_2\oplus I_3\ar[r]&I_1\oplus I_2\ar[r]&I_1\ar[ur]}
$}
where $P_{2i+1}:=\tau^{-i}(A_m e_1)$ and $P_{2i+2}:=\tau^{-i}(A_m e_2)$ are preprojective modules, and $I_{2i+1}:=\tau^i(\D(e_2 A_m))$ and $I_{2i+2}:=\tau^i(\D(e_1 A_m))$ are preinjective modules where $\D = \Hom_k(-,k)$. See also Example \ref{kronecker2} for a more detailed description of $\Hasse (\tors A)$ when $m = 2$.

Let $B := A_2/(a_2)$. Then we have $\overline{\mod A_2}=\mod B$, $\overline{\Fac P_1}=\add P_1$ and $\overline{0}=0$.

Any torsion class $\TT$ corresponding to the preprojective $\tau$-tilting modules except $A_2$
satisfies $\overline{\TT}=\S:=\add(B e_2 \oplus S_2)$.
In this case, $\phi^+(\S)=\Fac(P_2\oplus P_3)$ belongs to $\ftors A_2$, but $\phi^-(\S) \notin \ftors A_2$. Indeed, $\phi^-(\S) = \T(X_{(1,0)})$ where $X_{(1,0)}$ is the regular module of dimension vector $(1,1)$, $a_1$ acting as $1$ and $a_2$ acting as $0$.

Similarly, any torsion class $\TT$ corresponding to the preinjective support $\tau$-tilting modules except $0$
satisfies $\overline{\TT}=\S':=\add S_2$.
In this case $\phi^-(\S')=\add S_2 \in \ftors A_2$, but $\phi^+(\S') \notin \ftors A_2$. Indeed, $\phi^+(\S')$ consists of $X \in \mod A_2$ such that $a_2 X = e_1 X$, that is $\phi^+(\S') = \T(\{X_{(\lambda,\mu)}\}_{\mu \neq 0})$, using the above notation.
\end{example}

Suppose that $\phi:A\to B$ is a surjective morphism of $k$-algebras. If $A$ is $\tau$-tilting finite, by Proposition \ref{surj nice}\eqref{surj morph}, $\overline{(-)}:\ftors A\to\ftors B$ is surjective as $\ftors A = \tors A$. However, if we drop the assumption that $A$ is $\tau$-tilting finite, then it is not necessarily surjective, as shown by the following example, developed by the second author with Yingying Zhang.

\begin{example}\label{surjective question}
 Keeping the notation of Example \ref{kronecker}, consider the two algebras $A = A_3$ and $B = A_2 = A_3/(a_3)$.
 Then $\overline{(-)}:\ftors A\to\ftors B$ is not surjective. Indeed, consider $\TT \in \ftors A$.
Then $\TT = \Fac T$ for some $T \in \sttilt A$. By immediate inspection of the Auslander-Reiten quiver of $\mod A$ in Example \ref{kronecker}, there are three possibilities, excluding the case $T = 0$ or $T = A$:
 \begin{itemize}
  \item $T = P_1 = S_1$. In this case, $\overline \TT = \add S_1$.
  \item $T = I_\ell \oplus I_{\ell+1}$ for $\ell \geq 0$ (with $I_0 = 0$). In this case, $\overline \TT = \TT \cap \mod B = \add S_2$. Indeed $\TT = \add (I_i)_{i \leq \ell}$ and, for $i > 0$, $I_i \in \mod B$ if and only if $a_3 I_i = 0$ if and only if $i = 1$ and hence $I_i = I_1 = S_2$.
  \item $T = P_\ell \oplus P_{\ell+1}$ for $\ell \geq 2$. In this case, $\overline \TT = \Fac P_2$ holds. Indeed $\TT$ contains all indecomposable $A$-modules except $P_i$ for $i < \ell$, and the result follows by a similar argument as above.
 \end{itemize}
 So the image of $\overline{(-)}$ consists of $\mod B$, $\Fac P_2$, $\add S_1$, $\add S_2$ and $0$.
\end{example}

\subsection{Algebraic lattice quotients}\label{general sec}

We are now interested in lattice quotients of the form $\tors A \twoheadrightarrow \tors (A/I)$. We recall that the congruence corresponding to such a lattice quotient is called \emph{algebraic}.

 We summarize some results of Section \ref{gen-res} in lattice-theoretical language in the following result.
\begin{theorem}\label{eta op}
Let $A$ be a finite-dimensional $k$-algebra.
\begin{enumerate}[\rm(a)]
\item For any $I\in\Ideals A$, the map $\TT\mapsto\TT\cap\mod(A/I)$ is a surjective morphism of complete lattices from $\tors A$ to $\tors(A/I)$.
\item The congruence $\Theta_I$ inducing $\tors A \twoheadrightarrow \tors(A/I)$ is an arrow-determined complete congruence.
\item The map $\eta_A:\Ideals A\to\Con(\tors A)$ sending $I$ to $\Theta_I$ is a morphism of complete join-semilattices.
\item The class of $\tau$-tilting finite algebras is closed under taking factor algebras.
\end{enumerate}
\end{theorem}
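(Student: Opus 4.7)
The theorem has four parts: (a), (b), (d) essentially repackage earlier structural results applied to the surjection $\phi_I:A\twoheadrightarrow A/I$, while (c) is the substantive claim requiring the arrow-labelling machinery of Theorem~\ref{introanncont}. For (a), the map $\TT\mapsto \TT\cap\mod(A/I)$ is exactly $\phi_{I,-}=\phi_{I,+}=\overline{(-)}$, and Proposition~\ref{surj nice}\eqref{+-over}, \eqref{surj morph} states that it is a surjective morphism of complete lattices onto $\tors(A/I)$. For (b), Theorem~\ref{semidistributiveandbialg}(b) gives that $\tors(A/I)$ is bialgebraic and hence arrow-separated by Theorem~\ref{arrsep}; since $\tors A/\Theta_I\cong\tors(A/I)$, Proposition~\ref{arrdeteq} shows that $\Theta_I$ is arrow-determined, while its completeness is built into its construction from a morphism of complete lattices. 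For (d), Theorem~\ref{tors is complete lattice} characterises $\tau$-tilting finiteness of $A$ by finiteness of $\tors A$; by (a), $\tors(A/I)$ is then also finite, so $A/I$ is $\tau$-tilting finite.

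Part (c) is the heart of the theorem. Monotonicity is easy: $I\subseteq J$ implies $\mod(A/J)\subseteq\mod(A/I)$, and intersecting the defining identity of $\Theta_I$ with $\mod(A/J)$ gives $\Theta_I\le\Theta_J$; consequently $\bigvee_i\Theta_{I_i}\le\Theta_{\sum_i I_i}$ in $\Con(\tors A)$. For the reverse inequality, the key observation is $\mod(A/\sum_i I_i)=\bigcap_i\mod(A/I_i)$, translated into arrow-labels via the general form of Theorem~\ref{introanncont}: an arrow $q$ of $\Hasse(\tors A)$ is contracted by $\Theta_{\sum_i I_i}$ if and only if $\left(\sum_i I_i\right)S_q\ne 0$, if and only if some $I_jS_q\ne 0$, if and only if $q$ is contracted by some $\Theta_{I_j}$. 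Thus the set of arrows contracted by $\Theta_{\sum_i I_i}$ is precisely the union $\bigcup_i\II_{I_i}$ of the sets of arrows contracted by the individual $\Theta_{I_i}$.

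From here I would combine this with the isomorphism $\Comb(\tors A)\cong\Ideals(\Hasse_1(\tors A))$ of Theorem~\ref{mainiso}. Each $\II_{I_i}$ is a forcing ideal (by part (b) together with Theorem~\ref{mainiso}), so the union $\bigcup_i\II_{I_i}$ is still a forcing ideal and corresponds to $\bigvee_{\Comb}\Theta_{I_i}=\Theta_{\sum_i I_i}$. The remaining step, which I expect to be the main technical obstacle, is to lift this equality from $\Comb(\tors A)$ to $\Con(\tors A)$: given any $\Theta\in\Con(\tors A)$ with $\Theta\ge\Theta_{I_i}$ for all $i$, if $\TT\equiv_{\Theta_{\sum_i I_i}}\UU$, the final clause of Theorem~\ref{introanncont} together with the arrow-calculation above shows that every arrow $q$ in $\Hasse[\TT\wedge\UU,\TT\vee\UU]$ is contracted by some $\Theta_{I_j}$, so $\con(q)\le\Theta_{I_j}\le\Theta$ for each such $q$. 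One then uses the coincidence $\con=\com=\comb$ of forcing orders from Proposition~\ref{concom} (valid since $\tors A$ is bialgebraic and completely semidistributive by Theorem~\ref{semidistributiveandbialg}) together with the arrow-determined presentation of $\Theta_{\sum_i I_i}$ to assemble these covering-by-covering inclusions into $\TT\equiv_\Theta\UU$, yielding $\Theta\ge\Theta_{\sum_i I_i}$ and thus the desired equality in $\Con(\tors A)$.
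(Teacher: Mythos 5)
Parts (a), (b), (d) and the easy inequality $\Join_{I\in\II}\Theta_I\le\Theta_{\sum\II}$ in (c) are correct and match the paper (for (b) the paper likewise combines Proposition~\ref{arrdeteq} with arrow-separatedness of $\tors(A/I)$; for (d) it simply cites (a)). Your computation of the contracted arrows via the brick labelling is also correct, and it is not circular, since the proof of Theorem~\ref{annihilate contract0}(a) uses only parts (a)--(b) of the present theorem — though it would force a reordering of the exposition. What this computation actually establishes, via Theorem~\ref{mainiso}, is that $\Theta_{\sum\II}$ is the join of the $\Theta_I$ \emph{computed in $\Comb(\tors A)$}. Since $\Comb(\tors A)$ is only a complete meet-sublattice of $\Con(\tors A)$, this gives $\Theta_{\sum\II}=\Join_{\Comb}\Theta_I\ge\Join_{\Con}\Theta_I$, which is the inequality you already had; it does not give the hard direction $\Theta_{\sum\II}\le\Join_{\Con}\Theta_I$.

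The final lifting step is where the proof genuinely breaks. Knowing that an arbitrary $\Theta\in\Con(\tors A)$ with $\Theta\ge\Theta_{I_j}$ for all $j$ contracts every arrow of $\Hasse[\UU,\TT]$ does not yield $\TT\equiv_\Theta\UU$ unless $\Theta$ is arrow-determined, which a general congruence is not — this is precisely the distinction the paper sets up between $\Con$, $\Com$ and $\Comb$ (and it explicitly warns that $\con(\S)$, $\com(\S)$, $\comb(\S)$ differ in general for a \emph{set} $\S$ of arrows). Proposition~\ref{concom} only identifies the forcing preorders on single arrows and cannot assemble arrow-by-arrow contractions into the relation $\TT\equiv_\Theta\UU$ across a possibly infinite interval. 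Since the join in $\Con(\tors A)$ is the transitive closure of $\bigcup_I\Theta_I$, what is needed is a \emph{finite chain} from $\TT$ to $\UU$ whose consecutive terms are $\Theta_{I_j}$-equivalent. The paper produces one explicitly: writing $\TT^I:=\piup_{\Theta_I}(\TT)=\{X\mid X/IX\in\TT\}$, one checks $(\TT^I)^J=\TT^{(I+J)}$, whence $\TT\equiv_{\Theta_I}\TT^I\equiv_{\Theta_J}\UU^I\equiv_{\Theta_I}\UU$ whenever $\TT\equiv_{\Theta_{I+J}}\UU$; this proves $\Theta_I\join\Theta_J=\Theta_{I+J}$ in $\Con(\tors A)$, and the infinite case reduces to the finite one because finite-dimensionality of $A$ gives $\sum\II=\sum\II'$ for some finite $\II'\subseteq\II$. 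Some argument of this kind is indispensable, and your proposal contains no substitute for it.
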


\begin{proof}
 (a) This is Proposition~\ref{surj nice}(d).

 (b) It is complete by (a). Then it is arrow-determined by Theorem~\ref{semidistributiveandbialg}(b) and Proposition \ref{arrdeteq}.

 (c) Let $\II \subseteq \Ideals A$. We will write $\sum\II$ for $\sum_{I \in \II} I$.
For $I \in \II$, we have $\mod(A/\sum \II)\subseteq\mod(A/I)$. So if $\TT, \UU \in \tors A$ satisfy $\TT \equiv_{\Theta_I} \UU$ (that is $\TT \cap \mod (A/I) = \UU \cap \mod (A/I)$), they also satisfy $\TT \cap \mod (A/\sum \II) = \UU \cap \mod (A/\sum \II)$, so $\TT \equiv_{\Theta_{\sum \II}} \UU$. We proved that $\Theta_I \le \Theta_{\sum \II}$ for any $I \in \II$. So $\Join_{I \in \II} \Theta_I \le \Theta_{\sum \II}$. In the rest, we prove the opposite inequality.

 For $I \in \Ideals A$ and $\TT \in \tors A$, let $\TT^{ I} := \smash{\piup_{\Theta_I}(\TT)}$ for simplicity. By Proposition \ref{surj nice}\eqref{maxmin}, we get
 \[\TT^{ I} = \{X \in \mod A \mid (A/I) \otimes_A X \in \TT \cap \mod (A/I) \} = \{X \in \mod A \mid X/IX \in \TT\}.\]

 Let now $I, J \in \Ideals A$. For $\TT \in \tors A$, we have
 \begin{align*}(\TT^{ I})^{ J} &= \{X \in \mod A \mid (X/JX)/(I (X/JX)) \in \TT\} \\ &= \{X \in \mod A \mid X/(I+J)X \in \TT\} = \TT^{ (I+J)}.\end{align*}
 Therefore, if $\TT, \UU \in \tors A$ satisfy $\TT \equiv_{\Theta_{I + J}} \UU$, we have $(\TT^{ I})^{ J} = (\UU^{ I})^{ J}$. So $\TT^{ I} \equiv_{\Theta_J} \UU^{ I}$. Finally, we get the sequence
  \[\TT \equiv_{\Theta_I} \TT^{ I} \equiv_{\Theta_J} \UU^{ I} \equiv_{\Theta_I} \UU,\]
 so $\TT \equiv_{\Theta_I \join \Theta_J} \UU$. We have proved that $\Theta_I \join \Theta_J = \Theta_{I+J}$.

 As $A$ is finite-dimensional, there exists $\II' \subseteq \II$ finite such that $\sum \II' = \sum \II$. So
 \[\Theta_{\sum \II} = \Theta_{\sum \II'} = \Join_{I \in \II'} \Theta_I \le \Join_{I \in \II} \Theta_I.\]

 (d) This is an immediate consequence of (a).
\end{proof}

Thanks to Theorem~\ref{eta op}, we have a morphism of complete join-semilattices $\eta_A: \Ideals A \to \Com(\tors A), I \mapsto \Theta_I$. As this map is usually not surjective, and as the case of lattice quotients coming from algebra quotients is of particular interest, we study the image $\AlgCon A$ of $\eta_A$. As a consequence of Theorem \ref{eta op}, we get:

\begin{theorem} \label{algc}
 The set $\AlgCon A$ of algebraic congruences is a complete join-sublattice of $\Con(\tors A)$, of $\Com(\tors A)$, and of $\Comb(\tors A)$. Hence it is a complete lattice.
\end{theorem}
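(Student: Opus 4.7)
The plan is to deduce Theorem~\ref{algc} as an essentially immediate consequence of parts (b) and (c) of Theorem~\ref{eta op}, together with the elementary observation that joins computed in three nested posets coincide on elements of the smallest one whenever the join in the ambient poset lands in it.

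First I would record the chain of inclusions. By Theorem~\ref{eta op}(b), each $\Theta_I$ is arrow-determined, so $\AlgCon A = \eta_A(\Ideals A) \subseteq \Comb(\tors A)$; combined with Proposition~\ref{combmeetsl}, this gives $\AlgCon A \subseteq \Comb(\tors A) \subseteq \Com(\tors A) \subseteq \Con(\tors A)$ as subsets.

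Next, I would apply Theorem~\ref{eta op}(c): for any family $\{\Theta_i\}_{i \in \II} \subseteq \AlgCon A$, pick $I_i \in \Ideals A$ with $\eta_A(I_i) = \Theta_i$ and compute, in $\Con(\tors A)$,
\[
\Join_{i \in \II} \Theta_i \;=\; \eta_A\Bigl(\sum_{i \in \II} I_i\Bigr) \;\in\; \AlgCon A.
\]
Call this congruence $\Theta$. Thus $\AlgCon A$ is closed under arbitrary joins computed in $\Con(\tors A)$. To promote this to $\Com(\tors A)$ and $\Comb(\tors A)$, I would argue that the three joins agree on $\AlgCon A$: since $\Theta$ lies in the smallest of the three posets $\Comb(\tors A)$, and any upper bound of $\{\Theta_i\}$ computed inside $\Comb(\tors A)$ or $\Com(\tors A)$ is \emph{a fortiori} an upper bound inside $\Con(\tors A)$ (hence $\geq \Theta$), the element $\Theta$ is the least upper bound in all three. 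Therefore $\AlgCon A$ is a complete join-sublattice of $\Con(\tors A)$, of $\Com(\tors A)$, and of $\Comb(\tors A)$.

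Finally, to conclude that $\AlgCon A$ is itself a complete lattice, I would note that $0 \in \Ideals A$ yields the trivial congruence $\Theta_0 \in \AlgCon A$, which is the bottom element of $\Con(\tors A)$. A subset of a complete lattice closed under arbitrary joins and containing a least element is automatically a complete lattice, meets in $\AlgCon A$ being recovered as joins of the (nonempty) sets of lower bounds. There is no real obstacle here: all work has been done in Theorem~\ref{eta op}, and the only point requiring a small verification is that joins in $\Con(\tors A)$ and in $\Comb(\tors A)$ coincide on members of $\AlgCon A$, which follows immediately from the fact that the former join already lies in the latter subset.
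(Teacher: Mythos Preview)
Your proof is correct and follows essentially the same route as the paper: both invoke Theorem~\ref{eta op}(c) to see that joins in $\Con(\tors A)$ of algebraic congruences are again algebraic, then use Theorem~\ref{eta op}(b) and the inclusions $\Comb(\tors A)\subseteq\Com(\tors A)\subseteq\Con(\tors A)$ to conclude that this same element serves as the join in all three posets. Your explicit mention of $\Theta_0$ as a bottom element is a slight elaboration, but the argument is the same.
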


\begin{proof}
 By Theorem \ref{eta op}(c), $\eta_A$ is a morphism of complete join-semilattices. Hence its image $\AlgCon A$ is a complete join-sublattice of $\Con(\tors A)$, and hence itself a complete lattice. Consider $\II \subseteq \Ideals A$. We know that $\Theta_{\sum \II}$ is the smallest congruence that is bigger than all $\Theta_I$ for $I \in \II$. Additionally, by Theorem \ref{eta op}(b), $\Theta_{\sum \II} \in \Comb(\tors A) \subseteq \Com(\tors A) \subseteq \Con(\tors A)$ so $\Theta_{\sum \II}$ is also the smallest complete congruence and the smallest arrow-determined complete congruence that is bigger than all $\Theta_I$ for $I \in \II$. So $\AlgCon A$ is also a complete join-sublattice of $\Com(\tors A)$ and $\Comb(\tors A)$.
\end{proof}

Recall that by Proposition \ref{combmeetsl}, $\Comb(\tors A)$ is a complete meet-sublattice of $\Com(\tors A)$ which is in turn a complete meet-sublattice of $\Con(\tors A)$.
In both case, it is clear that they are not complete join-sublattices, so the three statements of Theorem \ref{algc} are not just obtained by composition of morphisms of complete join-semilattices.

Note that $\eta_A$ is not necessarily a morphism of lattices:

\begin{example}
Let $A$ be the path algebra of the quiver \[\xymatrix@C1em{1&2\ar[l]_a\ar[r]^b&3},\] and let $I_1:= (a)$ and $I_2:= (b)$. Then $I_1\cap I_2=0$ holds. Notice that $\Hasse(\tors A)$ contains an arrow $\Fac P_2 \to \Fac (P_2/S_1 \oplus P_2/S_3)$, that is contracted by $\Theta_{I_1}$ and $\Theta_{I_2}$ hence by $\Theta_{I_1} \meet \Theta_{I_2}$. So we have $\Theta_{I_1\cap I_2} = \Theta_0 \neq\Theta_{I_1}\meet \Theta_{I_2}$.
This example also shows that $\AlgCon A$ is not a sublattice of $\Con(\tors A)$ since it is easy to check that $\Theta_{I_1}\meet \Theta_{I_2}$ is not an algebraic congruence.
\end{example}

We get the following important characterization of an algebraic congruence~$\Theta_I$ in terms of bricks. As $\mod (A/I)$ is a full subcategory of $\mod A$, we naturally identify $\brick (A/I)$ with the subset $\{S \in \brick A \mid IS = 0\}$ of $\brick A$.

 \begin{theorem} \label{annihilate contract0}
  Let $A$ be a finite-dimensional $k$-algebra and $I \in \Ideals A$. Then the following hold:
  \begin{enumerate}[\rm(a)]
   \item An arrow $q$ in $\Hasse(\tors A)$ is not contracted by $\Theta_I$ if and only if $S_q$ is in $\mod(A/I)$. Moreover, in this case, it has the same label in $\Hasse(\tors A)$ and $\Hasse(\tors (A/I))$.
   \item Consider two torsion classes $\UU \subseteq \TT$ in $\mod A$. We have $\TT \equiv_{\Theta_I} \UU$ if and only if, for every brick $S$ in $\TT \cap \UU^\perp$, $IS \neq 0$.
  \end{enumerate}
 \end{theorem}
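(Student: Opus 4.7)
My plan is to prove (a) directly using the brick description of $\TT\cap\UU^\perp$ from Theorem~\ref{difftors2}(b), and then derive (b) by combining (a) with the fact that $\Theta_I$ is arrow-determined (Theorem~\ref{eta op}(b)) and the bijection of Theorem~\ref{dtcd}(a).

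For part (a), let $q:\TT\to\UU$ be an arrow, so $\TT\cap\UU^\perp=\Filt S_q$. If $S_q\in\mod(A/I)$, then $S_q$ is a nonzero element of $\TT\cap\mod(A/I)$ that does not lie in $\UU$ (since $0\neq S_q\in\UU^\perp$), witnessing $\TT\cap\mod(A/I)\neq\UU\cap\mod(A/I)$; so $q$ is not contracted. Conversely, if $q$ is not contracted, pick $X\in(\TT\cap\mod(A/I))\setminus\UU$ and take the canonical sequence $0\to U\to X\to X'\to 0$ with $U\in\UU$, $X'\in\UU^\perp$. Then $X'$ is nonzero, lies in $\TT$ (a quotient of $X$), and lies in $\mod(A/I)$ (closed under quotients), so $0\neq X'\in\Filt S_q\cap\mod(A/I)$. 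Since $S_q$ is a composition factor of any nonzero object of $\Filt S_q$ and $\mod(A/I)$ is closed under subfactors, $S_q\in\mod(A/I)$.

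For the "same label" statement, I must identify the label $S_{q'}$ of the image arrow $q':\TT\cap\mod(A/I)\to\UU\cap\mod(A/I)$ in $\Hasse(\tors(A/I))$. The key identity is
\[ (\UU\cap\mod(A/I))^{\perp_{A/I}}=\UU^\perp\cap\mod(A/I), \]
which holds because any morphism from $U\in\UU$ to $X\in\mod(A/I)$ factors through the quotient $U/IU\in\UU\cap\mod(A/I)$. Hence $S_{q'}$ is the unique brick of $\TT\cap\UU^\perp\cap\mod(A/I)=\Filt(S_q)\cap\mod(A/I)$, which equals $\Filt S_q$ when $S_q\in\mod(A/I)$ since $\mod(A/I)$ is closed under extensions. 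Thus $S_{q'}\cong S_q$.

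For part (b), since $\Theta_I$ is arrow-determined by Theorem~\ref{eta op}(b), the condition $\TT\equiv_{\Theta_I}\UU$ is equivalent to every arrow in $\Hasse[\UU,\TT]$ being contracted by $\Theta_I$. By (a), this translates to saying no arrow of $\Hasse[\UU,\TT]$ has its label in $\mod(A/I)$. Now every arrow $q':\TT'\to\UU'$ in $\Hasse[\UU,\TT]$ has $S_{q'}\in\TT'\cap\UU'^\perp\subseteq\TT\cap\UU^\perp$, and conversely by Theorem~\ref{dtcd}(a) every brick of $\TT\cap\UU^\perp$ arises as such a label. Hence the condition is equivalent to no brick in $\TT\cap\UU^\perp$ lying in $\mod(A/I)$, that is, $IS\neq 0$ for every brick $S$ in $\TT\cap\UU^\perp$.

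The only delicate step is the identification of perpendiculars inside $\mod(A/I)$ used to verify that labels are preserved; everything else is a direct assembly of previously established results (the brick labelling theorem, arrow-determinedness of algebraic congruences, and the completely join-irreducible/brick bijection).
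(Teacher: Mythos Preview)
Your proof is correct. The paper, however, reverses the order: it first proves (b) directly, then deduces (a) as the special case where $\brick[\UU,\TT]=\{S_q\}$. For (b), the paper isolates your perpendicular identity $(\UU\cap\mod(A/I))^{\perp_{A/I}}=\UU^\perp\cap\mod(A/I)$ as a separate lemma (Lemma~\ref{anncl}) stating that the bricks in $\overline\TT\cap\overline\UU^\perp$ are exactly the bricks in $\TT\cap\UU^\perp$ lying in $\mod(A/I)$, and then simply applies Theorem~\ref{difftors2}(a): $\overline\TT=\overline\UU$ iff there are no bricks in $\overline\TT\cap\overline\UU^\perp$. This avoids invoking arrow-determinedness (Theorem~\ref{eta op}(b)) and the $\cjirr$/brick bijection of Theorem~\ref{dtcd}(a), which you use to translate between ``all arrows of $\Hasse[\UU,\TT]$ are contracted'' and ``no brick of $\TT\cap\UU^\perp$ survives''. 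So the paper's argument is more self-contained and lighter in prerequisites, while your route is equally valid and perhaps more conceptual, since it explains exactly why arrow-determinedness is the right framework for describing $\Theta_I$.
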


 We start by a lemma.
 \begin{lemma} \label{anncl}
  Under the assumptions of \textup{Theorem \ref{annihilate contract0}(b)}, the bricks in $\overline \TT \cap \overline \UU^\perp$ are exactly the bricks of $\mod (A/I)$ that are in $\TT \cap \UU^\perp$.
 \end{lemma}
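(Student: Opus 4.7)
The plan is to verify both inclusions by exploiting the fact that $\overline{\TT} = \TT \cap \mod(A/I)$ and $\overline{\UU} = \UU \cap \mod(A/I)$, together with the key observation that any brick $S$ annihilated by $I$ is a brick over $A$ if and only if it is a brick over $A/I$ (the endomorphism rings coincide via the fully faithful inclusion $\mod(A/I) \hookrightarrow \mod A$). In particular, the $\Hom$ groups computed in $\mod A$ and $\mod(A/I)$ agree for pairs of $(A/I)$-modules, so $\overline{\UU}^{\perp_{A/I}}$ and $\overline{\UU}^{\perp_A} \cap \mod(A/I)$ coincide.

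For the easier inclusion, suppose $S \in \brick A$ lies in $\mod(A/I)$ and belongs to $\TT \cap \UU^\perp$. Then $S \in \overline{\TT}$ by definition, and since $\overline{\UU} \subseteq \UU$ we have $\Hom_A(\overline{\UU}, S) \subseteq \Hom_A(\UU, S) = 0$, so $S \in \overline{\UU}^\perp$. Thus $S$ is a brick in $\overline{\TT} \cap \overline{\UU}^\perp$.

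The substantive direction is the reverse: let $S$ be a brick in $\overline{\TT} \cap \overline{\UU}^\perp$. Then $S \in \mod(A/I)$, $S \in \TT$ (since $\overline{\TT} \subseteq \TT$), and $S$ is a brick in $\mod A$ by the first paragraph. It remains to show $S \in \UU^\perp$, i.e.\ $\Hom_A(U, S) = 0$ for every $U \in \UU$. The key step, and really the only non-formal one, is the following factorization argument: since $IS = 0$, any morphism $f: U \to S$ factors through the canonical surjection $U \twoheadrightarrow U/IU$. Since $\UU$ is a torsion class, it is closed under quotients, so $U/IU \in \UU$; and clearly $U/IU \in \mod(A/I)$, whence $U/IU \in \overline{\UU}$. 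Thus $f$ factors through $\Hom_A(\overline{\UU}, S) = 0$, forcing $f = 0$.

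I do not foresee a genuine obstacle: everything reduces to the fact that $\UU$ is closed under quotients combined with $IS = 0$, and the compatibility of the $\Hom$ and brick notions between $\mod A$ and $\mod(A/I)$. The only subtlety worth flagging in the write-up is being explicit that "brick" is unambiguous here, i.e.\ that the endomorphism-ring computation is the same whether $S$ is viewed over $A$ or over $A/I$.
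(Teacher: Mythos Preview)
Your proof is correct and follows the same approach as the paper's. The paper simply asserts that $\overline{\UU}^\perp = \UU^\perp \cap \mod(A/I)$ is ``immediate''; your factorization argument through $U/IU$ is exactly what justifies this claim, so you have spelled out the detail the paper left implicit.
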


 \begin{proof}
  Recall that $\overline \TT = \TT \cap \mod(A/I)$ and $\overline \UU = \UU \cap \mod(A/I)$. It is immediate that $\overline \UU^\perp = \UU^\perp \cap \mod(A/I)$
and $\brick(A/I) = \brick A \cap \mod (A/I)$, so the result follows.
 \end{proof}

 \begin{proof}[Proof of Theorem~\ref{annihilate contract0}]
  (b) By definition, $\TT \equiv_{\Theta_I} \UU$ if and only if $\overline \TT = \overline \UU$. According to Theorem
\ref{difftors2}(a), this holds if and only if $\smash{\overline \TT \cap \overline \UU^\perp}$ contains no brick. So, by Lemma \ref{anncl}, $\TT \equiv_{\Theta_I} \UU$ if and only if no brick of $\TT \cap \UU^\perp$ is in $\mod(A/I)$, and the result follows.

  (a) Let $q: \TT\to\UU$ be an arrow in $\Hasse(\tors A)$. By definition, $S_q$ is the unique brick in $\TT \cap \UU^\perp$. Hence, by (b), $q$ is contracted, that is $\overline \TT = \overline \UU$, if and only if $I S_q \neq 0$, if and only if $S_q \notin \mod(A/I)$. If it is not the case, according to Theorem Lemma \ref{anncl}, $\smash{\overline \TT \cap \overline \UU^\perp}$ contains only the brick $S_q$, hence the arrow $\overline \TT \to \overline \UU$ is labelled by $S_q$.
 \end{proof}

Recall that the lattice $\Con L$ of congruences on a lattice $L$ has $\Phi\le\Theta$ if and only if for $x, y \in L$, if $x \equiv_\Phi y$ implies $x \equiv_\Theta y$.
When $\Phi$ and $\Theta$ are arrow-determined, $\Phi\le\Theta$ if and only if the set of Hasse arrows contracted by $\Phi$ is contained in the set of arrows contracted by $\Theta$.
As an immediate consequence of Theorems~\ref{eta op} and \ref{annihilate contract0}, we have the following characterization of $\AlgCon A$, the restriction of $\Con(\tors A)$ to algebraic congruences.

\begin{corollary}\label{contain refine}
Let $A$ be a finite-dimensional $k$-algebra.
Then $I, J \in \Ideals A$ satisfy $\Theta_I\le\Theta_J$ in $\AlgCon A$ if and only if $\brick(A/I)\supseteq\brick(A/J)$.
\end{corollary}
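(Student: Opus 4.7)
The plan is to assemble three previously-established facts: the fact that algebraic congruences are arrow-determined, the characterization of arrows contracted by $\Theta_I$ via their brick labels, and the existence of an arrow labelled by every brick.

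First, I will observe that the partial order on $\AlgCon A$ is just the restriction of the order on $\Con(\tors A)$, and that by Theorem \ref{eta op}(b) both $\Theta_I$ and $\Theta_J$ are arrow-determined complete congruences. As noted in the paragraph just above the statement, arrow-determined congruences are comparable in $\Con(\tors A)$ exactly when their sets of contracted Hasse arrows are comparable by inclusion. Hence $\Theta_I\le\Theta_J$ is equivalent to: every arrow of $\Hasse(\tors A)$ contracted by $\Theta_I$ is also contracted by $\Theta_J$.

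Next, I will translate this condition on arrows into a condition on brick labels using Theorem \ref{annihilate contract0}(a), which states that an arrow $q$ fails to be contracted by $\Theta_I$ precisely when $S_q\in\mod(A/I)$. Taking contrapositives, the containment of contracted arrow sets becomes the implication $S_q\in\mod(A/J)\Rightarrow S_q\in\mod(A/I)$, quantified over all arrows $q$ of $\Hasse(\tors A)$.

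Finally, to convert this quantification over arrow labels into the desired quantification over all bricks, I will apply Theorem \ref{difftors2}(c): every brick $S\in\brick A$ occurs as the label of (the unique arrow emanating from) the corresponding completely join-irreducible torsion class. Therefore the set of labels of arrows in $\Hasse(\tors A)$ is exactly $\brick A$, so the implication above is equivalent to $\brick(A/J)\subseteq\brick(A/I)$, completing the proof. There is no genuine obstacle here — the statement is flagged as an immediate corollary — so the only care required is to verify that the order on $\AlgCon A$ really does agree with the order on $\Con(\tors A)$ (which follows since $\AlgCon A$ is defined as a subposet of $\Com(\tors A)\subseteq\Con(\tors A)$ via Theorem \ref{algc}).
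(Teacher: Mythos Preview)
Your proof is correct and follows essentially the same approach as the paper: both use arrow-determinedness (Theorem~\ref{eta op}(b)) to reduce the comparison of congruences to containment of contracted arrows, translate via Theorem~\ref{annihilate contract0} to a condition on brick labels, and invoke Theorem~\ref{difftors2}(c) to ensure every brick appears as a label. The only cosmetic difference is organizational---you handle both directions at once through the arrow-set reformulation, whereas the paper treats each implication separately (and cites part (b) of Theorem~\ref{annihilate contract0} rather than (a), which amounts to the same thing for arrows).
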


\begin{proof}
  First, if $\brick(A/I)\supseteq\brick(A/J)$, by Theorem \ref{annihilate contract0}(b), arrows contracted by $\Theta_I$ are also contracted by $\Theta_J$, hence we get $\Theta_I\le\Theta_J$ as $\Theta_I$ and $\Theta_J$ are arrow-determined by Theorem~\ref{eta op}(b).

  Suppose now that $\Theta_I\le\Theta_J$ and let $S \in \brick(A/J)$. By Theorem \ref{difftors2}(c), there exists an arrow $\TT \to \UU$ in $\Hasse(\tors A)$ labelled by $S$.
  By Theorem \ref{annihilate contract0}(b), $S \in \brick(A/J)$ implies that $\TT \not\equiv_{\Theta_J} \UU$, hence, as $\Theta_I\le\Theta_J$, $\TT \not\equiv_{\Theta_I} \UU$ so, again by Theorem \ref{annihilate contract0}(b), there is a brick in $\TT \cap \UU^\perp$ that is in $\mod (A/I)$. As $S$ is the only brick in $\TT \cap \UU^\perp$, $S \in \brick (A/I)$.
\end{proof}

We now relate $\AlgCon A$ and $\AlgCon (A/I)$ for an ideal $I$ of $A$.
\begin{proposition} \label{algconquot}
 Let $A$ be a finite-dimensional $k$-algebra and $I \in \Ideals A$. Then there exist two unique maps $\iota_I$ and $\varepsilon_I$ making the following diagram commutative:
 \[
  \xymatrix{
   \Ideals(A/I) \ar@{^{(}->}[r]^-{\phi^{-1}} \ar@{->>}[d]_{\eta_{A/I}} & \Ideals A \ar@{->>}[r]^-{J \mapsto J+I} \ar@{->>}[d]_{\eta_A} & \Ideals(A/I) \ar@{->>}[d]^{\eta_{A/I}} \\
   \AlgCon (A/I) \ar@{^{(}->}[r]_-{\iota_I}  &  \AlgCon A \ar@{->>}[r]_-{\varepsilon_I} & \AlgCon (A/I).
  }
 \]
 where $\phi: A \to A/I$ is the canonical surjection. Moreover,
 \begin{enumerate}[\rm(a)]
  \item $\varepsilon_I \circ \iota_I = \id_{\AlgCon(A/I)}$ ;
  \item $\iota_I \circ \varepsilon_I(\Theta) = \Theta \join \Theta_I$ for any $\Theta \in \AlgCon A$ ;
  \item $\Image \iota_I = [\Theta_I, \Theta_A]$ ($\Theta_A$ identifies all torsion classes) ;
  \item $\iota_I$ is a morphism of complete lattices ;
  \item $\varepsilon_I$ is a morphism of complete join-semilattices.
 \end{enumerate}
\end{proposition}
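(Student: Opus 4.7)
The plan is to construct $\iota_I$ and $\varepsilon_I$ on ideal representatives using Corollary~\ref{contain refine} for well-definedness, then verify (a)--(e) by routine manipulations.

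First, since $\eta_{A/I}$ and $\eta_A$ are surjective (Theorem~\ref{eta op}), commutativity of the two squares determines both $\iota_I$ and $\varepsilon_I$ uniquely once existence is established. For existence I would set
\[ \iota_I(\Theta_{J/I}) := \Theta_J \quad (J \in \Ideals A, \ J \supseteq I), \qquad \varepsilon_I(\Theta_J) := \Theta_{(J+I)/I} \quad (J \in \Ideals A). \]
Well-definedness of $\iota_I$ amounts to the implication: if $J, J' \supseteq I$ satisfy $\Theta_{J/I} = \Theta_{J'/I}$, then $\Theta_J = \Theta_{J'}$. By Corollary~\ref{contain refine} applied to $A/I$, the hypothesis reads $\brick((A/I)/(J/I)) = \brick((A/I)/(J'/I))$; under the identification $\mod((A/I)/(J/I)) = \mod(A/J)$ as full subcategories of $\mod A$, this becomes $\brick(A/J) = \brick(A/J')$, and Corollary~\ref{contain refine} applied to $A$ yields $\Theta_J = \Theta_{J'}$. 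Well-definedness of $\varepsilon_I$ follows similarly from $\brick(A/(J+I)) = \brick(A/J) \cap \brick(A/I)$.

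Properties (a), (b), (e) are direct calculations: $\varepsilon_I \iota_I(\Theta_{J/I}) = \Theta_{(J+I)/I} = \Theta_{J/I}$ since $J \supseteq I$; $\iota_I \varepsilon_I(\Theta_J) = \Theta_{J+I} = \Theta_J \join \Theta_I$ by Theorem~\ref{eta op}(c); and $\varepsilon_I(\Join_k \Theta_{J_k}) = \Theta_{(\sum_k J_k + I)/I} = \Join_k \Theta_{(J_k+I)/I} = \Join_k \varepsilon_I(\Theta_{J_k})$, again by Theorem~\ref{eta op}(c) applied to both algebras. For (c), $\Image \iota_I = \{\Theta_J : J \supseteq I\}$: any such $\Theta_J$ satisfies $\Theta_J \ge \Theta_I$ since $\brick(A/J) \subseteq \brick(A/I)$ by Corollary~\ref{contain refine}; conversely, $\Theta_J \ge \Theta_I$ gives $\brick(A/J) \subseteq \brick(A/I)$, whence $\brick(A/J) = \brick(A/(J+I))$ and so $\Theta_J = \Theta_{J+I} \in \Image \iota_I$.

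For (d), order-preservation and order-reflection of $\iota_I$ both follow from Corollary~\ref{contain refine} applied to $A$ and to $A/I$, making $\iota_I$ an order-embedding onto $[\Theta_I, \Theta_A] = \Image \iota_I$. This interval is upward-closed in $\AlgCon A$, so meets in $\AlgCon A$ of elements of the interval remain in the interval, and $[\Theta_I, \Theta_A]$ is a complete sublattice of $\AlgCon A$. Join preservation of $\iota_I$ is the same calculation as for $\varepsilon_I$. For meet preservation, given $\Psi_k \in \AlgCon(A/I)$, let $\Phi := \Meet_k \iota_I(\Psi_k)$ in $\AlgCon A$; then $\Phi \in [\Theta_I, \Theta_A]$, so $\Phi = \iota_I(\Psi)$ for a unique $\Psi$, and order-reflection of $\iota_I$ forces $\Psi = \Meet_k \Psi_k$ in $\AlgCon(A/I)$, completing (d). The main obstacle is the well-definedness of $\iota_I$, which hinges on the nontrivial translation between algebraic congruences and bricks supplied by Corollary~\ref{contain refine}; the remainder is formal lattice-theoretic bookkeeping.
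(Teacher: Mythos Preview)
Your proof is correct and follows essentially the same approach as the paper; the only tactical difference is that the paper obtains well-definedness of $\iota_I$ by noting that $\eta_A(\phi^{-1}(J))$ is the congruence of the composite $\tors A \twoheadrightarrow \tors(A/I) \twoheadrightarrow \tors((A/I)/J)$, which visibly depends only on $\eta_{A/I}(J)$, rather than going through Corollary~\ref{contain refine}. One small slip in your argument for (d): ``upward-closed, so meets remain in the interval'' is not the right implication (upward-closed sets are closed under \emph{joins}); the correct reason $[\Theta_I,\Theta_A]$ is closed under meets is simply that $\Theta_I$ is a common lower bound, hence lies below any meet of elements of the interval.
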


\begin{proof}
 Let $J$ be an ideal of $A/I$. The congruence $\eta_{A}(\phi^{-1}(J))$ corresponds to the surjective complete lattice morphism
 \[\tors A \twoheadrightarrow \tors (A/I) \twoheadrightarrow \tors ((A/I)/J) = \tors (A/\phi^{-1}(J))\]
 so it only depends on $\eta_{A/I}(J)$, so $\iota_I$ exists. As $\eta_{A/I}$ is surjective, $\iota_I$ is unique.

 Suppose that $J_1, J_2 \in \Ideals A$ satisfy $\eta_A(J_1) = \eta_A(J_2)$. By Corollary \ref{contain refine}, this is equivalent to $\brick (A/J_1) = \brick (A/J_2)$. This implies
 \begin{align*}\brick (A/(I+J_1)) &= \{S \in \brick (A/J_1) \mid I S = 0\} \\ &= \{S \in \brick (A/J_2) \mid I S = 0\} = \brick (A/(I+J_2)),\end{align*}
 so by Corollary \ref{contain refine} again, $\eta_{A/I}(J_1 +  I) = \eta_{A/I}(J_2 +  I)$ and $\varepsilon_I$ exists and is unique as before.

 (a) As the composition of the two maps of the upper row is the identity of $\Ideals(A/I)$, and $\eta_{A/I}$ is surjective, $\varepsilon_I \circ \iota_I = \id_{\AlgCon(A/I)}$.

 (b) For $J \in \Ideals A$, $\phi^{-1}(J + I) = J + I$, so, as $\eta_A$ is a morphism of complete join-semilattices, $\eta_A(\phi^{-1}(J + I)) = \eta_A(J) \join \eta_A(I) = \eta_A(J) \join \Theta_I$. On the other hand, using the commutative diagram, $\eta_A(\phi^{-1}(J + I)) = \iota_I(\epsilon_I(\eta_A(J)))$, so the assertion follows as $\eta_A$ is surjective.

 (c) This is a clear consequence of (b).

 (d) By (a) and (c), $\iota_I$ is an inclusion, with image a complete sublattice, hence $\iota_I$ is a morphism of complete lattices.

 (e) By Theorem \ref{eta op}(c), $\eta_A$ and $\eta_{A/I}$ are both morphisms of complete join-semilattices. Moreover, it is elementary that  $J \mapsto J + I$ is a morphism of complete join-semilattices. It follows easily that $\varepsilon_I$ is a morphism of complete join-semilattices.
\end{proof}

\begin{remark}
 In Proposition \ref{algconquot}, $\varepsilon_I$ is not a morphism of lattices in general. For example, consider the Kronecker quiver as in Example \ref{kronecker2}. Let $I = (a)$, $J = (b)$ and $J' = (a-b)$. As $\eta_A$ is a morphism of complete join-semilattices, we get easily $\varepsilon_I(\Theta_J) = \varepsilon_I(\Theta_{J'}) = \Theta_{(b)}$. On the other hand, $J S_{(1:0)} = 0$ and $J' S_{(1:1)} = 0$. It is immediate that the only ideal that annihilates both $S_{(1:0)}$ and $S_{(1:1)}$ is $0$, so, by Theorem \ref{annihilate contract0}, $\Theta_J \meet \Theta_{J'} = 0$. Finally, $\varepsilon_I(\Theta_J \meet \Theta_{J'}) = 0 \neq \Theta_{(b)} = \varepsilon_I(\Theta_J) \meet \varepsilon_I(\Theta_{J'})$.
\end{remark}

We get the following corollary of Theorem~\ref{annihilate contract0}.
\begin{corollary}\label{Istar}
Let $A$ be a finite-dimensional $k$-algebra and $I \in \Ideals A$. Then the following are equivalent:
\begin{enumerate}[\rm(i)]
\item $I \subseteq I_0 := \bigcap_{S \in \brick A} \ann S$ where $\ann S := \{a \in A \mid aS = 0\}$;
\item $\eta_A(I)$ is the trivial congruence;
\item The map $\TT\mapsto\TT\cap\mod(A/I)$ is an isomorphism from $\tors A$ to $\tors(A/I)$.
\item The maps $\iota_I$ and $\varepsilon_A$ of Proposition \ref{algconquot} are inverse of each other.
\end{enumerate}
Moreover, \textup{(i)}, \textup{(ii)}, \textup{(iii)}, \textup{(iv)} imply:
\begin{enumerate}[\rm(i)] \rs{4}
\item The lattices $\AlgCon(A/I)$ and $\AlgCon A$ are isomorphic.
\end{enumerate}
and \textup{(v)} implies \textup{(i)}, \textup{(ii)}, \textup{(iii)}, \textup{(iv)} if $A$ is $\tau$-tilting finite.

In particular, $I_0$ is the maximum of $\Ideals A$ satisfying each of these properties.
\end{corollary}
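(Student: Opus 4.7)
The plan is to prove the equivalences mostly by chaining together results already established in the paper, and then to handle the implications with (v) separately.

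First I would show (i) $\Leftrightarrow$ (ii). By definition $\eta_A(I) = \Theta_I$, and by Theorem~\ref{annihilate contract0}(a), an arrow $q$ of $\Hasse(\tors A)$ is not contracted by $\Theta_I$ iff $IS_q = 0$. Since the congruence $\Theta_I$ is arrow-determined (Theorem~\ref{eta op}(b)) and the labels of arrows exhaust $\brick A$ (Theorem~\ref{difftors2}(c)), $\Theta_I$ is trivial iff $IS = 0$ for every $S \in \brick A$, which is exactly the condition $I \subseteq \bigcap_{S \in \brick A} \ann S = I_0$. Next, (ii) $\Leftrightarrow$ (iii) is immediate from Theorem~\ref{eta op}(a): the map $\TT \mapsto \TT \cap \mod(A/I)$ is always a surjective morphism of complete lattices whose kernel congruence is precisely $\Theta_I$, and it is an isomorphism iff this kernel is trivial.

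For (ii) $\Leftrightarrow$ (iv), I would use Proposition~\ref{algconquot}. Part (a) tells us that $\varepsilon_I \circ \iota_I = \id_{\AlgCon(A/I)}$ is automatic, so $\iota_I$ and $\varepsilon_I$ are mutual inverses iff the other composite is the identity too. By Proposition~\ref{algconquot}(b), $\iota_I \circ \varepsilon_I(\Theta) = \Theta \vee \Theta_I$, so this equals $\Theta$ for every $\Theta \in \AlgCon A$ iff $\Theta_I$ lies below every algebraic congruence, iff $\Theta_I$ is the trivial congruence.

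For the last part, (iv) trivially implies (v) since $\iota_I$ is then an isomorphism of lattices. For the converse under the $\tau$-tilting finite hypothesis, note that $\iota_I$ is always injective by Proposition~\ref{algconquot}(a); since $\tors A$ is finite when $A$ is $\tau$-tilting finite, so is $\AlgCon A$; thus an isomorphism $\AlgCon A \cong \AlgCon(A/I)$ forces equality of (finite) cardinalities, so the injection $\iota_I$ must be a bijection, and then $\varepsilon_I$ is its inverse, which is~(iv). Finally, the "in particular" statement holds because condition (i) is literally $I \subseteq I_0$, so the set of ideals satisfying the equivalent conditions is $\{I \in \Ideals A \mid I \subseteq I_0\}$, whose maximum is $I_0$ itself. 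The only genuinely delicate step, which I would flag as the main obstacle, is the converse (v) $\Rightarrow$ (iv) outside the $\tau$-tilting finite case—where cardinality arguments fail and indeed the implication need not hold.
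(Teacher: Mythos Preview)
Your proof is correct and follows essentially the same route as the paper's: the paper invokes Corollary~\ref{contain refine} for (i)$\Leftrightarrow$(ii) (which is just a repackaging of your appeal to Theorem~\ref{annihilate contract0}(a)), uses the definition for (ii)$\Leftrightarrow$(iii), Proposition~\ref{algconquot}(a)(b) for (ii)$\Leftrightarrow$(iv), and the same finiteness/injectivity argument for (v)$\Rightarrow$(iv) in the $\tau$-tilting finite case. Your remark that the converse (v)$\Rightarrow$(iv) may fail without $\tau$-tilting finiteness is also exactly in line with the paper's scope.
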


\begin{proof}
(i) $\Leftrightarrow$ (ii) It is an immediate consequence of Corollary~\ref{contain refine}.

(ii) $\Leftrightarrow$ (iii) It is true by definition of $\eta_A(I)$.

(ii) $\Leftrightarrow$ (iv) By Proposition \ref{algconquot}(a) and (b), $\iota_I$ and $\varepsilon_I$ are inverse of each other if and only if $\Theta_I = \eta_A(I)$ is trivial.

(iv) $\Rightarrow$ (v) It is trivial.

(v) $\Leftarrow$ (iv) If $A$ is $\tau$-tilting finite, hence $\# \tors A < \infty$ by Theorem \ref{tors is complete lattice}, we have $\# \AlgCon A \leq \# \Con A < \infty$. Therefore, if $\AlgCon(A/I) \cong \AlgCon A$, (iv) holds by Proposition \ref{algconquot}(a).
\end{proof}

We get the following corollary, extending a result of \cite{EJR}.

\begin{corollary} \label{EJRcoro}
Let $A$ be a finite-dimensional $k$-algebra and $Z$ the center of $A$.
Then for any $I \subset A \rad Z$, $\eta_A(I)$ is the trivial congruence.
\end{corollary}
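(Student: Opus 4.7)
The plan is to reduce the statement to Corollary~\ref{Istar} by showing that $A \rad Z \subseteq I_0 := \bigcap_{S \in \brick A} \ann S$. Once this inclusion is established, any $I \subseteq A \rad Z$ satisfies $I \subseteq I_0$, so by Corollary~\ref{Istar} the congruence $\eta_A(I)$ is trivial.

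First, I would fix an arbitrary brick $S \in \brick A$ and exploit the action of the center $Z$. Since elements of $Z$ commute with everything in $A$, multiplication by $z \in Z$ defines an $A$-linear endomorphism of $S$, so we obtain a ring homomorphism $\varphi : Z \to \End_A(S)$. Because $S$ is a brick, $\End_A(S)$ is a division ring, and $\varphi(Z)$ is a commutative subring of this division ring. In a division ring, any nilpotent element must be zero, so $\varphi(Z)$ has no nonzero nilpotents.

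Next, I would observe that $Z$ is a finite-dimensional commutative $k$-algebra, so $\rad Z$ is a nilpotent ideal. Hence $\varphi(\rad Z)$ consists of nilpotent elements of $\varphi(Z)$, which forces $\varphi(\rad Z) = 0$. This means $(\rad Z) \cdot S = 0$, i.e., $\rad Z \subseteq \ann S$. Since $\ann S$ is a two-sided ideal of $A$, we get $A \rad Z \subseteq A \cdot \ann S = \ann S$.

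As $S$ was arbitrary, $A \rad Z \subseteq \bigcap_{S \in \brick A} \ann S = I_0$, and the result follows from Corollary~\ref{Istar}. There is no real obstacle here: the only substantive ingredient is the elementary fact that a commutative subring of a division ring contains no nonzero nilpotents, which immediately kills the image of $\rad Z$.
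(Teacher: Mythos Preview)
Your proof is correct and follows essentially the same approach as the paper: both arguments show that each $a \in \rad Z$ acts as a nilpotent (hence zero) endomorphism on every brick, then invoke Corollary~\ref{Istar}. Your version is simply more explicit about why the endomorphism fails to be invertible (nilpotency of $\rad Z$), whereas the paper states this in one line.
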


\begin{proof}
Fix $a\in\rad Z$.
For any $A$-module $X$, we have an endomorphism $a:X\to X$ which is not an isomorphism. If $X$ is a brick, this has to be zero.
Thus any $S \in \brick A$ is annihilated by $a$, so $\eta_A(\rad Z)$ is trivial by Corollary \ref{Istar}.
\end{proof}

Corollary \ref{EJRcoro} immediately implies that if $I \subset A \rad Z$, the projection $\sttilt A \twoheadrightarrow \sttilt (A/I)$ is an isomorphism, which is the original result of \cite{EJR}.

By Theorem \ref{eta op} and Corollary \ref{Istar}, there is a surjective complete lattice morphism $\Ideals(A/I_0) \to \AlgCon A$. Notice that it is not necessarily an isomorphism:
\begin{example}
  Consider
\[A:=k \left(\xymatrix{ 1 \ar@(ul,dl)[]_u \ar@/_/[r]_x & 2 \ar@(dr,ur)[]_v \ar@/_/[l]_y } \right) / (yx, xy, u^2, v^2, xvy, vyu, yux, uxv).\]
Then $A$ has $10$ support $\tau$-tilting modules. We depicted $\Hasse(\tors A)$ and its brick labelling in Figure \ref{exnotcorr}.
\begin{figure}
\[\xymatrix@C=3.5cm@R=1.25cm@!=0cm{
 &\tb{\Pun}{\Pdeux} \ar[dl]|-{\cirl{\Sa}} \ar[dr]|-{\cirl{\Sb}} \\
 \tb{\Punm}{\Pdeux} \ar[dd]|-{\cirl{\Sduu}} & & \tb{\Pun}{\Pdeuxm} \ar[dd]|-{\cirl{\Sudd}} \\ \\
 \tb{\Punm}{\Sddu} \ar[dd]|-{\cirl{\Sdu}} & & \tb{\Suud}{\Pdeuxm} \ar[dd]|-{\cirl{\Sud}} \\ \\
\tb{\Sdd}{\Sddu} \ar[dd]|-{\cirl{\Sddu}} & & \tb{\Suud}{\Suu} \ar[dd]|-{\cirl{\Suud}} \\ \\
 \ta{\Sdd} \ar[dr]|-{\cirl{\Sb}} & & \ta{\Suu} \ar[dl]|-{\cirl{\Sa}} \\
 & 0}
\]
\caption{An example where $\Ideals(A/I_0) \not\cong \AlgCon A$} \label{exnotcorr}
\end{figure}
Moreover, it is easy to see that $I_0 = 0$, and, however, there is a family of ideals indexed by $\mathbb{P}^1$: $I_{(\lambda:\mu)} = (\lambda xv + \mu ux)$.
\end{example}

\section{The preprojective algebra and the weak order}\label{pre weak sec}
In this section we give background on the weak order on a Weyl group, on preprojective algebras, and on the connection between the weak order and preprojective algebras.

\subsection{Weak order on Weyl groups}\label{weyl sec}
Let $Q$ be a Dynkin quiver, that is, a quiver whose underlying graph is one of the following simply laced diagrams:
\[\xymatrix@C0.4cm@R0.3cm{
A_n&1\ar@{-}[r]&2\ar@{-}[r]&3\ar@{-}[r]&\ar@{..}[rrr]&&&\ar@{-}[r]&(n-2)\ar@{-}[r]&(n-1)\ar@{-}[r]&n ;\\
&&&&&&&&n\\
D_n&1\ar@{-}[r]&2\ar@{-}[r]&3\ar@{-}[r]&\ar@{..}[rrr]&&&\ar@{-}[r]&(n-2)\ar@{-}[r]\ar@{-}[u]&(n-1) ; \\
&&&4\\
E_6&1\ar@{-}[r]&2\ar@{-}[r]&3\ar@{-}[r]\ar@{-}[u]&5\ar@{-}[r]&6 ;\\
&&&&7\\
E_7&1\ar@{-}[r]&2\ar@{-}[r]&3\ar@{-}[r]&4\ar@{-}[r]\ar@{-}[u]&5\ar@{-}[r]&6 ;\\
&&&&&8\\
E_8&1\ar@{-}[r]&2\ar@{-}[r]&3\ar@{-}[r]&4\ar@{-}[r]&5\ar@{-}[r]\ar@{-}[u]&6\ar@{-}[r]&7.}
\]
The Dynkin quiver $Q$
determines a group called the \newword{Weyl group} $W$ of $Q$, which depends only on the underlying undirected graph of $Q$.
We label the vertices of $Q$ as above
and let $S := \set{s_1,\ldots,s_n}$.
The Weyl group $W$ of $Q$ is the group given by the presentation
\[W=\left\langle S \, \middle| \, \begin{array}{ll} s_i^2=1 & \text{for all }i=1,\ldots,n \\ s_i s_j s_i=s_j s_i s_j & \text{for all $i$ and $j$ adjacent in $Q$} \\ s_i s_j =s_j s_i & \text{for all $i$ and $j$ not adjacent in $Q$}  \end{array} \right\rangle.\]
The best known example of a Weyl group is the symmetric group $\sym_{n+1}$, which is the Weyl group associated to a quiver of type $A_n$.
The generators $s_1,\ldots,s_n$ are the simple transpositions $(1,2)$ through $(n,n+1)$.
We represent each element $\sigma$ of $\sym_{n+1}$ by its \newword{one-line notation} $\sigma(1)\cdots \sigma(n+1)$.
Background on the combinatorics of Coxeter groups can be found in \cite{BB}.

We will call an expression $s_{i_1}\cdots s_{i_k}$ a \newword{word} for $w$ if $w=s_{i_1}\cdots s_{i_k}$ holds in $W$.
The minimal length (number of letters) of a word for $w$ is called the \newword{length} of $w$ and denoted $\ell(w)$.
A word for $w$ having exactly $\ell(w)$ letters is called a \newword{reduced word} for $w$.

The \newword{(right) weak order} on $W$ is the partial order on $W$ setting $v\le w$ if and only if there exists a reduced word $s_{i_1}\cdots s_{i_k}$ for $w$ such that, for some $j\le k$, the word $s_{i_1}\cdots s_{i_j}$ is a reduced word for $v$.
Importantly for our purposes, the weak order on $W$ is a lattice
(see, for example, \cite[Theorem~3.2.1]{BB}). Arrows of $\Hasse W$ are of the form $ws \to w$ for $s \in S$ whenever $\ell(ws) > \ell(w)$.

As an example, we describe the weak order on permutations.
An \newword{inversion} of $\sigma \in \sym_{n+1}$ is a pair $(\sigma(i),\sigma(j))$ such that $1 \le i < j \le n+1$ and $\sigma(i) > \sigma(j)$.
The length of $\sigma$ is the number of inversions of $\sigma$.
The weak order on $\sym_{n+1}$ corresponds to containment of inversion sets.
Hasse arrows are $\tau \to \sigma$ were $\tau$ is obtained from $\sigma$ by swapping two adjacent entries $\sigma(i) < \sigma(i+1)$.
We illustrate the weak order on $\sym_4$ in Figure~\ref{weak S4}.

\begin{figure}
\[\xymatrix@R=.9cm@C=.5cm@!=0cm{
 & & & & & 4321\\
 & & & 4312 \ar@{<-}[urr] & & 4231 \ar@{<-}[u] & & 3421\ar@{<-}[ull] \\
 & 4132 \ar@{<-}[urr] & & 4213 \ar@{<-}[urr] & & 3412 \ar@{<-}[ull] \ar@{<-}[urr] & & 2431 \ar@{<-}[ull] & & 3241 \ar@{<-}[ull] \\
 1432 \ar@{<-}[ur] & & 4123 \ar@{<-}[ul] \ar@{<-}[ur] & & 2413 \ar@{<-}[ul] \ar@{<-}[urrr] & & 3142 \ar@{<-}[ul] & & 2341 \ar@{<-}[ul] \ar@{<-}[ur] & & 3214 \ar@{<-}[ul] \\
 & 1423 \ar@{<-}[ul] \ar@{<-}[ur] & & 1342 \ar@{<-}[ulll] \ar@{<-}[urrr] & & 2143 \ar@{<-}[ul] & & 3124 \ar@{<-}[ul] \ar@{<-}[urrr] & & 2314 \ar@{<-}[ul] \ar@{<-}[ur] \\
  & & & 1243 \ar@{<-}[ull] \ar@{<-}[urr] & & 1324 \ar@{<-}[ull] \ar@{<-}[urr] & & 2134 \ar@{<-}[ull] \ar@{<-}[urr] \\
& & & & & 1234 \ar@{<-}[ull] \ar@{<-}[u] \ar@{<-}[urr]
}
\]
\caption{The weak order on $\sym_4$}
\label{weak S4}
\end{figure}

We are interested in lattice congruences on and lattice quotients of the weak order.
Background on congruences and quotients of the weak order can be found in \cite{regions,regions2}.

 There is a hyperplane arrangement associated to $W$ which will play a role in
what follows.  Specifically, $\mathbb R^n$ can be equipped with a
positive-definite symmetric bilinear form such that each element $s_i$
acts as reflection in a hyperplane $H_i$.  It follows that any element of
the form $ws_iw^{-1}$ acts as reflection in some hyperplane.
It is less obvious, but still known, that every element of $W$ that acts as a reflection is conjugate to some $s_i$.
The collection of all these hyperplanes is called the \newword{reflection arrangement}.

The complement of the reflection arrangement is a union of open cones.  We
refer to the closure of each of these cones as a \newword{chamber}.  We
can view the collection of the chambers and their faces as a fan $\FF$, the
\newword{Coxeter fan}. Fix a chamber
$D$ in the Coxeter fan whose facets are given by
$H_1,\dots,H_n$.  We call this cone the \newword{dominant chamber}.
There is a natural bijection between the chambers of the Coxeter fan and the elements of $W$, given by sending $w$ to the chamber $wD$.

For any lattice congruence $\Theta$ on $W$, there is a corresponding coarsening of the Coxeter fan:
Since elements of $W$ correspond to chambers, each congruence class is a set of chambers.
The union of the chambers corresponding to a congruence class is itself a convex cone, and the set of such cones is a complete fan $\FF_\Theta$ that coarsens $\FF$.
(See \cite[Theorem~1.1]{con_app}.)
By definition, the maximal cones of $\FF_\Theta$ are in bijection with the elements of the quotient $W/\Theta$.
In fact, the arrows in the Hasse quiver of $W/\Theta$ correspond bijectively to the pairs of adjacent maximal cones in $\FF_\Theta$.
(This result can be obtained by concatenating \cite[Theorem~1.1]{con_app} and \cite[Proposition~3.3]{con_app} or by interpreting \cite[Proposition~9-8.6]{regions} in the special case of the weak order on $W$.)
As an immediate consequence, we have the following proposition, in which a fan is said to be \newword{simplicial} if for each of its maximal cones, the facet normals for the cone are linearly independent.

\begin{proposition}\label{hasse-simplicial}
Given a lattice congruence $\Theta$ on $W$, the quotient $W/\Theta$ is Hasse-regular if and only if the fan $\FF_\Theta$ is simplicial.
\end{proposition}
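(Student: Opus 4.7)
The plan is to reformulate both conditions as the single statement ``every maximal cone of $\FF_\Theta$ has exactly $n$ facets.'' The key observation is that, for each $[w] \in W/\Theta$, the undirected degree of $[w]$ in $\Hasse(W/\Theta)$ equals the number of facets of the corresponding maximal cone $C_{[w]} = \bigcup_{u \in [w]} uD$ of $\FF_\Theta$. This follows from the correspondence recalled in the excerpt between arrows of $\Hasse(W/\Theta)$ and pairs of adjacent maximal cones of $\FF_\Theta$, together with the fact that in a complete fan of $n$-dimensional convex cones every facet of $C_{[w]}$ is shared with exactly one other maximal cone, and two distinct facets cannot be shared with the same neighbour (since any two distinct maximal cones in a fan meet in a single common face). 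Since an $n$-dimensional polyhedral cone in $\reals^n$ has at least $n$ facets with equality precisely in the simplicial case, this reformulates the proposition as: $\FF_\Theta$ is simplicial if and only if every vertex of $\Hasse(W/\Theta)$ has degree exactly $n$.

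The direction ``$\Leftarrow$'' is then immediate. For ``$\Rightarrow$'', suppose $W/\Theta$ is Hasse-regular with common degree $d$; it suffices to show $d = n$. The lower bound $d \ge n$ is automatic from the reformulation. For the upper bound, I focus on the bottom element $\hat{0} = [e]$ of $W/\Theta$, whose degree equals the number of atoms above it. Invoking the $\pidown$, $\piup$ machinery of Subsection~\ref{lat prelim}, for any atom $\hat{a}$ the element $\pidown_\Theta(\hat{a})$ must cover $u := \piup_\Theta(e)$ in the weak order on $W$, since any strictly intermediate element would belong to a class strictly between $\hat{0}$ and $\hat{a}$, contradicting the fact that $\hat{a}$ covers $\hat{0}$. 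Since $u$ is covered in the weak order by at most $n$ elements, namely the products $us_i$ with $\ell(us_i) > \ell(u)$, one for each simple reflection $s_i$, there are at most $n$ distinct atoms, and so $d \le n$. Combining gives $d = n$, whence every maximal cone has exactly $n$ facets and $\FF_\Theta$ is simplicial.

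I expect the only delicate point to be the bijection between facets of $C_{[w]}$ and neighbouring maximal cones, which relies on the completeness of $\FF_\Theta$ together with the convexity of its maximal cones. Once this is established, the remainder reduces to the atom count above and the elementary fact that simplicial $n$-dimensional cones are precisely those with exactly $n$ facets.
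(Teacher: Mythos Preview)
Your overall strategy --- translate both sides into statements about facet counts via the bijection between Hasse arrows and adjacent cone pairs --- is sound, and is essentially what the paper has in mind when it calls the proposition ``an immediate consequence'' without giving a detailed argument. However, two steps in your write-up do not go through as stated.

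The serious gap is the atom-counting argument for $d\le n$. You assert that for every atom $\hat a$ of $W/\Theta$, the element $\pidown_\Theta(\hat a)$ covers $u:=\piup_\Theta(e)$ in $W$. This is false: $\pidown_\Theta(\hat a)$ need not even be comparable to $u$. Take $W=\sym_3$ and $\Theta=\con(s_1\to e)$; the congruence classes are $\{e,s_1,s_1s_2\}$ and $\{s_2,s_2s_1,w_0\}$. The unique atom is $\hat a=[s_2]$, with $\pidown\hat a=s_2$, while $u=\piup e=s_1s_2$; these are incomparable in the weak order. Your justification tacitly assumes $\pidown\hat a>u$, which is exactly what fails. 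A correct replacement: since $w_0=\bigvee_i s_i$ in $W$, applying the quotient map gives $\hat 1=\bigvee_i\pi(s_i)$, and each $\pi(s_i)$ is either $\hat 0$ or an atom. Now use that $W/\Theta$ is meet-semidistributive (Lemma~\ref{lemsd}): for any atom $b$ one has $b\wedge\pi(s_i)=\hat 0$ for all $i$ only if $b\wedge\hat 1=\hat 0$, which is absurd; hence $b=\pi(s_i)$ for some $i$, and there are at most $n$ atoms.

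A smaller issue: your reformulation ``$\FF_\Theta$ simplicial $\Leftrightarrow$ every vertex has degree exactly $n$'' and the supporting fact ``an $n$-dimensional cone has at least $n$ facets'' require the maximal cones to be pointed. In $W$ of type $A_1\times A_1$ with classes $\{e,s_1\}$ and $\{s_2,s_1s_2\}$, the two maximal cones are half-planes with one facet each; the fan is simplicial in the paper's sense and the quotient is Hasse-regular of degree $1\ne n=2$. One way to handle this is to observe that in a complete fan all maximal cones share the same lineality space, so one may quotient by it and reduce to the pointed case.
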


\subsection{Preprojective algebras and Weyl groups}\label{preproj-weyl} \label{alg Hasse-reg}
Let $Q=(Q_0,Q_1)$ be an acyclic quiver with set of vertices $Q_0$
and set of arrows $Q_1$.
We define a new quiver $\overline{Q}$ by adding a new arrow
$a^*:j\to i$ for each arrow $a:i\to j$ in $Q$.
The \newword{preprojective algebra} of $Q$ is defined as
\[\Pi=\Pi_Q:=k\overline{Q}\left/\middle(\sum_{a\in Q_1}(aa^*-a^*a)\right).\]
Then, up to isomorphism, $\Pi$ does not depend on the choice of orientation of the quiver $Q$.
It is well-known that $\Pi$ is finite-dimensional if and only if $Q$ is
a Dynkin quiver.

Now we assume that $Q$ is a Dynkin quiver, and let $W$ be the corresponding
Weyl group.
For a vertex $i\in Q_0$, we denote by $e_i$ the corresponding idempotent of $\Pi$.
We denote by $I_i$ the two-sided ideal of $\Pi$ generated by the idempotent $1-e_i$.
Then $I_i$ is a maximal left ideal and a maximal right ideal of $\Pi$ since $Q$ has no loops.
For each element $w\in W$, we take a reduced word $w=s_{i_1}\cdots s_{i_k}$
for $w$, and let
\[I_w:=I_{i_1}\cdots I_{i_k}.\]
The following result due to Mizuno is the starting point of this section.

\begin{theorem}\label{Mizuno's theorem}\
\begin{enumerate}[\rm(a)]
\item {\cite[Theorem III.1.9]{BIRS}} $I_w$ does not depend on the choice of the reduced word for $w$.
\item {\cite[Theorem 2.14]{M}} $\Pi$ is $\tau$-tilting finite, and we have bijections
\begin{equation}\label{Mizuno}
W\xrightarrow{\sim}\sttilt\Pi\xrightarrow{\sim}\tors\Pi
\end{equation}
given by $w\mapsto I_w\mapsto \Fac I_w$.
\item {\cite[Theorem 2.21]{M}} The bijections \eqref{Mizuno} give isomorphisms of lattices
\[(W,\le^{\op})\xrightarrow{\sim}
(\sttilt\Pi,\le)\xrightarrow{\sim}(\tors\Pi,\subseteq).\]
\end{enumerate}
\end{theorem}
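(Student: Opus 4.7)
The plan is to establish the three parts of Mizuno's theorem in sequence, exploiting at each stage the interplay between the combinatorics of reduced words in $W$ and the module-theoretic behaviour of the ideals $I_i$ in $\Pi$.

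For part (a), I would verify directly that the assignment $s_i \mapsto I_i$ respects the commutation and braid relations of $W$. The commutation $I_i I_j = I_j I_i$ when $i$ and $j$ are non-adjacent in $Q$ should follow from inspection: the idempotents $e_i$ and $e_j$ are orthogonal, and the generators of $\overline Q$ incident to $i$ and $j$ do not interact in the preprojective relations, so both products coincide with the two-sided ideal generated by $1 - e_i - e_j + e_i e_j$ (suitably interpreted). The braid relation $I_i I_j I_i = I_j I_i I_j$ for adjacent $i,j$ is the substantial content, and the most conceptual route is to show both sides equal the annihilator (in $\Pi$) of the appropriate rank-one module supported on the type-$A_2$ subquiver generated by $i$ and $j$; alternatively one can expand both products as $k$-linear spans of paths in $\overline Q$ and verify equality using the mesh relations.

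For part (b), I would proceed by induction on $\ell(w)$. The inductive step will show two things in tandem: (i) if $w' = ws_i$ with $\ell(w') > \ell(w)$, then $I_{w'} = I_w I_i$ is obtained from $I_w$ by mutating at a specific indecomposable summand (determined by the vertex $i$), and (ii) the resulting $I_{w'}$ remains basic support $\tau$-tilting with the correct combinatorial type. This requires identifying, at each step, which indecomposable summand of $I_w$ is being replaced; the natural candidate is the summand $I_w e_i$. Starting from $I_e = \Pi$, this generates a sequence of mutations whose images are precisely the $I_w$. Injectivity of $w \mapsto I_w$ can be extracted either from the fact that $\Pi/I_w$ has a composition series whose multiplicities recover the inversion set of $w$ (distinguishing different $w$), or by comparing lengths: each mutation strictly moves us in the Hasse quiver of $\sttilt \Pi$, so distinct reduced paths give distinct modules at each $w$. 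Surjectivity follows from the key fact that the Hasse quiver of $\sttilt \Pi$ is connected (by general $\tau$-tilting theory) and that from any $I_w$ every outgoing mutation is again of the form $I_{w'}$ with $w' = ws_i$; since $W$ is finite, $\# \sttilt \Pi$ is bounded, which forces $\Pi$ to be $\tau$-tilting finite and forces every basic support $\tau$-tilting module to occur as some $I_w$. Applying Theorem~\ref{tors is complete lattice}, $\tors \Pi = \ftors \Pi$ and the bijection with $\sttilt \Pi$ follows from \eqref{tau-tilting gives tors}.

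For part (c), once the bijection is established, order preservation reduces to the statement that $v \le w$ in the right weak order on $W$ iff $\Fac I_v \supseteq \Fac I_w$, iff $I_w \in \Fac I_v$ (since both are characterized by their summand structure). The easy direction: a covering relation $w = v s_i$ with $\ell(w) > \ell(v)$ gives $I_w = I_v I_i \subseteq I_v$, hence $I_w \in \Fac I_v$ and $\Fac I_w \subseteq \Fac I_v$. The converse follows by comparing Hasse quivers: both $W$ and $\sttilt \Pi$ are Hasse-regular of degree $n$ (for $W$ this is classical since each element has $n$ simple generators; for $\sttilt \Pi$ this is Corollary~\ref{Hasse-regular}), and the mutation correspondence of part (b) matches covering relations exactly, so the two Hasse quivers are isomorphic under the bijection. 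Since weak order is determined by its Hasse quiver (as is any finite lattice), this upgrades to a lattice isomorphism. The main obstacle in the whole argument is the bookkeeping in part (b): verifying that the mutation at $I_w e_i$ really produces $I_{w s_i}$ requires computing a specific exchange triangle, and handling the case when $I_w e_i$ is zero (so that multiplication by $I_i$ decreases the number of summands, corresponding to $s_i$ already appearing at the end of a reduced word for $w$) requires the formalism of support $\tau$-tilting pairs rather than $\tau$-tilting modules.
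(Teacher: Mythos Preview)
The paper does not prove this theorem: each of parts (a), (b), (c) is stated as a citation (to \cite{BIRS} and \cite{M}) and no argument is supplied beyond the remark about left versus right modules. So there is no ``paper's own proof'' to compare against; your proposal is in effect a sketch of the cited proofs of Buan--Iyama--Reiten--Scott and Mizuno, and on that level it tracks the original arguments reasonably well.

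Two points of caution in your sketch. First, in part (b) you invoke ``the Hasse quiver of $\sttilt\Pi$ is connected (by general $\tau$-tilting theory)'' to deduce surjectivity and $\tau$-tilting finiteness. Connectedness of $\Hasse(\sttilt A)$ is \emph{not} a general theorem; what is available is the result (from \cite{AIR}) that a \emph{finite} connected component of $\Hasse(\sttilt A)$ must be all of $\sttilt A$. Your argument should therefore run: the set $\{I_w : w\in W\}$ is closed under mutation and contains $\Pi$, hence is a connected component; since $W$ is finite this component is finite, and then the cited finiteness criterion forces $\sttilt\Pi=\{I_w\}$. As written your logic is slightly circular. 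Second, your parenthetical in (b) about $I_w e_i=0$ corresponding to ``$s_i$ already appearing at the end of a reduced word for $w$'' is off: that situation is $\ell(ws_i)<\ell(w)$, which you are excluding. The vanishing $I_{ws_i}e_i=0$ occurs for other reasons (the summand at vertex $i$ becomes the projective part of the pair after mutation), and this is exactly why one must phrase everything via $\tau$-tilting \emph{pairs}, as you note.
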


Note that in \cite{M}, right modules are considered rather than
left modules, which has the consequence
that \cite{M} works with left weak order on $W$
rather than right weak order.

In type $A_3$, the weak order on $W=\sym_4$ is displayed in Figure~\ref{weak S4}.
The corresponding support $\tau$-tilting modules are shown in Figure~\ref{A3 fig}.
\begin{figure}
\[\begin{xy}
(0,0) *+{\tc{\abc}{\bacb}{\cba}}="1",
(-30,-14) *+{\tc{\bc}{\bacb}{\cba}}="2",
(0,-14) *+{\tc{\abc}{\acb}{\cba}}="3",
(30,-14) *+{\tc{\abc}{\bacb}{\ba}}="4",
(-50,-28) *+{\tc{\bc}{\cb}{\cba}}="5",
(-25,-28) *+{\tc{\Sc}{\acb}{\cba}}="6",
(0,-28) *+{\tc{\bc}{\bacb}{\ba}}="7",
(25,-28) *+{\tc{\abc}{\acb}{\Sa}}="8",
(50,-28) *+{\tc{\abc}{\ab}{\ba}}="9",
(-55,-42) *+{\tb{\bc}{\cb}}="10",
(-35,-42) *+{\tc{\Sc}{\cb}{\cba}}="11",
(-10,-42) *+{\tc{\Sc}{\acb}{\Sa}}="12",
(10,-42) *+{\tc{\bc}{\Sb}{\ba}}="13",
(35,-42) *+{\tc{\abc}{\ab}{\Sa}}="14",
(55,-42) *+{\tb{\ab}{\ba}}="15",
(-50,-56) *+{\tb{\Sc}{\cb}}="16",
(-25,-56) *+{\tb{\bc}{\Sb}}="17",
(-0,-56) *+{\tb{\Sa}{\Sc}}="18",
(25,-56) *+{\tb{\Sb}{\ba}}="19",
(50,-56) *+{\tb{\ab}{\Sa}}="20",
(-30,-70) *+{\ta{\Sc}}="21",
(0,-70) *+{\ta{\Sb}}="22",
(30,-70) *+{\ta{\Sa}}="23",
(0,-84) *+{0}="24",
\ar"1";"2",
\ar"1";"3",
\ar"1";"4",
\ar"2";"5",
\ar"2";"7",
\ar"3";"6",
\ar"3";"8",
\ar"4";"7",
\ar"4";"9",
\ar"5";"10",
\ar"5";"11",
\ar"6";"11",
\ar"6";"12",
\ar"7";"13",
\ar"8";"12",
\ar"8";"14",
\ar"9";"14",
\ar"9";"15",
\ar"10";"16",
\ar"10";"17",
\ar"11";"16",
\ar"12";"18",
\ar"13";"17",
\ar"13";"19",
\ar"14";"20",
\ar"15";"19",
\ar"15";"20",
\ar"16";"21",
\ar"17";"22",
\ar"18";"21",
\ar"18";"23",
\ar"19";"22",
\ar"20";"23",
\ar"21";"24",
\ar"22";"24",
\ar"23";"24",
\end{xy}\]
\caption{$\sttilt(\Pi)$ in type $A_3$}\label{A3 fig}
\end{figure}

Recall that a join-irreducible element is called a \newword{double join-irreducible element} if the unique element which it covers is either join-irreducible or the bottom element of the lattice.
Theorem~\ref{double ji} asserts that if $W$ is a finite Weyl group of simply-laced type and $\Theta$ is a lattice congruence on $W$, then the following conditions satisfy the implications $\mathrm{(i)}\Rightarrow\mathrm{(ii)}\Rightarrow\mathrm{(iii)}$:
\begin{enumerate}[\rm(i)]
\item $\Theta$ is an algebraic congruence.
\item $W/\Theta$ is Hasse-regular.
\item There is a set $J$ of double join-irreducible elements such that $\Theta$ is the smallest congruence contracting every element of $J$.
\end{enumerate}
The theorem also asserts that $\mathrm{(iii)}\Rightarrow\mathrm{(i)}$ when $W$ is of type $A$.
At the end of this subsection, we show that (iii) $\Rightarrow$ (ii) and
  (iii) $\Rightarrow$ (i) are not true for the preprojective algebra of type $D_4$.

We now prove Theorem~\ref{double ji}, except for the assertion that is specific to type~$A$, which is proved in Section~\ref{preprojA}.
By the definition, any algebraic lattice quotient of $\tors A$ is $\tors(A/I)$ for some $I\in\Ideals A$.
Thus the quotient is Hasse-regular by Corollary~\ref{Hasse-regular}.
We see that (i) implies (ii).

It is easy to construct non-Hasse-regular quotients of the weak order, which are therefore not algebraic quotients.
For example, in $\sym_4$, each one of the sets $\set{2413 \to 2143}$, $\set{3412 \to 3142}$, and $\set{2413 \to 2143,\,3412 \to 3142}$ is closed under polygonal forcing (see Figure~\ref{weak S4}), so by Proposition \ref{forcing=polygonal}, each defines a lattice congruence.
However, the corresponding quotients are not Hasse-regular
(each has one or more vertices of degree 4 in the Hasse quiver).

The following theorem shows that (ii) implies (iii).

\begin{theorem} \label{two}
Let $W$ be a simply-laced finite Weyl group, and suppose that $\Theta$ is a lattice congruence on $W$ such that $W/\Theta$ is Hasse-regular.
Then there exists a set $\S$ of double join-irreducible elements in $W$ such that $\Theta=\con(\S)$.
\end{theorem}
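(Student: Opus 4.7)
\textbf{Proof plan for Theorem~\ref{two}.}

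The plan is to pass to the forcing description of congruences. By Theorem~\ref{theorem uniform2} applied to $\tors\Pi$ together with Mizuno's isomorphism (Theorem~\ref{Mizuno's theorem}), the weak order on $W$ is congruence uniform. In particular, every lattice congruence $\Theta$ on $W$ is uniquely determined by the subset $J_\Theta := \{j\in\jirr W \mid \Theta \text{ contracts } j\to j_*\}$, which is an order ideal in the forcing order on $\jirr W$, and $\Theta=\con(J_\Theta)$. Setting $\S := J_\Theta \cap \{\text{double join-irreducibles of } W\}$ and $\Theta' := \con(\S)$, we automatically have $\Theta'\leq\Theta$, and the theorem reduces to showing that every $j\in J_\Theta$ is forced by some element of $\S$.

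The argument I would give is a strong induction on $\ell(j_*)$. The base case $\ell(j_*)=0$ gives $j_*=e$, so $j$ is a simple reflection and is itself double. For the inductive step, I suppose $j\in J_\Theta$ is not double: then $j_*\neq e$ and $j_*$ is not join-irreducible, so $j_*$ covers two distinct elements $u_1,u_2$ in the weak order. Weak order on $W$ is polygonal (via the isomorphism $W\cong\tors\Pi$ and Proposition~\ref{torsApole}(a) applied to $\Pi$), so the interval $[u_1\wedge u_2, j_*]$ is a polygon of type $A_1\times A_1$ or $A_2$, and similarly every collection of arrows meeting at a common element sits inside a polygon.

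The Hasse-regularity of $W/\Theta$, via Proposition~\ref{hasse-simplicial}, is equivalent to the fan $\FF_\Theta$ being simplicial, i.e., each maximal cone of $\FF_\Theta$ has exactly $n$ facet normals. Combined with the fact that the underlying Hasse quiver of $W$ is itself $n$-regular (one neighbor per simple reflection at each vertex), this simpliciality forces the following key local statement at $j_*$: since the contracted arrow $j\to j_*$ already identifies a certain number of Hasse neighbors of $[j]_\Theta=[j_*]_\Theta$, and since $j_*$ has at least two down-neighbors $u_1,u_2$, there must exist a further arrow incident to $j_*$ in $W$ that is also contracted by $\Theta$. By polygonality this yields a join-irreducible $j'\in J_\Theta$ sitting inside a polygon $P$ containing $j\to j_*$, with $\ell(j'_*)<\ell(j_*)$, and such that $j'\forces j$ via the polygonal forcing of Proposition~\ref{forcing=polygonal}. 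The induction hypothesis applied to $j'$ then furnishes a double join-irreducible in $\S$ forcing $j'$, hence forcing $j$ by transitivity, completing the proof.

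The main obstacle will be the geometric-combinatorial step in the induction: extracting from the simpliciality of the cone of $[j_*]_\Theta$ and from the $n$-regularity of weak order the precise conclusion that \emph{some} arrow out of $j_*$ other than $j\to j_*$ must be contracted, and identifying this contracted arrow with the top edge of a polygon whose middle arrow is $j\to j_*$ (so that polygonal forcing runs in the desired direction). This is exactly where the simply-laced Weyl group hypothesis is genuinely used, both to control the local shape of polygons (only squares and hexagons occur) and to guarantee the $n$-regularity of the ambient Hasse quiver that makes the simpliciality constraint bite; without either of these ingredients the reduction argument breaks down.
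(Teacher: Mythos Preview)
Your reduction to congruence uniformity is correct and matches the paper's strategy: $\Theta=\con(J_\Theta)$, and it suffices to show that every forcing-maximal element of $J_\Theta$ is double. However, the induction on $\ell(j_*)$ does not work as stated. When you locate a contracted arrow in a polygon $P$ containing $j\to j_*$, the join-irreducible $j'$ associated to that arrow by congruence uniformity is \emph{not} an element of $P$ in general; it lives elsewhere in $W$, and there is no reason for $\ell(j'_*)<\ell(j_*)$. The right bookkeeping is simply to take $j$ forcing-maximal among contracted join-irreducibles and derive a contradiction from non-doubleness; this is logically equivalent to what you want but avoids the spurious length parameter.

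The substantive gap is in the geometric step. Your counting heuristic (``$j$ and $j_*$ together have $2n-2$ outward neighbours, so $n$-regularity forces extra contractions'') fails for two reasons: the $\Theta$-class of $j$ may contain many more chambers than just $jD$ and $j_*D$, and distinct outward Coxeter walls may lie in the same reflecting hyperplane, so the number of uncontracted incident arrows does not equal the number of facets of the cone. The paper's argument is instead a direct linear-dependence computation. Write $j_*=js_j$; since $j_*$ is neither $e$ nor join-irreducible, $D(j_*)$ contains two reflections $s_i,s_k$, and one checks both are adjacent to $s_j$ (else they would lie in $D(j)$). Thus $j\to j_*$ is the \emph{middle} edge of the two hexagons $[j_*s_i,\,js_i]$ and $[j_*s_k,\,js_k]$. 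By forcing-maximality of $j$, the four extreme edges $js_i\to j$, $j_*\to j_*s_i$, $js_k\to j$, $j_*\to j_*s_k$ are \emph{not} contracted, so each contributes a facet of the $\FF_\Theta$-cone $C$, with normal $\beta_1,\beta_2,\beta_3,\beta_4$ respectively. All six walls of a hexagon share a codimension-$2$ face, so the normal $\beta$ to the contracted wall lies in $\operatorname{span}(\beta_1,\beta_2)\cap\operatorname{span}(\beta_3,\beta_4)$; this is a nontrivial linear relation on four facet normals of $C$, so $C$ is not simplicial, contradicting Proposition~\ref{hasse-simplicial}. This two-hexagon linear-algebra step is exactly the missing idea in your outline, and it is where the simply-laced hypothesis (only squares and hexagons) is actually used.
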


\begin{proof}
Let $x$ be a join-irreducible element that is maximal in the forcing order among join-irreducible elements contracted by $\Theta$. It suffices to show that $x$ is double join-irreducible.

For $w\in W$, write $D(w)$ for the \newword{right descents} of $w$, the set of simple reflections which can occur as the rightmost letter in a reduced word for $w$.
The set of elements covered by $w$ is $\set{ws_i:s_i\in D(w)}$.

Suppose $x$ is join-irreducible and let $x \to x_*$ be the unique arrow of $\Hasse W$ starting at $x$. 
Thus $x_*=xs_j$ where $s_j$ is the unique element of $D(x)$.
If $D(x_*)$ contains some element $s_i$, then $s_i$ and $s_j$ do not commute (otherwise, $s_i\in D(x)$).
Thus $i$ and $j$ are adjacent in the Dynkin diagram of $W$.
Furthermore, since $s_i\notin D(w)$, there is an arrow $x s_i \to x$ in $\Hasse W$.

Now suppose that $x_*$ is not join-irreducible, and not equal to $e$.
Then $D(x_*)$ contains at least two distinct simple reflections $s_i$ and $s_k$, with $i$ and $k$ each adjacent to $j$ in the Dynkin diagram.
The arrow $x \to x s_j$ is a side arrow in two distinct hexagons, namely the intervals $[xs_js_i,xs_i]$ and $[xs_js_k,xs_k]$, as shown below:
\[\xymatrix@C=5pt@R=11pt{
&xs_i\ar[dl]\ar[dr]_{\beta_1}&&xs_k\ar[dl]^{\beta_3}\ar[dr]&\\
xs_is_j\ar[d]&&x\ar[d]^\beta&&xs_ks_j\ar[d]\\
xs_is_js_i\ar[dr]&&xs_j\ar[dl]_{\beta_2}\ar[dr]^{\beta_4}&&xs_ks_js_k\ar[dl]\\
&xs_js_i&&xs_js_k&
}\]
None of the arrows up from $x$ or down from $xs_j$ in these hexagons are contracted by $\Theta$, because such a contraction would also force the contraction of $x \to xs_j$, contradicting our assumption that $x$ is maximal in forcing order among join-irreducible elements contracted by $\Theta$.
Since these arrows are not contracted, the cone $C$ in $\FF_\Theta$ corresponding to the $\Theta$-class of $x$ and $xs_j$ has walls that contain the walls of the Coxeter fan separating $xD$ from $xs_iD$ and from $xs_kD$ and separating $xs_jD$ from $xs_js_iD$ and $xs_js_kD$.
Thus the normal vectors to $C$ include the four vectors normal to these four walls.
Call these vectors $\beta_1$, $\beta_2$, $\beta_3$, and $\beta_4$, associated to walls in the Coxeter fan (and thus to arrows in the weak order) as indicated in the diagram above.
Also, write $\beta$ for the vector normal to the wall separating $xD$ from $xs_jD$ (which is not a wall of $C$).

All the Coxeter-fan walls associated to the hexagon $[xs_js_i,xs_i]$ contain a common codimension-2 face.
Thus in particular $\beta$ is in the linear span of $\beta_1$ and $\beta_2$.
Similarly, $\beta$ is in the linear span of $\beta_3$ and $\beta_4$.
We have found a nontrivial linear relation on the set $\set{\beta_1,\beta_2,\beta_3,\beta_4}$.
This is a subset of the set of normal vectors to walls of $C$, and we conclude that $C$ is not simplicial.
So Proposition~\ref{hasse-simplicial} implies that $W/\Theta$ is not Hasse-regular.
\end{proof}

We explain in the following example why we cannot have $\mathrm{(iii)}\Rightarrow\mathrm{(i)}$ or $\mathrm{(iii)} \Rightarrow \mathrm{(ii)}$ in other Dynkin types (see also Example \ref{cexiiii} for an easier counterexample to $\mathrm{(iii)}\Rightarrow\mathrm{(i)}$ for a different finite-dimensional algebra):
\begin{example} \label{cexd4}
 We consider the case $D_4$ indexed as in the beginning of the section:
 \[\overline{Q} = \boxinminipage{\xymatrix@R=1.5cm@C=1.5cm@!=0cm{ & 4 \ar@/^/[d]^\gamma \\ 1 \ar@/^/[r]^\alpha & 2 \ar@/^/[l]^{\alpha^*} \ar@/^/[u]^{\gamma^*} \ar@/^/[r]^{\beta^*} &  3\ar@/^/[l]^\beta  }}\]

We consider the bricks
\[S = \boxinminipage{$\xymatrix@C=.15cm@R=.3cm@!=0cm@!=0cm{& 1 \\  & 2 \\ 3 & & 4}$} \quad \text{and} \quad
 S' = \boxinminipage{$\xymatrix@C=.15cm@R=.3cm@!=0cm@!=0cm{1 \\  & 2 \\ & & 3}$} \quad \text{and} \quad S'' = \boxinminipage{$\xymatrix@C=.15cm@R=.3cm@!=0cm@!=0cm{1 \\  & 2 \\ & & 4}$}. \]

Then we claim:
\begin{enumerate}[\rm (a)]

 \item $S$ does not force $S'$, and $S$ does not force $S''$.
 \item Any algebraic congruence contracting $S$ contracts at least one of $S'$ and $S''$.
 \item The smallest congruence contracting $S$ is not algebraic.
 \item Let $w = s_2 s_4 s_3 s_2 s_4 s_3 s_1$. Then $I_w$ is a double join-irreducible element and the corresponding brick is $S$.
\end{enumerate}

Together, these imply that (iii) $\not\Rightarrow$ (i), taking $J = \{S\}$.
Further, one can observe that the quotient by the
  smallest congruence 
  contracting $S$ is not
  Hasse-regular, proving that (iii) $\not\Rightarrow$ (ii). This, of course,
also constitutes a proof that (i) does not hold for this quotient, by Corollary \ref{Hasse-regular}. However, since this is a somewhat involved calculation, we prefer the more conceptual argument for the non-algebraicity of the quotient outlined above, and which we detail below.
\begin{proof}
  (a) 
  For $(\lambda:\mu) \in \mathbf{P}^1(k)$, let $I_{(\lambda:\mu)} := (\lambda \alpha \beta^* + \mu \alpha \gamma^*) \subseteq \Pi$. Then $I_{(0:1)}S \neq 0$ and $I_{(0:1)} S' = 0$. Thus $\smash{\Theta_{I_{(0:1)}}}$ contracts $S$ but does not contract $S'$. So $S$ does not force $S'$. In the same way, using $I_{(1:0)}$, $S$ does not force $S''$.  Alternatively, the fact that $S$ forces neither $S'$ nor $S''$ follows immediately from Theorem \ref{theorem forcing}.

 (b) Since $S' \oplus S'' \in \Fac S$ and $S \in \Sub(S' \oplus S'')$, we have $\ann(S)=\ann(S'\oplus S'')=\ann(S')\cap \ann(S'')$.
 Let $I$ be an ideal of $\Pi$ such that $\Theta_I$ contracts $S$. Then $I \not \subseteq \ann(S)$ by Theorem \ref{annihilate contract0}(a). Thus, at least one of $I \not\subseteq \ann(S')$ and $I \not\subseteq \ann(S'')$ holds. Again by Theorem \ref{annihilate contract0}(a), $\Theta_I$ contracts at least one of $S'$ and $S''$.

 (c) By (a), the smallest congruence contracting $S$ contracts neither $S'$ nor $S''$, so it is not algebraic by (b).

 (d) Let $w_0$ be the longest element of $W$. As all reduced expressions of $w w_0$, which are $s_2 s_4 s_3 s_2 s_1$ and $s_2 s_3 s_4 s_2 s_1$, terminate by $s_2 s_1$, $w w_0$ is double join-irreducible in $W$. So, by Theorem \ref{Mizuno's theorem}, and because $u \mapsto u w_0$ is an anti-automorphism of $W$, we get that $I_w$ is join-irreducible in $\sttilt \Pi$. We easily compute
 \[I_w = \boxinminipage{$\xymatrix@C=.15cm@R=.3cm@!=0cm@!=0cm{1}$} \oplus \boxinminipage{$\xymatrix@C=.15cm@R=.3cm@!=0cm@!=0cm{&1 \\ & 2 \\ 4 & & 3 & & 1 \\ & & 2}$}  \oplus \boxinminipage{$\xymatrix@C=.15cm@R=.3cm@!=0cm@!=0cm{1 \\ & 2 \\ & & 3}$} \oplus \boxinminipage{$\xymatrix@C=.15cm@R=.3cm@!=0cm@!=0cm{1 \\ & 2 \\ & & 4}$}\]
 and the brick labelling the arrow $q$ starting at $I_w$ is $S$ by Proposition \ref{theorem label2}.
\end{proof}
\end{example}

\subsection{The preprojective algebra and the weak order in type \texorpdfstring{$A$}{A}}  \label{preprojA}
We continue our discussion of the preprojective algebra of type $A$ in Section 5 in \cite{IRRT}.
The goal of this section is to provide more combinatorial and algebraic details about the type $A$ case.
Some related results can be found in the recent preprint \cite{Kase}.
Throughout this section, let $\Pi$ be the preprojective algebra of type $A_n$, that is given by the quiver
\[\overline{Q} := \xymatrix@R=.3em@C=3em{1\ar@<2pt>[r]^{x_1}&2\ar@<2pt>[r]^{x_2}\ar@<2pt>[l]^{y_2}&3\ar@<2pt>[r]^{x_3}\ar@<2pt>[l]^{y_3}&\ar@{.}[r]\ar@<2pt>[l]^{y_4}&\ar@<2pt>[r]^-{x_{n-2}}&n-1\ar@<2pt>[r]^-{x_{n-1}}\ar@<2pt>[l]^-{y_{n-1}}&n\ar@<2pt>[l]^-{y_n}}\]
with relations $x_1y_2=0$, $x_iy_{i+1}=y_{i}x_{i-1}$ for $2\le i\le n-1$ and $y_nx_{n-1}=0$.
We identify the corresponding Weyl group $W$, $\sttilt\Pi$ and $\tors\Pi$ by bijections \eqref{Mizuno}.

We denote by $\SS$ the set of \emph{non-revisiting walks} on the double quiver $\overline{Q}$.
By definition, these are walks in $\overline{Q}$ which follow a sequence of arrows either with or against the direction of the arrow, and which do not visit any vertex more than once.
We  identify a walk and its reverse walk (for instance $x_2 y_4^{-1}$ is identified with $y_4 x_2^{-1}$). We also include length $0$ walks starting and ending at the same vertex.

We give a definition of string modules that fits this context. For a more general version, see \cite{WW}. For each $S \in \SS$, there is a \emph{string module} $X_S \in \mod \overline{\Pi}$ satisfying
\begin{itemize}
 \item for $i \in \overline{Q}_0$, $e_i X_S = k$ if $S$ contains $i$ and $e_i X_S = 0$ otherwise;
 \item for $q \in \overline{Q}_1$, $q$ acts as $\id_k$ if $S$ contains $q$ and acts as $0$ otherwise.
\end{itemize}

The set $\UUU$ of \emph{non-revisiting paths} defined in the introduction is the subset of $\SS$ corresponding to uniserial modules.

The main result of this section is the following.

\begin{theorem} \label{brmodpi}\
 \begin{enumerate}[\rm(a)]
  \item Bricks of $\mod \Pi$ are exactly the string modules.
  \item For two bricks $S$ and $S'$, we have $S \forces S'$ if and only if $S$ is a subfactor of $S'$.
 \end{enumerate}
\end{theorem}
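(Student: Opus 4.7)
The plan is to prove (a) by establishing an injection $\SS \hookrightarrow \brick \Pi_{A_n}$, $S \mapsto X_S$, and then matching cardinalities; part (b) will follow quickly from (a) together with earlier results.

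First, for $S \in \SS$, the module $X_S$ is a brick. Indeed, the support of $X_S$ is a contiguous subinterval $\{i, i+1, \ldots, j\}$ of $\{1, \ldots, n\}$ and $\dim e_\ell X_S = 1$ for each $\ell$ in this support, so any endomorphism of $X_S$ acts by a scalar $\lambda_\ell$ on each component. The arrows of $\overline{Q}$ belonging to the walk $S$ act as the identity, forcing $\lambda_\ell = \lambda_{\ell+1}$ at each consecutive pair, and hence all $\lambda_\ell$ coincide, so $\End_\Pi(X_S) \cong k$. The map $S \mapsto X_S$ is injective, since $S$ can be recovered from the support of $X_S$ together with the set of arrows acting nontrivially.

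To show surjectivity, I compare cardinalities. By Theorem~\ref{Mizuno's theorem}, $\Pi_{A_n}$ is $\tau$-tilting finite and $\tors \Pi_{A_n}$ is anti-isomorphic to $W_{A_n} = \sym_{n+1}$, so Proposition~\ref{brick-rigid}(b) together with Theorem~\ref{join-irreducible in sttilt}(a) yield
\[ |\brick \Pi_{A_n}| = |\itrigid \Pi_{A_n}| = |\jirr (\tors \Pi_{A_n})| = |\mirr \sym_{n+1}| = |\jirr \sym_{n+1}|, \]
where the last equality uses the self-anti-isomorphism $\sigma \mapsto w_0\sigma$ of the weak order. The final quantity is the Eulerian count of permutations with a unique descent, which equals $2^{n+1}-n-2$. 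On the other hand, $\SS$ decomposes according to the vertex interval $\{i, \ldots, j\}$ that the walk visits, with $2^{j-i}$ walks on each interval (a binary choice between the two arrows of $\overline{Q}$ at each internal edge), giving
\[ |\SS| = \sum_{1 \le i \le j \le n} 2^{j-i} = \sum_{d=0}^{n-1} (n-d)2^d = 2^{n+1}-n-2. \]
Equality of cardinalities, combined with the injection above, completes (a).

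For (b), the forward implication is the final assertion of Theorem~\ref{theorem forcing}. For the converse, part (a) shows that every brick of $\Pi_{A_n}$ is a string module $X_T$ with $\dim e_\ell X_T \le 1$ for all $\ell$, so every brick is multiplicity-free; in particular $S'$ is multiplicity-free, and Corollary~\ref{elemforc} then yields $S \forces S'$ as soon as $S$ is a subfactor of $S'$.

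The main obstacle is the surjectivity in (a). A direct argument would require establishing $\dim e_i B \le 1$ for every brick $B$ and then reading off the associated non-revisiting walk, which seems to entail a delicate case analysis with the preprojective relations $x_i y_{i+1} = y_i x_{i-1}$. The counting approach above sidesteps this difficulty by routing through the Weyl-group/$\tau$-tilting dictionary provided by Mizuno's theorem and the general bijections of Proposition~\ref{brick-rigid} and Theorem~\ref{join-irreducible in sttilt}.
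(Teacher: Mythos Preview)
Your argument is correct (with the tiny slip that the bijection $\itrigid \Pi \to \brick \Pi$ is Proposition~\ref{brick-rigid}(a), not~(b)), and for part~(b) it coincides with the paper's proof. For part~(a), however, you take a genuinely different route. The paper passes to the quotient $\overline{\Pi}=\Pi/I_{\cyc}$, observes that this algebra is gentle, invokes the Wald--Waschb\"usch classification of indecomposables over special biserial algebras to see that every indecomposable $\overline{\Pi}$-module is a string module (there being no bands), checks that each of these is a brick, and then uses $\brick\Pi=\brick\overline{\Pi}$ (Proposition~\ref{pipibar}). Your approach instead verifies directly that each $X_S$ is a brick and closes the gap by a cardinality match through Mizuno's isomorphism and the count of single-descent permutations. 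The paper's argument is more structural and yields more---it actually classifies $\ind\overline{\Pi}$, which is used again later (e.g.\ in Proposition~\ref{Anbrick})---while your argument is pleasantly self-contained in that it avoids importing the representation theory of special biserial algebras, at the cost of relying on the Weyl-group numerology and giving no direct handle on the module category of $\overline{\Pi}$.
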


We give an easy example.

\begin{example} \label{exforc}
    The Hasse quiver of $(\brick \Pi, \forces)$ for $n=3$ is:
   \[
   \xymatrix@R=1.4cm@C=0.8cm@!=0cm{
   &\ta{\Sa}\ar@{->}[dl]\ar@{->}[drrr]&&\ta{\Sb}\ar@{->}[dlll]\ar@{->}[dl]\ar@{->}[dr]\ar@{->}[drrr]&&\ta{\Sc}\ar@{->}[dlll]\ar@{->}[dr]\\
   \ta{\ab}\ar@{->}[d]\ar@{->}[drrrr]&&\ta{\cb}\ar@{->}[dll]\ar@{->}[d]&&\ta{\ba}\ar@{->}[dll]\ar@{->}[drr]&&\ta{\bc}\ar@{->}[dll]\ar@{->}[d]\\
   \ta{\acb}&&\ta{\cba}&&\ta{\abc}&&\ta{\bac}
   }
  \]
\end{example}

Recall that $I_{\cyc}$ is the ideal of $\Pi$ generated by all 2-cycles and define
\[\overline{\Pi}:=\Pi/I_{\cyc}.\]
 \begin{proposition} \label{pipibar}\
  \begin{enumerate}[\rm (a)]
   \item We have $I_\cyc = \bigcap_{S \in \brick \Pi} \ann S$.
   \item There is an isomorphism of lattices \[\overline{(-)}: \tors \Pi \to \tors \overline{\Pi}, \TT \mapsto \TT \cap \mod \overline{\Pi}.\]
   \item We have $\brick \Pi = \brick \overline{\Pi}$.
  \end{enumerate}
 \end{proposition}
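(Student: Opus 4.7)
The plan is to deduce (b) and (c) from (a), so the main task is to prove (a): $I_\cyc = I_0$, where $I_0 := \bigcap_{S \in \brick \Pi} \ann S$. A preliminary observation that I would record first is that $\overline{\Pi} = \Pi/I_\cyc$ admits $\UUU$ as a $k$-basis. Indeed, any path in $\overline{Q}$ that revisits a vertex must contain a 2-cycle as a subpath, because the underlying Dynkin graph is a line and returning to a vertex forces a back-and-forth across some edge; modulo $I_\cyc$ only non-revisiting paths can survive as spanning elements, and linear independence follows because there is at most one non-revisiting path between any two given vertices of $\overline{Q}$. Concretely, $\UUU$ consists of the trivial paths $e_i$, the up-paths $p^+_{ij} := x_i x_{i+1} \cdots x_{j-1}$ for $i < j$, and the down-paths $p^-_{ji} := y_j y_{j-1} \cdots y_{i+1}$ for $i < j$.

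The inclusion $I_\cyc \subseteq I_0$ is then immediate: by Theorem~\ref{brmodpi}(a), every brick of $\Pi$ has the form $X_w$ for some $w \in \SS$, and by the excerpt's definition these string modules already belong to $\mod \overline{\Pi}$, hence are annihilated by $I_\cyc$.

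The main obstacle is the reverse inclusion, equivalently showing that $\overline{I_0} := I_0/I_\cyc$ vanishes in $\overline{\Pi}$. My plan here is a direct computation: take $0 \neq \bar a \in \overline{\Pi}$, expand $\bar a = \sum_{p \in \UUU} c_p \bar p$ using the basis above, pick $p_0 \in \UUU$ with $c_{p_0} \neq 0$, and exhibit a string module $X_w$ on which $\bar a$ acts non-trivially. When $p_0 = p^+_{ij}$ is an up-path, I would take $w$ to be the up-walk $x_i \cdots x_{j-1}$; the string module $X_w$ has basis $v_i, \ldots, v_j$ on which every $y$-arrow acts as zero. Consequently the only basis paths in $\UUU$ contributing to $\bar a \cdot v_i$ are $e_i$ and the up-paths $p^+_{i,l}$ for $i < l \le j$, yielding $\bar a \cdot v_i = c_{e_i} v_i + \sum_{i < l \le j} c^+_{i,l} v_l$; the coefficient of $v_j$ is exactly $c_{p_0} \neq 0$, so $\bar a$ does not annihilate $X_w$. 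The case $p_0 = p^-_{ji}$ is handled symmetrically via the string module of the down-walk $y_j \cdots y_{i+1}$, and the case $p_0 = e_i$ is handled on the simple module $S_i$, where all non-trivial basis paths act as zero and only the $c_{e_i}$-term survives. The crucial combinatorial input, specific to type $A$, is the uniqueness of the monotone path between any two vertices in $\overline{Q}$: without it the $v_j$-coefficient of $\bar a \cdot v_i$ would mix contributions from several basis elements and no clean isolation of $c_{p_0}$ would be possible.

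Part (b) follows at once from (a) via Corollary~\ref{Istar}(i)$\Leftrightarrow$(iii) applied to $I = I_\cyc$. For (c), part (a) forces every brick of $\Pi$ to be annihilated by $I_\cyc$, so bricks of $\Pi$ lie in $\mod \overline{\Pi}$; since the surjection $\Pi \twoheadrightarrow \overline{\Pi}$ induces a fully faithful embedding $\mod \overline{\Pi} \hookrightarrow \mod \Pi$ preserving endomorphism rings, a $\Pi$-brick is automatically an $\overline{\Pi}$-brick, and the converse is immediate from the same embedding.
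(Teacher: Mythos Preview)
Your argument for the inclusion $I_\cyc \subseteq I_0$ is circular. You invoke Theorem~\ref{brmodpi}(a) to say that every brick of $\Pi$ is a string module, but in the paper's logical order Theorem~\ref{brmodpi}(a) is proved via Proposition~\ref{Anbrick}(b), and the identification $\brick \Pi = \brick \overline{\Pi}$ appearing there is exactly Proposition~\ref{pipibar}(c), which you derive from (a). So you are assuming the result to prove it. The paper avoids this by a completely different observation: the element $\omega := x_1 y_2 + x_2 y_3 + \cdots + x_{n-1} y_n$ generates $I_\cyc$ and is \emph{central} in $\Pi$, so $I_\cyc \subseteq \Pi \cdot \rad Z(\Pi)$ and Corollary~\ref{EJRcoro} gives $I_\cyc \subseteq I_0$ without knowing anything about the classification of bricks.

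Your treatment of the reverse inclusion $I_0 \subseteq I_\cyc$ is correct and close in spirit to the paper's, though organized differently. The paper packages the same idea through Lemma~\ref{ideals of Pi}: since $I_0$ is an ideal, $I_0 = \spanv_k(I_0 \cap \UUU) \oplus (I_0 \cap I_\cyc)$, and each $p \in \UUU$ fails to annihilate the corresponding uniserial (hence multiplicity-free, hence brick) module, forcing $I_0 \cap \UUU = \emptyset$. Your explicit computation on up-paths, down-paths and idempotents reproduces this without isolating the ideal-decomposition lemma. Your derivations of (b) and (c) from (a) via Corollary~\ref{Istar} match the paper's.
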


For the proof of the proposition, we need an elementary lemma about ideals of~$\Pi$:
\begin{lemma}\label{ideals of Pi}
For $I \in \Ideals \Pi$,
$I=\spanv_k (I\cap\UUU) \oplus(I\cap I_{\cyc})$ as $k$-vector spaces.
\end{lemma}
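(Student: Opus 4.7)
The plan is to establish the decomposition in three stages: a $k$-vector space decomposition of $\Pi$, a reduction to a single idempotent block, and a local extraction via nilpotency.

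First, I would show $\Pi = \spanv_k \UUU \oplus I_\cyc$ as $k$-vector spaces by proving that the images $\{[u] : u \in \UUU\}$ form a $k$-basis of $\overline{\Pi} = \Pi/I_\cyc$. Spanning holds because the underlying graph of $\overline{Q}$ is a line, so any revisiting path contains an immediate reversal $a a^*$ or $a^* a$, and hence vanishes in $\overline{\Pi}$; linear independence is immediate since the non-revisiting path between any two vertices is unique in type $A$. The $k$-linear section $\overline{\Pi} \to \Pi$ sending $[u] \mapsto u$ then gives the direct-sum decomposition.

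Next, given $I \in \Ideals \Pi$ and $x \in I$, I would decompose $x = \sum_{i,j} e_i x e_j$ using the primitive idempotents, noting that each summand lies in $I \cap e_i\Pi e_j$ since $I$ is two-sided; this reduces the claim to each block. For the block-level extraction I would invoke two structural facts of $\Pi$ in type $A$: (a) $e_j\Pi e_j$ has $k$-basis $\{e_j\}$ when $j \in \{1,n\}$ and $\{e_j, \omega_j\}$ otherwise, with $\omega_j^2 = 0$, where $\omega_j$ is the canonical $2$-cycle at $j$; and (b) $e_i \Pi e_j = u_{ij} \cdot e_j \Pi e_j$ as $k$-vector spaces, where $u_{ij}$ denotes the unique non-revisiting path from $i$ to $j$ (or $0$ if none exists). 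Granted these, any $x \in I \cap e_i\Pi e_j$ factors as $x = u_{ij}(c e_j + d \omega_j)$ for some $c, d \in k$, whose $\UUU$-component is $c u_{ij}$ and whose $I_\cyc$-component is $d\,u_{ij}\omega_j$. Right-multiplying $x$ by $\omega_j$ gives $x \omega_j = c u_{ij}\omega_j \in I$ using $\omega_j^2 = 0$; if $c \ne 0$, this places $u_{ij}\omega_j$, and hence both components of $x$, into $I$. The cases $c = 0$ and $u_{ij}\omega_j = 0$ are immediate, and the diagonal case $i = j$ fits the same framework with $u_{ii} = e_i$.

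The main obstacle is fact (b) above, which encodes the rigidity of paths in the double quiver of type $A$: every element of $e_i \Pi e_j$ is expressible as $u_{ij}$ times an element of $e_j \Pi e_j$. I would verify this by taking an arbitrary path from $i$ to $j$ in $\Pi$ and repeatedly applying the preprojective relation $x_k y_{k+1} = y_k x_{k-1}$ to slide any cycle factor past $u_{ij}$ toward the endpoint $j$; the boundary relations $x_1 y_2 = 0$ and $y_n x_{n-1} = 0$, together with $\omega_j^2 = 0$, then force the tail to be at most a single factor of $\omega_j$ or to vanish, matching the two-dimensional structure of part (a).
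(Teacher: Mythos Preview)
Your overall strategy is sound and close to the paper's, but there is a genuine error in your structural fact (a): it is \emph{not} true that $e_j\Pi e_j$ is at most two-dimensional for interior vertices. In type $A_n$ one has $\dim_k e_j\Pi e_j=\min(j,\,n+1-j)$, so already for $n=5$ and $j=3$ the algebra $e_3\Pi e_3$ is three-dimensional, with basis $\{e_3,\omega_3,\omega_3^2\}$; concretely $\omega_3^2=y_3y_2x_1x_2\neq 0$. (Your computation ``$\omega_j^2=0$'' works only when $j$ is adjacent to an endpoint, because then one of the two expressions $y_jx_{j-1}$ or $x_jy_{j+1}$ vanishes by a boundary relation.) Consequently the expression $x=u_{ij}(c\,e_j+d\,\omega_j)$ does not cover all elements of $e_i\Pi e_j$, and your single right-multiplication by $\omega_j$ does not suffice.

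The paper avoids this by using only that $e_j\Pi e_j$ is a \emph{local} ring with maximal ideal $e_jI_{\cyc}e_j$ (no bound on its dimension is needed). Since $e_iIe_j$ is a right $e_j\Pi e_j$-submodule of $e_i\Pi e_j=u_{ij}\cdot e_j\Pi e_j$, it equals $u_{ij}J$ for some right ideal $J\subseteq e_j\Pi e_j$; locality then forces either $J=e_j\Pi e_j$, whence $u_{ij}\in I$, or $J\subseteq e_jI_{\cyc}e_j$, whence $e_iIe_j\subseteq I_{\cyc}$. Your argument can be repaired along exactly these lines (or by iterating your multiplication trick, using $\omega_j^{m_j}=0$ instead of $\omega_j^2=0$), but as written the key step rests on a false dimension claim.
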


\begin{proof}
 As we have $\Pi=\spanv_k \UUU \oplus I_{\cyc}$, it suffices to prove $I\subseteq \spanv_k(I\cap\UUU)+I_{\cyc}$.
 Let $i, j \in Q_0$.
 Using relations for $\Pi$, we have $e_i \Pi e_j = p e_j \Pi e_j$ where $p \in \UUU$ is the shortest path from $i$ to $j$ in $\overline{Q}$. As $e_j \Pi e_j$ is a local ring with maximal ideal $e_j I_\cyc e_j$, we get that either $e_i I e_j = e_i \Pi e_j$ or $e_i I e_j \subseteq p e_j I_\cyc e_j \subseteq e_i I_\cyc e_j$. Therefore $e_i I e_j \subseteq  \spanv_k( I \cap \{p\} ) \oplus e_i I_\cyc e_j$. Thus, $I \subseteq \spanv_k (I \cap \UUU) + I_\cyc$ holds.
\end{proof}

 \begin{proof}[Proof of Proposition~\ref{pipibar}]
  (a) Any $x \in \UUU$ is outside of the annihilator of the corresponding uniserial module. As any uniserial module is multiplicity free, hence a brick, we deduce that $\UUU \cap \bigcap_{S \in \brick \Pi} \ann S = \emptyset$. Hence by Lemma \ref{ideals of Pi}, \[\bigcap_{S \in \brick \Pi} \ann S \subseteq I_\cyc.\] Let $\omega := x_1 y_2 + x_2 y_3 + \cdots + x_{n-1} y_n$. This is clearly a generator of $I_\cyc$ which is central in $\Pi$. Hence by Corollaries \ref{Istar} and \ref{EJRcoro}, $I_\cyc \subseteq \bigcap_{S \in \brick \Pi} \ann S$.

  (b) and (c) are immediate consequences of (a) and Corollary \ref{Istar}.
 \end{proof}
 Because of this proposition, from the point of view of this paper, we can restrict our study to $\overline{\Pi}$.

 We recall that an algebra presented by a quiver and relations $kQ / I$ is \emph{gentle} if
 \begin{itemize}
  \item every $x \in Q_0$ has at most two incoming and at most two outgoing arrows;
  \item the ideal $I$ is generated by paths of length $2$;
  \item for any $q \in Q_1$, there is at most one $q' \in Q_1$ with $qq' \notin I$;
  \item for any $q \in Q_1$, there is at most one $q' \in Q_1$ with $q'q \notin I$;
  \item if $q, q', q'' \in Q_1$ with $t(q') = t(q'') = s(q)$ and $q' \neq q''$, we have $q'q \notin I$ or $q''q \notin I$;
  \item if $q, q', q'' \in Q_1$ with $s(q') = s(q'') = t(q)$ and $q' \neq q''$, we have $qq' \notin I$ or $qq'' \notin I$.
 \end{itemize}

Then we get the following.

  \begin{proposition} \label{Anbrick}\
   \begin{enumerate}[\rm(a)]
    \item The algebra $\overline{\Pi}$ is gentle.
    \item There is a commutative diagram of bijections:
     \[\xymatrix{
       & & \itrigid \Pi \ar[r]_\sim^-{\text{\ref{brick-rigid}(a)}} \ar[d]^\wr_{\overline{\Pi} \otimes_\Pi -} & \brick \Pi \ar@{=}[d] \\
        \SS \ar[r]_-\sim^-{X_{-}} & \ind \overline{\Pi} \ar@{=}[r] & \itrigid \overline{\Pi} \ar@{=}[r] & \brick \overline{\Pi}
      }\]
   \end{enumerate}
 \end{proposition}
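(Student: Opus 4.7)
For part (a), I verify the gentle algebra axioms directly for $\overline{\Pi} = \Pi/I_{\cyc}$. At each vertex of $\overline{Q}$ there are at most two incoming arrows ($x_{i-1}$ and $y_{i+1}$) and two outgoing arrows ($x_i$ and $y_i$). Combining the preprojective relations with the quotient by $I_{\cyc}$, the ideal of relations of $\overline{\Pi}$ becomes exactly the ideal generated by all 2-cycles $x_iy_{i+1}$ and $y_{i+1}x_i$, i.e., paths of length two. The uniqueness and commuting-square conditions in the gentle definition then follow directly: at each vertex the relations pair each outgoing arrow uniquely with its ``reverse partner'' on the opposite side, so that exactly one two-letter extension on each side is non-zero.

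For part (b), I first note that $\Pi$ is $\tau$-tilting finite by Theorem~\ref{Mizuno's theorem}, and hence so is $\overline{\Pi}$ by Corollary~\ref{factor-closed}. Proposition~\ref{brick-rigid}(a) then yields the bijections $\itrigid\Pi \cong \brick\Pi$ and $\itrigid\overline{\Pi} \cong \brick\overline{\Pi}$, and Proposition~\ref{pipibar}(c) identifies $\brick\Pi = \brick\overline{\Pi}$. This settles the top row, right column, and the equality $\itrigid\overline{\Pi} = \brick\overline{\Pi}$ of the diagram. To establish the remaining equalities and the bijection $X_- : \SS \to \ind\overline{\Pi}$, I apply the general classification of indecomposables over a gentle algebra as strings or bands. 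A string in $\overline{\Pi}$ is a walk in $\overline{Q}$ without immediate backtracks and avoiding the length-two relations; since these are exactly the 2-cycles, a case analysis at each vertex shows that after a step $i \to j$ the forbidden continuations are precisely those returning to $i$. Hence strings are precisely the non-revisiting walks $\SS$. The underlying simple graph of $\overline{Q}$ is a path, so any closed walk revisits a vertex and $\overline{\Pi}$ has no band modules. Each $X_S$ is multiplicity-free, and an endomorphism of $X_S$ must act as a single scalar (the scalars on the one-dimensional weight-spaces must agree along every arrow of $S$), so $\End_{\overline{\Pi}}(X_S) = k$ and $X_S$ is a brick. Combined with what precedes, this gives $\ind\overline{\Pi} = \itrigid\overline{\Pi} = \brick\overline{\Pi}$ and the bijection with $\SS$.

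Finally, I check commutativity of the diagram. For $M \in \itrigid\Pi$ with corresponding brick $S := M/\rad_{\End_\Pi(M)}(M) \in \brick\Pi = \brick\overline{\Pi}$, Proposition~\ref{pipibar}(a) gives $I_\cyc S = 0$, so the surjection $M \twoheadrightarrow S$ factors through $\overline{\Pi} \otimes_\Pi M = M/I_\cyc M$. By Proposition~\ref{tau-rigid gives tau-rigid}(a), the latter is $\tau$-rigid over $\overline{\Pi}$, so each of its indecomposable summands lies in $\itrigid\overline{\Pi} = \brick\overline{\Pi}$. Commutativity then reduces to showing $\overline{\Pi} \otimes_\Pi M$ is indecomposable, which forces it to equal the brick $S$ it surjects onto. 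This is the main obstacle: I plan to handle it by exploiting Mizuno's explicit description of indecomposable $\tau$-rigid $\Pi$-modules as summands $I_w e_i$, from which $M/I_\cyc M$ can be computed directly in type $A$ and matched against the non-revisiting walk corresponding to $S$.
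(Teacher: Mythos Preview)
Your treatment of part (a) and of most of part (b) --- the gentleness check, the string/band classification giving $\SS \xrightarrow{\sim} \ind\overline{\Pi}$, the absence of bands, the equality $\ind\overline{\Pi} = \brick\overline{\Pi}$, and the use of Proposition~\ref{pipibar}(c) --- matches the paper's argument closely.

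The difference lies entirely in the commutativity of the square. Your reduction contains a slip: knowing that $\overline{\Pi}\otimes_\Pi M$ is indecomposable (hence a brick) and surjects onto the brick $S$ does \emph{not} force an isomorphism --- a surjection of bricks need not be an isomorphism. Your fallback plan of computing $I_w e_i / I_\cyc I_w e_i$ case by case from Mizuno's description would repair this, but it is an explicit type-$A$ calculation that you would still have to carry out.

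The paper bypasses this computation entirely. It observes that $\Fac T \in \cjirr(\tors\Pi)$ and that, by Proposition~\ref{tau-rigid gives tau-rigid}(b), the lattice isomorphism $\overline{(-)}:\tors\Pi \to \tors\overline{\Pi}$ of Proposition~\ref{pipibar}(b) sends $\Fac T$ to $\Fac(\overline{\Pi}\otimes_\Pi T)$, which is therefore again completely join-irreducible. Theorem~\ref{annihilate contract0}(a) then says the brick labelling the unique arrow out of $\Fac T$ equals the brick labelling the unique arrow out of $\Fac(\overline{\Pi}\otimes_\Pi T)$; since the maps $\itrigid \to \brick$ on both rows are exactly these labellings (via Theorem~\ref{join-irreducible in sttilt} and Proposition~\ref{theorem label2}), commutativity follows. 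This lattice-theoretic argument is short, uniform, and requires no explicit knowledge of the modules $I_w e_i$.
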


 \begin{proof}
   (a) As, clearly, $\overline{\Pi} = k \overline{Q} / I_\cyc$, this is immediate from the definition.

   (b) As $\overline{\Pi}$ is gentle, it is special biserial. Therefore, by \cite{WW}, an indecomposable $\overline{\Pi}$-module $X$ is a string module or a band module. It is an easy verification that $\SS$ is the set of strings, hence string modules are exactly the $X_S$'s. It is also easy that there are no bands. Thus we obtain the bijection from $\SS$ to $\ind \overline{\Pi}$.
Using this, it is immediate that
$\ind \overline{\Pi} = \brick \overline{\Pi}$ holds.
In particular, the bijection of Proposition \ref{brick-rigid}(a) becomes an equality $\itrigid \overline{\Pi} = \brick \overline{\Pi}$. We have $\brick \overline{\Pi} = \brick \Pi$ by Proposition \ref{pipibar}(c).

Let $T \in \itrigid \Pi$. By Proposition \ref{tau-rigid gives tau-rigid}(b), $\Fac T \cap \mod {\overline{\Pi}} = \Fac (\overline{\Pi} \otimes_\Pi T)$, so  by Theorem \ref{annihilate contract0}(a), the labels of the arrow starting at $\Fac T \in \cjirr(\tors \Pi)$ and the arrow starting at $\Fac (\overline{\Pi} \otimes_\Pi T) \in \cjirr(\tors \overline{\Pi})$ coincide. Thus the diagram commutes.
  \end{proof}

  \begin{proof}[Proof of Theorem~\ref{brmodpi}]
   (a) It follows from Proposition~\ref{Anbrick}.

   (b) This is shown in Corollary~\ref{elemforc}.
  \end{proof}

 \begin{proposition}\label{A injective}
 Let $\Pi$ be a preprojective
 algebra of type $A$ and let $\overline\Pi$ be as above.
 \begin{enumerate}[\rm (a)]
  \item We have an isomorphism of lattices $\Ideals \UUU \to \Ideals \overline{\Pi}$ sending $\S$ to $\spanv_k \S$.
  \item The map $\eta_{\overline\Pi}:\Ideals\overline\Pi\to\Con W$ is injective.
  \item The morphism $\eta_{\overline \Pi}$ is an isomorphism of lattices $\Ideals\overline\Pi\cong\AlgCon \Pi$. 
 \end{enumerate}
 \end{proposition}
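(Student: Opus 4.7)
The proof naturally splits into three parts following the statement. Part (a) is a basis-level identification analogous to Lemma \ref{ideals of Pi}; part (b) is the technical heart, relying on the uniserial string modules $X_p$ for $p \in \UUU$; and part (c) is a largely formal combination using Corollary \ref{contain refine} together with Proposition \ref{algconquot}.

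For (a), the key observation is that $\UUU$ is a $k$-basis of $\overline{\Pi}$: by Lemma \ref{ideals of Pi}, $\Pi = \spanv_k \UUU \oplus I_\cyc$, so $\overline{\Pi}$ is canonically $\spanv_k \UUU$. The map $\S \mapsto \spanv_k \S$ is then inverted by $I \mapsto I \cap \UUU$. Checking that both directions are well-defined requires two structural facts. On one hand, when $\S$ is upward closed in the subpath order and $p \in \S$, multiplying $p$ by an arrow of $\overline{Q}$ in $\overline{\Pi}$ either yields zero (by the relations defining $I_\cyc$) or yields a longer non-revisiting path containing $p$ as a subpath, which lies in $\S$ by upward closure; hence $\spanv_k \S$ is an ideal. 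On the other hand, for $I \in \Ideals \overline{\Pi}$, lifting to the preimage $\tilde I \supseteq I_\cyc$ in $\Pi$ and applying Lemma \ref{ideals of Pi} gives $\tilde I = \spanv_k(\tilde I \cap \UUU) \oplus I_\cyc$, hence $I = \spanv_k(I \cap \UUU)$; moreover $I \cap \UUU$ is upward closed because if $r \in I \cap \UUU$ and $r \leq p$ then $p = p_1 r p_2$ in $\overline{\Pi}$, so $p \in I$. Compatibility with joins and meets follows from the identifications join $=$ union/sum and meet $=$ intersection, using that $\UUU$ is a basis so that intersections of spans are spans of intersections.

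For (b), the key lemma to establish is the following characterization: for $I \in \Ideals \overline{\Pi}$ and $p \in \UUU$, one has $p \in I$ if and only if $X_p \notin \brick(\overline{\Pi}/I)$, where $X_p$ is the uniserial brick from Proposition \ref{Anbrick}. The forward direction follows because $p$ acts nontrivially on $X_p$, so $I \cdot X_p \neq 0$ and $X_p$ is not an $\overline{\Pi}/I$-module. For the converse, if $p \notin I$, part (a) and upward closedness imply that no subpath of $p$ lies in $I \cap \UUU$; since any non-revisiting path that is not a subpath of $p$ annihilates $X_p$, we get $I \subseteq \ann(X_p)$, so $X_p$ descends to a brick of $\overline{\Pi}/I$ by Proposition \ref{Anbrick}(b). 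Given this characterization, $I \cap \UUU$ is recovered from $\brick(\overline{\Pi}/I)$, which by Corollary \ref{contain refine} is determined by $\eta_{\overline{\Pi}}(I)$; thus (a) forces $\eta_{\overline{\Pi}}$ to be injective. This characterization lemma is the main obstacle of the proof.

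For (c), the restatement of $\eta_{\overline{\Pi}}$ as landing in $\AlgCon \Pi$ rather than $\AlgCon \overline{\Pi}$ uses Corollary \ref{Istar}(v): since $I_\cyc = I_0$ by Proposition \ref{pipibar}(a), the morphism $\iota_{I_\cyc}: \AlgCon \overline{\Pi} \to \AlgCon \Pi$ of Proposition \ref{algconquot} is a lattice isomorphism (its image is $[\Theta_{I_\cyc}, \Theta_\Pi] = \AlgCon \Pi$ because $\Theta_{I_\cyc}$ is trivial). The composite $\Ideals \overline{\Pi} \to \AlgCon \overline{\Pi} \cong \AlgCon \Pi$ is then surjective by definition of $\AlgCon$, injective by (b), and preserves joins by Theorem \ref{eta op}(c). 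Its inverse is order-preserving by the same argument as in (b): if $\Theta_I \leq \Theta_J$ then $\brick(\overline{\Pi}/I) \supseteq \brick(\overline{\Pi}/J)$ by Corollary \ref{contain refine}, hence $p \in I$ forces $X_p \notin \brick(\overline{\Pi}/J)$ and so $p \in J$, giving $I \subseteq J$ by (a). This makes the composite a lattice isomorphism, completing the proof.
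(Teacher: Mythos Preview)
Your proof is correct and follows essentially the same approach as the paper: recover $I \cap \UUU$ from the set of uniserial bricks contracted by $\eta_{\overline\Pi}(I)$, using that $p \in I$ if and only if $X_p$ is not annihilated by $I$. You are considerably more explicit than the paper in part (a) (the paper simply cites Lemma~\ref{ideals of Pi}) and in part (c), where you verify directly that the inverse is order-preserving via Corollary~\ref{contain refine}, whereas the paper relies implicitly on the fact that a bijective join-semilattice morphism between finite lattices is automatically a poset isomorphism.
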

 \begin{proof}
  (a) It is an immediate consequence of Lemma~\ref{ideals of Pi}.

  (b) For $\S \in \Ideals \UUU$, by Theorem \ref{annihilate contract0}(a), bricks contracted by $\eta_{\overline\Pi}(\spanv_k \S)$ are those that are  not annihilated by $\spanv_k \S$. In particular, $\S$ corresponds to the set of uniserial bricks that are contracted by $\eta_{\overline\Pi}(\spanv_k \S)$. Using (a), it implies that $I \in \Ideals \overline{\Pi}$ is determined by $\eta_{\overline\Pi}(I)$.

  (c) By definition $\AlgCon \Pi$ is the image of $\eta_{\overline{\Pi}}$. So we have a bijection by (b). It is order-preserving, so it has to be an isomorphism of lattices.
 \end{proof}

We now prove Theorem~\ref{AlgQuot A}.

\begin{proof}[Proof of Theorem~\ref{AlgQuot A}]
(a) This is Proposition \ref{pipibar}(a).

 (b) This is Proposition \ref{A injective}.

 (c) By Propositions \ref{pipibar}, \ref{algconquot} and Corollary \ref{Istar}, $\eta_\Pi(I) = \eta_\Pi(J)$ if and only if $\eta_{\overline{\Pi}}(I + I_\cyc) = \eta_{\overline{\Pi}}(J + I_\cyc)$. By Proposition \ref{A injective}(b), this happens if and only if $I + I_\cyc = J + I_\cyc$. By Lemma \ref{ideals of Pi}, we have $I + I_\cyc = \spanv_k ((I + I_\cyc) \cap \UUU) \oplus I_\cyc$ and $J + I_\cyc = \spanv_k ((J + I_\cyc) \cap \UUU) \oplus I_\cyc$. Therefore, $\eta_\Pi(I) = \eta_\Pi(J)$ if and only if $(I + I_\cyc) \cap \UUU = (J + I_\cyc) \cap \UUU$.

 To conclude the proof, it suffices to prove that $(I + I_\cyc) \cap \UUU = I \cap \UUU$ (and, by symmetry, $(J + I_\cyc) \cap \UUU = J \cap \UUU$). 
By Lemma \ref{ideals of Pi}, we have $\Pi=\spanv_k(\UUU)\oplus I_\cyc$ and $I+I_\cyc=\spanv_k(I\cap\UUU)\oplus I_\cyc$. Thus the desired equality holds.
\end{proof}

We conclude by proving the part of Theorem \ref{double ji} that is specific for type $A$. Before that, we give an explicit description of double join irreducible elements of $W = \sym_{n+1}$. For $1 \le i \le j \le n$, we define $d_{i, j} := s_i s_{i+1} \cdots s_j$ and for $1 \le j \le i \le n$, we define $d_{i, j} := s_i s_{i-1} \cdots s_j$ where the $s_i$ are the standard generators of $\sym_{n+1}$.

\begin{proposition}\label{A double ji}
 Double join-irreducible elements of $\sym_{n+1}$ are exactly elements $d_{i, j}$ for $1 \le i, j \le n$.
\end{proposition}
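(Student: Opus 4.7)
The plan is to verify both directions by working with the standard parameterization of join-irreducibles of $\sym_{n+1}$ via one-line notation, then translating the double join-irreducibility condition into a combinatorial constraint on the underlying subset.

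First I would dispatch the easy direction by direct computation. For $1\le i\le j\le n$ one checks that $d_{i,j}$ has one-line notation $1,2,\ldots,i-1,i+1,\ldots,j+1,i,j+2,\ldots,n+1$, which has a unique descent at position $j$; the case $i\ge j$ is symmetric. Hence $d_{i,j}$ is join-irreducible with unique right descent $s_j$. A direct calculation gives $d_{i,j}\cdot s_j = d_{i,j-1}$ if $i<j$, $d_{i,j}\cdot s_j = e$ if $i=j$, and $d_{i,j}\cdot s_j = d_{i,j+1}$ if $i>j$; by the same argument, the nonidentity results are join-irreducible. Hence every $d_{i,j}$ is double join-irreducible.

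For the converse, let $w$ be double join-irreducible with unique right descent at position $k$. Encode $w$ by $A := \{w(1),\ldots,w(k)\}$: then $w$ lists $A$ in increasing order in positions $1,\ldots,k$ and $A^c$ in increasing order in positions $k+1,\ldots,n+1$, and the descent at $k$ corresponds to $\max A > \min A^c$. Write $a = w(k) = \max A$, $b = w(k+1) = \min A^c$, and, when defined, $a' = w(k-1)$, $b' = w(k+2)$. Then $w_* = w s_k$ swaps the entries at positions $k$ and $k+1$, and the only positions where $w_*$ can have a descent are $k-1$ (occurring iff $a' > b$) and $k+1$ (occurring iff $a > b'$). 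Double join-irreducibility of $w$ requires at most one of these two inequalities, i.e.\ $a' < b$ or $a < b'$. I would then translate: $a' < b$ forces $\{1,\ldots,a'\}\subseteq A$ and hence $A = \{1,\ldots,m\}\cup\{p\}$ for some $m\ge 0$, $p\ge m+2$; setting $m = j-1$ and $p = i+1$ identifies $w$ with $d_{i,j}$ for $i\ge j$. Dually, $a < b'$ forces $\{b',\ldots,n+1\}\subseteq A^c$ and hence $A = \{1,\ldots,J\}\setminus\{i\}$; setting $J = j+1$ recovers $w = d_{i,j}$ for $i\le j$.

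The main bookkeeping obstacle will be the boundary cases $k=1$ and $k=n$, where $a'$ or $b'$ is undefined, together with the degenerate situation $w_* = e$ (i.e.\ $w=s_k$). In these cases the "no descent" condition is vacuous, and one should note that $s_k = d_{k,k}$ sits consistently under both parameterizations $A = \{1,\ldots,m\}\cup\{p\}$ with $p = m+2$ and $A = \{1,\ldots,J\}\setminus\{i\}$ with $i = J-1$. Once these edge cases are carefully handled, the two combinatorial forms of $A$ exhaust all double join-irreducibles and match exactly the family $\{d_{i,j}\}_{1\le i,j\le n}$.
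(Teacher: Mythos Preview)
Your argument is correct and complete. It takes a genuinely different route from the paper's proof. The paper works entirely with reduced words: starting from a double join-irreducible $\sigma$ with unique right descent $s_j$, it shows that the penultimate letter must be $s_{j-1}$ or $s_{j+1}$ (since any other letter would commute past $s_j$ and create a second right descent), then argues inductively that $\sigma s_j$ is again double join-irreducible, forcing $\sigma$ to be a monotone product $s_i s_{i\pm 1}\cdots s_j$. Your approach instead translates everything into the one-line-notation subset $A=\{w(1),\ldots,w(k)\}$, observes that $w_*=ws_k$ can only have descents at positions $k-1$ and $k+1$, and shows that ruling out one of these descents forces $A$ to have the shape $\{1,\ldots,k-1\}\cup\{p\}$ or $\{1,\ldots,k+1\}\setminus\{i\}$, matching the two families $d_{i,j}$ with $i\ge j$ and $i\le j$ respectively.

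Your approach has the advantage of being entirely self-contained in the combinatorics of permutations and of handling both directions explicitly; the paper leaves the forward direction (that each $d_{i,j}$ is double join-irreducible) implicit. The paper's approach, on the other hand, is phrased purely in Coxeter-theoretic language and would generalize more readily to other types if one wanted to pursue an analogous classification there. Both proofs are short and of comparable difficulty.
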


\begin{proof}
Let $\sigma \in \sym_{n+1}$ be double join-irreducible. Let $s_j$ be the rightmost simple reflection in a reduced word for $\sigma$.
Since $\sigma$ is join-irreducible, this simple reflection is unique.
If $\sigma\ne s_j$, then since $\sigma$ is double join-irreducible, the rightmost simple reflection of $\sigma s_j$ must also be unique.
This simple reflection cannot commute with $s_j$, or else there would be two possible rightmost simple reflections for $\sigma$.
Thus, this reflection is $s_{j-1}$ or $s_{j+1}$.
By symmetry, we can assume that it is $s_{j-1}$.

If $\sigma \ne s_{j-1}s_j$, then consider the previous simple reflection in a reduced word for $\sigma$. As before, because $\sigma s_j$ is join-irreducible, it does not commute with $s_{j-1}$, hence it is $s_{j-2}$ or $s_j$. If it was $s_j$, as $s_j s_{j-1} s_j = s_{j-1} s_j s_{j-1}$, it would contradict the uniqueness of $s_j$. So the only possibility is $s_{j-2}$. In particular, it is unique, hence $\sigma s_j s_{j-1}$ is also join-irreducible, so $\sigma s_j$ is double join-irreducible. Then, by an immediate induction, we get $\sigma = d_{i, j}$ for some $i \le j$.
\end{proof}

Using Proposition \ref{A double ji}, we associate to each double join-irreducible element $d_{i, j}$ of $W$ the non-revisiting path $u_{d_{i,j}} \in \UUU$ starting at $i$ and ending at $j$. Via the lattice isomorphism $W \cong \tors \overline{\Pi}$ that sends $w$ to $\overline \Pi \otimes_\Pi I_{w w_0}$, the unique arrow pointing from $d_{i, j}$ in $\Hasse W$ is labelled by $X_{u_{d_{i,j}}}$.

\begin{proposition} \label{realdji}
 Let $J$ be a set of double join-irreducible elements of $W$. We consider the ideal $I = (u_\sigma)_{\sigma \in J}$ of $\overline{\Pi}$. Then we have $\eta_{\overline{\Pi}}(I) = \con J$.
\end{proposition}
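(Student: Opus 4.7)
The plan is to establish the equality $\eta_{\overline{\Pi}}(I) = \con J$ by proving the two inequalities in the lattice of congruences. Both directions rest on two ingredients from earlier in the paper: Theorem~\ref{annihilate contract0}(a), which says that an arrow $q$ of $\Hasse(\tors\overline{\Pi})$ is contracted by $\Theta_I = \eta_{\overline{\Pi}}(I)$ if and only if $I \cdot S_q \neq 0$; and Theorem~\ref{brmodpi}(b) together with Theorem~\ref{theorem uniform2}(a), which together say that the forcing order on bricks in $\tors\Pi\cong\tors\overline{\Pi}$ coincides with the subfactor relation, and that two arrows with the same brick label are forcing-equivalent.

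For the inequality $\con J \le \eta_{\overline{\Pi}}(I)$, I would fix $\sigma \in J$ and consider the unique arrow $q_\sigma$ of $\Hasse W \cong \Hasse(\tors\overline{\Pi})$ pointing from $\sigma$; by the identification recalled just after Proposition~\ref{A double ji}, its brick label is $X_{u_\sigma}$, which is the uniserial $\overline{\Pi}$-module corresponding to the non-revisiting path $u_\sigma$. Since $u_\sigma$ acts as the identity on the basis vector of $X_{u_\sigma}$ sitting at the source of $u_\sigma$, we have $u_\sigma \cdot X_{u_\sigma} \ne 0$, and a fortiori $I \cdot X_{u_\sigma} \ne 0$. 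By Theorem~\ref{annihilate contract0}(a), $\Theta_I$ contracts $q_\sigma$ for every $\sigma\in J$, so $\con J \le \eta_{\overline{\Pi}}(I)$.

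For the reverse inequality $\eta_{\overline{\Pi}}(I) \le \con J$, I would take any arrow $q$ of $\Hasse(\tors\overline{\Pi})$ contracted by $\Theta_I$. By Theorem~\ref{annihilate contract0}(a), $I \cdot S_q \ne 0$, and by Proposition~\ref{Anbrick}(b), $S_q = X_T$ for some non-revisiting walk $T \in \SS$. Since $I$ is generated by $\{u_\sigma\}_{\sigma\in J}$, some $u_\sigma$ satisfies $u_\sigma \cdot X_T \ne 0$. The key combinatorial observation to verify is that $u_\sigma \cdot X_T \ne 0$ is equivalent to $u_\sigma$ appearing as a connected subwalk of $T$: this follows directly from the definition of string modules in Subsection~\ref{preprojA}, since each arrow of $\overline{Q}$ acts on $X_T$ as the identity on the relevant basis vector precisely when it occurs in the walk $T$, and zero otherwise, so a product of arrows acts nontrivially on $X_T$ exactly when that product can be traced consecutively through $T$. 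Such a connected subwalk corresponds to a substring of $T$, hence to a subfactor $X_{u_\sigma}$ of the string module $X_T$. By Theorem~\ref{brmodpi}(b), $X_{u_\sigma}$ forces $X_T$; by Theorem~\ref{theorem uniform2}(a), any congruence contracting the arrow $q_\sigma$ (which is labelled by $X_{u_\sigma}$) also contracts $q$ (labelled by $X_T$). Since $\con J$ contracts each such $q_\sigma$, it contracts $q$, completing the argument.

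The main obstacle is the combinatorial bookkeeping in the key observation above: one must carefully match the conventions for how a product of arrows in $\overline{\Pi}$ acts on the basis of $X_T$ with the geometric description of subwalks of $T$, and verify that this really corresponds to $X_{u_\sigma}$ being a subfactor (rather than merely some unrelated subquotient). Once this is set up, the rest is a clean application of the brick-labelling and forcing machinery already developed.
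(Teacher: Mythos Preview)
Your proposal is correct and follows essentially the same approach as the paper's proof: both directions use Theorem~\ref{annihilate contract0}(a) to translate contraction into non-annihilation, observe that $u_\sigma X_{u_\sigma}\ne 0$ for the first inequality, and for the second pass from $IS_q\ne 0$ to $u_\sigma S_q\ne 0$ for some $\sigma\in J$, then use that $u_\sigma$ acting nontrivially on a string module forces $X_{u_\sigma}$ to be a subfactor, concluding via Theorem~\ref{brmodpi}(b). Your write-up is slightly more explicit in invoking Theorem~\ref{theorem uniform2}(a) to pass between arrows and brick labels, but the argument is the same.
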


\begin{proof}
 For $\sigma \in J$, the uniserial module $X_{u_\sigma}$ is not annihilated by $u_\sigma$, so it is not annihilated
by $I$. Therefore, by Theorem \ref{annihilate contract0}(a), $\sigma$ is contracted by $\eta_{\overline{\Pi}}(I)$. Consider now $S \in \brick \overline{\Pi}$ that is contracted by $\eta_{\overline{\Pi}}(I)$. By Theorem \ref{annihilate contract0}(a) again, we have $I S \neq 0$. So there exists $\sigma \in J$ such that $u_\sigma S \neq 0$. As $u_\sigma$ is a non-revisiting path, it implies that $X_{u_\sigma}$ is a subfactor of $S$, so by Theorem \ref{brmodpi}(b), we get that $S$ is contracted by $\con J$.
\end{proof}

\begin{proof}[Proof of Theorem \ref{double ji} \textup{(iii)} $\Rightarrow$ \textup{(i)}]
 It follows from Proposition \ref{realdji}.
\end{proof}

\subsection{Combinatorial realizations}

We now discuss the combinatorics of algebraic congruences and quotients in type $A_n$.
Specifically, we describe which arrows are contracted by a given algebraic congruence, and describe the quotient explicitly as a subposet of the weak order.

Recall that $s_\ell$ is the transposition $(\ell, \ell+1)$ and that the arrows in $\Hasse \sym_{n+1}$ are $\sigma \to \tau$ such that $\sigma=\tau s_\ell$ for $\ell$ with $\tau(\ell) < \tau(\ell+1)$.
 It is immediate that $\sigma \in \sym_{n+1}$ is join-irreducible if and only if \begin{equation} \sigma(1) < \sigma(2) < \cdots < \sigma(\ell) > \sigma(\ell+1) < \sigma(\ell+2) <\cdots < \sigma(n+1) \label{dscjirr} \end{equation} for some $\ell \in \{1, 2, \dots, n\}$.

 The following observation is an easy consequence of \cite[Section 6.1]{IRRT}. We fix~$\Pi$ as in Section~\ref{preprojA}.
 Until the end of this subsection, in order to get an isomorphism of partially ordered sets between $\sym_{n+1}$ and $\sttilt \Pi$ (see Theorem~\ref{Mizuno's theorem}), we identify $\sigma \in \sym_{n+1}$ with $I_{\sigma w_0} \in \sttilt \Pi$, where $w_0$ is the longest element in $\sym_{n+1}$ (\emph{i.e.} $w_0(i) = n+2-i$).
 \begin{proposition}[Corollary of \cite{IRRT}] \label{bricksym}
  Let $\sigma$ be join-irreducible and $\ell$ as before.
  Then the arrow starting at $\sigma$ in $\Hasse \sym_{n+1}$, that is $\sigma \to \sigma s_\ell$, is labelled by the following brick in $\mod \Pi$, depicted in composition series notation:
  \[ \scriptsize
   \left(\boxinminipage{\xymatrix@C=.4cm@R=.5cm{
    \sigma(\ell+1) \ar[r] & \sigma(\ell+1) + 1 \ar@{..>}[r] & \sigma(\ell+2)-1 \\
    \sigma(\ell+2) \ar[urr] \ar[r] & \sigma(\ell+2) + 1  \ar@{..>}[r] & \sigma(\ell+3)-1 \\
    \sigma(\ell_M) \ar@{..>}[urr] \ar[r] & \sigma(\ell_M) + 1 \ar@{..>}[r] & \ell_M-1
   }}\right) = \left(
   \boxinminipage{\xymatrix@C=.4cm@R=.5cm{
    \sigma(\ell_m)-1 \ar@{..>}[drr] \ar[r] & \sigma(\ell_m) - 2 \ar@{..>}[r] & \ell_m \\
    \sigma(\ell-1) - 1 \ar[drr] \ar[r] & \sigma(\ell-1) - 2 \ar@{..>}[r] & \sigma(\ell-2) \\
    \sigma(\ell) - 1 \ar[r] & \sigma(\ell) - 2 \ar@{..>}[r]  & \sigma(\ell-1)
   }}\right)
  \]
  where $\ell_M$ is the biggest index satisfying $\sigma(\ell_M) < \ell_M$ and $\ell_m$ is the smallest satisfying $\sigma(\ell_m) > \ell_m$. Notice that $\ell_M = \sigma(\ell)$ and $\ell_m = \sigma(\ell+1)$.
 \end{proposition}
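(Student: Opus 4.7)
The plan is to deduce this from Proposition \ref{theorem label2} together with the explicit combinatorial description of the support $\tau$-tilting module $I_{\sigma w_0}$ given in \cite{IRRT}. The strategy is to identify, for a join-irreducible $\sigma$ with unique descent at position $\ell$, precisely which indecomposable summand $X$ of $T := I_{\sigma w_0}$ undergoes mutation in the arrow $\sigma \to \sigma s_\ell$, and then to compute the quotient $X/(\Rad_A(T,X)\cdot T)$ explicitly as a string module over $\Pi$.

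First I would recall from \cite[Subsection 6.1]{IRRT} the parametrization of the indecomposable summands of $I_{\sigma w_0}$ by positions $1,\dots,n+1$ (equivalently, by the simple reflections), together with their explicit realization as submodules of certain injective $\Pi$-modules whose composition factors are determined by the values $\sigma(1),\dots,\sigma(n+1)$. Under Mizuno's isomorphism $W \to \sttilt\Pi$, Theorem \ref{muttautilt}(b) and the general theory of left/right mutation translate the arrow $\sigma \to \sigma s_\ell$ in $\Hasse W$ into the mutation of $T$ at a unique indecomposable summand $X$; since $\ell$ is the unique descent of $\sigma$, the summand $X$ is precisely the one indexed by the position where $\sigma$ descends, which by the description in \cite{IRRT} is the submodule of the corresponding injective whose composition factors record exactly the ``staircase'' of the first display in the proposition.

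Next, applying Proposition \ref{theorem label2}, the brick label of $q:\sigma\to\sigma s_\ell$ is \[S_q \;\cong\; X/(\Rad_A(T,X)\cdot T).\] The quotient by $\Rad_A(T,X)\cdot T$ has the effect of cutting off the portion of $X$ that is hit by radical maps from the other summands of $T$. Since every summand of $T$ is a string module of the prescribed interval form and $\Pi$ is gentle modulo $I_\cyc$ (Proposition \ref{Anbrick}(a)), these radical images are themselves controlled substrings; a direct inspection identifies the portion that survives as precisely the string module with composition factors displayed in the proposition. Here the index $\ell_M$ (respectively $\ell_m$) appears as the largest (respectively smallest) position at which $\sigma$ is below (respectively above) the identity, which by the descent condition coincides with $\sigma(\ell)$ and $\sigma(\ell+1)$.

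The two displayed forms for $S_q$ are just the two equivalent ways of writing the composition series of the same string module: reading the string along the arrows $x_i$ of $\overline{Q}$ versus reading it along the $y_i$. The equality between them is a direct combinatorial verification using $\sigma(\ell) = \ell_M$ and $\sigma(\ell+1) = \ell_m$, together with the descent pattern \eqref{dscjirr}. The main obstacle is the careful bookkeeping of the composition series: ensuring that the indexing rows (corresponding to successive values $\sigma(\ell+1),\sigma(\ell+2),\dots,\sigma(\ell_M)$ in the left form, and $\sigma(\ell_m),\dots,\sigma(\ell-1),\sigma(\ell)$ in the right form) align correctly with the arrows of $\overline{Q}$ and that $\Rad_A(T,X)\cdot T$ removes exactly the expected portion of $X$. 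Once this combinatorial matching is established, the two displays are equivalent descriptions of the same brick, completing the proof.
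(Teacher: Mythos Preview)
Your proposal is correct and follows the route the paper implicitly takes: the paper states this result without proof, simply marking it as a corollary of \cite[Subsection~6.1]{IRRT}, and your outline---use the explicit description of $I_{\sigma w_0}$ from \cite{IRRT} together with Proposition~\ref{theorem label2} to identify the mutated summand and compute the brick as $X/(\Rad_A(T,X)\cdot T)$---is exactly how one would unpack that citation. One minor simplification: since $\sigma$ is join-irreducible, you could bypass the full support $\tau$-tilting module and instead use the bijection $\itrigid\Pi\to\jirr(\tors\Pi)$ of Theorem~\ref{join-irreducible in sttilt} together with Proposition~\ref{brick-rigid}(a), applying $M\mapsto M/\rad_{\End(M)}M$ directly to the indecomposable $\tau$-rigid module associated to $\sigma$ in \cite{IRRT}; this avoids tracking the other summands of $T$ and the radical maps between them.
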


To reformulate Proposition \ref{bricksym}, and justify the equality of the two string modules, the label of $\sigma \to \sigma s_\ell$ corresponds to the non-revisiting walk supported by vertices $\ell_m = \sigma(\ell+1), \ell_m + 1, \dots, \ell_M-1 = \sigma(\ell) - 1$, traveling through the arrow $x_{i-1} = (i-1 \to i)$ if $i \in \sigma(\{1, 2, \dots, \ell\})$ and through the arrow $y_i = (i-1 \leftarrow i)$ if $i \in \sigma(\{\ell+1, \ell+2, \dots, n+1\})$.

Just before Proposition \ref{A double ji}, we defined double join-irreducible permutations $d_{i, j} \in \sym_{n+1}$ for $1 \le i, j \le n$.
Given $d_{i, j}$ and a permutation $\sigma$, define a \newword{$d_{i,j}$-pattern} in $\sigma$ to be a pair $\sigma(\ell) \sigma(\ell+1)$ with $\sigma(\ell)>\sigma(\ell+1)$ such that
 \[ [i+1, j] \subseteq \sigma([1, \ell-1]) \text{ if } i \le j \quad \text{and} \quad [j+1, i] \subseteq \sigma([\ell+2, n+1]) \text{ if } i \ge j.\]
We say that $\sigma$ \emph{avoids} $d_{i,j}$ if it contains no $d_{i,j}$-pattern.
The following is a special case of \cite[Corollary~4.6]{arcs} and Proposition \ref{realpidown}.

\begin{theorem} \label{type A bottoms}
Let $D$ be a set of double join-irreducible elements of $\sym_{n+1}$ and let $\Theta_D$ be the smallest congruence on $\sym_{n+1}$ that contracts the elements of $D$.
Then the quotient $\sym_{n+1}/\Theta_D$ is isomorphic to the subposet of $\sym_{n+1}$ induced by the permutations $\sigma$ that avoid $d_{i,j}$ for all $d_{i, j}\in D$.
\end{theorem}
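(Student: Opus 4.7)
The plan is to reduce the theorem to a characterization of the bottom elements of the congruence classes of $\Theta_D$, and then recognize that characterization as pattern avoidance. By Proposition~\ref{realpidown}, the quotient $\sym_{n+1}/\Theta_D$ is isomorphic (as a poset) to the subposet $\pidown^{\Theta_D}\sym_{n+1}$ of $\sym_{n+1}$. So the task becomes showing that for $\sigma\in\sym_{n+1}$, we have $\sigma=\pidown^{\Theta_D}(\sigma)$ if and only if $\sigma$ avoids the pattern $d_{i,j}$ for every $d_{i,j}\in D$.

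An element $\sigma$ equals $\pidown^{\Theta_D}(\sigma)$ precisely when no arrow of $\Hasse\sym_{n+1}$ starting at $\sigma$ is contracted by $\Theta_D$. Such arrows are exactly the arrows $\sigma\to\sigma s_\ell$ where $\ell$ is a descent of $\sigma$ (i.e.\ $\sigma(\ell)>\sigma(\ell+1)$). Using Theorem~\ref{theorem uniform2}(a) together with Proposition~\ref{bijfeqjirr}(a), the congruence $\Theta_D=\con(D)$ contracts an arrow $q$ if and only if the brick $S_q$ is forced by the brick of some $d_{i,j}\in D$. Combined with Theorem~\ref{brmodpi}(b), this translates into the subfactor condition: $\Theta_D$ contracts $\sigma\to\sigma s_\ell$ if and only if the brick labelling the unique arrow starting at some $d_{i,j}\in D$ appears as a subfactor of the brick $S_q$ labelling $\sigma\to\sigma s_\ell$. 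Hence $\sigma=\pidown^{\Theta_D}(\sigma)$ iff no such subfactor occurs for any descent $\ell$ of $\sigma$ and any $d_{i,j}\in D$.

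To finish, one must identify this subfactor condition combinatorially. Proposition~\ref{bricksym} gives an explicit description of the brick labelling an arrow $\tau\to\tau s_\ell$ when $\tau$ has a unique descent at $\ell$; the same formula extends to the arrow $\sigma\to\sigma s_\ell$ starting at the arbitrary permutation $\sigma$ (with descent at $\ell$), because the brick depends only on the local behavior of $\sigma$ at its descent, via the common polygon containing both $\sigma\to\sigma s_\ell$ and the corresponding join-irreducible arrow. Computing the brick for $d_{i,j}$ in the same way, it is a very short string module supported on the vertices between $i$ and $j$ in $\overline{Q}$. The condition that this short string module is a subfactor of the string module attached to $\sigma\to\sigma s_\ell$ then unwinds directly into the combinatorial requirement that $\sigma([1,\ell-1])\supseteq[i+1,j]$ when $i\le j$, or $\sigma([\ell+2,n+1])\supseteq[j+1,i]$ when $i\ge j$. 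This is precisely the definition of a $d_{i,j}$-pattern at the descent $\sigma(\ell)\sigma(\ell+1)$, so the equivalence with pattern avoidance follows.

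The main obstacle in this plan is step three, the explicit combinatorial match between subfactors of the relevant string modules and the $d_{i,j}$-pattern conditions; everything else is a straightforward application of results already established in the paper. As noted before the statement, this final translation is a direct specialization of the general pattern-avoidance framework of \cite[Corollary~4.6]{arcs}, which is exactly what one needs to close the argument.
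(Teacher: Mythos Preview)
The paper's proof is a one-line citation: it invokes \cite[Corollary~4.6]{arcs} together with Proposition~\ref{realpidown}. Your approach is more ambitious: you try to recover the result from the paper's internal machinery (brick labelling, forcing, Theorem~\ref{brmodpi}) before appealing to \cite{arcs} only at the very end. The reduction you give is sound through the point where you characterize the arrows contracted by $\Theta_D$ as exactly those whose brick label has some $X_{u_{d_{i,j}}}$ (with $d_{i,j}\in D$) as a subfactor; this follows cleanly from Theorem~\ref{mainiso}, Theorem~\ref{theorem uniform2}(a), and Theorem~\ref{brmodpi}(b) as you indicate.

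The gap is the next step. You assert that ``the same formula'' from Proposition~\ref{bricksym} extends to an arbitrary arrow $\sigma\to\sigma s_\ell$, justified only by a vague reference to ``the common polygon containing both $\sigma\to\sigma s_\ell$ and the corresponding join-irreducible arrow''. But Proposition~\ref{bricksym} applies only to join-irreducible $\sigma$, and the paper provides no tool for reading off the brick of a general Hasse arrow directly from the one-line notation of $\sigma$. What is needed is precisely the identification of the join-irreducible forcing-equivalent to $\sigma\to\sigma s_\ell$ in terms of the data $\sigma(\ell)$, $\sigma(\ell+1)$, and the positions of the intermediate values---and that identification is exactly the content supplied by \cite{arcs} (the arc associated to a descent). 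So the dependence on \cite{arcs} is not confined to the final combinatorial unwind; it is already needed to make sense of ``the same formula extends''. In effect your argument repackages the citation rather than replacing it, and the middle step is not justified by anything in the present paper.
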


We can also say explicitly which arrows of $\Hasse \sym_{n+1}$ are contracted by $\Theta_D$.
The following theorem is a consequence of Theorem~\ref{type A bottoms} and \cite[Theorem~2.4]{arcs}.

\begin{theorem} \label{type A contract}
Let $D$ be a set of double join-irreducible elements of $\sym_{n+1}$ and let $\Theta_D$ be the smallest congruence on $\sym_{n+1}$ that contracts the elements of $D$.
Then an arrow $\sigma \to \tau$ with $\sigma=\tau s_\ell$ is contracted by $\Theta_D$ if and only if $\sigma(\ell) \sigma(\ell+1)$ is a $d_{i,j}$-pattern.
\end{theorem}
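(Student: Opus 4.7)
My plan is to express $\Theta_D$ as an algebraic congruence on $\tors\Pi$ via Mizuno's identification, use Theorem~\ref{annihilate contract0} to rewrite the contraction condition in terms of annihilation of the brick label, and then unpack the resulting algebraic condition into the combinatorial pattern condition using the explicit walk description of the brick given in Proposition~\ref{bricksym}.

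First, I will invoke Proposition~\ref{realdji}, which identifies the smallest congruence contracting the elements of $D$ with the algebraic congruence $\eta_{\overline\Pi}(I_D)$, where $I_D := (u_{d_{i,j}} : d_{i,j} \in D)$ is the ideal of $\overline\Pi$ generated by the non-revisiting paths $u_{d_{i,j}}$. Explicitly, for $d_{i,j} \in D$ with $i \le j$, the generator $u_{d_{i,j}} = x_i x_{i+1} \cdots x_{j-1}$ is the forward path from vertex $i$ to vertex $j$ in $\overline Q$, and for $i \ge j$ it is the backward path $y_i y_{i-1} \cdots y_{j+1}$. Theorem~\ref{annihilate contract0}(a) then tells me that the arrow $q : \sigma \to \sigma s_\ell$ in $\Hasse(\tors\Pi)$ is contracted by $\Theta_D$ exactly when its brick label $S_q$ satisfies $I_D \cdot S_q \neq 0$, equivalently when $u_{d_{i,j}} \cdot S_q \neq 0$ for some $d_{i,j} \in D$.

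Next, I apply Proposition~\ref{bricksym}: the brick $S_q$ is the string module $X_w$ for the non-revisiting walk $w$ on the consecutive vertices $\sigma(\ell+1), \sigma(\ell+1)+1, \ldots, \sigma(\ell)-1$, whose edge between vertices $m-1$ and $m$ is the arrow $x_{m-1}$ if $m \in \sigma([1,\ell])$ and the arrow $y_m$ if $m \in \sigma([\ell+1, n+1])$. A directed path in $\overline Q$ acts non-trivially on a string module if and only if it occurs as a consecutive directed sub-walk of the defining walk, so for $i \le j$ the non-vanishing of $u_{d_{i,j}} \cdot X_w$ is equivalent to three conditions: the endpoint vertices of $u_{d_{i,j}}$ lie in the support, giving $\sigma(\ell+1) \le i$ and $j+1 \le \sigma(\ell)$; and every intermediate edge is an $x$-arrow, giving $m \in \sigma([1,\ell])$ for all $m \in \{i+1, \ldots, j\}$. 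Since $\sigma(\ell) > j$ we have $\sigma(\ell) \notin [i+1,j]$, so the last condition is equivalent to $[i+1, j] \subseteq \sigma([1, \ell-1])$; together with the endpoint conditions this is exactly the $d_{i,j}$-pattern condition from Theorem~\ref{type A bottoms}. The case $i \ge j$ is handled symmetrically via the $y$-arrows, yielding the dual condition $[j+1, i] \subseteq \sigma([\ell+2, n+1])$.

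The main technical point is the third step, namely the careful identification of ``path $u_{d_{i,j}}$ acts non-trivially on $X_w$'' with the existence of a directed consecutive sub-walk in $w$; once this is made precise, both directions of the equivalence are simultaneous, and the pattern conditions for the two cases $i \le j$ and $i \ge j$ emerge directly from the symmetric role played by the $x$- and $y$-arrows in the walk description.
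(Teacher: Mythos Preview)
Your argument is correct and takes a genuinely different route from the paper. The paper does not actually prove this theorem: it simply asserts that it follows from Theorem~\ref{type A bottoms} together with \cite[Theorem~2.4]{arcs}, thereby deferring entirely to the external combinatorics of noncrossing arc diagrams. Your approach instead stays inside the representation-theoretic framework built in the paper: you realise $\Theta_D$ as the algebraic congruence $\eta_{\overline\Pi}(I_D)$ via Proposition~\ref{realdji}, convert ``contracted'' into the annihilation condition $I_D S_q\neq 0$ via Theorem~\ref{annihilate contract0}, reduce to a single generator $u_{d_{i,j}}$ (which is immediate since $u_d S_q=0$ for all $d$ forces $\overline\Pi\,u_d\,\overline\Pi\cdot S_q=0$), and finally read off the sub-walk condition from the string-module description. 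This has the advantage of making the theorem a direct corollary of the paper's own machinery, and it also makes transparent why the endpoint inequalities $\sigma(\ell+1)\le i$ and $\sigma(\ell)\ge j+1$ must be part of the $d_{i,j}$-pattern condition: they are exactly the support constraints for the uniserial brick $X_{u_{d_{i,j}}}$ to sit inside $X_w$ as a subfactor.

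One small gap to close: Proposition~\ref{bricksym} and the walk description following it are stated only for join-irreducible $\sigma$, whereas you apply the walk formula to an arbitrary arrow $\sigma\to\sigma s_\ell$. This is harmless but needs a sentence: by Theorem~\ref{theorem uniform2} the brick label is constant on the forcing-equivalence class, and by Proposition~\ref{bijfeqjirr} each class contains a unique arrow $j\to j_*$ with $j$ join-irreducible; one checks directly that the data $(\sigma(\ell),\sigma(\ell+1),\{m:\,m\in\sigma([1,\ell])\}\cap[\sigma(\ell+1)+1,\sigma(\ell)-1])$ determining the walk is unchanged when passing from $\sigma$ to that join-irreducible representative. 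With this remark in place your proof is complete.
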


\section{Cambrian and biCambrian lattices} \label{camblat}
In this section, we use the results of this paper to re-derive the known connection between hereditary algebras of Dynkin type and Cambrian lattices.
We also give an algebraic/lattice-theoretic proof of another known fact, namely that each Cambrian lattice is a sublattice of the weak order.
Both of our proofs bypass the combinatorics of sortable elements, which is needed in the previously known proofs. We also analyze the biCambrian congruence of Barnard and Reading \cite{BR} and show that it is algebraic.

\subsection{A representation-theoretic interpretation of Cambrian lattices}\label{camb subsec}
Let $Q$ be a simply-laced Dynkin quiver and let $W$ be the corresponding Weyl group.
A \newword{Coxeter element} of $W$ is an element $c$ obtained as the product in any order of the generators $S=\set{s_1,\ldots, s_n}$.
The quiver $Q$ defines a Coxeter element given by an expression $c=s_{i_1}s_{i_2}\cdots s_{i_n}$ such that if there is an arrow $i\leftarrow j$ in $Q$ then $s_i$ appears before $s_j$ in the expression $s_{i_1}s_{i_2}\cdots s_{i_n}$.
There may be several expressions having this property, but they all define the same Coxeter element of $W$ because if $i$ and $j$ are not related by an arrow of $Q$, the generators $s_i$ and $s_j$ commute.
Conversely a Coxeter element $c$ uniquely determines an orientation of the Dynkin diagram such that an edge $i$ --- $j$ is oriented $i\leftarrow j$ if $s_i$ precedes $s_j$ in some (equivalently, every) reduced word for $c$.

We use $Q$ (or equivalently $c$) to define a lattice congruence $\Theta_c$ on $W$ called the \newword{$c$-Cambrian congruence}.  
We consider the set $\E_c := \{s_j s_i \to s_j  \mid i \leftarrow j \in Q_1\}$ of arrows of $\Hasse W$ and the congruence $\Theta_c := \con \E_c$.
The full set of arrows contracted by $\Theta_c$ can be computed using polygon forcing as in Section~\ref{lat prelim}.

The Cambrian congruence $\Theta_c$ is illustrated in the left picture of Figure~\ref{camb cong fig} for $W=\sym_4$ and $c=s_2s_1s_3$.
(The edges contracted by $\Theta_c$ are doubled.)
Thus $\Theta_c$ is the smallest congruence on $\sym_4$ contracting the arrows $2314 \to 2134$ and $1423 \to 1243$.
\begin{figure}
\[\xymatrix@R=.9cm@C=.5cm@!=0cm{
 & & & & & 4321\\
 & & & 4312 \ar@{<-}[urr] & & 4231 \ar@{<-}[u] & & 3421\ar@{<-}[ull] \\
 & 4132 \ar@{<-}[urr] & & 4213 \ar@{<=}[urr] & & 3412 \ar@{<-}[ull] \ar@{<-}[urr] & & 2431 \ar@{<=}[ull] & & 3241 \ar@{<-}[ull] \\
 1432 \ar@{<=}[ur] & & 4123 \ar@{<-}[ul] \ar@{<-}[ur] & & 2413 \ar@{<=}[ul] \ar@{<=}[urrr] & & 3142 \ar@{<-}[ul] & & 2341 \ar@{<-}[ul] \ar@{<-}[ur] & & 3214 \ar@{<=}[ul] \\
 & 1423 \ar@{<-}[ul] \ar@{<=}[ur] & & 1342 \ar@{<-}[ulll] \ar@{<-}[urrr] & & 2143 \ar@{<=}[ul] & & 3124 \ar@{<-}[ul] \ar@{<-}[urrr] & & 2314 \ar@{<=}[ul] \ar@{<-}[ur] \\
 & & & 1243 \ar@{<=}[ull] \ar@{<-}[urr] & & 1324 \ar@{<-}[ull] \ar@{<-}[urr] & & 2134 \ar@{<-}[ull] \ar@{<=}[urr] \\
 & & & & & 1234 \ar@{<-}[ull] \ar@{<-}[u] \ar@{<-}[urr]
} \quad
\xymatrix@R=.9cm@C=.5cm@!=0cm{
 & & & & & 4321\\
 & & & 4312 \ar@{<-}[urr] & &  & & 3421\ar@{<-}[ull] \\
 & & & & & & 3412 \ar@{<-}[ulll] \ar@{<-}[ur] & &  &   \\
 1432 \ar@{<-}[uurrr] & & & & & & 3142 \ar@{<-}[u] & &  & & 3214 \ar@{<-}[uulll] \\
 & & & 1342 \ar@{<-}[ulll] \ar@{<-}[urrr] & & 2143 \ar@{<-}[uuuu] & & 3124 \ar@{<-}[ul] \ar@{<-}[urrr] & &  \\
 & & & 1243 \ar@{<-}[uulll] \ar@{<-}[urr] & & 1324 \ar@{<-}[ull] \ar@{<-}[urr] & & 2134 \ar@{<-}[ull] \ar@{<-}[uurrr] \\
 & & & & & 1234 \ar@{<-}[ull] \ar@{<-}[u] \ar@{<-}[urr]
}
\]
\caption{A Cambrian congruence and Cambrian lattice}
\label{camb cong fig}
\end{figure}

The quotient $W/\Theta_c$ is called the \newword{$c$-Cambrian lattice}.
The Cambrian lattice $W/ \Theta_c$ for $W=\sym_4$ and $c=s_2s_1s_3$ is drawn on the right of Figure~\ref{camb cong fig}.
As a special case of Proposition \ref{realpidown}, the Cambrian lattice is isomorphic to the subposet $\smash{\pidown^c\, W}$ of $W$ consisting of bottom elements of $\Theta_c$-classes.
These bottom elements were characterized combinatorially in \cite[Theorems 1.1, 1.4]{sort_camb} as the \emph{$c$-sortable elements}.
By definition, an element of $W$ is $c$-sortable if it admits a reduced expression $u_1 u_2 \dots u_\ell$ where, for each $i = 1, \dots, \ell-1$, $u_{i+1}$ is a subword of $u_i$ (\emph{e.g.} $s_2 s_3 s_5$ is a subword of $s_1 s_2 s_3 s_4 s_5$) and $u_1$ is a subword of a reduced expression $u$ for $c$. 

The connection between torsion classes and Cambrian lattices was established in \cite{IngTho}:

\begin{theorem}\label{IT-th} Let $Q$ be a quiver of simply-laced Dynkin type, and
$c$ the corresponding Coxeter element.  Then $\tors kQ$
is isomorphic to the $c$-Cambrian lattice.
\end{theorem}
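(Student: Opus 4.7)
The plan is to realize the $c$-Cambrian congruence $\Theta_c$ on $W$ as the algebraic congruence on $\tors \Pi$ induced by the natural surjection $\phi \colon \Pi \twoheadrightarrow kQ_c$ sending every opposite arrow $a^*$ to zero. The mesh relations $aa^* - a^*a$ descend trivially after killing the $a^*$'s, so $\phi$ is a well-defined surjection of $k$-algebras, with $I := \Kernel \phi = (a^* \mid a \in Q_1)$. By Theorem~\ref{eta op}(a), $\phi$ induces a surjective morphism of complete lattices $\tors \Pi \twoheadrightarrow \tors kQ_c$, producing an algebraic congruence $\Theta_I$ on $\tors \Pi$, which I transport to $W$ via Mizuno's anti-isomorphism (Theorem~\ref{Mizuno's theorem}). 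The heart of the proof is to establish $\Theta_I = \Theta_c$, which immediately yields $\tors kQ_c \cong W/\Theta_c$.

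To obtain $\Theta_c \leq \Theta_I$, I would identify the bricks labelling the generators of $\Theta_c$. Each generator is a weak-order cover $s_j s_i \to s_j$ indexed by an arrow $i \leftarrow j$ of $Q_c$; under Mizuno it corresponds to the Hasse arrow $\Fac I_{s_j} \to \Fac I_{s_j s_i}$ in $\tors \Pi$. I would place this arrow inside the braid-hexagon interval $[\Fac I_{s_i s_j s_i}, \mod \Pi]$, which is a $2$-polytope of $\tors \Pi$ associated to the common $\tau$-rigid pair $(N, 0)$ with $N := \bigoplus_{k \neq i, j} P_k$ projective (so $\TT(N,0) = \mod \Pi$ since $\tau N = 0$, and $\UU(N,0) = \Fac N = \Fac I_{s_i s_j s_i}$). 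By Proposition~\ref{topbottomlatsimpleforce}, the two arrows out of $\mod \Pi$ inside this polygon are labelled by $S_i$ and $S_j$, and since $\dim \Ext^1_\Pi(S_i, S_j) = \dim \Ext^1_\Pi(S_j, S_i) = 1$ for adjacent simples in a simply-laced Dynkin diagram, Proposition~\ref{proposition shapepoly} pins down the hexagon in the $(1,1)$-case. Reading the labels along the chain passing through $\Fac I_{s_j}$, the arrow $\Fac I_{s_j} \to \Fac I_{s_j s_i}$ is labelled by the non-trivial extension $M_{ij}$ with top $S_i$ and socle $S_j$, on which $a^* \colon i \to j$ acts non-trivially while $a \colon j \to i$ acts as zero. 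Since $a^* M_{ij} \neq 0$, we have $M_{ij} \notin \mod kQ_c$, so Theorem~\ref{annihilate contract0}(a) shows that $\Theta_I$ contracts this arrow; hence $\Theta_c \leq \Theta_I$.

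For the reverse inequality I would argue by cardinality. Since $\Theta_c \leq \Theta_I$, the surjection $W \twoheadrightarrow W/\Theta_I$ factors through $W \twoheadrightarrow W/\Theta_c$, giving $|W/\Theta_I| \leq |W/\Theta_c|$. Now $W/\Theta_I \cong \tors kQ_c$ has cardinality the Coxeter--Catalan number $C(W)$ (via the classical bijection, for a hereditary algebra of Dynkin type, between torsion classes and antichains in the poset of positive roots), while $|W/\Theta_c| = C(W)$ by the defining count of the Cambrian lattice. Equality of these two quantities together with the surjection between them forces $\Theta_I = \Theta_c$, completing the proof.

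The main technical obstacle is the brick-label computation inside the braid hexagon: one must verify that $[\Fac I_{s_i s_j s_i}, \mod \Pi]$ really is a $2$-polytope (by exhibiting $N$ as a direct summand common to all six of $\Pi, I_{s_i}, I_{s_j}, I_{s_i s_j}, I_{s_j s_i}, I_{s_i s_j s_i}$), invoke Proposition~\ref{proposition shapepoly} --- which tacitly uses that all bricks of $\Pi$ are $k$-stones in Dynkin type --- and carefully track which chain of the hexagon passes through $\Fac I_{s_j}$ rather than $\Fac I_{s_i}$, a matter of consistently handling the direction reversal in Mizuno's isomorphism.
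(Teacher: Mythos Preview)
Your argument is correct, and the first half (showing $\Theta_c\le\Theta_I$ by computing the brick labels of the generating arrows) is essentially what the paper does, though the paper asserts the label identification more tersely while you justify it through the hexagon of Proposition~\ref{proposition shapepoly}.

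The second half, however, takes a genuinely different route.  You finish by a counting argument: $\Theta_c\le\Theta_I$ gives a surjection $W/\Theta_c\twoheadrightarrow W/\Theta_I\cong\tors kQ$, and you invoke $|W/\Theta_c|=C(W)=|\tors kQ|$ to force equality.  Both cardinalities are indeed theorems in the literature, so the argument is valid, but the equality $|W/\Theta_c|=C(W)$ is not a ``defining count'' --- it is Reading's enumeration of Cambrian classes, and its known proofs go through the combinatorics of $c$-sortable elements.  That is precisely the machinery the paper is trying to bypass.  The paper instead proves $\Theta_I\le\Theta_c$ by an induction on $\dim S$ for bricks $S\notin\mod kQ$: the key technical step (Lemma~\ref{prepcamb2}, supported by Lemma~\ref{prepcamb1}) shows that any such brick with $\dim S\ge3$ sits in a short exact sequence $0\to S_1\to S\to S_2\to 0$ with $\{S_1,S_2\}$ a semibrick and at least one of $S_1,S_2$ again not in $\mod kQ$; Theorem~\ref{theorem forcing} then says $S_1$ and $S_2$ both force $S$, closing the induction.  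Your shortcut is cleaner but outsources the hard part to \cite{sort_camb}; the paper's approach is longer but yields a self-contained representation-theoretic argument independent of the sortable-elements theory.
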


This theorem was proved by showing that $\tors kQ$ is isomorphic to the sublattice of $W$ consisting of the $c$-sortable elements.
We will now give a direct representation-theoretical argument in Theorem~\ref{kQ camb} using the lattice-theoretic definition of the $c$-Cambrian lattice rather than the combinatorial realization via $c$-sortable elements.
Let $\Pi = \Pi_Q$ be a preprojective algebra and $I$ be the ideal $(a^* \mid a \in Q_1)$ of $\Pi$. Then, we identify $kQ$ with $\Pi/I$ and consider the canonical projection \[\phi: \Pi \to \Pi/I = k Q.\]
\begin{theorem} \label{kQ camb}
The congruences $\Theta_c$ and $\Theta_I$ of $W \cong \tors \Pi$ coincide. Thus, there is a lattice isomorphism $W/\Theta_c \cong \tors kQ$  making the following square commute:
\[
\xymatrix@R=0.5cm{
W \ar[r] \ar[d] & W/\Theta_c \ar@{.>}[d] \\
\tors \Pi \ar[r]^-{\overline{(-)}}& \tors kQ.
}\]
\end{theorem}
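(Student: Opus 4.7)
Plan. My strategy is to prove $\Theta_c=\Theta_I$ as congruences on $\tors\Pi\cong W$; the commutative diagram then follows automatically, since $\overline{(-)}$ is by construction the canonical projection for $\Theta_I$, so that $W/\Theta_c=\tors\Pi/\Theta_I\cong \tors kQ$. The main tool is the complete congruence uniformity of $\tors\Pi$ (Theorem \ref{theorem uniform2}): an arrow-determined complete congruence is characterized by the set of isomorphism classes of bricks labelling the arrows it contracts. By Theorem \ref{annihilate contract0}, $\Theta_I$ is arrow-determined and contracts exactly the arrows whose brick label $S$ satisfies $IS\neq 0$, equivalently $S\notin\mod kQ$. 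The two inequalities $\Theta_c\le\Theta_I$ and $\Theta_I\le\Theta_c$ are thus reformulated in terms of brick labels.

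For $\Theta_c\le\Theta_I$, I would compute the brick label of each generating arrow in $\E_c$. For $i\leftarrow j\in Q_1$, with associated arrow $a:j\to i$ of $Q$ and dual arrow $a^*:i\to j$ of $\overline{Q}$, Mizuno's isomorphism (Theorem \ref{Mizuno's theorem}) sends $s_js_i\to s_j$ to a Hasse arrow in $\tors\Pi$ starting at $\Fac I_{s_j}$. Using Proposition \ref{topbottomlatsimpleforce}(b) and the compatibility of Mizuno's bijection with covers of $\mod\Pi$, one identifies $\Fac I_{s_j}={}^\perp S_j$, and a direct computation shows that the label of $\Fac I_{s_j}\to \Fac I_{s_js_i}$ is the unique nontrivial extension $B_a:0\to S_j\to B_a\to S_i\to 0$ in $\mod\Pi$ corresponding to the extension class $a^*\in \Ext^1_\Pi(S_i,S_j)$. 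Since $a^*\in I$ acts nontrivially on $B_a$, we have $B_a\notin\mod kQ$, so Theorem \ref{annihilate contract0} yields that every arrow in $\E_c$ is contracted by $\Theta_I$, giving $\Theta_c=\con(\E_c)\le\Theta_I$.

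For $\Theta_I\le\Theta_c$, it remains to show that every brick $S\in\brick\Pi$ with $IS\neq 0$ is forced by some $B_a$. Since $\Pi$ is $\tau$-tilting finite (Theorem \ref{Mizuno's theorem}) and all bricks of a simply-laced Dynkin preprojective algebra are $k$-stones, Theorem \ref{forcing=doubleton} applies and forcing equals the transitive closure $\forces[d]$ of the doubleton extension relation. I proceed by induction on $\dim S$: in the base case $\dim S=2$, a nontrivial extension of two distinct simples on which some $a^*$ acts nontrivially is exactly some $B_a$; for $\dim S>2$, Proposition \ref{splitbrick} produces a semibrick $\{S_1,S_2\}$ with $\dim\Ext^1(S_2,S_1)=1$ and a short exact sequence $0\to S_1\to S\to S_2\to 0$, giving $S_k\forces[d] S$ for $k=1,2$, and whenever $IS_k\neq 0$ for some $k$, the inductive hypothesis together with transitivity of $\forces$ yields $B_a\forces S_k\forces S$ for some $a$.

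The main obstacle is the residual case where $IS_1=IS_2=0$ yet $IS\neq 0$, i.e., the $I$-action on $S$ is witnessed solely by the extension class, which must then be a "new" element of $\Ext^1_\Pi(S_2,S_1)$ not coming from $\Ext^1_{kQ}(S_2,S_1)$. I would handle this case by picking any nonzero $v\in e_j S$ with $a^*v\neq 0$ for some $a^*$, extracting from it a $2$-dimensional subquotient of $S$ isomorphic to $B_a$, and applying the wide-subcategory reduction of Theorem \ref{ttwideaa} inside a polytope of $\tors\Pi$ supporting both arrows labelled by $B_a$ and by $S$; this exhibits $S$ as filtered by a semibrick containing $B_a$, so the filtration characterization of forcing (Theorem \ref{theorem forcing}) yields $B_a\forces S$, completing the proof.
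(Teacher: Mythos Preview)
Your treatment of $\Theta_c\le\Theta_I$ and of the inductive step when some $IS_k\neq0$ is essentially the paper's argument. The genuine gap is your handling of the residual case $S_1,S_2\in\mod kQ$ but $S\notin\mod kQ$. You propose to find a $2$-dimensional subquotient $B_a$ of $S$ and then assert that wide-subcategory reduction ``inside a polytope of $\tors\Pi$ supporting both arrows labelled by $B_a$ and by $S$'' exhibits $S$ as filtered by a semibrick containing $B_a$. But nothing you have written produces such a polytope: the existence of $B_a$ as a subquotient of $S$ does not furnish a $\tau$-rigid pair $(N,Q)$ with $B_a\in\simp[\UU(N,Q),\TT(N,Q)]$ and $S\in\WW(N,Q)\setminus\Filt(\simp\setminus\{B_a\})$. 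Corollary \ref{elemforc} shows that subfactor implies forcing only when the target brick is multiplicity free, which fails already in type $D_4$; so the passage from ``$B_a$ is a subquotient of $S$'' to ``$B_a\forces S$'' cannot be made by the means you indicate.

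The paper's proof does not attempt to force from a length-$2$ subquotient in the residual case. Instead it proves a refinement lemma (Lemma \ref{prepcamb2}): given $S\notin\mod kQ$ with $\dim S\ge3$, one can \emph{choose} a semibrick decomposition $0\to S_1\to S\to S_2\to 0$ with at least one $S_i\notin\mod kQ$, so the residual case never occurs. The proof uses Ringel's description $\mod\Pi\simeq(\mod kQ)(1,\tau)$ to see that in the residual situation $\Ext^1_{kQ}(S_2',S_1')=0$ but $\Ext^1_{kQ}(S_1',S_2')\neq0$, then invokes a hereditary-specific lemma (Lemma \ref{prepcamb1}) to split one of $S_1',S_2'$ further inside $\mod kQ$ and rebuild a new length-$2$ filtration of $S$ in which one piece is forced out of $\mod kQ$. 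This hereditary input (Lemma \ref{reduchered} and the three-vertex case analysis) is the missing idea in your plan.
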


We prepare now for the proof of Theorem \ref{kQ camb}.

\begin{lemma}[{Corollary of \cite[Proposition 6.4]{H}, \cite[Corollary 3.19]{J}}] \label{reduchered}
 Let $A$ be a finite-dimensional hereditary algebra. If $(M, P) \in \trigidpair A$ then $\WW(M, P)$ is equivalent to $\mod H$ where $H$ is a finite-dimensional hereditary algebra.
\end{lemma}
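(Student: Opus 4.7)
The plan is to identify $\WW(M,P)$ with the classical right perpendicular category (in the sense of Geigle--Lenzing and Schofield) of $M \oplus P$ inside $\mod A$, and then invoke the two cited results.

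First, I would use hereditariness to rewrite ${}^\perp(\tau M)$ in purely $\Ext^1$ terms. By Proposition~\ref{characanntau}, $\Hom_A(X,\tau M) = 0$ is equivalent to $\Ext^1_A(M,X') = 0$ for every $X' \in \Fac X$. Since $A$ is hereditary, $\Ext^2_A$ vanishes, and applying $\Hom_A(M,-)$ to a short exact sequence $0 \to K \to X^n \to X' \to 0$ yields the exact segment $\Ext^1_A(M,X^n) \to \Ext^1_A(M,X') \to \Ext^2_A(M,K) = 0$. Hence $\Ext^1_A(M,X) = 0$ already forces $\Ext^1_A(M,X') = 0$ for every $X' \in \Fac X$, and the two conditions coincide. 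Since $P$ is projective, $\Ext^1_A(P,-)$ vanishes identically. Assembling everything,
\[
\WW(M,P) \;=\; \{\, X \in \mod A \mid \Hom_A(M \oplus P, X) = 0 \text{ and } \Ext^1_A(M \oplus P, X) = 0 \,\}.
\]

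This is precisely the classical right perpendicular category of the partial tilting / exceptional module $M \oplus P$ inside the hereditary module category $\mod A$: indeed, under the $\tau$-rigid pair hypothesis, the identity above combined with the hereditary AR formula implies $\Ext^1_A(M \oplus P, M \oplus P) = 0$, putting us exactly in the Geigle--Lenzing--Schofield setting. In the precise form needed for a $\tau$-rigid pair over a hereditary algebra, that theorem is recorded by Hubery as \cite[Proposition 6.4]{H}, and the matching statement within the language of $\tau$-tilting reduction is Jasso's \cite[Corollary 3.19]{J}. Either reference directly produces a finite-dimensional hereditary algebra $H$ (of rank $|A| - |M| - |P|$) together with an equivalence of abelian categories from this perpendicular category to $\mod H$, proving the lemma.

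The only conceptual step is the Ext-translation of ${}^\perp(\tau M)$, which is where the hereditariness of $A$ enters essentially; outside the hereditary setting $\WW(M,P)$ need not be equivalent to a module category over a hereditary algebra (and in general $C_{M,P}$ from Theorem~\ref{ttwideaa} is not hereditary). I do not anticipate a genuine obstacle beyond this, since the two cited results are stated in exactly the generality required.
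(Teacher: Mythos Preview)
Your proposal is correct and follows the same overall strategy as the paper: rewrite $\WW(M,P)$ as the right perpendicular category $\{X : \Hom_A(M\oplus P,X)=0=\Ext^1_A(M\oplus P,X)\}$ using hereditariness, then invoke the cited results. The paper reaches the same identity via the hereditary Auslander--Reiten formula $\Hom_A(X,\tau M)\cong D\Ext^1_A(M,X)$ rather than via Proposition~\ref{characanntau} plus $\Ext^2=0$, but this is cosmetic.

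Where the two arguments genuinely diverge is in how the cited result is applied. The paper first kills $P$ by passing to $A/(e)$, and then applies \cite[Proposition 6.4]{H} only to an \emph{indecomposable} $M$, after checking $\End_A(M)\cong k$ via Kac's Theorem; the general case is obtained by induction on the number of indecomposable summands, using that rigid modules of $\mod A$ remain rigid in $\WW(M',0)$ for an indecomposable summand $M'$. Your proposal instead appeals to the perpendicular-category theorem once, for the full partial tilting module $M\oplus P$. This is mathematically legitimate (the Geigle--Lenzing/Schofield result covers partial tilting modules), and it is shorter; the paper's inductive route is presumably chosen because \cite[Proposition 6.4]{H} is stated only for a single exceptional module, so a direct one-shot citation would not be quite accurate. (Incidentally, \cite{H} is Happel's triangulated-categories book, not Hubery.)
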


\begin{proof}
 We have $\WW(M, P) = {}^\perp (\tau M) \cap P^\perp \cap M^\perp$. First, up to replacing $A$ by $A/(e)$, where $e$ is the idempotent that corresponds to the projective $P$, we can suppose that $P = 0$. Then, as $A$ is hereditary, by Auslander-Reiten duality, we have \[\WW(M, 0) = \{X \in \mod A \mid \Ext^1_A(M, X) = \Hom_A(M, X) = 0\}.\] We have $\Ext^1_A(M, M) = 0$, so if $M$ is indecomposable, again because $A$ is hereditary, by Kac's Theorem, we get $\End_A(M) \cong k$. Hence, the result follows \cite[Proposition 6.4]{H} if $M$ is indecomposable.

 If $M$ is not indecomposable, the result is proven by induction on the number of indecomposable direct summands of $M$, using that rigid objects of $\mod A$ are rigid in $\WW(M', 0)$ for an indecomposable direct summand $M'$ of $M$.
\end{proof}

\begin{lemma} \label{prepcamb1}
 Let $Q$ be a finite union of Dynkin quivers. Let $\{S_1, S_2\}$ be a semibrick of $\mod k Q$ such that $\Ext^1_{k Q}(S_1, S_2) \neq 0$ and $\dim S_1 + \dim S_2 \ge 3$. Then one of the following holds in $\mod k Q$:
 \begin{itemize}
  \item There is a semibrick $\{S_1', S_1'', S_2\}$ and an exact sequence $0 \to S_1' \to S_1 \to S_1'' \to 0$.
  \item There is a semibrick $\{S_1, S_2', S_2''\}$ and an exact sequence $0 \to S_2' \to S_2 \to S_2'' \to 0$.
 \end{itemize}
\end{lemma}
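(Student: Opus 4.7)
The plan relies on the fact that $\mod kQ$ is hereditary and $\tau$-tilting finite (since $Q$ is a finite union of Dynkin quivers) and that every brick of such an algebra is a $k$-stone, so Proposition~\ref{splitbrick} applies. First I would set up the wide subcategory $\WW := \Filt(S_1, S_2)$, which by Lemma~\ref{reduchered} and Theorem~\ref{ttwideb} is equivalent to $\mod H$ for some finite-dimensional hereditary algebra $H$ with two simples of endomorphism ring $k$. Since $\WW$ is representation-finite, the quiver of $H$ has at most one arrow between its two vertices, and the hypothesis $\Ext^1_{kQ}(S_1, S_2) \neq 0$ then forces $H$ to be Morita equivalent to the path algebra of $1 \to 2$ and $\Ext^1_{kQ}(S_2, S_1) = 0$. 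In particular $\WW$ has exactly three indecomposables: $S_1$, $S_2$ and the non-trivial extension $X$ of $S_1$ by $S_2$.

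Since $\dim_k S_1 + \dim_k S_2 \ge 3$, at least one of $S_1, S_2$ is non-simple as a $kQ$-module; suppose $S_1$ is. By Proposition~\ref{splitbrick} I obtain a semibrick $\{T_1, T_2\}$ with a non-split short exact sequence $0 \to T_1 \to S_1 \to T_2 \to 0$ and $\dim \Ext^1_{kQ}(T_2, T_1) = 1$. Applying $\Hom_{kQ}(-, S_2)$ and $\Hom_{kQ}(S_2, -)$ to this sequence and using the vanishings of $\Hom(S_1, S_2)$, $\Hom(S_2, S_1)$ and $\Ext^1(S_2, S_1)$ together with hereditarity of $kQ$, I immediately get $\Hom(T_2, S_2) = 0$ and $\Hom(S_2, T_1) = 0$; the remaining conditions $\Hom(T_1, S_2) = 0$ and $\Hom(S_2, T_2) = 0$ reduce to the injections $\Hom(T_1, S_2) \hookrightarrow \Ext^1(T_2, S_2)$ and $\Hom(S_2, T_2) \hookrightarrow \Ext^1(S_2, T_1)$.

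The hard part will be to guarantee that at least one of the two possible splittings (of $S_1$, or symmetrically of $S_2$) actually produces a valid semibrick; a direct calculation in type $D_4$ shows that a given choice can fail. My plan to overcome this is a case analysis: when the splitting of $S_1$ fails because one of the above injections is non-zero, the obstruction structurally forces $S_2$ to be non-simple and yields a specific sub-brick of $S_2$ whose associated short exact sequence produces a semibrick of the second form $\{S_1, S_2', S_2''\}$. I would formalize this either by exploiting the exceptional-pair structure of $(S_1, S_2)$ in the hereditary Dynkin setting and classifying sub- and quotient-bricks, or equivalently via Theorem~\ref{ttwideb}, by proving that $\WW$ always admits a covering wide subcategory $\WW' \supsetneq \WW$ of rank $3$ in $\wide(\mod kQ)$ whose three simples are of one of the two required forms. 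Ruling out simultaneous failure of both splittings is the delicate step.
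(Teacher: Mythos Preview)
Your direct approach via Proposition~\ref{splitbrick} has exactly the gap you identify: a splitting $0\to T_1\to S_1\to T_2\to 0$ produced by Proposition~\ref{splitbrick} need not satisfy $\Hom(T_1,S_2)=0$ or $\Hom(S_2,T_2)=0$, and there is no mechanism in your outline that turns failure of the $S_1$-splitting into success of an $S_2$-splitting. The ``case analysis'' you sketch is not an argument; this is the whole difficulty.

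Your alternative plan --- produce a rank-$3$ wide subcategory $\WW'\supsetneq\WW=\Filt(S_1,S_2)$ whose three simples realize one of the two desired semibricks --- is in fact what the paper does, and the point you are missing is \emph{how} to construct $\WW'$. The paper proceeds via $\tau$-tilting theory: take $(M,P)\in\ttiltpair kQ$ with $\Fac M=\T(S_1,S_2)$, so exactly two arrows $q_1,q_2$ leave $(M,P)$, labelled $S_1,S_2$; let $(M',P')$ be the meet of their targets (a polygon). Since one of $S_1,S_2$ is non-simple and labels an arrow into $(M',P')$, Proposition~\ref{topbottomlatsimpleforce} forces $(M',P')\neq(0,kQ)$, so there is an arrow $(M',P')\to(M'',P'')$. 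The common direct summand $(M_0,P_0)$ of $(M,P)$, $(M',P')$, $(M'',P'')$ has corank $3$, and $\WW':=\WW(M_0,P_0)$ is the desired rank-$3$ wide subcategory containing $S_1,S_2$. The crucial non-obvious step is that $\{S_1,S_2\}$ cannot both be simple in $\WW'$: if they were, the polygon in $[\UU(M_0,P_0),\TT(M_0,P_0)]$ with top labels $S_1,S_2$ would have top $(M,P)$ and bottom $(M',P')=(M_0^-,P_0^-)$, contradicting the existence of $(M'',P'')$ below $(M',P')$.

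With this in hand, Lemma~\ref{reduchered} gives $\WW'\cong\mod kQ'$ for $Q'$ a union of Dynkin quivers with $|Q'_0|=3$, and since $\{S_1,S_2\}$ are not both simple in $\WW'$ their total dimension there is $\geq 3$. The paper then disposes of the base case $|Q_0|=3$ directly: the nontrivial extension $S$ of $S_1$ by $S_2$ is a brick of dimension $\geq 3$, forcing $Q=A_3$ and $\{\dim S_1,\dim S_2\}=\{1,2\}$, so the three simple $kQ$-modules form the required semibrick. Your proposal lacks both this base case and, more importantly, the polytope construction of $\WW'$.
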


\begin{proof}
 First of all, if $\# Q_0 \leq 2$, then $Q$ is of type $A_1 \times A_1$ or $A_2$ and there is no semibrick $\{S_1, S_2\}$ with $\dim S_1 + \dim S_2 \ge 3$. We start with the case $\# Q_0 = 3$. As $\Ext^1_{k Q}(S_1, S_2) \neq 0$, there is a non-split extension $0 \to S_2 \to S \to S_1 \to 0$. By Lemma \ref{lemma extbricks}, $S$ is a brick with $\dim S=\dim S_1+\dim S_2 \geq 3$. Thus, Q has to be of type $A_3$, and $\{\dim S_1, \dim S_2\}=\{1,2\}$. Thus the simple $kQ$-modules form the desired semibrick.

 \begin{figure}
  \[
   \xymatrix@R=1.1cm@C=2cm@!=0cm{
    & & & (M_0^+, P_0) \ar@{..}@`{p+(40,85), p+(40,50), p+(40,5)}[dddddddd] \ar@{..}@`{p+(-50,85), p+(-50,50), p+(-50,5)}[dddddddd] & & \\
    & & (M, P) \ar[dl]|{\bcirl{S_1}} \ar[dr]|{\bcirl{S_2}} \\
    & (M_1, P_1) \ar@{..}[d] & & (M_2, P_2) \ar@{..}[d] \\
    & \ar[dr]|{\bcirl{S_2}} & & \ar[dl]|{\bcirl{S_1}} \\
    & & (M', P') \ar[d] \\
    & & (M'', P'') & (M''', P''') \ar[dl]|{\bcirl{S_1}} \ar[rd]|{\bcirl{S_2}} \\
    & & \ar@{..}[d] & & \ar@{..}[d] \\
    & & (M_2', P_2') \ar[dr]|{\bcirl{S_2}} & & (M_1', P_1') \ar[dl]|{\bcirl{S_1}}  \\
    & & & (M_0^-, P_0^-)
   }
  \]
  \caption{Hasse quiver of $\WW(M_0, P_0)$} \label{illustproofcamb}
 \end{figure}

 Let us return to the general case. We illustrate the following reasoning in Figure \ref{illustproofcamb}. By Proposition \ref{brick-rigid}(b), there exists $(M, P) \in \ttiltpair k Q$ such that $S_1 \oplus S_2 = M/\rad_{\End_{kQ}(M)} M$. Equivalently, $\Fac M$ is the smallest torsion class $\T(S_1, S_2)$ containing $S_1$ and $S_2$. In particular, there are exactly two arrows $q_1: (M, P) \to (M_1, P_1)$ and $q_2: (M, P) \to (M_2, P_2)$ starting at $(M,P)$ in $\Hasse(\ttiltpair k Q)$, $q_1$ being labelled by $S_1$ and $q_2$ by $S_2$. We consider the polygon $[(M', P'), (M, P)]$ where $(M', P') = (M_1, P_1) \meet (M_2, P_2)$.

 As at least one of $S_1$ and $S_2$ is not simple and labels an arrow pointing toward $(M', P')$, by Proposition \ref{topbottomlatsimpleforce}, we get that $(M', P') \neq (0, kQ)$. So there exists an arrow $(M', P') \to (M'', P'')$ in $\Hasse(\tors k Q)$. Let $(M_0,P_0)$ be the biggest common direct summands of $(M,P)$, $(M',P')$ and $(M'',P'')$.  By mutation theory, $(M_0, P_0)$ has exactly $\# Q_0 - 3$ non-isomorphic indecomposable direct summands.

 Let $(M_0^+, P_0)$ be the Bongartz completion of $(M_0, P_0)$ and $(M_0^-, P_0^-)$ be its co-Bongartz completion. The interval $[(M_0^-, P_0^-), (M_0^+, P_0)]$ is a $3$-polytope as defined in Definition \ref{lpolyt}. By Theorem \ref{theorem filtorth}(a), $\WW := \WW(M_0, P_0)$ is a wide subcategory of $\mod kQ$ and by Theorem \ref{theorem filtorth}(e), the labels of arrows of $\Hasse [(M_0^-, P_0^-), (M_0^+, P_0)]$ are exactly the bricks that are contained in $\WW$.
 The set $\S$ of simple objects of $\WW$ is a semibrick with $\# \S = 3$. We will prove that $\S$ is the desired semibrick.

 Suppose first that $\{S_1, S_2\} \subseteq \S$. We have a polygon $[(M_0^-,P_0^-),(M''',P''')]$ containing two arrows ending at $(M_0^-,P_0^-)$ labelled by $S_1$ and $S_2$ and two arrows starting at $(M''',P''')$ labelled by $S_1$ and $S_2$. Since $\Fac M''' \supset T(S_1,S_2)= \Fac M$, $(M,P)$ belongs to the polygon $[(M_0^-,P_0^-),(M''',P''')]$, and hence $(M,P)=(M''',P''')$ and $(M',P')=(M_0^-,P_0^-)$ hold. This is a contradiction since $(M',P')$ is not the minimum element of $[(M_0^-,P_0^-),(M_0^+,P_0)]$. So $\{S_1, S_2\} \not\subseteq \S$.

 By Lemma \ref{reduchered}, there is an equivalence $\psi: \WW \cong \mod k Q'$ for a quiver $Q'$. Moreover, as $\WW \subseteq \mod k Q$ has finitely many isomorphism classes of indecomposable objects, by Gabriel's theorem, $Q'$ is a union of Dynkin quivers with $\# Q'_0 = 3$. As $\{S_1, S_2\} \not\subseteq \S$, it means that $\psi(S_1)$ and $\psi(S_2)$ are not both simple in $\mod Q'$. So $\dim \psi(S_1) + \dim \psi(S_2) \geq 3$ and the result has already been proven in $\WW \cong \mod k Q'$.
\end{proof}

Recall that, as $kQ$ is hereditary, the Auslander-Reiten translation $\tau: \mod kQ \to \mod kQ$ is a functor. Then, we recall the following alternative description of $\mod \Pi$:

\begin{definition}[\cite{ringel}]
 We define the category $(\mod k Q)(1, \tau)$ in the following way: an object of $(\mod kQ)(1, \tau)$ is a pair $(M, \alpha)$ where $M \in \mod k Q$ and $\alpha : M \to \tau M$ is a morphism. A morphism from $(M, \alpha)$ to $(N, \beta)$ is a morphism $f: M \to N$ in $\mod kQ$ satisfying $\beta \circ f = (\tau f) \circ \alpha$.
\end{definition}

\begin{theorem}[{\cite[Theorem B]{ringel}}] \label{thmdescmodpi}
 There is an equivalence of categories between $\mod \Pi$ and $(\mod k Q)(1, \tau)$ such that, via this equivalence,
 \begin{enumerate}[\rm (a)]
  \item The restriction $\mod \Pi \to \mod kQ$ along $kQ\hookrightarrow\Pi$ is given by $(M, \alpha) \mapsto M$;
  \item The restriction $\mod kQ \to \mod \Pi$ along $\Pi\twoheadrightarrow kQ$ is given by $M \mapsto (M, 0)$.
 \end{enumerate}
\end{theorem}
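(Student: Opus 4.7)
The plan is to construct mutually inverse $k$-linear functors by exploiting a tensor algebra presentation of $\Pi$ over $kQ$. Write $V$ for the $kQ$-bimodule spanned by the reverse arrows $\{a^* \mid a \in Q_1\}$ of $\overline{Q}$. Then $k\overline{Q} \cong T_{kQ}(V)$, and hence $\Pi \cong T_{kQ}(V)/(\rho)$ with $\rho = \sum_{a \in Q_1}(aa^* - a^*a)$. By the universal property of tensor algebras, a $\Pi$-module structure on $M \in \mod kQ$ is the same data as a $kQ$-linear map $\phi: V \otimes_{kQ} M \to M$ such that the two composites arising from $\rho$ coincide.

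The heart of the proof is a natural identification, derived from Auslander--Reiten duality, of such data $\phi$ with a morphism $\alpha: M \to \tau M$ in $\mod kQ$. Since $kQ$ is hereditary, $\tau M$ can be read off from a minimal projective presentation $P_1 \to P_0 \to M \to 0$ via $\tau M = D\,\Cokernel(\Hom_{kQ}(P_0, kQ) \to \Hom_{kQ}(P_1, kQ))$, where $D = \Hom_k(-, k)$. Unpacking $V \otimes_{kQ} M$ in terms of this presentation, its elements correspond precisely to the lifts required to witness an element of this cokernel, and the tensor-Hom adjunction converts the preprojective relation into exactly the $kQ$-linearity condition on the resulting map $M \to \tau M$. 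Equivalently, one can check the identification $\Hom_{kQ}(V \otimes_{kQ} M, M) \cong \Hom_{kQ}(M, \tau M)$ directly on indecomposable projectives and simples, then extend by additivity and exactness.

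With this correspondence in hand, define $\Phi: \mod \Pi \to (\mod kQ)(1, \tau)$ by $N \mapsto (N|_{kQ}, \alpha_N)$, where $\alpha_N$ is obtained from the reverse-arrow action of $N$ as above. The inverse $\Psi$ sends $(M, \alpha)$ to the $\Pi$-module with underlying $kQ$-module $M$ whose reverse-arrow action is prescribed by $\alpha$. Functoriality and mutual inverseness follow from the naturality of the tensor-Hom adjunction and of the Auslander--Reiten isomorphism. Parts (a) and (b) then drop out: restriction along $kQ \hookrightarrow \Pi$ is built into the definition of $\Phi$, while any $\Pi$-module inflated along $\Pi \twoheadrightarrow kQ$ has all reverse arrows acting by zero, giving $\alpha = 0$.

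The main obstacle is making the Auslander--Reiten identification between the reverse-arrow action and morphisms $M \to \tau M$ fully natural in $M$, and verifying that the specific signed form $\sum_a(aa^* - a^*a)$ of the preprojective relation translates precisely to $kQ$-linearity rather than some twisted variant. Pinning down orientations and signs in this correspondence for an arbitrary Dynkin orientation of $Q$ is the principal bookkeeping challenge; once settled, all remaining verifications reduce to formal properties of tensor algebras and adjunctions.
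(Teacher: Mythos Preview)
The paper does not prove this theorem: it is quoted verbatim from \cite[Theorem~B]{ringel} and used as a black box. So there is no ``paper's own proof'' to compare against.

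Your outline is essentially the standard route to Ringel's result, and it is correct in spirit. A small clarification on the crucial step: the identification you want is that, for a fixed $M\in\mod kQ$, the set of $\Pi$-module structures extending the $kQ$-structure on $M$ is naturally $\Hom_{kQ}(M,\tau M)$. Concretely, the reverse-arrow data $(M_{a^*})_a$ lives in $\bigoplus_{a}\Hom_k(M_{t(a)},M_{s(a)})$, and the preprojective relations (one at each vertex) cut out the kernel of the map
\[
\textstyle\bigoplus_{a}\Hom_k(M_{t(a)},M_{s(a)})\longrightarrow\bigoplus_{i}\Hom_k(M_i,M_i),\qquad (g_a)\mapsto\Bigl(\sum_{s(a)=i}g_aM_a-\sum_{t(a)=i}M_ag_a\Bigr)_i.
\]
This is the transpose (for the trace pairing) of the differential in the standard two-term complex computing $\Hom_{kQ}(M,M)$ and $\Ext^1_{kQ}(M,M)$, so its kernel is naturally $D\Ext^1_{kQ}(M,M)\cong\Hom_{kQ}(M,\tau M)$ by Auslander--Reiten duality. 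This makes precise your assertion that ``the preprojective relation translates into $kQ$-linearity of $M\to\tau M$'': the relation is exactly the dual of the commutation condition defining $kQ$-linearity. Once this functorial identification is nailed down (with the sign bookkeeping you flag), parts (a) and (b) are immediate, as you say.
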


\begin{lemma} \label{prepcamb2}
 Let $S \in \brick \Pi$ such that $S \notin \mod k Q$ and $\dim S \ge 3$. Then there exists a semibrick $\{S_1, S_2\}$ of $\mod \Pi$ and a short exact sequence $0 \to S_1 \to S \to S_2 \to 0$ such that at least one of $S_1$ and $S_2$ is not in $\mod k Q$.
\end{lemma}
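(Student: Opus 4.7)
The plan is to apply Proposition~\ref{splitbrick} to $S$, noting that $\Pi$ is $\tau$-tilting finite by Theorem~\ref{Mizuno's theorem} and (as for any preprojective algebra of simply-laced Dynkin type) every brick of $\mod\Pi$ is a $k$-stone. This will produce a semibrick $\{S_1,S_2\}$ of $\mod\Pi$ with $\dim\Ext^1_\Pi(S_2,S_1)=1$ and a short exact sequence $0\to S_1\to S\to S_2\to 0$. If at least one of $S_1,S_2$ fails to lie in $\mod kQ$, the lemma is already proved and nothing more is required. The remaining task is to handle the case where both $S_1$ and $S_2$ lie in $\mod kQ$.

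In that remaining case, I will translate everything via the equivalence $\mod\Pi\simeq(\mod kQ)(1,\tau)$ of Theorem~\ref{thmdescmodpi}, writing $S=(M,\alpha)$ with $\alpha\neq 0$ and $S_i=(T_i,0)$. The natural map $\Ext^1_{kQ}(T_2,T_1)\hookrightarrow\Ext^1_\Pi(S_2,S_1)$ is injective with image consisting exactly of the extension classes represented in $\mod kQ$; since $S\notin\mod kQ$ and $\dim\Ext^1_\Pi(S_2,S_1)=1$, I conclude $\Ext^1_{kQ}(T_2,T_1)=0$. Consequently $M\cong T_1\oplus T_2$ as $kQ$-modules, and $\alpha$ identifies with a nonzero morphism $\tilde\alpha:T_2\to\tau T_1$. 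Auslander-Reiten duality then yields $\Ext^1_{kQ}(T_1,T_2)\cong D\Hom_{kQ}(T_2,\tau T_1)\neq 0$.

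Now $\{T_1,T_2\}$ is a semibrick of $\mod kQ$ with $\Ext^1_{kQ}(T_1,T_2)\neq 0$ and $\dim T_1+\dim T_2=\dim S\geq 3$, so Lemma~\ref{prepcamb1} applies and will supply either a proper $kQ$-submodule $T_1'\subsetneq T_1$ (case (a)) or a proper $kQ$-quotient $T_2\twoheadrightarrow T_2''$ (case (b)), together with a three-element semibrick of $\mod kQ$. I will convert this into a new decomposition of $S$ inside $\mod\Pi$: in case~(a), take $N:=(T_1',0)$, which is a $\Pi$-submodule of $S$ because $\alpha$ vanishes on $T_1$, so that the underlying $kQ$-module of $S/N$ is $T_1''\oplus T_2$ with induced morphism $\bar\alpha:T_2\to\tau T_1''$ obtained by composing $\tilde\alpha$ with the natural map $\tau T_1\to\tau T_1/\tau T_1'\hookrightarrow\tau T_1''$, using left-exactness of $\tau$ for the hereditary algebra $kQ$. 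Case~(b) will be handled dually by the $\Pi$-submodule $(T_1\oplus T_2',\alpha|_{T_1\oplus T_2'})$.

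The hard part will be to ensure that at least one of the two pieces in this new decomposition lies outside $\mod kQ$, which translates to $\tilde\alpha(T_2)\not\subseteq\tau T_1'$ in case~(a), or $\tilde\alpha|_{T_2'}\neq 0$ in case~(b); one also needs to check that the pair $\{N,S/N\}$ forms a semibrick of bricks in $\mod\Pi$, which I expect to follow from the three-element semibrick structure provided by Lemma~\ref{prepcamb1} together with the explicit description of $\Pi$-morphisms in the $(\mod kQ)(1,\tau)$ model. The key leverage is that $\tilde\alpha$ is nonzero, and I anticipate that by choosing appropriately between cases~(a) and~(b) of Lemma~\ref{prepcamb1} -- or by iterating the construction on the resulting piece if necessary -- this positivity condition can always be arranged, thereby producing the required decomposition.
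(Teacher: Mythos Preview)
Your proposal tracks the paper's proof closely through the first several steps: Proposition~\ref{splitbrick}, the reduction to $S_1,S_2\in\mod kQ$, the translation via Theorem~\ref{thmdescmodpi}, the vanishing $\Ext^1_{kQ}(T_2,T_1)=0$, Auslander--Reiten duality giving $\Ext^1_{kQ}(T_1,T_2)\neq 0$, and the application of Lemma~\ref{prepcamb1}. Up to this point your argument is essentially identical to the paper's.

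The gap is in the final step. Your decomposition $0\to(T_1',0)\to S\to S/N\to 0$ only yields a semibrick when $\bar\alpha\neq 0$: if $\bar\alpha=0$ then $S/N\cong(T_1''\oplus T_2,0)$ is decomposable, so not a brick. Likewise your dual case~(b) only works when $\tilde\alpha|_{T_2'}\neq 0$. Lemma~\ref{prepcamb1} provides only \emph{one} of cases~(a) or~(b), and for whichever one is available the relevant nonvanishing condition can genuinely fail. Your fallback suggestions do not resolve this: you cannot freely ``choose'' between (a) and (b), and an iteration is not clearly well-defined (you would be replacing $T_1$ by a smaller $T_1'$, but $S$ itself has underlying module $T_1\oplus T_2$, not $T_1'\oplus T_2$, so the setup does not simply recur).

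The paper's resolution is a clean dichotomy via pushforward/pullback of extensions rather than iteration. Writing $\xi:0\to T_1'\xrightarrow{f}T_1\xrightarrow{g}T_1''\to 0$ and $\chi:0\to T_1\to S\to T_2\to 0$, one first computes $\Ext^1_{kQ}(T_2,T_1')=\Ext^1_{kQ}(T_2,T_1'')=0$ from the three-term semibrick. Then one considers $g_*\chi\in\Ext^1_\Pi(T_2,T_1'')$. If $g_*\chi\neq 0$, the middle term of $g_*\chi$ is the brick $S/N\notin\mod kQ$ (this is your case, now justified). If $g_*\chi=0$, exactness of $\Ext^1_\Pi(T_2,-)$ on $\xi$ gives $\chi=f_*\chi'$ for some nonzero $\chi'\in\Ext^1_\Pi(T_2,T_1')$; the middle term $S_1$ of $\chi'$ is then a brick not in $\mod kQ$ (since $\Ext^1_{kQ}(T_2,T_1')=0$), and the pullback diagram yields the alternative decomposition $0\to S_1\to S\to T_1''\to 0$. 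This second branch is the missing idea.
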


\begin{proof}
 By \cite[Theorem 1.2]{IRRT}, all bricks in $\mod \Pi$ are stones. Then they are clearly $k$-stones. So, by Proposition \ref{splitbrick}, there exist a semibrick $\{S'_1, S'_2\}$ in $\mod \Pi$ and a short exact sequence
 \[\chi: 0 \to S'_1 \to S \to S'_2 \to 0\]
 such that $\dim \Ext^1_\Pi(S_2', S_1') = 1$.

 If at least one of $S'_1$ and $S'_2$ is not in $\mod kQ$, we have our conclusion, so we suppose that $S'_1, S'_2 \in \mod k Q$. As $\dim \Ext^1_\Pi(S'_2, S'_1) = 1$ and $S \notin \mod kQ$, we have $\Ext^1_{k Q} (S'_2, S'_1) = 0$ and $\chi$ splits as an exact sequence of $kQ$-modules. So, via the equivalence of Theorem \ref{thmdescmodpi}, $\chi$ can be rewritten as
 \[\chi: 0 \to (S'_1, 0) \xto{u} \left(S'_1 \oplus S'_2, \alpha\right) \xto{v} (S'_2, 0) \to 0.\]
  As $u$ and $v$ are morphisms, we have
 \[\alpha = \begin{bmatrix} 0 & \beta \\ 0 & 0\end{bmatrix},\]
 where $\beta$ is a morphism from $S'_2$ to $\tau S'_1$. As $\chi$ does not split, $\beta \neq 0$. Hence by Auslander-Reiten duality for hereditary algebras, \[\dim \Ext^1_{k Q} (S_1', S_2') = \dim \Hom_{k Q}(S_2', \tau S'_1) \ge 1.\]

 So we can apply Lemma \ref{prepcamb1}. By symmetry, we suppose that we are in the first case: There is a semibrick $\{S''_1, S''_2, S'_2\}$ and a short exact sequence
 \[\xi: 0 \to S''_1 \xto{f} S'_1 \xto{g} S''_2 \to 0\]
 in $\mod k Q$. Applying $\Hom_{k Q}(S'_2, -)$ to $\xi$ gives the exact sequence
 \[0 = \Hom_{k Q}(S'_2, S''_2) \to \Ext^1_{k Q} (S'_2, S''_1) \to \Ext^1_{k Q} (S'_2, S'_1) = 0 \to \Ext^1_{k Q} (S'_2, S''_2) \to 0,\]
 so $\Ext^1_{k Q} (S'_2, S''_1) = 0 = \Ext^1_{k Q} (S'_2, S''_2)$. Let us consider two possibilities, depending on the image $g \chi \in \Ext^1_\Pi(S_2', S''_2)$ of $\chi$.
 \begin{itemize}
  \item If $g\chi \neq 0$. In this case, we get the following Cartesian diagram where the last row does not split:
  \[\xymatrix{
   & S''_1 \ar@{=}[r] \ar@{^{(}->}[d]_f & S''_1 \ar@{^{(}->}[d] \\
   \chi: & S'_1 \ar@{->>}[d]_g \ar@{^{(}->}[r] & S \ar@{->>}[d] \ar@{->>}[r] & S'_2 \ar@{=}[d] \\
   g\chi: & S''_2 \ar@{^{(}->}[r] & S_2 \ar@{->>}[r] & S'_2.
  }\]
 As $\{S''_1, S''_2, S'_2\}$ is a semibrick, Lemma \ref{lemma extbricks} implies that $S_2$ is a brick. We also deduce that $\{S_1'', S_2\}$ is a semibrick. As $\Ext^1_{k Q} (S'_2, S''_2) = 0$, we have $S_2 \notin \mod k Q$, so the middle vertical sequence satisfies our requirements.
  \item If $g \chi = 0$. In this case, we get the following Cartesian diagram:
  \[\xymatrix{
   \chi': & S''_1 \ar@{^{(}->}[r] \ar@{^{(}->}[d]_f & S_1 \ar@{->>}[r] \ar@{^{(}->}[d] & S'_2 \ar@{=}[d] \\
   \chi = f\chi': & S'_1 \ar@{->>}[d]_g \ar@{^{(}->}[r] & S \ar@{->>}[d] \ar@{->>}[r] & S'_2  \\
    & S''_2 \ar@{=}[r] & S''_2
  }\]
 As before, $\{S_1, S_2''\}$ is a semibrick. As $\Ext^1_{k Q} (S'_2, S''_1) = 0$, we get $S_1 \notin \mod k Q$, so the middle vertical sequence satisfies our requirements.  \qedhere
 \end{itemize}
\end{proof}

Then we can prove Theorem \ref{kQ camb}:

\begin{proof}[Proof of Theorem \ref{kQ camb}]
 For an arrow $i \to j$ of $\overline{Q}$, we denote by $X_{i, j}$ the indecomposable $\Pi$-module of length $2$ with top $\top A e_i$ and socle $\top A e_j$. By definition, $\Theta_c = \con E$ where $E := \{X_{j, i} \mid (j \to i) \notin Q_1\}$. For $X \in E$, we have $I X \neq 0$, hence by Theorem~\ref{annihilate contract0}(a), $X$ is contracted by $\Theta_{I}$, so $\Theta_{I} \ge \Theta_c$.

 By Theorem \ref{annihilate contract0}(a), bricks $S$ contracted by $\Theta_{I}$ are exactly the ones satisfying $S \notin \mod k Q$. So to prove that $\Theta_{I} \le \Theta_c$, it suffices to prove that such a brick $S$ is contracted by $\Theta_c$. We argue by induction on $\dim S$. If $\dim S = 2$, then $S \in E$, so $S$ is contracted by $\Theta_c$. Otherwise, $\dim S \ge 3$ and by Lemma \ref{prepcamb2}, there is a short exact sequence $0 \to S_1 \to S \to S_2 \to 0$ such that $\{S_1, S_2\}$ is a semibrick of $\Pi$ that is not in $\mod k Q$. So, by the induction hypothesis, $\Theta_c$ contracts $S_1$ or $S_2$. By Theorem \ref{theorem forcing}, both $S_1$ and $S_2$ force $S$, so $\Theta_c$ contracts $S$.
\end{proof}

Recall from Section~\ref{lat prelim} that for a general congruence on a finite lattice $L$, the set of bottom elements of congruence classes are a join-sublattice of $L$, but need not be a sublattice of $L$.
The bottom elements can fail to be a sublattice even when $L$ is $W$ and even when the congruence is algebraic.
As an example, one can consider the algebraic congruence generated by contracting the double join-irreducible element $s_1s_2s_3$ in $\sym_4$.
However, the $c$-Cambrian congruence is an exception:
the following is \cite[Theorem~1.2]{sort_camb}.

\begin{theorem}\label{camb sublat}
For any Coxeter element $c$ of $W$, the set $\smash{\pidown^c\, W}$, which consists of $c$-sortable elements, is a sublattice of $W$.
\end{theorem}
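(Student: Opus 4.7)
The plan is to use Theorem~\ref{kQ camb} together with Mizuno's isomorphism (Theorem~\ref{Mizuno's theorem}) and Proposition~\ref{surj nice}\eqref{maxmin} to identify $\pidown^{\Theta_c}W$ with $\phi^-(\tors kQ)\subseteq\tors\Pi$, where $\phi:\Pi\twoheadrightarrow kQ$ is the canonical projection. Since Proposition~\ref{realpidown} already ensures closure under joins, the theorem reduces to showing that $\phi^-:\tors kQ\to\tors\Pi$ preserves binary meets.

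The first step is the representation-theoretic identification $\phi^-(\S)=\T(\S)$, where $\T(\S)$ is the smallest torsion class in $\mod\Pi$ containing $\S$. This holds because $\mod kQ$ is closed under subobjects and factor modules in $\mod\Pi$, so any $\mod\Pi$-filtration of a $kQ$-module with quotients in $\S$ already lies in $\mod kQ$; hence $\T(\S)\cap\mod kQ=\S$, and minimality of $\phi^-(\S)$ forces equality. With this in hand, the theorem amounts to the identity $\T(\S_1)\cap\T(\S_2)=\T(\S_1\cap\S_2)$ for every $\S_1,\S_2\in\tors kQ$; the inclusion $\supseteq$ is immediate by monotonicity.

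For the reverse inclusion, given $X\in\T(\S_1)\cap\T(\S_2)$, the identification applied to each factor yields $X/IX\in\S_i$, so $X/IX\in\S_1\cap\S_2$. Combining the short exact sequence $0\to IX\to X\to X/IX\to 0$ with extension-closure of $\T(\S_1\cap\S_2)$ reduces the problem to $IX\in\T(\S_1\cap\S_2)$, which I would establish by induction on the smallest $n$ with $I^nX=0$.

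The main obstacle is that the induction requires $IX$ itself to lie in $\T(\S_1)\cap\T(\S_2)$, yet torsion classes need not be closed under subobjects. To overcome this I would invoke the Ringel equivalence $\mod\Pi\simeq(\mod kQ)(1,\tau)$ of Theorem~\ref{thmdescmodpi}, writing $X=(M,\alpha)$ with $\alpha:M\to\tau M$ encoding the $I$-action, so that $IX$ corresponds to a canonical $kQ$-subobject built from $\Image\alpha$. A filtration witnessing $X\in\T(\S_i)$ is automatically $\alpha$-invariant, and combined with Lemma~\ref{reduchered} on the hereditary structure of $kQ$ this should allow one to transfer the filtration to $IX$ with successive quotients remaining in $\S_i$. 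The delicate technical point is to make the two transfers compatible across $i=1,2$ so that $IX$ is filtered by objects of $\S_1\cap\S_2$; this is where the specific form of the Cambrian ideal $I=(a^*\mid a\in Q_1)$ enters essentially.
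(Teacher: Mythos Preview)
Your setup is correct and matches the paper: by Proposition~\ref{surj nice}\eqref{maxmin} one has $\phi^-(\S)=\T(\S)$, the smallest torsion class in $\mod\Pi$ containing $\S$, and the task reduces to showing $\phi^-$ preserves meets. However, your inductive strategy has a genuine gap that you yourself identify and do not close. The step requires $IX\in\T(\S_1)\cap\T(\S_2)$, and nothing in your sketch establishes this: the claim that ``a filtration witnessing $X\in\T(\S_i)$ is automatically $\alpha$-invariant'' does not yield a filtration of $IX$ with quotients in $\S_i$, and Lemma~\ref{reduchered} is about perpendicular categories of $\tau$-rigid $kQ$-modules being module categories over hereditary algebras---it says nothing about the $\Pi$-submodule $IX$ or about transporting filtrations along $\alpha$. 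The final sentence (``the delicate technical point is to make the two transfers compatible'') is an acknowledgement that the argument is not complete.

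The paper avoids this obstruction entirely by bringing in the section $i:kQ\hookrightarrow\Pi$ of $\phi$. The restriction functor ${}_{kQ}(-):\mod\Pi\to\mod kQ$ is exact, so by Theorem~\ref{relating the maps}\eqref{meet semi homs} the map $i_-:\tors kQ\to\tors\Pi$, $i_-(\TT)=\{X\in\mod\Pi\mid{}_{kQ}X\in\TT\}$, is a morphism of complete meet-semilattices \emph{for free}. The whole proof then reduces to the single identity $\phi^-=i_-$. The inclusion $\phi^-(\TT)\subseteq i_-(\TT)$ is immediate; for the converse the paper proves (Lemma~\ref{filtPi}(b), via the Ringel description of Theorem~\ref{thmdescmodpi}) that every $X\in\mod\Pi$ admits a filtration by $\Pi$-submodules whose successive quotients are $kQ$-modules summing to ${}_{kQ}X$. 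Thus if ${}_{kQ}X\in\TT$ then $X$ is $\Pi$-filtered by objects of $\TT$, hence $X\in\T(\TT)=\phi^-(\TT)$. This is the missing idea: rather than analysing $IX$ layer by layer, one exhibits a single $\Pi$-filtration of $X$ governed entirely by the $kQ$-structure, and meet-preservation then comes from general functoriality rather than a hands-on induction.
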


We now give a new, representation-theoretical proof of Theorem~\ref{camb sublat}.

As before, we consider the projection $\phi: \Pi \twoheadrightarrow k Q$. We also consider the natural inclusion $i: kQ \hookrightarrow \Pi$. It gives a fully faithful functor $\mod k Q \hookrightarrow \mod \Pi$ that we denote implicitly or by $M \mapsto {}_\Pi M$ if necessary, and a faithful functor $\mod \Pi \to \mod kQ$ that we denote by $X \mapsto {}_{kQ} X$. We start with a lemma.

\begin{lemma} \label{filtPi}
 The following hold:
 \begin{enumerate}[\rm(a)]
  \item Let $X, Y \in \mod \Pi$. Then there is an exact sequence
   \[0 \to \Hom_\Pi (X, Y) \to \Hom_{kQ}({}_{kQ} X, {}_{kQ} Y) \xto{u} \Hom_{kQ}({}_{kQ} X, \tau({}_{kQ} Y))\]
   where the first map is the canonical inclusion and $u(f) = (\tau f) \circ \alpha - \beta \circ f$ where $X = (M, \alpha)$ and $Y = (N, \beta)$ via the equivalence of Theorem \ref{thmdescmodpi}.
  \item Let $X \in \mod \Pi$. There exists a filtration
  \[0 = X_0 \subsetneq X_1 \subsetneq X_2 \subsetneq \cdots \subsetneq X_{\ell-1} \subsetneq X_\ell = X\]
 of $X$ by $\Pi$-submodules such that ${}_\Pi({}_{kQ} X) \cong \bigoplus_{i=1}^\ell X_{i} / X_{i-1}$.
 \end{enumerate}

\end{lemma}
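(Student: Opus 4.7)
For part (a), the plan is to apply directly the equivalence $\mod \Pi \simeq (\mod kQ)(1, \tau)$ from Theorem~\ref{thmdescmodpi}. Writing $X = (M, \alpha)$ and $Y = (N, \beta)$ under this equivalence, we have $M = {}_{kQ}X$ and $N = {}_{kQ}Y$ by Theorem~\ref{thmdescmodpi}(a). By the very definition of morphisms in the pair category, a $\Pi$-morphism from $X$ to $Y$ is precisely a $kQ$-morphism $f\colon M \to N$ satisfying $\beta \circ f = (\tau f)\circ\alpha$, i.e., $u(f) = 0$. This identifies $\Hom_\Pi(X,Y)$ with $\ker u$, and the desired left exact sequence follows immediately.

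For part (b), I would proceed by induction on $\dim_k X$. The case $X \in \mod kQ$ is handled by the trivial filtration $0 \subsetneq X$. Otherwise, consider the short exact sequence $0 \to IX \to X \to X/IX \to 0$ in $\mod \Pi$. Since $X/IX$ is annihilated by $I$, it lies in $\mod kQ$, and $IX$ is a proper $\Pi$-submodule of strictly smaller dimension. Applying the induction hypothesis to $IX$ produces a filtration $0 = X_0 \subsetneq X_1 \subsetneq \cdots \subsetneq X_{\ell-1} = IX$ with subquotients in $\mod kQ$ and satisfying ${}_\Pi({}_{kQ}(IX)) \cong \bigoplus_{i=1}^{\ell-1} X_i/X_{i-1}$ as $\Pi$-modules. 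Setting $X_\ell = X$ then yields a filtration of $X$ of the required form, and the direct sum of its subquotients equals ${}_\Pi({}_{kQ}(IX)) \oplus (X/IX)$ as $\Pi$-modules.

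The main obstacle is to identify this direct sum with ${}_\Pi({}_{kQ}X)$, which reduces to showing that $X|_{kQ} \cong (IX)|_{kQ} \oplus (X/IX)$ as $kQ$-modules, i.e., that the sequence $0 \to IX \to X \to X/IX \to 0$ splits after restriction along $kQ\hookrightarrow\Pi$. I would establish this splitting using the equivalence of Theorem~\ref{thmdescmodpi}: writing $X = (M, \alpha\colon M \to \tau M)$, one identifies $IX$ explicitly in terms of $\alpha$ and the hereditary nature of $kQ$, and constructs a $kQ$-linear section by exploiting the specific form of the star-arrow action encoded by $\alpha$. The $\mathbb{G}_m$-family $(M, t\alpha)_{t \in k}$ degenerating $X$ (at generic $t$) to ${}_\Pi({}_{kQ}X) = (M, 0)$ (at $t = 0$) offers additional geometric motivation that such a splitting should exist, and suggests organizing the construction along the $I$-adic degree.
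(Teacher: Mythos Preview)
Your argument for part~(a) is correct and is exactly the paper's proof.

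For part~(b), however, the $I$-adic filtration does \emph{not} have the required property: the short exact sequence $0\to IX\to X\to X/IX\to 0$ need not split over $kQ$, so your approach breaks down rather than merely having a gap. Here is a counterexample for $Q\colon 1\xrightarrow{a}2\xrightarrow{b}3$. Take $X$ with $V_1=k$, $V_2=k^2$, $V_3=k^2$, and
\[
\phi_a(1)=(1,0),\qquad \phi_b(x,y)=(y,0),\qquad \phi_{a^*}=0,\qquad \phi_{b^*}(u,v)=(v,0).
\]
One checks the preprojective relations hold. Then $IX=(0,\,k(1,0),\,0)\cong S_2$ and $X/IX\cong S_1\oplus M_{23}\oplus S_3$, so the associated graded is $S_1\oplus S_2\oplus M_{23}\oplus S_3$. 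But ${}_{kQ}X\cong M_{12}\oplus M_{23}\oplus S_3$, and $M_{12}\not\cong S_1\oplus S_2$. Thus ${}_{kQ}X\not\cong IX\oplus(X/IX)$ and the sequence does not split over $kQ$. Your $\mathbb{G}_m$-family $(M,t\alpha)$ is no help here: its underlying $kQ$-module is the constant $M$ for every $t$, so the degeneration says nothing about how $\operatorname{gr}_I X$ compares with~$M$.

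The paper avoids this problem by using part~(a) as the engine of the induction rather than as background. Instead of the $I$-adic filtration, it chooses $N$ to be an indecomposable direct summand of ${}_{kQ}X$ that is \emph{leftmost} in the Auslander--Reiten quiver of $\mod kQ$, so that $\Hom_{kQ}({}_{kQ}X,\tau N)=0$. Then the map $u$ of part~(a) (with $Y={}_\Pi N$, so $\beta=0$) has zero target, and the $kQ$-projection $\pi\colon X\twoheadrightarrow N$ is automatically a $\Pi$-morphism. Setting $X_{\ell-1}=\ker\pi$ gives ${}_{kQ}X\cong{}_{kQ}X_{\ell-1}\oplus N$ by construction, and induction on $\dim X$ finishes. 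In the example above this produces the filtration $0\subset M_{12}\subset(\text{kernel of }X\twoheadrightarrow S_3)\subset X$ with subquotients $M_{12},M_{23},S_3$, which does satisfy the conclusion.
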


\begin{proof}
 (a) This is an immediate consequence of Theorem \ref{thmdescmodpi}.

 (b) We prove the statement by induction on $\dim X$. Consider the indecomposable direct summand $N$ of ${}_{kQ} X$ that is leftmost in the Auslander-Reiten quiver of $\mod kQ$. Then, $\Hom_{kQ}({}_{kQ} X, \tau N) = 0$, so by (a), the canonical projection of $kQ$-modules $\pi: X \twoheadrightarrow N$ is a morphism of $\Pi$-modules. By the induction hypothesis, $\Kernel \pi$ has a filtration of the desired form, which is easily extended to $X$.
\end{proof}

\begin{proof}[Proof of Theorem~\ref{camb sublat}]
 It is proven in Theorem~\ref{relating the maps}\eqref{meet semi homs}\eqref{join semi homs} that $\phi^-$ is a morphism of join-semilattices and $i_-$ is a morphism of meet-semilattices. We also know from Proposition~\ref{surj nice}(e) that the image of $\phi^-$ is $\smash{\pidown^c\, W}$.

 We conclude by proving that $\phi^- = i_-$ so that $\phi^-$ is a morphism of lattices and $\smash{\pidown^c\,W}$ is a sublattice of $W$.
 Let $\TT \in \tors kQ$.
 By Proposition~\ref{surj nice}\eqref{maxmin}, $\phi^-(\TT)$ is minimal such that $\overline{\phi^-(\TT)} = \TT$, hence $\phi^-(\TT)$ is the minimal torsion class in $\mod \Pi$ containing $\TT$. By definition, $i_-(\TT)$ consists of all $\Pi$-modules that are in $\TT$ as $kQ$-modules. Hence $\phi^-(\TT) \subseteq i_-(\TT)$ clearly. Moreover, by Lemma~\ref{filtPi}, any $X \in i_-(\TT)$ is filtered by modules in $\TT$, hence is in $\phi^-(\TT)$. It concludes the proof.
\end{proof}

\subsection{The bipartite biCambrian congruence}
Let $W$ be a finite Coxeter group.
The \newword{bipartite biCambrian congruence} on $W$, defined in \cite{BR}, is the lattice congruence $\Theta_{\bic}=\Theta_c\wedge \Theta_{c^{-1}}$, where $\Theta_c$ is the
Cambrian congruence from Section~\ref{camb subsec} and $c$ is a bipartite Coxeter element. 
We will prove \cite[Conjecture 2.11]{BR}, which asserts that $W/\Theta_\bic$ is Hasse-regular.

\begin{theorem}   
Suppose $W$ is a simply-laced finite Coxeter group and $\Pi$ is the associated preprojective algebra.
Identifying $W$ with $\tors\Pi$ as before, $\Theta_\bic$ coincides with $\Theta_I$, where $I$ is the ideal in $\Pi$ generated by all paths of length 
two.  
\end{theorem}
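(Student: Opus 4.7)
The plan is to compare $\Theta_\bic$ and $\Theta_I$ by describing which arrows of $\Hasse(\tors \Pi)$ each contracts, via the brick labelling and Theorem~\ref{annihilate contract0}. Theorem~\ref{kQ camb} identifies $\Theta_c = \Theta_{I_c}$ and $\Theta_{c^{-1}} = \Theta_{I_{c^{-1}}}$, where $I_c = (a^* \mid a \in Q_c)$ and $I_{c^{-1}} = (a \mid a \in Q_c)$. Because $c$ is bipartite, the vertex set splits as $V = V_+ \sqcup V_-$ with every arrow of $Q_c$ running from $V_+$ to $V_-$; in $\overline{Q}$ the forward arrows then go $V_+ \to V_-$ and the backward arrows $a^*$ go $V_- \to V_+$. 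Any path of length two in $\overline{Q}$ must alternate sides and hence contains exactly one forward and one backward arrow, so $I \subseteq I_c \cap I_{c^{-1}}$. Corollary~\ref{contain refine} then gives $\Theta_I \leq \Theta_c$ and $\Theta_I \leq \Theta_{c^{-1}}$, whence $\Theta_I \leq \Theta_\bic$. This is the easy direction.

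For the reverse inequality, both $\Theta_I$ and $\Theta_\bic$ are arrow-determined ($\Theta_\bic$ as a meet in $\Comb(\tors \Pi)$, by Proposition~\ref{combmeetsl}), so by Theorem~\ref{annihilate contract0}(a) it suffices to show: for every brick $S$ of $\Pi$ with $I \cdot S = 0$, either $I_c \cdot S = 0$ or $I_{c^{-1}} \cdot S = 0$. Since $I$ contains the preprojective relations, $\Pi/I \cong k\overline{Q}/\mathrm{rad}^2$, so such an $S$ is simply a representation of $\overline{Q}$ in which every composition of two arrows acts as zero. Decompose $S = S_+ \oplus S_-$ by the bipartition, and let $\phi:S_+\to S_-$ and $\psi:S_-\to S_+$ denote the sums of the forward and backward arrow actions. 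The vanishing of length-two compositions becomes $\phi\psi = 0$ and $\psi\phi = 0$, so $\Image \phi \subseteq \Kernel \psi$ and $\Image \psi \subseteq \Kernel \phi$.

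The core step is then to choose splittings
\[
S_+ = \Image \psi \oplus C'_+ \oplus C^\phi_+, \qquad S_- = \Image \phi \oplus C'_- \oplus C^\psi_-,
\]
where $C'_+$ complements $\Image \psi$ inside $\Kernel \phi$, $C^\phi_+$ complements $\Kernel \phi$ inside $S_+$, and dually for $S_-$. A direct computation, using $\phi\psi = \psi\phi = 0$ together with the fact that forward (resp.\ backward) arrows kill vectors in $S_-$ (resp.\ $S_+$), shows that
\[
S^{(0)} := C'_+ \oplus C'_-, \qquad S^{(1)} := C^\phi_+ \oplus \Image \phi, \qquad S^{(2)} := C^\psi_- \oplus \Image \psi
\]
are $\Pi/I$-submodules of $S$ and that $S = S^{(0)} \oplus S^{(1)} \oplus S^{(2)}$ as $\Pi/I$-modules. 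By construction, $S^{(0)}$ is semisimple, backward arrows vanish on $S^{(1)}$ (so $I_c \cdot S^{(1)} = 0$), and forward arrows vanish on $S^{(2)}$ (so $I_{c^{-1}} \cdot S^{(2)} = 0$). Since $S$ is a brick it is indecomposable, so at most one of $S^{(0)}, S^{(1)}, S^{(2)}$ is non-zero, and the required alternative follows.

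The main obstacle is the bookkeeping needed to verify that each $S^{(i)}$ is closed under every arrow action, and it is precisely this step that uses the bipartite hypothesis: bipartiteness is what forces each length-two path in $\overline{Q}$ to use one arrow of each type, which in turn makes $\phi$ and $\psi$ globally well-defined on the two sides of the bipartition and makes the relations $\phi\psi = 0 = \psi\phi$ equivalent to $I \cdot S = 0$. Once the decomposition is in place, Theorem~\ref{annihilate contract0}(a) matches the arrows contracted by $\Theta_I$ with those contracted by $\Theta_\bic$, yielding $\Theta_\bic \leq \Theta_I$ and completing the proof.
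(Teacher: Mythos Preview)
Your reduction is correct: both congruences are arrow-determined (the lattice is finite here), so it suffices to compare the sets of contracted bricks, and the inequality $\Theta_I \leq \Theta_\bic$ follows from $I \subseteq I_c \cap I_{c^{-1}}$ exactly as you say. The remaining claim --- that a brick $S$ with $I\cdot S = 0$ satisfies $I_cS = 0$ or $I_{c^{-1}}S = 0$ --- is also the heart of the paper's proof.

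The gap is in your module decomposition. The pieces $S^{(0)}, S^{(1)}, S^{(2)}$ are closed under the \emph{aggregate} maps $\phi$ and $\psi$, but not under individual arrows, and it is the individual arrows that give the $\Pi/I$-module structure. Concretely, take $Q$ of type $A_3$ with central vertex $2$ in $V_+$, forward arrows $a:2\to 1$ and $b:2\to 3$, and let $S$ have $S_1=S_2=S_3=k$, $a=b=\id$, all backward arrows zero. Then $\phi:k\to k^2$ is $v\mapsto(v,v)$, so $\Image\phi$ is the diagonal; your $S^{(1)} = S_2 \oplus \Image\phi$ has dimension $2$, your $S^{(0)} = C'_-$ has dimension $1$, and both are nonzero although $S$ is indecomposable. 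Indeed $S^{(1)}$ is not a submodule: the single arrow $a$ sends $v\in S_2$ to $(v,0)\notin\Image\phi$. The same example shows your parenthetical claim that ``$\phi\psi=0=\psi\phi$ is equivalent to $I\cdot S=0$'' is false; only the implication you actually need holds.

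The fix is to replace $\Image\phi$ and $\Kernel\phi$ by the vertex-graded subspaces $R' := \sum_{a\text{ forward}}\Image a$ and $K := \bigcap_{a\text{ forward}}\Kernel a$ (and dually $R,K'$). One still has $R\subseteq K$ and $R'\subseteq K'$ because every length-two path factors as one forward and one backward arrow; with these graded pieces the same splitting gives genuine $\Pi/I$-submodules and the argument goes through. The paper's proof packages the same idea differently: it observes that $I=\rad^2\Pi$, so $I\cdot S=0$ means $S$ has Loewy length at most $2$, and then asserts that for a brick this is equivalent to $S$ being a representation of $Q$ or of $Q^{\op}$ --- which amounts to the (corrected) decomposition above, or equivalently to the observation that the separated quiver of $\overline{Q}$ is $Q\sqcup Q^{\op}$.
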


\begin{proof}
The condition that $c$ is bipartite means that the corresponding
orientation of the Dynkin diagram $Q$ has only sinks and sources.  As we have
showed, the bricks contracted by $\Theta_c$ are the bricks which are not
representations of $Q$, while the bricks contracted by $\Theta_{c^{-1}}$ are
those which are not representations of $Q^{\op}$.
Consider a
path $p$ of length two in the doubled quiver.  It necessarily uses
one arrow from $Q$ and one arrow from $Q^{\op}$.  Therefore, for $S$ a brick, if $pS\ne 0$,
then $S$ is neither a representation of $Q$ nor a representation of $Q^{\op}$.
Thus, $S$ is contracted by both $\Theta_{c}$ and $\Theta_{c^{-1}}$, and
thus is contracted by $\Theta_\bic=\Theta_c\wedge \Theta_{c^{-1}}$.

On the other hand, for a brick $S$, the following properties are equivalent:
\begin{itemize}
\item $S$ is not contracted by $\Theta_I$,
\item $IS=0$,
\item The Loewy length of $S$ is at most 2,
\item $S$ is a representation of $Q$ or of $Q^{op}$.
\end{itemize}
Thus, if $S$ is not contracted by $\Theta_I$, then $S$ is a representation of
$Q$ or of $Q^{\op}$, and therefore is not contracted by $\Theta_c$ or
by $\Theta_{c^{-1}}$ respectively, and thus is not contracted by $\Theta_\bic$.
\end{proof}

The following corollary is now immediate from Corollary~\ref{Hasse-regular}.  
\begin{corollary} \label{bicat cor}
If $W$ is a simply-laced finite Coxeter group, then $W/\Theta_\bic$ is Hasse-regular.
\end{corollary}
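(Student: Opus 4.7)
The plan is to chain together the identifications the paper has already established. By the preceding theorem, $\Theta_\bic=\Theta_I$ where $I\subseteq\Pi$ is the ideal generated by paths of length two, so under Mizuno's isomorphism $W\cong\tors\Pi$ the quotient $W/\Theta_\bic$ is identified with $\tors\Pi/\Theta_I$. By Theorem~\ref{eta op}(a), this quotient is canonically isomorphic as a lattice to $\tors(\Pi/I)$, so it suffices to show that $\tors(\Pi/I)$ is Hasse-regular.

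For this, I would invoke Corollary~\ref{Hasse-regular}, which states that $\tors B$ is Hasse-regular whenever $B$ is $\tau$-tilting finite. The algebra $\Pi$ is $\tau$-tilting finite by Mizuno's Theorem~\ref{Mizuno's theorem}(b), and Theorem~\ref{eta op}(d) (equivalently, Corollary~\ref{factor-closed}) ensures that $\tau$-tilting finiteness is inherited by factor algebras. Hence $\Pi/I$ is $\tau$-tilting finite, Corollary~\ref{Hasse-regular} applies, and $\tors(\Pi/I)$ is Hasse-regular.

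There is no real obstacle here: all the ingredients—the identification $\Theta_\bic=\Theta_I$, the lattice isomorphism $\tors\Pi/\Theta_I\cong\tors(\Pi/I)$, closure of $\tau$-tilting finiteness under factor algebras, and Hasse-regularity of $\tors B$ for $\tau$-tilting finite $B$—have already been established in the paper. The corollary is thus a two-line concatenation of these results, which is why the paper labels it as immediate.
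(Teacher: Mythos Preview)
Your proof is correct and follows essentially the same route as the paper: the preceding theorem identifies $\Theta_\bic$ with the algebraic congruence $\Theta_I$, so $W/\Theta_\bic\cong\tors(\Pi/I)$, and Corollary~\ref{Hasse-regular} (together with closure of $\tau$-tilting finiteness under quotients) gives Hasse-regularity. The paper compresses all of this into ``immediate from Corollary~\ref{Hasse-regular}'', but your unpacking of the implicit steps is exactly what is intended.
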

Corollary~\ref{bicat cor} is the simply-laced case of \cite[Conjecture 2.11]{BR}.
The general case follows by a folding argument, as explained in the type-B case in \cite[Section~3.6]{BR}.

\begin{remark}\label{bic rem}
As pointed out in \cite{BR}, when $c$ is not bipartite, $\Theta_c\wedge \Theta_{c^{-1}}$ is less well-behaved.  
From our point of view, the point is that, for more general $c$, the congruence $\Theta_c\wedge \Theta_{c^{-1}}$ need not be algebraic. 
Indeed, in type $A_3$, for $c=s_1s_2s_3$, it is apparent in \cite[Figure 4]{BR} that the fan associated to $\Theta_c\wedge\Theta_{c^{-1}}$ is not simplicial, and thus the quotient of $W$ modulo this congruence is not Hasse-regular.
\end{remark}

\section*{Acknowledgments}
We thank the Mathematical Sciences Research Institute, and the organizers of the Workshop ``Cluster Algebras in Combinatorics, Algebra, and Geometry'' where this collaboration began.
We also thank the Mathematisches Forschungsinstitut Oberwolfach and the organizers of the workshops ``Cluster Algebras and Related Topics'' and ``Representation Theory of Quivers and Finite Dimensional Algebras'' for helping this collaboration to continue; we likewise thank the Centre International de Rencontres Math\'ematiques and the organizers of ``ARTA VI'' and ``Cluster algebras twenty years on''. 
We thank the Institut Mittag-Leffler, and the organizers of the program on ``Representation Theory'', which provided support and excellent working
conditions.
We thank David Speyer, Gordana Todorov, and Friedrich Wehrung for helpful conversations.
We thank Universit\"at Bielefeld for their hospitality. 
We thank the referee for their helpful comments.
We thank Lorenzo Molena for pointing out an error in an earlier version of the proof of Proposition~\ref{arrdeteq}.

\end{document}